\definecolor{cite}{rgb}{0.50,0.00,1.00}
\definecolor{url}{rgb}{0.00,0.50,0.75}
\definecolor{link}{rgb}{0.00,0.00,0.50}
\numberwithin{equation}{section}
\theoremstyle{plain}
\newtheorem{proposition}{Proposition}[subsection]
\newtheorem{corollary}[proposition]{Corollary}
\newtheorem{lem}[proposition]{Lemma}
\newtheorem{theorem}[proposition]{Theorem}
\theoremstyle{definition}
\newtheorem{definition}[proposition]{Definition}
\newtheorem{notation}[proposition]{Notation}
\newtheorem{assumption}[proposition]{Assumption}
\newtheorem{variant}[proposition]{Variant}
\newtheorem{construction}[proposition]{Construction}
\theoremstyle{remark}
\newtheorem{example}[proposition]{Example}
\newtheorem{remark}[proposition]{Remark}
\renewcommand{\b}[1]{\mathbf{#1}}
\renewcommand{\c}[1]{\mathcal{#1}}
\renewcommand{\d}[1]{\mathbb{#1}}
\newcommand{\f}[1]{\mathfrak{#1}}
\renewcommand{\r}[1]{\mathrm{#1}}
\newcommand{\s}[1]{\mathscr{#1}}
\renewcommand{\sf}[1]{\mathsf{#1}}
\renewcommand{\(}{\left(}
\renewcommand{\)}{\right)}
\newcommand{\res}{\mathbin{|}}
\newcommand{\Sec}{\S}
\newcommand{\bA}{\b A}
\newcommand{\bB}{\b B}
\newcommand{\bD}{\b D}
\newcommand{\bL}{\b L}
\newcommand{\bT}{\b T}
\newcommand{\cA}{\c A}
\newcommand{\cB}{\c B}
\newcommand{\cC}{\c C}
\newcommand{\cD}{\c D}
\newcommand{\cE}{\c E}
\newcommand{\cF}{\c F}
\newcommand{\cG}{\c G}
\newcommand{\cH}{\c H}
\newcommand{\cI}{\c I}
\newcommand{\cK}{\c K}
\newcommand{\cL}{\c L}
\newcommand{\cM}{\c M}
\newcommand{\cN}{\c N}
\newcommand{\cO}{\c O}
\newcommand{\cQ}{\c Q}
\newcommand{\cR}{\c R}
\newcommand{\cS}{\c S}
\newcommand{\cT}{\c T}
\newcommand{\cU}{\c U}
\newcommand{\cV}{\c V}
\newcommand{\cW}{\c W}
\newcommand{\cX}{\c X}
\newcommand{\cY}{\c Y}
\newcommand{\cZ}{\c Z}
\newcommand{\dG}{\d G}
\newcommand{\dN}{\d N}
\newcommand{\dQ}{\d Q}
\newcommand{\dR}{\d R}
\newcommand{\dS}{\d S}
\newcommand{\dZ}{\d Z}
\newcommand{\fC}{\f C}
\newcommand{\fF}{\f F}
\newcommand{\fR}{\f R}
\newcommand{\fm}{\f m}
\newcommand{\rB}{\r B}
\newcommand{\rD}{\r D}
\newcommand{\rE}{\r E}
\newcommand{\rG}{\r G}
\newcommand{\rH}{\r H}
\newcommand{\rI}{\r I}
\newcommand{\rL}{\r L}
\newcommand{\rN}{\r N}
\newcommand{\rR}{\r R}
\newcommand{\rT}{\r T}
\newcommand{\ra}{\r a}
\newcommand{\rb}{\r b}
\newcommand{\rc}{\r c}
\newcommand{\rh}{\r h}
\newcommand{\sF}{\s F}
\newcommand{\sG}{\s G}
\newcommand{\sK}{\s K}
\newcommand{\sfC}{\sf C}
\newcommand{\sfK}{\sf K}
\newcommand{\sfL}{\sf L}
\newcommand{\sfU}{\sf U}
\newcommand{\sfV}{\sf V}
\newcommand{\sfW}{\sf W}
\newcommand{\sfp}{\sf p}
\newcommand{\sfq}{\sf q}
\newcommand{\all}{\r{all}}
\newcommand{\atimes}{\overset{\ra}\otimes}
\newcommand{\bis}{$^{\text{bis}}$}
\newcommand{\cart}{\r{cart}}
\newcommand{\Cat}{\c{C}\r{at}_\infty}
\newcommand{\cat}{\c{C}\r{at}_1}
\newcommand{\cEs}{\cE_{\r{s}}}
\newcommand{\cEt}{\cE_{\r{t}}}
\newcommand{\Cart}{\cC\r{art}}
\newcommand{\CCpt}{\cC\r{pt}}
\newcommand{\Chp}{\c{C}\r{hp}}
\newcommand{\Chpars}{\Chp^{\r{Ar}}_{\r{lft}/\dS}}
\newcommand{\Chpfppf}{\Chp^{\r{fppf}}}
\newcommand{\Chppre}{\Chp^{\r{pre}}}
\newcommand{\colim}{\varinjlim}
\newcommand{\compl}{\r{compl}}
\newcommand{\cons}{\r{cons}}
\newcommand{\Corr}{\r{Corr}}
\newcommand{\corr}{\r{corr}}
\newcommand{\cosk}{\r{cosk}}
\newcommand{\Cov}{\r{Cov}}
\newcommand{\Cpt}{\b{Cpt}}
\newcommand{\Crt}{\r{Cart}}
\renewcommand{\deg}{\r{deg}}
\newcommand{\del}{\b{\Delta}}
\newcommand{\dgflat}{\r{dg}\text{-}\r{flat}}
\newcommand{\diag}{\r{diag}}
\newcommand{\Esp}{\c{E}\r{sp}}
\newcommand{\espet}{\r{esp}.\acute{\r{e}}\r{t}}
\newcommand{\Espqcs}{\Esp^{\r{qc.sep}}}
\newcommand{\Et}{\acute{\r{E}}\r{t}}
\newcommand{\et}{\acute{\r{e}}\r{t}}
\newcommand{\Ext}{\r{Ext}}
\newcommand{\Fibr}{\r{Fibr}}
\newcommand{\Fin}{\c{F}\r{in}_*}
\newcommand{\FunL}{\r{Fun^L}}
\newcommand{\FunR}{\r{Fun^R}}
\newcommand{\HOM}{\c{H}\r{om}}
\newcommand{\Hom}{\r{Hom}}
\newcommand{\id}{\r{id}}
\newcommand{\Kart}{\c{K}\r{art}}
\newcommand{\Komp}{\c{K}\r{pt}}
\newcommand{\Kov}{\r{Kov}}
\newcommand{\Kpt}{\mathrm{Kpt}}
\renewcommand{\lim}{\varprojlim}
\newcommand{\Liset}{\r{Lis}\text{-}\acute{\r{e}}\r{t}}
\newcommand{\liset}{{\r{lis}\text{-}\acute{\r{e}}\r{t}}}
\newcommand{\ltor}{\Box\text{-}\tor}
\newcommand{\Mset}{\c{S}\r{et}_{\Delta}^+}
\newcommand{\op}{\r{op}}
\newcommand{\pr}{\r{pr}}
\newcommand{\PR}{\c{P}\r{r}}
\newcommand{\PRing}{\c{PR}\r{ing}}
\newcommand{\PRL}{\PR^{\rL}}
\newcommand{\PRR}{\PR^{\rR}}
\newcommand{\PS}{\PR_{\r{st}}}
\newcommand{\PSL}{\PS^{\rL}}
\newcommand{\PSR}{\PS^{\rR}}
\newcommand{\PSLM}{\CAlg(\Cat)_{\r{pr,st,cl}}^\rL}
\newcommand{\PSLt}[1][]{\PR^{\r{L}}_{\r{st},\r{t}#1}}
\newcommand{\PSRt}[1][]{\PR^{\r{R}}_{\r{st},\r{t}#1}}
\newcommand{\PTopos}{\c{PT}\r{opos}}
\newcommand{\qset}{\r{qs}.\acute{\r{e}}\r{t}}
\newcommand{\RCpt}{\r{Cpt}}
\newcommand{\Ret}{\r{Ret}}
\newcommand{\Rhom}{\rR\HOM}
\newcommand{\Rind}{\c{R}\r{ind}}
\newcommand{\Ring}{\c{R}\r{ing}}
\newcommand{\RingedPTopos}{\Ring\r{ed}\PTopos}
\newcommand{\RKE}{\r{RKE}}
\newcommand{\rres}{\r{res}}
\newcommand{\Sch}{\c{S}\r{ch}}
\newcommand{\Schaff}{\Sch^{\r{aff}}}
\newcommand{\Schqcs}{\Sch^{\r{qc.sep}}}
\newcommand{\Schqs}{\Sch^{\r{qs}}}
\newcommand{\Set}{\c{S}\r{et}}
\newcommand{\St}{\mathnormal{St}}
\newcommand{\tcEs}{\tilde\cE_{\r{s}}}
\newcommand{\tcEt}{\tilde\cE_{\r{t}}}
\newcommand{\tor}{\r{tor}}
\newcommand{\trdeg}{\r{tr.deg}}
\newcommand{\Un}{\r{Un}}
\newcommand{\univ}{\r{univ}}
\newcommand{\Vosm}{\r{Vo}^{\r{sm}}}
\newcommand{\Voet}{\r{Vo}^{\et}}
\renewcommand{\leq}{\leqslant}
\renewcommand{\geq}{\geqslant}
\renewcommand{\le}{\leqslant}
\renewcommand{\ge}{\geqslant}
\newcommand{\Chpar}[1]{\Chp^{\ifthenelse{\equal{#1}{}}{}{#1\text{-}}\r{Ar}}}
\newcommand{\Chpdm}[1]{\Chp^{\ifthenelse{\equal{#1}{}}{}{#1\text{-}}\r{DM}}}
\newcommand{\Chplft}[1]{\Chp^{\r{Ar}}_{\r{lft}/{#1}}}
\newcommand{\Chplmb}[1]{\Chp^{\r{LMB}}_{\r{lft}/{#1}}}
\newcommand{\EO}[4]{\prescript{#1}{#2}{\r{EO}}^{#3}_{#4}}
\newcommand{\Sset}[1][]{\Set_{#1\Delta}}
\newcommand{\TS}[3]{\prescript{#1}{}{#2}^{#3}}
\DeclareMathOperator{\Alg}{Alg}
\DeclareMathOperator{\Ar}{Ar}
\DeclareMathOperator{\CAlg}{CAlg}
\DeclareMathOperator{\card}{card}
\DeclareMathOperator{\Ch}{Ch}
\DeclareMathOperator{\codim}{codim}
\DeclareMathOperator{\Fun}{Fun}
\DeclareMathOperator{\Map}{Map}
\DeclareMathOperator{\Mod}{Mod}
\DeclareMathOperator{\Ob}{Ob}
\DeclareMathOperator{\Spec}{Spec}
\DeclareMathOperator{\Tr}{Tr}
\begin{document}

\title[Enhanced six operations and base change theorem]
{Enhanced six operations and base change theorem for higher Artin stacks} 

\author{Yifeng Liu}
\address{Institute for Advanced Study in Mathematics, Zhejiang University, Hangzhou 310058, China}
\email{liuyf0719@zju.edu.cn}

\author{Weizhe Zheng}
\address{Morningside Center of Mathematics, Academy of Mathematics and Systems Science, Chinese Academy of Sciences, Beijing 100190, China; University of the Chinese Academy of Sciences, Beijing 100049, China}
\email{wzheng@math.ac.cn}

\date{\today}
\subjclass[2020]{14F08 (primary), 14A20, 14F20, 18N50, 18N60 (secondary)}

\begin{abstract}
In this article, we develop a theory of Grothendieck's six operations for 
derived categories in \'etale cohomology of Artin stacks, for both torsion 
and adic coefficients. We prove several desired properties of the  
operations, including the base change theorem in derived categories. This 
extends many previous theories on this subject, including the one developed 
by Laszlo and Olsson, in which the operations are subject to more 
assumptions and the base change isomorphism is only constructed on the 
level of sheaves. Moreover, our theory works for higher Artin stacks as 
well. In addition, we define perverse t-structures on higher Artin stacks 
for general perversity, extending Gabber's work on schemes. 

Our method differs from previous approaches, as we exploit the theory of 
stable $\infty$-categories developed by Lurie. We enhance derived 
categories, functors, and natural isomorphisms to the level of 
$\infty$-categories and introduce $\infty$-categorical (co)homological 
descent. To handle the issue of ``homotopy coherence'', we develop a 
general technique for gluing subcategories of $\infty$-categories and 
several other $\infty$-categorical techniques. We obtain categorical 
equivalences between simplicial sets associated to certain multisimplicial 
sets. Such equivalences can be used to construct functors in different 
contexts. One of our category-theoretical results generalizes Deligne's 
gluing theory developed in the construction of the extraordinary 
pushforward operation in \'etale cohomology of schemes. 
\end{abstract}

\maketitle

\tableofcontents

\section*{Introduction}
\label{0}

This article is an amalgamation, with minor improvements, of the following three preprints we previously posted on the arXiv:
\begin{itemize}
  \item \emph{Gluing restricted nerves of $\infty$-categories}, \href{http://arxiv.org/abs/1211.5294}{arXiv:1211.5294},

  \item \emph{Enhanced six operations and base change theorem for higher Artin stacks}, \href{http://arxiv.org/abs/1211.5948}{arXiv:1211.5948},

  \item \emph{Enhanced adic formalism and perverse t-structures for higher Artin stacks}, \href{http://arxiv.org/abs/1404.1128}{arXiv:1404.1128}.
\end{itemize}

Derived categories in \'etale cohomology on Artin stacks and Grothendieck's 
six operations (also known as six-functors) between such categories have been 
developed by many authors including \cite{Zh2} (for Deligne--Mumford stacks), 
\cite{LMB}, \cite{Behrend}, \cite{Olsson} and \cite{LO1}. These theories all 
have some restrictions. In the most recent and general one \cite{LO1} by 
Laszlo and Olsson on Artin stacks, a technical condition was imposed on the 
base scheme which excludes, for example, the spectra of certain 
fields.\footnote{For example, the field $k(x_1,x_2,\dots)$ obtained by 
adjoining countably infinitely many variables to an algebraically closed 
field $k$ in which $\ell$ is invertible.} More importantly, the base change 
isomorphism was constructed only on the level of (usual) cohomology sheaves 
\cite{LO1}*{\Sec5}. The Base Change theorem is fundamental in many 
applications. In the Geometric Langlands Program for example, the theorem has 
already been used on the level of perverse cohomology. It is thus necessary 
to construct the Base Change isomorphism not just on the level of cohomology, 
but also in the derived category. Another limitation of most previous works 
is that they dealt only with constructible sheaves. When working with 
morphisms \emph{locally} of finite type, it is desirable to have the six 
operations for more general sheaves. 

In this article, we develop a theory that provides the desired extensions of 
previous works. Instead of the usual unbounded derived category, we work with 
its enhancement, which is a stable $\infty$-category in the sense of Lurie 
\cite{HA}*{Definition 1.1.1.9}. This makes our approach different from 
previous ones. We construct functors and produce relations in the world of 
$\infty$-categories, which themselves form an $\infty$-category. We start by 
upgrading the known theory of six operations for (coproducts of) 
quasi-compact and separated schemes to $\infty$-categories. The coherence of 
the construction is carefully recorded. This enables us to apply 
$\infty$-categorical descent to carry over the theory of six operations, 
including the Base Change theorem, to algebraic spaces, higher 
Deligne--Mumford stacks and higher Artin stacks.

\subsection{Results for torsion coefficients}
\label{0ss:results_torsion}

In this section, we will state our results only in the classical setting of 
Artin stacks on the level of usual derived categories (which are homotopy 
categories of the derived $\infty$-categories), among other simplifications. 
We refer the reader to Chapter~\ref{b6ss} for a list of complete results for 
higher Deligne--Mumford stacks and higher Artin stacks, stated on the level 
of stable $\infty$-categories. 

By an \emph{algebraic space}, we mean a sheaf in the big fppf site satisfying 
the usual axioms \cite{SP}*{025Y}: its diagonal is representable (by 
schemes); and it admits an \'{e}tale and surjective map from a scheme (in 
$\Sch_\cU$; see \Sec\ref{0ss:conventions}). 

By an \emph{Artin stack}, we mean an algebraic stack in the sense of 
\cite{SP}*{026O}: it is a stack in (1-)groupoids over 
$(\Sch_\cU)_{\r{fppf}}$; its diagonal is representable by algebraic spaces; 
and it admits a smooth and surjective map from a scheme. In particular, we do 
not assume that an Artin stack is quasi-separated. Our main results are the 
construction of the six operations on the derived categories of sheaves in 
the \'etale cohomology of Artin stacks and the expected relations among them. 
In what follows, $\Lambda$ denotes a (unital commutative) ring, or more 
generally, a ringed diagram in Definition \ref{b2de:ringed_diagram}. 

To an Artin stack $\cX$, we associate a triangulated category 
$\rD(\cX,\Lambda)$. If $\cX$ is Deligne--Mumford, then this is simply the 
unbounded derived category $\rD(\cX_{\et},\Lambda)$ of 
$\Mod(\cX_{\et},\Lambda)$, the Abelian category of  
$(\cX_{\et},\Lambda)$-modules, where $\cX_{\et}$ is the \'etale topos  
associated to $\cX$. In general, although our construction does not make use 
of the lisse-\'etale topos, $\rD(\cX,\Lambda)$ turns out to be equivalent to 
a full subcategory of $\rD(\cX_{\liset},\Lambda)$, the unbounded derived 
category of $(\cX_{\liset},\Lambda)$-modules, where $\cX_{\liset}$ is the 
lisse-\'etale topos $\cX_{\liset}$ associated to $\cX$. Recall that an 
$(\cX_{\liset},\Lambda)$-module $\sF$ is equivalent to an assignment to each 
smooth morphism $v\colon Y\to\cX$ with $Y$ an algebraic space a 
$(Y_{\et},\Lambda)$-module $\sF_v$ and to each 2-commutative triangle 
\begin{align*}
\xymatrix{
  Y' \ar[rr]^-{f} \ar[dr]_-{v'}
  &  &    Y \ar[dl]^{v}    \\
  & \cX \ar@{}[u] |\sigma
}
\end{align*}
with $v$, $v'$ smooth and $Y$, $Y'$ being algebraic spaces, a morphism 
$\tau_\sigma\colon f^*\sF_v\to\sF_{v'}$ that is an isomorphism if $f$ is 
\'{e}tale, such that the collection $\{\tau_\sigma\}$ satisfies a natural 
cocycle condition \cite{LMB}*{Lemme 12.2.1}. An 
$(\cX_{\liset},\Lambda)$-module $\sF$ is \emph{Cartesian} if in the above 
description, \emph{all} morphisms $\tau_\sigma$ are isomorphisms 
\cite{LMB}*{D\'{e}finition 12.3}. Let $\rD_\cart(\cX_{\liset},\Lambda)$ be 
the full subcategory of $\rD(\cX_{\liset},\Lambda)$ spanned by complexes 
whose cohomology sheaves are all Cartesian. We have an equivalence of 
categories $\rD(\cX,\Lambda)\simeq \rD_\cart(\cX_{\liset},\Lambda)$. 

Let $f\colon\cY\to\cX$ be a morphism of Artin stacks. We define the following four operations in \Sec\ref{b6ss:operations}:
\begin{align*}
f^*&\colon
\rD(\cX,\Lambda)\to\rD(\cY,\Lambda),\\
f_*&\colon \rD(\cY,\Lambda)\to\rD(\cX,\Lambda),\\
-\otimes_\cX-&\colon
\rD(\cX,\Lambda)\times\rD(\cX,\Lambda)\to\rD(\cX,\Lambda),\\
\HOM_\cX&\colon
\rD(\cX,\Lambda)^{op}\times\rD(\cX,\Lambda)\to\rD(\cX,\Lambda).
\end{align*}
The pairs $(f^*,f_*)$ and $(-\otimes_\cX\sfK,\HOM_\cX(\sfK,-))$ for every $\sfK\in\rD_\cart(\cX_{\liset},\Lambda)$ are pairs of adjoint functors.

To state the other two operations, we fix a nonempty set $\Box$ of rational primes. A ring is \emph{$\Box$-torsion} \cite{SGA4}*{Expos\'{e} ix, D\'{e}finition 1.1} if each element of it is killed by an integer that is a product of primes in $\Box$. An Artin stack $\cX$ is {\em $\Box$-coprime} if there exists a morphism $\cX\to \Spec\dZ[\Box^{-1}]$. If $\cX$ and $\cY$ are $\Box$-coprime (resp.\ Deligne--Mumford), $f\colon \cY\to \cX$ is locally of finite type, and $\Lambda$ is $\Box$-torsion (resp.\ torsion), then there is another pair of adjoint functors:
\begin{align*}
f_!&\colon \rD(\cY,\Lambda)\to\rD(\cX,\Lambda),\\
f^!&\colon \rD(\cX,\Lambda)\to\rD(\cY,\Lambda).
\end{align*}

Next we list some properties of the six operations. We refer the reader to \Sec\ref{b6ss:operations} for a more complete list.

\begin{theorem}[K\"{u}nneth Formula, Theorem \ref{b6th:kunneth}]
Let $\Lambda$ be a $\Box$-torsion (resp.\ torsion) ring, and
\[
\xymatrix{\cY_1\ar[d]_-{f_1} & \cY\ar[l]_-{q_1}\ar[d]^-{f}\ar[r]^-{q_2} & \cY_2\ar[d]^{f_2}\\
\cX_1 & \cX\ar[l]_{p_1}\ar[r]^-{p_2} & \cX_2}
\]
a diagram of $\Box$-coprime Artin stacks (resp.\ of arbitrary 
Deligne--Mumford stacks) that exhibits $\cY$ as the limit 
$\cY_1\times_{\cX_1} \cX\times_{\cX_2} \cY_2$, where $f_1$ and $f_2$ are 
locally of finite type. Then there is a natural isomorphism of functors: 
\[
f_!(q_1^*-\otimes_\cY q_2^*-)\simeq (p_1^*f_{1!}-)\otimes_\cX
(p_2^*f_{2!}-)\colon
\rD(\cY_{1},\Lambda)\times\rD(\cY_{2},\Lambda)
\to\rD(\cX,\Lambda).
\]
\end{theorem}

\begin{corollary}[Base Change]\label{0co:base_change}
Let $\Lambda$ be a $\Box$-torsion (resp.\ a torsion) ring, and
\begin{align*}
\xymatrix{
\cW \ar[d]_-{q} \ar[r]^-{g} & \cZ \ar[d]^-{p} \\
\cY \ar[r]^-{f} & \cX   }
\end{align*}
a \emph{Cartesian} diagram of $\Box$-coprime Artin stacks (resp.\ of arbitrary Deligne--Mumford stacks) where $p$ is locally of finite type. Then there is a natural isomorphism of functors:
\[
 f^*\circ p_!\simeq q_!\circ g^*\colon \rD(\cZ,\Lambda)\to\rD(\cY,\Lambda).
\]
\end{corollary}

\begin{corollary}[Projection Formula]
Let $\Lambda$ be a $\Box$-torsion (resp.\ torsion) ring, and $f\colon\cY\to\cX$ a morphism locally of finite type of $\Box$-coprime Artin stacks (resp.\ of arbitrary Deligne--Mumford stacks). Then there is a natural isomorphism of functors:
\[
f_!(-\otimes_\cY f^*-)\simeq (f_!-)\otimes_\cX-\colon\rD(\cY,\Lambda)\times\rD(\cX,\Lambda)\to\rD(\cX,\Lambda).
\]
\end{corollary}

\begin{theorem}[Trace map and Poincar\'e duality, Theorem \ref{b6th:poincare}]
Let $\Lambda$ be a $\Box$-torsion ring, and $f\colon \cY\to\cX$ a flat morphism locally of finite presentation of $\Box$-coprime Artin stacks. Then
\begin{enumerate}
  \item There is a functorial trace map
      \[
      \Tr_f\colon\tau^{\geq0}f_!\Lambda_\cY\langle d\rangle=\tau^{\geq0}f_!(f^*\Lambda_\cX)\langle d\rangle\to\Lambda_\cX,
      \]
      where $d$ is an integer larger than or equal to the dimension of 
       every geometric fiber of $f$; $\Lambda_\cX$ and $\Lambda_\cY$ denote 
       the constant sheaves placed in degree $0$; and $\langle 
       d\rangle=[2d](d)$ is the composition of the shift by $2d$ and the 
       $d$-th power of Tate's twist. 

  \item If $f$ is moreover smooth, then the induced natural transformation
      \[
      u_f\colon f_!\circ f^*\langle\dim f\rangle\to\id_\cX
      \]
      is a counit transformation, where $\id_\cX$ is the identity functor 
      of $\rD(\cX,\Lambda)$. In other words, there is a natural isomorphism 
      of functors: 
      \[
      f^*\langle\dim f\rangle\simeq f^!\colon \rD(\cX,\Lambda)\to\rD(\cY,\Lambda).
      \]
\end{enumerate}
\end{theorem}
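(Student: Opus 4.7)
The plan is to reduce both assertions to the classical case of smooth morphisms of schemes, via the descent framework developed earlier in the paper. On $\sf{L}$-coprime schemes and for $f\colon Y\to X$ flat of finite presentation with geometric fibers of dimension $\leq d$, the trace $\tr_f\colon \tau^{\geq 0}f_!\Lambda_Y\langle d\rangle\to\Lambda_X$ is Deligne's classical construction, and Poincar\'e duality $f^*\langle\dim f\rangle\simeq f^!$ for $f$ smooth is classical as well. The first task is to promote these to morphisms between two $\infty$-functors on the $\infty$-category of such morphisms of (quasi-compact and separated) schemes, using the enhanced operations already set up in the paper for this setting and exploiting the classical compatibilities of $\tr$ with composition and with arbitrary base change.

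Next, I would extend the construction to Artin stacks. Given $f\colon \cY\to\cX$ as in the statement, choose a smooth presentation of $\cX$ by a scheme and form its \v{C}ech nerve; pulling back $f$ yields a simplicial diagram of flat, locally finitely presented morphisms of algebraic spaces, which in turn admit smooth presentations by schemes. The smooth-descent procedure used in Chapter \ref{5} to define $f_!$ on Artin stacks, combined with the functorial trace morphism from the previous paragraph, descends to produce $\tr_f$. The same integer $d$ works at every stage of the descent because relative dimension is preserved under smooth base change.

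For part (2), assume $f$ is smooth. The projection formula (Proposition \ref{4pr:projection}) gives a natural identification $f_!(f^*(-)\langle\dim f\rangle)\simeq(f_!\Lambda_{\cY}\langle\dim f\rangle)\otimes_{\cX}(-)$, and tensoring $\tr_f$ (for the choice $d=\dim f$) yields the natural transformation $u_f\colon f_!\circ f^*\langle\dim f\rangle\to\id_{\cX}$. To verify that $u_f$ is a counit of the adjunction $(f_!,f^!)$, I adjoin through to obtain $\alpha_f\colon f^*\langle\dim f\rangle\to f^!$ and check that $\alpha_f$ is an equivalence. Being an equivalence in $\rD_{\cart}$ is detected after pullback along a smooth atlas by a scheme, and both $f^*$ and $f^!$ commute with smooth base change (by Theorem \ref{0th:bc} and its dual); iterating this reduction on source and target returns to the classical scheme-level Poincar\'e duality.

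The main obstacle is not the pointwise truth of the trace or duality statements, which are classical on schemes, but their coherence. One must exhibit $\tr$ and $\alpha$ as morphisms in the appropriate $\infty$-functor category, rather than as mere collections of isomorphisms in homotopy categories, so that smooth descent outputs a genuine natural transformation of $\infty$-functors between derived categories of Artin stacks. This is exactly the homotopy-coherence bookkeeping that the paper's enhanced-operation formalism is designed to handle, and the crux of the proof lies in verifying that Deligne's classical trace assembles into such a morphism functorially in the underlying scheme morphism, before the descent to stacks can be performed.
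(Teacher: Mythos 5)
Your overall plan — reduce to schemes, then descend to Artin stacks — is the right one, but the proposal misstates the geometry in a way that hides the real two-step structure of the argument, and it does not supply the key lemma that makes the second step go.

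The central problem is in the descent step. You say: ``choose a smooth presentation of $\cX$ by a scheme and form its \v{C}ech nerve; pulling back $f$ yields a simplicial diagram of flat, locally finitely presented morphisms of algebraic spaces.'' This is false unless $f$ is representable: if $\cY$ is a $1$-Artin stack and $X_0\to\cX$ an atlas from a scheme, then $\cY\times_\cX X_0$ is again a $1$-Artin stack, not an algebraic space. A target-atlas descent alone therefore only handles the representable case. The paper accordingly proceeds in two stages. In the first stage (Lemma~\ref{3le:poincare1}) one treats $f$ representable, pulling back $f$ to the \v{C}ech nerve of a target atlas $x_\bullet\colon X_\bullet\to\cX$ and defining $\tr_f$ as the limit $\lim_{n\in\del}\tau^{\le0}x_{n*}\tr_{f_n}$, using cohomological descent (\ie the identification $\Lambda_\cX\simeq\lim_n x_{n*}x_n^*\Lambda_\cX$). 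In the second stage (the lemma proving (P7)(1)) one treats general $f$, now choosing a \v{C}ech nerve $y_\bullet\colon Y_\bullet\to\cY$ of an atlas on the \emph{source} and expressing $f_!\Lambda_\cY$ as the geometric realization $\colim_n f_{n!}y_n^!\Lambda_\cY$ via $!$-codescent (Proposition~\ref{3pr:etale_descent}(2)). This second step is possible only because one has already established Poincar\'e duality for the smooth representable atlas morphisms $y_n$ (Lemma~\ref{3le:poincare2}), which lets one rewrite $y_n^!\Lambda_\cY$ as $\Lambda_{Y_n}\langle\dim y_n\rangle$; without that intermediate lemma the colimit of traces cannot be assembled at all. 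Moreover, your claim that ``the same integer $d$ works at every stage'' fails here: the source atlas shifts dimension, so the correct shift on $Y_n$ is $\langle d+\dim y_n\rangle$, not $\langle d\rangle$.

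A smaller but real issue is the emphasis in your final paragraph. The trace map is a morphism in the heart $\cD^\heartsuit(\cX,\lambda)$, an ordinary abelian category, so there is \emph{no} higher homotopy-coherence problem for $\tr_f$ itself; the paper stresses this explicitly (Remark~\ref{3re:program}(3)), and the trace is pinned down by a uniqueness argument via its characterizing compatibilities (Remark~\ref{2re:trace}). The $\infty$-categorical coherence work is in constructing $f_!,f^*,f^!$ and the (co)descent equivalences coherently — not in the trace. Finally, for part~(2): your reduction after pullback to an atlas is essentially correct, but the base change you should cite is the $*/*$-adjointability axiom (P5\bis)(1), $f^*x_{n*}\simeq y_{n*}f_n^*$, which the paper establishes by formal limit arguments (Lemma~\ref{3le:limit_adjointable}) prior to and independently of Poincar\'e duality. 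Invoking Smooth Base Change as in Corollary~\ref{4co:smooth} would be circular, since that corollary is itself \emph{deduced} from Poincar\'e duality; Theorem~\ref{0th:bc} (base change for $f^*p_!\simeq q_!g^*$) is not the statement you need either. The paper's actual argument in Lemma~\ref{3le:poincare2} checks $f^*\sK\langle d\rangle\to f^!\sK$ on $\sK$ of the form $x_{n*}\sL$ and takes a limit, using that $f^*$ preserves limits (which again comes from (P5\bis)(1)); your proposal should make this precise.
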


\begin{corollary}[Smooth Base Change, Corollary \ref{b6co:smooth_base_change}]
Let $\Lambda$ of a $\Box$-torsion ring, and
\begin{align*}
\xymatrix{
\cW \ar[d]_-{q} \ar[r]^-{g} & \cZ \ar[d]^-{p} \\
\cY \ar[r]^-{f} & \cX   }
\end{align*}
a Cartesian diagram of $\Box$-coprime Artin stacks where $p$ is smooth. Then the natural transformation of functors
\[
p^*f_*\to g_*q^*\colon \rD(\cY,\Lambda)\to \rD(\cZ,\Lambda)
\]
is a natural isomorphism.
\end{corollary}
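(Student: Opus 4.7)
The plan is to deduce smooth base change from the proper base change theorem (Theorem \ref{0th:bc}) together with Poincar\'e Duality. First I would observe that since $q$ is the pullback of the smooth morphism $p$ along $f$, the morphism $q$ is itself smooth (smoothness is stable under base change) with the same relative dimension, so $\dim q = \dim p$. By the Poincar\'e Duality part of the preceding theorem applied to both $p$ and $q$, there are natural isomorphisms
\[
p^!\simeq p^*\langle \dim p\rangle,\qquad q^!\simeq q^*\langle \dim p\rangle.
\]

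Next, I would take right adjoints of the Base Change isomorphism $f^*\circ p_!\simeq q_!\circ g^*$ from Theorem \ref{0th:bc}. Using that $(f^*,f_*)$, $(g^*,g_*)$, $(p_!,p^!)$ and $(q_!,q^!)$ are adjoint pairs, this yields a natural isomorphism
\[
p^!\circ f_*\simeq g_*\circ q^!.
\]
Substituting the Poincar\'e Duality identifications and using that the auto-equivalence $\langle \dim p\rangle=[2\dim p](\dim p)$ is invertible and commutes with $g_*$, we may cancel it from both sides to obtain
\[
p^*\circ f_*\simeq g_*\circ q^*,
\]
which is the desired statement.

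The main obstacle is a coherence check: one must verify that the isomorphism produced by this chain of manipulations coincides with the canonical base change natural transformation $p^*f_*\to g_*q^*$ built from the unit of $(q^*,q_*)$ and the counit of $(p^*,p_*)$. In the classical setting this is a diagram chase involving the interaction between the trace map $u_p$ realizing Poincar\'e Duality and the base change isomorphism. In the $\infty$-categorical framework of this paper, passing to right adjoints is functorial and unique up to contractible choice, so this amounts to identifying two natural transformations between the same pair of functors of stable $\infty$-categories; once Poincar\'e Duality and the Base Change isomorphism are both packaged as morphisms in the appropriate functor $\infty$-category (as done in Chapter \ref{5}), the identification reduces to unwinding the adjunction data. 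This is the genuine content beyond the formal argument above, and it is where the careful tracking of coherence established earlier in the paper is indispensable.
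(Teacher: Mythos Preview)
Your proposal is correct and follows essentially the same route as the paper: the paper's proof is the single line ``Combining Base Change and Proposition \ref{4pr:smooth} (2)'', and you have unpacked precisely that combination --- take right adjoints in the base change square for $(f^*,p_!)$ and then invoke Poincar\'e duality $p^!\simeq p^*\langle\dim p\rangle$, $q^!\simeq q^*\langle\dim q\rangle$ to replace $!$ by $*$. The coherence check you flag is the content of the diagram chase in Lemma \ref{3le:axiom} (see the diagram \eqref{3eq:bigdiag}), carried out there in the abstract DESCENT framework.
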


\begin{theorem}[Descent, Corollary \ref{b6co:descent}]\label{0th:descent}
Let $\Lambda$ be a ring, $f\colon \cY\to \cX$ a morphism of Artin stacks, and $y\colon \cY^+_0\to\cY$ a smooth surjective morphism. Let $\cY^+_\bullet$ be the \v{C}ech nerve of $y$ with the morphism $y_n\colon \cY^+_n\to\cY^+_{-1}=\cY$. Put $f_n=f\circ y_n\colon \cY^+_n\to \cX$.
\begin{enumerate}
  \item For every complex $\sfK\in\rD^{\geq0}(\cY,\Lambda)$, there is a convergent spectral sequence
      \[
      \rE_1^{p,q}=\rH^q(f_{p*}y_p^*\sfK)\Rightarrow \rH^{p+q} f_* \sfK.
      \]

  \item If $\cX$ is $\Box$-coprime; $\Lambda$ is $\Box$-torsion; and $f$ is locally of finite type, then for every complex
      $\sfK\in\rD^{\leq0}(\cY,\Lambda)$, there is a convergent spectral sequence
      \[
      \tilde\rE_1^{p,q}=\rH^q(f_{-p!}y_{-p}^!\sfK)\Rightarrow\rH^{p+q}f_!\sfK.
      \]
\end{enumerate}
\end{theorem}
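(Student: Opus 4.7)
The strategy is to express $f_*\sK$ (resp.\ $f_!\sK$) as a limit (resp.\ colimit) over the simplex category at the level of the underlying stable $\infty$-categories, and then invoke the standard spectral sequence associated to a (co)simplicial object in a stable $\infty$-category equipped with a $t$-structure.

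For part (1), I would first establish an $\infty$-categorical cohomological descent statement: since $y\colon\cY_0^+\to\cY$ is a smooth surjection, the coaugmented cosimplicial $\infty$-category $\rD_{\cart}(\cY_{\bullet,\liset}^+,\Lambda)$ with structure maps $y_n^*$ is a limit diagram (this is an output of the $\infty$-categorical descent machinery developed earlier in the paper, and reduces via smooth base change to conservativity of $y^*$ on Cartesian sheaves). Passing through the right adjoint $f_*$, which commutes with limits, yields an equivalence $f_*\sK\simeq\lim_{[n]\in\del}f_{n*}y_n^*\sK$ in $\rD(\cX_{\liset},\Lambda)$. The Bousfield--Kan spectral sequence associated to this totalization in a stable $\infty$-category with its natural $t$-structure gives $\r E_1^{p,q}=\CH^q(f_{p*}y_p^*\sK)\Rightarrow\CH^{p+q}(f_*\sK)$. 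Strong convergence is verified as follows: smooth pullback $y_n^*$ is $t$-exact, and $f_{n*}$ is left $t$-exact, so each term $f_{n*}y_n^*\sK$ lies in $\rD^{\geq 0}$, and the cosimplicial spectral sequence of a uniformly bounded-below cosimplicial object is conditionally convergent in the usual sense.

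For part (2), the dual statement is $f_!\sK\simeq\colim_{[n]\in\del^{\op}}f_{-p!}y_{-p}^!\sK$ (with transition maps now the $!$-counits). Under the hypotheses of the theorem, Poincar\'e duality for the smooth morphisms $y_n$ (Theorem on Trace Map and Poincar\'e Duality above) gives $y_n^!\simeq y_n^*\langle\dim y_n\rangle$, so the diagram is well-defined and moreover agrees up to a shift with the one appearing in (1). The homological descent for $f_!$ then follows formally from the cohomological descent for $f^*$ applied to the right adjoint pair $(f_!,f^!)$, together with smooth base change (Corollary~\ref{0th:smooth}) to identify the relevant transition maps. The simplicial spectral sequence yields $\tilde{\r E}_1^{p,q}=\CH^q(f_{-p!}y_{-p}^!\sK)\Rightarrow\CH^{p+q}(f_!\sK)$. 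Convergence follows symmetrically from the $t$-structure: with $\sK\in\rD^{\leq 0}$, the shift $\langle\dim y_n\rangle$ combined with right $t$-exactness of $f_{n!}$ puts each term in a controlled range of non-positive degrees.

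The main obstacle is the descent equivalence for $f_!$ in the second part. The cohomological descent for $f_*$ is a direct consequence of the $\infty$-categorical enhancement and conservativity of smooth pullback, but the homological descent for $f_!$ requires converting between the $*$- and $!$-formalisms along the smooth structural maps of the \v{C}ech nerve; this uses the full force of the Base Change theorem (Theorem~\ref{0th:bc}) together with Poincar\'e duality. Once these descent isomorphisms are in place, writing down the spectral sequences and verifying their convergence via $t$-structure bounds is routine.
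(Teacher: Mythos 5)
Your proof follows essentially the same route as the paper. The theorem is deduced from Proposition~\ref{4pr:descent}, the $\infty$-categorical (co)homological descent statement produced by the DESCENT program, together with the spectral sequence of a (co)simplicial object in a stable $\infty$-category with $t$-structure (\cite[1.2.4.7]{Lu2} and its dual). Your identification of the ingredients for the $!$-side---Poincar\'e duality identifying $y_n^!$ with a shift of $y_n^*$, smooth base change/Beck--Chevalley along the \v{C}ech nerve, and conservativity---matches what property~(P4) requires and what Lemma~\ref{2le:Beck} uses at the base level, so this is not a genuinely different argument.

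One imprecision in the convergence argument for part~(2): you appeal to ``right $t$-exactness of $f_{n!}$'', but $f_!$ is not right $t$-exact (already for $f\colon\d{A}^1\to\Spec k$ one has $\CH^2 f_!\Lambda\neq 0$). The correct ingredient is Lemma~\ref{5le:cohdim}: $y_{n!}y_n^!$ preserves $\cD^{\le 0}$, so $f_{n!}y_n^!\sK=f_!(y_{n!}y_n^!\sK)$ lies in $\cD^{\le 2\dim^+(f)}(\cX,\lambda)$ \emph{uniformly in $n$}, and it is this uniform upper bound---not non-positivity---that forces the realization spectral sequence to converge (in particular one needs $\dim^+(f)<\infty$).
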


\begin{remark}
Note that even in the case of schemes, Theorem \ref{0th:descent}(2) seems to be a new result.
\end{remark}

To state our results for constructible sheaves, we work over a $\Box$-coprime 
base scheme $\dS$ that is either quasi-excellent finite-dimensional or 
regular of dimension $\leq1$. We consider only Artin stacks $\cX$ that are 
locally of finite type over $\dS$. Let $\Lambda$ be a Noetherian 
$\Box$-torsion ring. We let $\rD_\cons(\cX,\Lambda)\subseteq 
\rD(\cX,\Lambda)$ denote the full subcategories spanned by those objects 
whose pullback to every scheme $X$, of finite type over $\dS$, has 
constructible cohomology sheaves in the usual sense. Let 
$\rD_\cons^{(+)}(\cX,\Lambda)$ (resp.\ $\rD_\cons^{(-)}(\cX,\Lambda)$) be the 
full subcategory of $\rD_\cons(\cX,\Lambda)$ spanned by complexes whose 
cohomology sheaves are locally bounded from below (resp.\ from above). We 
show in \Sec\ref{b6ss:constructible} that the six operations mentioned 
previously restrict to the following ones (see Proposition 
\ref{b6pr:constructible} and Proposition \ref{b6pr:constructible2} for 
precise statements): 
\begin{align*}
f^*&\colon
\rD_\cons(\cX,\Lambda)\to\rD_\cons(\cY,\Lambda),\\
f^!&\colon \rD_\cons(\cX,\Lambda)\to\rD_\cons(\cY,\Lambda),\\
-\otimes_\cX-&\colon
\rD_\cons^{(-)}(\cX,\Lambda)\times\rD_\cons^{(-)}(\cX,\Lambda)
\to\rD_\cons^{(-)}(\cX,\Lambda),\\
\HOM_\cX&\colon
\rD_\cons^{(-)}(\cX,\Lambda)^{op}\times\rD_\cons^{(+)}(\cX,\Lambda)
\to\rD_\cons^{(+)}(\cX,\Lambda).
\end{align*}
If $f$ is \emph{quasi-compact and quasi-separated}, then there are two more:
\begin{align*}
f_*&\colon \rD_\cons^{(+)}(\cY,\Lambda)\to\rD_\cons^{(+)}(\cX,\Lambda),\\
f_!&\colon \rD_\cons^{(-)}(\cY,\Lambda)\to\rD_\cons^{(-)}(\cX,\Lambda).
\end{align*}

We will also show that when the base scheme, the coefficient ring, and the 
morphism $f$ are all in the range of \cite{LO1}, our operations for 
constructible complexes are compatible with those constructed by Laszlo and 
Olsson on the level of usual derived categories. In particular, Corollary 
\ref{0co:base_change} implies that their operations satisfy Base Change in 
derived categories, which was left open in \cite{LO1}.


\subsection{Why $\infty$-categories?}
\label{0ss:why}

The $\infty$-categories in this article refer to the ones studied by A.~Joyal 
in \cite{Joyal1} and \cite{Joyal2} (where they are called 
\emph{quasi-categories}), J.~Lurie \cite{HTT}, et al. Namely, an 
$\infty$-category is a simplicial set satisfying the right lifting properties 
with respect to inner horn inclusions \cite{HTT}*{Definition 1.1.2.4}. In 
particular, they are models for $(\infty,1)$-categories, that is, higher 
categories whose $n$-morphisms are invertible for $n\geq2$. There are also 
other models for $(\infty,1)$-categories such as topological categories, 
simplicial categories, complete Segal spaces, Segal categories, model 
categories, and, in a looser sense, differential graded (DG) categories and 
$A_\infty$-categories. We address two questions in this section. First, why 
do we need $(\infty,1)$-categories instead of (usual) derived categories? 
Second, why do we choose this particular model of $(\infty,1)$-categories? 

To answer these questions, let us fix an Artin stack $\cX$ and an atlas 
$u\colon X\to\cX$, that is, a smooth and surjective morphism with $X$ an 
algebraic space. We denote by $\Mod_{\cart}(\cX_{\liset},\Lambda)$ the 
Abelian category of Cartesian $(\cX_{\liset},\Lambda)$-modules. Let 
$p_\alpha\colon X\times_\cX X\to X$ ($\alpha=1,2$) be the two projections. We 
know that for $\sF\in\Mod_\cart (\cX_{\liset},\Lambda)$, there is a natural 
isomorphism $\sigma\colon p_1^*u^*\sF\xrightarrow{\sim}p_2^*u^*\sF$ 
satisfying a cocycle condition. Conversely, an object 
$\sG\in\Mod(X_{\et},\Lambda)$ such that there exists an isomorphism 
$\sigma\colon p_1^*\sG\xrightarrow{\sim}p_2^*\sG$ satisfying the same cocycle 
condition is isomorphic to $u^*\sF$ for some 
$\sF\in\Mod_\cart(\cX_{\liset},\Lambda)$. More formally, 
$\Mod_\cart(\cX_{\liset},\Lambda)$ is the (2-)limit of the following diagram 
\[
\xymatrix{
   \Mod(X_{\et},\Lambda) \ar@<.5ex>[r]^-{p_1^*}\ar@<-.5ex>[r]_-{p_2^*}
   & \Mod((X\times_\cX X)_{\et},\Lambda) \ar[r]\ar@<1ex>[r]\ar@<-1ex>[r]
   & \Mod((X\times_\cX X\times_\cX X)_{\et},\Lambda)   }
\]
in the $2$-category of Abelian categories. Therefore, to study 
$\Mod_\cart(\cX_{\liset},\Lambda)$, we only need to study 
$\Mod(X_{\et},\Lambda)$ for (all) algebraic spaces $X$ in a ``2-coherent 
way'', that is, we need to track down all the information of natural 
isomorphisms (2-cells). Such 2-coherence is not more complicated than the one 
in Grothendieck's theory of descent \cite{TDTEI}. 

One may want to apply the same idea to derived categories. The problem is 
that the descent property mentioned previously, in its na\"{\i}ve sense, does 
not hold anymore, since otherwise the classifying stack $\rB\dG_{\r{m}}$ over 
an algebraically closed field would have finite cohomological dimension, 
which is false. In fact, when forming derived categories, we throw away too 
much information on the coherence of homotopy equivalences or 
quasi-isomorphisms, which causes the failure of such descent. A descent 
theory in a weaker sense, known as cohomological descent 
\cite{SGA4}*{Expos\'{e} vbis} and due to Deligne, does exist partially on the 
level of objects. It is one of the main techniques used in Olsson 
\cite{Olsson} and Laszlo--Olsson \cite{LO1} for the definition of the six 
operations on Artin stacks in certain cases. However, it has the following 
restrictions. First, Deligne's cohomological descent is valid only for 
complexes bounded from below. Although a theory of cohomological descent for 
unbounded complexes was developed in \cite{LO1}, it comes at the price of 
imposing further finiteness conditions and restricting to constructible 
complexes when defining the remaining operators. Second, relevant spectral 
sequences suggest that cohomological descent cannot be used directly to 
define $!$-pushforward. 

A more natural solution can be reached once the derived categories are 
``enhanced''. Roughly speaking (see Proposition \ref{b5pr:descent_lisse} for 
the precise statement), writing
\[
X_n=X\times_\cX\dots\times_\cX X \text{ ($(n+1)$-fold),}
\]
we define $\cD(\cX,\Lambda)$ to be the limit of following cosimplicial 
diagram 
\[
\xymatrix{
   \cD(X_{0,\et},\Lambda) \ar@<.5ex>[r]^-{p_1^*}\ar@<-.5ex>[r]_-{p_2^*}
   & \cD(X_{1,\et},\Lambda) \ar[r]\ar@<1ex>[r]\ar@<-1ex>[r]
   & \cD(X_{2,\et},\Lambda) \ar@<1.5ex>[r]\ar@<0.5ex>[r]\ar@<-0.5ex>[r]\ar@<-1.5ex>[r] &\dotsb }
\]
in a suitable $\infty$-category of \emph{presentable stable 
$\infty$-categories}. This is completely parallel to the descent property for 
module categories. Here $\cD(X_{n,\et},\Lambda)$ is the derived 
$\infty$-category of the Grothendieck Abelian category 
$\Mod(X_{n,\et},\Lambda)$. It is a presentable stable $\infty$-category that 
enhances $\rD(X_{n,\et},\Lambda)$. We then define $\rD(\cX,\Lambda)$ to be 
the homotopy category of $\cD(\cX,\Lambda)$. Strictly speaking, the previous 
diagram is incomplete in the sense that we do not mark all the higher cells 
in the diagram, that is, all natural equivalences of functors, ``equivalences 
between natural equivalences'', etc. In fact, there is an infinite hierarchy 
of (homotopy) equivalences hidden behind the limit of the previous diagram, 
not just the 2-level hierarchy in the classical case. To deal with such kind 
of ``homotopy coherence'' is the major difficulty of the work, that is, we 
need to find a way to encode all such hierarchy \emph{simultaneously} in 
order to make the idea of descent work. In other words, we need to work in 
the {\em totality} of all $\infty$-categories of concern. 

It is possible that such a descent theory (and other relevant 
higher-categorical techniques introduced below) can be realized by using 
other models for higher categories. We have chosen the theory developed by 
Lurie in \cite{HTT}, \cite{HA} for its elegance and availability. Precisely, 
we will use the techniques of the (marked) straightening/unstraightening 
construction, Adjoint Functor Theorem, and the $\infty$-categorical 
Barr--Beck Theorem. Based on Lurie's theory, we develop further 
$\infty$-categorical techniques to treat the homotopy-coherence problem 
mentioned as above. These techniques would enable us to, for example, 
\begin{itemize}
  \item find a coherent way to decompose morphisms (\Sec\ref{a4ss}); 

  \item gluing data from Cartesian diagrams to general ones 
      (\Sec\ref{a5ss}); 

  \item take partial adjoints along given directions 
      (\Sec\ref{b1ss:partial_adjoints}); 

  \item make a coherent choice of descent data 
      (\Sec\ref{b4ss:construction}). 
\end{itemize}
In \Sec\ref{0ss:enhance}, we will have a chance to explain some of them. 

We would also like to remark that Lurie's theory has already been used, for 
example, in \cite{BFN} to study quasi-coherent sheaves on certain (derived) 
stacks with many applications. This work, which studies lisse-\'{e}tale 
sheaves, is another manifestation of the power of Lurie's theory.

\subsection{Results for adic coefficients}
\label{0ss:results_adic}

In this section, we discuss the adic formalism and adic analogues of results 
in \Sec\ref{0ss:results_torsion}. This extends many previous theories on the 
subject, including SGA~5 \cite{SGA5}, Deligne \cite{WeilII}, Ekedahl 
\cite{Ekedahl} (for schemes), Behrend \cite{Behrend} and Laszlo--Olsson 
\cite{LO2}. We prove, among other things, the base change theorem in derived 
categories, which was previous known only on the level of sheaves \cite{LO2} 
(and under other restrictions). Another limitation of the existing theories, 
including those for schemes, is the constructibility assumption. This 
assumption is not often met, for example, when considering morphisms between 
Artin stacks that are only locally of finite type. By contrast, the adic 
formalism developed in this article applies to unrestricted derived 
categories. 

As in \Sec\ref{0ss:results_torsion}, we will state our constructions and 
results only in the classical setting of Artin stacks on the level of usual 
derived categories (which are homotopy categories of the derived 
$\infty$-categories), among other simplifications. See Chapter \ref{c1} for 
the complete results for higher Artin stack higher (and higher 
Deligne--Mumford stacks), stated on the level of stable $\infty$-categories. 

Let $\cX$ be an Artin stack and let $\lambda=(\Xi,\Lambda)$ be a ringed 
diagram, that is, a functor $\Lambda$ from the opposite of a partially 
ordered set $\Xi$ to the category of unital commutative rings. A typical 
example is the projective system 
\[
\dots\to\dZ/\ell^{n+1}\dZ\to\dZ/\ell^n\dZ\to\dots\to\dZ/\ell\dZ,
\]
where $\ell$ is a fixed prime number and the transition maps are natural 
projections. Recall that for every $\xi\in\Xi$, $\rD(\cX,\Lambda(\xi))$ has a 
natural $\infty$-categorical enhancement $\cD(\cX,\Lambda(\xi))$. In fact, 
there is a functor $\rN(\Xi)^{op}\to\Cat$ from the nerve of $\Xi^{op}$ to the 
$\infty$-category of $\infty$-categories sending $\xi$ to 
$\cD(\cX,\Lambda(\xi))$, with the transition functors being (derived) 
extension of scalars. We define 
\[
\cD(\cX,\lambda)_\ra\coloneqq\varprojlim_{\rN(\Xi)^{op}}\cD(\cX,\Lambda(\xi))
\]
and let $\rD(\cX,\lambda)_\ra$ be its homotopy category. It is crucial that 
the limit be taken on the level of $\infty$-categories. 

Let $f\colon\cY\to\cX$ be a morphism of Artin stacks. We then define operations:
\begin{align*}
f^{*\ra}&\colon\rD(\cX,\lambda)_\ra\to\rD(\cY,\lambda)_\ra,\\
f_{*\ra}&\colon\rD(\cY,\lambda)_\ra\to\rD(\cX,\lambda)_\ra,\\
-\atimes_\cX-&\colon\rD(\cX,\lambda)_\ra\times\rD(\cX,\lambda)_\ra\to\rD(\cX,\lambda)_\ra,\\
\HOM^\ra_\cX&\colon\rD(\cX,\lambda)_\ra^{op}\times\rD(\cX,\lambda)_\ra\to\rD(\cX,\lambda)_\ra.
\end{align*}
The pairs $(f^{*\ra},f_{*\ra})$ and $(-\atimes_\cX\sfK,\HOM^\ra_\cX(\sfK,-))$ for every $\sfK\in\rD(\cX,\lambda)_\ra$ are pairs of adjoint functors.

To state the other two operations, we fix a nonempty set $\Box$ of rational 
primes. If $\cX$ and $\cY$ are $\Box$-coprime, $f\colon \cY\to \cX$ is 
locally of finite type, and $\lambda$ is a $\Box$-torsion ringed diagram, 
then there is another pair of adjoint functors: 
\begin{align*}
f_{!\ra}&\colon\rD(\cY,\lambda)_\ra\to\rD(\cX,\lambda)_\ra,\\
f^{!\ra}&\colon\rD(\cX,\lambda)_\ra\to\rD(\cY,\lambda)_\ra.
\end{align*}
Among these functors, $f^{*\ra}$, $f_{!\ra}$ and $-\atimes_\cX-$ are naturally defined from the limit construction of $\rD(-,\lambda)_\ra$. These six operations satisfy the similar properties as in the non-adic version as stated in \Sec\ref{0ss:results_torsion}.

We show that $\rD(\cX,\lambda)_\ra$ is canonically equivalent to the full 
subcategory of $\rD(\cX,\lambda)$ spanned by so-called \emph{adic complexes}, 
which admits a \emph{colocalization functor} 
$\fR_\cX\colon\rD(\cX,\lambda)\to\rD(\cX,\lambda)_\ra$. Moreover, $f^{*\ra}$, 
$f_{!*}$ and $-\atimes_\cX-$ are simply restrictions of $f^*$, $f_!$ and 
$-\otimes_\cX-$, respectively, as they preserve adic complexes. For the other 
three, we have $f_{*\ra}=\fR_\cX\circ f_*$, $f^{!\ra}=\fR_\cY\circ f^!$ and 
$\HOM^\ra_\cX=\fR_\cX\circ\HOM_\cX$.  We refer the reader to 
\Sec\ref{c1ss:adic_properties} and \Sec\ref{c1ss:relation} for more details. 

The adic formalism introduced above does \emph{not} assume the 
constructibility at the first place. In other words, we are free to talk 
about adic complexes for any sheaves. In particular, in terms of 
Grothendieck's fonctions-faisceaux dictionary, we make sense of divergent 
integrals on stacks over finite fields. Those appear for example in 
\cite{FN}. 

In \Sec\ref{c2ss:madic}, we study a special setup, the $\fm$-adic formalism. 
Let $\Lambda$ be a ring and $\fm\subseteq \Lambda$ a principal ideal 
generated by a nonzerodivisor. The pair $(\Lambda,\fm)$ gives rise to a 
ringed diagram $\Lambda_{\bullet}$ with the underlying category 
$\dN=\{0\to1\to2\to\cdots\}$ and $\Lambda_n=\Lambda/\fm^{n+1}$. This setup is 
sufficient for most applications. The $\fm$-adic formalism enjoys very nice 
properties. For example, the adic complexes in this case are stable under the 
six operations.  In \Sec\ref{c2ss:compatibility}, we show that our theory of 
constructible adic formalism coincides with Laszlo--Olsson \cite{LO3} under 
their assumptions.

\subsection{What do we need to enhance?}\label{0ss:enhance}

In Section \ref{0ss:why}, we mentioned the enhancement $\cD(\cX,\Lambda)$ of 
a single triangulated category $\rD(X,\Lambda)$, namely, a stable 
$\infty$-category whose homotopy category (which is an ordinary category) is 
naturally equivalent to $\rD(\cX,\Lambda)$. The enhancement of operations is 
understood in the similar way. For example, the enhancement of $*$-pullback 
for $f\colon\cY\to\cX$ is an exact functor 
\begin{align}\label{0eq:upperstar}
f^*\colon  \cD(\cX,\Lambda)\to\cD(\cY,\Lambda)
\end{align}
such that the induced functor
\[
\rh f^*\colon\rD(\cX,\Lambda)\to\rD(\cY,\Lambda)
\]
is the $*$-pullback functor of usual derived categories.

However, such enhancement is not enough for us to do descent. The reason is that we need to put all schemes and then algebraic spaces together. Let us denote by $\Schqcs$ the category of coproducts of quasi-compact and separated schemes. The enhancement of $*$-pullback for schemes in the strong sense is a functor:
\begin{align}\label{0eq:upperstar_operation}
\EO{\Lambda}{\Schqcs}{*}{}\colon \rN(\Schqcs)^{op}\to\PSL
\end{align}
where $\rN$ denotes the nerve functor (see the definition following \cite{HTT}*{Definition 1.1.2.1}) and $\PSL$ is a certain $\infty$-category of presentable stable $\infty$-categories, which will be specified later. Then \eqref{0eq:upperstar} is just the image of the edge $f\colon \cY\to\cX$ if $f$ belongs to $\Schqcs$. The construction of \eqref{0eq:upperstar_operation} (and its right adjoint which is the enhancement of $*$-pushforward) is not hard, with the help of the general construction in \cite{HA}. The difficulty arises in the enhancement of $!$-pushforward. Namely, we need to construct a functor:
\[
\EO{\Lambda}{\Schqcs}{}{!}\colon \rN(\Schqcs)_F\to\PSL,
\]
where $\rN(\Schqcs)_F$ is the subcategory of $\rN(\Schqcs)$ only allowing morphisms that are locally of finite type. The basic idea is similar to the classical approach: using Nagata compactification theorem. The problem is the following: for a morphism $f\colon Y\to X$ in $\Schqcs$, locally of finite type, we need to choose (non-canonically!) a relative compactification
\begin{align*}
\xymatrix{
Y \ar[d]_-{f} \ar[r]^-{i}
& \overline{Y} \ar[d]^-{\overline{f}}  \\
X & \coprod_I X \ar[l]_-{p},
}
\end{align*}
where $i$ is an open immersion and $\overline{f}$ is proper, and define 
$f_!=p_!\circ \overline{f}_*\circ i_!$ (in the derived sense). It turns out 
that the resulting functor of usual derived categories is independent of the 
choice, up to natural isomorphism. First, we need to upgrade such natural 
isomorphisms to natural equivalences between $\infty$-categories. Second and 
more importantly, we need to ``remember'' such natural equivalences for all 
different compactifications, and even ``equivalences among natural 
equivalences''. We immediately find ourselves in the same scenario of an 
infinite hierarchy of homotopy equivalences again. To handle this kind of 
homotopy coherence, we develop a technique called \emph{multisimplicial 
descent} in \Sec\ref{a4ss}, which can be viewed as an $\infty$-categorical 
generalization of \cite{SGA4}*{Expos\'{e} xvii {\Sec}3.3}. 

This is not the end of the story since our goal is to prove all expected relations among six operations. To use the same idea of descent, we need to ``enhance'' not just operations, but also relations as well. To simplify the discussion, let us temporarily ignore the two binary operations ($\otimes$ and $\HOM$) and consider how to enhance the ``Base Change theorem'' which essentially involves $*$-pullback and $!$-pushforward. We define a simplicial set $\delta^*_{2,\{2\}}\rN(\Schqcs)^\cart_{F,\all}$ in the following way:
\begin{itemize}
  \item The vertices are objects $X$ of $\Schqcs$.

  \item The edges are \emph{Cartesian} diagrams
     \begin{align}\label{0eq:one_cell}
     \xymatrix{
       X_{01} \ar[d]_-{q} \ar[r]^-{g} & X_{00} \ar[d]^-{p} \\
       X_{11} \ar[r]^-{f} & X_{10}   }
     \end{align}
     with $p$ locally of finite type, whose source is $X_{00}$ and target is $X_{11}$.

  \item Simplices of higher dimensions are defined in a similar way.
\end{itemize}
Note that this is \emph{not} an $\infty$-category. Assuming that $\Lambda$ is 
torsion, the enhancement of the Base Change theorem (for $\Schqcs$) is a 
functor 
\begin{equation}\label{0eq:EO*!}
\EO{\Lambda}{\Schqcs}{*}{!}\colon \delta^*_{2,\{2\}}\rN(\Schqcs)^\cart_{F,\all}\to\PSL
\end{equation}
sending the edge
\begin{align*}
\xymatrix{
X_{00} \ar[d] \ar[r]^-{\id} & X_{00} \ar[d]^-{p} \\
X_{11} \ar[r]^-{\id} & X_{11}
}\qquad
\text{resp.\ }
\xymatrix{
X_{11} \ar[d]_-{\id} \ar[r] & X_{00} \ar[d]^-{\id} \\
X_{11} \ar[r]^-{f} & X_{00}
}
\end{align*}
to $p_!\colon \cD(X_{00,\et},\Lambda)\to\cD(X_{11,\et},\Lambda)$ (resp.\ 
$f^*\colon \cD(X_{00,\et},\Lambda)\to\cD(X_{11,\et},\Lambda)$). The upshot is 
that the image of the edge \eqref{0eq:one_cell} is a functor 
$\cD(X_{00,\et},\Lambda)\to\cD(X_{11,\et},\Lambda)$ which is naturally 
equivalent to both $f^*\circ p_!$ and $q_!\circ g^*$. In other words, this 
functor has already encoded the Base Change theorem (for $\Schqcs$) in a 
homotopy coherent way. This allows us to apply the descent method to 
construct the enhancement of the Base Change theorem for Artin stacks, which 
itself includes the enhancement of the four operations $f^*$, $f_*$, $f_!$ 
and $f^!$ by restriction and adjunction. To deal with the homotopy coherence 
involved in the construction of $\EO{\Lambda}{\Schqcs}{*}{!}$, we develop 
another technique called \emph{Cartesian gluing} in \Sec\ref{a5ss}, which can 
be viewed as an $\infty$-categorical variant of \cite{Zh1}*{\Sec6, \Sec7}. 

In fact, the source $\delta^*_{2,\{2\}}\rN(\Schqcs)^\cart_{F,\all}$ of the 
map $\EO{\Lambda}{\Schqcs}{*}{!}$ is categorically equivalent to the 
\emph{$(2,1)$-category of correspondences} $\rN(\Schqcs)_{\corr\colon 
F,\all}$.\footnote{See Example \ref{a4ex:corr} for a precise definition.}  An 
object of $\rN(\Schqcs)_{\corr\colon F,\all}$ is an object of $\Schqcs$. A 
morphism of $\rN(\Schqcs)_{\corr\colon F,\all}$ from $X$ to $Y$ is a 
correspondence 
\[\xymatrix{Y'\ar[r]^g\ar[d]_q & X\\
Y}
\] 
where $g$ and $q$ are morphisms in $\Schqcs$, with $q$ locally of finite 
type. The map $\EO{\Lambda}{\Schqcs}{*}{!}$ \eqref{0eq:EO*!} encoding the 
four operations and the base change theorem can be equivalently formulated as 
a functor 
\[
\EO{\Lambda}{\Schqcs}{}{\corr}\colon\rN(\Schqcs)_{\corr\colon F,\all}\to\PSL
\]
between $\infty$-categories.

We hope that the discussion so far explains the meaning of enhancement to 
some degree. The actual enhancement \eqref{b3eq:operation} constructed in the 
article is more complicated than the ones mentioned previously, since we need 
to include also the information of binary operations, the projection formula 
and extension of scalars.

\subsection{About this work}

As we mentioned at the beginning of the introduction, this article 
amalgamates and improves three preprints we initially posted on the arXiv in 
the years 2012 and 2014. 

During the preparation of this article, Gaitsgory \cite{Gai1} and 
Gaitsgory--Rozenblyum studied operations for ind-coherent sheaves on DG 
schemes and derived stacks in the framework of $\infty$-categories, which was 
later published as the book \cite{GR}. Their work bears some similarity to 
ours, but is in a different setup. In particular, their approach uses 
$(\infty,2)$-categories (see \cite{GR}*{Chapter V}), while we stay in the 
world of $(\infty,1)$-categories. We would like to point out that the 
alternative formulation of our results using the category of correspondences 
(see Example \ref{a4ex:corr} and \Sec\ref{b6ss:correspondences}) was added 
after we learned this concept, due to Lurie, from \cite{Gai1}.

More recently, Mann \cite{Man} improved and simplified our formulation of the 
six operations, while working in the context of rigid-analytic 
geometry.\footnote{His work relies on results from Chapter~\ref{ass} of our 
work.} The readers may consult Lecture II in Scholze's notes \cite{Scho} for 
a comparison of our work, \cite{GR}, and \cite{Man}. 

Since the posting of our work, the $\infty$-categorical techniques developed in this (series of) work have been extensively used to construct (enhanced) six operations in many other contexts. Here is an incomplete list of such examples:
\begin{itemize}
  \item \cite{Man} in the context of rigid-analytic geometry, which has been mentioned above,

  \item \cite{KR21} and \cite{Cho} in the context of (stable) motivic homotopy category for algebraic stacks,

  \item \cite{Park} in the context of Nisnevich sheaves for divided log spaces,

  \item \cite{GHW} in the context of \'{e}tale sheaves on diamonds and v-stacks,

  \item \cite{HP} in the context of Dirac geometry,

  \item \cite{HM} in the context of representation theory.
\end{itemize}

On the other hand, the main outcome of this work -- the enhanced six 
operations for \'{e}tale sheaves on (higher) Artin stacks -- has also been 
found necessary in many works, for example, \cite{BKV}, \cite{ALRR}, 
\cite{HL}, \cite{RS20}, etc. It is worth mentioning that the recent work 
\cite{FYZ} on derived special cycles on the moduli of shtukas uses our result 
for genuinely \emph{higher} Artin stacks.

\subsection{Structure of the article}

The article has three parts. The first part consists of Chapters \ref{ass} and \ref{b1ss}, where we focus on the categorical preparation. The second part consists of Chapters \ref{b2ss}, \ref{b4ss}, \ref{b5ss}, and \ref{b6ss}, where we develop the theory of enhanced six operations for torsion coefficients. The third part consists of Chapters \ref{c1}, \ref{c3}, and \ref{c4}, where we develop the theory of enhanced six operations for adic coefficients, introduce perverse t-structures, and prove some hyperdescent properties.

In Chapter~\ref{ass}, we develop a general technique for gluing subcategories 
of $\infty$-categories. 

In Chapter~\ref{b1ss}, we collect further preliminaries on $\infty$-categories, including the technique of taking partial adjoints (\Sec\ref{b1ss:partial_adjoints}).

In Chapter~\ref{b2ss}, we construct enhanced operation maps for ringed topoi and certain schemes. The enhanced operation maps encode even more information than the enhancement of the Base Change theorem we mentioned in \Sec\ref{0ss:enhance}. We also prove several properties of the maps that are crucial for later constructions.

In Chapter~\ref{b4ss}, we develop an abstract program which we name DESCENT. The program allows us to extend the existing theory to a larger category. It will be run recursively from schemes to algebraic spaces, then to Artin stacks, and eventually to higher Artin or Deligne--Mumford stacks.

In Chapter~\ref{b5ss}, we run the program DESCENT, and prove certain compatibility between our theory and existing ones.

In Chapter~\ref{b6ss}, we write down the resulting six operations for the most general situations and summarize their properties. We also develop a theory of constructible complexes, based on finiteness results of Deligne \cite{SGA4d}*{Th.\ finitude} and Gabber \cite{TG}*{Expos\'{e} XIII}. Finally, we show that our theory is compatible with the work of Laszlo and Olsson \cite{LO1}.

In Chapter~\ref{c1}, we develop the adic formalism for Grothendieck's six 
operations, which includes the most common application, namely, the 
$\ell$-adic one. 

In Chapter~\ref{c3}, we study perverse t-structures for stacks for both torsion and adic coefficients.

In Chapter~\ref{c4}, we study hyperdescent properties for certain operations on stacks for both torsion and adic coefficients.

For more detailed descriptions of the individual chapters, we refer to the beginning of these chapters.

We assume that the reader has some knowledge of Lurie's theory of $\infty$-categories, especially Chapters 1 through 5 of \cite{HTT}, and
Chapters 1 through 4 of \cite{HA}. In particular, we assume that the reader is familiar with basic concepts of simplicial sets \cite{HTT}*{{\Sec}A.2.7}. However, an effort has been made to provide precise references for notation, concepts, constructions, and results used in this article, (at least) at their first appearance.

\subsection{Conventions and notation}
\label{0ss:conventions}

\begin{itemize}
  \item All rings are assumed to be commutative with unity; and ring homomorphisms are assumed to preserve unity.
\end{itemize}

For \emph{set-theoretical issues}:

\begin{itemize}
  \item We fix two (Grothendieck) universes $\cU$ and $\cV$ such that $\cU$ belongs to $\cV$. The adjective \emph{small} means $\cU$-small. In particular, Grothendieck Abelian categories and presentable $\infty$-categories are relative to $\cU$. A topos means a $\cU$-topos.

  \item All rings are assumed to be $\cU$-small. We denote by $\Ring$ the category of rings in $\cU$. By the usual abuse of language, we call
      $\Ring$ the category of $\cU$-small rings.

  \item All schemes are assumed to be $\cU$-small. We denote by $\Sch$ the category of schemes belonging to $\cU$ and by $\Schaff$ the
      full subcategory consisting of affine schemes belonging to $\cU$. There is an equivalence of categories $\Spec \colon (\Ring)^{op}\to
      \Schaff$. The big fppf site on $\Schaff$ is not a $\cU$-site, so that we need to consider prestacks with values in $\cV$. More
      precisely, for $\cW=\cU$ or $\cV$, let $\cS_\cW$ \cite{HTT}*{Definition 1.2.16.1} be the $\infty$-category of spaces in $\cW$. We define the $\infty$-category of prestacks to be $\Fun(\rN(\Schaff)^{op},\cS_\cV)$ \cite{HTT}*{Notation 1.2.7.2}. However, a (higher) Artin stack is assumed to be contained in the essential image of the full subcategory $\Fun(\rN(\Schaff)^{op},\cS_\cU)$. See \Sec\ref{b5ss:artin} for more details.

      The (small) \'etale site of an algebraic scheme and the lisse-\'etale site of an Artin stack are $\cU$-sites.

  \item For every $\cV$-small set $I$, we denote by $\Sset[I]$ the category of $I$-simplicial sets in $\cV$. See also variants in \Sec\ref{a3ss}. We denote by $\Cat$ the (non $\cV$-small) $\infty$-category of $\infty$-categories in $\cV$
      \cite{HTT}*{Definition 3.0.0.1}.\footnote{In \cite{HTT}, $\Cat$ denotes the category of small $\infty$-categories. Thus, our $\Cat$ corresponds more closely to the notation $\widehat{\c{C}\r{at}}_\infty$ in \cite{HTT}*{Remark 3.0.0.5}, where the extension of universes is tacit.} (Multi)simplicial sets and $\infty$-categories are usually tacitly assumed to be $\cV$-small.
\end{itemize}

For \emph{lower categories}:

\begin{itemize}
  \item Unless otherwise specified, a category will be understood as an ordinary category. A $(2,1)$-category $\cC$ is a (strict) $2$-category in which all $2$-cells are invertible, or, equivalently, a category enriched in the category of groupoids. We regard $\cC$ as a simplicial category by taking $\rN(\Map_\cC(X,Y))$ for all objects $X$ and $Y$ of $\cC$.

  \item Let $\cC,\cD$ be two categories. We denote by $\Fun(\cC,\cD)$ the \emph{category of functors} from $\cC$ to $\cD$, whose objects are
      functors and morphisms are natural transformations.

  \item Let $\cA$ be an additive category. We denote by $\Ch(\cA)$ the category of cochain complexes of $\cA$.

  \item Recall that a \emph{partially ordered set} $P$ is an (ordinary) category such that there is at most one arrow (usual denoted as $\leq$) between each pair of objects. For every element $p\in P$, we identify the overcategory $P_{/p}$ (resp.\ undercategory $P_{p/}$) with the full partially ordered subset of $P$ consisting of elements $\leq p$ (resp.\ $\geq p$). For $p,p'\in P$, we identify $P_{p//p'}$ with the full partially ordered subset of $P$ consisting of elements both $\geq p$ and $\leq p'$, which is empty unless $p\leq p'$.

  \item Let $[n]$ be the ordered set $\{0,\dots,n\}$ for $n\geq0$, and put $[-1]=\emptyset$. Let us recall the \emph{category of combinatorial simplices} $\del$ (resp.\ $\del^{\leq n}$, $\del_+$, $\del_+^{\leq n}$). Its objects are the linearly ordered sets $[i]$ for $i\geq0$ (resp.\ $0\leq i\leq n$, $i\geq-1$, $-1\leq i\leq n$) and its morphisms are given by (nonstrictly) order-preserving maps. In particular, for every $n\geq 0$ and $0\leq k\leq n$, there is the face map $d^n_k\colon [n-1]\to[n]$ that is the unique injective map with $k$ not in the image; and the degeneration map $s^n_k\colon[n+1]\to [n]$ that is the unique surjective map such that $s^n_k(k+1)=s^n_k(k)$.
\end{itemize}

For \emph{higher categories}:

\begin{itemize}
  \item As we have mentioned, the word \emph{$\infty$-category} refers to the one defined in \cite{HTT}*{Definition 1.1.2.4}. Throughout the article, an effort has been made to keep our notation consistent with those in \cite{HTT} and \cite{HA}.

  \item For $\cC$ a category, a $(2,1)$-category, a simplicial category, or an $\infty$-category, we denote by $\id_\cC$ the identity functor of $\cC$. We denote by $\rN(\cC)$  the (simplicial) nerve of a (simplicial) category $\cC$ \cite{HTT}*{Definition 1.1.5.5}. We identify $\Ar(\cC)$ (the set of arrows of $\cC$) with $\rN(\cC)_1$ (the set of edges of $\rN(\cC)$) if $\cC$ is a category. Usually, we will not distinguish between $\rN(\cC^{op})$ and $\rN(\cC)^{op}$ for $\cC$ a category, a $(2,1)$-category or a simplicial category.

  \item We denote the homotopy category \cite{HTT}*{Definition 1.1.3.2, Proposition 1.2.3.1} of an $\infty$-category $\cC$ by $\rh\cC$ and we view it as an ordinary category. In other words, we ignore the $\cH$-enrichment of $\rh\cC$.

  \item Let $\cC$ be an $\infty$-category, and $c^\bullet\colon\rN(\del)\to\cC$ (resp.\ $c_\bullet\colon\rN(\del)^{op}\to\cC$) a cosimplicial (resp. simplicial) object of $\cC$. Then the limit \cite{HTT}*{Definition 1.2.13.4} $\lim(c^\bullet)$ (resp.\ colimit or geometric realization $\colim(c_\bullet)$), if it exists, is denoted by $\lim_{n\in\del}c^n$ (resp.\ $\colim_{n\in\del^{op}}c_n$). It is viewed as an object (up to equivalences parameterized by a contractible Kan complex) of $\cC$.

  \item Let $\cC$ be an ($\infty$-)category, and $\cC'\subseteq\cC$ a full subcategory. We say that a morphism $f\colon y\to x$ in $\cC$ is {\em representable in $\cC'$} if for every Cartesian diagram \cite{HTT}*{{\Sec}4.4.2}
      \[
      \xymatrix{
      w \ar[d] \ar[r] & z \ar[d] \\
      y \ar[r]^-{f} & x   }
      \]
      such that $z$ is an object of $\cC'$, $w$ is equivalent to an object of $\cC'$.

  \item We refer the reader to the beginning of \cite{HTT}*{{\Sec}2.3.3} for the terminology \emph{homotopic relative to $A$ over $S$}. We say that $f$ and $f'$ are \emph{homotopic over $S$} (resp.\ \emph{homotopic relative to $A$}) if $A=\emptyset$ (resp.\ $S=*$).

  \item Recall that $\Cat$ is the $\infty$-category of $\cV$-small $\infty$-categories. In \cite{HTT}*{Definition 5.5.3.1}, the subcategories $\PRL,\PRR\subseteq\Cat$ are defined.\footnote{Under our convention, the objects of $\PRL$ and $\PRR$ are the $\cU$-presentable
      $\infty$-categories in $\cV$.} We define subcategories $\PSL,\PSR\subseteq\Cat$ as follows:
      \begin{itemize}
        \item The objects of both $\PSL$ and $\PSR$ are the $\cU$-presentable stable $\infty$-categories in $\cV$ \cite{HTT}*{Definition 5.5.0.1}, \cite{HA}*{Definition 1.1.1.9}.

        \item A functor $F\colon \cC\to\cD$ of presentable stable $\infty$-categories is a morphism of $\PSL$ if and only if $F$ preserves small colimits, or, equivalently, $F$ is a left adjoint functor \cite{HTT}*{Definition 5.2.2.1, Corollary 5.5.2.9(1)}.

        \item A functor $G\colon \cC\to\cD$ of presentable stable $\infty$-categories is a morphism of $\PSR$ if and only if $G$ is accessible and preserves small limits, or, equivalently, $G$ is a right adjoint functor \cite{HTT}*{Corollary 5.5.2.9(2)}.
      \end{itemize}
      We adopt the notation of \cite{HTT}*{Definition 5.2.6.1}: for $\infty$-categories $\cC$ and $\cD$, we denote by $\FunL(\cC,\cD)$ (resp.\ $\FunR(\cC,\cD)$) the full subcategory of $\Fun(\cC,\cD)$ \cite{HTT}*{Notation 1.2.7.2} spanned by left (resp.\ right) adjoint functors. Small limits exist in $\Cat$, $\PRL$, $\PRR$, $\PSL$ and $\PSR$. Such limits are preserved by the natural inclusions $\PSL\subseteq\PRL\subseteq\Cat$ and $\PSR\subseteq\PRR\subseteq\Cat$ by \cite{HTT}*{Proposition 5.5.3.13, Theorem 5.5.3.18} and \cite{HA}*{Theorem 1.1.4.4}.

  \item For a simplicial model category $\bA$, we denote by $\bA^\circ$ the subcategory spanned by fibrant-cofibrant objects.

  \item For the simplicial model category $\Mset$ of marked simplicial sets in $\cV$ \cite{HTT}*{Notation 3.1.0.2} with respect to the Cartesian model structure \cite{HTT}*{Proposition 3.1.3.7, Corollary 3.1.4.4}, we fix a \emph{fibrant replacement simplicial functor}
      \[
      \Fibr\colon\Mset\to(\Mset)^\circ
      \]
      via the Small Object Argument \cite{HTT}*{Proposition A.1.2.5, Remark A.1.2.6}. By construction, it commutes with finite products. If $\cC$ is a $\cV$-small simplicial category \cite{HTT}*{Definition 1.1.4.1}, we let $\Fibr^\cC\colon(\Mset)^\cC\to((\Mset)^\circ)^\cC\subseteq(\Mset)^\cC$ be the induced fibrant replacement simplicial functor with respect to the projective model structure \cite{HTT}*{Remark A.3.3.1}.
\end{itemize}

\subsubsection*{Acknowledgments}

We thank Ofer~Gabber, Rein~Groesbeek, David Hansen, Luc~Illusie, 
Aise~Johan~de~Jong, Jo\"el~Riou, Will~Sawin, Shenghao~Sun, Xiangdong Wu, and 
Xinwen~Zhu for helpful conversations and useful comments. Part of this work 
was done during visits of the first author to the Morningside Center of 
Mathematics, Chinese Academy of Sciences, in Beijing for several times. He 
thanks the Center for its hospitality. Y.~L. was partially supported by (US) 
NSF grants DMS--1302000, DMS--1702019, and a Sloan Research Fellowship. W. Z. 
was partially supported by National Natural Science Foundation of China 
Grants 12125107, 11321101, 11621061, 11688101; China's Recruitment Program of 
Global Experts; Chinese Academy of Sciences Project for Young Scientists in 
Basic Research Grant YSBR-033; National Center for Mathematics and 
Interdisciplinary Sciences and Hua Loo-Keng Key Laboratory of Mathematics, 
Chinese Academy of Sciences.

\section{Gluing restricted nerves of $\infty$-categories}
\label{ass}

The extraordinary pushforward, one of Grothendieck's six operations, in 
\'etale cohomology of schemes was constructed in \cite{SGA4}*{Expos\'{e} 
xvii}. Let $\Sch'$ be the category of quasi-compact and quasi-separated 
schemes, with morphisms being separated of finite type, and let $\Lambda$ be 
a fixed torsion ring. For a morphism $f\colon Y\to X$ in $\Sch'$, the 
extraordinary pushforward by $f$ is a functor 
\begin{align*}
f_!\colon \rD(Y,\Lambda)\to\rD(X,\Lambda),
\end{align*}
between unbounded derived categories of $\Lambda$-modules in the \'{e}tale 
topoi. The functoriality of this operation is encoded by a pseudofunctor 
\begin{align*}
F\colon \Sch'\to\cat
\end{align*}
sending a scheme $X$ in $\Sch'$ to $\rD(X,\Lambda)$ and a morphism $f\colon 
Y\to X$ in $\Sch'$ to the functor $f_!$. Here $\cat$ denotes the 
$(2,1)$-category of categories.\footnote{A $(2,1)$-category is a $2$-category 
in which all $2$-cells are invertible.} There are obvious candidates for the 
restrictions $F_P$ and $F_J$ of $F$ to the subcategories $\Sch'_P$ and 
$\Sch'_J$ of $\Sch'$ spanned respectively by proper morphisms and open 
immersions. The construction of $F$ thus amounts to gluing the two 
pseudofunctors. For this, Deligne developed a general theory for gluing two 
pseudofunctors of target $\cat$ \cite{SGA4}*{Expos\'{e} xvii, {\Sec}3}. 
Deligne's gluing theory, together with its variants 
(\cite{Ayoub}*{{\Sec}1.3}, \cite{Zh1}), have found several other applications 
(\cite{Ayoub}, \cite{CD} and \cite{Zh2}). 

In this chapter, we study the problem of gluing in higher categories. The 
technique developed here can be used to construct Grothendieck's six 
operations in different contexts (see, for example, \cite{Robalo}). In later 
chapters, we use the gluing technique to construct higher categorical six 
operations in \'etale cohomology of higher Artin stacks and prove the base 
change theorem. Even for $1$-Artin stacks and ordinary six operations, this 
theorem was previously only established on the level of sheaves (and subject 
to other restrictions) (\cite{LO1} and \cite{LO2}). Our construction of the 
six operations makes essential use of higher categorical descent, so that 
even if one is only interested in the six operations and base change in 
ordinary derived categories, the enhanced version is still an indispensable 
step of the construction. As a starting point for the descent procedure, we 
need an enhancement of the pseudofunctor $F$ above. In the language of 
$\infty$-categories developed in \cite{HTT}, such an enhancement is a functor 
\begin{align*}
F^{\infty}\colon \rN(\Sch')\to\Cat
\end{align*}
between $\infty$-categories, where $\rN(\Sch')$ is the nerve of $\Sch'$ and 
$\Cat$ denotes the $\infty$-category of $\infty$-categories. For every scheme 
$X$ in $\Sch'$, $F^\infty(X)$ is an $\infty$-category $\cD(X,\Lambda)$, whose 
homotopy category is equivalent to $\rD(X,\Lambda)$. For every morphism 
$f\colon Y\to X$ in $\Sch'$, the image $F^{\infty}(f)$ is a functor 
\[
f_!^{\infty}\colon \cD(Y,\Lambda)\to\cD(X,\Lambda)
\]
such that the induced functor $\rh f_!^{\infty}$ between homotopy categories 
is equivalent to the classical $f_!$. 

One major difficulty of the construction of $F^\infty$ is the need to keep 
track of coherence of all levels. By Nagata compactification \cite{Conrad}, 
every morphism $f$ in $\Sch'$ can be factorized as $p\circ j$, where $j$ is 
an open immersion and $p$ is proper. One can then define $F(f)$ as 
$F_P(p)\circ F_J(j)$. The issue is that such a factorization is not 
canonical, so that one needs to include coherence with composition as part of 
the data. Since the target of $F$ is a $(2,1)$-category, in Deligne's theory 
coherence up to the level of $2$-cells suffices. The target of $F^{\infty}$ 
being an $\infty$-category, we need to consider coherence of \emph{all} 
levels. 

Another complication is the need to deal with more than two subcategories. 
This need is already apparent in \cite{Zh2}. We will give another 
illustration in the proof of Corollary \ref{0co:scheme2} below. 

To handle these complications, we propose the following general framework. 
Let $\cC$ be an (ordinary) category and let $k\geq 2$ be an integer. Let 
$\cE_1,\dots,\cE_k\subseteq\Ar(\cC)$ be $k$ sets of arrows of $\cC$, each 
containing every identity morphism in $\cC$. In addition to the nerve 
$\rN(\cC)$ of $\cC$, we define another simplicial set, which we denote by 
$\delta^*_k\rN(\cC)^\cart_{\cE_1,\dots,\cE_k}$. Its $n$-simplices are 
functors $[n]^k\to\cC$ such that the image of a morphism in the $i$-th 
direction is in $\cE_i$ for $1\leq i\leq k$, and the image of every square in 
direction $(i,j)$ is a Cartesian square (also called pullback square) for 
$1\le i<j\le k$. For example, when $k=2$, the $n$-simplices of 
$\delta^*_2\rN(\cC)^\cart_{\cE_1,\cE_2}$ correspond to diagrams 
\begin{align}\label{0eq:simplex}
\xymatrix{
c_{00} \ar[r]\ar[d] & c_{01} \ar[r]\ar[d] & \dotsb \ar[r] & c_{0n}\ar[d] \\
c_{10} \ar[r]\ar[d] & c_{11} \ar[r]\ar[d] & \dotsb \ar[r] & c_{1n}\ar[d] \\
\vdots \ar[d]       & \vdots \ar[d]       &               & \vdots\ar[d] \\
c_{n0} \ar[r]       & c_{n1} \ar[r]       & \dotsb \ar[r] & c_{nn}
}
\end{align}
where vertical (resp.\ horizontal) arrows are in $\cE_1$ (resp.\ $\cE_2$) and 
all squares are Cartesian. The face and degeneracy maps are defined in the 
obvious way. Note that $\delta^*_k\rN(\cC)^\cart_{\cE_1,\dots,\cE_k}$ is 
\emph{seldom} an $\infty$-category. It is the simplicial set associated to a 
$k$-simplicial set $\rN(\cC)^\cart_{\cE_1,\dots,\cE_k}$. The latter is a 
special case of what we call the \emph{restricted multisimplicial nerve} of 
an ($\infty$-)category with extra data (Definition 
\ref{a3de:restricted_nerve}). 

Let $\cE_0\subseteq \Ar(\cC)$ be a set of arrows stable under composition and 
containing $\cE_1$ and $\cE_2$. Then there is a natural map 
\begin{equation}\label{0eq:equivalence}
g\colon \delta^*_k\rN(\cC)^\cart_{\cE_1,\cE_2,\cE_3,\dots,\cE_k}\to\delta^*_{k-1}\rN(\cC)^\cart_{\cE_0,\cE_3,\dots,\cE_k}
\end{equation}
of simplicial sets, sending an $n$-simplex of the source corresponding to a 
functor $[n]^k\to\cC$, to its partial diagonal 
\[
[n]^{k-1}=[n]\times[n]^{k-2}\xrightarrow{\r{diag}\times\r{id}_{[n]^{k-2}}}[n]^k=[n]^2\times[n]^{k-2}\to\cC,
\]
which is an $n$-simplex of the target. 

We say that a subset $\cE\subseteq\Ar(\cC)$ is \emph{admissible} (Definition 
\ref{a3de:admissible_edge}) if $\cE$ contains every identity morphism, $\cE$ 
is stable under pullback, and for every pair of composable morphisms 
$p\in\cE$ and $q$ in $\cC$, $p\circ q$ is in $\cE$ if and only if $q\in\cE$. 
One main result of this chapter is the following. 

\begin{theorem}[Special case of Theorem \ref{a5th:cartesian_descent}]\label{0th:main}
Let $\cC$ be a category admitting pullbacks and let 
$\cE_0,\cE_1,\dots,\cE_k\subseteq \Ar(\cC)$, $k\ge 2$, be sets of morphisms 
containing every identity morphism and satisfying the following conditions: 
\begin{enumerate}
  \item $\cE_1,\cE_2\subseteq \cE_0$; $\cE_0$ is stable under composition 
      and $\cE_1,\cE_2$ are admissible. 

  \item For every morphism $f$ in $\cE_0$, there exist $p\in\cE_1$ and 
      $q\in\cE_2$ such that $f=p\circ q$. 

  \item For every $3\le i \le k$, $\cE_i$ is stable under pullback by 
      $\cE_1$. 
\end{enumerate}
Then the natural map \eqref{0eq:equivalence} 
\[
g\colon\delta^*_k\rN(\cC)^\cart_{\cE_1,\cE_2,\cE_3,\dots,\cE_k}\to\delta^*_{k-1}\rN(\cC)^\cart_{\cE_0,\cE_3,\dots,\cE_k}
\]
is a \emph{categorical equivalence} (Definition 
\ref{a1de:categorical_equivalence}). 
\end{theorem}

Taking $k=2$ and $\cE_0=\Ar(\cC)$ we obtain the following. 

\begin{corollary}\label{0co:main}
Let $\cC$ be a category admitting pullbacks. Let 
$\cE_1,\cE_2\subseteq\Ar(\cC)$ be admissible subsets. Assume that for every 
morphism $f$ of $\cC$, there exist $p\in\cE_1$ and $q\in\cE_2$ such that 
$f=p\circ q$. Then the natural map 
\[
g\colon \delta^*_2\rN(\cC)^\cart_{\cE_1,\cE_2}\to \rN(\cC)
\]
is a categorical equivalence. 
\end{corollary}

In the situation of Corollary \ref{0co:main}, for every $\infty$-category 
$\cD$, the functor 
\[
\Fun(\rN(\cC),\cD)\to \Fun(\delta^*_2\rN(\cC)^\cart_{\cE_1,\cE_2},\cD)
\]
is an equivalence of $\infty$-categories. We remark that such equivalences 
can be used to construct functors in many different contexts. For instance, 
we can take $\cD$ to be $\rN(\cat)$,\footnote{Here $\rN(\cat)$ denotes the 
simplicial nerve \cite{HTT}*{Definition 1.1.5.5} of $\cat$, the latter 
regarded as a simplicial category.} $\Cat$, or the $\infty$-category of 
differential graded categories. 

In the above discussion, we may replace $\rN(\cC)$ by an $\infty$-category 
$\cC$ (not necessarily the nerve of an ordinary category), and define the 
simplicial set $\delta_k^*\cC_{\cE_1,\dots,\cE_k}^\cart$. Moreover, in later 
application, we need to encode information such as the Base Change 
isomorphism, which involves both pullback and (extraordinary) pushforward. To 
this end, we will define in \Sec\ref{a3ss}, for every subset $L\subseteq 
\{1,\dots,k\}$, a variant $\delta_{k,L}^*\cC^\cart_{\cE_1,\dots,\cE_k}$ of 
$\delta_k^*\cC_{\cE_1,\dots,\cE_k}^\cart$ by ``taking the opposite'' in the 
directions in $L$. For $L\subseteq \{3,\dots, k\}$, the theorem remains valid 
modulo slight modifications. We refer the reader to Theorem 
\ref{a5th:cartesian_descent} for a precise statement. Let us mention in 
passing that there exists a canonical categorical equivalence from the 
simplicial set $\delta^*_{2,\{2\}}\cC^\cart_{\cE_1,\cE_2}$ to the 
$\infty$-category of correspondences introduced in \cite{Gai1}; see Example 
\ref{a4ex:corr}. 

Next we turn to applications to categories of schemes. 

\begin{corollary}\label{0co:scheme1}
Let $P\subseteq \Ar(\Sch')$ be the subset of proper morphisms and let 
$J\subseteq \Ar(\Sch')$ be the subset of open immersions. Then the natural 
map 
\[
\delta_2^*\rN(\Sch')^\cart_{P,J}\to\rN(\Sch')
\]
is a categorical equivalence. 
\end{corollary}

\begin{proof}
This follows immediately from Corollary \ref{0co:main} applied to 
$\cC=\Sch'$, $\cE_1=P$, $\cE_2=J$. 
\end{proof}

As many important moduli stacks are not quasi-compact, later we will work 
with Artin stacks that are not necessarily quasi-compact. Accordingly, we 
need the following variant of Corollary \ref{0co:scheme1}. 

\begin{corollary}\label{0co:scheme2}
Let $\Sch''$ be the category of disjoint unions of quasi-compact and 
quasi-separated schemes, with morphisms being separated and \emph{locally} of 
finite type. Let $F=\Ar(\Sch'')$ be the set of morphisms of $\Sch''$. Let 
$P\subseteq F$ be the subset of proper morphisms, and let $I\subseteq F$ be 
the subset of local isomorphisms \cite{EGAIn}*{D\'efinition 4.4.2}. Then the 
natural map 
\[
\delta_2^*\rN(\Sch'')^\cart_{P,I}\to\rN(\Sch'')
\]
is a categorical equivalence. 
\end{corollary}

Corollary \ref{0co:scheme2} still holds if one replaces $I$ by the subset 
$E\subseteq F$ of \'{e}tale morphisms. 

One might be tempted to apply Corollary \ref{0co:main} by taking $\cE_1=P$, 
$\cE_2=I$. However, the assumption of Corollary \ref{0co:main} does not hold. 
For example, we may take $f$ to be the structural morphism of the disjoint 
union of varieties of unbounded dimensions over a field. 

\begin{proof}[Proof of Corollary \ref{0co:scheme2}]
Put $\cC=\Sch''$. We introduce the following auxiliary sets of morphisms. Let 
$F_{\r{ft}}\subseteq F$ be the set of separated morphisms of finite type, and 
let $I_{\r{ft}}=I\cap F_{\r{ft}}$. Consider the following commutative diagram 
\[
\xymatrix{
\delta_3^*\rN(\cC)^\cart_{P,I_{\r{ft}},I} \ar[r] \ar[d] &  \delta_2^*\rN(\cC)^\cart_{F_{\r{ft}},I} \ar[d] \\
\delta_2^*\rN(\cC)^\cart_{P,I} \ar[r]  & \rN(\cC),
}
\]
where the upper arrow is induced by ``composing morphisms in $P$ and 
$I_{\r{ft}}$'', while the left arrow is induced by ``composing morphisms in 
$I_{\r{ft}}$ and $I$''. We will apply Theorem \ref{0th:main} to all arrows in 
the diagram, except the lower one, to show that they are categorical 
equivalences. It then follows that the lower arrow is also a categorical 
equivalence. 

For the upper arrow, we apply Theorem \ref{0th:main} to $k=3$, 
$\cE_0=F_{\r{ft}}$, $\cE_1=P$, $\cE_2=I_{\r{ft}}$, $\cE_3=I$. Conditions (1) 
and (3) are obviously satisfied. For Condition (2), note that every morphism 
$f$ in $F_{\r{ft}}$ can be written as a disjoint union $\coprod f_i$ of 
morphisms $f_i$ of $\Sch'$. It then suffices to apply Nagata compactification 
to each $f_i$. 

For the left arrow, we apply Theorem \ref{0th:main} to $k=3$, 
$\cE_0=\cE_1=I$, $\cE_2=I_{\r{ft}}$, $\cE_3=P$. All the conditions are 
obviously satisfied. 

For the right arrow, note that the map 
$\delta_2^*\rN(\cC)^\cart_{F_{\r{ft}},I}\to\delta_2^*\rN(\cC)^\cart_{I,F_{\r{ft}}}$ 
given by ``flipping the squares in \eqref{0eq:simplex} along the diagonal'' 
is an isomorphism, which is compatible with the maps to $\rN(\cC)$. Thus, it 
suffices to show that the map 
$\delta_2^*\rN(\cC)^\cart_{I,F_{\r{ft}}}\to\rN(\cC)$ is a categorial 
equivalence. For this, we apply Corollary \ref{0co:main} to ($k=2$, 
$\cE_0=F$,) $\cE_1=I$, $\cE_2=F_{\r{ft}}$. To verify the assumption of 
Corollary \ref{0co:main}, let $f$ be a morphism of $\Sch''$. Then $f$ has the 
form $\coprod_{i,j} X_{ij}\to \coprod_i Y_i$ and is induced by morphisms 
$X_{ij}\to Y_i$, where $X_{ij}$ and $Y_i$ are quasi-compact and 
quasi-separated schemes. Then $f$ is the composition $\coprod_{i,j} 
X_{ij}\xrightarrow{q}\coprod_{i,j}Y_i\xrightarrow{p}\coprod_i Y_i$ with $p\in 
I$ and $q\in F_{\r{ft}}$. 
\end{proof}

The proof of Theorem \ref{0th:main} consists of two steps. Let us illustrate 
them in the case of Corollary \ref{0co:main}. The map $g$ can be decomposed 
as 
\[
\delta_2^*\rN(\cC)_{\cE_1,\cE_2}^\cart\xrightarrow{g'}\delta_2^*\rN(\cC)_{\cE_1,\cE_2}\xrightarrow{g''}\rN(\cC),
\]
where $\delta_2^*\rN(\cC)_{\cE_1,\cE_2}$ is the simplicial set whose 
$n$-simplices are diagrams \eqref{0eq:simplex} without the requirement that 
every square is Cartesian, $g'$ is the natural inclusion and $g''$ is the map 
remembering the diagonal. We prove that both $g'$ and $g''$ are categorical 
equivalences. The fact that $g''$ is a categorical equivalence is an 
$\infty$-categorical generalization of Deligne's result 
\cite{SGA4}*{Expos\'{e} xvii, Proposition 3.3.2}. 

This chapter is organized as follows. In \Sec\ref{a1ss}, we collect some 
basic definitions and facts in the theory of $\infty$-categories \cite{HTT} 
for the reader's convenience. In \Sec\ref{a2ss}, we develop a general 
technique for constructing functors to $\infty$-categories. In 
\Sec\ref{a3ss}, we introduce several notions related to multisimplicial sets 
used in the statements of our main results. In particular, we define the 
restricted multisimplicial nerve of an $\infty$-category with extra data. In 
\Sec\ref{a4ss}, we prove a multisimplicial descent theorem, which implies 
that the map $g''$ is a categorical equivalence.  In \Sec\ref{a5ss}, we prove 
a Cartesian gluing theorem, which implies that the inclusion $g'$ is a 
categorical equivalence. A Cartesian gluing formalism for pseudofunctors 
between $2$-categories was developed in \cite{Zh1}. Our treatment here is 
quite different and more adapted to the higher categorical context. In 
\Sec\ref{a6ss}, we prove some facts about inclusions of simplicial sets used 
in the previous sections.

\subsection{Simplicial sets and $\infty$-categories}
\label{a1ss}

In this section, we collect some basic definitions and facts in the theory of $\infty$-categories developed by Joyal in \cite{Joyal1} and \cite{Joyal2} (who calls them ``quasi-categories'') and Lurie \cite{HTT}.

For $n\ge 0$, we let $[n]$ denote the totally ordered set $\{0,\dots,n\}$ and we put $[-1]\coloneqq\emptyset$. We let $\del$ denote the \emph{category of combinatorial simplices}, whose objects are the totally ordered sets $[n]$ for $n\geq0$ and whose morphisms are given by (non-strictly) order-preserving maps. For $n\geq 0$ and $0\leq k\leq n$, the face map $d^n_k\colon [n-1]\to[n]$ is the unique injective order-preserving map such that $k$ is not in the image; and the degeneracy map $s^n_k\colon [n+1]\to[n]$ is the unique surjective order-preserving map such that $(s^n_k)^{-1}(k)$ has two elements.

\begin{definition}[Simplicial set and $\infty$-category]
We let $\Set$ denote the category of sets.\footnote{More rigorously, $\Set$ is the category of sets in a universe that we fix once and for all.}
\begin{itemize}
  \item We define the category of \emph{simplicial sets}, denoted by $\Sset$, to be the functor category $\Fun(\del^{op},\Set)$. For a simplicial set $S$, we denote by $S_n=S([n])$ its set of $n$-simplices.

  \item For $n\geq0$, we denote by $\Delta^n=\Fun(-,[n])$ the simplicial set represented by $[n]$. We let $\partial\Delta^n\subseteq\Delta^n$ denote the simplicial subset obtained by removing the interior, namely the $n$-simplex defined by $\id_{[n]}\colon [n]\to[n]$. In particular, $\partial\Delta^0=\emptyset$. For each $0\leq k\leq n$, we define the \emph{$k$-th horn} $\Lambda^n_k\subseteq\partial\Delta^n$ to be the simplicial subset obtained by removing the face opposite to the $k$-th vertex, namely
      the $(n-1)$-simplex defined by $d^n_k\colon [n-1]\to [n]$.

  \item An \emph{$\infty$-category} (resp.\ Kan complex) is a simplicial set $\cC$ such that $\cC\to \Delta^0$ has the right lifting property
      with respect to all inclusions $\Lambda^n_k\subseteq\Delta^n$ with $0<k<n$ (resp.\ $0\le k\le n$). In other words, a simplicial set $\cC$ is an $\infty$-category (resp.\ Kan complex) if and only if every map $\Lambda^n_k\to\cC$ with $0<k<n$ (resp.\ $0\le k\le n$) can be extended to a map $\Delta^n\to\cC$.
\end{itemize}
\end{definition}

Note that a Kan complex is an $\infty$-category. The lifting property in the definition of $\infty$-category was first introduced (under the name of ``restricted Kan condition'') by Boardman and Vogt \cite{BV}*{Definition IV.4.8}.

The lifting property defining $\infty$-category (resp.\ Kan complex) can be adapted to the relative case. More precisely, a map $f\colon T\to S$ of simplicial sets is called an \emph{inner fibration} (resp.\ \emph{Kan fibration}) if it has the right lifting property with respect to all
inclusions $\Lambda^n_k\subseteq\Delta^n$ with $0<k<n$ (resp.\ $0\leq k\leq n$). A map $i\colon A\to B$ of simplicial sets is said to be \emph{inner anodyne} (resp.\ \emph{anodyne}) if it has the left lifting property with respect to all inner fibrations (resp.\ Kan fibrations).

\begin{example}[Nerve of an ordinary category]
Let $\cC$ be an ordinary category. The \emph{nerve} $\rN(\cC)$ of $\cC$ is the simplicial set given by $\rN(\cC)_n=\Fun([n],\cC)$. It is easy to see that $\rN(\cC)$ is an $\infty$-category and we can identify $\rN(\cC)_0$ and $\rN(\cC)_1$ with the set of objects $\Ob(\cC)$ and the set of arrows $\Ar(\cC)$, respectively.
\end{example}

Conversely, given a simplicial set $S$, one constructs an ordinary category $\rh S$, the \emph{homotopy category of} $S$ (\cite{HTT}*{Definition 1.1.5.14}, ignoring the enrichment) such that $\Ob(\rh S)=S_0$. For an $\infty$-category $\cC$, $\Hom_{\rh\cC}(x,y)$ consists of homotopy classes of edges $x\to y$ in $\cC_1$ \cite{HTT}*{Proposition 1.2.3.9}. By \cite{HTT}*{Proposition 1.2.3.1}, $\rh$ is left adjoint to the nerve functor $\rN$.

\begin{definition}[Object, morphism, equivalence]
Let $\cC$ be an $\infty$-category. Vertices of $\cC$ are called \emph{objects} of $\cC$ and edges of $\cC$ are called \emph{morphisms} of $\cC$. A morphism of $\cC$ is called an \emph{equivalence} if it defines an isomorphism in the homotopy category $\rh \cC$.
\end{definition}

The category $\Sset$ is Cartesian-closed. For objects $S$ and $T$ of $\Sset$, we let $\Map(S,T)$ denote the internal mapping object defined by
\[
\Hom_{\Sset}(K,\Map(S,T))\simeq \Hom_{\Sset}(K\times S,T).
\]
If $\cC$ is an $\infty$-category, we write $\Fun(S,\cC)$ instead of $\Map(S,\cC)$. One can show that $\Fun(S,\cC)$ is an $\infty$-category \cite{HTT}*{Proposition 1.2.7.3(1)} (see also \cite{HTT}*{Corollary 2.3.2.5}).

\begin{definition}[Functor, natural transformation, natural equivalence]
Objects of $\Fun(S,\cC)$ are called \emph{functors} $S\to \cC$, morphisms of $\Fun(S,\cC)$ are called \emph{natural transformations}, and equivalences in $\Fun(S,\cC)$ are called \emph{natural equivalences}.
\end{definition}

\begin{remark}
Let $f,g\colon S\to\cC$ be functors and $\phi\colon f\to g$ a natural transformation. Then $\phi$ is a natural equivalence if and only if for every vertex $s$ of $S$, the morphism $\phi(s)\colon f(s)\to g(s)$ is an equivalence in $\cC$. We refer the reader to \cite{HTT}*{Proposition 3.1.2.1} for a generalization (see \cite{HTT}*{Remark 2.4.1.4}).
\end{remark}

\begin{remark}\label{a1re:homotopy}
Let $\cC$ be an $\infty$-category and let $f,g\colon x\to y$ be morphisms of $\cC$. Then $f$ and $g$ are homotopic (namely, having the same image in $\rh\cC$) if and only if they are equivalent when viewed as objects of the $\infty$-category defined by the fiber of the map $\Fun(\Delta^1,\cC)\to\Fun(\partial \Delta^1,\cC)$. Indeed, the latter condition means that there exist a morphism $h\colon x\to y$ and two $2$-simplices of $\cC$ as shown in the diagram
\[
\xymatrix{
x\ar[d]_{\id_x}\ar[r]^f\ar[rd]^h & y\ar[d]^{\id_y}\\x\ar[r]^g&y.
}
\]
By definition (resp.\ \cite{HTT}*{Remark 1.2.3.6}), the existence of the $2$-simplex in the upper right (resp.\ lower left) corner means that $f$ (resp.\ $g$) and $h$ are homotopic. This proves the ``if'' part. For the ``only if'' part, it suffices to take $h=g$ and to take the $2$-simplex in the lower left corner to be degenerate.
\end{remark}

We now recall the notion of categorical equivalence of simplicial sets, which is essential to our article. There are several equivalent definitions of categorical equivalence. The one given below (equivalent to \cite{HTT}*{Definition 1.1.5.14} in view of \cite{HTT}*{Proposition
2.2.5.8}), due to Joyal \cite{Joyal2}, will be used in the proofs of our theorems.

\begin{definition}[Categorical equivalence]\label{a1de:categorical_equivalence}
A map $f\colon T\to S$ of simplicial sets is a \emph{categorical equivalence} if for every $\infty$-category $\cC$, the induced functor
\[
\rh\Fun(S,\cC)\to\rh\Fun(T,\cC)
\]
is an equivalence of ordinary categories.
\end{definition}

If $f\colon T\to S$ is a categorical equivalence, then the induced functor $\rh T\to \rh S$ is an equivalence of ordinary categories. An inner anodyne map is a categorical equivalence \cite{HTT}*{Lemma 2.2.5.2}. The category $\Sset$ admits the Joyal model structure \cite{HTT}*{Theorem 2.2.5.1}, for which weak equivalences are precisely categorical equivalences.

\begin{remark}
Let $\cC$ and $\cD$ be $\infty$-categories. A functor $f\colon \cC\to \cD$ is a categorical equivalence if and only if there exist a functor $g\colon\cD\to \cC$ and natural equivalences between $f\circ g$ and $\id_\cD$ and between $g\circ f$ and $\id_{\cC}$. Indeed, the ``only if'' direction follows from \cite{HTT}*{Proposition 1.2.7.3} and the other direction is clear.
\end{remark}

The following criterion of categorical equivalence will be used in the proofs of our theorems. Given maps of simplicial sets $v,v'\colon Y\to X$ and an inner fibration $p\colon X\to S$ such that $p\circ v=p\circ v'$, we say that $v$ and $v'$ are \emph{homotopic over $S$} if they are equivalent when viewed as objects of the $\infty$-category defined by the fiber of the inner fibration $\Map(Z,X)\to \Map(Z,S)$ induced by $p$.

\begin{lem}\label{a1le:categorical_equivalence}
A map of simplicial sets $f\colon Y\to Z$ is a categorical equivalence if and only if the following conditions are satisfied for every
$\infty$-category $\cD$:
\begin{enumerate}
  \item For every $l=0,1$ and every commutative diagram
     \[
     \xymatrix{
     Y\ar[r]^-v\ar[d]_f & \Fun(\Delta^l,\cD)\ar[d]^p\\
     Z\ar[r]^-w & \Fun(\partial \Delta^l,\cD)
     }
     \]
     where $p$ is induced by the inclusion $\partial \Delta^l\subseteq\Delta^l$, there exists a map $u\colon Z\to \Fun(\Delta^l,\cD)$ satisfying $p\circ u=w$ such that $u\circ f$ and $v$ are homotopic over $\Fun(\partial \Delta^l,\cD)$.

\item For $l=2$ and every commutative diagram as above, there exists a map $u\colon Z\to \Fun(\Delta^l,\cD)$ satisfying $p\circ u=w$.
\end{enumerate}
\end{lem}

\begin{proof}
By definition, that $f$ is a categorical equivalence means that for every $\infty$-category $\cD$, the functor
\[
F\colon \rh \Fun(Z,\cD)\to \rh\Fun(Y,\cD)
\]
induced by $f$ is an equivalence of categories. We show that the conditions for $l=0,1,2$ mean that $F$ is essentially surjective, full, and faithful, respectively. For $l=0$, this is clear. For $l=1$, this follows from Remark \ref{a1re:homotopy}. For $l=2$, the condition means that for functors $g_0,g_1,g_2\colon Z\to \cD$, and natural transformations $\phi\colon g_0\to g_1$, $\psi\colon g_1\to g_2$, $\chi\colon g_0\to g_2$ such that $F([\psi]\circ [\phi])=F([\chi])$, we have $[\psi]\circ [\phi]=[\chi]$. Here $[\phi]$, $[\psi]$, $[\chi]$ denote the homotopy classes of $\phi$, $\psi$, $\chi$, respectively. The condition is clearly satisfied if $F$ is faithful. Conversely, if $F$ is faithful, it suffices to take $g_1=g_2$ and $\psi=\id$.
\end{proof}

In \Sec\ref{a3ss}, we will introduce the notion of multi-marked simplicial sets, which generalizes the notion of marked simplicial sets in
\cite{HTT}*{Definition 3.1.0.1}. Since marked simplicial sets play an important role in many arguments for $\infty$-categories, we recall its
definition.

\begin{definition}[Marked simplicial set]
A \emph{marked simplicial set} is a pair $(X,\cE)$ where $X$ is a simplicial set and $\cE\subseteq X_1$ is a subset containing all degenerate edges. A morphism $f\colon (X,\cE)\to (X',\cE')$ of marked simplicial sets is a map $f\colon X\to X'$ of simplicial sets satisfying $f(\cE)\subseteq\cE'$. We let $\Mset$ denote the category of marked simplicial sets.
\end{definition}

The forgetful functor $F \colon \Mset\to \Sset$ carrying $(X,\cE)$ to $X$ admits a right adjoint carrying a simplicial set $S$ to $S^\sharp=(S,S_1)$ and a left adjoint carrying $S$ to $S^\flat=(S,\cE)$, where $\cE$ is the set of all degenerate edges. For an $\infty$-category $\cC$, we let $\cC^\natural$ denote the marked simplicial set $(\cC,\cE)$, where $\cE$ is the set of all edges of $\cC$ that are equivalences. The category $\Mset$ is equipped with the \emph{Cartesian model structure} \cite{HTT}*{Proposition 3.1.3.7}. The adjoint pair $((-)^\flat, F)$ is a Quillen equivalence between the Joyal model structure on $\Sset$ and the Cartesian model structure on $\Mset$ \cite{HTT}*{Theorem 3.1.5.1}.

The category $\Mset$ is Cartesian-closed. For objects $X$ and $Y$ of $\Mset$, we let $\Map^\flat(X,Y)$ denote the underlying simplicial set of
the internal mapping object $Y^X$. We let $\Map^\sharp(X,Y)\subseteq\Map^\flat(X,Y)$ denote the largest simplicial subset such that
$\Map^\sharp(X,Y)^\sharp\subseteq Y^X$. If $\cC$ is an $\infty$-category, then $\Map^\flat(X,\cC^\natural)$ is an $\infty$-category and
$\Map^\sharp(X,\cC^\natural)$ is the largest Kan complex \cite{HTT}*{Proposition 1.2.5.3} contained in $\Map^\flat(X,\cC^\natural)$
\cite{HTT}*{Remark 3.1.3.1} (see also \cite{HTT}*{Lemma 3.1.3.6}), so that $(\cC^\natural)^X=\Map^\flat(X,\cC^\natural)^\natural$.

\subsection{Constructing functors via the category of simplices}
\label{a2ss}

In this section, we develop a general technique for constructing functors to $\infty$-categories, which is the key to several constructions in this article and its sequels. For a functor $F\colon K\to \cC$ from a simplicial set $K$ to an $\infty$-category $\cC$, the image $F(\sigma)$ of a simplex $\sigma$ of $K$ is a simplex of $\cC$, functorial in $\sigma$. Here we address the problem of constructing $F$ when, instead of having a canonical choice for $F(\sigma)$, one has a weakly contractible simplicial set $\cN(\sigma)$ of candidates for $F(\sigma)$.

We start with some generalities on diagrams of simplicial sets. Let $\cI$ be a (small) ordinary category. We consider the injective model structure on the functor category $(\Sset)^\cI\coloneqq\Fun(\cI,\Sset)$. We say that a morphism $i\colon \cN\to \cM$ in $(\Sset)^\cI$ is \emph{anodyne} if $i(\sigma)\colon \cN(\sigma)\to \cM(\sigma)$ is anodyne for every object $\sigma$ of $\cI$. We say that a morphism $\cR\to \cR'$ in $(\Sset)^\cI$ is an \emph{injective fibration} if it has the right lifting property with respect to every anodyne morphism $\cN\to \cM$ in $(\Sset)^\cI$. We say that an object $\cR$ of $(\Sset)^\cI$ is \emph{injectively fibrant} if the morphism from $\cR$ to the final object $\Delta^0_\cI$ is an injective fibration. The right adjoint of the diagonal functor $\Sset\to (\Sset)^\cI$ is the global section functor
\[
\Gamma\colon (\Sset)^\cI\to \Sset, \quad
\Gamma(\cN)_q=\r{Hom}_{(\Sset)^\cI}(\Delta^q_\cI,\cN),
\]
where $\Delta^q_\cI\colon \cI \to \Sset$ is the constant functor of value $\Delta^q$.

\begin{notation}
Let $\Phi\colon \cN\to \cR$ be a morphism of $(\Sset)^\cI$. We let $\Gamma_\Phi(\cR)\subseteq \Gamma(\cR)$ denote the simplicial subset, union
of the images of $\Gamma(\Psi)\colon \Gamma(\cM)\to \Gamma(\cR)$ for all factorizations
\[
\cN\xrightarrow{i}\cM\xrightarrow{\Psi}\cR
\]
of $\Phi$ such that $i$ is anodyne.
\end{notation}

\begin{remark}\label{a2re:components}
As the referee pointed out, $\Gamma_\Phi(\cR)$ can be computed using one single factorization
\[
\cN\xrightarrow{i'}\cM'\xrightarrow{\Psi'}\cR
\]
of $\Phi$, where $i'$ is anodyne and $\Psi'$ is an injective fibration. For every factorization $\cN\xrightarrow{i}\cM\xrightarrow{\Psi} \cR$ of $\Phi$ such that $i$ is anodyne, there exists a dotted arrow rendering the diagram
\[
\xymatrix{
\cN\ar[r]^{i'}\ar[d]_i & \cM'\ar[d]^{\Psi'}\\
\cM\ar[r]^{\Psi}\ar@{..>}[ru] & \cR
}
\]
commutative. Thus $\Gamma_\Phi(\cR)$ is simply the image of $\Gamma(\Psi')$. Since $\Gamma(\Psi')$ is a Kan fibration, $\Gamma_\Phi(\cR)$ is a union of connected components of $\Gamma(\cR)$. Indeed, the inclusion $\Gamma_\Phi(\cR)\subseteq \Gamma(\cR)$ satisfies the right lifting property with respect to the inclusion $\Delta^{\{j\}}\subseteq\Delta^n$ for all $0\le j\le n$.
\end{remark}

\begin{remark}
By definition, the map $\Gamma(\Phi)\colon \Gamma(\cN)\to\Gamma(\cR)$ factorizes through $\Gamma_\Phi(\cR)$. The construction of $\Gamma_\Phi(\cR)$ enjoys the following functoriality. For a commutative diagram
\[
\xymatrix{
\cN\ar[r]^\Phi\ar[d]_G & \cR\ar[d]^F \\ \cN' \ar[r]^{\Phi'} & \cR'
}
\]
in $(\Sset)^\cI$, the map $\Gamma(F)\colon \Gamma(\cR)\to \Gamma(\cR')$ carries $\Gamma_\Phi(\cR)$ into $\Gamma_{\Phi'}(\cR')$. Indeed, for every factorization $\cN\xrightarrow{i}\cM\xrightarrow{\Psi} \cR$ of $\Phi$ such that $i$ is anodyne, we have a commutative diagram
\[
\xymatrix{
\cN\ar[r]^i\ar[d]_G & \cM\ar[r]^\Psi\ar[d] & \cR\ar[d]^F\\
\cN'\ar[r]^{i'}&\cM'\ar[r]^{\Psi'} & \cR',
}
\]
where $i'$ is the pushout of $i$ by $G$, hence is anodyne.

For a functor $g\colon \cI'\to \cI$, composition with $g$ induces a functor $g^*\colon (\Sset)^\cI\to (\Sset)^{\cI'}$. By a slight abuse of notation, we still denote by $g^*\colon \Gamma(\cR)\to \Gamma(g^*\cR)$ the pullback map induced by the functor $g^*$. Then the pullback map $g^*$ carries $\Gamma_\Phi(\cR)$ into $\Gamma_{g^*\Phi}(g^*\cR)$. Indeed, for every factorization $\cN\xrightarrow{i}\cM\xrightarrow{\Psi} \cR$ of $\Phi$ such that $i$ is anodyne, $g^*\cN\xrightarrow{g^*i}g^*\cM\xrightarrow{g^*\Psi}g^*\cR$ is a factorization of $g^*\Phi$ such that $g^* i$ is anodyne, and we have the following commutative diagram
\[
\xymatrix{
\Gamma(\cM)\ar[d]_{g^*}\ar[rr]^{\Gamma(\Psi)}&&\Gamma(\cR)\ar[d]^{g^*}\\
\Gamma(g^*\cM)\ar[rr]^{\Gamma(g^*\Psi)}&& \Gamma(g^*\cR).
}
\]
\end{remark}

Our construction technique relies on the following property of $\Gamma_\Phi(\cR)$. In a previous draft of this article, the statement of
part (1) in the following lemma was incorrect. We thank the referee for suggesting the following correction.

\begin{lem}\label{a1le:component}
Let $\cI$ be a category. Let $\cN$, $\cR$ be objects of $(\Sset)^\cI$ such that $\cN(\sigma)$ is weakly contractible for all objects $\sigma$ of $\cI$ and $\cR$ is injectively fibrant.
\begin{enumerate}
  \item For every morphism $\Phi\colon \cN\to \cR$, the simplicial set $\Gamma_\Phi(\cR)$ is nonempty and connected, hence a connected
    component of $\Gamma(\cR)$.

  \item For homotopic morphisms $\Phi,\Phi'\colon \cN\to \cR$, we have $\Gamma_\Phi(\cR)=\Gamma_{\Phi'}(\cR)$.
\end{enumerate}
\end{lem}

The condition in (2) means that there exists a morphism $H\colon\Delta^1_\cI\times \cN\to \cR$ such that $H\res \Delta^{\{0\}}_\cI\times\cN=\Phi$ and $H\res \Delta^{\{1\}}_\cI\times \cN=\Phi'$. Note that $\Gamma(\cR)$ is a Kan complex.

\begin{proof}
(1) We apply Remark \ref{a2re:components}. Since the morphism $\cM'\to\Delta^0_\cI$ is a trivial fibration, $\Gamma(\cM')$ is a contractible Kan
complex. Therefore its image $\Gamma_\Phi(\cR)$ is nonempty and connected, hence a connected component of $\Gamma(\cR)$.

(2) We define an object $\cN^\triangleright$ by $\cN^\triangleright(\sigma)=\cN(\sigma)^\triangleright$ \cite{HTT}*{Notation 1.2.8.4}. Since the inclusion $\Delta^1_\cI\times \cN\hookrightarrow\Delta^1_\cI\times \cN^\triangleright$ is anodyne and $\cR$ is injectively fibrant, we can find a morphism $H'$ as shown in the diagram
\[
\xymatrix{
\Delta_\cI^1\times\cN\ar[r]^-{H}\ar@{^(->}[d]& \cR\\
\Delta_\cI^1\times\cN^{\triangleright}\ar@{..>}[ur]_{H'}
}
\]
rendering the diagram commutative. We denote by $h\colon \Delta_\cI^1\to\cR$ the restriction of $H'$ to the cone point of $\cN^{\triangleright}$, corresponding to an edge of $\Gamma(\cR)$. Then $h(0)$ belongs to $\Gamma_\Phi(\cR)$ and $h(1)$ belongs to $\Gamma_{\Phi'}(\cR)$. Since $\Gamma_\Phi(\cR)$ and $\Gamma_{\Phi'}(\cR)$ are connected components of $\Gamma(\cR)$ by (1), we have $\Gamma_\Phi(\cR)=\Gamma_{\Phi'}(\cR)$.
\end{proof}

Let $K$ be a simplicial set. The \emph{category of simplices of} $K$, which we denote by $\del_{/K}$ following \cite{HTT}*{Notation 6.1.2.5}, plays a key role in our construction technique. Recall that $\del_{/K}$ is the strict fiber product $\del\times_{\Sset}(\Sset)_{/K}$. An object of $\del_{/K}$ is a pair $(n,\sigma)$, where $n\geq0$ is some integer and $\sigma\in\Hom_{\Sset}(\Delta^n,K)$. A morphism $(n,\sigma)\to (n',\sigma')$ is a map $d\colon \Delta^n\to \Delta^{n'}$ such that $\sigma=\sigma'\circ d$. Note that $d$ is a monomorphism (resp.\ epimorphism) if and only if the underlying map $[n]\to [n']$ is injective (resp.\ surjective). Every epimorphism of $\del_{/K}$ is split. Moreover, $\del_{/K}$ admits pushouts of epimorphisms by epimorphisms. In what follows, we sometimes simply write $\sigma$ for an object of $\del_{/K}$ if $n$ is insensitive.

The usefulness of $\del_{/K}$ is demonstrated by the following lemma.

\begin{lem}[\cite{Hovey}*{Lemma 3.1.3}]\label{a2le:Hovey}
The maps $\sigma\colon \Delta^n\to K$ exhibit $K$ as the colimit of the functor $\del_{/K}\to\Sset$ carrying $(n,\sigma)$ to $\Delta^n$.
\end{lem}

\begin{proof}
We include a proof for completeness. Let $X$ be the colimit. Given $m\geq 0$, the set $X_m$ is the colimit of the functor $F_m\colon\del_{/K}\to\Set$ carrying $(n,\sigma)$ to $(\Delta^n)_m\simeq\Hom_{\Sset}(\Delta^m,\Delta^n)$. We denote by $\del_{/K}^{[m]}$ the
category of elements of $F_m$. Objects of $F_m$ are triples
\[
(n,\sigma,\tau)\colon\Delta^m\xrightarrow{\tau}\Delta^n\xrightarrow{\sigma}K,
\]
and morphisms $(n,\sigma,\tau)\to(n',\sigma',\tau')$ are commutative diagrams
\[
\xymatrix{
\Delta^m \ar[r]^-{\tau}\ar@{=}[d] & \Delta^n\ar[r]^{\sigma}\ar[d]^-{d} & K \ar@{=}[d] \\
\Delta^m \ar[r]^-{\tau'} & \Delta^{n'}\ar[r]^{\sigma'}  & K.
}
\]
Note that $\del_{/K}^{[m]}$ is a disjoint union of categories indexed by $\rho=\sigma\tau\in K_m$, each admitting an initial object
\[
(m,\rho,\id_{\Delta^m})\colon \Delta^m\xrightarrow{\id} \Delta^m\xrightarrow{\rho} K.
\]
The lemma then follows from the fact that the colimit of any functor $F\colon\cC\to \Set$ from a category $\cC$ to $\Set$ can be identified with the set of connected component of the category of elements of $F$.
\end{proof}

\begin{notation}
We define a functor $\Map[K,-]\colon \Mset\to (\Sset)^{(\del_{/K})^{op}}$ as follows. For a marked simplicial set $M$, we define $\Map[K,M]$  by
\[
\Map[K,M](n,\sigma)=\Map^\sharp((\Delta^n)^\flat,M),
\]
for every object $(n,\sigma)$ of $\del_{/K}$. A morphism $d\colon(n,\sigma)\to (n',\sigma')$ in $\del_{/K}$ goes to the natural restriction
map $\r{Res}^d\colon \Map^\sharp((\Delta^{n'})^\flat,M)\to\Map^\sharp((\Delta^n)^\flat,M)$. For an $\infty$-category $\cC$, we set
$\Map[K,\cC]=\Map[K,\cC^\natural]$.
\end{notation}

The following remark shows how $\Map[K,-]$ is related with the problem of constructing functors.

\begin{remark}\label{a2re:functors}
The map
\[
\Map^\sharp(K^\flat,M)\to \Gamma(\Map[K,M])
\]
induced by the restriction maps $\Map^\sharp(K^\flat,M)\to\Map^\sharp((\Delta^n)^\flat,M)$ is an isomorphism of simplicial sets. Indeed, the set of $m$-simplices of $\Map^\sharp(K^\flat,M)$ can be identified with $\Hom_{\Mset}((\Delta^m)^\sharp\times K^\flat,M)$, while the set of $m$-simplices of $\Gamma(\Map[K,M])$ is a limit of the functor $\del_{/K}\to \Set$ carrying $(n,\sigma)$ to
$\Hom_{\Mset}((\Delta^m)^\sharp\times (\Delta^n)^\flat,M)$. We are thus reduced to showing that the maps $\sigma\colon \Delta^n\to K$ exhibit
$(\Delta^m)^\sharp \times K^\flat$ as the colimit of the functor $\del_{/K}\to \Sset$ carrying $(n,\sigma)$ to $(\Delta^m)^\sharp\times(\Delta^n)^\flat$. Note that the functor $(\Delta^m)^\sharp\times(-)^\flat\colon \Sset\to \Mset$ admits a right adjoint $\Map^\flat((\Delta^m)^\sharp,-)$, hence preserves colimits. The assertion then follows from Lemma \ref{a2le:Hovey}.

Note that $\Map^\sharp(K^\flat,\cC^\natural)$ is the largest Kan complex contained in $\Fun(K,\cC)$.
\end{remark}

If $g\colon K'\to K$ is a map, then composition with the functor $\del_{/K'}\to \del_{/K}$ induced by $g$ defines a functor $g^*\colon(\Sset)^{(\del_{/K})^{op}}\to (\Sset)^{(\del_{/K'})^{op}}$. We have $g^*\Map[K,M]=\Map[K',M]$.

\begin{proposition}\label{a1le:injective_fibration}
Let $f\colon Z\to T$ be a fibration in $\Mset$ with respect to the Cartesian model structure, and let $K$ be a simplicial set. Then the morphism $\Map[K,f]\colon \Map[K,Z]\to \Map[K,T]$ is an injective fibration in $(\Sset)^{(\del_{/K})^{op}}$. In other words, for every commutative square in $(\Sset)^{(\del_{/K})^{op}}$ of the form
\[
\xymatrix{
\cN\ar[r]^-\Phi\ar@{^{(}->}[d] & \Map[K,Z]\ar[d]^-{\Map[K,f]}\\
\cM\ar[r]_-\Psi\ar@{..>}[ur]^{\Omega} & \Map[K,T]
}
\]
such that $\cN\hookrightarrow \cM$ is anodyne, there exists a dotted arrow as indicated, rendering the diagram commutative.
\end{proposition}

The proof of this proposition will be given after Remark \ref{a1re:simplices}.

\begin{corollary}\label{a2co:key}
Let $f\colon Z\to T$ be a fibration in $\Mset$ with respect to the Cartesian model structure, $K$ a simplicial set, $a\colon K^\flat\to T$ a map, and $\cN\in (\Sset)^{(\del_{/K})^{op}}$ such that $\cN(\sigma)$ is weakly contractible for all $\sigma\in \del_{/K}$. We let $\Map[K,f]_a$ denote the fiber of $\Map[K,f]\colon \Map[K,Z]\to \Map[K,T]$ at the section $\Delta_K^0\to \Map[K,T]$ corresponding to $a$.
\begin{enumerate}
  \item For every morphism $\Phi\colon \cN\to \Map[K,f]_a$, the simplicial set $\Gamma_\Phi(\Map[K,f]_a)$ is a (nonempty) connected component of $\Gamma(\Map[K,f]_a)$.

  \item For homotopic $\Phi,\Phi'\colon \cN\to \Map[K,f]_a$, we have
      \[
      \Gamma_\Phi(\Map[K,f]_a)=\Gamma_{\Phi'}(\Map[K,f]_a).
      \]
\end{enumerate}
\end{corollary}

The condition in (2) means that there exists a morphism $H\colon\Delta_K^1\times \cN\to \Map[K,f]_a$ in $(\Sset)^{(\del_{/K})^{op}}$ such
that $H\res \Delta_K^{\{0\}}\times \cN=\Phi$, $H\res \Delta_K^{\{1\}}\times\cN=\Phi'$.

\begin{proof}
By Proposition \ref{a1le:injective_fibration}, if $\cC$ is an $\infty$-category, then $\Map[K,\cC]$ is an injectively fibrant object of $(\Sset)^{(\del_{/K})^{op}}$ since $\cC^\natural$ is a fibrant object of $\Mset$ \cite{HTT}*{Proposition 3.1.4.1}. Then the corollary follows from Lemma \ref{a1le:component} applied to $\cR=\Map[K,f]_a$.
\end{proof}

\begin{remark}\label{a2re:Quillen}
The functor $\Map[K,-]$ admits a left adjoint $F\colon(\Sset)^{(\del_{/K})^{op}}\to \Mset$ carrying $\cR$ to the coend of the
diagram
\[
(\del_{/K})^{op}\times \del_{/K}\to \Mset,\quad
((n,\sigma),(m,\tau))\mapsto\cR(n,\sigma)^\sharp\times (\Delta^m)^\flat.
\]
The functor $F$ can be described more explicitly as follows. Note that for functors $G\colon \cC^{op}\to \Set$, $H\colon \cC\to \Set$, where $\cC$ is a category, the coend of the diagram
\[
\cC^{op}\times \cC\to \Set,\quad (A,B)\mapsto G(A)\times H(B)
\]
can be identified with the colimit of the functor $\cD^{op}\to\Set$ carrying $(A,h)$ to $G(A)$, where $\cD$ is the category of elements of $H$.
Thus if we write $F\cR=(X,\cE)$, then $X_m$ is the colimit of the functor $(\del_{/K}^{[m]})^{op}\to \Set$ carrying $(n,\sigma,\tau)$ to
$\cR(n,\sigma)_m$, where $\del_{/K}^{[m]}$ is the category defined in the proof of Lemma \ref{a2le:Hovey}. Therefore, $X_m$ is the disjoint union of $\cR(m,\sigma)_m$ for all $m$-simplices $\sigma\colon \Delta^m\to K$. Moreover, $\cE\subseteq X_1$ is the union of $\cR(1,\sigma)_1$ for all
degenerate edges $\sigma\colon \Delta^1\to K$.

It follows from the above description that $F$ preserves monomorphisms. Thus Proposition \ref{a1le:injective_fibration} shows that the pair
$(F,\Map[K,-])$ is a Quillen adjunction between $\Mset$ endowed with the Cartesian model structure and $(\Sset)^{(\del_{/K})^{op}}$ endowed with the injective model structure.
\end{remark}

\begin{remark}\label{a1re:simplices}
If we replace $\del_{/K}$ by the full subcategory $\del_{/K}^{\r{nd}}$ spanned by nondegenerate simplices, then Proposition
\ref{a1le:injective_fibration} still holds and the proof becomes simpler. However, $\del_{/K}^{\r{nd}}$ is only functorial with respect to
monomorphisms of simplicial sets, which is insufficient for our applications.
\end{remark}

\begin{proof}[Proof of Proposition \ref{a1le:injective_fibration}]
For $n\ge 0$, we let $\cI_n$ denote the full subcategory of $\del_{/K}$ spanned by $(m,\sigma)$ for $m\le n$. We construct $\Omega\res \cI_n^{op}$ by induction on $n$. It suffices to construct, for every $\sigma\colon\Delta^n\to K$, a map $\Omega(n,\sigma)$ as the dotted arrow rendering the following diagram commutative
\[
\xymatrix{
\cN(n,\sigma) \ar@{^(->}[d] \ar[rr]^-{\Phi(n,\sigma)}
&&    \Map^\sharp((\Delta^n)^\flat, Z) \ar[d]^-{\Map^\sharp((\Delta^n)^\flat,f)}   \\
\cM(n,\sigma) \ar@{..>}[urr]^-{\Omega(n,\sigma)}\ar[rr]_-{\Psi(n,\sigma)}
&& \Map^\sharp((\Delta^n)^\flat,T),
}
\]
such that for every monomorphism $d\colon (n-1,\rho)\to(n,\sigma)$ and every epimorphism $s\colon (n,\sigma)\to (n-1,\tau)$, the following two diagrams commute
\[
\xymatrix{
\cM(n,\sigma) \ar[d]_-{\cM(d)} \ar[rr]^-{\Omega(n,\sigma)}
&& \Map^\sharp((\Delta^{n})^\flat,Z) \ar[d]^-{\r{Res}^d} \\
\cM(n-1,\rho) \ar[rr]^-{\Omega(n-1,\rho)}
&& \Map^\sharp((\Delta^{n-1})^\flat,Z),
}
\]
\[
\xymatrix{
\cM(n-1,\tau) \ar[d]_-{\cM(s)} \ar[rr]^-{\Omega(n-1,\tau)}
&& \Map^\sharp((\Delta^{n-1})^\flat,Z) \ar[d]^-{\r{Res}^s} \\
\cM(n,\sigma) \ar[rr]^-{\Omega(n,\sigma)} && \Map^\sharp((\Delta^{n})^\flat,Z).
}
\]

By the induction hypothesis, the maps $\Omega(n-1,\rho)$ amalgamate into a map $\cM(n,\sigma)\to\Map^\sharp((\partial\Delta^{n})^\flat,Z)$, and
the maps $\Omega(n-1,\tau)$ amalgamate into a map $\cM(n,\sigma)^\deg\to\Map^\sharp((\Delta^n)^\flat,Z)$, where $\cM(n,\sigma)^\deg\subseteq\cM(n,\sigma)$ is the union of the images of $\cM(s)\colon \cM(n-1,\tau)\to\cM(n,\sigma)$. These maps amalgamate with $\Phi(n,\sigma)\colon\cN(n,\sigma)\to\Map^\sharp((\Delta^{n})^\flat,Z)$ into a map $\Omega'\colon A\to Z$, where
\[
A=(\cN(n,\sigma)\cup \cM(n,\sigma)^\deg)^\sharp\times (\Delta^{n})^\flat
\coprod_{(\cN(n,\sigma)\cup \cM(n,\sigma)^\deg)^\sharp\times(\partial\Delta^n)^\flat}
\cM(n,\sigma)^\sharp \times (\partial\Delta^{n})^\flat,
\]
fitting into the commutative square
\[
\xymatrix{
   A \ar@{^(->}[d]_i \ar[rr]^-{\Omega'}   && Z \ar[d]^-{f} \\
   \cM(n,\sigma)^\sharp\times (\Delta^{n})^\flat \ar[rr]_-{\Psi(n,\sigma)}
   \ar@{..>}[rru]^-{\Omega(n,\sigma)}    && T.
}
\]
It suffices to show that $i$ is a trivial cofibration in $\Mset$ with respect to the Cartesian model structure, so that there exists a dotted arrow rendering the above diagram commutative.

Let us first remark that for every epimorphism $s\colon (n',\sigma')\to(n'',\sigma'')$ of $\del_{/K}$, the left square of the commutative diagram
\[
\xymatrix{
\cN(n'',\tau'')\ar[d]\ar[r]^{\cN(s)} & \cN(n',\tau')\ar[d]\ar[r]^{\cN(d)} &\cN(n'',\tau'')\ar[d]\\
\cM(n'',\tau'')\ar[r]^{\cM(s)} & \cM(n',\tau')\ar[r]^{\cM(d)} & \cM(n'',\tau''),
}
\]
is a pullback by Lemma \ref{a2le:retract} below. Here $d$ is a section of $s$.

Next we prove that the map $\cN(n,\sigma)^\deg\to \cM(n,\sigma)^\deg$ is anodyne, where $\cN(n,\sigma)^\deg\subseteq \cN(n,\sigma)$ is the union of the images of $\cN(s)$. More generally, we claim that, for pairwise distinct epimorphisms $s_1,\dots ,s_m$, where $s_j\colon(n,\sigma)\to (n-1,\tau_j)$, the inclusion $\cN(\tau_1)\cup\dots\cup\cN(\tau_m)\hookrightarrow\cM(\tau_1)\cup\dots \cup \cM(\tau_m)$ is anodyne. Here $\cN(\tau_j)\subseteq \cN(n,\sigma)$ denotes the image of the split monomorphism $\cN(s_j)$ and similarly for $\cM(\tau_j)$. We proceed by induction on $m$ (simultaneously for all $n$). The case $m=0$ is trivial and we assume $m\ge 1$. For $1\le j\le m-1$, form the pushout
\[
\xymatrix{
(n,\sigma)\ar[r]^{s_j}\ar[d]_{s_m} & (n-1,\tau_j)\ar[d]\\
(n-1,\tau_m)\ar[r]^{s'_j} & (n-2,\tau'_j).
}
\]
By Lemma \ref{a2le:absolute} below, we have $\cN(\tau_j)\cap\cN(\tau_m)=\cN(\tau'_j)$, where $\cN(\tau'_j)$ denotes the image of $\cN(s'_js_m)$. The same holds for $\cM$. It follows that we have the following pushout square
\[
\xymatrix{
f_0\ar[r]\ar[d] & f_1\ar[d] \\ f_2 \ar[r]& f_3
}
\]
in the category $(\Sset)^{[1]}$, where
\begin{alignat*}{2}
f_0\colon&&\cN(\tau'_1)\cup \dots\cup\cN(\tau'_{m-1})&\to\cM(\tau'_1)\cup \dots\cup\cM(\tau'_{m-1}),\\
f_1\colon&&\cN(\tau_m)&\to\cM(\tau_m),\\
f_2\colon&&\cN(\tau_1)\cup \dots\cup\cN(\tau_{m-1})&\to\cM(\tau_1)\cup \dots\cup\cM(\tau_{m-1}),\\
f_3\colon&&\cN(\tau_1)\cup \dots\cup\cN(\tau_m)&\to\cM(\tau_1)\cup \dots\cup\cM(\tau_m)
\end{alignat*}
are natural arrows. By assumption, $f_1$ is anodyne. By induction hypothesis, $f_0$ and $f_2$ are anodyne. Since $\cN(\tau_m)\cap\cM(\tau'_j)=\cN(\tau'_j)$ by the remark of the preceding paragraph, Lemma \ref{a2le:pushout_anodyne} implies that $f_3$ is anodyne.

By the remark again, we have $\cN(n,\sigma)\cap\cM(n,\sigma)^\deg=\cN(n,\sigma)^\deg$. Thus the inclusion
$\cN(n,\sigma)\subseteq\cN(n,\sigma)\cup\cM(n,\sigma)^\deg$ is a pushout of $\cN(n,\sigma)^\deg\subseteq \cM(n,\sigma)^\deg$, hence is anodyne. By assumption, the inclusion $\cN(n,\sigma)\subseteq\cM(n,\sigma)$ is anodyne. By the two-out-of-three property for weak equivalences, it follows that the inclusion $\cN(n,\sigma)\cup \cM(n,\sigma)^\deg\subseteq\cM(n,\sigma)$ is anodyne, and consequently the inclusion
$(\cN(n,\sigma)\cup\cM(n,\sigma)^\deg)^\sharp\subseteq\cM(n,\sigma)^\sharp$ is a trivial cofibration in $\Mset$ (see Remark \ref{a3re:Quillen} below). The lemma then follows from the fact that trivial cofibrations in $\Mset$ are stable under smash products with cofibrations \cite{HTT}*{Corollary 3.1.4.3}.
\end{proof}

We say that a square in a category $\cC$ is an \emph{absolute pullback} (resp.\ \emph{absolute pushout}) if every functor $F\colon \cC\to\cD$
carries the square to a pullback (resp.\  pushout) square in $\cD$.

\begin{lem}\label{a2le:retract}
Let $\cC$ be a category. Given a commutative diagram in $\cC$
\[
\xymatrix{
X\ar[d]_f\ar[r]^{s} & Y\ar[r]^r\ar[d]^{g} & X\ar[d]^f\\
X'\ar[r]^{s'} & Y'\ar[r]^{r'} & X'
}
\]
in which both horizontal compositions are identities and $g$ is a monomorphism, then the square on the left is a pullback square. In particular, if $g$ is a split monomorphism, then the square on the left is an absolute pullback.
\end{lem}

\begin{proof}
The second assertion follows immediately from the first one. To show the first assertion, let $a\colon W\to X'$ and $b\colon W\to Y$ be morphisms satisfying $s'a=gb$. If $c\colon W\to X$ is a morphism satisfying $fc=a$ and $sc=b$, then we have $c=rsc=rb$. Conversely, we have $f(rb)=r'gb=r's'a=a$ and $s(rb)=b$. The last equality follows from $gsrb=s'frb=s'a=gb$, since $g$ is a monomorphism.
\end{proof}

\begin{lem}\label{a2le:absolute}
In $\del_{/K}$, pushouts of epimorphisms by epimorphisms are absolute pushouts.
\end{lem}

In the case of $\del\simeq \del_{/\Delta^0}$ the lemma is \cite{JT}*{Theorem 1.2.1} (see also \cite{GZ}*{{\Sec}II.3.2}). The proof in the general case is similar. We include a proof for completeness.

\begin{proof}
Factorizing epimorphisms into compositions of $s^n_i$'s (for the notation see the beginning of \Sec\ref{a1ss}), we are reduced to the case of the pushout of $s^n_i$ by $s^n_j$, where $i\le j$. This case follows from Lemma \ref{a2le:retract} applied to the diagram
\[
\xymatrix{
(n,\tau)\ar[d]_{s^{n-1}_{j-1}}\ar[r]^{d^{n+1}_i} & (n+1,\sigma)\ar[r]^{s^n_i}\ar[d]^{s^n_j} & (n,\tau)\ar[d]^{s^{n-1}_{j-1}}\\
(n-1,\tau')\ar[r]^{d^n_i} &(n,\sigma')\ar[r]^{s^{n-1}_i} & (n-1,\tau')
}
\]
for $i<j$, and to the diagram
\[
\xymatrix{
(n,\tau)\ar[d]_{\id} \ar[r]^{d^{n+1}_i} & (n+1,\sigma)\ar[d]^{s^n_i}\ar[r]^{s^n_i} & (n,\tau)\ar[d]^{\id}\\
(n,\tau)\ar[r]^{\id} & (n,\tau)\ar[r]^{\id} & (n,\tau)
}
\]
for $i=j$.
\end{proof}

\begin{lem}\label{a2le:pushout_anodyne}
Consider a pushout square
\[
\xymatrix{
f_0\ar[r]^u\ar[d] & f_1\ar[d] \\ f_2\ar[r]&f_3
}
\]
in $(\Sset)^{[1]}$, where $f_i\colon Y_i\to X_i$. Assume that $f_0$, $f_1$, $f_2$ are anodyne (resp.\ right anodyne) and the map $X_0\coprod_{Y_0}Y_1\to X_1$ induced by $u$ is a monomorphism. Then $f_3$ is anodyne (resp.\ right anodyne).
\end{lem}

\begin{proof}
The square corresponds to a cube in $\Sset$, which can be decomposed into a commutative diagram
\[
\xymatrix{
Y_0\ar[rr]\ar[rd]^{f_0}\ar[dd]&&Y_1\ar@{=}[rr]\ar[rd]^{g_0}\ar@{-->}[dd] && Y_1\ar[rd]^{f_1}\ar@{-->}[dd]\\
&X_0\ar[dd]\ar[rr] && Z_0\ar[rr]^(.3){a_0}\ar[dd] && X_1\ar[dd]\\
Y_2\ar[rd]^{f_2}\ar@{-->}[rr] && Y_3\ar[rd]^{g_2}\ar@{==}[rr] && Y_3\ar@{-->}[rd]^{f_3}\\
&Y_2\ar[rr] && Z_2\ar[rr]^{a_2} && X_3,
}
\]
where the top and bottom squares on the left are pushout squares, and the front and back squares are pushout squares. For $i=0,2$, the map $g_i$ is a pushout of $f_i$, hence is anodyne (resp.\ right anodyne). Since $f_1$ is anodyne (resp.\ right anodyne) and $a_0$ is a monomorphism by assumption, $a_0$ is anodyne by the two-out-of-three property of weak equivalences (resp.\ right anodyne by \cite{HTT}*{Proposition 4.1.1.3}). Thus the pushout $a_2$ of $a_0$ is anodyne (resp.\  right anodyne). Therefore, $f_3=a_2g_2$ is anodyne (resp.\ right anodyne).
\end{proof}

We now give the form of the construction technique as used in Sections \ref{a4ss} and \ref{a5ss}.

\begin{proposition}\label{a1pr:extension}
Let $K$ be a simplicial set, $\cC$ an $\infty$-category, and $i\colon A\hookrightarrow B$ a monomorphism of simplicial sets. Denote by $f\colon\Fun(B,\cC)\to\Fun(A,\cC)$ the map induced by $i$. Let $\cN$ be an object of $(\Sset)^{(\del_{/K})^{op}}$ such that $\cN(\sigma)$ is weakly contractible for all $\sigma\in\del_{/K}$, and let $\Phi\colon \cN\to \Map[K,\Fun(B,\cC)]$ be a morphism such that $\Map[K,f]\circ\Phi\colon \cN\to \Map[K,\Fun(A,\cC)]$ factorizes through $\Delta^0_{(\del_{/K})^{op}}$ to give a functor $a\colon K\to\Fun(A,\cC)$. Then there exists $b\colon K\to\Fun(B,\cC)$ such that $b\circ p=a$ and for every map $g\colon K'\to K$ and every global section $\nu\in \Gamma(g^*\cN)_0$, the maps $b\circ g$ and $g^*\Phi\circ\nu\colon K'\to\Fun(B,\cC)$ are homotopic over $\Fun(A,\cC)$. Here $g^*\Phi\colon g^*\cN\to g^*\Map[K,\Fun(B,\cC)]=\Map[K',\Fun(B,\cC)]$.
\end{proposition}

In the statement we have implicitly used isomorphisms provided by Remark\ref{a2re:functors} such as $\Map^\sharp(K,\Fun(A,\cC)^\natural)\simeq\Gamma(\Map[K,\Fun(A,\cC)])$.

\begin{proof}
Since $\Fun(-,\cC)^\natural=(\cC^\natural)^{(-)^\flat}$, the map $f^\natural\colon \Fun(B,\cC)^\natural\to \Fun(A,\cC)^\natural$ is a
fibration in $\Mset$ for the Cartesian model structure by Lemma \ref{a1le:marked_fibration} below. Thus by Proposition
\ref{a1le:injective_fibration}, $\Map[K,f^\natural]$ is an injective fibration. We let $\Map[K,f^{\natural}]_a$ denote the fiber of
$\Map[K,f^{\natural}]$ at $a$, which is injectively fibrant. By Lemma \ref{a1le:component} (1), $\Gamma_\Phi(\Map[K,f^{\natural}]_a)$ is a
(nonempty) connected component of $\Gamma(\Map[K,f^{\natural}]_a)$. Note that $\Gamma(\Map[K,f^\natural]_a)$ is the fiber of
$\Gamma(\Map[K,\Fun(B,\cC)])\to \Gamma(\Map[K,\Fun(A,\cC)])$ at $a$. Any vertex of $\Gamma_\Phi(\Map[K,f^{\natural}]_a)$ then provides the desired $b$. Indeed, for given $g$ and $\nu$, both $b\circ g$ and $g^*\Phi\circ\nu$ are given by vertices of the connected Kan complex
$\Gamma_{g^*\Phi}(\Map[K',f^\natural]_{g^*a})$, which are necessarily equivalent.
\end{proof}

\begin{lem}\label{a1le:marked_fibration}
Let $X\to Y$ be a fibration and $i\colon A\to B$ be a cofibration in $\Mset$ with respect to the Cartesian model structure. Then the induced map
\[
X^B\to X^A\times_{Y^A} Y^B
\]
is a fibration in $\Mset$ with respect to the Cartesian model structure.
\end{lem}

\begin{proof}
This follows immediately from the fact that trivial cofibrations in $\Mset$ are stable under smash product with cofibrations \cite{HTT}*{Corollary 3.1.4.3}.
\end{proof}

\subsection{Restricted multisimplicial nerves}
\label{a3ss}

In this section, we introduce several notions related to multisimplicial sets. The restricted multisimplicial nerve (Definition \ref{a3de:restricted_nerve}) of a multi-tiled simplicial set (Definition \ref{a3de:multitiled_simplicial}) will play an essential role in the statements of our theorems.

\begin{definition}[Multisimplicial set]\label{a3de:multisimplicial}
Let $I$ be a set. We define the category of \emph{$I$-simplicial sets} to be $\Sset[I]\coloneqq\Fun((\del^I)^{op},\Set)$, where $\del^I\coloneqq\Fun(I,\del)$. For an integer $k\ge 0$, we define the category of $k$-simplicial sets to be $\Sset[k]\coloneqq\Sset[I]$, where $I=\{1,\dots,k\}$. We identify $\Sset[1]$ with $\Sset$.
\end{definition}

We denote by $\Delta^{n_i\mid i\in I}$ the $I$-simplicial set represented by the object $([n_i])_{i\in I}$ of $\del^I$. For an $I$-simplicial set $S$, we denote by $S_{n_i\mid i\in I}$ the value of $S$ at the object $([n_i])_{i\in I}$ of $\del^I$. An $(n_i)_{i\in I}$-\emph{simplex} of an $I$-simplicial set $S$ is an element of $S_{n_i\mid i\in I}$. By the Yoneda lemma, there is a canonical bijection between the set $S_{n_i\mid i\in I}$ and the set of maps from $\Delta^{n_i\mid i\in I}$ to $S$.

For $J\subseteq I$, composition with the partial opposite functor $\del^I\to\del^I$ sending $(\dots,P_{j'},\dots,P_j,\dots)$ to $(\dots,P_{j'},\dots,P_j^{op},\dots)$ (taking $op$ for $P_j$ when $j\in J$) defines a functor $\op^I_J\colon \Sset[I]\to \Sset[I]$. We put $\Delta^{n_i\res i\in I}_J\coloneqq\op^I_J\Delta^{n_i\res i\in I}$. Although $\Delta^{n_i\res i\in I}_J$ is isomorphic to $\Delta^{n_i\res i\in I}$, it will be useful in specifying the variance of many constructions. When $J=\emptyset$, $\op^I_\emptyset$ is the identity functor so that $\Delta^{n_i\res i\in I}_\emptyset=\Delta^{n_i\res i\in I}$.

\begin{remark}
The category $\Sset[I]$ is Cartesian-closed. In fact, for two $I$-simplicial sets $X$ and $Y$, the internal mapping object $\Map(Y,X)$ is an $I$-simplicial set such that $\Hom_{\Sset[I]}(Z,\Map(Y,X))\simeq \Hom_{\Sset[I]}(Z\times Y,X)$ for every $Z\in\Sset[I]$. We have $\op^I_J \Map(Y,X)\simeq \Map(\op^I_J Y,\op^I_J X)$.
\end{remark}

\begin{definition}
Let $I,J$ be two sets.
\begin{enumerate}
  \item Let $f\colon J\to I$ be a map of sets. Composition with $f$ defines a functor $\del^f\colon \del^I\to \del^J$. Composition with $(\del^f)^{op}$ induces a functor $(\del^f)^*\colon \Sset[J]\to\Sset[I]$, which has a right adjoint $(\del^f)_*\colon \Sset[I]\to\Sset[J]$. We will now look at two special cases.

  \item Let $f\colon J\to I$ be an injective map. The functor $\del^f$ has a right adjoint $c_f\colon \del^J\to \del^I$ given by $c_f(F)_i=F_j$ if $f(j)=i$ and $c_f(F)_i=[0]$ if $i$ is not in the image of $f$. The functor $(\del^f)_*$ can be identified with the functor $\epsilon^f$ induced by composition with $(c_f)^{op}$. If $J=\{1,\dots,k'\}$, we write $\epsilon^I_{f(1)\dotsm f(k')}$ for $\epsilon^f$.

  \item Consider the map $f\colon I\to \{1\}$. Then $\delta_I\coloneqq\del^f\colon \del\to\del^I$ is the diagonal functor, and composition with $(\delta_I)^{op}$ induces the \emph{diagonal functor} $\delta^*_I=(\del^f)^*\colon \Sset[I]\to\Sset$. We define
      \[
      \Delta^{[n_i]_{i\in I}}\coloneqq\delta_I^*\Delta^{n_i\res i\in I} =\prod_{i\in I}\Delta^{n_i}.
      \]
      We define the \emph{multisimplicial nerve} functor to be the right adjoint $\delta^I_*\colon \Sset\to\Sset[I]$ of $\delta^*_I$. An $(n_i)_{i\in I}$-simplex of $\delta^I_*X$ is given by a map $\Delta^{[n_i]_{i\in I}}\to X$.

  \item For $J\subseteq I$, we define the \emph{twisted diagonal functor} $\delta^*_{I,J}$ as $\delta^*_I\circ\op^I_J\colon\Sset[I]\to\Sset$. We define
      \[
      \Delta^{[n_i]_{i\in I}}_J\coloneqq \delta_{I,J}^*\Delta^{n_i\res i\in I}=\delta_I^*\Delta^{n_i\res i\in I}_J
      =\(\prod_{i\in I-J}\Delta^{n_i}\)\times\(\prod_{j\in J}(\Delta^{n_j})^{op}\).
      \]
      When $J=\emptyset$, we have $\delta^*_{I,\emptyset}=\delta^*_I$ and $\Delta^{[n_i]_{i\in I}}_{\emptyset}=\Delta^{[n_i]_{i\in I}}$.
\end{enumerate}
\end{definition}

When $I=\{1,\dots,k\}$, we write $k$ instead of $I$ in the previous notation. For example, in (2) we have $(\epsilon^k_jK)_n=K_{0,\dots,n,\dots,0}$, where $n$ is at the $j$-th position and all other indices are $0$. In (3) we have $\delta^*_k\colon \Sset[k]\to\Sset$ defined by $(\delta^*_kX)_n=X_{n,\dots,n}$.

\begin{remark}\label{a3re:eps}
For any map $f\colon J\to I$, we have $\del^f\circ \delta_I=\delta_J$, so that $(\del^f)_*\circ \delta^I_*\simeq \delta^J_*$. In particular, for $f$ injective, we have $\epsilon^f\circ \delta^I_*\simeq \delta^J_*$. For $\alpha\in I$, we have $\epsilon^I_\alpha\circ \delta^I_*\simeq \id_{\Sset}$.
\end{remark}

\begin{remark}
For $f\colon J\to I$ injective, we have $\del^f\circ c_f=\id_{\del^J}$, so that $\epsilon^f\circ (\del^f)^*=\id_{\Sset[J]}$. The counit transformation $(\del^f)^* \circ \epsilon^f\to \id_{\Sset[I]}$ is a monomorphism. Indeed, for each object $P$ of $\del^I$, the unit morphism $P\to (c_f\circ \del^f)(P)$ admits a section. Applying the functor $\delta_I^*$, we obtain a monomorphism $\delta_J^*\circ \epsilon^f\to \delta_I^*$.
\end{remark}

\begin{remark}\label{a3re:adjunction}
For every map $f\colon J\to I$, the adjunction formula for presheaves provides a canonical isomorphism
\[
\Map(Y,(\del^f)_*X)\simeq (\del^f)_*\Map((\del^f)^*Y,X)
\]
for every $I$-simplicial set $X$ and every $J$-simplicial set $Y$. This map is the composite map
\begin{align*}
\Map(Y,(\del^f)_*X)&\xrightarrow{(\del^f)^*}(\del^f)_*\Map((\del^f)^*Y,(\del^f)^*(\del^f)_*X) \\
& \to (\del^f)_*\Map((\del^f)^*Y,X),
\end{align*}
where the second map is induced by the counit map $(\del^f)^*(\del^f)_*X\to X$.

Specializing to the case of $\delta_I$ and applying the functor $\epsilon^I_\alpha$, where $\alpha\in I$, we get an isomorphism
\[
\epsilon^I_\alpha\Map(X,\delta^I_*S)\simeq \Map(\delta_I^*X,S)
\]
for every $I$-simplicial set $X$ and every simplicial set $S$, which is the composite map
\[
\epsilon^I_\alpha\Map(X,\delta^I_*S)\xrightarrow{\delta_I^*} \Map(\delta_I^*X,\delta_I^*\delta^I_*S)\to\Map(\delta_I^*X,S).
\]
\end{remark}

\begin{definition}[Exterior product]
Let $I=\coprod_{j\in J}I_j$ be a partition. We define a functor
\[
\boxtimes_{j\in J}\colon \prod_{j\in J}\Sset[I_j]\to\Sset[I]
\]
by the formula $\boxtimes_{j\in J}S^j=\prod_{j\in J}(\del^{\iota_j})^* S^j$, where $\iota_j\colon I_j\hookrightarrow I$ is the inclusion. For
$J=\{1,\dots,m\}$, $I_j=\{1,\dots,k_j\}$, we define
\[
-\boxtimes\dots \boxtimes -\colon \Sset[k_1]\times\dots \times\Sset[k_m]\to\Sset[k].
\]
by $(S^1\boxtimes\dots\boxtimes S^m)_{n^1_1,\dots,n^1_{k_1},\dots,n^m_{1},\dots,n^m_{k_m}}=S^1_{n^1_1,\dots,n^1_{k_1}}\times\dots\times S^m_{n^m_{1},\dots,n^m_{k_m}}$.
\end{definition}

We have the isomorphisms $\boxtimes_{i\in I}\Delta^{n_i}\simeq\Delta^{n_i\mid i\in I}$ and $\delta_I^*\boxtimes_{j\in J}S^j\simeq\prod_{j\in J}\delta_{I_j}^*S^j$.

\begin{remark}
For a map $f\colon J\to I$, we have $(\del^f)^*\Delta^{n_i\mid i\in I}\simeq\boxtimes_{i\in I}\Delta^{[n_j]_{j\in f^{-1}(i)}}$, so that an
$(n_j)_{j\in J}$-simplex of $(\del^f)_* X$ is given by a map $\boxtimes_{i\in I} \Delta^{[n_j]_{j\in f^{-1}(i)}}\to X$.
\end{remark}

We next turn to restricted variants of the multisimplicial nerve functor $\delta^I_*$. We start with restrictions on edges.

\begin{definition}[Multi-marked simplicial set]\label{a3de:multimarked_simplicial}
An \emph{$I$-marked simplicial set} (resp.\ \emph{$I$-marked $\infty$-category}) is the data $(X,\cE=\{\cE_i\}_{i\in I})$, where $X$ is a simplicial set (resp. an $\infty$-category) and, for all $i\in I$, $\cE_i$ is a set of edges of $X$ which contains every degenerate edge. The data $\cE$ is sometimes called an \emph{$I$-marking} on $X$. A morphism $f\colon(X,\{\cE_i\}_{i\in I})\to (X',\{\cE'_i\}_{i\in I})$ of $I$-marked simplicial sets is a map $f\colon X\to X'$ having the property that $f(\cE_i)\subseteq\cE'_i$ for all $i\in I$. We denote the category of $I$-marked simplicial sets by $\Sset^{I+}$. It is the strict fiber product of $I$ copies of $\Mset$ over $\Sset$.

For a simplicial set $X$ and a subset $J\subseteq I$, we define an $I$-marked simplicial set $X^{\sharp^I_J} =(X,\cE)$ by $(X,\cE_j)=X^\sharp$ for $j\in J$ and $(X,\cE_i)=X^\flat$ for $i\in I-J$. We write $X^{\sharp^I}=X^{\sharp^I_I}$ and $X^{\flat^I}=X^{\sharp^I_\emptyset}$. The functor $\Sset\to \Sset^{I+}$ carrying $X$ to $X^{\sharp^I}$ (resp.\ $X^{\flat^I}$) is a right (resp.\ left) adjoint of the forgetful functor $\Sset^{I+}\to \Sset$.
\end{definition}

Consider the functor $\delta_{I+}^*\colon \Sset[I]\to \Sset^{I+}$ sending $S$ to $(\delta^*_I S,\{\cE_i\}_{i\in I})$, where $\cE_i$ is the set of edges of $\epsilon^I_i S\subseteq\delta_I^*S$. This functor admits a right adjoint $\delta^{I+}_*\colon \Sset^{I+}\to \Sset[I]$. Since $\delta_{I+}^*\Delta^{n_i\mid i\in I}=\prod_{i\in I}(\Delta^{n_i})^{\sharp^I_{\{i\}}}$, the functor $\delta^{I+}_*$ carries $(X,\{\cE_i\}_{i\in I})$ to the $I$-simplicial subset of $\delta^I_* X$ whose $(n_i)_{i\in I}$-simplices are maps $\Delta^{[n_i]_{i\in I}}\to X$ such that for every $j\in I$ and every map $\Delta^1\to \epsilon^I_j\Delta^{n_i\res i\in I}$, the composition
\[
\Delta^1\to\epsilon^I_j\Delta^{n_i\res i\in I} \to\Delta^{[n_i]_{i\in I}} \to X
\]
is in $\cE_j$. We have $\delta^I_*(X)=\delta^{I+}_*(X^{\sharp I})$. When 
$I=\{1,\dots,k\}$, we use the notation $\Sset^{k+}$, $\delta_{k+}^*$ and 
$\delta^{k+}_*$.\footnote{In particular, $\Sset^{2+}$ in our notation is 
$\Sset^{++}$ in \cite{HA}*{Definition 4.7.4.2}.} 

\begin{definition}[Restricted multisimplicial nerve]
We define the \emph{restricted $I$-simplicial nerve} of an $I$-marked simplicial set $(X,\cE=\{\cE_i\}_{i\in I})$ to be the $I$-simplicial set
\[
X_{\cE}=X_{\{\cE_i\}_{i\in I}}\coloneqq\delta^{I+}_*(X,\{\cE_i\}_{i\in I}).
\]
In particular, for any marked simplicial set $(X,\cE)$, the simplicial set $X_\cE$ is the simplicial subset of $X$ spanned by the edges in $\cE$.
\end{definition}

\begin{remark}\label{a3re:Quillen}
The functor $\delta^*_{1+}\colon \Sset\to \Mset$ carries $S$ to $S^\sharp$. The functor $\delta^{1+}_*\colon \Mset\to \Sset$ carries $(X,\cE)$ to the simplicial subset of $X$ consisting of all simplices whose edges are all marked edges. In other words, $X_\cE=\delta^{1+}_*(X,\cE)$ is the largest simplicial subset $S\subseteq X$ such that $S^\sharp \subseteq (X,\cE)$. We have $\delta^{1+}_*\simeq\Map^\sharp((\Delta^0)^\flat,-)$. For objects $X$ and $Y$ of $\Mset$, we have $\Map^\sharp(X,Y)=\delta^{1+}_*(Y^X)$.

The pair $(\delta^*_{1+},\delta^{1+}_*)$ is a Quillen adjunction for the Kan model structure on $\Sset$ and the Cartesian model structure on $\Mset$. This is a special case of Remark \ref{a2re:Quillen} but we can also check this easily as follows. Clearly $\delta^*_{1+}$ preserves cofibrations. To see that it also preserves trivial cofibrations, note that for any anodyne map of simplicial sets $T\to S$ and any $\infty$-category $\cC$, the induced map $\Map^\sharp(S^\sharp,\cC^\natural)\to\Map^\sharp(T^\sharp,\cC^\natural)$ is a trivial Kan fibration.
\end{remark}

Next we consider restrictions on squares. By a \emph{square} of a simplicial set $X$, we mean a map $\Delta^1\times \Delta^1 \to X$. The transpose of a square is obtained by swapping the two $\Delta^1$'s. Composition with the maps $\id\times d^1_0,\id\times d^1_1\colon\Delta^1\simeq \Delta^1\times\Delta^0 \to \Delta^1\times \Delta^1$ induce maps $\Hom(\Delta^1\times\Delta^1,X)\to X_1$ and composition with the map $\id\times s^0_0 \colon\Delta^1\times \Delta^1\to \Delta^1\times \Delta^0\simeq \Delta^1$ induces a
map $X_1\to \Hom(\Delta^1\times \Delta^1,X)$.

\begin{definition}[Multi-tiled simplicial set]\label{a3de:multitiled_simplicial}
An \emph{$I$-tiled simplicial set} (resp.\ \emph{$I$-tiled $\infty$-category}) is the data $(X,\cE=\{\cE_i\}_{i\in I},\cQ=\{\cQ_{ij}\}_{i,j\in I, i\neq j})$, where $(X,\cE)$ is an $I$-marked simplicial set (resp.\ $\infty$-category) and, for all $i,j\in I$, $i\neq j$, $\cQ_{ij}$ is a set of squares of $X$ such that $\cQ_{ij}$ and $\cQ_{ji}$ are obtained from each other by transposition of squares, and $\id\times d^1_0$, $\id\times d^1_1$ induce maps $\cQ_{ij}\to \cE_i$, and $\id\times s^0_0$ induces $\cE_i\to \cQ_{ij}$. A morphism $f\colon(X,\cE,\cQ)\to(X',\cE',\cQ')$ of $I$-tiled simplicial sets is a map $f\colon X\to X'$ having the property that $f(\cE_i)\subseteq f(\cE'_i)$ and $f(\cQ_{ij})\subseteq\cQ'_{ij}$ for all $i,j$. We denote the category of $I$-tiled simplicial sets by $\Sset^{I\square}$. The data $\cT=(\cE,\cQ)$ is sometimes called an \emph{$I$-tiling} on $X$. For brevity, we adopt the conventions $\cT_i=\cE_i$ and $\cT_{ij}=\cQ_{ij}$.
\end{definition}

\begin{remark}\label{a3re:multitile}
Note that $\cE_i$ is the image of $\cQ_{ij}$ under either of the maps $\cQ_{ij}\to \cE_i$ given by $\id\times d^1_0$ and $\id\times d^1_1$. Moreover, $f(\cQ_{ij})\subseteq \cQ'_{ij}$ implies $f(\cE_i)\subseteq\cE'_i$ and $f(\cE_j)\subseteq \cE'_j$.
\end{remark}

Consider the functor $\delta_{I\square}^*\colon \Sset[I]\to\Sset^{I\square}$ carrying $S$ to $(\delta^*_{I+} S,\cQ)$, where $\cQ_{ij}$ is the image of the injection
\begin{align*}
(\epsilon^I_{ij}S)_{11}&=\Hom_{\Sset[2]}(\Delta^{1,1},\epsilon^I_{ij}S) \\
&\xrightarrow{\delta^*_2}
\Hom_{\Sset}(\Delta^1\times \Delta^1,\delta^*_2\epsilon^I_{ij}S)\subseteq \Hom_{\Sset}(\Delta^1\times \Delta^1,\delta^*_I S).
\end{align*}
This functor admits a right adjoint $\delta^{I\square}_*\colon \Sset^{I\square}\to \Sset[I]$ carrying $(X,\cE,\cQ)$ to the $I$-simplicial subset of $\delta^{I+}_*(X,\cE)\subseteq \delta^I_* X$ whose $(n_i)_{i\in I}$-simplices are maps $\Delta^{[n_i]_{i\in I}}\to X$ satisfying the additional condition that for every pair of elements $j,k\in I$, $j\neq k$, and every map $\Delta^1\boxtimes\Delta^1\to\epsilon^I_{jk}\Delta^{n_i\res i\in I}$, the composition
\[
\Delta^1\times \Delta^1\to\delta_2^*\epsilon^I_{jk}\Delta^{n_i\res i\in I}\to\Delta^{[n_i]_{i\in I}} \to X
\]
is in $\cQ_{jk}$. When $I=\{1,\dots,k\}$, we use the notation $\Sset^{k\square}$, $\delta^*_{k\square}$, $\delta_*^{k\square}$.

\begin{definition}[Restricted multisimplicial nerve]\label{a3de:restricted_nerve}
We define the \emph{restricted $I$-simplicial nerve} of an $I$-tiled simplicial set $(X,\cT)$ to be the $I$-simplicial set
$\delta_*^{I\square}(X,\cT)$.
\end{definition}

\begin{notation}\label{a3no:cartesian}
The underlying functor $\sfU\colon \Sset^{I\square}\to \Sset^{I+}$ carrying $(X,\cE,\cQ)$ to $(X,\cE)$ admits a left adjoint $\sfV\colon\Sset^{I+}\to\Sset^{I\square}$ and a right adjoint $\sfW\colon \Sset^{I+}\to\Sset^{I\square}$, which can be described as follows.
\begin{itemize}
  \item We have $\sfV(X,\cE)=(X,\cE,\cQ)$, where $\cQ_{ij}$ is the union of the image of $\cE_i$ under $-\circ(\id \times s_0^0)$ and the
      image of $\cE_j$ under $-\circ (s_0^0\times \id)$.

  \item For sets of edges $\cE_1$ and $\cE_2$ of $X$, we denote by $\cE_1*_X\cE_2$ the set of squares $f\colon\Delta^1\times\Delta^1\to X$
      \[
      \xymatrix{f(0,0)\ar[r] \ar[d] & f(0,1)\ar[d]\\f(1,0)\ar[r] & f(1,1)
      }
      \]
      such that the vertical edges $f\circ (\id \times d^1_\alpha)$, $\alpha=0,1$ belong to $\cE_1$ and the horizontal edges $f\circ (d^1_\alpha\times \id)$, $\alpha=0,1$ belong to $\cE_2$. We have $\sfW(X,\cE)=(X,\cE,\cQ)$, where $\cQ_{ij}=\cE_i*_X\cE_j$.
\end{itemize}
We have $\delta^*_{I+}\simeq\sfU\circ\delta^*_{I\square}$ and $\delta_*^{I+}\simeq \delta_*^{I\square}\circ\sfW$.
\end{notation}

\begin{definition}[Cartesian multisimplicial nerve]\label{a3de:cartesian_nerve}
If $\cC$ is an $\infty$-category and $\cE_1$, $\cE_2$ are sets of edges of $\cC$, we denote by $\cE_1*^\cart_{\cC}\cE_2$ the subset of $\cE_1*_{\cC}\cE_2$ consisting of Cartesian squares. For an $I$-marked $\infty$-category $(\cC,\cE)$, we denote by $(\cC,\cE^\cart)$ the $I$-tiled $\infty$-category such that $\cE^\cart_i=\cE_i$ for $i\in I$ and $\cE^\cart_{ij}=\cE_i*_{\cC}^\cart\cE_j$ for $i,j\in I$ and $i\neq j$. We define the \emph{Cartesian $I$-simplicial nerve} of an $I$-marked $\infty$-category $(\cC,\cE)$ to be
\[
\cC^\cart_{\cE}\coloneqq\delta^{I\square}_*(\cC,\cE^\cart).
\]
\end{definition}

For reference in later sections, we define a few properties of sets of edges and squares. As in the definition of marked simplicial sets, we are mainly interested in those sets of edges that contain all degenerate edges. However, many sets of squares of interest, when regarded as sets of edges in suitable simplicial sets, do not contain all degenerate edges. For this reason, we allow sets of edges not containing all degenerate edges in the definitions below.

\begin{definition}\label{a3de:edges}
Let $X$ be a simplicial set, and let $\cE$ be a set of edges of $X$. We say that $\cE$ is
\begin{enumerate}
    \item \emph{composable} if every map $\Lambda^2_1\to X$ whose restrictions to $\Delta^{\{0,1\}}$ and to $\Delta^{\{1,2\}}$ are in $\cE$ extends to a $2$-simplex $\Delta^2\to X$ whose restriction to $\Delta^{\{0,2\}}$ is in $\cE$.

    \item \emph{stable under composition} for every $2$-simplex $\sigma$ of $X$ such that $\sigma\circ d^2_0,\sigma\circ d^2_2\in\cE$, we have $\sigma\circ d^2_1\in \cE$.
\end{enumerate}
\end{definition}

If $\cE$ contains every degenerate edge, then (1) above is equivalent to every one of the following conditions
\begin{itemize}
   \item $(X,\cE)$ has the extension property with respect to the inclusion $(\Lambda^2_1)^\sharp\subseteq (\Delta^2)^\sharp$;

   \item $X_\cE$ has the extension property with respect to the inclusion $\Lambda^2_1\subseteq \Delta^2$;
\end{itemize}
and (2) above is equivalent to every one of the following conditions
\begin{itemize}
   \item $(X,\cE)$ has the extension property with respect to the inclusion
       \[
       (\Lambda^2_1)^\sharp\coprod_{(\Lambda^2_1)^\flat}
       (\Delta^2)^\flat\subseteq (\Delta^2)^\sharp;
       \]

   \item $X_\cE\to X$ has the right lifting property with respect to the inclusion $\Lambda^2_1\subseteq \Delta^2$;

   \item $X_\cE\to X$ is an inner fibration.
\end{itemize}
If $X$ has the extension property with respect to $\Lambda^2_1\subseteq\Delta^2$, then (2) implies (1).

\begin{definition}\label{a3de:admissible_edge}
Let $\cC$ be an $\infty$-category and let $\cE$, $\cF$ be two sets of edges of $\cC$. We say that $\cE$ is
\begin{enumerate}
   \item \emph{stable under homotopy} if for $e\in\cE$ and $f\in\cC_1$ that have the same image in $\rh\cC$, we have $f\in\cE$;

   \item \emph{stable under equivalence} if for $e\in\cE$ and $f\in\cC_1$ that are equivalent as objects of $\Fun(\Delta^1,\cC)$, we have $f\in\cE$;

   \item \emph{stable under pullback by $\cF$} if for every Cartesian square in $\cC$ of the form
       \[
       \xymatrix{
       y' \ar[d]_{e'} \ar[r] & y \ar[d]^e \\
       x' \ar[r]^f & x
       }
       \]
       with $e\in\cE$ and $f\in\cF$, we have $e'\in\cE$;

   \item \emph{stable under pullback} (see \cite{HTT}*{Notation 6.1.3.4}) if it is stable under pullback by $\cC_1$;

   \item \emph{admissible} if $\cE$ contains every degenerate edge of $\cC$, is stable under pullback, and for every $2$-simplex of $\cC$ of the form
       \begin{align}\label{a3eq:2cell}
       \xymatrix{
       &y\ar[rd]^p \\ z\ar[ru]^q\ar[rr]^r && x
       }
       \end{align}
       with $p\in\cE$, we have $q\in\cE$ if and only if $r\in\cE$.
\end{enumerate}
\end{definition}

In the above definition, (5) implies (4); (4) implies (3); (2) implies (1). Moreover, if $\cF$ contains every edge of $\cC$ that is an equivalence (resp.\ degenerate), then (3) implies (2) (resp.\ (1)). If $\cE$ satisfies (3) with $\cE$ and $\cF$ each containing all degenerate edges of $\cC$, then $\cE$ contains all equivalences of $\cC$. The last condition in (5) is equivalent to saying that $X_\cE\to X$ is a right fibration \cite{HTT}*{Definition 2.0.0.3}.

\begin{remark}\label{a3re:admissible}
If $\cC$ admits pullbacks, then $\cE$ is admissible if and only if it contains every degenerate edge of $\cC$ and is stable under composition,
pullback, and taking diagonal in $\cC$. The ``only if'' part is clear. For the ``if'' part, note that in the $2$-simplex \eqref{a3eq:2cell}, $q$ is a composition of
\[
z\to z\times_x y \to y,
\]
where the first morphism is a pullback of the diagonal $y\to y\times_x y$ of $p$ and the second morphism is a pullback of $r$ by $p$. Indeed, we have a diagram with pullback squares
\[
\xymatrix{
z\ar[r]^q\ar[d] & y\ar[d]\\
z\times_x y\ar[r]\ar[d] & y\times_x y \ar[r]\ar[d] & y\ar[d]^p\\
z\ar[r]^q & y\ar[r]^p & x.
}
\]
\end{remark}

In an $\infty$-category $\cC$, a set of edges $\cE$ is composable if and only if its image in $\rh\cC$ is stable under composition. Thus if $\cE$ is composable and stable under homotopy, then $\cE$ is stable under composition. The converse holds if $\cE$ contains every degenerate edge. In the next section, we will need the following extension property of composable sets of edges.

\begin{lem}\label{a3le:comp}
Let $I$ be a set. Let $(B,\cF)$ be an $I$-marked simplicial set and $(\cC,\cE)$ an $I$-marked $\infty$-category.  Let $A\subseteq B$ be a
categorical equivalence such that for each $i\in I$, $\cF_i$ is contained in the smallest set of edges of $B$ containing $\cG_i=A_1\cap \cF_i$ and stable under composition. Assume $\cE_i$ composable for all $i\in I$ and $\cF_i\cap\cF_j\subseteq A_1$ for all $i,j\in I$, $i\neq j$. Then $(\cC,\cE)$ has the extension property with respect to $(A,\cG)\subseteq (B,\cF)$.
\end{lem}

\begin{proof}
Let $f\colon (A,\cG)\to (\cC,\cE)$ be a map of $I$-marked simplicial sets. Choose an extension $g\colon B\to \cC$ of $f$. For each $i\in I$, let $\cE'_i$ denote the set of edges of $\cC$ that are homotopic to some edge of $\cE_i$. Then $\cE'_i$ is stable under composition, and hence so is its inverse image under $g$. Thus $g$ induces $(B,\cF)\to(\cC,\cE')$. Let $D\subseteq B$ be the union of $A$ and the edges in $\cF$. We construct a map $h_0\colon (D,\cF)\to (\cC,\cE)$ extending $f$ and a natural equivalence $g\res D\to h_0$ extending $\id_f$, by choosing for each edge $e$ in $\cF_i$ but not in $A_1$, a homotopy from $g(e)$ to an edge $h_0(e)$ in $\cE_i$. By \cite{HTT}*{Lemma 2.4.6.3}, $h_0$ extends to a map $(B,\cF)\to (\cC,\cE)$, as desired.
\end{proof}

\begin{definition}\label{a3de:squares}
For a simplicial set $X$, the map
\[
\Hom(\Delta^1\times\Delta^1,X)\to\Hom(\Delta^1, \Map(\Delta^1,X))
\]
carrying $f$ to $a\mapsto(b\mapsto f(a,b))$ (resp.\ $a\mapsto (b\mapsto f(b,a))$) is an isomorphism.
\begin{enumerate}
  \item We say that a set of squares $\cQ$ of $X$ is \emph{stable under composition in the first (resp.\ second) direction} if the resulting
      set of edges of $\Map(\Delta^1,X)$ is stable under composition.
\end{enumerate}
Now let $\cQ$ and $\cQ'$ be sets of squares of an $\infty$-category $\cC$.
\begin{enumerate}\setcounter{enumi}{1}
  \item We say that $\cQ$ is \emph{stable under equivalence} if $\cQ$, when viewed as a set of edges of $\Map(\Delta^1,\cC)$ via the above
      isomorphism, is stable under equivalence.

  \item We say that $\cQ$ is \emph{stable under pullback by $\cQ'$ in the first (resp.\ second) direction}, if $\cQ$ is stable under pullback
      by $\cQ'$ in $\Map(\Delta^1,\cC)$, where $\cQ$ and $\cQ'$ are viewed as sets of edges via the above isomorphism.

  \item We say that $\cQ$ is \emph{stable under pullback in the first (resp.\ second) direction} if (3) holds for $\cQ'=\Fun(\Delta^1\times\Delta^1,\cC)$, the set of all squares of $\cC$.
\end{enumerate}
\end{definition}

By \cite{HTT}*{Corollary 5.1.2.3}, condition (3) means that for any cube in $\cC$ of the form
\begin{equation}\label{a3eq:cube}
\xymatrix{
y'(0)\ar[rr]\ar[rd]\ar[dd]&&y(0)\ar[rd]\ar@{-->}[dd]\\
&y'(1)\ar[dd]\ar[rr] && y(1)\ar[dd]\\
x'(0)\ar[rd]\ar@{-->}[rr] && x(0)\ar@{-->}[rd]\\
&x'(1)\ar[rr] && x(1),
}
\end{equation}
such that the front and back squares are pullback, such that the right square is in $\cQ$, and such that the bottom square is in $\cQ'$, the left square is in $\cQ$. Here we interpret the horizontal and vertical arrows as in the first (resp.\ second) direction and the oblique arrows as in the other direction.

\begin{lem}\label{a3le:cart}
Let $\cC$ be an $\infty$-category. Let $\cQ^\cart$ be the set of all pullback squares of $\cC$. Then the image of $\cQ^\cart$ under each of the
two isomorphisms in Definition \ref{a3de:squares} is an admissible set of edges. In particular, $\cQ^\cart$ is stable under equivalence, stable under composition in both directions, and stable under pullback in both directions.
\end{lem}

\begin{proof}
The last condition in the definition of admissibility is \cite{HTT}*{Lemma 4.4.2.1}. It remains to show the stability under pullback. Consider a cube of the form \eqref{a3eq:cube} in which the front, back, and right squares are pullback. By the ``if'' part of (1), the square with vertices $y'(0)$, $y(1)$, $x'(0)$, $x(1)$ is a pullback square. We conclude by the ``only if'' part of (1).
\end{proof}

\begin{remark}\label{a3re:cart_square}
Let $\cC$ be an $\infty$-category and let $\cE_1$, $\cE_2$, $\cE_3$ be sets of edges of $\cC$. Lemma \ref{a3le:cart} has the following consequences.
\begin{enumerate}
  \item If $\cE_1$ is stable under composition, then $\cE_1*^\cart_\cC\cE_2$ is stable under composition in the first direction.

  \item If $\cE_2$ and $\cE_3$ are stable under pullback by $\cE_1$, then $\cE_2*_\cC\cE_3$ and $\cE_2*_\cC^\cart\cE_3$ are stable under
      pullback by $\cE_1*^\cart_\cC \cE_3$ in the first direction.

  \item If $\cE_3$ is stable under pullback by $\cE_2$, and $\cE_2$ is stable under pullback by $\cE_1$, then $\cE_2*_\cC^\cart\cE_3$ is
      stable under pullback by $\cE_1*_\cC \cE_3$ (and, in particular, by $\cE_1*_\cC^\cart \cE_3$) in the first direction.
\end{enumerate}
\end{remark}

\begin{remark}
Let $\cC$ be an ordinary category, and let $\cE_1,\dots,\cE_k$ be sets of morphisms of $\cC$ stable under composition and containing identity
morphisms. Then $\rN(\cC)_{\cE_1,\dots,\cE_k}$ and $\rN(\cC)^\cart_{\cE_1,\dots,\cE_k}$ can be interpreted as the $k$-fold nerves in the sense of Fiore and Paoli \cite{FP}*{Definition 2.14} of suitable $k$-fold categories. More generally, if $\cQ_{ij}$ are sets of squares stable under composition in both directions such that $(\rN(\cC),\cE,\cQ)$ is a $k$-tiled $\infty$-category, then $\delta^{k\square}_*(\rN(\cC),\cE,\cQ)$ is
the $k$-fold nerve of a suitable $k$-fold category.
\end{remark}

\subsection{Multisimplicial descent}
\label{a4ss}

In this section, we study the map of simplicial sets obtained by composing two directions in a multisimplicial nerve. The main result is Theorem \ref{a4th:multisimplicial_descent}, which is a general criterion for the map to be a categorical equivalence. We then give more specific sufficient conditions in two important special cases: Theorem \ref{a4co:multisimplicial_descent} and Theorem \ref{a4pr:descent}. The latter can be regarded as a generalization of Deligne's result \cite{SGA4}*{Expos\'e xvii, Proposition 3.3.2} (see Remark \ref{a4re:deligne}).

In Deligne's theory, a fundamental role is played by the category of compactifications of a morphism $f$, whose objects are factorizations of $f$ as $p\circ q$, where $p$, $q$ belong respectively to the two classes of morphisms in question. To properly formulate compactifications of simplices of higher dimensions, we introduce a bit of notation.

We identify \emph{partially ordered sets} with ordinary categories in which there is at most one arrow between each pair of objects, by the convention $p\le q$ if and only if there exists an arrow $p\to q$. For every element $p\in P$, we identify the overcategory $P_{/p}$ (resp.\ undercategory $P_{p/}$) with the full partially ordered subset of $P$ consisting of elements $\leq p$ (resp.\ $\geq p$). For $p,p'\in P$, we identify $P_{p//p'}$ with the full partially ordered subset of $P$ consisting of elements both $\geq p$ and $\leq p'$, which is empty unless $p\leq p'$. For a subset $Q$ of $P$, we write $Q_{p/}=Q\cap P_{p/}$, etc.

\begin{notation}
Let $n\geq0$ be an integer. We consider the bisimplicial set $\Delta^{n,n}$ and the partially ordered set $[n]\times [n]$, related by the natural isomorphisms of simplicial sets $\delta_2^*\Delta^{n,n}\simeq\Delta^n\times\Delta^n\simeq\rN([n]\times[n])$. We enumerate their vertices by coordinates $(i,j)$ for $0\leq i,j\leq n$. We define $\Cpt^n\subseteq\Delta^{n,n}$ to be the bisimplicial subset obtained by the vertices $(i,j)$ with $0\leq i\leq j\leq n$. We define $\RCpt^n\subseteq[n]\times [n]$ to be the full partially ordered subset spanned by $(i,j)$ with $0\leq i\leq j\leq n$. We have
\begin{align*}
\delta_2^*\Cpt^n\simeq \Box^n\subseteq\CCpt^n\coloneqq\rN(\RCpt^n),
\end{align*}
where we have put $\Box^n\coloneqq\bigcup_{k=0}^n\Box_k^n$ and $\Box_k^n\coloneqq\rN(\RCpt^n_{(0,k)//(k,n)})$ is the nerve of the full
partially ordered subset of $[n]\times[n]$ spanned by $(i,j)$ with $0\leq i\leq k\leq j\leq n$.
\end{notation}

Below is the Hasse diagram of $\RCpt^3$, rotated so that the initial object is shown in the upper-left corner. The dashed box represents $\Box^3_1$, while bullets represent elements in the image of the diagonal embedding $[3]\to \RCpt^3$.
\begin{equation}\label{a4eq:Cpt}
\begin{xy}
(0,5)="00"; (5,5)*\cir<1.8pt>{}="01"**\dir{-};
(10,5)*\cir<1.8pt>{}="02"**\dir{-}; (15,5)*\cir<1.8pt>{}="03"**\dir{-};
(5,0)="11"; (10,0)*\cir<1.8pt>{}="12"**\dir{-};
(15,0)*\cir<1.8pt>{}="13"**\dir{-};
(10,-5)="22";
(15,-5)*\cir<1.8pt>{}="23"**\dir{-};
(15,-10)="33"**\dir{-};
"01"; "11"**\dir{-};
"02"; "12"**\dir{-}; "22"**\dir{-};
"03"; "13"**\dir{-}; "23"**\dir{-};
"00"*{\bullet}; "11"*{\bullet}; "22"*{\bullet}; "33"*{\bullet};
(3,-2)="b"; (3,7)**\dir{--}; (17,7)**\dir{--}; (17,-2)**\dir{--}; "b"**\dir{--};
\end{xy}
\end{equation}
Note that the first coordinate is represented vertically and the second one is represented horizontally.

We now review compactifications in ordinary categories.

\begin{definition}\label{a4de:comp}
Let $\cC$ be an ordinary category and let $\cE_1$, $\cE_2$ be two sets of morphisms of $\cC$ containing all identity morphisms. Let $\tau\colon [n]\to\cC$ be a functor, corresponding to a sequence of morphisms
\[
c_0\to c_1\to \dots \to c_n.
\]

We define a \emph{compactification of $\tau$} to be a functor $\sigma\colon\RCpt^n\to \cC$ satisfying the following conditions:
\begin{enumerate}
  \item The functor $\sigma$ carries ``vertical'' morphisms $(i,j)\to(i',j)$ of $\RCpt^n$ into $\cE_1$ and ``horizontal'' morphisms
      $(i,j)\to (i,j')$ into $\cE_2$.

  \item The composition $[n]\to \RCpt^n \xrightarrow{\sigma} \cC$ is $\tau$. Here $[n]\to\RCpt^n$ is the diagonal functor carrying $i$ to $(i,i)$.
\end{enumerate}

Assume that $\cE_\alpha$ is stable under composition for $\alpha=1$ or $\alpha=2$. The compactifications of $\tau$ can be organized into a category $\Kpt^\alpha(\tau)$ as follows. Given two compactifications $\sigma,\sigma'\colon \RCpt^n\to \cC$ of $\tau$, a morphism in
$\Kpt^\alpha(\tau)$ is a natural transformation $\gamma\colon \sigma\to\sigma'$ satisfying the following conditions:
\begin{enumerate}
  \item For every $(i,j)\in \RCpt^n$, the morphism $\gamma(i,j)\colon\sigma(i,j)\to \sigma'(i,j)$ is in $\cE_\alpha$.

  \item The restriction of $\gamma$ to $[n]$ via the diagonal functor is $\id_\tau$.
\end{enumerate}
\end{definition}

For $\alpha=1$ (and $n\le 3$), $\Kpt^1(\tau)$ is the category of compactifications considered by Deligne \cite{SGA4}*{Expos\'e xvii, D\'efinition 3.2.5}.

In the language of $2$-marked simplicial sets, we can reformulate the two conditions (1) in Definition \ref{a4de:comp} as follows. Condition (1) in the definition of compactifications means that the restriction of $\rN(\sigma)\colon \CCpt^n\to \rN(\cC)$ to $\Box^n$ induces a map of $2$-marked simplicial sets $\delta^*_{2+}\Cpt^n\to (\rN(\cC),\cE_1,\cE_2)$. Condition (1) in the definition of morphisms means that the restriction of $\rN(\gamma)\colon \Delta^1\times \RCpt^n\to \rN(\cC)$ to $\Delta^1\times\Box^n$, where $\gamma$ is regarded as a functor $[1]\times\RCpt^n\to \cC$, induces a map of $2$-marked simplicial sets $(\Delta^1)^{\sharp^{2}_{\{\alpha\}}}\times \delta^*_{2+}\Cpt^n\to(\rN(\cC),\cE_1,\cE_2)$. See Definition \ref{a3de:multimarked_simplicial} for the notation $(-)^{\sharp^{2}_{\{\alpha\}}}$.

We now define compactifications in $\infty$-categories, and more generally in simplicial sets. Besides the need to deal with simplices of higher dimensions, the definition is more complicated in two other ways: we consider an extra set $K$ of ``directions'' and we consider restrictions not only on edges, but also on squares, which leads to the use of multi-tiled simplicial sets.

\begin{definition}\label{a4de:compactification}
Let $K$ be a set and let $(X,\cT)$ be a $(\{1,2\}\coprod K)$-tiled simplicial set. For $L\subseteq K$, integers $n,n_k\geq 0$ ($k\in K$), a map $\tau\colon\Delta^{n,n_k\res k\in K}_L\to \delta_*^{\{0\}\amalg K}X$, and $\alpha\in\{1,2\}\coprod K$, we define $\Komp^\alpha(\tau)=\Komp^\alpha_{(X,\cT)}(\tau)$, the \emph{$\alpha$-th simplicial set of compactifications of $\tau$}, to be the limit of the
diagram
\begin{equation}\label{a4eq:limit}
\resizebox{\hsize}{!}{
\xymatrix{
&&\epsilon^{\{1,2\}\amalg K}_\alpha
\Map(\Cpt^n\boxtimes\Delta^{n_k\res {k\in K}}_L,\delta^{(\{1,2\}\amalg K)\square}_*(X,\cT))\ar@{^{(}->}[d]^g\\
&\Map(\CCpt^n\times\Delta^{[n_k]_{k\in K}}_L,X)
\ar[r]^-{\rres_1}\ar[d]^-{\rres_2}
& \Map(\Box^n\times\Delta^{[n_k]_{k\in K}}_L,X)\\
\{\tau\}\ar@{^{(}->}[r]& \Map(\Delta^{[n,n_k]_{k\in K}}_L,X)
}
}
\end{equation}
in the category $\Sset$ of simplicial sets, where
\begin{itemize}
  \item we regard $\tau$ as a map $\Delta^{[n,n_k]_{k\in K}}_L\to X$, hence a vertex of $\Map(\Delta^{[n,n_k]_{k\in K}}_L,X)$;

  \item $\rres_1$ is induced by the inclusion $\Box^n\subseteq \CCpt^n$;

  \item $\rres_2$ is induced by the diagonal map $\Delta^n\to \CCpt^n$; and

  \item $g$ is the composition of maps
      \begin{align*}
      &\quad\epsilon^{\{1,2\}\amalg K}_\alpha\Map(\Cpt^n\boxtimes\Delta^{n_k\res {k\in K}}_L,\delta^{(\{1,2\}\amalg K)\square}_*(X,\cT)) \\
      &\hookrightarrow\epsilon^{\{1,2\}\amalg K}_\alpha
      \Map(\Cpt^n\boxtimes\Delta^{n_k\res {k\in K}}_L,\delta^{\{1,2\}\amalg K}_*X)\\
      &\simeq\Map(\Box^n\times\Delta^{[n_k]_{k\in K}}_L, X),
      \end{align*}
      where the isomorphism is the adjunction formula of Remark \ref{a3re:adjunction}.
\end{itemize}

For a $(\{1,2\}\coprod K)$-marked simplicial set $(X,\cE)$, we put $\Komp^\alpha_{X,\cE}(\tau)\coloneqq\Komp^\alpha_{\sfW(X,\cE)}(\tau)$, where $\sfW$ is the functor in Notation \ref{a3no:cartesian}. We put $\Komp^\alpha(\tau)_L\coloneqq\Komp^\alpha(\tau)$ if $\alpha\not\in L$, and $\Komp^\alpha(\tau)_L\coloneqq\Komp^\alpha(\tau)^{op}$ if $\alpha\in L$.
\end{definition}

For brevity, we sometimes write $I$ for $\{1,2\}\coprod K$.

\begin{remark}\label{a4re:g}
Let us give a more explicit description of $g$ in \eqref{a4eq:limit}. To simplify notation, we let $Y$ denote the source of $g$. We let
$\iota_\alpha\colon \{1\}\to I$ denote the map with image $\alpha$. For any simplicial set $S$, we have isomorphisms
\begin{align*}
\Hom_{\Sset}(S,Y)&\simeq \Hom_{\Sset[I]}((\del^{\iota_\alpha})^*S,\Map(\Cpt^n\boxtimes\Delta^{n_k\res {k\in K}}_L,\delta^{I\square}_*(X,\cT)))\\
&\simeq \Hom_{\Sset[I]}((\del^{\iota_\alpha})^*S\times (\Cpt^n\boxtimes\Delta^{n_k\res {k\in K}}_L),\delta^{I\square}_*(X,\cT))\\
&\simeq \Hom_{\Sset^{I\square}}(\delta_{I\square}^*(\del^{\iota_\alpha})^*S\times
\delta_{I\square}^*(\Cpt^n\boxtimes\Delta^{n_k\res {k\in K}}_L),(X,\cT))\\
&\simeq \Hom_{\Sset^{I\square}}(\sfV(S^{\sharp^I_{\{\alpha\}}})\times \sfW\delta_{I+}^*(\Cpt^n\boxtimes\Delta^{n_k\res {k\in K}}_L),(X,\cT)),
\end{align*}
where $\sfV$ and $\sfW$ are the functor in Notation \ref{a3no:cartesian}. Here in the last step we have used the isomorphisms
\[
\delta_{I\square}^*(\del^{\iota_\alpha})^*S\simeq \sfV(S^{\sharp^I_{\{\alpha\}}}),
\quad \delta_{I\square}^*(\Cpt^n\boxtimes\Delta^{n_k\res {k\in K}}_L)\simeq \sfW\delta_{I+}^*(\Cpt^n\boxtimes\Delta^{n_k\res {k\in K}}_L).
\]
We define $\{\cF_\beta\}_{\beta\in I}$ by the isomorphism
\[
\delta_{I+}^*(\Cpt^n\boxtimes\Delta^{n_k\res {k\in K}}_L)\simeq(\square^n\times \Delta^{[n_k]_{k\in K}}_L,\{\cF_\beta\}_{\beta \in I}).
\]
In other words, $\cF_\beta$ is the set of edges of $\epsilon^I_\beta(\Cpt^n\boxtimes\Delta^{n_k\res {k\in K}}_L)$, for all $\beta\in I$. Then,
\begin{itemize}
  \item A vertex of $Y$ is precisely a map of $I$-marked simplicial sets $\delta_{I\square}^*(\Cpt^n\boxtimes\Delta^{n_k\res {k\in K}}_L)\to
      (X,\cT)$. In other words, a map $\sigma\colon \square^n\times\Delta^{[n_k]_{k\in K}}_L\to X$ is a vertex of $Y$ if and only if it carries $\cF_\beta*\cF_{\beta'}$ into $\cT_{\beta\beta'}$ for all $\beta,\beta'\in I$ with $\beta\neq \beta'$. As we observed in Remark \ref{a3re:multitile}, the condition implies that $\sigma$ carries $\cF_\beta$ into $\cT_\beta$ for all $\beta \in I$.

  \item Given vertices $\sigma$, $\sigma'$ of $Y$, an edge $\gamma\colon\sigma\to \sigma'$ of $\Map(\square^n\times\Delta^{[n_k]_{k\in K}}_L,X)$ is an edge of $Y$ if and only if, for every $\beta\in I$, $\beta\neq \alpha$ and for every square
      \begin{equation}\label{a4eq:Komp}
      \xymatrix{y'\ar[d]\ar[r] & y\ar[d]\\x'\ar[r] & x}
      \end{equation}
      in $\cF_\alpha*\cF_\beta$, with vertical arrows in $\cF_\alpha$ and horizontal arrows in $\cF_\beta$, $\gamma$ carries the square
      \begin{equation}\label{a4eq:Komp2}
      \xymatrix{(0,y')\ar[d]\ar[r] & (0,y)\ar[d]\\(1,x')\ar[r] & (1,x)}
      \end{equation}
      to a square in $\cT_{\alpha\beta}$. Here we have regarded $\gamma$ as a map $\Delta^1\times (\square^n\times \Delta^{[n_k]_{k\in
      K}}_L)\to\cC$. We note two special cases of the condition:
      \begin{enumerate}
        \item For every edge $y\to x$ in $\cF_\alpha$, $\gamma$ carries $(0,y)\to (1,x)$ to an edge in $\cT_\alpha$.

        \item For every $\beta\in I$, $\beta\neq \alpha$ and for every edge $x'\to x$ in $\cF_\beta$, $\gamma$ carries the
            square
            \[
            \xymatrix{
            (0,x')\ar[d]\ar[r] & (0,x)\ar[d]\\(1,x')\ar[r] & (1,x)
            }
            \]
            to a square in $\cT_{\alpha\beta}$.
      \end{enumerate}
      If $\cT_{\alpha\beta}$ is stable under composition in the first direction for every $\beta$, then Condition (2) is also a sufficient condition for $\gamma$ to be an edge of $Y$.

  \item For $m\ge 2$, an $m$-simplex $\gamma$ of $\Map(\square^n\times\Delta^{[n_k]_{k\in K}}_L,X)$ is an $m$-simplex of $Y$ if and only if each edge of $\gamma$ is an edge of $Y$.
\end{itemize}
In particular, $g$ satisfies the (unique) right lifting property with respect to $\partial \Delta^m\subseteq \Delta^m$ for $m\ge 2$.
\end{remark}

\begin{remark}
In the situation of Definition \ref{a4de:comp}, we have a canonical isomorphism $\Komp^\alpha_{\rN(\cC),\cE_1,\cE_2}(\tau)\simeq\rN(\Kpt^\alpha(\tau))$. We will see in Lemma \ref{a4le:infinity_category} that the simplicial set $\Komp^\alpha_{(X,\cT)}$ is an $\infty$-category under mild hypotheses.
\end{remark}

\begin{remark}\label{a4re:fibration}
We let $D^n$ denote the intersection of $\square^n$ and the diagonal embedding $\Delta^n\to \CCpt^n$. Then $D^n$ is the disjoint union of $n+1$
points. Note that the diagram \eqref{a4eq:limit} can be completed into a commutative diagram
\begin{align*}
\resizebox{\hsize}{!}{
\xymatrix{
\Map(\CCpt^n\times\Delta^{[n_k]_{k\in K}}_L,X)\ar[rd]^-{\rres_4}\ar@/^1.5pc/[rrd]^-{\rres_1}\ar@/_1.5pc/[ddr]_-{\rres_2}
&&\epsilon^I_\alpha\Map(\Cpt^n\boxtimes \Delta^{n_k\res k\in K}_L, \delta^{I\square}_*(X,\cT))\ar[d]^g\\
&\Map((\Box^n\coprod_{D^n}
\Delta^n)\times\Delta^{[n_k]_{k\in K}}_L,X)\ar[r]\ar[d]
& \Map(\Box^n\times\Delta^{[n_k]_{k\in K}}_L,X)\ar[d]^-{\rres_3}\\
\{\tau\}\ar[r]& \Map(\Delta^{n}\times\Delta^{[n_k]_{k\in K}}_L,X)\ar[r] &
\Map(D^n\times\Delta^{[n_k]_{k\in K}}_L,X),
}
}
\end{align*}
where the lower right square is a pullback. Here the maps in the lower right square (including $\rres_3$) and $\rres_4$ are obvious restrictions. If $X$ is an $\infty$-category, then $\rres_i$, $2\le i\le 4$ are Cartesian fibrations (and coCartesian fibrations) by \cite{HTT}*{Proposition 3.1.2.1} and $\rres_1$ is a trivial Kan fibration by Lemma \ref{a6le:cpt_inner} and \cite{HTT}*{Corollaries 2.3.2.4, 2.3.2.5}. Moreover, $\rres_1$ is an isomorphism if $X$ is isomorphic to the nerve of an ordinary category.
\end{remark}

\begin{remark}
We have introduced $K$ in the definition mainly for convenience. In the case where $\alpha\in\{1,2\}$, which is our main case of interest, we could reach the same generality without $K$. In fact, we can define a $\{1,2\}$-tiled simplicial set $(X',\cT')$, where $X'$ is the full simplicial subset of $\Map(\Delta^{[n_k]_{k\in K}}_L,X)$ spanned by maps corresponding to maps $\Delta^{n_k\res {k\in K}}_L\to\delta_*^{K\Box}(X,\cT_K)\subseteq\delta_*^K X$ (where $\cT_K$ denotes the $K$-tiling induced by $\cT$), with the following property: If $\tau$ defines an $n$-simplex $\tau'$ of $X'$, then we have an isomorphism $\Komp^\alpha_{(X,\cT)}(\tau)\simeq \Komp^\alpha_{(X',\cT')}(\tau')$; otherwise $\Komp^\alpha_{(X,\cT)}(\tau)$ is empty.
\end{remark}

Note that by Remark \ref{a3re:adjunction}, the map $g$ is also equal to the composition
\begin{align*}
&\quad\epsilon^{\{1,2\}\amalg K}_\alpha\Map(\Cpt^n\boxtimes\Delta^{n_k\res {k\in K}}_L,\delta^{(\{1,2\}\amalg K)\square}_*(X,\cT))\\
&\xrightarrow{\delta^*_{\{1,2\}\amalg K}}\Map(\Box^n\times\Delta^{[n_k]_{k\in K}}_L,\delta_{\{1,2\}\amalg
K}^*\delta^{(\{1,2\}\amalg K)\square}_*(X,\cT))\\
&\hookrightarrow\Map(\Box^n\times\Delta^{[n_k]_{k\in K}}_L,\delta_{\{1,2\}\amalg
K}^*\delta^{\{1,2\}\amalg K}_*X)\\
&\to\Map(\Box^n\times\Delta^{[n_k]_{k\in K}}_L,X),
\end{align*}
where the last map is induced by the counit map. We consider the composition
\begin{align}\label{a4eq:compactification_phi}
\phi(\tau)\colon \Komp^\alpha(\tau)_L &\to \epsilon^{I}_\alpha\op^{I}_L\Map(\Cpt^n\boxtimes\Delta^{n_k\res {k\in K}}_L,\delta^{I\square}_*(X,\cT)) \\
&\xrightarrow{\delta^*_{I}} \Map(\square^n\times\Delta^{[n_k]_{k\in K}},\delta_{I,L}^*\delta^{I\square}_*(X,\cT)), \notag
\end{align}
which will be used in the proof of Theorem \ref{a4th:multisimplicial_descent} below.

\begin{remark}\label{a4re:phi}
By construction, the composition
\begin{align*}
\Komp^\alpha(\tau)_L & \xrightarrow{\phi(\tau)}
\Map(\Box^n\times\Delta^{[n_k]_{k\in K}},\delta_{I,L}^*\delta^{I\square}_*(X,\cT))\\
& \to \Map(D^n\times
\Delta^{[n_k]_{k\in K}},\delta_{I,L}^*\delta^{I\square}_*(X,\cT)),
\end{align*}
where the second map is induced by the inclusion $D^n\subseteq \square^n$ (see Remark \ref{a4re:fibration} for the notation), is constant of value $\delta_{I,L}^*\tau_0$, where
\[
\tau_0\colon \delta^2_*(D^n)\boxtimes\Delta^{n_k\res{k\in K}}_L\to \delta^{I}_*X
\]
is the restriction of $\tau$. If $\Komp^\alpha(\tau)$ is nonempty, then $\tau_0$ factorizes through $\delta^{I\square}_*(X,\cT)$.
\end{remark}

Next we consider $(\{0\}\coprod K)$-tilings. Let $(X,\cT')$ be a $(\{0\}\coprod K)$-tiled simplicial set. For brevity we sometimes write $J$
for $\{0\}\coprod K$. For $L\subseteq K$ and $\alpha'\in J$, we have the commutative diagram
\begin{equation}\label{a4eq:zeroK}
\resizebox{\hsize}{!}{
\xymatrix{
\epsilon^J_{\alpha'}\Map(\CCpt^n\boxtimes\Delta^{n_k\res {k\in K}}_L,\delta^{J\square}_*(X,\cT')) \ar@{^{(}->}[rr] \ar[d]_{\delta^*_J}
&&\epsilon^J_{\alpha'}
\Map(\CCpt^n\boxtimes\Delta^{n_k\res {k\in K}}_L,\delta^J_*X)\ar[d]^-{\simeq}\\
\Map(\CCpt^n\times\Delta^{[n_k]_{k\in K}}_L,\delta_J^*\delta^{J\square}_*(X,\cT')) \ar@{^{(}->}[r] &
\Map(\CCpt^n\times\Delta^{[n_k]_{k\in K}}_L,\delta^*_J\delta_*^J X)\ar[r]&
\Map(\CCpt^n\times\Delta^{[n_k]_{k\in K}}_L, X)
}
}
\end{equation}
by Remark \ref{a3re:adjunction}. This is similar to the situation of the map $g$ in Definition \ref{a4de:compactification}.

To compare the restricted multisimplicial nerves of $(X,\cT)$ and of $(X,\cT')$, we make some assumptions.

\begin{assumption}\label{a4as:composition}
Let $(X,\cT)$ be a $(\{1,2\}\coprod K)$-tiled simplicial set and let $(X,\cT')$ be a $(\{0\}\coprod K)$-tiled simplicial set. Consider the
following assumptions:
\begin{enumerate}
  \item For $\sigma\colon \Delta^2\to X$ with $\sigma\circ d^2_0\in\cT_1$, $\sigma\circ d^2_2\in \cT_2$, we have $\sigma\circ d^2_1\in\cT'_0$.

  \item For $k\in K$ and $\sigma \colon \Delta^2\times \Delta^1\to X$ satisfying $\sigma\circ (d^2_0\times \id)\in \cT_{1k}$,
      $\sigma\circ (d^2_2\times \id)\in \cT_{2k}$, we have $\sigma\circ(d^2_1\times \id)\in \cT'_{0k}$.

  \item For $k\in K$, we have $\cT_k\subseteq \cT'_k$. For distinct elements $k,k'\in K$, we have $\cT_{kk'}\subseteq \cT'_{kk'}$.
\end{enumerate}
\end{assumption}

Note that (2) implies (1) if $K$ is nonempty.

\begin{remark}\label{a4re:assump}
Assumption (1) implies $\cT_1,\cT_2\subseteq \cT'_0$. Conversely, if we have $\cT_1,\cT_2\subseteq \cT'_0$ and $\cT'_0$ is stable under composition, then Assumption (1) holds. Similarly, Assumption (2) implies $\cT_{1k},\cT_{2k}\subseteq \cT'_{0k}$. Conversely, if we have
$\cT_{1k},\cT_{2k}\subseteq \cT'_{0k}$ and $\cT'_{0k}$ is stable under composition in the first direction, then Assumption (2) holds.
\end{remark}

We consider the maps $\mu_0\colon \{1,2\}\to \{0\}$ and $\mu=\mu_0\coprod\id_K \colon \{1,2\}\coprod K\to \{0\}\coprod K$. For brevity, we sometimes write $I$ for $\{1,2\}\coprod K$ and $J$ for $\{0\}\coprod K$.

\begin{lem}\label{a4le:multisimp}
Suppose that Assumption \ref{a4as:composition} is satisfied. Then
\begin{enumerate}
  \item The isomorphism $\delta_*^{\{1,2\}\amalg K}X\simeq(\del^\mu)_*\delta_*^{\{0\}\amalg K} X$ induces an inclusion
      \[
      \delta_*^{(\{1,2\}\amalg K)\square}(X,\cT)\subseteq(\del^\mu)_*\delta_*^{(\{0\}\amalg K)\square}(X,\cT').
      \]

  \item The pullback of $g$ by $\rres_1$ in Definition \ref{a4de:compactification} factorizes through the upper-left corner of the diagram \eqref{a4eq:zeroK} with $\alpha'=\mu(\alpha)$.
\end{enumerate}
\end{lem}

\begin{proof}
(1) We have
\[
\delta^*_{I\square}\Delta^{n_k\mid k\in I}=\sfW\delta^*_{I+}\Delta^{n_k\mid k\in I},\quad
\delta^*_{J\square}(\del^\mu)^*\Delta^{n_k\mid k\in I}\simeq \sfW\delta^*_{J+}(\Delta^{[n_1,n_2]}\boxtimes\Delta^{n_k\mid k\in K}).
\]
We let $\cG_\beta$ denote the set of edges of $\epsilon^I_\beta\Delta^{n_k\mid k\in I}$ for $\beta\in I$, and let $\cG_0$ denote the set of edges of $\epsilon^J_0(\Delta^{[n_1,n_2]}\boxtimes\Delta^{n_k\mid k\in K})$. Then we have
\begin{align*}
\delta^*_{I+}\Delta^{n_k\mid k\in I}&\simeq (\Delta^{[n_k]_{k\in I}},\{\cG_\beta\}_{\beta\in I}),\\ \delta^*_{J+}(\Delta^{[n_1,n_2]}\boxtimes\Delta^{n_k\mid k\in K})&\simeq (\Delta^{[n_k]_{k\in I}},\{\cG_\beta\}_{\beta\in J}).
\end{align*}
Thus an $(n_k)_{k\in I}$-simplex of $\delta_*^{I\square}(X,\cT)$ is given by a map $\sigma\colon \Delta^{[n_k]_{k\in I}}\to X$ carrying $\cG_\beta$ into $\cT_\beta$ for all $\beta\in I$ and carrying $\cG_\beta*\cG_{\beta'}$ into $\cT_{\beta\beta'}$ for all $\beta,\beta'\in I$, $\beta\neq \beta'$.

Let us show first that $\sigma$ carries $\cG_\beta$ into $\cT'_\beta$ for all $\beta\in J$. For $\beta\in K$, this follows from the assumption
$\cT_\beta\subseteq \cT'_\beta$. An edge $e$ in $\cG_0$ has the form $(i,j,a)\to (i',j',a)$, where $a=(a_k)_{k\in K}$. Consider the $2$-simplex
\[
\xymatrix{
(i,j,a)\ar[r]^-{e''}\ar[rd]_-e&(i,j',a)\ar[d]^-{e'}\\&(i',j',a)
}
\]
of $\Delta^{[n_k]_{k\in I}}$, where $e'$ is in $\cG_1$ and $e''$ is in $\cG_2$. By Assumption \ref{a4as:composition} (1), $\sigma(e)$ is in $\cT'_0$.

Next we show that $\sigma$ carries $\cG_{\beta}*\cG_{\beta'}$ into $\cT'_{\beta\beta'}$ for all $\beta,\beta'\in J$, $\beta\neq \beta'$. For
$\beta,\beta'\in K$, this follows from the assumption $\cT_{\beta\beta'}\subseteq \cT'_{\beta\beta'}$. It remains to show that $\sigma$ carries $\cG_0*\cG_\beta$ into $\cT'_{0\beta}$ for $\beta\in K$. Every square $\varsigma$ in $\cG_0*\cG_\beta$ can be extended to a map $\Delta^2\times\Delta^1\to\Delta^{[n_k]_{k\in I}}$ as shown by the diagram
\[
\xymatrix{
(i,j,b)\ar[r]\ar[d] & (i,j,a)\ar[d]\\
(i,j',b)\ar[r]\ar[d] & (i,j',a)\ar[d]\\
(i',j',b)\ar[r] & (i',j',a)
}
\]
with $\varsigma$ as the outer square. The upper square is in $\cG_2*\cG_\beta$ and the lower square is in $\cG_1*\cG_\beta$. Thus, by
Assumption \ref{a4as:composition} (2), $\sigma(\varsigma)$ is in $\cT'_{0\beta}$.

(2) We let $Y'$ denote the pullback of $g$ by $\rres_1$ and let $Z$ denote the upper-left corner of the diagram \eqref{a4eq:zeroK}. We adopt the
notation of Remark \ref{a4re:g}. Note that for $\beta\in K$, $\cF_\beta$ can be identified with the set of edges of $\epsilon^J_\beta(\CCpt^n\boxtimes\Delta^{n_k\mid k\in K}_L)$. We let $\cF_0$ denote the set of edges of $\epsilon^J_0(\CCpt^n\boxtimes \Delta^{n_k\mid k\in K}_L)$. Then
\[
\delta^*_{J+}(\CCpt^n\boxtimes \Delta^{n_k\mid k\in K}_L)\simeq(\CCpt^n\times \Delta^{[n_k]_{k\in K}}_L,\{\cF_\beta\}_{\beta \in J}).
\]
Note that $Z$ admits a description similar to the description of $Y$ in Remark \ref{a4re:g}. In particular, for $m\ge 2$, $Z\hookrightarrow\Map(\CCpt^n\times\Delta^{[n_k]_{k\in K}}_L,X)$ has the right lifting property with respect to $\partial\Delta^m\subseteq\Delta^m$. Thus it suffices to check $Y'\subseteq Z$ on the level of vertices and edges.

Let $\sigma\colon \CCpt^n\times \Delta^{[n_k]_{k\in K}}_L\to X$ be a vertex of $Y'$. To show that $\sigma$ is a vertex of $Z$, we need to check that $\sigma$ carries $\cF_\beta$ to $\cT'_\beta$ for all $\beta\in J$ and carries $\cF_\beta*\cF_{\beta'}$ to $\cT'_{\beta\beta'}$ for all
$\beta,\beta'\in J$, $\beta\neq \beta'$. The proof is similar to that of (1). Note that for every edge $(i,j)\to (i',j')$ of $\CCpt^n$, $(i,j')$ is a vertex of $\CCpt^n$.

Let $\gamma$ be an edge of $Y'$, regarded as a map $\Delta^1\times(\CCpt^n\times \Delta^{[n_k]_{k\in K}}_L)\to X$. To show that $\gamma$ is an
edge of $Z$, we first check that for every edge $y\to x$ of $\cF_{\mu(\alpha)}$, $\gamma$ carries the edge $e\colon (0,y)\to (1,x)$ to an edge in $\cT'_{\mu(\alpha)}$. If $\alpha\in K$, then this follows from the assumption $\cT_\alpha\subseteq \cT'_\alpha$. If $\alpha\in \{1,2\}$,
then $e$ can be completed into a $2$-simplex of the form
\[
\xymatrix{
(0,i,j,a)\ar[r]^-{e''}\ar[rd]_-e & (\alpha-1,i,j',a)\ar[d]^-{e'}\\&(1,i',j',a).
}\]
Since $\gamma(e')$ is in $\cT_1$ and $\gamma(e'')$ is in $\cT_2$, we have $\gamma(e)\in \cT'_0$ by Assumption \ref{a4as:composition} (1). Finally we check that for every $\beta\in J$, $\beta\neq \mu(\alpha)$ and every square of the form \eqref{a4eq:Komp} in $\cF_{\mu(\alpha)}*\cF_\beta$ with vertical arrows in $\cF_{\mu(\alpha)}$ and horizontal arrows in $\cF_\beta$, $\gamma$ carries the square \eqref{a4eq:Komp2} to a square in
$\cT'_{\mu(\alpha)\beta}$. If $\alpha,\beta\in K$, then this follows from the assumption $\cT_{\alpha\beta}\subseteq \cT'_{\alpha\beta}$. In the remaining cases we apply Assumption \ref{a4as:composition} (2). If $\beta=0$, we factorize the square horizontally. If $\alpha\in \{1,2\}$, we factorize the square vertically, with the first component of the middle row given by $\alpha-1$.
\end{proof}

\begin{construction}
The main result of this section, Theorem \ref{a4th:multisimplicial_descent} below, is about the composition
\begin{align}\label{a4eq:composition}
\delta_{\{1,2\}\amalg K,L}^*\delta^{(\{1,2\}\amalg K)\square}_*(X,\cT)
&\simeq \delta_{\{0\}\amalg K,L}^*(\del^\mu)^*\delta^{(\{1,2\}\cup K)\square}_*(X,\cT) \\
& \hookrightarrow\delta_{\{0\}\amalg K,L}^*(\del^\mu)^*(\del^\mu)_*\delta^{(\{0\}\amalg K)\square}_*(X,\cT') \notag\\
&\to\delta_{\{0\}\amalg K,L}^*\delta^{(\{0\}\amalg K)\square}_*(X,\cT'), \notag
\end{align}
where the inclusion in the middle is given by Lemma \ref{a4le:multisimp} (1) and the last map is the counit map. An $n$-simplex of the left hand side of \eqref{a4eq:composition} corresponds to a map $\Delta^n\times\Delta^n\times(\Delta^n)^K\to X$. The map \eqref{a4eq:composition} carries it to the $n$-simplex corresponding to the composition
\[
\Delta^n\times(\Delta^n)^K\xrightarrow{\diag\times\id_{(\Delta^n)^K}}\Delta^n\times\Delta^n\times(\Delta^n)^K\to X,
\]
where $\diag\colon\Delta^n\to\Delta^n\times\Delta^n$ is the diagonal map.

For any map $\tau\colon \Delta^{n,n_k\res k\in K}_L\to \delta_*^{\{0\}\amalg K}X$, we consider the composition
\begin{align}\label{a4eq:compactification_psi}
\psi(\tau)\colon \Komp^\alpha(\tau)_L &\to \epsilon^{J}_{\mu(\alpha)}\op^{J}_L
\Map(\CCpt^n\boxtimes\Delta^{n_k\res {k\in K}}_L,\delta^{J\square}_*(X,\cT')) \\
& \xrightarrow{\delta^*_{J}}
\Map(\CCpt^n\times\Delta^{[n_k]_{k\in K}},\delta_{J,L}^*\delta^{J\square}_*(X,\cT')), \notag
\end{align}
where the first map is given by Lemma \ref{a4le:multisimp} (2). We have a commutative diagram
\[
\xymatrix{
\Komp^\alpha(\tau)_L \ar[r]^-{\psi(\tau)}\ar[d]_{\phi(\tau)}&
\Map(\CCpt^n\times\Delta^{[n_k]_{k\in K}},\delta_{J,L}^*\delta^{J\square}_*(X,\cT'))\ar[d]\\
\Map(\square^n\times\Delta^{[n_k]_{k\in K}},\delta_{I,L}^*\delta^{I\square}_*(X,\cT)\ar[r] &
\Map(\square^n\times\Delta^{[n_k]_{k\in K}},\delta_{J,L}^*\delta^{J\square}_*(X,\cT')),
}
\]
where $\phi(\tau)$ is defined in \eqref{a4eq:compactification_phi}, the lower horizontal arrow is induced by \eqref{a4eq:composition}, and the right vertical arrow is the obvious restriction.
\end{construction}

\begin{remark}\label{a4re:psi}
By construction, the composition
\begin{align*}
\Komp^\alpha(\tau)_L &\xrightarrow{\psi(\tau)}\Map(\CCpt^n\times\Delta^{[n_k]_{k\in K}},\delta_{J,L}^*\delta^{J\square}_*(X,\cT')) \\
&\to \Map(\Delta^{[n,n_k]_{k\in K}},\delta_{J,L}^*\delta^{J\square}_*(X,\cT')),
\end{align*}
where the second map is induced by the diagonal embedding $\Delta^n\to\CCpt^n$, is constant of value $\delta_{J,L}^*\tau$. If $\Komp^\alpha(\tau)$ is nonempty, then $\tau$ factorizes through $\delta_*^{J\square}(X,\cT')$.
\end{remark}

\begin{theorem}[Multisimplicial descent]\label{a4th:multisimplicial_descent}
Let $K$ be a set and let $\alpha\in\{1,2\}\coprod K$ be an element. Let $(X,\cT)$ be a $(\{1,2\}\coprod K)$-tiled simplicial set and let $(X,\cT')$ be a $(\{0\}\coprod K)$-tiled simplicial set, satisfying Assumption \ref{a4as:composition}. We assume that $\Komp^\alpha_{(X,\cT)}(\tau)$ is weakly contractible for every $n\ge 0$ and every $(n,n_k)_{k\in K}$-simplex $\tau$ of $\delta^{(\{0\}\amalg K)\square}_*(X,\cT')$ with $n_k=n$. Then, for every subset $L\subseteq K$, the map
\[
f\colon \delta_{\{1,2\}\amalg K,L}^*\delta^{(\{1,2\}\amalg K)\square}_*(X,\cT)\to \delta_{\{0\}\amalg K,L}^*\delta^{(\{0\}\amalg K)\square}_*(X,\cT'),
\]
composition of \eqref{a4eq:composition}, is a categorical equivalence.
\end{theorem}

Note that the assumption that the simplicial sets $\Komp^\alpha(\tau)$ for those $\tau$ are nonempty implies that $\cT_k=\cT'_k$ for all $k\in K$, and $\cT_{kk'}=\cT'_{kk'}$ for all $k,k'\in K$ with $k\neq k'$.

In the case $K=\emptyset$ and $\cT'_0=X_1$, Assumption \ref{a4as:composition} is clearly satisfied and the theorem takes the following form.

\begin{corollary}\label{a4co:multi2}
Let $\alpha$ be $1$ or $2$. Let $(X,\cT)$ be a $2$-tiled simplicial set such that $\Komp^\alpha_{(X,\cT)}(\tau)$ is weakly contractible for every simplex $\tau$ of $X$. Then the map
\[
f\colon \delta^*_2\delta_*^{2\square}(X,\cT)\to X
\]
induced by the counit map $\delta^*_2\delta_*^2 X\to X$ is a categorical equivalence.
\end{corollary}

\begin{proof}[Proof of Theorem \ref{a4th:multisimplicial_descent}]
We let $Y$ and $Z$ denote the source and target, respectively, of the map $f$ in the statement of the theorem. Consider a commutative diagram
\[
\xymatrix{
Y\ar[r]^-v\ar[d]_f & \Fun(\Delta^l,\cD)\ar[d]^p\\
Z\ar[r]^-w & \Fun(\partial \Delta^l,\cD)
}
\]
as in Lemma \ref{a1le:categorical_equivalence}. Let $\sigma$ be an $n$-simplex of $Z$, corresponding to a map $\tau\colon \Delta^{n,n_k\res
k\in K}_L\to \delta^{J\square}_*(X,\cT')$, where $n_k=n$. Consider the commutative diagram
\begin{align}\label{a4eq:descent}
\resizebox{\hsize}{!}{
\xymatrix{
\cN(\sigma)\ar[d]\ar[r] & \Fun(\Delta^l\times\CCpt^n\times\Delta^{[n_k]_{k\in K}},\cD)
\ar[d]^-{\rres_1}\ar[rr]^-{\rres_2} && \Fun(\Delta^l\times \Delta^n,\cD)\ar[d]\ar@/^4.5pc/[dd]^{\rres_4}\\
\Komp^\alpha(\tau)_L\ar[r]^-{h}\ar[rd]_{v_*\phi(\tau)} &  \Fun(H\times\Delta^{[n_k]_{k\in
K}},\cD)\ar[r]\ar[d]&\Fun(\partial \Delta^l\times \CCpt^n\times\Delta^{[n_k]_{k\in
K}},\cD)\ar[r]^-{\rres_2} & \Fun(\partial\Delta^l\times\Delta^n,\cD)\\
&\Fun(\Delta^l\times \square^n\times\Delta^{[n_k]_{k\in
K}},\cD)\ar[rr]^-{\rres_3}&&\Fun(\Delta^l\times D^n,\cD).
}
}
\end{align}
In the above diagram,
\begin{itemize}
  \item $\rres_1$ is induced by
      \[
      j\colon H=\Delta^l\times\Box^n\coprod_{\partial \Delta^l\times\Box^n}
      \partial\Delta^l\times\CCpt^n \hookrightarrow \Delta^l \times \CCpt^n;
      \]

  \item $h$ is the amalgamation of $v_*\phi(\tau)$ and $w_*\psi(\tau)$, where
      \[
      v_*\phi(\tau)\colon\Komp^\alpha(\tau)_L\to\Fun(\Delta^l\times\Box^n\times\Delta^{[n_k]_{k\in K}},\cD)
      \]
      is the composition of \eqref{a4eq:compactification_phi} and the map induced by $v$, and
      \[
      w_*\psi(\tau)\colon\Komp^\alpha(\tau)_L\to\Fun(\partial\Delta^l\times\CCpt^n\times\Delta^{[n_k]_{k\in K}},\cD)
      \]
      is the composition of \eqref{a4eq:compactification_psi} and the map induced by $w$;

  \item $\cN(\sigma)$ is defined so that the upper left square is a pullback square;

  \item the two maps $\rres_2$ are both induced by the diagonal embedding $\Delta^n\subseteq \CCpt^n\times \Delta^{[n_k]_{k\in K}}$;

  \item $D^n$ is as in Remark \ref{a4re:fibration};

  \item $\rres_3$ is induced by the diagonal embedding $D^n\subseteq\square^n\times \Delta^{[n_k]_{k\in K}}$; and

  \item $\rres_4$ is induced by the inclusion $D^n\subseteq \Delta^n$;

  \item the unnamed arrows in the middle column and in the upper right square are obvious restrictions.
\end{itemize}

By Lemma \ref{a6le:cpt_inner} and \cite{HTT}*{Corollaries 2.3.2.4, 2.3.2.5}, the map $j\times\id_{\Delta^{[n_k]_{k\in K}}}$ is inner anodyne, and consequently $\rres_1$ is a trivial Kan fibration. Thus $\cN(\sigma)$ is weakly contractible.

We let $\Phi(\sigma)\colon\cN(\sigma)\to\Fun(\Delta^l\times \Delta^n,\cD)$ denote the composition of the upper horizontal arrows in \eqref{a4eq:descent}. We let $\sigma_0\colon D^n\to Z$ denote the restriction of $\sigma$. Since $f$ induces a bijection on vertices, $\sigma_0$ factorizes uniquely through a map $D^n\to Y$, which we still denote by $\sigma_0$. By Remark \ref{a4re:phi}, $\rres_3\circ v_*\phi(\tau)$ is constant of value $v(\sigma_0)$. It follows that $\rres_4\circ \Phi(\sigma)$ is constant of value $v(\sigma_0)$. In particular, $\Phi(\sigma)$ induces a map
\[
\cN(\sigma)^\sharp\times(\Delta^n)^\flat\to\Fun(\Delta^l,\cD)^\flat\subseteq\Fun(\Delta^l,\cD)^{\natural}.
\]
Thus $\Phi(\sigma)$ induces a map $\cN(\sigma)\to\Map^\sharp((\Delta^n)^\flat, \Fun(\Delta^l,\cD)^{\natural})$, which we still denote by $\Phi(\sigma)$. This construction is functorial in $\sigma$, giving rise to a morphism $\Phi\colon \cN\to \Map[Z,\Fun(\Delta^l,\cD)]$ in
the category $(\Sset)^{(\del_{/Z})^{op}}$.

By Remark \ref{a4re:psi}, the composition of the middle row of \eqref{a4eq:descent} is constant of value $w(\sigma)$. Thus $\Map[Z,p]\circ\Phi\colon\cN\to \Map[Z,\Fun(\partial\Delta^l,\cD)]$ factorizes through the morphism $\Delta^0_{(\del_{/Z})^{op}}\to \Map[Z,\Fun(\partial\Delta^l,\cD)]$ corresponding to $w$ via Remark \ref{a2re:functors}.

Now let $\sigma'$ be an $n$-simplex of $Y$ corresponding to a map $\tau'\colon \Delta_L^{n,n,n_k\mid k\in K}\to \delta^{I\square}_*(X,\cT)$.
By restricting to $\CCpt^n\subseteq \Delta^{[n,n]}$ we obtain a vertex of $\Komp^\alpha(\tau)$. By restricting the composition
\[
\Delta^{n,n,n_k\mid k\in K}\xrightarrow{\tau'} \op^I_L\delta_*^{I\square}(X,\cT)\xrightarrow{v} \delta_*^I\Fun(\Delta^l,\cD),
\]
we obtain a vertex of $\Fun(\Delta^l\times\RCpt^n\times\Delta^{[n_k]_{k\in K}},\cD)$. The two vertices have the same image in $\Fun(H\times\Delta^{[n_k]_{k\in K}},\cD)$ and hence provide a vertex $\nu(\sigma')$ of $\cN(f(\sigma'))$, whose image under $\Phi(f(\sigma'))$ is $v(\sigma')$. This construction is functorial in $\sigma'$, giving rise to $\nu\in\Gamma(f^*\cN)_0$ such that $f^*\Phi\circ \nu=v$. Applying Proposition \ref{a1pr:extension} to $\Phi$, the map $f\colon Y\to Z$ and the global section $\nu$ of $f^*\cN$, we obtain a map $u\colon Z\to\Fun(\Delta^l,\cD)$ satisfying $p\circ u=w$ such that $u\circ f$ and $v$ are homotopic over $\Fun(\partial \Delta^l,\cD)$, as desired.
\end{proof}

Next we show that in a favorable case, the weak contractibility condition in the theorem can be reduced to a weak contractibility condition on a $2$-marked simplicial set.

\begin{theorem}\label{a4co:multisimplicial_descent}
Let $\cC$ be an $\infty$-category and $K$ a \emph{finite} set. Consider a $(\{0,1,2\}\coprod K)$-marked $\infty$-category
$(\cC,\cE_0,\cE_1,\cE_2,\{\cE_k\}_{k\in K})$ such that
\begin{enumerate}
  \item $\cE_1,\cE_2\subseteq \cE_0$;

  \item $\cE_0$ is stable under composition;

  \item $\cE_1$, $\cE_2$ are stable under pullback by $\cE_k$ for all $k\in K$;

  \item $\cE_k$ is stable under pullback by $\cE_1$ for all $k\in K$; and

  \item edges in $\cE_k$ admit pullbacks in $\cC$ by edges in $\cE_1$ for all $k\in K$.
\end{enumerate}
Then for every $(n,n_k)_{k\in K}$-simplex $\tau$ of the $(\{0\}\coprod K)$-tiled $\infty$-category $\cC_{\cE_0,\{\cE_k\}_{k\in K}}^\cart$, the
restriction map $\Komp^\alpha_{(\cC,\cT)}(\tau)\to\Komp^\alpha_{\cC,\cE_1,\cE_2}(\gamma)$, where $\gamma$ is the restriction of $\tau$ to $\Delta^n\times \{(n_k)_{k\in K}\}$, is a trivial Kan fibration for every $\alpha\in \{1,2\}$. Here $(\cC,\cT)=(\cC,\cE_1,\cE_2,\{\cE_k\}_{k\in K},\cQ)$ is the $(\{1,2\}\coprod K)$-tiled $\infty$-category in which $\cQ$ is determined by the conditions
\[
\cQ_{12}=\cE_1*_{\cC}\cE_2,\quad \cQ_{ij}=\cE_i*_\cC^\cart\cE_j,\ (i,j)\neq (1,2),(2,1).
\]
Moreover, if for some $\alpha\in \{1,2\}$ and for every simplex $\gamma$ of $\cC_{\cE_0}\subseteq \cC$, the simplicial set
$\Komp^\alpha_{\cC,\cE_1,\cE_2}(\gamma)$ is weakly contractible, then, for every subset $L\subseteq K$, the map
\[
f\colon \delta^*_{\{1,2\}\amalg K,L}\delta_*^{(\{1,2\}\amalg K)\square}(\cC,\cT)\to
\delta^*_{\{0\}\amalg K,L} \cC_{\cE_0,\{\cE_k\}_{k\in K}}^\cart
\]
is a categorical equivalence.
\end{theorem}

\begin{proof}
By Lemma \ref{a3le:cart} and Condition (2), $\cE_0*^\cart_\cC \cE_k$, $k\in K$ are stable under composition in the first direction. Thus by Remark \ref{a4re:assump} and Conditions (1) and (2), Assumption \ref{a4as:composition} is satisfied for $(\cC,\cT)$ and $\cC_{\cE_0,\{\cE_k\}_{k\in K}}^\cart$. By Theorem \ref{a4th:multisimplicial_descent}, it suffices to show the first assertion. Indeed, the assumption that $\Komp^\alpha_{\cC,\cE_1,\cE_2}(\gamma)$ is weakly contractible then implies that $\Komp^\alpha_{(\cC,\cT)}(\tau)$ is weakly contractible.

We let $\infty=(n_k)_{k\in K}$ denote the final object of $[n_k]_{k\in K}\coloneqq \prod_{k\in K} [n_k]$. We have the following commutative
diagram
\begin{equation}\label{a4eq:contractible}
\xymatrix{
\Komp^\alpha_{(\cC,\cT)}(\tau)\ar[rr]\ar[d]&& \Komp^\alpha_{\cC,\cE_1,\cE_2}(\gamma)\ar[d]\\
\Fun(\CCpt^n\times\Delta^{[n_k]_{k\in K}},\cC)_\RKE\ar[r]^-{\rres_1}&
\cK'\ar[d]
\ar[r]&\cK\ar[d]\\
&\Fun(\rN(Q\cup R),\cC)\ar[r]^-{\rres_2}& \Fun(\rN(Q)\cup\rN(R),\cC),
}
\end{equation}
where
\begin{itemize}
  \item $Q=[n]\times[n_k]_{k\in K}\subseteq \RCpt^n\times[n_k]_{k\in K}$ is induced by the diagonal inclusion $[n]\subseteq \RCpt^n$.

  \item $R=\RCpt^n\times\{\infty\}\subseteq \RCpt^n\times[n_k]_{k\in K}$.

  \item $\Fun(\CCpt^n\times\Delta^{[n_k]_{k\in K}},\cC)_\RKE\subseteq\Fun(\CCpt^n\times\Delta^{[n_k]_{k\in K}},\cC)$ is the full
      subcategory spanned by functors $F\colon\CCpt^n\times\Delta^{[n_k]_{k\in K}}\to\cC$ which are right Kan extensions of $F\res\rN(Q\cup R)$.

  \item $\cK'\subseteq \Fun(\rN(Q\cup R),\cC)$ is the full subcategory spanned by functors $F$ such that the composition $\rN(Q\cup
      R)_{(i,j,p)/}\to \rN(Q\cup R)\xrightarrow{F} \cC$ admits a limit for every vertex $(i,j,p)$ of $\CCpt^n\times \Delta^{[n_k]_{k\in K}}$.

  \item $\cK\subseteq \Fun(\rN(Q)\cup \rN(R),\cC)$ is the full subcategory spanned by functors $F$ such that the diagram
      \begin{equation}\label{a4eq:pullback}
      \xymatrix{& F(i,j,\infty)\ar[d] \\F(j,j,p)\ar[r]&F(j,j,\infty).}
      \end{equation}
      admits a limit in $\cC$ for every vertex $(i,j,p)$ of $\CCpt^n\times\Delta^{[n_k]_{k\in K}}$.

  \item The horizontal arrows are restrictions. We will check that $\rres_2$ carries $\cK'$ into $\cK$ below.

  \item The lower vertical arrows are inclusions.

  \item The upper right vertical arrow is the amalgamation of the inclusion $\Komp^\alpha_{\cC,\cE_1,\cE_2}(\gamma)\subseteq\Fun(\rN(R),\cC)$ with $\tau$, viewed as a vertex of $\Fun(\rN(Q),\cC)$. The fact that the image is in $\cK$ follows from Conditions (3) and (5) (Condition (3) is needed if $\# K\ge 2$).

  \item The left vertical arrow is induced by the inclusion
      \[
      \Komp^\alpha_{(\cC,\cT)}(\tau)\subseteq\Fun(\CCpt^n\times\Delta^{[n_k]_{k\in K}},\cC).
      \]
      We will check that the image is contained in $\Fun(\CCpt^n\times\Delta^{[n_k]_{k\in K}},\cC)_\RKE$ below.
\end{itemize}
For any vertex $(i,j,p)$ of $\CCpt^n\times \Delta^{[n_k]_{k\in K}}$, we let $g\colon \Delta^1\times \Delta^1\to \CCpt^n\times\Delta^{[n_k]_{k\in K}}$ denote the square
\[
\xymatrix{
(i,j,p)\ar[r]\ar[d]& (i,j,\infty)\ar[d] \\(j,j,p)\ar[r]&(j,j,\infty).
}
\]
We have $\Delta^1\times \Delta^1\simeq ((\Lambda^2_0)^{op})^\triangleleft$. The induced map $\Lambda^2_0\to (\rN(Q\cup R)_{(i,j,p)/})^{op}$ is cofinal by Lemma \ref{a3le:cofinal} below. Thus a functor $G\colon\CCpt^n\times\Delta^{[n_k]_{k\in K}}\to\cC$ is a right Kan extension of
$G\res\rN(Q\cup R)$ if and only if $G\circ g$ is a pullback square, for all $(i,j,p)$. For any vertex $G$ of $\Komp^\alpha_{(\cC,\cT)}(\tau)$, regarded as a functor $G\colon \CCpt^n\times\Delta^{[n_k]_{k\in K}}\to\cC$, the square $G\circ g$ is obtained by a finite sequence of compositions from squares in $\cT_{1k}=\cE_{1}*^\cart_\cC\cE_k$, $k\in K$. Therefore, the image of $\Komp^\alpha_{(\cC,\cT)}(\tau)\subseteq\Fun(\CCpt^n\times\Delta^{[n_k]_{k\in K}},\cC)$ is contained in
$\Fun(\CCpt^n\times\Delta^{[n_k]_{k\in K}},\cC)_\RKE$. Moreover, if $F\colon\rN(Q\cup R)\to \cC$ is a functor, then the composition $\rN(Q\cup
R)_{(i,j,p)/}\to \rN(Q\cup R)\xrightarrow{F} \cC$ admits a limit if and only if the diagram \eqref{a4eq:pullback} admits a limit. Thus $\rres_2$ carries $\cK'$ into $\cK$ and the lower right square in a pullback.

By \cite{HTT}*{Proposition 4.3.2.15}, $\rres_1$ is a trivial Kan fibration. We apply Lemma \ref{a6le:union} to show that the inclusion $\rN(Q)\cup\rN(R)\subseteq \rN(Q\cup R)$ is inner anodyne. For this we need to check that $Q\cup R=Q\coprod_{Q\cap R} R$ is a pushout in the category of partially ordered sets (see Remark \ref{a6re:order}). Let $(i,i,p)$ be in $Q$ and $(i',j',\infty)$ in $R$. If we have $(i',j',\infty)\le (i,i,p)$, then $p=\infty$ so that $(i,i,p)$ is in $Q\cap R$. On the other hand, if we have $(i,i,p)\le (i',j',\infty)$, then we have $(i,i,p)\le (i',i',\infty)\le(i',j',\infty)$. It follows that $\rres_2$ is a trivial Kan fibration.

To show that the upper horizontal arrow is a trivial Kan fibration, it remains to show that, ignoring the middle term in the second row, the upper square of \eqref{a4eq:contractible} is also a pullback square. This amounts to saying that for every $m$-simplex $\sigma$ of
$\Fun(\CCpt^n\times\Delta^{[n_k]_{k\in K}},\cC)_\RKE$, if the restriction of $\sigma$ to $\rN(Q)$ is $\tau$ and the restriction of $\sigma$ to $\rN(R)$ is in $\Komp^\alpha_{\cC,\cE_1,\cE_2}(\gamma)$, then $\sigma$ is a simplex of $\Komp^\alpha_{(\cC,\cT)}(\tau)$. By Remark \ref{a4re:g}, it suffices to treat the cases $m=0$ and $m=1$.

Case $m=0$. Consider integers $0\le i\le i'\le j\le j'\le n$ and a morphism $p\le q$ of $[n_k]_{k\in K}$. Since $\sigma\colon\CCpt^n\times\Delta^{[n_k]_{k\in K}}\to\cC$ is a right Kan extension of $\sigma \res \rN(Q\cup R)$, it carries the outer and right squares of the diagram
\[
\xymatrix{
(i,j,p)\ar[r]\ar[d]&(i,j,q)\ar[r]\ar[d] & (i,j,\infty)\ar[d]\\
(j,j,p)\ar[r] &(j,j,q)\ar[r] & (j,j,\infty)
}
\]
to pullback squares. It follows that $\sigma$ carries the left square to a pullback square. Thus, since the restriction of $\sigma$ to $\rN(Q)$ is $\tau$, $\sigma$ carries $\cF_k$ to $\cE_k$ for all $k\in K$ by Condition (4), where $\cF_k$ is defined in Remark \ref{a4re:g}. Moreover, since $\sigma$ carries the outer and lower squares of the diagram
\[
\xymatrix{
(i,j,p)\ar[r]\ar[d] & (i,j,q)\ar[d]\\
(i',j,p)\ar[r]\ar[d] & (i',j,q)\ar[d]\\
(j,j,p)\ar[r] & (j,j,q)
}
\]
to pullbacks, it carries the upper square to a pullback. Taking $q=\infty$, Condition (3) then implies that $\sigma$ carries $(i,j,p)\to (i',j,p)$ to a morphism in $\cE_1$. It follows that $\sigma$ carries $\cF_1*\cF_k$ into $\cE_1*^\cart\cE_k$ for every $k\in K$. Consider the cube
\[
\xymatrix{
(i,j,p)\ar[rr]\ar[rd]\ar[dd] && (i,j,q)\ar[rd]\ar@{-->}[dd]\\
&(i,j',p)\ar[rr]\ar[dd] &&(i,j',q)\ar[dd]\\
(j,j,p)\ar@{-->}[rr]\ar[rd] && (j,j,q)\ar@{-->}[rd]\\
&(j',j',p)\ar[rr] &&(j',j',q).
}
\]
The image of the bottom square under $\sigma$ can be obtained by a finite sequence of compositions from squares in $\cE_0*_\cC^\cart \cE_k$, $k\in K$. Since $\sigma$ carries the front and back squares to pullbacks as well, $\sigma$ carries the top square to pullback. Taking $q=\infty$, Condition (3) then implies that $\sigma$ carries $(i,j,p)\to (i,j',p)$ to a morphism in $\cE_2$. It follows that $\sigma$ carries $\cF_2*\cF_k$ into $\cE_2*^\cart \cE_k$ for every $k\in K$. Finally, given a square $S$ in $\cF_k*\cF_{l}$ for distinct $k,l\in K$, let $(i,j)$ be its projection in $\CCpt^n$ and $T$ its projection in $\Delta^{[n_k]_{k\in K}}$. Then $S$ can be identified with the top face of a cube, product of the edge $(i,j)\to(j,j)$ and the square $T$. Since $\sigma$ carries the other five faces of the cube to pullback squares, it carries $S$ to a pullback as well.

Case $m=1$. We check Condition (2) in Remark \ref{a4re:g}. For $0\le i\le j\le n$ and $p\le q$ in $[n_k]_{k\in K}$, consider the following cube in $\Delta^1\times\CCpt^n\times\Delta^{[n_k]_{k\in K}}$:
\[
\xymatrix{
(0,i,j,p)\ar[rd]\ar[dd]\ar[rr]&&(0,i,j,q)\ar[rd]\ar@{-->}[dd]\\
&(0,j,j,p)\ar[rr]\ar[dd]&&(0,j,j,q)\ar[dd]\\
(1,i,j,p)\ar@{-->}[rr]\ar[rd]&& (1,i,j,q)\ar@{-->}[rd]\\
&(1,j,j,p)\ar[rr]&&(1,j,j,q).
}
\]
Since $\sigma\colon \Delta^1\times\CCpt^n\times\Delta^{[n_k]_{k\in K}}\to\cC$ carries the top and bottom squares to pullbacks and carries the
front square to the identity on $\tau(j,p)\to \tau(j,q)$, it carries the back square to a pullback. Taking $q=\infty$, Condition (3) then implies that $\sigma$ carries $(0,i,j,p)\to (1,i,j,p)$ to a morphism in $\cE_\alpha$.
\end{proof}

\begin{lem}\label{a3le:cofinal}
Let $P$ be a partially ordered set and $f\colon \Lambda^2_0\to \rN(P)$ a map. Assume that $f(0)$ is the product (namely, supremum) of $f(1)$ and $f(2)$ in $P$, and $P_{/f(1)}\cup P_{/f(2)}=P$. Then $f$ is cofinal \cite{HTT}*{Definition 4.1.1.1}.
\end{lem}

\begin{proof}
By \cite{HTT}*{Theorem 4.1.3.1}, it suffices to show that for every $p\in P$, the simplicial set $S=\Lambda^2_0\times_{\rN(P)}\rN(P_{p/})$ is weakly contractible. By the second assumption, either $p\le f(1)$ or $p\le f(2)$. If exactly one of the two inequalities holds, then $S$ is a point. If both inequalities hold, then $p\le f(0)$ by the first assumption, and hence $S=\Lambda^2_0$.
\end{proof}

\begin{remark}
In Theorem \ref{a4co:multisimplicial_descent}, the Cartesian restriction on $\cE_k*_\cC\cE_l$ for $k,l\in K$ is not essential. To be more precise, under the assumptions of the theorem, consider an $I$-tiling $\cT=((\cE_{i})_{i\in I},(\cQ_{ij})_{i,j\in I,i\neq j})$ and a $J$-tiling $\cT'=((\cE_i)_{i\in J},(\cQ_{ij})_{i,j\in J,i\neq j})$ such that $\cQ_{12}=\cE_1*_\cC \cE_2$, $\cQ_{ik}=\cE_i*^\cart_\cC \cE_k$ for $i=0,1,2$ and $k\in K$, and $\cQ_{kl}\subseteq \cE_k*_\cC \cE_l$ is stable under pullback by $\cQ_{1l}=\cE_1*_\cC^\cart \cE_l$ in the first direction or stable under pullback by $\cQ_{k1}=\cE_k*_\cC^\cart\cE_1$ in the second direction for $k,l\in K$, $k\neq l$. Then the proof shows that the restriction map $\Komp^\alpha_{(\cC,\cT)}(\tau)\to \Komp^\alpha_{\cC,\cE_1,\cE_2}(\gamma)$ is a trivial Kan fibration for every $(n,n_k)_{k\in K}$-simplex $\tau$ of $\delta^{(\{0\}\amalg K)\square}_*(X,\cT')$, and if $\Komp^\alpha_{\cC,\cE_1,\cE_2}(\gamma)$ is weakly contractible for every $\gamma$, then
\[
f\colon \delta_{\{1,2\}\amalg K,L}^*\delta^{(\{1,2\}\amalg K)\square}_*(X,\cT)
\to \delta_{\{0\}\amalg K,L}^*\delta^{(\{0\}\amalg K)\square}_*(X,\cT')
\]
is a categorical equivalence.
\end{remark}

As promised, we give sufficient conditions for the simplicial set $\Komp^\alpha(\tau)$ to be an $\infty$-category.

\begin{lem}\label{a4le:infinity_category}
In the situation of Definition \ref{a4de:compactification},
\begin{enumerate}
  \item Assume that $\cT_{\alpha \beta}$ is stable under composition in the first direction (Definition \ref{a3de:squares}) for all
      $\beta\in \{1,2\}\coprod K$, $\beta\neq \alpha$. Then the map $g$ is an inner fibration. Moreover, if $X$ is an $\infty$-category,
      then $\Komp^\alpha_{(X,\cT)}(\tau)$ is an $\infty$-category.

  \item If we have $(X,\cT)=\sfW(X,\cE)$ and $\cE_\alpha$ is composable (Definition \ref{a3de:edges}) and $X$ is an $\infty$-category, then
      $\Komp^\alpha_{X,\cE}(\tau)=\Komp^\alpha_{(X,\cT)}(\tau)$ is an $\infty$-category.
\end{enumerate}
\end{lem}

The assumption in (1) implies that $\cT_\alpha$ is stable under composition. The assumption in (1) is satisfied if we have $(X,\cT)=\sfW(X,\cE)$ and $\cE_\alpha$ is stable under composition.

\begin{proof}
By Remark \ref{a4re:g}, $g$ satisfies the right lifting property with respect to every horn inclusion $\Lambda^m_i\subseteq \Lambda^m$ for $m\ge 3$. Thus, for the first assertion of (1), it suffices to show that $g$ satisfies the right lifting property with respect to $\Lambda^2_1\subseteq \Delta^2$. We use the notation of Remark \ref{a4re:g}. Let $\gamma$ be a $2$-simplex of $\Map(\square^n\times\Delta^{[n_k]_{k\in K}},X)$ such that the restriction of $\gamma$ to $\Lambda^2_1$ factorizes through $Y$. We regard $\gamma$ as a map $\Delta^2\times (\square^n\times \Delta^{[n_k]_{k\in K}})\to X$. For any square in $\cF_\alpha*\cF_\beta$ of the form \eqref{a4eq:Komp}, consider the map $\Delta^2\times \Delta^1\to \Delta^2\times (\square^n\times\Delta^{[n_k]_{k\in K}})$ as shown by the diagram
\[
\xymatrix{
(0,y')\ar[r]\ar[d] & (0,y)\ar[d]\\
(1,y')\ar[r]\ar[d] & (1,y)\ar[d]\\
(2,x')\ar[r] & (2,x).}
\]
By assumption, $\gamma$ carries the upper and lower squares to squares in $\cT_{\alpha\beta}$. (We could replace the second row by $(1,x')\to(1,x)$ without affecting the validity of the argument.) Since $\cT_{\alpha\beta}$ is stable under composition in the first direction, $\gamma$ carries the outer square to a square in $\cT_{\alpha\beta}$. Therefore, the restriction of $\gamma$ to $\Delta^{\{0,2\}}$ is an edge of $Y$.

The second assertion of (1) follows from the first assertion of (1) and the fact that $\rres_2$ is an inner fibration if $X$ is an $\infty$-category (Remark \ref{a4re:fibration}).

For (2), note that by Remark \ref{a4re:fibration}, we have a diagram with pullback square
\[
\xymatrix{
\Komp^\alpha(\tau)\ar[r]
&Z\ar[r]\ar[d] & Y\ar[d]^-{\rres_3\circ g}\\
&\{\tau\}\ar[r] & \Map(D^n\times\Delta^{[n_k]_{k\in K}}_L,X),}
\]
where $Z$ denotes the fiber of the map $\rres_3\circ g$ at $\tau$, and the map $\Komp^\alpha(\tau)\to Z$ is a pullback of the map $\rres_4$ in Remark \ref{a4re:fibration}, hence an inner fibration. Thus it suffices to show that $Z$ is an $\infty$-category. Since $\rres_3$ is an inner fibration and $g$ satisfies the right lifting property with respect to every horn inclusion $\Lambda^m_i\subseteq \Lambda^m$ for $m\ge 3$, it suffices to check that $Z$ satisfies the extension property with respect to $\Lambda^2_1\subseteq\Delta^2$. Let $f\colon \Lambda^2_1\to Z$ be a map. Unwinding the definition, to show that $f$ extends to a map $\Delta^2\to Z$, we are reduced to showing the extension property
\[
\xymatrix{
(A,A_1\cap \cG)\ar@{^{(}->}[d]\ar[r]^-{f'} & (X,\cE_\alpha),\\
(B,\cG)\ar@{..>}[ru]
}
\]
where we have $(B,\cG)=(\Delta^2)^\sharp\times (\square^n\times\Delta^{[n_k]_{k\in K}}_L,\cF_\alpha)$ and
\[
A=\Lambda^2_1\times(\square^n\times \Delta^{[n_k]_{k\in K}}_L)\coprod_{\Lambda^2_1\times
(D^n\times \Delta^{[n_k]_{k\in K}}_L)} \Delta^2\times (D^n\times\Delta^{[n_k]_{k\in K}}_L),
\]
and $f'$ is the amalgamation of $f$ and $\tau$. Every edge in $\cG$ that is not in $A$ has the form $(0,y)\to (2,x)$ with $y\to x$ in $\cF_\alpha$, and can be extended to a $2$-simplex of $B$
\[
\xymatrix{
&(1,y)\ar[rd] \\ (0,y)\ar[ru]\ar[rr] &&(2,x),
}
\]
where the oblique edges are in $A_1\cap \cG$. (Again we could replace $(1,y)$ by $(1,x)$.) Therefore, it suffices to apply Lemma \ref{a3le:comp}.
\end{proof}

We now give a criterion for the weak contractibility of certain $\infty$-categories of compactifications.

\begin{theorem}\label{a4pr:descent}
Let $(\cC,\cE_1,\cE_2)$ be a $2$-marked $\infty$-category. Suppose that the following conditions are satisfied:
\begin{enumerate}
  \item $\cE_1$ and $\cE_2$ are composable (Definition \ref{a3de:edges}).

  \item The $\infty$-category $\cC_{\cE_1}$ admits pullbacks and pullbacks are preserved by the inclusion $\cC_{\cE_1}\subseteq\cC$.

  \item For every morphism $f$ of $\cC$, there exists a $2$-simplex of $\cC$ of the form
      \begin{equation}\label{a4eq:2cell}
      \xymatrix{&y\ar[rd]^p\\z\ar[ru]^q\ar[rr]^f && x}
      \end{equation}
      with $p\in\cE_1$ and $q\in\cE_2$.
\end{enumerate}
Then, for every $n$-simplex $\tau$ of $\cC$, the simplicial set $\Komp^1_{\cC,\cE_1,\cE_2}(\tau)^{op}$ is a filtered $\infty$-category and
is weakly contractible. Moreover, the natural map
\[
\delta_2^*\cC_{\cE_1,\cE_2} \to \cC
\]
is a categorical equivalence.
\end{theorem}

Recall that an $\infty$-category is said to be \emph{filtered} \cite{HTT}*{Definition 5.3.1.7} if it satisfies the extension property with respect to the inclusion $A\subseteq A^\triangleright$ for every finite simplicial set $A$. Recall also that an ordinary category is filtered if and only if its nerve is a filtered $\infty$-category \cite{HTT}*{Proposition 5.3.1.13}. Thus in the case where $\cC$ is the nerve of an ordinary category, the first assertion of Theorem \ref{a4pr:descent} generalizes \cite{SGA4}*{Expos\'{e} xvii, Proposition 3.2.6}.

\begin{remark}
Condition (2) of Theorem \ref{a4pr:descent} is satisfied if the following conditions are satisfied:
\begin{enumerate}[(a)]
  \item morphisms in $\cE_1$ admit pullbacks in $\cC$ by morphisms in $\cE_1$;

  \item $\cE_1$ is stable under pullback by $\cE_1$;

  \item for every $2$-simplex of $\cC$ of the form \eqref{a4eq:2cell} such that $f$ and $p$ are in $\cE_1$, $q$ is in $\cE_1$.
\end{enumerate}
Indeed, Condition (c) implies that for every diagram $a\colon A\to \cC$, where $A$ is a nonempty simplicial set, the overcategory $(\cC_{\cE_1})_{/a}$ is a full subcategory of $\cC_{/a}$, so that a diagram $\bar a\colon A^\triangleleft\to \cC_{\cE_1}$ is a limit diagram if the composition $A^\triangleleft \xrightarrow{\bar a}\cC_{\cE_1}\to \cC$ is a limit diagram. Note that Conditions (b) and (c) hold if $\cE_1$ is admissible (Definition \ref{a3de:admissible_edge}).
\end{remark}

\begin{proof}[Proof of Theorem \ref{a4pr:descent}]
For brevity we write $\Komp^1(\tau)$ for $\Komp^1_{\cC,\cE_1,\cE_2}(\tau)$. Since $\cE_1$ is composable, $\Komp^1(\tau)$ is an $\infty$-category by Lemma \ref{a4le:infinity_category}. It suffices to show that $\Komp^1(\tau)^{op}$ is filtered. In fact, every filtered $\infty$-category is weakly contractible \cite{HTT}*{Lemma 5.3.1.18}. The last assertion of the proposition then follows from Corollary \ref{a4co:multi2}.

By \cite{HTT}*{Remark 5.3.1.10}, $\Komp^1(\tau)^{op}$ is filtered if and only if $\Komp^1(\tau)$ has the extension property with respect to the
inclusion $A\subseteq A^\triangleleft$ whenever $A$ is the nerve of a finite partially ordered set. We fix such an $A$ and proceed by induction on $n$. For $n=0$, $\Komp^1(\tau)$ is a point and the assertion holds trivially.

For $n\ge 1$, by the induction hypothesis, the composite map $f_{-1}\colon A\xrightarrow{f}\Komp^1(\tau)\to \Komp^1(\tau\circ d^n_n)$ extends to $g_{-1}\colon A^\triangleleft\to \Komp^1(\tau\circ d^n_n)$. We identify $\CCpt^{n-1}$ with its image under $d^n_n$, hence with the full subcategory of $\CCpt^n$ spanned by the objects $(i,j)$, $1\le i\le j\le n-1$. For $0\leq k\leq n$, consider the full subcategory $\CCpt^n_k$ of $\CCpt^n$ spanned by $\CCpt^{n-1}$ and the objects $(i,n)$ with $n-k\leq i\leq n$. We have $\CCpt^{n-1}\subseteq\CCpt^n_0\subseteq\dots\subseteq\CCpt^n_n=\CCpt^n$. Similarly we define $\Cpt^n_k\subseteq \Cpt^n$. Define $\Komp^1_k(\tau)$ similarly to $\Komp^1(\tau)$ but with $\CCpt^n$, $\Cpt^n$ and $\Box^n=\delta_2^*\Cpt^n$ replaced by $\CCpt^n_k$, $\Cpt^n_k$ and $\delta_2^*\Cpt^n_k$, respectively. We show by induction on $k$ that there exists a map $g_k\colon A^\triangleleft \to \Komp^1_k(\tau)$ compatible with $f_k$ and $g_{k-1}$, where $f_k$ is the composition of $f$ and the natural map $\Komp^1(\tau)\to\Komp^1_k(\tau)$, rendering the following diagram commutative:
\[
\resizebox{\hsize}{!}{
\xymatrix{
A \ar[dd] \ar[rd]^(.6){f=f_n} \ar@/^0.6pc/[rrrd]_-{f_k}\ar@/^0.6pc/[rrrrd]^(.7){f_{k-1}}\ar@/^1pc/[drrrrrr]^(.7){f_{-1}} \\
& \Komp^1(\tau) \ar[r] & \cdots \ar[r] &\Komp^1_k(\tau) \ar[r] & \Komp^1_{k-1}(\tau)\ar[r] &\cdots \ar[r] & \Komp^1(\tau\circ d^n_n). \\
A^\triangleleft \ar@/_1pc/[urrrrrr]_(.7){g_{-1}}
\ar@{..>}@/_0.6pc/[urrr]^-{g_k}\ar@{..>}@/_0.6pc/[urrrr]_(.7){g_{k-1}}  \ar@{..>}[ur]_(.6){g_n}
}
}
\]
The map $g_n$ will allow us to conclude the proof of the proposition.

Below are the Hasse diagrams of (the homotopy categories of) $\CCpt^3_0$ and $\CCpt^3_2$, respectively. Bullets in the first diagram represent vertices in the image of the diagonal embedding $\Delta^3\subseteq \CCpt^3_0$. Bullets in the second diagram represent vertices in the image of the embedding $\Delta^2\to \CCpt^3_2$ defined later in the proof.
\[
\begin{xy}
(0,5)="00"; (5,5)*\cir<1.8pt>{}="01"**\dir{-};
(10,5)*\cir<1.8pt>{}="02"**\dir{-};
(5,0)="11"; (10,0)*\cir<1.8pt>{}="12"**\dir{-};
(10,-5)="22";
(15,-10)="33"**\dir{-};
"01"; "11"**\dir{-};
"02"; "12"**\dir{-}; "22"**\dir{-};
"00"*{\bullet}; "11"*{\bullet}; "22"*{\bullet}; "33"*{\bullet};
\end{xy}\qquad\qquad\begin{xy}
(0,5)*\cir<1.8pt>{}="00"; (5,5)*\cir<1.8pt>{}="01"**\dir{-};
(10,5)*\cir<1.8pt>{}="02"**\dir{-};
(5,0)*\cir<1.8pt>{}="11"; (10,0)="12"**\dir{-};
(15,0)="13"**\dir{-};
(10,-5)*\cir<1.8pt>{}="22";
(15,-5)="23"**\dir{-};
(15,-10)*\cir<1.8pt>{}="33"**\dir{-};
"01"; "11"**\dir{-};
"02"; "12"**\dir{-}; "22"**\dir{-};
"13"; "23"**\dir{-};
"12"*{\bullet}; "13"*{\bullet}; "23"*{\bullet};
\end{xy}
\]

We first consider the case $k=0$. The map $f_0$ (resp.\ $g_{-1}$) corresponds to a map $\tilde{f}_0\colon A\times\CCpt^n_0\to\cC$ (resp.\
$\tilde{g}_{-1}\colon A^\triangleleft\times\CCpt^{n-1}\to\cC$). To find the desired map $g_0$, it suffices to construct a map $\tilde{g}_0\colon A^\triangleleft\times\CCpt^n_0\to\cC$, extending $\tilde{f}_0$ and $\tilde{g}_{-1}$ and the composition $A^\triangleleft\times\Delta^n\to\Delta^n\xrightarrow{\tau}\cC$, where the first map is the projection, via the diagonal embedding $\Delta^n\subseteq\CCpt^n_0$. This follows if $\cC$ has the extension property with respect to the smash product of $A\subseteq A^\triangleleft$ and $\CCpt^{n-1}\coprod_{\Delta^{n-1}}\Delta^n\subseteq\CCpt^n_0$. However, the latter inclusion is inner anodyne by Lemma \ref{a6le:union} applied to $Q=[n]^{op}$ and $R=(\RCpt^{n-1})^{op}$. Thus we may find the map $\tilde{g}_0$ by \cite{HTT}*{Corollary 2.3.2.4} as $\cC$ is an
$\infty$-category.

For $1\le k\le n$, consider the full subcategory $\Delta^2\subseteq\CCpt^n_k$ spanned by $\{(n-k,n-1),(n-k,n),(n-k+1,n)\}$. We identify
$\Delta^{\{0,2\}}$ with the subcategory of $\CCpt^n_{k-1}$ spanned by $\{(n-k,n-1),(n-k+1,n)\}$. The inclusion
$\CCpt^n_{k-1}\coprod_{\Delta^{\{0,2\}}}\Delta^2\subseteq \CCpt^n_k$ is inner anodyne by Lemma \ref{a6le:cut}, and so is its smash product
\[
S\coloneqq \(A^\triangleleft\times\(\CCpt^n_{k-1}\coprod_{\Delta^{\{0,2\}}}\Delta^2\)\)\cup(A\times \CCpt^n_k)\subseteq A^\triangleleft \times \CCpt^n_k
\]
with $A\subseteq A^\triangleleft$. We define $\cG_1$ and $\cG_2$ by
\[
(A^\triangleleft\times \delta_2^*\Cpt^n_k,\cG_1,\cG_2)\simeq(A^\triangleleft)^{\sharp^2_{\{1\}}} \times \delta_{2+}^{*}\Cpt^n_k.
\]
We let $-\infty$ denote the cone point of $A^\triangleleft$. Any edge in $\cG_1$ but not in $S$ has the form $(-\infty,n-k,n)\to (l,i,n)$ with $l$ in $A^\triangleleft$ and $i>n-k+1$, and can be extended to a $2$-simplex
\[
\xymatrix{
&(l,n-k+1,n)\ar[rd] \\ (-\infty,n-k,n)\ar[ru]\ar[rr] &&(l,i,n)
}
\]
with oblique edges in $S_1\cap \cG_1$. Any edge in $\cG_2$ but not in $S$ has the form $(-\infty,n-k,j)\to (-\infty,n-k,n)$ with $j<n-1$ and can be extended to a $2$-simplex
\[
\xymatrix{
&(-\infty,n-k,n-1)\ar[rd] \\ (-\infty,n-k,j)\ar[ru]\ar[rr] &&(-\infty,n-k,n)
}
\]
with oblique edges in $S_1\cap \cG_2$. Thus, by Condition (1) and Lemma \ref{a3le:comp}, it suffices to construct a map $(S,S_1\cap\cG_1,S_1\cap\cG_2)\to (\cC,\cE_1,\cE_2)$ extending the amalgamation $v\colon V\coloneqq A\times\CCpt^n_k\coprod_{A\times\CCpt^n_{k-1}}A^\triangleleft\times \CCpt^n_{k-1}\to \cC$ of $\tilde f_k$ and $\tilde g_k$, where $\tilde{f}_k\colon A\times\CCpt^n_k\to\cC$ (resp.\ $\tilde{g}_{k-1}\colon A^\triangleleft\times\CCpt^{n}_{k-1}\to\cC$) is the map given by $f_k$ (resp.\ $g_{k-1}$). For this, it suffices to construct a map $(A^\triangleleft)^{\sharp^2_{\{1\}}}\times T\to(\cC,\cE_1,\cE_2)$ extending the amalgamation of $\tilde{f}_k\res A\times\Delta^2$ and $\tilde{g}_{k-1}\res A^\triangleleft\times\Delta^{\{0,2\}}$. Here $T=(\Delta^2,\cF_1,\cF_2)$ is the $2$-marked simplicial set with $\cF_1$ (resp.\ $\cF_2$) consisting of the degenerate edges and the edge $1\to 2$ (resp.\ $0\to 1$).

We now lift $v$ to a map $V\to \cC_{/\tau(n)}$, corresponding to a map $(V^\triangleright,\cG'_1,\cG'_2)\to (\cC,\cE_1,\cE_2)$, where $\cG'_1$ is the union of $(V_1\cap \cG_1)\cup \{\id_{+\infty}\}$ and all edges $(l,i,n)\to +\infty$ in $V^\triangleright$ for $l\in A^\triangleleft$, and $\cG'_2\coloneqq (V_1\cap \cG_2)\cup \{\id_{+\infty}\}$. Here $+\infty$ denotes the cone point of $V^\triangleright$. Consider the inclusion
$\iota\colon A^\triangleleft \to V$ induced by the inclusion $\{(n,n)\}\subseteq \CCpt^n_{k-1}$. Since the restriction of $v$ to $A^\triangleleft$ is constant of value $\tau(n)$, the amalgamation of $v$ and the constant map $A^{\triangleleft\triangleright}\to \cC$ of value $\tau(n)$ provides a map $v'\colon(C^\triangleright(\iota),\cG''_1,\cG'_2)\to (\cC,\cE_1,\cE_2)$, where we have $C^\triangleright(\iota)\coloneqq V\coprod_{A^\triangleleft}A^{\triangleleft\triangleright}$, and $\cG''_1$ is the intersection of $\cG'_1$ and the set of edges of $C^\triangleright(\iota)$. Since the inclusions $\{(n,n)\}\subseteq \CCpt^n_{k-1}$ and $\{(n,n)\}\subseteq\CCpt^n_{k}$ are right anodyne by \cite{HTT}*{Lemma 4.2.3.6}, and so are their products with identity maps \cite{HTT}*{Corollary 2.1.2.7}, the inclusion $A^\triangleleft=A\coprod_A A^\triangleleft\subseteq V$ is right anodyne by Lemma \ref{a2le:pushout_anodyne}. By \cite{HTT}*{Lemma 2.1.2.3},
it follows that the inclusion $C^\triangleright(\iota)\subseteq V^\triangleright$ is inner anodyne. Every edge in $\cG'_1$ that is not in
$\cG''_1$ has the form $(l,i,n)\to +\infty$ and can be extended to a $2$-simplex
\[
\xymatrix{
&(l,n,n)\ar[rd]\\(l,i,n)\ar[rr]\ar[ru]&&+\infty
}
\]
with oblique edges in $\cG''_1$. Lemma \ref{a3le:comp} then provides the desired extension of $v'$ and hence $v$.

We are therefore reduced to showing that every map
\[
a\colon A^{\sharp^2_{\{1\}}}\times T \coprod_{A^{\sharp^2_{\{1\}}}\times (\Delta^{\{0,2\}})^{\flat^2}}
(A^\triangleleft)^{\sharp^2_{\{1\}}}\times (\Delta^{\{0,2\}})^{\flat^2}\to (\cC_{/x},\cE'_1,\cE'_2)
\]
whose restriction to $A\times\Delta^{\{1,2\}}\coprod_{A\times\Delta^{\{2\}}} A^\triangleleft \times \Delta^{\{2\}}$ factorizes through
$(\cC_{\cE_1})_{/x}$ extends to a map $(A^\triangleleft)^{\sharp^2_{\{1\}}}\times T\to (\cC_{/x},\cE'_1,\cE'_2)$. Here $x$ is an object of $\cC$ and $\cE'_i$ denotes the inverse image of $\cE_i$ via the map $\cC_{/x}\to \cC$ for $i=1,2$. Recall that $A$ is the nerve of a partially ordered set. We let $B\subseteq A^\triangleleft\times\Delta^2$ denote the full subcategory spanned by all vertices except $(-\infty,1)$. Consider the commutative diagram of inclusions
\[
\xymatrix{
A\times\Delta^{\{0,2\}}\ar[r]\ar[d] & A\times \Delta^2\ar[d]\ar[rd]\\
(A\times \Delta^{\{0,2\}})^\triangleleft \ar[r]\ar[d] &
A\times \Delta^2\coprod_{A\times\Delta^{\{0,2\}}}(A\times \Delta^{\{0,2\}})^\triangleleft\ar[r]^-{h}\ar[d]
&(A\times \Delta^2)^\triangleleft\ar[d]\\
A^\triangleleft\times \Delta^{\{0,2\}}\ar[r] & A\times \Delta^2\coprod_{A\times\Delta^{\{0,2\}}}A^\triangleleft\times \Delta^{\{0,2\}}\ar[r]^-{h'} & B
}
\]
where the lower left (resp.\ right) vertical arrow carries the cone point of $(A\times\Delta^{\{0,2\}})^\triangleleft$ (resp.\
$(A\times\Delta^2)^\triangleleft$) to $(-\infty,0)$, and the squares on the left are clearly pushouts. For any simplex $\sigma$ of $B$, if $\sigma$ is not a simplex of $(A\times \Delta^2)^\triangleleft$, then $(-\infty,2)$ is a vertex of $\sigma$, so that $\sigma$ is a simplex of $A^\triangleleft\times\Delta^{\{0,2\}}$. Thus $h'$ is a pushout of $h$, which is inner anodyne by \cite{HTT}*{Lemma 2.1.2.3}, since the inclusion $A\times \Delta^{\{0,2\}}\subseteq A\times \Delta^2$ is left anodyne by \cite{HTT}*{Corollary 2.1.2.7}. Thus $a$ extends to a map $a'\colon B\to\cC_{/x}$. We would like to apply \cite{HTT}*{Lemma 4.3.2.13} to conclude that there exists a right Kan extension $b\colon A^\triangleleft\times \Delta^2\to \cC_{/x}$ of $a'$. The only condition we need to check for this is that the induced diagram $B_{(-\infty,1)/}\to B\xrightarrow{a'}\cC_{/x}$ has a limit. However, the composite map factorizes through $a_0\colon
B_{(-\infty,1)/}\to(\cC_{\cE_1})_{/x}$. By Condition (2) and Lemma \ref{a4le:over_pull} below, the $\infty$-category $(\cC_{\cE_1})_{/x}$ admits
finite limits and such limits are preserved by the inclusion $(\cC_{\cE_1})_{/x}\subseteq\cC_{/x}$. We therefore obtain a limit diagram $b_0\colon A^\triangleleft \times \Delta^{\{1,2\}}\to (\cC_{\cE_1})_{/x}$ extending $a_0$ and a right Kan extension $b$ of $a'$. The restriction of $b$ to $(A^\triangleleft \times \Delta^{\{1,2\}})\cup B$ is equivalent to the amalgamation $b_1$ of $b_0$ and $a'$. Thus, by \cite{HTT}*{Lemma 2.4.6.3}, up to replacing $b$ by an extension of $b_1$, we may assume that $b\res A^\triangleleft \times \Delta^{\{1,2\}}$ factorizes through $(\cC_{\cE_1})_{/x}$.

Note that $b$ does not necessarily carry the edge $(-\infty,0)\to(-\infty,1)$ into $\cE'_2$, which is the last requirement to conclude that $b$ gives rise to the desired extension $(A^\triangleleft)^{\sharp^2_{\{1\}}}\times T\to (\cC_{/x},\cE'_1,\cE'_2)$. To overcome this problem, we apply Condition (3) to the arrow $b((-\infty,0)\to (-\infty,1))$ to get a $2$-simplex $\gamma$ of $\cC_{/x}$. Consider the totally ordered set $I=\{0<1^-<1<2\}$, which contains $[2]=\{0<1<2\}$. The amalgamation of $\gamma$ and $b$ is a map $c\colon K\to\cC_{/x}$, where
\[
K\coloneqq A^\triangleleft \times\Delta^2\coprod_{\{-\infty\}\times \Delta^1}
\{-\infty\}\times \Delta^{\{0,1^-,1\}}\subseteq A^\triangleleft \times \Delta^I,
\]
with $c((-\infty,0)\to (-\infty,1^-))\in\cE'_2$ and $c((-\infty,1^-)\to(-\infty,1))\in\cE'_1$. We let $\cF'_1$ (resp.\ $\cF'_2$) denote the set of all degenerate edges of $\Delta^I$ and all edges of $\Delta^{\{1^-,1,2\}}$ (resp.\ $\Delta^{\{0,1^-\}}$). Consider the pushout
\[
(L,\cH_1,\cH_2)=(A^\triangleleft)^{\sharp^2_{\{1\}}} \times
(\Delta^I,\cF'_1,\cF'_2)\coprod_{(A\times \Delta^I)^{\flat^2}} (A\times \Delta^2)^{\flat^2}
\]
given by the degeneracy map $I\to [2]$ identifying $1^-$ and $1$. The inclusion $K\subseteq L$ induced by the inclusion $K\subseteq A^\triangleleft \times \Delta^I$ is a pushout of the inclusion
\begin{multline*}
r\colon (\{-\infty\}\times \Delta^0)\star (A^\triangleleft \times \Delta^{\{1,2\}})
\coprod_{(\{-\infty\}\times \Delta^0)\star (\{-\infty\}\times
\Delta^{\{1\}})} (\{-\infty\}\times \Delta^{\{0,1^-\}})\star
(\{-\infty\}\times \Delta^{\{1\}}) \\
\to (\{-\infty\}\times\Delta^{\{0,1^-\}})\star (A^\triangleleft \times
\Delta^{\{1,2\}}).
\end{multline*}
Indeed, for any simplex $\sigma$ of $L$, if $(-\infty,1^-)$ is a vertex of $\sigma$, then $\sigma$ is a simplex of the target of $r$; otherwise
$\sigma$ is a simplex of $A^\triangleleft\times \Delta^2$. Moreover, $r$ is inner anodyne by \cite{HTT}*{Lemma 2.1.2.3}, since the inclusion
$\{-\infty\}\times \Delta^{\{1\}}\subseteq A^\triangleleft\times\Delta^{\{1,2\}}$ is left anodyne by \cite{HTT}*{Lemma 4.2.3.6}. Note that we have $\cH_2\subseteq K_1$ and $c$ induces a map $(K,K_1\cap\cH_1,\cH_2)\to (\cC_{/x},\cE'_1,\cE'_2)$. Moreover, any edge in $\cH_1$ that is not in $K$ has the form $(-\infty,1^-)\to (l,m)$ with $m\ge 1$ and can be extended to a $2$-simplex
\[
\xymatrix{
&(-\infty,1)\ar[rd] \\ (-\infty,1^-)\ar[rr]\ar[ru]&&(l,m)
}
\]
with oblique arrows in $K_1\cap \cH_1$. Thus, by Condition (1) and Lemma \ref{a3le:comp}, $c$ extends to a map $c'\colon (L,\cH_1,\cH_2)\to(\cC_{/x},\cE'_1,\cE'_2)$. The restriction of $c'$ to $A^\triangleleft\times\Delta^{\{0,1^-,2\}}\simeq A^\triangleleft\times\Delta^2$ provides the desired extension.
\end{proof}

\begin{lem}\label{a4le:over_pull}
Let $\cC$ and $\cD$ be $\infty$-categories and $f\colon \cC\to \cD$ a functor. Assume that $\cC$ admits pullbacks and pullbacks are preserved by $f$. Then, for any object $x$ of $\cC$, the overcategory $\cC_{/x}$ admits finite limits and such limits are preserved by the functor $f'\colon\cC_{/x}\to\cD_{/f(x)}$.
\end{lem}

\begin{proof}
The morphism $\id_x$ is a final object of $\cC_{/x}$ and $f(\id_x)=\id_{f(x)}$ is a final object of $\cD_{/f(x)}$. By Lemma \ref{a4le:over} below, $\cC_{/x}$ admits pullbacks and the functors $\cC_{/x}\to \cC$ and $\cD_{/f(x)}\to \cD$ preserve pullbacks. Since the latter is conservative, the functor $f'$ preserves pullbacks. We conclude by \cite{HTT}*{Corollaries 4.4.2.4, 4.4.2.5}.
\end{proof}

\begin{lem}\label{a4le:over}
Let $A$ and $B$ be simplicial sets. Assume that $B$ is weakly contractible. Let $\cC$ be an $\infty$-category and $p\colon A\to \cC$ a diagram. Then a diagram $f\colon B\to \cC_{/p}$ admits a limit if and only if the composition $B\xrightarrow{f} \cC_{/p}\to \cC$ admits a limit. Moreover, $\bar f\colon B^\triangleleft\to \cC_{/p}$ is a limit diagram if and only if the composition $B^\triangleleft \xrightarrow{\bar f} \cC_{/p}\to\cC$ is a limit diagram.
\end{lem}

This applies in particular to the case where $B=\Lambda^2_2$. In this case we have $B^\triangleleft\simeq \Delta^1\times \Delta^1$.

\begin{proof}
We let $q\colon B\star A\to \cC$ denote the diagram corresponding to $f$. We let $q_0$ denote the restriction of $q$ to $B$. Since the inclusion $B\subseteq B\star A$ is left anodyne by \cite{HTT}*{Lemma 4.2.3.6}, the map $\cC_{/q}\to \cC_{/q_0}$ is a trivial Kan fibration by \cite{HTT}*{Proposition 2.1.2.5}. Therefore, $\cC_{/q}$ admits a final object if and only if $\cC_{/q_0}$ admits a final object, and an object of $\cC_{/q}$ is a final object if and only if its image in $\cC_{/q_0}$ is a final object.
\end{proof}

\begin{remark}\label{a4re:deligne}
In the situation of Theorem \ref{a4pr:descent}, for every $\infty$-category $\cD$, the functor
\begin{equation}\label{a4eq:fun}
\Fun(\cC,\cD)\to \Fun(\delta_2^*\cC_{\cE_1,\cE_2},\cD)
\end{equation}
is an equivalence of $\infty$-categories. This generalizes Deligne's gluing result \cite{SGA4}*{Expos\'{e} xvii, Proposition 3.3.2}, which can be interpreted as saying that \eqref{a4eq:fun} induces a bijection between the sets of equivalence classes of objects when $\cC$ is the nerve of an ordinary category and $\cD=\rN(\cat)$.
\end{remark}

In the remaining part of this section, we will study a variant of the diagonal functor $\delta_2^*\colon \Sset[2]\to \Sset$, which will allow,
among other things, to express the $\infty$-category of correspondences in \cite{Gai1} in terms of our multisimplicial nerves. This will not be used in the later sections of this article. Therefore, the uninterested reader may safely skip the remaining part of this section and proceed to Section
\ref{a5ss}.

\begin{definition}\label{a4de:correspondence}
Let $X$ be a bisimplicial set. We let $\delta_{2\nabla}^*X$ denote the simplicial set defined by $(\delta_{2\nabla}^*X)_n=\Hom_{\Sset[2]}(\Cpt^n,X)$. This defines a functor $\delta_{2\nabla}^*\colon \Sset[2]\to\Sset$.
\end{definition}

Recall that we have $(\delta_{2}^*X)_n\simeq \Hom_{\Sset[2]}(\Delta^{n,n},X)$.

\begin{theorem}\label{a4th:correspondence}
The map
\[
f\colon \delta_2^* X\to \delta_{2\nabla}^* X
\]
induced by the inclusions $\Cpt^n\subseteq\Delta^{n,n}$ is a categorical equivalence.
\end{theorem}

Under our convention of representing the first direction vertically and second direction horizontally as in \eqref{a4eq:Cpt}, the map can be
described as ``forgetting the lower-left corner''. Before proving the theorem, let us look at a few examples.

\begin{example}
For $X=\Cpt^n$, we have a canonical isomorphism $\CCpt^n\simeq\delta_{2\nabla}^*\Cpt^n$. An $m$-simplex $\alpha$ of $\CCpt^n$ is given by a sequence $(i_0,j_0)\le \dots \le (i_m,j_m)$ in $\RCpt^n$. The isomorphism carries $\alpha$ to the $m$-simplex of $\delta_{2\nabla}^*\Cpt^n$ given by the map of bisimplicial sets $\Cpt^m\to \Cpt^n$ carrying $(a,b)$ to $(i_a,j_b)$. The map $f$ can be identified with the inclusion
$\square^n\subseteq \CCpt^n$, which is inner anodyne (Lemma \ref{a6le:cpt_inner}), and in particular a categorical equivalence.
\end{example}

\begin{example}
In the situation of Theorem \ref{a4pr:descent}, there exists a non-canonical categorical equivalence $\delta_{2\nabla}^*\cC_{\cE_1,\cE_2}\to \cC$ by Theorem \ref{a4th:correspondence} applied to the bisimplicial set $\cC_{\cE_1,\cE_2}$.
\end{example}

\begin{example}\label{a4ex:corr}
Given a $2$-marked $\infty$-category $(\cC,\cE_1,\cE_2)$ satisfying certain conditions, Gaitsgory defined an $\infty$-category of correspondences $\cC_{\corr:\cE_1,\cE_2}$ \cite{Gai1}*{{\Sec}5.1.2} ($\cE_1=vert$, $\cE_2=horiz$ in his notation) following an idea of Lurie. More generally, given an \emph{arbitrary} $2$-marked $\infty$-category $(\cC,\cE_1,\cE_2)$, using the above functor $\delta_{2\nabla}^*$, one can define the \emph{simplicial set of correspondences} to be
\[
\cC_{\corr:\cE_1,\cE_2}\coloneqq\delta_{2\nabla}^*(\op^2_{\{2\}}\cC^\cart_{\cE_1,\cE_2}).
\]
In other words, we have
\[
(\cC_{\corr:\cE_1,\cE_2})_n=\Hom_{\Sset[2]}(\Cpt^n,\op^2_{\{2\}}\cC^\cart_{\cE_1,\cE_2}).
\]
Applying Theorem \ref{a4th:correspondence} to the bisimplicial set $\op^2_{\{2\}}\cC^\cart_{\cE_1,\cE_2}$, we know that the natural map
\[
\delta^*_{2,\{2\}}\cC^\cart_{\cE_1,\cE_2}\to\cC_{\corr:\cE_1,\cE_2},
\]
given by ``forgetting the lower-right corner'', is a categorical equivalence.
\end{example}

\begin{proof}[Proof of Theorem \ref{a4th:correspondence}]
The proof is very similar to that of Theorem \ref{a4th:multisimplicial_descent}. Consider a commutative diagram
\[
\xymatrix{
\delta_2^* X\ar[r]^-v\ar[d]_f & \Fun(\Delta^l,\cD)\ar[d]^p\\
\delta_{2\nabla}^* X\ar[r]^-w & \Fun(\partial \Delta^l,\cD)
}
\]
as in Lemma \ref{a1le:categorical_equivalence}. Let $\sigma$ be an $n$-simplex of $\delta_{2\nabla}^* X$, corresponding to a map $\tau\colon\Cpt^n\to X$. Consider the commutative diagram
\begin{align}\label{a7eq:descent}
\resizebox{\hsize}{!}{
\xymatrix{
\cN(\sigma)\ar[d]\ar[r] & \Fun(\Delta^l\times\CCpt^n,\cD)
\ar[d]^-{\rres_1}\ar[rr]^-{\rres_2} && \Fun(\Delta^l\times \Delta^n,\cD)\ar[d]\ar@/^4.5pc/[dd]^{\rres_4}\\
\Delta^0\ar[r]^-{h}\ar[rd]_{v\circ \delta_2^* \tau} &  \Fun(H,\cD)\ar[r]\ar[d]&
\Fun(\partial \Delta^l\times \CCpt^n,\cD)\ar[r]^-{\rres_2} & \Fun(\partial\Delta^l\times\Delta^n,\cD)\\
&\Fun(\Delta^l\times \square^n,\cD)\ar[rr]^-{\rres_3}&&\Fun(\Delta^l\times D^n,\cD).
}
}
\end{align}
In the above diagram,
\begin{itemize}
  \item $H$ and the maps $\rres_i$, $1\le i\le 4$ are defined as in the proof of Theorem \ref{a4th:multisimplicial_descent};

  \item $h$ is the amalgamation of $v\circ \delta_2^*\tau\colon\square^n\to\Fun(\Delta^l,\cD)$ and $w\circ\delta_{2\nabla}^*\tau\colon\CCpt^n\to\Fun(\partial\Delta^l,\cD)$;

  \item $\cN(\sigma)$ is defined so that the upper left square is a pullback square;

  \item the unnamed arrows in the middle column and in the upper right square are obvious restrictions.
\end{itemize}

By \cite{HTT}*{Corollaries 2.3.2.4, 2.3.2.5}, the map $j\colon H\hookrightarrow \Delta^l\times \CCpt^n$ is inner anodyne, and consequently
$\rres_1$ is a trivial Kan fibration. It follows that $\cN(\sigma)$ is a contractible Kan complex.

We let $\Phi(\sigma)\colon\cN(\sigma)\to\Fun(\Delta^l\times \Delta^n,\cD)$ denote the composition of the upper horizontal arrows in \eqref{a7eq:descent}. Then $\Phi(\sigma)$ induces a map
\[
\cN(\sigma)^\sharp\times(\Delta^n)^\flat\to\Fun(\Delta^l,\cD)^\flat\subseteq\Fun(\Delta^l,\cD)^{\natural}.
\]
Thus $\Phi(\sigma)$ induces a map $\cN(\sigma)\to\Map^\sharp((\Delta^n)^\flat, \Fun(\Delta^l,\cD)^{\natural})$, which we still denote by $\Phi(\sigma)$. This construction is functorial in $\sigma$, giving rise to a morphism $\Phi\colon\cN\to\Map[\delta_{2\nabla}^*X,\Fun(\Delta^l,\cD)]$ in the category $(\Sset)^{(\del_{/ \delta_{2\nabla}^*X})^{op}}$.

The composition $\Delta^n\hookrightarrow \CCpt^n\xrightarrow{\delta^*_{2\nabla}\tau}X$, where the first map is the diagonal embedding, is $\sigma$. Thus the composition of the middle row of \eqref{a4eq:descent} is given by $w(\sigma)$. Thus $\Map[\delta_{2\nabla}^*X,p]\circ\Phi\colon\cN\to\Map[\delta_{2\nabla}^*X,\Fun(\partial\Delta^l,\cD)]$ factorizes through the morphism $\Delta^0_{(\del_{/\delta_{2\nabla}^* X})^{op}}\to \Map[\delta_{2\nabla}^* X,\Fun(\partial \Delta^l,\cD)]$ corresponding to $w$ via Remark \ref{a2re:functors}.

Now let $\sigma'$ be an $n$-simplex of $\delta_2^* X$ corresponding to a map $\tau'\colon \Delta^{n,n}\to X$. The restriction of $v\circ\delta^2\tau'\colon\Delta^{[n,n]}\to\Fun(\Delta^l,\cD)$ to $\CCpt^n\subseteq \Delta^{[n,n]}$ provides a vertex of $\nu(\sigma')$ of $\cN(f(\sigma'))$, whose image under $\Phi(f(\sigma'))$ is $v(\sigma')$. This construction is functorial in $\sigma'$, giving rise to $\nu\in\Gamma(f^*\cN)_0$ such that $f^*\Phi\circ \nu=v$. Applying Proposition \ref{a1pr:extension} to $\Phi$, the map $f\colon \delta_2^*X\to
\delta_{2\nabla}^* X$ and the global section $\nu$ of $f^*\cN$, we obtain a map $u\colon \delta_{2\nabla}^* X\to \Fun(\Delta^l,\cD)$ satisfying $p\circ u=w$ such that $u\circ f$ and $v$ are homotopic over $\Fun(\partial\Delta^l,\cD)$, as desired.
\end{proof}

\subsection{Cartesian gluing}
\label{a5ss}

In \Sec\ref{a4ss}, we gave a general criterion for multisimplicial descent (Theorem \ref{a4th:multisimplicial_descent}). It is often impossible to apply the theorem directly to Cartesian multisimplicial nerves, as the simplicial set of compactifications for Cartesian tilings is often empty for $n\ge 2$. However, we have seen that certain bigger multisimplicial nerves do satisfy multisimplicial descent (Theorem
\ref{a4co:multisimplicial_descent} and Theorem \ref{a4pr:descent}). In this section, we complete the picture by comparing Cartesian multisimplicial nerves with bigger multisimplicial nerves. The basic idea is to decompose a square $\sigma$ in an $\infty$-category
\begin{equation}\label{a5eq:square}
\xymatrix{w \ar[r]\ar[d]& y\ar[d]\\ z\ar[r]&x}
\end{equation}
into a diagram $\sigma'$
\begin{equation}\label{a5eq:decompose}
\xymatrix{w\ar[rd]\\&w' \ar[r]\ar[d]&y\ar[d]\\&z\ar[r]&x,}
\end{equation}
where the inner square is Cartesian. More precisely, $\sigma'$ is a right Kan extension of $\sigma$ along the full embedding $\Delta^1\times\Delta^1\to(\Delta^1\times \Delta^1)^\triangleleft$ carrying $(0,0)$ to the cone point $-\infty$ and carrying every other vertex $(i,j)$ to $(i,j)$. To deal with the oblique arrow $f\colon w'\to w$, we consider the square
\[
\xymatrix{
w\ar[r]^{\id_w}\ar[d]_{\id_w} & w\ar[d]^f \\ w\ar[r]^f & w'.
}
\]
If this square is a pullback square (which happens exactly when $f$ is a monomorphism), we stop. Otherwise, we apply the above procedure recursively, which leads to the diagonal map $\delta\colon w\to w\times_{w'}w$ of $f$, and the diagonal of $\delta$, and so on.

To state our result, we introduce a bit of notation. For sets of edges $\cE_1$, $\cE_2$, $\cE$ of an $\infty$-category $\cC$, we let
$\cE_1*_\cC^\cE\cE_2\subseteq \cE_1*_\cC\cE_2$ denote the set of squares that admit a decomposition as above with $w\to w'$ in $\cE$. We have
$\cE_1*^\cart_\cC\cE_2=\cE_1*_\cC^\cE\cE_2$, where $\cE$ is the set of equivalences of $\cC$ (or the set of degenerate edges of $\cC$).

The main result of this section is the following.

\begin{theorem}[Cartesian gluing]\label{a5th:cartesian_gluing}
Let $\cC$ be an $\infty$-category and $K$ a finite set. Let $(\cC,\cT)\subseteq(\cC,\cT')$ be two $(\{1,2\}\coprod K)$-tiled $\infty$-categories such that $\cT_j=\cT'_j$ for all $j\in \{1,2\}\coprod K$, and $\cT_{jj'}=\cT'_{jj'}$ for all $j,j'\in \{1,2\}\coprod K$ with $j\neq j'$, except when $(j,j')=(1,2)$ or $(2,1)$, we have $\cT_{12}=\cT_1*_\cC^\cart \cT_2$ and $\cT'_{12}=\cT_1*_\cC^{\cE}\cT_2$, where $\cE\subseteq \cT_1\cap \cT_2$ is a set of edges of $\cC$. Suppose that the following conditions are satisfied:
\begin{enumerate}
  \item $\cT_1*_{\cC}\cT_2=\cT_1*^{\cC_1}_{\cC}\cT_2$; $\cT_1$ (resp.\ $\cT_2$) is stable under composition and pullback by $\cT_2$
      (resp.\ $\cT_1$).

  \item Every morphism $f$ in $\cE$ is $n$-truncated for some integer $n\ge -2$ (which may depend on $f$) \cite{HTT}*{Definition 5.5.6.8}.
      Moreover, $\cE$ is stable under composition, pullback by $\cT_1\cup\cT_2$, and taking diagonals: for every edge $y\to x$ in $\cE$, its
      diagonal $y\to y\times_x y$ is in $\cE$ (the pullback $y\times_x y$ exists in $\cC$ by the first part of Condition (1)).

  \item For every $k\in K$, the set $\cT_{1k}$ (resp.\ $\cT_{2k}$) is stable under composition and pullback by $\cT_{2k}$ (resp.\ $\cT_{1k}$) in the first direction, and $\cT_{1k}\cap \cT_{2k}$ is stable under pullback by $\cT_{1k}\cup \cT_{2k}$ in the first direction. Moreover, we have
      \begin{equation}\label{a5eq:cond3}
      \cT_{1k}*_{\Fun(\Delta^1,\cC)}^{\cE*_\cC\cT_k}\cT_{2k}
      =\cT_{1k}*_{\Fun(\Delta^1,\cC)}^{(\cE*_\cC\cT_k)\cap\cT_{1k}\cap \cT_{2k}}\cT_{2k}.
      \end{equation}
      See Remark \ref{a5re:explicit} (3) below for an explicit description of the meaning of \eqref{a5eq:cond3}.

  \item For every pair $k,k'\in K$ with $k\neq k'$, and every \emph{Cartesian} square of the form \eqref{a5eq:square} of the $\infty$-category $\Fun(\Delta^1\times \Delta^1,\cC)$ (whose vertices are regarded as squares of $\cC$ in directions $k,k'$), with $y\to x$ given by a $(1,1,1)$-simplex of $\delta_*^{\{1,k,k'\}\square}(\cC,\cT)$ and $z\to x$ given by a $(1,1,1)$-simplex of $\delta_*^{\{2,k,k'\}\square}(\cC,\cT)$ (where the obvious restrictions of $\cT$ are still denoted by $\cT$), we have $w\in\cT_{kk'}$.
\end{enumerate}
Then, for any subset $L\subseteq K$, the inclusion map
\[
\iota\colon \delta^*_{\{1,2\}\amalg K,L}\delta_*^{(\{1,2\}\amalg K)\square}(\cC,\cT)\hookrightarrow\delta^*_{\{1,2\}\amalg K,L}\delta_*^{(\{1,2\}\amalg K)\square}(\cC,\cT')
\]
is a categorical equivalence.
\end{theorem}

We note that unlike the theorems in the last section, Theorem \ref{a5th:cartesian_gluing} is symmetric in $\cE_1$ and $\cE_2$.

\begin{remark}
Let us recall some facts about $n$-truncated morphisms, $n\ge -2$, in an $\infty$-category $\cC$.
\begin{itemize}
  \item A morphism $f$ of $\cC$ is $(-2)$-truncated (resp.\ $(-1)$-truncated) if and only if $f$ is an equivalence (resp.\ a monomorphism).

  \item The set of $n$-truncated morphisms of $\cC$ is admissible. Indeed, the set is stable under pullback by \cite{HTT}*{Remark 5.5.6.12}. It follows from the long exact sequence of homotopy groups that the set is stable under composition. Moreover, given a $2$-simplex $\sigma$ of $\cC$ of the form \eqref{a3eq:2cell}, if $r=\sigma\circ d^2_1$ is $n$-truncated and $p=\sigma\circ d^2_0$ is $(n+1)$-truncated, then $q=\sigma\circ d^2_2$ is $n$-truncated.

  \item Given a morphism $f\colon y\to x$ of $\cC$ such that the fiber product $y\times_x y$ exists, $f$ is $(n+1)$-truncated if and only if its diagonal $y\to y\times_x y$ is $n$-truncated (\cite{HTT}*{Lemma 5.5.6.15} assumes that $\cC$ admits finite limits, but the proof only uses the existence of $y\times_x y$).

  \item In an $(n+1)$-truncated category \cite{HTT}*{Definition 2.3.4.1}, every morphism is $n$-truncated by \cite{HTT}*{Proposition 2.3.4.18}.
\end{itemize}
\end{remark}

\begin{remark}\label{a5re:explicit}
We have the following remarks concerning the conditions in the above theorem.
\begin{enumerate}
  \item The conditions of the theorem imply that the sets $\cT_j$, $\cT_{ij}$, $\cT'_{ij}$ and $\cE$ are all stable under equivalence. Indeed, the second part of Condition (1) implies that $\cT_1$ and $\cT_2$ are stable under equivalence. The second part of Condition (2) implies that $\cE$ is stable under equivalence. It follows that $\cT_{12}$ and $\cT'_{12}$ are stable under equivalence. The first part of Condition (3) implies that $\cT_{1k}$ and $\cT_{2k}$ are stable under equivalence. It follows that $\cT_k$ is stable under equivalence. Finally, Condition (4) implies that $\cT_{kk'}$ is stable under equivalence.

  \item The first part of Condition (1) is satisfied if morphisms in $\cT_1$ admits pullback in $\cC$ by morphisms in $\cT_2$.

  \item The left hand side of \eqref{a5eq:cond3} clearly contains the right hand side. Since $\cT_{1k}$ and $\cT_{2k}$ are stable under equivalence, the meaning of the equality is as follows. Consider a square of the form \eqref{a5eq:square} in the $\infty$-category
      $\Fun(\Delta^1,\cC)$ (whose vertices are regarded as edges of $\cC$ in direction $k$), such that $y\to x,w\to z\in\cT_{1k}$ and $z\to x,w\to y\in\cT_{2k}$. If it has a decomposition of the form \eqref{a5eq:decompose} with $w\to w'$ in $\cE*_\cC \cT_k$, then $w\to w'$ is in $\cT_{1k}\cap\cT_{2k}$.

  \item Suppose that we have $\cT_{jj'}=\cT_j*^\cart_{\cC}\cT_{j'}$ for all $j,j'\in \{1,2\}\coprod K$ with $j\neq j'$. Then the identity
      \eqref{a5eq:cond3} holds automatically, by (the dual of) \cite{HTT}*{Lemma 4.4.2.1}. Moreover, the first part of Condition (3) implies Condition (4). To see this, consider a square $\sigma$ as in Condition (4). Applying Lemma \ref{a3le:cart} to the corresponding cube (whose vertices are edges of $\cC$ in direction $k'$, say), we get $w\in \cC_1*^\cart_\cC \cC_1$. Applying the first part of Condition (3) to the images of $\sigma$ under the maps $\Fun(\Delta^1\times \Delta^1,\cC)\to \Fun(\Delta^1,\cC)$ induced by $d^1_0\times \id$, $d^1_1\times \id$, we get $w\in\cC_1*_\cC\cT_{k'}$. Similarly, we have $w\in \cT_k*_\cC\cC_1$.

  \item Suppose that we have $\cT_{jj'}=\cT_j*^\cart_{\cC}\cT_{j'}$ for all $j,j'\in \{1,2\}\coprod K$ with $j\neq j'$, and moreover that $\cT_k$ is stable under pullback by either $\cT_1$ or $\cT_2$ for each $k\in K$. Then, by Remark \ref{a3re:cart_square}, Conditions (1)
      and (2) imply Condition (3), which in turn implies Condition (4).
\end{enumerate}
\end{remark}

Combining Theorem \ref{a5th:cartesian_gluing} with Theorem \ref{a4co:multisimplicial_descent} and Theorem \ref{a4pr:descent}, we obtain the following.

\begin{theorem}\label{a5th:cartesian_descent}
Let $\cC$ be an $\infty$-category and let $K$ be a finite set. We are given a $(\{0,1,2\}\coprod K)$-marked $\infty$-category $(\cC,\cE_0,\cE_1,\cE_2,\{\cE_k\}_{k\in K})$ such that
\begin{enumerate}
  \item $\cE_1,\cE_2\subseteq \cE_0$; $\cE_0$ is stable under composition. Moreover, for every morphism $f$ in $\cE_0$, there exists a $2$-simplex of $\cC$ of the form
      \[
      \xymatrix{&y\ar[rd]^p\\z\ar[ru]^q\ar[rr]^f && x}
      \]
      with $p\in\cE_1$ and $q\in\cE_2$.

  \item Every morphism $f$ in $\cE_1\cap \cE_2$ is $n$-truncated for some integer $n\ge -2$ (which may depend on $f$).

  \item $\cE_k$ is stable under pullback by $\cE_1$ for every $k\in K$.

  \item Edges in $\cE_1$ admit pullbacks in $\cC$ by edges in $\cE_k$ for all $k\in K$.

  \item $\cE_1*_\cC \cE_2=\cE_1 *_\cC^{\cE_1\cap \cE_2} \cE_2$. Moreover, $\cE_1$ (resp.\ $\cE_2$) is stable under composition and pullback b by $\cE_k$ for all $k\in K$ and by $\cE_2$ (resp.\ $\cE_1$); $\cE_1\cap\cE_2$ is stable under pullback by $\cE_1\cup \cE_2$.

  \item $\cC_{\cE_1}$ admits pullbacks and pullbacks are preserved by the functor $\cC_{\cE_1}\to \cC_{\cE_0}$.
\end{enumerate}
Then, for every subset $L\subseteq K$, the natural map
\[
g\colon \delta^*_{\{1,2\}\amalg K,L}\cC_{\cE_1,\cE_2,\{\cE_k\}_{k\in K}}^\cart
\to \delta^*_{\{0\}\amalg K,L}\cC_{\cE_0,\{\cE_k\}_{k\in K}}^\cart
\]
is a categorical equivalence (see Definition \ref{a3de:cartesian_nerve} for the notation).
\end{theorem}

\begin{remark}
If $\cC$ admits pullbacks and $\cE_1$, $\cE_2$ are admissible, then Conditions (4), (5), and (6) of Theorem \ref{a5th:cartesian_descent} hold. Moreover, in this case, Condition (1) of Theorem \ref{a5th:cartesian_descent} implies that $\cE_0$ is admissible by Remark \ref{a3re:admissible}. Indeed, $\cE_0$ is clearly stable under pullback, and given a $2$-simplex as in Condition (1), we have a diagram
\[
\xymatrix{
z\ar[r]^-{d_q}\ar[rd]_-{d_f} & z\times_y z\ar[r]\ar[d]&y\ar[d]^-{d_p} \\
& z\times_x z\ar[r] & y\times_x y,
}
\]
where the square is a pullback by Lemma \ref{a5le:pullback} below, so that the diagonal $d_f$ of $f$ belongs to $\cE_0$.
\end{remark}

\begin{lem}\label{a5le:pullback}
Let $\cC$ be an $\infty$-category admitting pullbacks. Consider two $2$-simplices of $\cC$ sharing an edge as depicted by the diagram
\[
\xymatrix{
z\ar[r]\ar[rd]&x'\ar[d]&y\ar[l]\ar[ld]\\ &x.
}
\]
Then we have a pullback square
\[
\xymatrix{
y\times_{x'} z\ar[d]\ar[r]& x'\ar[d]\\ y\times_{x} z\ar[r] & x'\times_x x',
}
\]
where the right vertical arrow is the diagonal of $x'\to x$.
\end{lem}

\begin{proof}
Indeed, we have a diagram
\[
\xymatrix{
y\times_{x'}z\ar[rr]\ar[dd]\ar@{-->}[rd]&& y\ar[dd]\ar[rd]\\
&y\times_{x} z\ar@{-->}[rr]\ar@{-->}[dd] && y\times_x x'\ar[r]\ar[dd] & y\ar[dd]\\
z\ar[rr]\ar[rd] && x'\ar[rd]\\
&x'\times_x z\ar[rr]\ar[d] && x'\times_x x'\ar[r]\ar[d] & x'\ar[d]\\
&z\ar[rr] && x'\ar[r] & x
}
\]
where the front face of the cube and the squares on the back page are pullbacks. It follows that the other two faces of the cube containing $x'$ are pullbacks. Therefore, all the faces of the cube are pullbacks.
\end{proof}

\begin{proof}[Proof of Theorem \ref{a5th:cartesian_descent}]
Denote by $(\cC,\cT)$ the $(\{1,2\}\coprod K)$-tiled simplicial set as in Theorem \ref{a4co:multisimplicial_descent}. Then the map $g$ factorizes as
\[
\delta^*_{\{1,2\}\amalg K,L}\cC_{\cE_1,\cE_2,\{\cE_k\}_{k\in K}}^\cart
\xrightarrow{\iota}\delta^*_{\{1,2\}\amalg K,L}\delta_*^{(\{1,2\}\amalg K)\square}(\cC,\cT) \xrightarrow{f}\delta^*_{\{0\}\amalg
K,L}\cC_{\cE_0,\{\cE_k\}_{k\in K}}^\cart.
\]
By Theorem \ref{a5th:cartesian_gluing} applied to the inclusion $(\cC,(\cE_1,\cE_2,\{\cE_k\}_{k\in K})^\cart)\subseteq(\cC,\cT)$ (see Definition \ref{a3de:cartesian_nerve} for the notation) and $\cE=\cE_1\cap\cE_2$, the inclusion $\iota$ is a categorical equivalence. Indeed, by Condition (3) of Theorem \ref{a5th:cartesian_descent} and Remark \ref{a5re:explicit} (5), it suffices to check Conditions (1) and (2) of Theorem \ref{a5th:cartesian_gluing}. The first part of Condition (2) of Theorem \ref{a5th:cartesian_gluing} is Condition (2) of Theorem \ref{a5th:cartesian_descent}. Condition (1) and the second part of Condition (2) of Theorem \ref{a5th:cartesian_gluing} follow from Condition (5) of Theorem \ref{a5th:cartesian_descent}. To show that $f$ is a categorical equivalence as well, we use Theorem \ref{a4co:multisimplicial_descent} (with $\alpha=1$). Conditions (1) and (2) of Theorem \ref{a4co:multisimplicial_descent} follow from Condition (1) of Theorem \ref{a5th:cartesian_descent}. Condition (3) of Theorem \ref{a4co:multisimplicial_descent} follows from Condition (5) of Theorem \ref{a5th:cartesian_descent}. Conditions (4) and (5) of Theorem \ref{a4co:multisimplicial_descent} are Conditions (3) and (4) of Theorem \ref{a5th:cartesian_descent}, respectively. It remains to check that $\Komp^1_{\cC,\cE_1,\cE_2}(\tau)$ is weakly contractible for every simplex
$\tau$ of $\cC_{\cE_0}$, which follows from Theorem \ref{a4pr:descent} applied to $(\cC_{\cE_0},\cE_1,\cE_2)$. Conditions (1), (2), (3) of Theorem \ref{a4pr:descent} follow from Conditions (5), (6), (1) of Theorem \ref{a5th:cartesian_descent}, respectively.
\end{proof}

The rest of this section is devoted to the proof of Theorem \ref{a5th:cartesian_gluing}. A key ingredient in the proof is an analogue of the diagram \eqref{a5eq:decompose} for decompositions of simplices of higher dimensions. Such decompositions are naturally encoded by certain lattices. Let us review some basic terminology.

\begin{definition}[Lattice]
By a \emph{lattice} we mean a nonempty partially ordered set admitting products (namely, infima) and coproducts (namely, suprema) of pairs of
elements, or equivalently, admitting finite nonempty products and coproducts. In a lattice, we denote products by $\wedge$ and coproducts by $\vee$. A lattice $P$ is said to be \emph{distributive} if $p\wedge (q\vee r)=(p\wedge q)\vee (p\wedge r)$ for all $p,q,r\in P$, or equivalently, $p\vee (q\wedge r)=(p\vee q)\wedge (p\vee r)$  for all $p,q,r\in P$ \cite{DP}*{Lemma 4.3}.

A map between lattices preserving finite nonempty products and coproducts is called a \emph{morphism} of lattices. A morphism of lattices necessarily preserves order.
\end{definition}

Note that a finite lattice admits arbitrary products and coproducts.

\begin{definition}[Sublattice]\label{a5de:interval}
A nonempty subset of a lattice is called a \emph{sublattice} if it is stable under finite nonempty products and coproducts. We endow the subset with the induced lattice structure.
\end{definition}

Subsets of a lattice $P$ of the forms $P_{p/}$, $P_{/q}$, $P_{p//q}$ for $p\le q$ in $P$ are necessarily sublattices of $P$.

\begin{definition}[Up-set lattice]
Let $P$ be a partially ordered set. A subset $Q$ of $P$ is called an \emph{up-set} if $q\in Q$ and $p\ge q$ with $p\in P$ imply $p\in Q$. We
order the set $\cU(P)$ of up-sets of $P$ by \emph{inverse inclusion}: $Q\le Q'$ if and only if $Q\supseteq Q'$. Then $\cU(P)$ becomes a distributive lattice admitting arbitrary products and coproducts. In fact, we have $Q\vee Q'=Q\cap Q'$ and $Q\wedge Q'=Q\cup Q'$. We call $\cU(P)$ the \emph{up-set lattice} of $P$.

We let $\varsigma^P\colon P\to \cU(P)$ denote the map carrying $p$ to $P_{p/}$, which is a fully faithful functor (namely, an order embedding)
since we have chosen the inverse inclusion order on $\cU(P)$. Note that $\varsigma^P$ preserves coproducts whenever they exist in $P$. On the other hand, $\varsigma^P$ does not preserve the product of any family of elements, unless the family admits a minimum.
\end{definition}

\begin{remark}
Although we do not need it in the sequel, let us recall the correspondence between finite partially ordered sets and finite distributive lattices \cite{DP}*{Chapter 5} via up-set lattices. An element $p$ of a lattice $L$ is said to be \emph{product-irreducible} if $p$ is not a final object (namely, maximum) of $L$ and $p=a\wedge b$ implies $p=a$ or $p=b$ for all $a,b\in L$. We let $\cI(L)\subseteq L$ denote the subset of product-irreducible elements of $L$. The map $\varsigma^P$ factorizes to give an embedding $P\to \cI(\cU(P))$, which is an isomorphism if $P$ is finite. The map $\eta_L\colon L\to \cU(\cI(L))$ carrying $x$ to $\cI(L)_{x/}$ is a morphism of lattices preserving initial and final objects. Birkhoff's representation theorem states that $\eta_L$ is an isomorphism for any finite distributive lattice $L$.
\end{remark}

We will need the following properties of up-set lattices.

\begin{remark}\label{a5re:U1}
We have an isomorphism $\cU(P^\triangleright)\simeq \cU(P)^\triangleright$ carrying $Q\neq \emptyset$ to $Q\cap P$ and carrying $\emptyset$ to the cone point of $\cU(P)^\triangleright$. In particular, $\cU(P)$ can be identified with the sublattice of $\cU(P^\triangleright)$ spanned by nonempty up-sets of $P^\triangleright$, or equivalently, up-sets of $P^\triangleright$ that contain the cone point.
\end{remark}

\begin{remark}\label{a5re:U2}
For $Q\in \cU(P)$, we have $Q\subseteq P$ and $\varsigma^P(Q)\subseteq\varsigma^P(P)\subseteq \cU(P)$. Moreover, we have $\varsigma^P(P)_{Q/}=\varsigma^P(Q)$. Thus a diagram $F\colon \rN(\cU(P))\to\cC$ in an $\infty$-category $\cC$ is a right Kan extension along $\rN(\varsigma^P)$ if and only if for every $Q\in \cU(P)$, the restriction of $F$ to $\rN(\varsigma^P(Q))^\triangleleft$ exhibits $F(Q)$ as the limit of $F\res \rN(\varsigma^P(Q))$. Note that when $Q\in\varsigma^P(P)$, the last condition is automatic. To alleviate notation, we will write $\varsigma^P$ for $\rN(\varsigma^P)$.
\end{remark}

\begin{definition}
Let $P$ and $P'$ be partially ordered sets and let $f\colon P'\to P$ be an order-preserving map. The map $\cU^f\colon \cU(P)\to \cU(P')$ carrying $Q$ to $f^{-1}(Q)$ is a morphism of lattices preserving products and coproducts. The functor $\cU^f$ admits a right adjoint $\cU_f\colon \cU(P')\to \cU(P)$ carrying an up-set $Q'$ of $P'$ to the up-set of $P$ generated by $f(Q')$. In other words, $\cU_f(Q')=\bigcup_{q\in Q'} P_{f(q)/}$. The functor $\cU_f$ preserves products.
\end{definition}

We will need the following properties of the functor $\cU_f$.

\begin{remark}\label{a5re:U3}
The following diagram commutes:
\[
\xymatrix{
P'\ar[r]^-{\varsigma^{P'}}\ar[d]_f & \cU(P')\ar[d]^{\cU_f}\\ P\ar[r]^-{\varsigma^P}& \cU(P).
}
\]
\end{remark}

\begin{remark}\label{a5re:U5}
Suppose that $P'$ admits nonempty coproducts and $f$ preserves such coproducts. For $Q'\in \cU(P')$, the map $f$ restricts to a map $Q'\to\cU_f(Q')$. We claim that the induced map $\rN(Q')^{op}\to \rN(\cU_f(Q'))^{op}$ is cofinal. Indeed, for every $Q\in \cU_f(Q')$, the partially ordered set $Q'\times_{\cU_f(Q')}\cU_f(Q')_{/Q}$ is nonempty and admits nonempty coproducts, hence admits a final object. Thus $\rN(Q')\times_{\rN(\cU_f(Q'))}\rN(\cU_f(Q'))_{/Q}$ is weakly contractible and we apply the criterion of cofinality \cite{HTT}*{Theorem 4.1.3.1}.

In this case, if $F\colon \rN(\cU(P))\to \cC$ is a right Kan extension along $\varsigma^P$, then $F\circ \rN(\cU_f)\colon \rN(\cU(P'))\to \cC$ is a right Kan extension along $\varsigma^{P'}$. Indeed, by Remark \ref{a5re:U2}, it suffices to check that for every $Q'\in \cU(P')$ and every limit diagram $\rN(\cU_f(Q'))^\triangleleft\to \cC$, the induced map $\rN(Q')^\triangleleft\to \cC$ is a limit diagram, which follows from the
above cofinality by \cite{HTT}*{Proposition 4.1.1.8}.
\end{remark}

\begin{lem}\label{a5le:U4}
If $P'$ admits coproducts indexed by a set $I$ and $f\colon P'\to P$ preserves such coproducts, then $\cU_f$ preserves coproducts indexed by $I$. In particular, if $P$ admits coproducts of pairs of elements and $f$ preserves such coproducts, then $\cU_f$ is a morphism of lattices.
\end{lem}

\begin{proof}
Let $Q'_i$, $i\in I$ be up-sets of $P'$. We have $\bigcap_{i\in I}\cU_f(Q'_i)\supseteq \cU_f(\bigcap_{i\in I}Q'_i)$. To show the inclusion in the other direction, let $y\in \bigcap_{i\in I}\cU_f(Q'_i)$. For each $i\in I$, there exists $x_i\in Q'_i$ such that $f(x_i)\le y$. Thus
$f(\bigvee_{i\in I} x_i)= \bigvee_{i\in I}f(x_i)\le y$. This implies $y\in\cU_f(\bigcap_{i\in I} Q'_i)$ since we have $\bigvee_{i\in I}x_i\in\bigcap_{i\in I}Q'_i$.
\end{proof}

\begin{definition}[Exact square]\label{a5de:exact}
By an \emph{exact square} in a lattice, we mean a square that is both a pushout square and a pullback square, or, equivalently, a square of the form
\[
\xymatrix{
x\wedge y\ar[r]\ar[d] & x\ar[d]\\y \ar[r] & x\vee y.
}
\]
The left vertical arrow is called an \emph{exact pullback} of the right vertical arrow.
\end{definition}

Exact squares in $\cU(P)$ correspond to pushout squares of sets. The relevance of such squares is shown by the following lemmas.

\begin{lem}\label{a5le:U7}
Every right Kan extension $F\colon \rN(\cU(P))\to \cC$ along $\varsigma^P$ carries exact squares to pullback squares. More generally, for every full subcategory $R\subseteq \cU(P)$ containing $\varsigma^P(P)$, every functor $F\colon\rN(R)\to\cC$ that is a right Kan extension of $F\res\rN(\varsigma^P(P))$ carries exact squares to pullback squares.
\end{lem}

\begin{proof}
Let
\begin{equation}\label{a5eq:exact}
\xymatrix{Q\cup Q'\ar[r]\ar[d] & Q\ar[d]\\Q'\ar[r] & Q\cap Q'}
\end{equation}
be an exact square in $R$. We consider $S=\varsigma^P(P)\cup \{Q,Q',Q\cap Q'\}$, satisfying $\varsigma^P(P)\subseteq S\subseteq R$. By \cite{HTT}*{Proposition 4.3.2.8}, $F$ is a right Kan extension of $F\res\rN(S)$. In particular, the restriction of $F$ exhibits $F(Q\cup Q')$ as a limit of $F\res \rN(S_{Q\cup Q'/})$. By Lemma \ref{a3le:cofinal}, the map $\Lambda^{2}_0\to \rN(S_{Q\cup Q'/})^{op}$ induced by the square
\eqref{a5eq:exact} is cofinal. Thus by \cite{HTT}*{Proposition 4.1.1.8}, $F$ carries the square to a pullback square in $\cC$.
\end{proof}

\begin{lem}\label{a5le:exact}
Let $P$ be a finite partially ordered set. Every morphism $Q\to Q'$ in $\cU(P)$ is the composition of a finite sequence of exact pullbacks of the morphisms $\omega^P(x)\colon \varsigma^P(x)\to \varsigma^P(x)-\{x\}$ for $x\in Q-Q'$.
\end{lem}

\begin{proof}
We may choose a (finite) sequence of morphisms $Q=Q_0\to\cdots\to Q_m=Q'$ such that for $1\leq i\leq m$, $Q_{i-1}=Q_i\cup\{x_i\}$, where $x_i\in Q-Q_i$ is a maximal element. For each $i$, the following diagram
\[
\xymatrix{
Q_{i-1} \ar[r]\ar[d] & \varsigma^P(x_i) \ar[d]^-{\omega^P(x_i)} \\
Q_i \ar[r]& \varsigma^P(x_i)-\{x_i\}
}
\]
is an exact square. Thus the lemma follows.
\end{proof}

The following lattices encode generalizations of the diagram \eqref{a5eq:decompose}.

\begin{notation}\label{a5no:cart}
For $n\ge 0$, we let $\Crt^n$ denote the sublattice of $\cU([n]\times [n])$ spanned by nonempty up-sets of $[n]\times [n]$ and we let $\varsigma^n\colon[n]\times [n]\to \Crt^n$ denote the map induced by $\varsigma^{[n]\times[n]}$ carrying $(p,q)$ to $([n]\times [n])_{(p,q)/}$. For an order-preserving map $d\colon [m]\to [n]$, we let $\Crt(d)\colon\Crt^m\to\Crt^n$ denote the map induced by $\cU_{d\times d}$. Put
$\Cart^n\coloneqq\rN(\Crt^n)$ and $\Cart(d)\coloneqq\rN(\Crt(d))$. We still write $\varsigma^n$ for $\rN(\varsigma^n)$.
\end{notation}

By Remark \ref{a5re:U1}, we have $\Crt^n\simeq \cU([n]\times [n]-\{(n,n)\})$. The definition of $\Crt^n$ given above has the advantage of being functorial with respect to $[n]$. Every up-set of $[n]\times [n]$ has the form $\{(p,q)\in [n]\times [n]\mid q\ge a_p\}$ for a sequence of integers $-1\le a_0\le\dots\le a_n\le n$. Thus the cardinality of $\Crt^n$ is $\binom{2n+2}{n+1}-1$.

Below are the Hasse diagrams of $\Crt^1$ and $\Crt^2$, rotated so that the initial objects are shown in the upper-left corners. Bullets represent elements in the images of $\varsigma^1$ and $\varsigma^2$. The dashed boxes represent $\Crt^1_{0,1}$ and $\Crt^2_{1,2}$ (see Construction \ref{a5cs:boxplus} (1) below).
\begin{equation}\label{a5eq:Hasse}
\begin{xy}
(0,5)="0000";
(5,0)*\cir<1.8pt>{}="00"**\dir{-}; (10,0)="01"**\dir{-};
(10,-5)="11"**\dir{-};
(5,-5)="10"**\dir{-};
"00"**\dir{-};
"0000"*{\bullet}; "01"*{\bullet}; "11"*{\bullet}; "10"*{\bullet};
(-2,-2)="b"; (-2,7)**\dir{--}; (12,7)**\dir{--}; (12,-2)**\dir{--}; "b"**\dir{--};
\end{xy}\qquad\qquad
\begin{xy}
(0,5)="000000"; (5,0)*\cir<1.8pt>{}="0000"**\dir{-};
(10,0)*\cir<1.8pt>{}="0001"**\dir{-}; (15,0)="0101"**\dir{-};
(5,-5)*\cir<1.8pt>{}="0010"; (10,-5)*\cir<1.8pt>{}="0011"**\dir{-};
(15,-5)*\cir<1.8pt>{}="0111"**\dir{-};
(5,-10)="1010";
(10,-10)*\cir<1.8pt>{}="1011"**\dir{-};
(15,-10)="1111"**\dir{-};
(12.5,-7.5)*\cir<1.8pt>{}="00";
(17.5,-7.5)*\cir<1.8pt>{}="01"**\dir{-};
(22.5,-7.5)="02"**\dir{-};
(12.5,-12.5)*\cir<1.8pt>{}="10";
(17.5,-12.5)*\cir<1.8pt>{}="11"**\dir{-};
(22.5,-12.5)="12"**\dir{-};
(12.5,-17.5)="20";
(17.5,-17.5)="21"**\dir{-};
(22.5,-17.5)="22"**\dir{-};
"00"; "10"**\dir{-}; "20"**\dir{-};
"01"; "11"**\dir{-}; "21"**\dir{-};
"02"; "12"**\dir{-}; "22"**\dir{-};
"0000"; "0010"**\dir{-}; "1010"**\dir{-};
"0001"; "0011"**\dir{-}; "1011"**\dir{-}; "10"**\dir{-};
"0101"; "0111"**\dir{-}; "1111"**\dir{-}; "11"**\dir{-};
"0011"; "00"**\dir{-};
"0111"; "01"**\dir{-};
"000000"*{\bullet}; "0101"*{\bullet}; "1010"*{\bullet}; "1111"*{\bullet};
"02"*{\bullet}; "12"*{\bullet}; "22"*{\bullet}; "21"*{\bullet}; "20"*{\bullet};
(3,-3)="b"; (3,-14.5)**\dir{--}; (24.5,-14.5)**\dir{--}; (24.5,-3)**\dir{--}; "b"**\dir{--};
\end{xy}
\end{equation}

The map $\Crt(d)$ is a morphism of lattices by Lemma \ref{a5le:U4}. Moreover, $\varsigma^n$ preserves coproducts and final objects. In particular, $\varsigma^n(p,q)=\varsigma^n(p,0)\vee \varsigma^n(0,q)$. By Remark \ref{a5re:U3}, the maps $\varsigma^n$ for different $n$ are compatible with $d$ in the sense that we have $\Crt(d)(\varsigma^m(p,q))=\varsigma^n(d(p),d(q))$ for all $(p,q)\in[m]\times[m]$.

By Remark \ref{a5re:U2}, a diagram $F\colon \Cart^n\to \cC$ in an $\infty$-category $\cC$ is a right Kan extension along $\varsigma^n$ if and
only if for every $Q\in \Crt^n$, the restriction of $F$ to $\rN(\varsigma^n(Q))^\triangleleft$ exhibits $F(Q)$ as the limit of $F\res\rN(\varsigma^n(Q))$. By Remark \ref{a5re:U5}, if $F\colon \Cart^n\to\cC$ is a right Kan extension along $\varsigma^n$, then $F\circ\Cart(d)\colon\Cart^m\to \cC$ is a right Kan extension along $\varsigma^m$.

\begin{definition}\label{a5de:cartesian}
Let $\cC$, $\cD$ be $\infty$-categories and let $\tau\colon \Delta^n\times\Delta^n\times \cD\to \cC$ be a functor.  We define $\Kart(\tau)$, the simplicial set of \emph{Cartesianizations of $\tau$}, to be the fiber of the restriction map
\begin{align*}
\Fun(\Cart^n\times\cD,\cC)_\RKE \xrightarrow{\rres}\Fun(\Delta^n\times\Delta^n\times\cD,\cC)
\end{align*}
at $\tau$. Here $\Fun(\Cart^n\times\cD,\cC)_\RKE\subseteq\Fun(\Cart^n\times\cD,\cC)$ is the full subcategory spanned by functors $F\colon\Cart^n\times\cD\to\cC$ that are right Kan extensions of $F\res\Delta^n\times\Delta^n\times\cD$ along $\varsigma^n\times \id_\cD$.
\end{definition}

\begin{remark}
By \cite{HTT}*{Proposition 4.3.2.9}, $\rres$ is the composition
\[
\Fun(\Cart^n\times\cD,\cC)_\RKE\to \cK \hookrightarrow \Fun(\Delta^n\times\Delta^n\times\cD,\cC)
\]
of a trivial Kan fibration with the inclusion of the full subcategory $\cK$ spanned by functors $\tau$ that admit right Kan extensions along
$\varsigma^n\times \id_\cD$. In particular, $\Kart(\tau)$ is a contractible Kan complex if $\tau$ admits a right Kan extension along $\varsigma^n\times\id_\cD$ and $\Kart(\tau)$ is empty otherwise.

If $\cC$ admits pullbacks, then $\rres$ is a trivial Kan fibration. Indeed, in this case, every diagram $\rN(Q)\to \cC$, where $Q\in \Crt^n$, admits a limit by Lemma \ref{a4le:over_pull}.
\end{remark}

The following projection map will play an important role.

\begin{notation}\label{a5no:pi}
Let $n\ge 0$ be an integer. We define a morphism of lattices
\[
\pi^n=(\pi^n_1,\pi^n_2)\colon \Crt^n\to [n]\times[n]
\]
to be the composite of the morphism of lattices $-\vee \xi^n(n,n)\colon\Crt^n\to \Crt^n_{n,n}$, where $\xi^n(n,n)=\varsigma^n(n,0)\wedge\varsigma^n(0,n)$ and $\Crt^n_{n,n}=\Crt^n_{\xi^n(n,n)/}$, and the isomorphism $\Crt^n_{n,n}\simeq[n]\times[n]$ carrying $\xi^n(n,n)_{(p,q)/}=\varsigma^n(p,n)\wedge\varsigma^n(n,q)$ to $(p,q)$. We still write $\pi^n$ for $\rN(\pi^n)$.
\end{notation}

Note that $\varsigma^n$ is a left adjoint of $\pi^n$, hence a section of $\pi^n$. We have the following characterizations of $\pi^n$: for $Q\in\Crt^n$, we have
\begin{align*}
\varsigma^n(\pi^n_1(Q),n)&=Q\vee \varsigma^n(0,n),\\
\varsigma^n(n,\pi^n_2(Q))&=Q\vee\varsigma^n(n,0),\\
\pi^n(Q)&=\(\min_{(p,q)\in Q}p,\min_{(p,q)\in Q} q\).
\end{align*}
The last equation implies that for every order-preserving map $d\colon[m]\to [n]$, we have $\pi^n\circ \Crt(d)=(d\times d)\circ \pi^m$. Indeed, for $Q\in \Crt^m$, we have
\[
\pi^n(\Crt(d)(Q))=\(\min_{(p,q)\in Q} d(p),\min_{(p,q)\in Q} d(q)\)=(d\times d)(\pi^n(Q)).
\]

\begin{lem}\label{a5le:kan}
Let $\cC$ be an $\infty$-category and $F\colon\Delta^n\times\Delta^n\to\cC$ a diagram. The following conditions are equivalent:
\begin{enumerate}
  \item $F$ is obtained from a map of bisimplicial sets $\Delta^{n,n}\to\cC^\cart_{\cC_1,\cC_1}$.

  \item $F$ is a right Kan extension of $F\res \rN(\xi^n(n,n))$.

  \item $F\circ \pi^n\colon \Cart^n\to \cC$ is a right Kan extension along $\varsigma^n$.
\end{enumerate}
\end{lem}

\begin{proof}
By Lemma \ref{a3le:cofinal}, the map $\Lambda^2_0\to\rN(\xi^n(n,n)_{(p,q)/})^{op}$ induced by the square
\[
\xymatrix{
(p,q)\ar[r]\ar[d] & (p,n)\ar[d]\\(n,q)\ar[r] & (n,n)
}
\]
is cofinal. Thus, by \cite{HTT}*{Proposition 4.1.1.8}, (2) is equivalent to the condition that $F$ carries the above square to a pullback. This
condition is a special case of (1), and is equivalent to (1) by \cite{HTT}*{Lemma 4.4.2.1}.

Next we show that (2) implies (3). Assume that $F\circ \pi^n\colon\Cart^n\to \cC$ is a right Kan extension along $\varsigma^n$. Then
$F(p,q)=F(\pi^n(\varsigma^n(p,n)\wedge \varsigma^n(n,q)))$ is a limit of $F\res \rN(\varsigma^n(p,n)\wedge \varsigma^n(n,q))$ by Remark \ref{a5re:U2}. This implies that $F$ is a right Kan extension of $F\res \rN(\xi^n(n,n))$.

Finally we show that (3) implies (2). Assume that $F$ is a right Kan extension of $F\res \rN(\xi^n(n,n))$. Then, for every $Q\in \Crt^n$, the
restriction $F\res \rN(Q)$ is a right Kan extension of $F\res \rN(Q\vee\xi^n(n,n))$. Indeed, for any $(p,q)\in Q$, we have $(Q\vee\xi^n(n,n))_{(p,q)/}=\xi^n(n,n)_{(p,q)/}$. Moreover, the restriction of $F$ exhibits $F(\pi^n(Q))$ as the limit of $F\res Q\vee \xi^n(n,n)$ since $\xi^n(n,n)_{\pi^n(Q)/}=Q\vee \xi^n(n,n)$. It follows that the restriction of $F\circ\pi^n$ exhibits $(F\circ \pi^n)(Q)$ as a limit of $F\circ\pi^n\res \rN(\varsigma^n(Q))$. Therefore, $F\circ \pi^n\colon \Cart^n\to\cC$ is a right Kan extension along $\varsigma^n$ by Remark \ref{a5re:U2}.
\end{proof}

We now introduce a crucial $2$-marking on $\Cart^n$.

\begin{notation}
Let $n\geq 0$ be an integer. We define a $2$-marking $\cF=(\cF_1,\cF_2)$ on $\Cart^n$ as follows. For $i=1,2$, we let $\bar\cF_i$ denote the set of edges of $\epsilon^2_i \Delta^{n,n}$, so that $\delta^*_{2+}\Delta^{n,n}\simeq(\Delta^n\times\Delta^n,\bar\cF_1,\bar \cF_2)$. We define
$\cF_i=(\pi^n)^{-1}(\bar\cF_i)$ for $i=1,2$. Graphically, $\cF_1$ (resp.\ $\cF_2$) consists of edges whose image under $\pi^n$ are vertical (resp.\ horizontal). Recall that $\cF$ induces a $2$-tiling $\cF^\cart$ defined by $\cF^\cart_{12} =\cF_1*_{\Cart^n}^\cart \cF_2$.
\end{notation}

For an order-preserving map $d\colon [m]\to [n]$, the map $\Cart(d)$ induces a map $(\Cart^m,\cF)\to(\Cart^n,\cF)$ of $2$-marked $\infty$-categories, and a map $(\Cart^m,\cF^\cart)\to(\Cart^n,\cF^\cart)$ of $2$-tiled $\infty$-categories.

\begin{construction}\label{a5cs:YZ}
Consider a $(\{1,2\}\coprod K)$-tiled $\infty$-category $(\cC,\cT)$ and a subset $L\subseteq K$. For brevity, we write $I$ for $\{1,2\}\coprod K$. We consider the following two simplicial sets
\begin{align*}
Y^n(\cT)&=\epsilon^I_1\Map(\delta_*^{2+}(\Cart^n,\cF)\boxtimes
\Delta^{n_k\res k\in K}_L,\delta_*^{I\square} (\cC,\cT)),\\
Z^n(\cT)&=\epsilon^I_1\Map(\delta_*^{2\square}(\Cart^n,\cF^\cart)\boxtimes
\Delta^{n_k\res k\in K}_L,\delta_*^{I\square} (\cC,\cT)).
\end{align*}
We have a natural commutative diagram
\[
\xymatrix{
& \Fun(\delta^*_2\delta_*^2\Cart^n\times\Delta^{[n_k]_{k\in K}}_L,\cC) \ar[d]
& \Fun(\Cart^n\times\Delta^{[n_k]_{k\in K}}_L,\cC)\ar[l]_-{f^n}\ar@/^1pc/[ld]_-{g^n}\ar@/^2pc/[ldd]_-{h^n} \\
Y^n(\cT) \ar@{^(->}[r]\ar[d] & \Fun(\delta^*_2\delta_*^{2+}(\Cart^n,\cF)\times\Delta^{[n_k]_{k\in K}}_L,\cC) \ar[d] \\
Z^n(\cT) \ar@{^(->}[r] & \Fun(\delta^*_2\delta_*^{2\square}(\Cart^n,\cF^\cart)\times\Delta^{[n_k]_{k\in K}}_L,\cC),
}
\]
where
\begin{itemize}
  \item The vertical arrows are induced by the inclusions
      \[
      \delta_*^{2\square}(\Cart^n,\cF^\cart)\subseteq\delta_*^{2+}(\Cart^n,\cF)\subseteq\delta_*^2\Cart^n;
      \]

  \item $f^n$ is induced by the adjunction $\delta^*_2\delta_*^2\Cart^n\to \Cart^n$;

  \item $g^n$ and $h^n$ are compositions of $f^n$ and the vertical arrows;

  \item In the inclusion on the second row we have used the isomorphism
      \[
      \epsilon^I_1\Map(\delta_*^{2+}(\Cart^n,\cF)\boxtimes\Delta^{n_k\res k\in K}_L ,\delta_*^I\cC)\simeq
      \Fun(\delta^*_2\delta_*^{2+}(\Cart^n,\cF)\times \Delta^{[n_k]_{k\in K}}_L,\cC)
      \]
      in Remark \ref{a3re:adjunction} and similarly for the inclusion on the third row.
\end{itemize}
Moreover, we have a commutative diagram
\[
\xymatrix{
Y^n(\cT) \ar[r]^-{y^n(\cT)}\ar[d] & \Map(\delta^*_2\delta_*^{2+}(\Cart^n,\cF)\times\Delta^{[n_k]_{k\in K}},
\delta^*_{I,L}\delta_*^{I\square}(\cC,\cT)) \ar[d] \\
Z^n(\cT) \ar[r]^-{z^n(\cT)} & \Map(\delta^*_2\delta_*^{2\square}(\Cart^n,\cF^\cart)\times\Delta^{[n_k]_{k\in K}},
\delta^*_{I,L}\delta_*^{I\square} (\cC,\cT)),
}
\]
where the vertical arrows are induced by the inclusion
\[
\delta_*^{2\square}(\Cart^n,\cF^\cart)\subseteq\delta_*^{2+}(\Cart^n,\cF)
\]
and the horizontal arrows are induced by $\delta^*_I$ in Remark \ref{a3re:adjunction}. In the above notation, we have kept the datum $\cT$ as we will now let it vary.
\end{construction}

\begin{lem}\label{a5le:restriction}
Assume that we are in the situation of Theorem \ref{a5th:cartesian_gluing}. Let $\cE^i\subseteq \cE$ be the subset of $i$-truncated edges, and let $\cT^i$ be the $(\{1,2\}\coprod K)$-tiling between $\cT$ and $\cT'$ determined by $\cT^i_{12}=\cT_1*_{\cC}^{\cE^i}\cT_2$. Then, for every map $\tau\colon\Delta^{n,n,n_k\res k\in K}_L\to \delta_*^{(\{1,2\}\amalg K)\square}(\cC,\cT^i)$, the simplicial set $\Kart(\tau)$ is a contractible Kan complex and the restriction of the map $g^n$ (resp.\ $h^n$) to $\Kart(\tau)\subseteq\Fun(\Cart^n\times\Delta^{[n_k]_{k\in K}}_L,\cC)$ has image contained in $Y^n(\cT^i)$ (resp.\ $Z^n(\cT^{i-1})$ for $i\ge -1$). Here $\Kart(\tau)$ is the simplicial set of Cartesianizations of $\tau$ (where $\tau$ is regarded as a functor $\Delta^n\times\Delta^n\times\Delta^{[n_k]_{k\in K}}_L\to\cC$) in Definition
\ref{a5de:cartesian}.
\end{lem}

In particular, we have induced maps
\begin{align*}
g(\tau)&\coloneqq y^n(\cT^i)\circ g^n  \colon\Kart(\tau) \\
& \to\Map(\delta^*_2\delta_*^{2+}(\Cart^n,\cF)\times\Delta^{[n_k]_{k\in K}},
\delta^*_{\{1,2\}\amalg K,L}\delta_*^{(\{1,2\}\amalg K)\square} (\cC,\cT^i)),
\end{align*}
\begin{align*}
h(\tau)&\coloneqq z^n(\cT^{i-1})\circ h^n \colon\Kart(\tau) \\
& \to\Map(\delta^*_2\delta_*^{2\square}(\Cart^n,\cF^\cart)\times\Delta^{[n_k]_{k\in K}},
\delta^*_{\{1,2\}\amalg K,L}\delta_*^{(\{1,2\}\amalg K)\square} (\cC,\cT^{i-1})).
\end{align*}

\begin{proof}
Consider an equivalence $e$ in the $\infty$-category
\begin{equation}\label{a5eq:restr}
\Fun(\delta^*_2\delta_*^{2+}(\Cart^n,\cF)\times\Delta^{[n_k]_{k\in K}}_L,\cC)
\end{equation}
with one vertex in $Y^n(\cT^i)$. By Remark \ref{a5re:explicit} (1), we know that the other vertex is also in $Y^n(\cT^i)$. Moreover, we have
$\cE^{-2}*^\cart_\cC\cT_\alpha\subseteq \cT^i_{1\alpha}$ for $\alpha\in\{2\}\coprod K$. It follows that $e$ is in $Y^n(\cT^i)$. Thus, for any
connected Kan complex $S$ contained in \eqref{a5eq:restr}, either $Y^n(\cT^i)\cap S=\emptyset$ or $S\subseteq Y^n(\cT^i)$. The same holds for
$Z^n(\cT^{i-1})$. As $\Kart(\tau)$ is either empty or a contractible Kan complex, its images in $\infty$-categories are contained in connected Kan complexes. Therefore, it suffices to find one vertex $F$ of $\Kart(\tau)$ satisfying $g^n(F)\in Y^n(\cT^i)$ and $h^n(F)\in Z^n(\cT^{i-1})$.

For clarity, let $G\colon\Delta^{[n,n,n_k]_{k\in K}}\to\cC$ be the functor corresponding to $\tau$ (we have till now denoted $G$ by $\tau$). Note that $G$ underlies a map of $(\{1,2\}\coprod K)$-tiled simplicial sets $\delta^*_{I\square}\Delta^{n,n,n_k\res k\in K}_L\to (\cC,\cT^i)$. We let $\bar\cG_\alpha$ denote the set of edges of $\epsilon^I_\alpha\Delta^{n,n,n_k\res k\in K}_L$. Then there are isomorphisms
\begin{align*}
\delta^*_{I\square}\Delta^{n,n,n_k\res k\in K}_L &\simeq \sfW\delta^*_{I+}\Delta^{n,n,n_k\res k\in K}_L,\\
\delta^*_{I+}\Delta^{n,n,n_k\res k\in K}_L &\simeq (\Delta^{[n,n,n_k]_{k\in K}}_L,\{\bar\cG_\alpha\}_{\alpha\in I}).
\end{align*}
We define an $I$-marked simplicial set $(\Cart^n\times \Delta^{[n_k]_{k\in K}}_L,\{\cG_\alpha\}_{\alpha\in I})$ by $\cG_\alpha=(\varsigma^n\times\id)^{-1}\bar \cG_\alpha$. The goal is to show that $G$ admits a right Kan extension $F\colon\Cart^n\times\Delta^{[n_k]_{k\in K}}_L\to\cC$ along $\varsigma^n\times \id$ such that $F$ sends squares in $\cG_\alpha*\cG_\beta$ to squares in $\cT^i_{\alpha\beta}$ for $\alpha,\beta\in I$, $\alpha\neq \beta$, and, for $i\ge -1$, $F$ sends squares in $\cG_1*^\cart \cG_2$ to squares in $\cT^{i-1}_{12}$.

Let us first show that there exists a right Kan extension $F$ of $G$ along $\varsigma^n\times \id$ such that for each vertex $(x,u)$ of $\Cart^n\times\Delta^{[n_k]_{k\in K}}_L$, the morphism $G(\pi^n(x),u)=F(\varsigma^n(\pi^n(x)),u)\to F(x,u)$ is in $\cE^i$. We construct the restriction of $F$ to $\Cart^n_{\varsigma^n(p,0)/}\times\Delta^{[n_k]_{k\in K}}_L$ by descending induction on $p$. In the case $p=n$, $\Crt^n_{\varsigma^n(n,0)/}$ is contained in the image of $\varsigma^n$ and there is nothing to prove. For $0\le p\le n-1$, and $x\in \Crt^n$ satisfying $\pi^n(x)=(p,q)$, consider the commutative diagram
\begin{equation}\label{a5eq:squares}
\xymatrix{
\varsigma^n(p,q)\ar[r]\ar[d] & x\ar[r]\ar[d]& \varsigma^n(p,q')\ar[d]\\
\varsigma^n(p+1,q)\ar[r] & x\vee \varsigma^n(p+1,q)\ar[r] & \varsigma^n(p+1,q'),
}
\end{equation}
where $q'=\min \{q_0\mid (p,q_0)\in x\}$. The right square is exact. The vertical (resp.\ horizontal) arrows are in $\cF_1$ (resp.\ $\cF_2$). The horizontal arrows in the left square are in $\cF_1\cap \cF_2$. By induction hypothesis, the morphism $G(p+1,q,u)\to F(x\vee\varsigma^n(p+1,q),u)$ is in $\cE^i$, so that $G(p,q,u)\to F(x\vee \varsigma^n(p+1,q),u)$ is in $\cT_1$, since $\cT_1$ is stable under composition. Thus, by the assumption $\cT_1*_\cC\cT_2=\cT_1*_\cC^{\cC_1}\cT_2$, the pullback $F(x\vee\varsigma^n(p+1,q),u)\times_{G(p+1,q',u)}G(p,q',u)$ exists in $\cC$, which provides $F(x,u)$ by the proof of Lemma \ref{a5le:U7}. The morphism $G(p,q,u)\to F(x,u)$ is the composition
\[
G(p,q,u)\to G(p+1,q,u)\times_{G(p+1,q',u)} G(p,q',u)\to F(x,u),
\]
where the first arrow is in $\cE^i$ by the assumption that $G$ carries $\bar\cG_1*\bar\cG_2$ into $\cT^i_{12}=\cT_1*_\cC^{\cE^i}\cT_2$, and the
second arrow is in $\cE^i$ by the assumption that $\cE^i$ is stable under pullback by $\cT_1$.

We claim that $F$ sends $\cG_1$ to $\cT_1$, $\cG_2$ to $\cT_2$, and $\cG_1\cap\cG_2$ to $\cE^i$. Let $e\colon (x,u)\to (y,u)$ be an edge in
$\cG_1\cup \cG_2$, where $x\to y$ in $\cF_1\cup \cF_2$ and $u$ is a vertex of $\Delta^{[n_k]_{k\in K}}$. We show by induction on $\# x$ that $F(e)\in\cT_1$ for $e\in \cG_1$, $F(e)\in \cT_2$ for $e\in \cG_2$, and $F(e)\in\cE^i$ for $\in \cG_1\cap \cG_2$. By Lemma \ref{a5le:exact}, any morphism $x\to y$ in $\Crt^n$ is a composition of a finite sequences of morphisms of the following classes:
\begin{enumerate}
  \item An exact pullback  of $\omega^n(p,n)\colon \varsigma^n(p,n)\to\varsigma^n(p+1,n)$ by $c\in \cF_2$;

  \item An exact pullback of $\omega^n(n,q)\colon \varsigma^n(n,q)\to\varsigma^n(n,q+1)$ by $c\in \cF_1$;

  \item An exact pullback of $\omega^n(p,q)\colon \varsigma^n(p,q)\to\varsigma^n(p,q)-\{(p,q)\}$ by $c$,
\end{enumerate}
where we have $(p,q)\in [n-1]\times [n-1]$, and $c\colon x'\to y'$ satisfies $\# x'<\# x$. If $e\in \cG_1$ (resp.\ $e\in\cG_2$), then class (2) (resp.\ (1)) does not appear. Since $\cT_1$, $\cT_2$ and $\cE^i$ are stable under composition, we may assume that $x\to y$ is in one of the three classes. In class (1), $\omega^n(p,n)$ is in $\varsigma^n(\bar\cF_1)$, and we conclude by Lemma \ref{a5le:U7} and the assumption that $\cT_1$ is stable under pullback by $\cT_2$. In class (2), $\omega^n(n,q)$ is in $\varsigma^n(\bar\cF_2)$, and we conclude by Lemma \ref{a5le:U7} and the assumption that $\cT_2$ is stable under pullback by $\cT_1$. In class (3), $c$ is a composition of an edge in $\cF_1$ and an edge in $\cF_2$ both satisfying the induction hypothesis, and we have a diagram
\[
\xymatrix{
\varsigma^n(p,q)\ar[rd]^{\omega^n(p,q)}\\
&\varsigma^n(p,q)-\{(p,q)\} \ar[r]\ar[d] & \varsigma^n(p,q+1) \ar[d] \\
&\varsigma^n(p+1,q) \ar[r] & \varsigma^n(p+1,q+1)
}
\]
with exact square in $\Crt^n$. By Lemma \ref{a5le:U7}, the morphism $F(\omega^n(p,q)\times \id_u)$ can be identified with the induced morphism
\[
G(p,q,u)\to G(p+1,q,u)\times_{G(p+1,q+1,u)} G(p,q+1,u),
\]
which belongs to $\cE^i$ since $G$ carries $\bar\cG_1*\bar \cG_2$ into $\cT^i_{12}=\cT_1*_\cC^{\cE^i}\cT_2$. We conclude by the assumption that
$\cE^i$ is stable under pullback by $\cT_1\cup \cT_2$. This finishes the proof of the claim.

Similarly, applying Condition (3), we see that $F$ carries $\cG_1*\cG_k$ into $\cT_{1k}$ and $\cG_2*\cG_k$ into $\cT_{2k}$ for all $k\in K$.

Next we show that $F$ carries squares in $\cG_k*\cG_l$ into $\cT_{kl}$ for all $k,l\in K$, $k\neq l$. Consider such a square and let $x\in\Crt^n$ be its projection. For $x$ in the image of $\varsigma^n$, this follows from the assumption that $G$ carries $\bar\cG_k*\bar\cG_l$ to $\cT_{kl}$. For the general case, we proceed by descending induction on $\pi_1(x)$. If $\pi(x)=(n,q)$, then $x=\varsigma^n(n,q)$ is in the image of $\varsigma^n$. For $\pi(x)=(p,q)$ with $p<n$, we consider the right square of \eqref{a5eq:squares}. We conclude by Condition (4) and the induction hypothesis applied to $x\vee \varsigma^n(p+1,q)$.

Finally we show that $F$ carries $\cG_1*\cG_2$ into $\cT^i_{12}$ and carries $\cG_1*^\cart \cG_2$ into $\cT^{i-1}_{12}$. Every square in
$\cF_1*_{\Cart^n} \cF_2$ of the form \eqref{a5eq:square} has a canonical decomposition
\[
\xymatrix{
w\ar[rd] \\ &y\wedge z\ar[r]\ar[d] & y\ar[d]\\&z \ar[r] & y\vee z\ar[rd] \\ &&& x,
}
\]
where the vertical (resp.\ horizontal) arrows are in $\cF_1$ (resp.\ $\cF_2$), and oblique arrows are in $\cF_1\cap \cF_2$. Note that $\cF^\cart_{12}$ is the set of squares such that $w=y\wedge z$. Multiplying by $\id_u$ and applying $F$, we obtain a similar diagram where the inner square is a pullback by Lemma \ref{a5le:U7} and the oblique arrows are in $\cE^i$ by the previous claim. Since we have already proved that $F$ carries $\cG_1*\cG_2$ into $\cT_1*_\cC\cT_2$, all we need to show is that the induced morphism $F(y\wedge z,u)\to F(y,u)\times_{F(x,u)}F(z,u)$ belongs to $\cE^{i-1}$. However, by Lemma \ref{a5le:pullback}, this morphism can be identified with the left vertical arrow of the pullback square
\[
\xymatrix{
F(y,u)\times_{F(x',u)}F(z,u) \ar[r]\ar[d]& F(x',u) \ar[d] \\
F(y,u)\times_{F(x,u)}F(z,u) \ar[r]& F(x',u)\times_{F(x,u)}F(x',u),
}
\]
where for brevity we have written $x'$ for $y\vee z$, and the right vertical arrow is the diagonal of $F(x',u)\to F(x,u)$ and hence belongs to
$\cE^{i-1}$. The lower horizontal arrow is a composition of a pullback of a morphism in $\cT_1$ by a morphism in $\cT_2$ and a pullback of a morphism in $\cT_2$ by a morphisms in $\cT_1$. Since $\cE^{i-1}$ is stable under pullback by $\cT_1\cup \cT_2$, the left vertical arrow belongs to $\cE^{i-1}$ as well.
\end{proof}

The functor $g^n$ in Construction \ref{a5cs:YZ} is induced by the map
\begin{equation}\label{a5eq:gn}
\delta_2^*\delta^{2+}_* (\Cart^n,\cF)\to \Cart^n,
\end{equation}
which carries a square in $\cF_1*_{\Cart^n}\cF_2$ to its diagonal. We now construct a family of sections of this map.

\begin{construction}\label{a5co:section}
Let $n\geq 0$ be an integer.
\begin{enumerate}
  \item For $x\le y$ in $\Crt^n$ and $(p,q)$ in $[n]\times [n]$, we define two elements of $\Crt^n_{x//y}$:
      \[
      \lambda^n_p(x,y)=(\varsigma^n(\pi^n_1(y)\vee p,0)\vee x)\wedge y,\qquad
      \mu^n_q(x,y)=(\varsigma^n(0,\pi^n_2(y)\vee q)\vee x)\wedge y.
      \]
      These formulas are increasing in $p$, $q$, $x$ and $y$. Moreover, we have the properties
      \begin{align}\label{a5eq:lambdamu1}
      \lambda^n_p(x,x)=\mu^n_q(x,x)=x,
      \end{align}
      and
      \begin{align}\label{a5eq:lambdamu2}
      \pi^n(\lambda^n_p(x,y))=(\pi^n_1(y),\pi^n_2(x)),\quad \pi^n(\mu^n_q(x,y))=(\pi^n_1(x),\pi^n_2(y)).
      \end{align}

  \item We construct a map
      \[
      \alpha^n\colon A^n\coloneqq (\Delta^n\times \Delta^n)^\sharp \times (\Cart^n)^\flat\to(\delta^*_2\delta_*^{2+}(\Cart^n,\cF))^\flat
      \]
      as follows. For an $m$-simplex $\tau=(\tau_1,\tau_2,\tau_3)\colon\Delta^m\to \Delta^n\times \Delta^n\times \Cart^n$, we define $\alpha^n(\tau)$ to be the map $\Delta^m\times \Delta^m\to \Cart^n$ carrying $(a,b)$ to $\lambda^n_{\tau_1(b)}(\tau_3(b),\tau_3(a))$ for $a\ge b$, and to $\mu^n_{\tau_2(a)}(\tau_3(a),\tau_3(b))$ for $a\le b$. By \eqref{a5eq:lambdamu1}, the two definitions coincide for
      $a=b$. By \eqref{a5eq:lambdamu2}, $\alpha^n(\tau)$ is an $m$-simplex of $\delta^*_2\delta_*^{2+}(\Cart^n,\cF)$. In particular, $\alpha^n$ carries an edge $(p,q,x)\to (p',q',y)$ of $\Delta^n\times\Delta^n\times \Cart^n$ to the square
      \begin{equation}\label{a5eq:lambdamusquare}
      \xymatrix{x\ar[r]\ar[d] & \mu^n_q(x,y)\ar[d]\\ \lambda^n_p(x,y)\ar[r] &y}
      \end{equation}
      in $\cF_1*_{\Cart^n}\cF_2$. By \eqref{a5eq:lambdamu1}, $\alpha^n$ carries marked edges of $A^n$ to degenerate edges. The composition
      \[
      A^n\xrightarrow{\alpha^n}(\delta^*_2\delta_*^{2+}(\Cart^n,\cF))^\flat \to (\Cart^n)^\flat,
      \]
      where the second map is \eqref{a5eq:gn}, is the projection.
\end{enumerate}
\end{construction}

\begin{remark}
For an order-preserving map $d\colon[m]\to[n]$, we have identities
\begin{align*}
\Crt(d)(\lambda^m_p(x,y))&=\lambda^n_{d(p)}(\Crt(d)(x),\Crt(d)(y)), \\
\qquad\Crt(d)(\mu^m_q(x,y))&=\mu^n_{d(q)}(\Crt(d)(x),\Crt(d)(y)).
\end{align*}
Thus the maps $\alpha^n$ for different $n$ are compatible with $\Cart(d)$ in the obvious sense.
\end{remark}

Next we define a restriction of $\alpha^n$, taking values in $\delta_2^*\delta^{2\square}_*(\Cart^n,\cF^\cart)$.

\begin{construction}\label{a5cs:boxplus}
Let $n\geq 0$ be an integer.
\begin{enumerate}
  \item We define order-preserving maps
      \[
      \xi^n,\eta^n\colon [n]\times[n]\to\Crt^n
      \]
      by
      \[
      \xi^n(p,q)=\varsigma^n(p,0)\wedge \varsigma^n(0,q),\quad \eta^n(p,q)=\varsigma^n(p,n)\wedge\varsigma^n(n,q).
      \]
      We have $\xi^n(p,q)\le \varsigma^n(p,q)\le \eta^n(p,q)$. We define a sublattice of $\Crt^n$ by
      \[
      \Crt^n_{p,q}\coloneqq\Crt^n_{\xi^n(p,q)//\eta^n(p,q)}
      \]
      and we put $\boxplus^n_{p,q}\coloneqq\rN(\Crt^n_{p,q})$. We put
      \[
      \boxplus^n\coloneqq\bigcup_{0\le p, q\le n}\boxplus^n_{p,q}\subseteq \Cart^n.
      \]
      Note that $\eta^n$ induces an isomorphism of lattices $[n]\times [n]\simeq\Crt^n_{n,n}=\Crt^n_{\xi^n(n,n)/}$ via which $\pi^n\colon\Crt^n\to[n]\times[n]$ can be identified with the morphism of lattices $-\vee \xi^n(n,n)\colon\Crt^n\to\Crt^n_{n,n}$.

  \item We define a marked simplicial subset $B^n$ of $A^n$ by
      \[
      B^n=\bigcup_{x\le y} \rN(I_{x,y})^\sharp \times (\Cart^n_{x//y})^\flat
      \subseteq (\Delta^n\times \Delta^n)^\sharp\times (\boxplus^n)^\flat\subseteq A^n.
      \]
      Here $x$ and $y$ run over elements of $\Crt^n$ and $I_{x,y}\subseteq[n]\times[n]$ denote the full subcategory spanned by pairs $(p,q)$ satisfying
      \begin{equation}\label{a5eq:Bn}
      \xi^n(p,q)\le x\le y\le \eta^n(p,q),
      \end{equation}
      or, equivalently, satisfying $\Cart^n_{x//y}\subseteq\boxplus^n_{p,q}$. We note that $\eta^n$ is a right adjoint of $\pi^n$: $y\le\eta^n(p,q)$ if and only if $\pi^n(y)\le (p,q)$.
\end{enumerate}
\end{construction}

We refer the reader to \eqref{a5eq:Hasse} for graphic depictions of $\Crt^n_{p,q}$ for some small values of $n$, $p$, $q$.

\begin{remark}
Let $d\colon [m]\to [n]$ be an order-preserving map. For $0\leq p,q\leq m$, we have
\begin{align*}
\Crt(d)(\xi^m(p,q))&=\varsigma^n(d(p),d(0))\wedge\varsigma^n(d(0),d(q))\ge \xi^n(d(p),d(q)),\\
\Crt(d)(\eta^m(p,q))&=\varsigma^n(d(p),d(m))\wedge\varsigma^n(d(m),d(q))\le \eta^n(d(p),d(q)).
\end{align*}
Thus $\Crt(d)$ induces morphisms of lattices $\Crt^m_{p,q}\to\Crt^n_{d(p),d(q)}$ and hence maps $\boxplus^m_{p,q}\to\boxplus^n_{d(p),d(q)}$ and $B^m\to B^n$.
\end{remark}

\begin{lem}
The map $\alpha^n$ induces a map
\[
\beta^n\colon B^n\to(\delta^*_2\delta_*^{2\square}(\Cart^n,\cF^\cart))^\flat.
\]
\end{lem}

\begin{proof}
It suffices to show that for every $m$-simplex $\tau$ of the underlying simplicial set of $B^n$, the diagram $\alpha^n(\tau)\colon\Delta^m\times\Delta^m\to \Cart^n$ carries the square spanned by the vertices $(a,b)$, $(a+1,b)$, $(a,b+1)$, $(a+1,b+1)$ to a pullback. For $a=b$, the assertion amounts to saying that for every edge $(p,q,x)\le (p',q',y)$ of $B^n$, the square \eqref{a5eq:lambdamusquare} is a pullback. We have
\[
\lambda^n_p(x,y)\wedge\mu^n_q(x,y)=(\xi^n(\pi^n(y)\vee (p,q))\vee x)\wedge y,
\]
which equals $x$ by the assumption $\xi^n(p,q)\le x$. For $a>b$, the assertion amounts to saying that for every $3$-simplex $(p,q,x)\le
(p',q',y)\le (p'',q'',z)\le (p''',q''',w)$ of $B^n$, the square
\[
\xymatrix{
\lambda^n_p(x,z)\ar[r]\ar[d] & \lambda^n_{p'}(y,z)\ar[d]\\
\lambda^n_p(x,w)\ar[r] &\lambda^n_{p'}(y,w)
}
\]
is a pullback. This is clear since $\lambda^n_p(x,z)=(\varsigma^n(p,0)\vee x)\wedge z$ by the assumption $\pi_1(z)\le p$ and similarly for the other vertices of the squares. The case $a<b$ is similar, with $\lambda^n_p$ replaced by $\mu^n_q$.
\end{proof}

\begin{remark}
The proof shows in fact that $\alpha^n$ carries the the simplicial subset $S\subseteq \Delta^n\times \Delta^n\times\Cart^n$ spanned by those edges $(p,q,x)\to (p',q',y)$ satisfying \eqref{a5eq:Bn} (with no restrictions on $(p',q')$) into $\delta^*_2\delta_*^{2\square}(\Cart^n,\cF^\cart)$. Note that $S$ is bigger than the underlying simplicial set of $B^n$ for $n\ge 1$.
\end{remark}

\begin{lem}\label{a5le:trivial_co}
The inclusion $B^n\subseteq A^n$ is a trivial cofibration in the category $\Mset$ for the Cartesian model structure.
\end{lem}

\begin{proof}
Choose an exhaustion of $\boxplus^n$ by a sequence of simplicial subsets
\[
\emptyset=K^0\subseteq K^1\subseteq \dots \subseteq K^N=\boxplus^n
\]
such that each $K^i$, $1\le i\le N$ is obtained from $K^{i-1}$ by adjoining a single nondegenerate simplex $\sigma^i\colon \Delta^{l_i}\to K^i$. This induces inclusions
\[
B^n=L^0\subseteq L^1\subseteq \dots\subseteq L^N=(\Delta^n\times \Delta^n)^\sharp\times(\boxplus^n)^\flat,
\]
where $L^i=B^n\cup ((\Delta^n\times \Delta^n)^\sharp\times(K^i)^\flat)$. By Lemma \ref{a6le:cart_inner}, $(\boxplus^n)^\flat\subseteq(\Cart^n)^\flat$ is a trivial cofibration in $\Mset$, so that $(\Delta^n\times\Delta^n)^\sharp\times(\boxplus^n)^\flat\subseteq A^n$ is a trivial cofibration in $\Mset$ by \cite{HTT}*{Corollary 3.1.4.3}.
Therefore, it suffices to show that the inclusion $L^{i-1}\subseteq L^i$ is a trivial cofibration in $\Mset$ for all $1\le i\le N$. However, this inclusion is a pushout of the map
\[
(\Delta^n\times \Delta^n)^\sharp \times (\partial \Delta^{l_i})^\flat\coprod_{\rN(I_{x,y})^\sharp\times(\partial \Delta^{l_i})^\flat}
\rN(I_{x,y})^\sharp\times(\Delta^{l_i})^\flat\to (\Delta^n\times \Delta^n)^\sharp\times (\Delta^{l_i})^\flat,
\]
where $x=\sigma^i(0)$, $y=\sigma^i(l_i)$. By the assumption that $\sigma^i$ is a simplex of $\boxplus^n$, the partially ordered set $I_{x,y}$ is nonempty, and admits an initial object $\pi^n(y)$. Thus the inclusion $\rN(I_{x,y})\subseteq \Delta^n\times \Delta^n$ is anodyne. It follows that the inclusion $\rN(I_{x,y})^\sharp\subseteq (\Delta^n\times \Delta^n)^\sharp$ is a trivial cofibration in $\Mset$ (by Remark \ref{a3re:Quillen}), and so is its smash product with $(\partial\Delta^{l_i})^\flat\subseteq(\Delta^{l_i})^\flat$ by \cite{HTT}*{Corollary 3.1.4.3}.
\end{proof}

\begin{proof}[Proof of Theorem \ref{a5th:cartesian_gluing}]
We adopt the notation of Lemma \ref{a5le:restriction}. By the first part of Condition (2), we have $\cE=\bigcup_{i\ge -2} \cE^i$, $\cT'=\bigcup_{i\ge-2}\cT^i$, and
\[
W_\infty\coloneqq \delta^*_{\{1,2\}\amalg K,L}\delta_*^{(\{1,2\}\amalg K)\square} (\cC,\cT')=\bigcup_{i\ge -2} W_i,
\]
where $W_i=\delta^*_{\{1,2\}\amalg K,L}\delta_*^{(\{1,2\}\amalg K)\square}(\cC,\cT^i)$. Since $\cE^{-2}$ is the set of equivalences of $\cC$, we have $\cT^{-2}=\cT$. Thus, the map $\iota$ in question is the transfinite composition of inclusions
\[
W_{-2}\to W_{-1}\to \dots \to W_i\to \dots \to W_\infty.
\]
Since the Joyal model structure on $\Sset$ is combinatorial, the trivial cofibrations form a weakly saturated class \cite{HTT}*{Definition A.1.2.2}. Thus it suffices to show that each inclusion $W_{-2}\to W_i$ is a categorical equivalence for every integer $i\geq -1$. By Lemma
\ref{a1le:categorical_equivalence} and induction, it suffices to show that for every $i \ge -1$ and every commutative diagram
\[
\xymatrix{
W_{-2}\ar[r]^{f'} &W_{i-1}\ar[r]^-v\ar[d]_f & \Fun(\Delta^l,\cD)\ar[d]^p\\
&W_i\ar[r]^-w \ar@{..>}[ur]^-{u} & \Fun(\partial \Delta^l,\cD)
}
\]
where $f$ and $f'$ are inclusions and $p$ is induced by the inclusion $\partial \Delta^l\subseteq \Delta^l$, there exists a map $u\colon W_i\to\Fun(\Delta^l,\cD)$ satisfying $p\circ u=w$ such that $u\circ f\circ f'$ and $v\circ f'$ are homotopic over $\Fun(\partial \Delta^l,\cD)$. The proof is mostly parallel to the proof of Theorem \ref{a4th:multisimplicial_descent}.

Let $\sigma$ be an $n$-simplex of $W_i$, corresponding to a map
\[
\tau\colon\Delta^{n,n,n_k\res k\in K}_L\to \delta^{(\{1,2\}\amalg K)\square}_*(\cC,\cT^i),
\]
where $n_k=n$. We consider the maps
\begin{align*}
w_* g(\tau)&\colon \Kart(\tau)\to \Fun(\delta_2^*\delta^{2+}_*(\Cart^n,\cF)\times \Delta^{[n_k]_{k\in K}},\Fun(\partial \Delta^l,\cD)),\\
v_* h(\tau)&\colon \Kart(\tau)\to \Fun(\delta_2^*\delta^{2\square}_*(\Cart^n,\cF^\cart)\times \Delta^{[n_k]_{k\in K}},\Fun(\Delta^l,\cD)),
\end{align*}
compositions of the maps $g(\tau)$ and $h(\tau)$ defined after the statement of Lemma \ref{a5le:restriction} and the maps induced by $w$ and $v$, respectively. Since $\Kart(\tau)$ is a contractible Kan complex, the maps $w_* g(\tau)$ and $v_*h(\tau)$ factorize through
\begin{align*}
w_* g(\tau)&\colon \Kart(\tau)\to \Map^\sharp((\partial \Delta^l)^\flat\times
(\delta_2^*\delta^{2+}_*(\Cart^n,\cF))^\flat\times (\Delta^{[n_k]_{k\in K}})^\flat,\cD^\natural),\\
v_* h(\tau)&\colon \Kart(\tau)\to \Map^\sharp((\Delta^l)^\flat\times
(\delta_2^*\delta^{2\square}_*(\Cart^n,\cF^\cart))^\flat\times(\Delta^{[n_k]_{k\in K}})^\flat,\cD^\natural),
\end{align*}
respectively. Composing with $\beta^n$ and $\alpha^n$, respectively, we obtain maps
\begin{align*}
\psi(\tau)&\colon \Kart(\tau)\to\Map^\sharp((\partial \Delta^l)^\flat\times A^n\times (\Delta^{[n_k]_{k\in
K}})^\flat,\cD^\natural),\\
\phi(\tau)&\colon \Kart(\tau)\to\Map^\sharp((\Delta^l)^\flat\times B^n\times
(\Delta^{[n_k]_{k\in K}})^\flat,\cD^\natural).
\end{align*}

Consider the commutative diagram
\begin{align}\label{a5eq:descent}
\xymatrix{\cN(\sigma)\ar[d]\ar[r]&\Kart(\tau)\ar[d]^-{h} \\
\Map^\sharp((\Delta^l)^\flat\times A^n\times(\Delta^{[n_k]_{k\in K}})^\flat,\cD^\natural)
\ar[r]^-{\rres_1}\ar[dd]^-{\rres_2} &\Map^\sharp(H\times(\Delta^{[n_k]_{k\in K}})^\flat,\cD^\natural)\ar[d]\\
&\Map^\sharp((\partial \Delta^l)^\flat\times A^n\times (\Delta^{[n_k]_{k\in K}})^\flat,\cD^\natural)\ar[d]^-{\rres_2} \\
\Map^\sharp((\Delta^l)^\flat\times (\Delta^n)^\flat,\cD^\natural)\ar[r]
&\Map^\sharp((\partial\Delta^l)^\flat\times(\Delta^n)^\flat,\cD^\natural).}
\end{align}
In the above diagram,
\begin{itemize}
  \item $\rres_1$ is induce by
     \[
     j\colon H=(\Delta^l)^\flat\times B^n
     \coprod_{(\partial\Delta^l)^\flat \times B^n } (\partial \Delta^l)^\flat\times A^n \hookrightarrow (\Delta^l)^\flat \times A^n;
     \]

  \item $h$ is the amalgamation of $\phi(\tau)$ and $\psi(\tau)$;

  \item $\cN(\sigma)$ is defined so that the upper square is a pullback square;

  \item the two maps $\rres_2$ are both induced by the composite embedding
      \begin{align*}
      \Delta^n &\xrightarrow{\r{diag}} \Delta^n\times \Delta^n\times \Delta^{n}\times \Delta^n\times
      \Delta^{[n_k]_{k\in K}} \\
      &\xrightarrow{\id_{\Delta^n\times \Delta^n}\times \varsigma^n\times \id_{\Delta^{[n_k]_{k\in K}}}}
      \Delta^n\times \Delta^n\times \Cart^n\times \Delta^{[n_k]_{k\in K}};
      \end{align*}

  \item the unmarked arrows in the lower square are obvious restrictions.
\end{itemize}

By Lemma \ref{a5le:trivial_co} and \cite{HTT}*{Corollary 3.1.4.3}, the map $j\times\id_{(\Delta^{[n_k]_{k\in K}})^\flat}$ is a trivial cofibration in $\Mset$ and consequently $\rres_1$ is a trivial Kan fibration. Thus $\cN(\sigma)$ is a contractible Kan complex.

We let $\Phi(\sigma)\colon \cN(\sigma)\to \Map^\sharp((\Delta^n)^\flat,\Fun(\Delta^l,\cD)^\natural)$ denote the composition of the vertical arrows in the first column of \eqref{a5eq:descent}. This construction is functorial in $\sigma$, giving rise to a morphism $\Phi\colon\cN\to\Map[W_i,\Fun(\Delta^l,\cD)]$ in $(\Sset)^{(\del_{/W_i})^{op}}$. The composition of the vertical arrows in the second column of \eqref{a5eq:descent} is constant of value $w(\sigma)$. Thus $\Map[W_i,p]\circ\Phi$ factors through the morphism $\Delta^0_{(\del_{/W_i})^{op}}$
corresponding to $w$ via Remark \ref{a2re:functors}.

Let $\sigma'$ be an $n$-simplex of $W_{-2}$ corresponding to a map
\[
\tau'\colon\Delta^{n,n,n_k\res k\in K}_L\to \delta^{(\{1,2\}\amalg K)\square}_*(\cC,\cT^{-2}).
\]
The composition
\[
\Cart^n\times\Delta^{[n_k]_{k\in K}}_L\xrightarrow{\pi^n\times\id}\Delta^n\times\Delta^n\times\Delta^{[n_k]_{k\in K}}_L\xrightarrow{\tau'}\cC
\]
is a vertex of $\Kart(\tau')$ by Lemma \ref{a5le:kan} and the equality $\cT^{-2}_{12}=\cT_1*_\cC^\cart \cT_2$. This vertex, together with the
composition
\begin{align*}
\Delta^n\times \Delta^n\times \Cart^n\times \Delta^{[n_k]_{k\in K}} &\to \Cart^n\times\Delta^{[n_k]_{k\in K}}  \\ &\xrightarrow{\pi^n\times \id}
\Delta^n\times \Delta^n\times\Delta^{[n_k]_{k\in K}}\to \Fun(\Delta^l,\cD),
\end{align*}
where the first map is the projection and the last map corresponds to the composition
\[
\Delta^{n,n,n_k\res k\in K}\xrightarrow{\tau'} \op^{\{1,2\}\amalg
K}_L\delta^{(\{1,2\}\amalg
K)\square}_*(\cC,\cT^{-2})\xrightarrow{v\circ f'}\delta_*^{\{1,2\}\amalg
K}\Fun(\Delta^l,\cD),
\]
provides a vertex of $\cN(f(f'(\sigma')))$, whose image under $\Phi(f(f'(\sigma')))$ is $v(f'(\sigma'))$. This construction is functorial in $\sigma'$, giving rise to $\nu\in \Gamma((f\circ f')^* \cN)_0$ satisfying $(f\circ f')^*\Phi\circ \nu=v\circ f'$.  Applying Proposition
\ref{a1pr:extension} to $\Phi$, the map $f\circ f'$, and the global section $\nu$, we obtain a map $u\colon W_i\to \Fun(\Delta^l,\cD)$ satisfying $p\circ u=w$ such that $u\circ f\circ f'$ and $v\circ f'$ are homotopic over $\Fun(\partial \Delta^l,\cD)$, as desired.
\end{proof}

\begin{remark}
As a special case of Theorem \ref{a5th:cartesian_gluing}, the inclusion
\[
\delta^*_2\delta_*^{2\square}(\Cart^n,\cF^\cart)\subseteq \delta^*_2\delta_*^{2+}(\Cart^n,\cF)
\]
is a categorical equivalence. If we have a direct proof of this special case, Construction \ref{a5co:section} through Lemma \ref{a5le:trivial_co} are not necessary and the proof of Theorem \ref{a5th:cartesian_gluing} can be achieved with $\psi(\tau)$ and $\phi(\tau)$ replaced by $g(\tau)$ and $h(\tau)$, respectively.
\end{remark}

\subsection{Some trivial cofibrations}
\label{a6ss}

In this section, we prove that certain inclusions of simplicial sets defined in combinatorial manners are inner anodyne or categorical equivalences. In particular, they are trivial cofibrations in $\Sset$ for the Joyal model structure \cite{HTT}*{Theorem 2.2.5.1}. Only Lemma \ref{a6le:cpt_inner} and Lemma \ref{a6le:cart_inner} are used in previous sections, namely, in \ref{a4ss} and \ref{a5ss}, respectively.

We let $\star$ denote \emph{joins} of categories and simplicial sets \cite{HTT}*{{\Sec}1.2.8}.

\begin{lem}\label{a6le:join}
Let $A_0\subseteq A$, $B_0\subseteq B$, $C_0\subseteq C$ be inclusions of simplicial sets. If $A_0\subseteq A$ is right anodyne and $C_0\subseteq C$ is left anodyne \cite{HTT}*{Definition 2.0.0.3}, then the induced inclusion
\[
A\star B_0 \star C\coprod_{A_0\star B_0\star C_0} A_0\star B\star C_0 \subseteq A\star B\star C
\]
is inner anodyne.
\end{lem}

\begin{proof}
Consider the commutative diagram of inclusions with pushout squares
\[
\resizebox{\hsize}{!}{
\xymatrix{
A_0\star B_0\star C_0\ar[r]\ar[d] & A_0\star B\star C_0\ar[rd]\ar[d]\\
A\star B_0\star C_0\ar[r]\ar[d] &A\star B_0\star C_0 \coprod_{A_0\star B_0 \star C_0} A_0\star B\star C_0\ar[r]^-f\ar[d]
& A\star B\star C_0\ar[d]\ar[rd]\\
A\star B_0\star C\ar[r] & A\star B_0 \star C\coprod_{A_0\star B_0\star C_0} A_0\star B\star C_0\ar[r]^-{f'}
& A\star B_0\star C\coprod_{A\star B_0\star C_0} A\star B\star C_0\ar[r]^-g
& A\star B\star C.
}
}
\]
By \cite{HTT}*{Lemma 2.1.2.3}, $f$ is inner anodyne since $A_0\subseteq A$ is right anodyne; $g$ is inner anodyne since $C_0\subseteq C$ is left anodyne. It follows that $g\circ f'$ is inner anodyne.
\end{proof}

\begin{lem}\label{a6le:cut}
Let $S$ be a partially ordered set and let $Q=[2] \subseteq S$, $R=S-\{1\}\subseteq S$ be full inclusions. Assume that $0$ is a final object of $R_{/1}$ and $2$ is an initial object of $R_{1/}$. Then the inclusion $\rN(Q)\cup\rN(R)\subseteq \rN(S)$ is inner anodyne.
\end{lem}

\begin{proof}
Consider the commutative diagram of inclusions
\[
\xymatrix{
\rN(Q\cap R)\ar[r]\ar[d] & \rN(Q)\ar[d]\ar[rd]\\
\rN(R_{/1}\star R_{1/})\ar[r]\ar[d]&\rN(R_{/1}\star R_{1/})\coprod_{\rN(\{0\}\star
\{2\})}
\rN([2])\ar[r]^-f\ar[d]&
\rN(R_{/1}\star \{1\}\star R_{1/})\ar[d]\\
\rN(R)\ar[r]&\rN(R)\cup \rN(Q)\ar[r]^g& \rN(S)
}
\]
in which the square on the left are clearly pushouts. Note that for any simplex $\sigma$ of $N(S)$, if $\sigma$ is not a simplex of $\rN(R)$, then $1$ is a vertex of $\sigma$, so that $\sigma$ is a simplex of $\rN(R_{/1}\star \{1\}\star R_{1/})$. Thus the lower outer square is a pushout. It follows that $g$ is a pushout of $f$. By assumption and \cite{HTT}*{Lemma 4.2.3.6}, $\rN(\{0\})\subseteq \rN(R_{/1})$ is right anodyne and $\rN(\{2\})\subseteq \rN(R_{1/})$ is left anodyne. It follows that $f$ is inner anodyne by Lemma \ref{a6le:join}. Therefore, $g$ is inner anodyne.
\end{proof}

\begin{remark}\label{a6re:order}
Let $P\subseteq Q$ and $P\subseteq R$ be full inclusions of partially ordered sets. The pushout $S=Q\coprod_P R$ in the category of partially
ordered sets admits the following description. The underlying set of $S$ is the set-theoretic pushout. The partial order on $S$ is uniquely
characterized by the following properties:
\begin{enumerate}
  \item $Q\subseteq S$ and $R\subseteq S$ are full inclusions; and

  \item for $q\in Q$, $r\in R$, we have $q\le r$ (resp.\ $q\ge r$) if and only if there exists $p\in P$ satisfying $q\le p\le r$ (resp.\
      $q\ge p\ge r$).
\end{enumerate}
\end{remark}

\begin{lem}\label{a6le:union}
Let $P\subseteq Q$ and $P\subseteq R$ be full inclusions of partially ordered sets and $S=Q\coprod_P R$ the pushout in the category of partially ordered sets. Suppose that the following conditions are satisfied:
\begin{enumerate}
  \item $Q$ admits pushouts and pushouts are preserved by the inclusion $Q\subseteq S$.

  \item $Q-P$ is finite.

  \item $P$ is an up-set of $Q$, that is, a subset such that $p\in P$ and $q\geq p$ with $q\in Q$ imply $q\in P$.
\end{enumerate}
Then the inclusion $\rN(Q)\cup \rN(R)\subseteq \rN(S)$ is inner anodyne.
\end{lem}

\begin{proof}
We proceed by induction on $n=\# (Q-P)$. For $n=0$, we have $R=S$ and the assertion is trivial. For $n=1$, put $Q-P\coloneqq\{q\}$. Then Condition (3) means that $q$ is a minimal element of $Q$, hence of $S$. Note that $\rN(R)\cup\rN(S_{q/})=\rN(S)$. Indeed, for any simplex $\sigma$ of $\rN(S)$, if $\sigma$ is a simplex of $\rN(R)$, then $q$ is a vertex of $\sigma$, so that $\sigma$ is a simplex of $\rN(S_{q/})$. Thus the inclusion $\rN(Q)\cup\rN(R)\subseteq \rN(S)$ is a pushout of the inclusion $\rN(Q_{q/})\cup\rN(R_{q/})\subseteq \rN(S_{q/})$. The latter is isomorphic to the inclusion
\begin{equation}\label{a7eq:preinner}
\rN(P_{q/})^{\triangleleft}\coprod_{\rN(P_{q/})}\rN(R_{q/})\subseteq\rN(R_{q/})^{\triangleleft}.
\end{equation}
By Condition (1), for every $r\in R_{q/}$, the partially ordered set $P_{q//r}$ is filtered. Indeed, for $p,p'\in P_{q//r}$, the pushout $p\vee_q p'$ is a common upper bound in $P_{q//r}$. Thus $\rN(P_{q//r})$ is weakly contractible by \cite{HTT}*{Theorem 5.3.1.13, Lemma 5.3.1.18}. It follows that $\rN(P_{q/})^{op}\subseteq \rN(R_{q/})^{op}$ is cofinal by \cite{HTT}*{Theorem 4.1.3.1}, thus right anodyne by \cite{HTT}*{Proposition 4.1.1.3(4)}. Therefore, \eqref{a7eq:preinner} is inner anodyne by \cite{HTT}*{Lemma 2.1.2.3}.

For $n\ge 2$, we choose a minimal element $q$ of $Q-P$. Then Condition (3) implies that $q$ is a minimal element of $Q$, hence of $S$. Put $S'\coloneqq S-\{q\}\supseteq R$ and $Q'\coloneqq Q-\{q\}=S'\cap Q$. Consider the diagram of inclusions with pushout square
\[
\xymatrix{\rN(Q')\cup \rN(R)\ar[r]^-f\ar[d] & \rN(S')\ar[d]\\
\rN(Q)\cup \rN(R)\ar[r]& \rN(Q)\cup \rN(S')\ar[r]^-g & \rN(S).
}
\]

By the induction hypothesis applied to the inclusions $P\subseteq Q'$ and $P\subseteq R$, we know that $f$ is inner anodyne. Indeed, we have $P=Q'\cap R$ and $S'$ is the pushout $Q'\coprod_P R$ in the category of partially ordered set, by the description in Remark \ref{a6re:order}. Condition (1) holds since $q$ is a minimal element of $Q$, the partially ordered set $Q'$ admits pushouts and pushouts are stable under the inclusion $Q'\subseteq Q$, hence under the inclusions $Q'\subseteq S$ and $Q'\subseteq S'$; for Condition (2), we have $\#(Q'-P)=n-1$; and for Condition (3), $P$ is an up-set of $Q$, hence of $Q'$.

By the induction hypothesis applied to the inclusions $Q'\subseteq Q$ and $Q'\subseteq S'$, we know that $g$ is inner anodyne as well. Indeed, we have $Q'=Q\cap S'$ and $S$ is the pushout $Q\coprod_{Q'} S$ in the category of partially ordered sets; Condition (1) remains unchanged; for Condition (2), we have $\#(Q-Q')=1$; and for Condition (3), $Q'$ is an up-set of $Q$ since $q$ is minimal.

Therefore, the inclusion $\rN(Q)\cup \rN(R)\subseteq \rN(S)$ is inner anodyne.
\end{proof}

\begin{lem}\label{a6le:lattice}
Let $P$ be a finite partially ordered set admitting pushouts and let $p_0\le\dots\le p_s$; $q_0\le\dots\le q_s$ be elements of $P$ such that
$p_i\le q_{i-1}$ for $1\le i\le s$. Then the inclusion
\begin{align*}
\bigcup_{i=0}^s\rN(P_{p_i//q_i})\subseteq\rN\(\bigcup_{i=0}^{s}P_{p_i//q_i}\)
\end{align*}
is inner anodyne.
\end{lem}

\begin{proof}
Put $P_i\coloneqq P_{p_i//q_i}$. The inclusion can be decomposed as $Q_0\subseteq\dots \subseteq Q_n$, where
\[
Q_j=\rN\(\bigcup_{i=0}^jP_i\)\cup\bigcup_{i=j+1}^{n}\rN(P_i).
\]
For $1\le j\le n$, the inclusion $Q_{j-1}\subseteq Q_j$ is a pushout of
\begin{equation}\label{a6eq:lattice}
\rN\(\bigcup_{i=0}^{j-1}P_i\)\cup \rN(P_j)\subseteq \rN\(\bigcup_{i=0}^jP_i\).
\end{equation}
Indeed, for $k>j$, we have $P_k\cap \(\bigcup_{i=0}^j P_i\)\subseteq P_j$. It then suffices to check that \eqref{a6eq:lattice} satisfies the assumptions of Lemma \ref{a6le:union}. We denote coproducts in $P_{p_0/}$ by $\vee$. Take $x\in A=\bigcup_{i=0}^{j-1}P_i$ and $y\in P_j$. If $x\ge y$, then $x,y\in P_{p_j//q_{j-1}}=P_{j-1}\cap P_j$. If $x\le y$, then $x\le x\vee p_j\le y$, where $x\vee p_j\in P_{j-1}\cap P_j$ by the assumption that $p_j\le q_{j-1}$. Thus $\bigcup_{i=0}^jP_i$ is the pushout $A\coprod_{A\cap P_j} P_j$ in the category of partially ordered sets, by Remark \ref{a6re:order}. Condition (1) of Lemma \ref{a6le:union} follows from the fact that for $x\in P_i$, $y\in P_{i'}$, we have $x\vee y\in P_{\max\{i,i'\}}$. Condition (2) is clear. For Condition (3), it suffices to note that $A\cap P_j=A_{p_j/}$.
\end{proof}

By an \emph{interval sublattice} of a finite lattice $P$, we mean a subset of the form $P_{p//q}$, where $p\le q$ are in $P$.

\begin{lem}\label{a6le:equivalence}
Let $P$ be a finite lattice and let $p_0\le\dots\le p_s\le q_0\le\dots\le q_s$ be elements of $P$ satisfying $\bigcup_{i=0}^s P_{p_i//q_i}=P$. Let $Q_1,\dots,Q_t$ be interval sublattices of $P$. Then the inclusion
\[
\bigcup_{i=0}^s\rN(P_{p_i//q_i})\cup \bigcup_{j=1}^t \rN(Q_j)\subseteq \rN(P)
\]
is a categorical equivalence.
\end{lem}

Note that the assumptions imply that $p_0$ is the minimum of $P$ and $q_s$ is the maximum of $P$.

\begin{proof}
We proceed by induction on $t$. Put $P_i\coloneqq P_{p_i//q_i}$ and $R_j\coloneqq\bigcup_{i=0}^s\rN(P_i)\cup\bigcup_{k=1}^j\rN(Q_k)$. We need to show that $R_t\subseteq N(P)$ is a categorical equivalence. By Lemma \ref{a6le:lattice}, the inclusion $R_0=\bigcup_{i=0}^s\rN(P_i)\subseteq\rN(P)$ is inner anodyne, thus a categorical equivalence \cite{HTT}*{Lemma 2.2.5.2}. Thus for $t=0$ we are done. For $t\ge 1$, it suffices to show that the inclusions $R_0\subseteq \dots \subseteq R_t$ are categorical equivalences. For $1\le j\le t$, the inclusion $R_{j-1}\subseteq R_j$ is a pushout of
\begin{equation}\label{a8eq:cateq}
\bigcup_{i=0}^s\rN(P_i\cap Q_j)\cup \bigcup_{k=1}^{j-1}\rN(Q_k\cap Q_j)\subseteq \rN(Q_j)
\end{equation}
by an inclusion. By \cite{HTT}*{Lemma A.2.4.3}, it suffices to show that \eqref{a8eq:cateq} is a categorical equivalence, which follows from the induction hypothesis. In fact, if we write $Q_j=P_{p//q}$, then $P_i\cap Q_j=P_{p_i\vee p//q_i\wedge q}$, and for $0\le i,i'\le s$ such that $P_i\cap Q_j\neq \emptyset$, $P_{i'}\cap Q_j\neq \emptyset$, we have $p_i\vee p\le q_{i'}\wedge q$.
\end{proof}

\begin{lem}\label{a6le:cpt_inner}
The inclusion $\Box^n\subseteq\CCpt^n$ is inner anodyne.
\end{lem}

\begin{proof}
We apply Lemma \ref{a6le:lattice} to the lattice $P=[n]\times [n]$, with $s=n$, $p_i=(0,i)$ and $q_i=(i,n)$. We have $p_0\le \dots \le p_n=q_0\le \dots \le q_n$. Thus, the inclusion
\begin{align*}
\Box^n&=\bigcup_{i=0}^n\rN\(\RCpt^n_{(0,i)//(i,n)}\)\subseteq\rN\(\bigcup_{i=0}^n\RCpt^n_{(0,i)//(i,n)}\)
=\rN(\RCpt^n)=\CCpt^n
\end{align*}
is inner anodyne.
\end{proof}

\begin{lem}\label{a6le:cart_inner}
The inclusion $\bigcup_{0\le p\le n} \boxplus^n_{p,n}\subseteq \Cart^n$ is inner anodyne and the inclusion $\boxplus^n \subseteq \Cart^n$ is a
categorical equivalence.
\end{lem}

\begin{proof}
We apply Lemma \ref{a6le:lattice} and Lemma \ref{a6le:equivalence} to the lattice $P=\Crt^n$, with $s=n$, $p_i=\xi^n(i,n)$, $q_i=\eta^n(i,n)$, and the $Q_j$'s given by $\Crt^n_{p,q}$ with $0\le p\le n$ and $0\le q<n$. We have $\xi^n(0,n)\le \dots \le\xi^n(n,n)\le \eta^n(0,n)\le\dots\le \eta^n(n,n)$. It remains to show $\Crt^n=\bigcup_{p=0}^n \Crt^n_{p,n}$. For $Q\in \Crt^n$, we let $p$ denote $\pi^n_1(Q)=\min_{(p',q')\in Q} p'$. Then we have
\[
\xi^n(p,n)\le \varsigma^n(p,0)\le Q\le \varsigma^n(p,n)=\eta^n(p,n),
\]
so that $Q\in \Crt^n_{p,n}$.
\end{proof}

\section{More preliminaries on $\infty$-categories}
\label{b1ss}

This chapter is a further collection of preliminaries on $\infty$-categories. In \Sec\ref{b1ss:preliminary_lemmas}, we record some basic lemmas. In \Sec\ref{b1ss:partial_adjoints}, we develop a method of taking partial adjoints, namely, taking adjoint functors along given directions. This will be used to construct the initial enhanced operation map for schemes. In \Sec\ref{b1ss:symmetric_monoidal}, we collect some general facts and constructions related to symmetric monoidal $\infty$-categories.

\subsection{Elementary lemmas}
\label{b1ss:preliminary_lemmas}

Let us start with the following lemma, which appears as \cite{DAG5}*{Lemma 2.4.6}. We include a proof for the convenience of the reader.

\begin{lem}\label{b1le:trivial_contractible}
Let $\cC$ be a nonempty $\infty$-category that admits product of two objects. Then the geometric realization $\lvert \cC\rvert$ is contractible.
\end{lem}

\begin{proof}
Fix an object $X$ of $\cC$ and a functor $\cC\to \cC$ sending $Y$ to $X\times Y$. The projections $X\times Y\to X$ and $X\times Y\to Y$ define functors $h,h'\colon \Delta^1 \times \cC\to \cC$ such that
\begin{itemize}
  \item $h\res\Delta^{\{0\}}\times \cC=h'\res\Delta^{\{0\}}\times \cC$;

  \item $h\res\Delta^{\{1\}}\times \cC$ is the constant functor of value $X$;

  \item $h'\res\Delta^{\{1\}}\times \cC=\id_\cC$.
\end{itemize}
Then $\lvert h\rvert$ and $\lvert h'\rvert$ provide a homotopy between $\id_{\lvert \cC\rvert}$ and the constant map of value $X$.
\end{proof}

The following is a variant of the Adjoint Functor Theorem \cite{HTT}*{5.5.2.9}.

\begin{lem}\label{b1le:adjoint_variant}
Let $F\colon \cC\to \cD$ be a functor between presentable $\infty$-categories. Let $\rh F\colon \rh \cC\to \rh \cD$ be the functor of
(unenriched) homotopy categories.
\begin{enumerate}
  \item The functor $F$ has a right adjoint if and only if it preserves pushouts and $\rh F$ has a right adjoint.

  \item The functor $F$ has a left adjoint if and only if it is accessible and preserves pullbacks and $\rh F$ has a left adjoint.
\end{enumerate}
\end{lem}

\begin{proof}
The necessity follows from \cite{HTT}*{5.2.2.9}. The sufficiency in (1) follows from the fact that small colimits can be constructed out of pushouts and small coproducts \cite{HTT}*{4.4.2.7} and preservation of small coproducts can be tested on $\rh F$. The sufficiency in (2) follows from dual statements.
\end{proof}

We will apply the above lemma in the following form.

\begin{lem}\label{b1le:adjoint_infinity}
Let $F\colon \cC\to \cD$ be a functor between presentable stable $\infty$-categories. Let $\rh F\colon \rh \cC\to \rh \cD$ be the functor of
(unenriched) homotopy categories. Then
\begin{enumerate}
  \item The functor $F$ admits a right adjoint if and only if $\rh F$ is a triangulated functor and admits a right adjoint.

  \item The functor $F$ admits a left adjoint if $F$ admits a right adjoint and $\rh F$ admits a left adjoint.
\end{enumerate}
\end{lem}

\begin{proof}
By \cite{HA}*{Lemma 1.2.4.14}, a functor $G$ between stable $\infty$-categories is exact if and only if $\rh G$ is triangulated. The lemma then follows from Lemma \ref{b1le:adjoint_variant} and \cite{HA}*{Proposition 1.1.4.1}.
\end{proof}

\begin{lem}\label{b1le:adjoint_R}
Let $F\colon \cA\to \cB$ be a left exact functor between Grothendieck Abelian categories that commutes with small coproducts. Assume that $F$ has finite cohomological dimension. Then the right derived functor $\rR F\colon\cD(\cA)\to \cD(\cB)$ admits a right adjoint.
\end{lem}

\begin{proof}
By the previous lemma, it suffices to show that $\rh(\rR F)$ commutes with small coproducts. This is standard. See \cite{KS}*{Proposition 14.3.4(ii)}.
\end{proof}

\subsection{Partial adjoints}
\label{b1ss:partial_adjoints}

We first recall the notion of adjoints of squares.

\begin{definition}
Consider diagrams of $\infty$-categories
\[
\xymatrix{
\cC\ar[d]_U\ar@{}[rd]|\sigma&\cD\ar[l]_{F}\ar[d]^V & \cC\ar[r]^G\ar[d]_U\ar@{}[rd]|\tau &\cD\ar[d]^V\\
\cC'&\cD'\ar[l]_{F'}& \cC'\ar[r]^-{G'} & \cD'}
\]
that commute up to specified equivalences $\alpha\colon F'\circ V\to U\circ F$ and $\beta\colon V\circ G\to G'\circ U$. We say that $\sigma$ is a \emph{left adjoint} to $\tau$ and $\tau$ is a \emph{right adjoint} to $\sigma$, if $F$ is a left adjoint of $G$, $F'$ is a left adjoint of $G'$, and $\alpha$ is equivalent to the composite transformation
\[
F'\circ V\to F'\circ V\circ G\circ F \xrightarrow{\beta} F'\circ G'\circ U\circ F\to U\circ F.
\]
\end{definition}

\begin{remark}
The diagram $\tau$ has a left adjoint if and only if $\tau$ is left 
adjointable in the sense of \cite{HTT}*{7.3.1.2} and \cite{HA}*{Definition 
4.7.4.13}. If $G$ and $G'$ are equivalences, then $\tau$ is left adjointable. 
We have analogous notions for ordinary categories. A square $\tau$ of 
$\infty$-categories is left adjointable if and only if $G$ and $G'$ admit 
left adjoints and the square $\rh\tau$ of homotopy categories is left 
adjointable. When visualizing a square $\Delta^1\times\Delta^1\to\cC$, we 
adopt the convention that the first factor of $\Delta^1\times\Delta^1$ is 
vertical and the second factor is horizontal. 
\end{remark}

\begin{lem}\label{b1le:adjunction}
Consider a diagram of right Quillen functors
\[
\xymatrix{
\bA\ar[r]^G\ar[d]_U &\bB\ar[d]^V\\
\bA'\ar[r]^-{G'} & \bB'}
\]
of model categories, that commutes up to a natural equivalence $\beta\colon V\circ G\to G'\circ U$ and is endowed with Quillen equivalences $(F,G)$ and $(F',G')$. Assume that $U$ preserves weak equivalences and all objects of $\bB'$ are cofibrant. Let $\alpha$ be the composite transformation
\[
F'\circ V\to F'\circ V\circ G\circ F \xrightarrow{\beta} F'\circ G'\circ U\circ F\to U\circ F.
\]
Then for every fibrant-cofibrant object $Y$ of $\bB$, the morphism $\alpha(Y)\colon (F'\circ V)(Y)\to (U\circ F)(Y)$ is a weak equivalence.
\end{lem}

\begin{proof}
The square $R\beta$
\[
\xymatrix{
\rh\bA\ar[r]^-{RG}\ar[d]_-{RU} & \rh\bB\ar[d]^-{RV}\\
\rh\bA'\ar[r]^-{RG'} & \rh\bB'}
\]
of homotopy categories is left adjointable. Let $\sigma\colon LF'\circ RV \to RU\circ LF$ be its left adjoint. For fibrant-cofibrant $Y$, $\alpha(Y)$ computes $\sigma(Y)$.
\end{proof}

We apply Lemma \ref{b1le:adjunction} to the straightening functor \cite{HTT}*{{\Sec}3.2.1}. Let $p\colon S'\to S$ be a map of simplicial sets, and $\pi\colon \cC'\to \cC$ a functor of simplicial categories fitting into a diagram
\[
\xymatrix{
\fC[S']\ar[r]^-{\phi'}\ar[d]_-{\fC[p]}& \cC'^{op}\ar[d]^-{\pi^{op}}\\
\fC[S]\ar[r]^-{\phi} & \cC^{op}}
\]
which is commutative up to a simplicial natural equivalence. By \cite{HTT}*{Proposition 3.2.1.4}, we have a diagram
\[
\xymatrix{
(\Mset)^\cC\ar[r]^-{\Un^+_\phi}\ar[d]_-{\pi^*} & (\Mset)_{/S}\ar[d]^-{p^*}\\
(\Mset)^{\cC'}\ar[r]^-{\Un^+_{\phi'}} & (\Mset)_{/S'},}
\]
which satisfies the assumptions of Lemma \ref{b1le:adjunction} if $\phi$ and $\phi'$ are equivalences of simplicial categories. In this case, for every fibrant object $f\colon X\to S$ of $(\Mset)_{/S}$, endowed with the Cartesian model structure, the morphism
\[
(\St^+_{\phi'} \circ p^*) X\to (\pi^*\circ \St^+_\phi) X
\]
is a pointwise Cartesian equivalence.

Similarly, if $g\colon \cC\to \cD$ is a functor of ($\cV$-small) categories, then \cite{HTT}*{Remark 3.2.5.14} provides a diagram
\[
\xymatrix{
(\Mset)^\cD\ar[r]^-{\rN^+_\bullet(\cD)}\ar[d]_-{g^*} & (\Mset)_{/\rN(\cD)}\ar[d]^-{\rN(g)^*}\\
(\Mset)^\cC\ar[r]^-{\rN^+_\bullet(\cC)} & (\Mset)_{/\rN(\cC)}}
\]
satisfying the assumptions of Lemma \ref{b1le:adjunction}. Thus, for every fibrant object $Y$ of $(\Mset)_{/\rN(\cD)}$, endowed with the coCartesian model structure, the morphism
\[
\fF^+_{\rN(g)^* Y}(\cC)\to g^*\fF^+_Y(\cD)
\]
is a pointwise coCartesian equivalence.

\begin{proposition}[partial adjoint]\label{b1pr:partial_adjoint}
Consider quadruples $(I,J,R,f)$ where $I$ is a set, $J\subseteq I$, $R$ is an $I$-simplicial set and $f\colon \delta_I^* R\to\Cat$ is a functor, satisfying the following conditions:
\begin{enumerate}
  \item For every $j \in J$ and every edge $e$ of $\epsilon^I_j R$, the functor $f(e)$ has a left adjoint.

  \item For every $i\in J^c\colonequals I\backslash J$, every $j\in J$ and every $\tau\in (\epsilon_{i,j}^I R)_{1,1}$, the square
      $f(\tau)\colon \Delta^1\times \Delta^1\to \Cat$ is left adjointable.
\end{enumerate}
There exists a way to associate, to every such quadruple, a functor $f_J\colon \delta_{I,J}^* R\to \Cat$, satisfying the following conclusions:
\begin{enumerate}
  \item $f_J\res \delta_{J^c}^*(\del^\iota)_*R=f\res\delta_{J^c}^*(\del^\iota)_*R$, where $\iota\colon J^c\to I$ is the inclusion.

  \item For every $j\in J$ and every edge $e$ of $\epsilon^I_jR$, the functor $f_J(e)$ is a left adjoint of $f(e)$.

  \item For every $i\in J^c$, every $j\in J$ and every $\tau\in(\epsilon_{i,j}^I R)_{1,1}$, the square $f_J(\tau)$ is a left adjoint of $f(\tau)$.

  \item For two quadruples $(I,J,R,f)$, $(I',J',R',f')$ and maps $\mu\colon I'\to I$, $u\colon (\del^\mu)^*R'\to R$ such that $J'=\mu^{-1}(J)$ and $f'=f\circ\delta^*_Iu$, the functor $f'_{J'}$ is equivalent to $f_J\circ\delta_{I,J}^*u$.
\end{enumerate}
\end{proposition}

Note that in conclusion (1), $\delta_{J^c}^*(\del^\iota)_*R$ is naturally a simplicial subset of both $\delta_I^* R$ and $\delta_{I,J}^* R$. When visualizing $(1,1)$-simplices of $\epsilon_{i,j}^I R$, we adopt the convention that direction $i$ is vertical and direction $j$ is horizontal. If $J^c$ is nonempty, then condition (2) implies condition (1), and conclusion (3) implies conclusion (2).

\begin{proof}
Recall that we have fixed a fibrant replacement functor $\Fibr\colon \Mset\to\Mset$.

Let $\sigma\in (\delta^*_{I,J}R)_n$ be an object of $\del_{/\delta^*_{I,J}R}$, corresponding to $\Delta^{n_i\res i\in I}_J\to R$, where $n_i=n$. It induces a functor $f(\sigma)\colon\rN(D)\simeq\Delta^{[n_i]_{i\in I}}_J\to \Cat$, where $D$ is the partially ordered set $S\times T^{op}$ with $S=[n]^{J^c}$ and $T=[n]^J$. This corresponds to a projectively fibrant simplicial functor $\cF\colon\fC[\rN(D)]\to \Mset$. Let $\phi_D\colon \fC[\rN(D)]\to D$ be the canonical equivalence of simplicial categories and put
\[
\cF'\coloneqq(\Fibr^D \circ\St^+_{\phi_D^{op}} \circ \Un^+_{\rN(D)^{op}})\cF\colon D\to \Mset.
\]
We have weak equivalences
\begin{align*}
\cF\gets (\St^+_{\rN(D)^{op}}\circ \Un^+_{\rN(D)^{op}})\cF
&\to (\phi_D^*\circ\phi_{D!}\circ \St^+_{\rN(D)^{op}}\circ \Un^+_{\rN(D)^{op}})\cF \\
&\simeq (\phi_D^* \circ\St^+_{\phi_D^{op}}\circ \Un^+_{\rN(D)^{op}})\cF \to\phi_D^*(\cF').
\end{align*}
Thus, for every $\tau\in (\epsilon_{i,j}^I \rN(D))_{1,1}$, the square $\cF'(\tau)$ is equivalent to $f(\tau)$, both taking values in $\Cat$.

Let $\cF''$ be the composition
\[
S\to (\Mset)^{T^{op}} \xrightarrow{\Un^+_{\phi_T}} (\Mset)_{/\rN(T)},
\]
where the first functor is induced by $\cF'$. For every $s\in S$, the value $\cF''(s)\colon X(s)\to \rN(T)$ is a fibrant object of $(\Mset)_{/\rN(T)}$ with respect to the Cartesian model structure. In other words, there exists a Cartesian fibration $p(s)\colon Y(s)\to \rN(T)$ and an isomorphism $X(s)\simeq Y(s)^\natural$. By condition (1), for every morphism $t\to t'$ of $T$, the induced functor $Y(s)_{t'}\to Y(s)_t$ has a left adjoint. By \cite{HTT}*{Corollary 5.2.2.5}, $p(s)$ is also a coCartesian fibration. We consider the object $(p(s),\cE(s))$ of $(\Mset)_{/\rN(T)}$, where $\cE(s)$ is the set of $p$-coCartesian edges of $Y(s)$. By condition (2), this construction is functorial in $s$, giving rise to a functor $\cG'\colon S\to(\Mset)_{/\rN(T)}$.

The composition
\[
S\xrightarrow{\cG'}(\Mset)_{/\rN(T)}\xrightarrow{\fF^+_\bullet(T)}(\Mset)^{T}\xrightarrow{\Fibr^T}(\Mset)^T
\]
induces a projectively fibrant diagram
\[
\cG \colon S\times T\to \Mset.
\]
We denote by $\cG_\sigma\colon [n]\to \Mset$ the composition
\[
[n]\to S\times T\to \Mset,
\]
where the first functor is the diagonal functor. The construction of $\cG_\sigma$ is not functorial in $\sigma$ because the straightening functors do not commute with pullbacks, even up to natural equivalences. Nevertheless, for every morphism $d\colon \sigma\to \tilde\sigma$ in
$\del_{/\delta^*_{I,J}R}$, we have a canonical morphism $\cG_\sigma\to d^*\cG_{\tilde\sigma}$ in $(\Mset)^{[n]}$, which is a weak equivalence by Lemma \ref{b1le:adjunction}. The functor
\[
(\del_{/\delta^*_{I,J}R})_{\sigma/}\to (\Mset)^{[n]}
\]
sending $d\colon \sigma\to\tilde\sigma$ to $d^*\cG_{\tilde\sigma}$ induces a map
\[
\cN(\sigma)\colonequals\rN((\del_{/\delta^*_{I,J}R})_{\sigma/})\to \Map^\sharp((\Delta^n)^\flat,(\Cat)^\natural),
\]
which we denote by $\Phi(\sigma)$. Since the category $(\del_{/\delta^*_{I,J}R})_{\sigma/}$ has an initial object, the simplicial set $\cN(\sigma)$ is weakly contractible. This construction is functorial in $\sigma$ so that $\Phi\colon \cN\to \Map[\delta^*_{I,J}R,\Cat]$ is a morphism of $(\Sset)^{(\del_{/\delta^*_{I,J}R})^{op}}$. Applying Corollary \ref{a2co:key}(1), we obtain a functor $\widetilde{f_J}\colon \delta^*_{I,J} R\to \Cat$ satisfying conclusions (2) and (3) up to homotopy.

Under the situation of conclusion (4), $\delta^*_{I,J}u\colon\delta^*_{I',J'}R'\to \delta^*_{I,J}R$ induces $\varphi\colon \cN'\to(\delta^*_{I,J}u)^*\cN$. By construction, there exists a homotopy between $\Phi'$ and $((\delta^*_{I,J}u)^*\Phi)\circ \varphi$. By Corollary \ref{a2co:key}(2), this implies that $\widetilde{f'_{J'}}$ and $\widetilde{f_J}\circ\delta^*_{I,J}u$ are homotopic.

By construction, there exists a homotopy between $r^*\Phi$ and the composite map $r^*\cN\to \Delta^0_{Q}\xrightarrow{f\res Q} \Map[Q,\Cat]$, where $Q=\delta_{J^c}^*(\del^\iota)_*R$ and $r\colon Q\to \delta^*_{I,J}R$ is the inclusion. By Corollary \ref{a2co:key}(2), this implies that $\widetilde{f_J}\res Q$ and $f\res Q$ are homotopic. Since the inclusion
\[
Q^\natural\times(\Delta^1)^\sharp \coprod_{Q^\natural\times (\Delta^{\{0\}})^\sharp}(\delta_{I,J}^*R)^\natural\times(\Delta^{\{0\}})^\sharp
\to(\delta_{I,J}^*R)^\natural\times(\Delta^1)^\sharp
\]
is marked anodyne, there exists $f_J\colon \delta_{I,J}^*R\to \Cat$ homotopic to $\widetilde{f_J}$ such that $f_J\res Q=f\res Q$.
\end{proof}

\begin{remark}\label{b1re:partial_adjoint}
We have the following remarks concerning Proposition \ref{b1pr:partial_adjoint}.
\begin{enumerate}
  \item There is an obvious dual version of Proposition \ref{b1pr:partial_adjoint} for right adjoints.

  \item Proposition \ref{b1pr:partial_adjoint} holds without the (implicit) convention that $R$ is $\cV$-small. To see this, it suffices to apply the proposition to the composite map $\delta^*_IR\xrightarrow {f}\Cat\to \Cat^\cW$, where $\cW\supseteq \cV$ is a universe containing $R$ and $\Cat^\cW$ is the $\infty$-category of $\infty$-categories in $\cW$.

  \item Consider the $2$-tiled $\infty$-category $(\Cat,\cT)$ where $\cT_1=(\Cat)_1$, $\cT_2$ consists of all functors that admit a left adjoint, and $\cT_{12}$ consists of all squares that are left adjointable. Let
      \[
      \phi\colon\delta^{2\square}_*(\Cat,\cT)\hookrightarrow\delta_2^*\delta_{2*}\Cat\to \Cat
      \]
      be the natural functor induced by the counit map. Applying Proposition \ref{b1pr:partial_adjoint} (and Remark \ref{b1re:partial_adjoint}(2)) to the quadruple $(\{1,2\},\{2\},\delta^{2\square}_*(\Cat,\cT),\phi)$, we get a functor
      \[
      \phi_{\{2\}}\colon\delta_{2,\{2\}}^*\delta^{2\square}_*(\Cat,\cT)\to\Cat.
      \]
      This functor is \emph{universal} in the sense that for any quadruple $(I,J,R,f)$ satisfying the conditions in Proposition \ref{b1pr:partial_adjoint}, if we denote by $\mu\colon I\to \{1,2\}$ the map given by $\mu^{-1}({2})=J$, then $f\colon\delta_2^*(\del^\mu)^* R\to \Cat$ uniquely determines a map $u\colon (\del^\mu)^*R\to \delta_{2*}\Cat$ by adjunction which
      factorizes through $\delta^{2\square}_*(\Cat,\cT)$ and $f_J$ can be taken to be the composite functor
      \[
      \delta^*_{I,J}R\simeq \delta_{2,\{2\}}^*(\del^\mu)^*R
      \xrightarrow{\delta_{2,\{2\}}^*u}\delta_{2,\{2\}}^*\delta^{2\square}_*(\Cat,\cT)\xrightarrow{\phi_{\{2\}}}\Cat.
      \]

  \item For the quadruple $(\{1\},\{1\},\PRR,\phi)$ where $\phi\colon\PRR\to\Cat$ is the natural inclusion, the functor $\phi_{\{1\}}$
      constructed in Proposition \ref{b1pr:partial_adjoint} induces an equivalence $\phi_{\PR}\colon (\PRR)^{op}\to\PRL$. This gives
      another proof of the second assertion of \cite{HTT}*{Corollary 5.5.3.4}. By restriction, this equivalence induces an equivalence
      $\phi_{\PS}\colon \PSL\to(\PSR)^{op}$ of $\infty$-categories.

  \item For the quadruple $(\{1,2\},\{1\},S^{op}\boxtimes\Fun^{\r{LAd}}(S^{op},\Cat),f)$ where
      \[
      f\colon S^{op}\times\Fun^{\r{LAd}}(S^{op},\Cat)\to\Cat
      \]
      is the evaluation map, the functor
      \[
      f_{\{1\}}\colon S\times\Fun^{\r{LAd}}(S^{op},\Cat)\to\Cat
      \]
      constructed in Proposition \ref{b1pr:partial_adjoint} induces an 
      equivalence $\Fun^{\r{LAd}}(S^{op},\Cat)\to\Fun^{\r{RAd}}(S,\Cat)$. 
      This gives an alternative proof of \cite{HA}*{Corollary 4.7.4.18(3)}. 
\end{enumerate}
\end{remark}

\subsection{Symmetric monoidal $\infty$-categories}
\label{b1ss:symmetric_monoidal}

Let $\Fin$ be the category of pointed finite sets defined in \cite{HA}*{Notation 2.0.0.2}. It is (equivalent to) the category whose objects are
sets $\langle n\rangle=\langle n\rangle^\circ\cup\{\ast\}$, where $\langle n\rangle^\circ=\{1,\dots,n\}$ ($\langle 0\rangle^\circ=\emptyset$) for $n\geq0$, and morphisms are maps of sets that map $\ast$ to $\ast$.

Let $\cC$ be an $\infty$-category that admits finite products. By \cite{HA}*{Proposition 2.4.1.5}, we have a symmetric monoidal $\infty$-category \cite{HA}*{Definition 2.0.0.7} $\cC^\times\to\rN(\Fin)$, known as the \emph{Cartesian symmetric monoidal $\infty$-category associated to $\cC$}. We put $\CAlg(\cC)\coloneqq\CAlg(\cC^\times)$ \cite{HA}*{Definition 2.1.3.1} as the $\infty$-category of commutative algebra objects in $\cC$. We have the functor
\begin{align}\label{b1eq:G}
\rG\colon\CAlg(\cC)\to\cC
\end{align}
by evaluating at $\langle1\rangle$.

\begin{remark}
In the above construction, if we put $\cC\coloneqq\Cat$, then $\CAlg(\Cat)$ 
is canonically equivalent to $\Cat^\otimes$, the $\infty$-category of 
symmetric monoidal $\infty$-categories \cite{HA}*{Variant 2.1.4.13}. The 
functor $\rG$ restricts to a functor $\Cat^\otimes\to\Cat$ sending 
$\cC^\otimes$ to its underlying $\infty$-category $\cC$. 
\end{remark}

Recall that a symmetric monoidal $\infty$-category $\cC^\otimes$ is {\em 
closed} \cite{HA}*{Definition 4.1.1.15} if the functor $-\otimes 
-\colon\cC\times\cC\to\cC$, written as $\cC\to\Fun(\cC,\cC)$, factorizes 
through $\FunL(\cC,\cC)$. 

\begin{definition}\label{b1de:presentable_monoidal}
We define a subcategory $\CAlg(\Cat)_\pr^\rL$ (resp.\ $\CAlg(\Cat)_{\r{pr,st}}^\rL$) of $\CAlg(\Cat)$ as follows:
\begin{itemize}
  \item An object that belongs to this subcategory is a symmetric monoidal $\infty$-categories $\cC^\otimes$ such that $\cC=\rG(\cC^\otimes)$ is presentable (resp.\ and stable).

  \item A morphism that belongs to this subcategory is a symmetric monoidal functor $F^\otimes\colon\cC^\otimes\to\cD^\otimes$ such that the underlying functor $F=\rG(F^\otimes)$ is a left adjoint functor.
\end{itemize}
In particular, we have functors
\[
\rG\colon\CAlg(\Cat)_\pr^\rL\to\PRL,\quad\rG\colon\CAlg(\Cat)_{\r{pr,st}}^\rL\to\PSL.
\]
Moreover, we define $\CAlg(\Cat)_{\r{cl}}\subseteq\CAlg(\Cat)$, $\CAlg(\Cat)_{\r{pr,cl}}^\rL\subseteq\CAlg(\Cat)_\pr^\rL$ and
$\PSLM\subseteq\CAlg(\Cat)_{\r{pr,st}}^\rL$ to be the full subcategories spanned by closed symmetric monoidal $\infty$-categories.
\end{definition}

\begin{remark}\label{b1re:monoidal}
The $\infty$-categories $\CAlg(\Cat)_{\r{pr,cl}}^\rL$ and $\PSLM$ admit small limits and such limits are preserved under the inclusions
\[
\PSLM\subseteq\CAlg(\Cat)_{\r{pr,cl}}^\rL\subseteq\CAlg(\Cat).
\]
In fact, we only have to show that for a small simplicial set $S$ and a diagram $p^\otimes\colon S\to\CAlg(\PR^\rL)$ such that $p^\otimes(s)=\cC_s^\otimes$ is closed for every vertex $s$ of $S$, the limit $\lim(p^\otimes)$ is closed. Let $p\colon S\to\CAlg(\PR^\rL)\to\PRL$ (resp.\ $p'\colon S\to\CAlg(\PR^\rL)\to\Fun(\Delta^1,\Cat)$) be the diagram induced by evaluating at the object $\langle 1\rangle$ (resp.\ unique active map $\langle 2\rangle\to\langle 1\rangle$) of $\rN(\Fin)$. For every object $c$ of $\cC=\lim(p)$, the diagram $p'$ induces a diagram $p'_c\colon S\to\Fun(\Delta^1,\PRL)$ such that $p'_c(s)$ is the functor $f_s^*c\otimes-\colon \cC_s\to\cC_s$ that admits right adjoints, where $f_s^*\colon \cC\to\cC_s$ is the obvious functor. Since $\PRL\subseteq\Cat$ is stable under small limits, the limit $\lim(p'_c)$ is an object of $\FunL(\cC,\cC)$, which shows that the limit $\lim(p^\otimes)$ is closed.

A diagram $p\colon S^\triangleleft\to\PSLM$ is a limit diagram if and only if
\[
\rG\circ p\colon S^\triangleleft\to\PSLM\xrightarrow{\rG}\Cat
\]
is a limit diagram, by the dual version of \cite{HTT}*{Corollary 5.1.2.3}.
\end{remark}

Let $\cC$ be an $\infty$-category. Recall that by \cite{HA}*{Construction 2.4.3.1, Proposition 2.4.3.3}, we have an $\infty$-operad $p\colon\cC^\amalg\to\rN(\Fin)$. Suppose that $\cC$ is a fibrant simplicial category. We define $\cC^\amalg$ to be the fibrant simplicial category such that an object of $\cC^\amalg$ consists of an object $\langle n\rangle\in\Fin$ together with a sequence of objects $(Y_1,\dots,Y_n)$ in $\cC$, and
\[
\Map_{\cC^\amalg}((X_1,\dots,X_m),(Y_1,\dots,Y_n))=\coprod_\alpha\prod_{i\in\alpha^{-1}\langle n\rangle^\circ}\Map_\cC(X_i,Y_{\alpha(i)}),
\]
where $\alpha$ runs through all maps of pointed sets from $\langle m\rangle$ to $\langle n\rangle$. By construction, we have a forgetful functor $\cC^\amalg\to\Fin$, and its simplicial nerve $\rN(\cC^\amalg)\to\rN(\Fin)$ is canonically isomorphic to $\rN(\cC)^\amalg\to\rN(\Fin)$.

\begin{definition}\label{b1de:static}
Let $p\colon\cC\to\rN(\Fin)$ be a functor of $\infty$-categories. We say that a diagram in $\cC$ is \emph{$p$-static} (or simply \emph{static} if $p$ is clear) if its composition with $p$ is constant.
\end{definition}

\begin{lem}\label{b1le:static_colimit}
Let $\cC$ be an $\infty$-category that admits finite colimits. Then a square
\begin{align*}
\xymatrix{
(X_1,\dots,X_m) \ar[r]\ar[d] & (Y_1,\dots,Y_n) \ar[d] \\
(X'_1,\dots,X'_m) \ar[r] & (Y'_1,\dots,Y'_n) }
\end{align*}
in $\cC^\amalg$ with static vertical morphisms is a pushout square if and only if for every $1\leq j\leq n$, the induced square
\begin{align*}
\xymatrix{
\coprod_{\alpha(i)=j}X_i \ar[r]\ar[d] & Y_j \ar[d] \\
\coprod_{\alpha(i)=j}X'_i \ar[r] & Y'_j
}
\end{align*}
in $\cC$ is a pushout square.
\end{lem}

\begin{proof}
It follows from the fact that for every pair of objects $\{X_i\}_{1\leq i\leq m}$, $\{Y_j\}_{1\leq j\leq m}$ of $\cC^\amalg$, the mapping space
$\Map_{\cC^\amalg}(\{X_i\}_{1\leq i\leq m},\{Y_j\}_{1\leq j\leq m})$ is naturally equivalent to
\[
\coprod_{\alpha\in\Hom_{\Fin}(\langle m\rangle,\langle n\rangle)}\prod_{i\in\alpha^{-1}\langle n\rangle^\circ}\Map_\cC(X_i,Y_{\alpha(i)}),
\]
and the discussion in \cite{HTT}*{{\Sec}4.4.2}.
\end{proof}

\begin{remark}\label{b1re:cartesian}
Let $\bT\colon \cC^\amalg\to\Cat$ be a functor that is a \emph{lax Cartesian 
structure} \cite{HA}*{Definition 2.4.1.1}. Then we have an induced 
$\infty$-operad map $\bT^\otimes\colon\cC^\amalg\to\Cat^\times$ 
\cite{HA}*{Proposition 2.4.1.7}, which is an object of 
$\Alg_{\cC^\amalg}(\Cat^\times)$. The choice of such $\bT^\otimes$ is 
parameterized by a trivial Kan complex. Since the obvious map 
$\Alg_{\cC^\amalg}(\Cat^\times)\to\Fun(\cC,\CAlg(\Cat))$ is a trivial Kan 
fibration \cite{HA}*{Theorem 2.4.3.18}, in what follows, we will regard 
$\bT^\otimes$ as a functor $\cC\to\CAlg(\Cat)$. 
\end{remark}

\section{Enhanced operations for ringed topoi and schemes}
\label{b2ss}

In this chapter, we construct the enhanced operation maps for the category of ringed topoi and for the category of coproducts of quasi-compact and separated schemes, and establish several properties of the maps.

The construction is based on the flat model structure. This marks a major 
difference with the study of quasi-coherent sheaves. For the latter one can 
simply start with the projective model structure constructed in 
\cite{HA}*{Remark 7.1.2.9}, because the category of quasi-coherent sheaves on 
affine schemes have enough projectives. The flat model structure for a ringed 
topological space has been constructed by Gillespie in \cite{Gil1} and 
\cite{Gil2}. In \Sec\ref{b2ss:flat_model_structure}, we adapt the 
construction to every topos with enough points.

In \Sec\ref{b2ss:enhanced_operations}, we construct a functor $\bT$ \eqref{b2eq:enhanced_premonoidal} and its induced functor $\bT^\otimes$
\eqref{b2eq:enhanced_monoidal} that enhance the derived $*$-pullback and derived tensor product for ringed topoi. It also encodes the symmetric
monoidal structures in a homotopy-coherent way. This serves as a starting point for the construction of the enhanced operation map.

In \Sec\ref{b3ss:abstract}, we introduce an abstract notion of (universal) descent and collect some basic properties. In \Sec\ref{b3ss:enhanced_operation}, we construct the enhanced operation maps \eqref{b3eq:premonoidal} and \eqref{b3eq:operation} based on the ones constructed for ringed topoi. In \Sec\ref{b3ss:poincare}, we establish some properties of the maps constructed in the previous section, including an enhanced version of (co)homological descent for smooth coverings. This property is crucial for the extension of the enhanced operation map to algebraic spaces and stacks in Chapter~\ref{b5ss}.

\subsection{The flat model structure}
\label{b2ss:flat_model_structure}

Let $(X,\cO_X)$ be a ringed topos. In other words, $X$ is a (Grothendieck) topos and $\cO_X$ is a sheaf of rings in $X$. An $\cO_X$-module $C$ is called \emph{cotorsion} if $\Ext^1(F,C)=0$ for every flat $\cO_X$-module $F$. The following definition is a special case of \cite{Gil2}*{Definition 2.1}.

\begin{definition}
Let $K$ be a cochain complex of $\cO_X$-modules.
\begin{itemize}
  \item $K$ is called a \emph{flat complex} if it is exact and $Z^n K$ is flat for all $n$.

  \item $K$ is called a \emph{cotorsion complex} if it is exact and $Z^n K$ is cotorsion for all $n$.

  \item $K$ is called a \emph{dg-flat complex} if $K^n$ is flat for every $n$, and every cochain map $K\to C$, where $C$ is a cotorsion complex, is homotopic to zero.

  \item $K$ is called a \emph{dg-cotorsion complex} if $K^n$ is cotorsion for every $n$, and every cochain map $F\to K$, where $F$ is a flat
      complex, is homotopic to zero.
\end{itemize}
\end{definition}

\begin{lem}
Let $(f,\gamma)\colon (Y,\cO_Y)\to (X,\cO_X)$ be a morphism of ringed topoi. Then
\begin{enumerate}
  \item $(f,\gamma)^*$ preserves flat modules, flat complexes, and dg-flat complexes;

  \item $(f,\gamma)_*$ preserves cotorsion modules, cotorsion complexes, and dg-cotorsion complexes.
\end{enumerate}
\end{lem}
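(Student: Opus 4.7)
My plan is to prove the two bullets in parallel, bootstrapping from module-level statements to complex-level ones via adjunction. For the first bullet, preservation of flat modules is immediate from the formula $(f,\gamma)^{*}M=f^{-1}M\otimes_{f^{-1}\cO_X}\cO_Y$: the functor $f^{-1}$ is exact, and base change along $f^{-1}\cO_X\to\cO_Y$ preserves flatness (verified stalkwise, since the topos has enough points). For a flat complex $K$ on $X$, the short exact sequence $0\to Z^nK\to K^n\to Z^{n+1}K\to 0$ has flat quotient and is therefore preserved by $f^{*}$, yielding $Z^n(f^{*}K)\cong f^{*}Z^nK$ flat together with exactness of $f^{*}K$. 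The dg-flat case is postponed and obtained by adjunction at the end.

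The linchpin of the second bullet is the vanishing $R^1f_{*}C=0$ for a cotorsion $\cO_Y$-module $C$. I establish this in two steps. First, restriction of cotorsion along an open immersion $j\colon Y/V\to Y$ of topoi remains cotorsion, via the identity $\Ext^1_{\cO_Y|_V}(F,j^{*}C)\cong\Ext^1_{\cO_Y}(j_{!}F,C)$ combined with the standard fact that $j_{!}$ preserves flatness (via $j_{!}F\otimes_{\cO_Y}M\cong j_{!}(F\otimes_{\cO_Y|_V}j^{*}M)$). Second, applying this with $F=\cO_Y|_V$ gives $H^1(V,C|_V)=\Ext^1_{\cO_Y|_V}(\cO_Y|_V,C|_V)=0$, and sheafification yields $R^1f_{*}C=0$. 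With this in hand, preservation of cotorsion modules follows from the derived adjunction $R\Hom_{\cO_X}(F,Rf_{*}C)\simeq R\Hom_{\cO_Y}(f^{*}F,C)$ for $F$ flat on $X$: the spectral sequence
\[
E_2^{p,q}=\Ext^p_{\cO_X}(F,R^qf_{*}C)\Rightarrow\Ext^{p+q}_{\cO_Y}(f^{*}F,C)
\]
has no nontrivial differentials into or out of $E_r^{1,0}$ for $r\geq 2$ (source and target land in negative bi-degrees), so $\Ext^1_{\cO_X}(F,f_{*}C)=E_2^{1,0}=E_\infty^{1,0}$ sits inside the vanishing group $\Ext^1_{\cO_Y}(f^{*}F,C)=0$.

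The remaining statements follow formally. For cotorsion complexes: applying $f_{*}$ to $0\to Z^nC\to C^n\to Z^{n+1}C\to 0$, the vanishing $R^1f_{*}Z^nC=0$ preserves exactness, so $f_{*}C$ is exact with $Z^n(f_{*}C)\cong f_{*}Z^nC$ cotorsion by the module case. For dg-cotorsion complexes: the chain-level adjunction $\Hom_{\Ch(\cO_X)}(F,f_{*}C)\cong\Hom_{\Ch(\cO_Y)}(f^{*}F,C)$ preserves the null-homotopy relation since $f^{*}$ and $f_{*}$ are additive; for $F$ a flat complex on $X$, $f^{*}F$ is flat on $Y$ by the first bullet, so any chain map $F\to f_{*}C$ is null-homotopic, and $f_{*}C^n$ is cotorsion by the module case. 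The dg-flat case of the first bullet is dual: for $K$ dg-flat on $X$ and $C$ any cotorsion complex on $Y$, the complex $f_{*}C$ is cotorsion as just proved, so every chain map $f^{*}K\to C$ is null-homotopic by the adjunction. The main technical point, and the step I expect to require the most care, is the restriction-preserves-cotorsion argument, which hinges on $j_{!}$ preserving flatness for open immersions of topoi.
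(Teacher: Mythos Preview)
Your proof is correct and follows essentially the same strategy as the paper: bootstrap module-level statements to complexes via the short exact sequences $0\to Z^n\to K^n\to Z^{n+1}\to 0$, and pass between the flat and cotorsion sides by adjunction. The paper's proof is terser in two places. First, for preservation of cotorsion modules the paper simply asserts a monomorphism $\Ext^1(F,f_*C)\hookrightarrow\Ext^1(f^*F,C)$; this is exactly your edge map $E_2^{1,0}=E_\infty^{1,0}\hookrightarrow H^1$, so your preliminary vanishing $R^1f_*C=0$ is not needed there. Second, for cotorsion complexes the paper observes that short exact sequences of cotorsion modules are already exact as presheaves, which is precisely your local vanishing $H^1(V,C|_V)=\Ext^1(j_{!}\cO_V,C)=0$ (using that $j_!\cO_V$ is flat), repackaged; this immediately gives exactness of $f_*$ on such sequences without naming $R^1f_*$. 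Your restriction-preserves-cotorsion lemma is the explicit unpacking of the same fact, so the step you flagged as delicate is exactly the step the paper takes for granted.
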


Recall that the functor $(f,\gamma)^*=\cO_Y\otimes_{f^*\cO_X} f^*-\colon\Mod(X,\cO_X)\to \Mod(Y,\cO_Y)$ is a left adjoint of the functor
$(f,\gamma)_*\colon \Mod(Y,\cO_Y)\to\Mod(X,\cO_X)$.

\begin{proof}
Let $F\in \Mod(X,\cO_X)$ be flat, and $C\in \Mod(Y,\cO_Y)$ cotorsion. We have a monomorphism $\Ext^1(F,(f,\gamma)_*C)\to\Ext^1((f,\gamma)^*F,C)=0$. Thus, $(f,\gamma)_*C$ is cotorsion. Moreover, since short exact sequences of
cotorsion $\cO_Y$-modules are exact as sequences of presheaves, $(f,\gamma)_*$ preserves short exact sequences of cotorsion modules, hence it
preserves cotorsion complexes. It follows that $(f,\gamma)^*$ preserves dg-flat complexes.

It is well known that $(f,\gamma)^*$ preserves flat modules and short exact sequences of flat modules. It follows that $(f,\gamma)^*$ preserves flat complexes and hence $(f,\gamma)_*$ preserves dg-cotorsion complexes.
\end{proof}

The model structure in the following generalization of \cite{Gil2}*{Corollary 7.8} is called the \emph{flat model structure}.

\begin{proposition}\label{b2pr:flat_model}
Assume that $X$ has enough points. Then there exists a combinatorial model structure on $\Ch(\Mod(X,\cO_X))$ such that
\begin{itemize}
  \item The cofibrations are the monomorphisms with dg-flat cokernels.

  \item The fibrations are the epimorphisms with dg-cotorsion kernels.

  \item The weak equivalences are quasi-isomorphisms.
\end{itemize}
Furthermore, this model structure is monoidal with respect to the usual tensor product of chain complexes.
\end{proposition}

For a morphism $(f,\gamma)\colon (Y,\cO_Y)\to (X,\cO_X)$ of ringed topoi with enough points, the pair of functors $((f,\gamma)^*,(f,\gamma)_*)$ is a Quillen adjunction between the categories $\Ch(\Mod(Y,\cO_Y))$ and $\Ch(\Mod(X,\cO_X))$ endowed with the flat model structures.

\begin{remark}\label{b2re:flat_model}
We have the following remarks about different model structures.
\begin{enumerate}
  \item The functor $\id\colon \Ch(\Mod(X,\cO_X))^{\r{flat}}\to\Ch(\Mod(X,\cO_X))^{\r{inj}}$ is a right Quillen equivalence. Here
      $\Ch(\Mod(X,\cO_X))^{\r{flat}}$ (resp.\ $\Ch(\Mod(X,\cO_X))^{\r{inj}}$) is the model category $\Ch(\Mod(X,\cO_X))$ endowed with the flat model structure (resp.\ the injective model structure \cite{HA}*{Proposition 1.3.5.3}).

  \item If $X=*$ and $\cO_X=R$ is a (commutative) ring, then $\id\colon\Ch(\Mod(*,R))^{\r{proj}} \to \Ch(\Mod(*,R))^{\r{flat}}$ is a
      \emph{symmetric monoidal} left Quillen equivalence between symmetric monoidal model categories. Here $\Ch(\Mod(*,R))^{\r{proj}}$ is the model category $\Ch(\Mod(*,R))$ endowed with the (symmetric monoidal) projective model structure \cite{HA}*{Proposition 7.1.2.11}.
\end{enumerate}
\end{remark}

To prove Proposition \ref{b2pr:flat_model}, we adapt the proof of \cite{Gil2}*{Corollary 7.8}. Let $S$ be a site, and $G$ a small topologically
generating family \cite{SGA4}*{Expos\'{e} ii, D\'{e}finition 3.0.1} of $S$. For a presheaf $F$ on $S$, we put $\lvert F\rvert_G\coloneqq\sup_{U\in G}\card(F(U))$.

\begin{lem}
Let $\beta\ge \card(G)$ be an infinite cardinal such that $\beta\ge\card(\Hom(U,V))$ for all $U$ and $V$ in $G$, and $\kappa$ a cardinal $\ge
2^\beta$. Let $F$ be a presheaf on $S$ such that $\lvert F\rvert_G\le\kappa$, and $F^+$ the sheaf associated to $F$. Then $\lvert F^+\rvert_G\le\kappa$.
\end{lem}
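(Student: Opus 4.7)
The plan is to analyze the associated-sheaf functor as two applications of the Grothendieck ``plus'' construction and perform careful cardinal counting at each stage. Recall that $F^+$ may be built as $L^2 F$, where
\[
(LF)(U) = \colim_{\mathcal{R}} \Hom_{\mathrm{PSh}(S)}(\mathcal{R}, F)
\]
is a filtered colimit over covering sieves $\mathcal{R}$ of $U$ (ordered by reverse inclusion), and $\Hom_{\mathrm{PSh}(S)}(\mathcal{R}, F) = \varprojlim_{(V\to U)\in\mathcal{R}} F(V)$ is the set of compatible families. Accordingly, it suffices to prove $|LF|_G \le \kappa$ whenever $|F|_G \le \kappa$ and then iterate once to reach $F^+$.

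Fix $U\in G$. The first step is to bound the indexing set. Since $G$ is topologically generating, each covering sieve of $U$ is refined by one generated by morphisms $V\to U$ with $V\in G$, and such refinements are cofinal in the filtered colimit computing $(LF)(U)$. By hypothesis the set
\[
M_U := \bigl\{\,V\to U \bigm| V\in G\,\bigr\}
\]
has cardinality at most $|G|\cdot \sup_{V\in G}|\Hom(V,U)| \le \beta\cdot\beta = \beta$; hence the set of relevant sieves injects into $2^{M_U}$, of size $\le 2^\beta\le \kappa$. Next, for each such $\mathcal{R}$, a compatible family is determined by its values on the $G$-generators of $\mathcal{R}$, giving an embedding of $\Hom_{\mathrm{PSh}(S)}(\mathcal{R},F)$ into a product $\prod_{(V\to U)\in \mathcal{R}\cap M_U} F(V)$ indexed by a set of size $\le\beta$, each factor of size $\le\kappa$. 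Since filtered colimits of sets have cardinality bounded by the sum of the sizes of the terms, one gets
\[
|(LF)(U)| \;\le\; \sum_{\mathcal{R}} |\Hom_{\mathrm{PSh}(S)}(\mathcal{R},F)| \;\le\; 2^\beta\cdot \kappa^{\beta}.
\]

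The main obstacle is then the cardinal-arithmetic bound $2^\beta\cdot\kappa^\beta \le \kappa$, which does not hold for arbitrary $\kappa\ge 2^\beta$ (e.g.\ $\kappa^\beta$ can exceed $\kappa$ when $\mathrm{cf}(\kappa)\le\beta$). The clean way to handle this is to repackage a pair (sieve, compatible family) as a single partial function from a subset of $M_U$ to $\coprod_{V\in G} F(V)$. The latter codomain has cardinality at most $\beta\cdot\kappa = \kappa$, and the domain is a subset of a size-$\beta$ set; absorbing the subset into the codomain by an added symbol $\ast$, the total number of such data is at most $(\kappa+1)^\beta = \kappa^\beta$. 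To force $\kappa^\beta\le\kappa$ at the level of the hypothesis $\kappa\ge 2^\beta$, I would reduce to the case $\kappa = 2^\beta$, where $(2^\beta)^\beta = 2^{\beta\cdot\beta} = 2^\beta$ gives the closure exactly: writing an arbitrary $F$ with $|F|_G\le\kappa$ as a filtered union of subpresheaves $F_\alpha$ with $|F_\alpha|_G\le 2^\beta$, and using that $L$ commutes with such filtered unions (by filteredness of the sieve colimit), one obtains $|LF_\alpha|_G\le 2^\beta$ and hence $|LF|_G\le \kappa\cdot 2^\beta = \kappa$. A second application of the same estimate to $LF$ yields $|F^+|_G = |L^2F|_G \le \kappa$, as required.
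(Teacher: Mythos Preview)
You are right that the bound $2^\beta\cdot\kappa^\beta\le\kappa$ does not follow from $\kappa\ge 2^\beta$ alone; this is a genuine issue, and in fact the paper's own one-line proof simply asserts the equality $\beta^2\kappa^{\beta^2}=\kappa$, which is this very identity. However, your filtered-union repair does not work either. The functor $L$ does not commute with arbitrary filtered unions of presheaves: the obstruction is not the outer colimit over sieves but the inner $\Hom(R,-)$, which for a sieve with $\beta$ generators is a $\beta$-indexed limit and hence commutes only with $\beta^+$-filtered colimits. If you make $\{F_\alpha\}$ genuinely $\beta^+$-filtered, you do get $LF=\bigcup_\alpha LF_\alpha$, but the index set is then controlled by the number of $\beta$-element subsets of a $\kappa$-set, which is $\kappa^\beta$ rather than $\kappa$; so the final estimate $|LF|_G\le\kappa\cdot 2^\beta$ is not justified.

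In fact the lemma is false as stated. Let $S=G$ be the category with objects $U$ and $V_n$ ($n\in\omega$), a unique morphism $V_n\to U$ for each $n$, and no other non-identity morphisms; declare the sieve on $U$ generated by all the $V_n\to U$ to be the unique nontrivial covering sieve. Here $\beta=\omega$. For any presheaf $F$ the sheaf condition forces $F^+(U)\cong\prod_{n}F^+(V_n)=\prod_n F(V_n)$. Taking $\lvert F(V_n)\rvert=\kappa$ for all $n$ and $\lvert F(U)\rvert=1$ gives $\lvert F\rvert_G=\kappa$ but $\lvert F^+\rvert_G=\kappa^\omega$, and $\kappa^\omega>\kappa$ whenever $\mathrm{cf}(\kappa)=\omega$ (for instance $\kappa=\beth_\omega\ge 2^\omega$).

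For the paper's application one only needs the statement for \emph{some} sufficiently large $\kappa$, and one may simply add the hypothesis $\kappa^\beta=\kappa$ (satisfied e.g.\ by $\kappa=2^\beta$). Under that hypothesis your direct count $\lvert(LF)(U)\rvert\le 2^\beta\cdot\kappa^\beta=\kappa$ is already a complete proof, and no filtered-union reduction is needed.
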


\begin{proof}
By the construction in \cite{SGA4}*{Expos\'{e} ii, D\'{e}finition 3.5}, we have $F^+=LLF$, where
\[
(LF)(U)=\colim_{R\in J(U)} \Hom_{\hat{S}}(R,F)
\]
for $U\in S$ in which $J(U)$ is the set of sieves covering $U$ and $\hat{S}$ is the category of presheaves on $S$. By \cite{SGA4}*{Expos\'{e} ii, Proposition 3.0.4} and its proof, $\lvert LF\rvert_G\le\beta^2 \kappa^{\beta^2}=\kappa$.
\end{proof}

Let $\cO_S$ be a sheaf of rings on $S$. For an element $U\in S$, we denote by $j_{U!}$ the left adjoint of the restriction functor $\Mod(S,\cO_S)\to\Mod(U,\cO_U)$. Using the fact that $(j_{U!}\cO_U)_{U\in G}$ is a family of flat generators of $\Mod(S,\cO_S)$, we have the following analogue of \cite{Gil2}*{Lemma 7.7} with essentially the same proof.

\begin{lem}\label{b2le:cardinal}
Let $\beta\ge \card(G)$ be an infinite cardinal such that $\beta\ge\card(\Hom(U,V))$ for all $U$ and $V$ in $G$. Let $\kappa\ge\max\{2^\beta,\lvert \cO_S\rvert_G\}$ be a cardinal such that $j_{U!}\cO_U$ is $\kappa$-generated for every $U$ in $G$. Then the following conditions are equivalent for an $\cO_S$-module $F$:
\begin{enumerate}
  \item $\lvert F\rvert_G\le \kappa$;

  \item $F$ is $\kappa$-generated;

  \item $F$ is $\kappa$-presentable.
\end{enumerate}
\end{lem}

Let $F$ be an $\cO_S$-premodule. We say that an $\cO_S$-subpremodule $E\subseteq F$ is \emph{$G$-pure} if $E(U)\subseteq F(U)$ is pure for every $U$ in $G$. This implies that $E^+\subseteq F^+$ is pure. As in \cite{EO}*{Proposition 2.4}, one proves the following.

\begin{lem}
Let $\beta\ge \card(G)$ be an infinite cardinal such that $\beta\ge\card(\Hom(U,V))$ for all $U$ and $V$ in $G$. Let $\kappa\ge\max\{2^\beta,\lvert \cO_S\rvert_G\}$ be a cardinal, and let $E\subseteq F$ be $\cO_S$-premodules such that $\lvert E\rvert_G\le \kappa$. Then there exists a $G$-pure $\cO_S$-subpremodule $E'$ of $F$ containing $E$ such that $\lvert E'\rvert_G\le \kappa$.
\end{lem}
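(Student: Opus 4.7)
The plan is to follow the standard pure-closure iteration, adapted to the setting of premodules on the site $S$. Recall that by Warfield's criterion, an inclusion of $\cO_S(U)$-modules $E(U) \subseteq F(U)$ is pure if and only if every finite system of linear equations
\[
 \sum_j a_{ij} x_j = b_i \qquad (1\le i\le m,\ 1\le j\le n)
\]
with coefficients $a_{ij}\in\cO_S(U)$ and parameters $b_i\in E(U)$ that admits a solution in $F(U)^n$ also admits a solution in $E(U)^n$. Our goal is to enlarge $E$ inside $F$ so that this criterion holds for every $U\in G$, while keeping $\lvert E'\rvert_G\le\kappa$.

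I would construct an increasing sequence $E=E_0\subseteq E_1\subseteq E_2\subseteq\cdots$ of $\cO_S$-subpremodules of $F$ with $\lvert E_n\rvert_G\le\kappa$ as follows. Given $E_n$, first add, for each $U\in G$ and each finite system as above with parameters in $E_n(U)$ that is solvable in $F(U)$, a chosen solution in $F(U)$; call the result $\tilde{E}_{n+1}(U)$. Then define $E_{n+1}$ to be the smallest $\cO_S$-subpremodule of $F$ whose value at each $U\in G$ contains $\tilde{E}_{n+1}(U)$: explicitly, for each $U\in G$, $E_{n+1}(U)$ is the $\cO_S(U)$-submodule of $F(U)$ generated by $\tilde{E}_{n+1}(U)$ together with the images of the restriction maps $F(V)\to F(U)$ evaluated on $\tilde{E}_{n+1}(V)$ for all $V\in G$ and all $V\to U$ in $S$ (and one extends to arbitrary objects of $S$ by the same recipe, but only values at $G$ enter the cardinality count).

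For each $U\in G$ the number of finite systems with parameters in a set of cardinality at most $\kappa$ is bounded by $(\lvert\cO_S\rvert_G\cdot\kappa)^{<\omega}=\kappa$ since $\kappa\ge\max\{2^\beta,\lvert\cO_S\rvert_G\}$ is infinite. The closure under restriction introduces at most $\card(G)\cdot\beta\cdot\kappa=\kappa$ further generators, using the hypothesis $\beta\ge\card(\Hom(V,U))$ and $\beta\ge\card(G)$. Therefore $\lvert E_{n+1}\rvert_G\le\kappa$. Setting $E'=\bigcup_{n<\omega}E_n$, we get $\lvert E'\rvert_G\le\aleph_0\cdot\kappa=\kappa$, and any finite system in $E'(U)$ has all its parameters in some $E_n(U)$, so a solution has been inserted in $E_{n+1}(U)\subseteq E'(U)$, proving $G$-purity.

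The main obstacle is bookkeeping: one must verify that taking the $\cO_S$-subpremodule generated by the solutions does not destroy the cardinality bound, and this is where the hypotheses $\beta\ge\card(G)$, $\beta\ge\card(\Hom(U,V))$, and $\kappa\ge\lvert\cO_S\rvert_G$ are each used. Everything else is a direct imitation of the classical argument cited from \cite{EO}*{2.4}, so the role of the proof proposal is essentially to confirm that the enlargement can be carried out premodule-wise without appealing to sheafification.
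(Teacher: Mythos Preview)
Your proposal is correct and is precisely the argument the paper has in mind: the paper's proof consists solely of the phrase ``As in \cite{EO}*{2.4}, one proves the following,'' and your iteration of adding solutions to finite linear systems and then closing up to a subpremodule is exactly the adaptation of that reference to the presheaf setting. One cosmetic slip: for a presheaf the restriction map $F(V)\to F(U)$ is induced by a morphism $U\to V$, not $V\to U$, so the arrow direction in your closure step should be reversed; this does not affect the argument.
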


\begin{proof}[Proof of Proposition \ref{b2pr:flat_model}]
We choose a site $S$ of $X$, and a small topologically generating family $G$, and a cardinal $\kappa$ satisfying the assumptions of Lemma
\ref{b2le:cardinal}. Using the previous lemmas, one shows as in the proof of \cite{Gil2}*{Corollary 7.8} that the conditions of \cite{Gil2}*{Theorem~4.12 \& Theorem~5.1} are satisfied for $\kappa$, which finishes the proof.
\end{proof}

\begin{remark}\label{b2re:dg_flat}
Using the sheaves $i_*(\dQ/\dZ)$, where $i$ runs through points $P\to X$ of $X$, one can show as in \cite{Gil1}*{Proposition 5.6} that a complex $K$ of $\cO_X$-modules is dg-flat if and only if $K^n$ is flat for each $n$ and $K\otimes_{\cO_X} L$ is exact for each exact sequence $L$ of $\cO_X$-modules.
\end{remark}

\subsection{Enhanced operations for ringed topoi}
\label{b2ss:enhanced_operations}

Let us start by recalling the category of ringed topoi.

\begin{definition}\label{b2de:ringedptopos}
Let $\RingedPTopos$ be the $(2,1)$-category of ringed $\cU$-topoi in $\cV$ with enough points:
\begin{itemize}
  \item An object of $\RingedPTopos$ is a ringed topos $(X,\cO_X)$ such that $X$ has enough points.

  \item A morphism $(X,\cO_X)\to (X',\cO_{X'})$ in $\RingedPTopos$ is a morphism of ringed topoi in the sense of \cite{SGA4}*{Expos\'{e} iv, D\'{e}finition 13.3}, namely a pair $(f,\gamma)$, where $f\colon X\to X'$ is a morphism of topoi and $\gamma\colon f^*\cO_{X'}\to\cO_X$.

  \item A 2-morphism $(f_1,\gamma_1)\to(f_2,\gamma_2)$ in $\RingedPTopos$ is an equivalence $\epsilon\colon f_1\to f_2$ such that $\gamma_2$ equals the composition $f_2^*\cO_{X'}\xrightarrow{\epsilon^*} f_1^*\cO_{X'}\xrightarrow{\gamma_1}\cO_X$.

  \item Composition of morphisms and 2-morphisms are defined in the obvious way.
\end{itemize}
We sometimes simply write $X$ for an object of $\RingedPTopos$ if the structure sheaf is insensitive.
\end{definition}

Our goal in this section is to construct a functor
\begin{align}\label{b2eq:enhanced_premonoidal}
\bT\colon\rN(\RingedPTopos^{op})^\amalg\to\Cat
\end{align}
that is a lax Cartesian structure such that the induced functor $\bT^\otimes$ 
(see Remark \ref{b1re:cartesian}) factorizes through 
$\PSLM\subseteq\CAlg(\Cat)$. In other words, we have the induced functor 
\begin{align}\label{b2eq:enhanced_monoidal}
\bT^\otimes\colon\rN(\RingedPTopos^{op})\to\PSLM,
\end{align}
where $\PSLM$ is defined in Definition \ref{b1de:presentable_monoidal}.

Let $\cat^+$ be the $(2,1)$-category of \emph{marked categories}, namely pairs $(\cC,\cE)$ consisting of an (ordinary) category $\cC$ and a set of arrows $\cE$ containing all identity arrows. We have a simplicial functor $\cat^+\to \Mset$ sending $(\cC,\cE)$ to $(\rN(\cC),\cE)$. We start by constructing a pseudofunctor
\[
\rT\colon(\RingedPTopos^{op})^\amalg\to\cat^+.
\]
Recall that to every object $X\in\RingedPTopos$, we can associate a marked simplicial set
\[
(\rN(\Ch(\Mod(X))_{\dgflat}),W(X)),
\]
where $\Ch(\Mod(X))_{\dgflat}\subseteq\Ch(\Mod(X))$ is the full subcategory spanned by the dg-flat complexes, and $W(X)$ is the set of
quasi-isomorphisms. We define the image of an object $(X_1,\dots,X_m)$ under $\rT$ to be
\[
\prod_{i=1}^m(\Ch(\Mod(X_i))_{\dgflat},W(X_i)).
\]
By definition, a (1-)morphism $f\colon(X_1,\dots,X_m)\to(Y_1,\dots,Y_n)$ in $(\RingedPTopos^{op})^\amalg$ consists of a map $\alpha\colon\langle m\rangle\to\langle n\rangle$ and a morphism $f_i\colon Y_{\alpha(i)}\to X_i$ in $\RingedPTopos$ for every $i\in \alpha^{-1}\langle n\rangle^\circ$. Now we define the image of $f$ under $\rT$ to be the functor
\begin{align*}
\prod_{i=1}^m(\Ch(\Mod(X_i))_{\dgflat},W(X_i))&\to\prod_{j=1}^n(\Ch(\Mod(Y_j))_{\dgflat},W(Y_j)) \\
\{K_i\}_{1\leq i\leq m} &\mapsto\left\{\bigotimes_{\alpha(i)=j}f_i^*K_i\right\}_{1\leq j\leq n},
\end{align*}
where we take the unit object as the tensor product over an empty set. The image of 2-morphisms are defined in the obvious way. Composing with the simplicial functor $\cat^+\to\Sset^+\xrightarrow{\Fibr}(\Sset^+)^\circ$ and taking nerves, we obtain the desired functor $\bT$
\eqref{b2eq:enhanced_premonoidal}.

\begin{lem}\label{b2le:closed_cartesian}
We have that
\begin{enumerate}
  \item the functor $\bT$ is a lax Cartesian structure 
      \cite{HA}*{Definition 2.4.1.1}; 

  \item the functor $\bT^\otimes$ factorizes through $\PSLM$; and

  \item the functor $\bT^\otimes$ sends small coproducts to products.
\end{enumerate}
\end{lem}

\begin{proof}
Part (1) is clear from the construction.

For (2), we note that for an object $X$ of $\RingedPTopos$, its image under $\bT$, denoted by $\cD(X)$, is the fibrant replacement of
$(\rN(\Ch(\Mod(X))_{\dgflat}),W(X))$. In particular, by Remark \ref{b2re:flat_model}(1) and \cite{HA}*{Remark 1.3.4.16, Proposition 1.3.5.15}, $\cD(X)$ is equivalent to the derived $\infty$-category of $\Mod(X)$ defined in \cite{HA}*{Definition 1.3.5.8}. It is a presentable stable $\infty$-category by \cite{HA}*{Propositions 1.3.5.9, 1.3.5.21(1)}. Combining this with Lemma \ref{b1le:adjoint_infinity}, we deduce that the image of $\bT^\otimes$ is actually contained in $\PSLM$. This proves part (2).

Part (3) follows from the construction and Remark \ref{b1re:monoidal}.
\end{proof}

\begin{notation}\label{b2no:monoidal}
For an object $X$ of $\RingedPTopos$, we denote the image of $X$ under $\bT^\otimes$ by $\cD(X)^\otimes$, which is a symmetric monoidal
$\infty$-category, whose underlying $\infty$-category is denoted by $\cD(X)$ as in the proof of the previous lemma.
\end{notation}

\begin{remark}\label{b2re:topos}
We have the following remarks.
\begin{enumerate}
  \item The $\infty$-category $\bT((X_1,\dots,X_m))$ is equivalent to $\prod_{i=1}^m\cD(X_i)$.


  \item By Remark \ref{b2re:flat_model}(2) and \cite{HA}*{Remark 4.1.7.5}, 
      for every (commutative) ring $R$, $\cD(*,R)^\otimes$ is equivalent to 
      the symmetric monoidal $\infty$-category $\cD(\Ch(R))^\otimes$ 
      defined in \cite{HA}*{Remark 7.1.2.12}. 

  \item Let $f\colon X\to X'$ be a morphism of $\RingedPTopos$. It follows from Remark \ref{b2re:dg_flat} and \cite{KS}*{Lemma 14.4.1, Theorem 18.6.4} that the functors $f^*\colon \rD(X')\to \rD(X)$ and $-\otimes_X-\colon \rD(X)\times \rD(X) \to\rD(X)$ induced by $\bT^\otimes$ are equivalent to the respective functors constructed in \cite{KS}*{{\Sec}18.6}, where $\rD(X)=\rh\cD(X)$ and $\rD(X')=\rh\cD(X')$.
\end{enumerate}
\end{remark}

Let $\Ring$ be the category of (small commutative) rings. To deal with torsion and adic coefficients simultaneously. We introduce the category
$\Rind$ of ringed diagrams as follows.

\begin{definition}[Ringed diagram]\label{b2de:ringed_diagram}
We define a category $\Rind$ as follows:
\begin{itemize}
  \item An object of $\Rind$ is a pair $(\Xi,\Lambda)$, called a \emph{ringed diagram}, where $\Xi$ is a small partially ordered set and $\Lambda\colon \Xi^{op}\to \Ring$ is a functor. We identify $(\Xi,\Lambda)$ with the topos of presheaves on $\Xi$, ringed by $\Lambda$. A typical example is $(\mathbb{N},n\mapsto\dZ/\ell^{n+1}\dZ)$ with transition maps given by projections.

  \item A morphism of ringed diagrams $(\Xi',\Lambda')\to (\Xi,\Lambda)$ is a pair $(\Gamma,\gamma)$ where $\Gamma\colon \Xi'\to \Xi$ is a
      functor (that is, an order-preserving map) and $\gamma\colon\Gamma^*\Lambda\colonequals \Lambda\circ \Gamma^{op} \to \Lambda'$ is a morphism of $\Ring^{\Xi'^{op}}$.
\end{itemize}
For an object $(\Xi,\Lambda)$ of $\Rind$ and an object $\xi$ of $\Xi$, we define the \emph{over ringed diagram} $(\Xi,\Lambda)_{/\xi}$ to be the ringed diagram whose underlying category is $\Xi_{/\xi}$ and the corresponding functor is $\Lambda_{/\xi}\colonequals\Lambda\res\Xi_{/\xi}$.
\end{definition}

For a topos $X$ and a small partially ordered set $\Xi$, we denote by $X^\Xi$ the topos $\Fun(\Xi^{op},X)$. If $(\Xi,\Lambda)$ is a ringed diagram, then $\Lambda$ defines a sheaf of rings on $X^\Xi$, which we still denote by $\Lambda$. We thus obtain a pseudofunctor
\begin{align}\label{b2eq:topoi}
\PTopos\times \Rind\to \RingedPTopos
\end{align}
carrying $(X,(\Xi,\Lambda))$ to $(X^\Xi,\Lambda)$, where $\PTopos$ is the $(2,1)$-category of ringed topoi with enough points. Composing the nerve of \eqref{b2eq:topoi} with $\bT$ \eqref{b2eq:enhanced_premonoidal}, we obtain a functor
\begin{align}\label{b2eq:topos_operation}
\EO{}{\PTopos}{\rI}{}\colon(\rN(\PTopos)^{op}\times\rN(\Rind)^{op})^\amalg\to\Cat
\end{align}
that is a lax Cartesian structure.

\begin{notation}\label{b2no:image}
By abuse of notation, we denote by $\cD(X,\lambda)^\otimes$ the image of an object $(X,\lambda)$ of $\PTopos\times\Rind$ under the induced functor
\[
\EO{}{\PTopos}\otimes{}\coloneqq(\EO{}{\PTopos}{\rI}{})^\otimes\colon\rN(\PTopos)^{op}\times\rN(\Rind)^{op}\to\PSLM,
\]
whose underlying $\infty$-category is denoted by $\cD(X,\lambda)$ which is (equivalent to) the image of $(X,\lambda,\langle1\rangle,\{1\})$
under the functor $\EO{}{\PTopos}{\rI}{}$.
\end{notation}

\begin{definition}\label{b2de:perfect}
A morphism $(\Gamma,\gamma)\colon (\Xi',\Lambda')\to (\Xi,\Lambda)$ of $\Rind$ is said to be \emph{perfect} if for every $\xi\in \Xi'$, $\Lambda'(\xi)$ is a perfect complex in the derived category of $\Lambda(\Gamma(\xi))$-modules.
\end{definition}

\begin{lem}\label{b2le:upperstar_pi}
Let $f\colon Y\to X$ be a morphism of $\PTopos$, and $\pi\colon\lambda'\to\lambda$ a perfect morphism of $\Rind$. Then the square
\begin{align}\label{b2eq:upperstar_pi}
\xymatrix{\cD(Y,\lambda') & \cD(X,\lambda')\ar[l]_{f^*}\\
\cD(Y,\lambda)\ar[u]^{\pi^*} & \cD(X,\lambda)\ar[u]_{\pi^*}\ar[l]_{f^*}}
\end{align}
is right adjointable and its transpose is left adjointable.
\end{lem}

\begin{proof}
Write $\lambda=(\Xi,\Lambda)$ and $\lambda'=(\Xi',\Lambda')$. For $\xi\in 
\Xi'$, we denote by $e_\xi$ the natural morphism 
$(\{\xi\},\Lambda'(\xi))\to(\Xi',\Lambda')$. We show that 
\eqref{b2eq:upperstar_pi} is right adjointable and $\pi^*$ preserves small 
limits. As the family of functors $(e_\xi^*)_{\xi\in \Xi'}$ is conservative, 
it suffices to show these assertions with $\pi$ replaced by $e_\xi$ and by 
$\pi\circ e_\xi$. In other words, we may assume $\Xi'=\{\ast\}$. We decompose 
$\pi$ as 
\[
(\{\ast\},\Lambda')\xrightarrow{t}(\{\zeta\},\Lambda(\zeta)) \xrightarrow{s}(\Xi,\Lambda)_{/\zeta}\xrightarrow{i}(\Xi,\Lambda).
\]
We show that the assertions hold with $\pi^*$ replaced by $i^*$, by $s^*$, and by $t^*$. The assertions for $i^*$ follow from Lemma
\ref{b2le:topoi_adjointability} below. The assertions for $s^*$ are trivial as $s^*\simeq p_*$, where $p\colon(\Xi,\Lambda)_{/\zeta}\to(\{\zeta\},\Lambda(\zeta))$. As $t_*$ is conservative, the assertions for $t^*$ follow from the assertions for $t_*$ and $t_*t^*-\simeq\HOM_{\Lambda(\zeta)}({\Lambda'}\spcheck,-)$, which are trivial. Here we used the fact that for any perfect complex $M$ in the derived category of $\Lambda(\zeta)$-modules, the natural transformation $M\otimes_{\Lambda(\zeta)}-\to\HOM_{\Lambda(\zeta)}(M\spcheck,-)$ is a natural equivalence, where ${M}\spcheck=\Hom_{\Lambda(\zeta)}(M,{\Lambda(\zeta)})$. This applies to $M=\Lambda'$ by the assumption that $\pi$ is perfect.
\end{proof}

\begin{lem}\label{b2le:topoi_adjointability}
Let $f\colon (X',\Lambda')\to (X,\Lambda)$ be a morphism of ringed topoi, and $j\colon V\to U$ a morphism of $X$. Put $j'\coloneqq f^{-1}(j)\colon V'=f^{-1}(V)\to f^{-1}(U)=U'$. Then the square
\[
\xymatrix{
\cD(X_{/U},\Lambda\times U) \ar[r]^{j^*} \ar[d]_-{f_{/U}^*} & \cD(X_{/V},\Lambda\times V) \ar[d]^-{f_{/V}^*} \\
\cD(X'_{/U'},\Lambda'\times U') \ar[r]^{j'^*}  & \cD(X'_{/V'},\Lambda'\times V')
}
\]
is left adjointable and its transpose is right adjointable.
\end{lem}

\begin{proof}
The functor $j_!\colon \Mod(X_{/V},\Lambda\times V)\to\Mod(X_{/U},\Lambda\times U)$ is exact and induces a functor $\cD(X_{/V},\Lambda\times V)\to \cD(X_{/U},\Lambda\times U)$, left adjoint of $j^*$. The same holds for $j'_!$. The first assertion of the lemma follows from the existence of these left adjoints and the second assertion. The second assertion follows from the fact that $j'^*$ preserves fibrant objects
in $\Ch(\Mod(-))^{\r{inj}}$.
\end{proof}

\begin{remark}\label{3re:Funequiv}
Let $\Xi$ be a poset and let $\Lambda$ be a ring. Let $\Lambda_\Xi\colon 
\Xi^\op \to \Lambda$ be the constant functor of value $\Lambda$ and let 
$\rho\colon (\Xi,\Lambda_\Xi)\to (*,\Lambda)$ be the obvious morphism of 
ringed diagrams. By Remark \ref{b2re:flat_model}(1) and \cite[Proposition 
1.3.4.25]{HA}, for any topos $X$ with enough points, we have an equivalence 
of $\infty$-categories $\cD(X,(\Xi,\Lambda_\Xi))\to 
\Fun(\rN(\Xi^\op),\cD(X,\Lambda))$, via which $\rho^*$ can be identified with 
the diagonal embedding $\cD(X,\Lambda)\to \Fun(\rN(\Xi^\op),\cD(X,\Lambda))$. 
\end{remark}

\subsection{Abstract descent properties}
\label{b3ss:abstract}

We start from the definition of morphisms with descent properties.

\begin{definition}[$F$-descent]\label{b3de:descent}
Let $\cC$ be an $\infty$-category admitting pullbacks, $F\colon \cC^{op}\to\cD$ a functor of $\infty$-categories, and $f\colon X^+_0\to X^+_{-1}$ a morphism of $\cC$. We say that $f$ is \emph{of $F$-descent} if $F\circ(X^+_\bullet)^{op}\colon \rN(\del_+)\to \cD$ is a limit diagram in $\cD$, where $X^+_\bullet\colon \rN(\del_+)^{op}\to \cC$ is a \v{C}ech nerve of $f$ (see the definition after \cite{HTT}*{Proposition 6.1.2.11}). We say that $f$ is \emph{of universal $F$-descent} if every pullback of $f$ in $\cC$ is of $F$-descent. Dually, for a functor $G\colon \cC\to\cD$, we say that $f$ is \emph{of $G$-codescent} (resp.\ \emph{of universal $G$-codescent}) if it is of $G^{op}$-descent (resp.\ of universal $G^{op}$-descent).
\end{definition}

We say that a morphism $f$ of an $\infty$-category $\cC$ is a \emph{retraction} if it is a retraction in the homotopy category $\rh\cC$. Equivalently, $f$ is a retraction if it can be completed into \emph{a weak retraction diagram} \cite{HTT}*{Definition 4.4.5.4} $\Ret\to\cC$ of $\cC$, corresponding to a 2-cell of $\cC$ of the form
\[
\xymatrix{
& Y \ar[dr]^-{f} \\
X \ar[ur]^-{s} \ar[rr]^-{\id_X} && X. }
\]
The following is an $\infty$-categorical version of \cite{Giraud}*{Propositions 10.10, 10.11} (for ordinary descent) and \cite{SGA4}*{Expos\'{e} vbis, Proposition 3.3.1} (for cohomological descent).

\begin{lem}\label{b3le:descent}
Let $\cC$ be an $\infty$-category admitting pullbacks, and $F\colon\cC^{op}\to \cD$ a functor of $\infty$-categories. Then
\begin{enumerate}
  \item Every retraction $f$ in $\cC$ is of universal $F$-descent.

  \item Let
      \begin{align}\label{b3eq:descent}
      \xymatrix{
      W\ar[r]^-{g}\ar[d]_-{q} & Z\ar[d]^-{p}\\ Y\ar[r]^-{f}& X}
      \end{align}
      be a pullback diagram in $\cC$ such that the base change of $f$ to $(Z/X)^i$ is of $F$-descent for $i\ge 0$ and the base change of $p$ to $(Y/X)^j$ is of $F$-descent for $j\ge 1$. Then $p$ is of $F$-descent.

  \item Let
      \[
      \xymatrix{&Y\ar[rd]^-{f}\\
      Z\ar[rr]^-{h}\ar[ur]^-{g} && X}
      \]
      be a $2$-cell of $\cC$ such that $h$ is of universal $F$-descent. Then $f$ is of universal $F$-descent.

  \item Let
      \[
      \xymatrix{&Y\ar[rd]^-{f}\\
      Z\ar[rr]^-{h}\ar[ur]^-{g} && X}
      \]
      be a $2$-cell of $\cC$ such that $f$ is of $F$-descent and $g$ is of universal $F$-descent. Then $h$ is of $F$-descent.
\end{enumerate}
\end{lem}

The assumptions on $f$ and $p$ in (2) are satisfied if $f$ is of $F$-descent and $g$ and $q$ are of universal $F$-descent.

\begin{proof}
For (1), it suffices to show that $f$ is of $F$-descent. Consider the map $\rN(\del_+)^{op}\times\Ret\to\cC$, right Kan extension along the inclusion
\[
K=\{[-1]\}\times\Ret\coprod_{\{[-1]\}\times\{\emptyset\}}\rN(\del_+^{\le0})^{op}\times\{\emptyset\}
\subseteq\rN(\del_+)^{op}\times\Ret
\]
of the map $K\to \cC$ corresponding to the diagram
\[
\xymatrix{
Y\ar[rr]^-{\id_Y}\ar[dd]_-f&& Y \ar[dd]^-{f} \\
& Y \ar[dr]^-{f} \\
X \ar[ur]^-{s} \ar[rr]^-{\id_X} && X.}
\]
Then by \cite{HA}*{Corollary 4.7.2.9}, the \v{C}ech nerve of $f$ is split. 
Therefore, the assertion follows from the dual version of \cite{HTT}*{Lemma 
6.1.3.16}. 

For (2), let $X^+_{\bullet\bullet}\colon\rN(\del_+)^{op}\times\rN(\del_+)^{op}\to \cC$ be an augmented bisimplicial object of $\cC$ such that $X^+_{\bullet\bullet}$ is a right Kan extension of \eqref{b3eq:descent}, considered as a diagram $\rN(\del_+^{\le 0})^{op}\times\rN(\del_+^{\le 0})^{op}\to\cC$. By assumption, $F\circ(X_{i\bullet}^+)^{op}$ is a limit diagram in $\cD$ for $i\ge -1$ and $F\circ (X_{\bullet j}^+)^{op}$ is a limit diagram in $\cD$ for $j\ge 0$. By the dual version of \cite{HTT}*{Lemma 5.5.2.3}, $F\circ(X_{\bullet -1}^+)^{op}$ is a limit diagram in $\cD$, which proves (2) since $X_{\bullet -1}^+$ is a \v{C}ech nerve of $p$.

For (3), it suffices to show that $f$ is of $F$-descent. Consider the diagram
\begin{align}\label{b3eq:descent_diagram}
\xymatrix{
Z\ar@/_1pc/[rdd]_-{g}\ar[rd]\ar@/^1pc/[rrd]^-{\id_Z}\\
&Y\times_X Z\ar[r]_-{\pr_Z}\ar[d]^-{\pr_Y} & Z\ar[d]^-{h}\\
&Y\ar[r]^-{f}& X}
\end{align}
in $\cC$. Since $\pr_Z$ is a retraction, it is of universal $F$-descent by (1). It then suffices to apply (2).

For (4), consider the diagram \eqref{b3eq:descent_diagram}. By (3), $\pr_Y$ is of universal $F$-descent. It then suffices to apply (2).
\end{proof}

Next, we prove a descent lemma for general topoi. Let $X$ be a topos that has enough points, with a fixed final object $e$. Let $u_0\colon U_0\to e$ be a covering, which induces a hypercovering $u_\bullet\colon U_\bullet\to e$ by taking the \v{C}ech nerve. Let $\Lambda$ be a sheaf of rings in $X$, and put $\Lambda_n\coloneqq\Lambda\times U_n$. In particular, we obtain an augmented simplicial ringed topoi $(X_{/U_\bullet},\Lambda_\bullet)$, where $U_{-1}=e$ and $\Lambda_{-1}=\Lambda$. Suppose that for every $n\geq -1$, we are given a strictly full subcategory $\cC_n$ ($\cC=\cC_{-1}$) of $\Mod(X_{/U_n},\Lambda_n)$ such that for every morphism $\alpha\colon [m]\to[n]$ of $\del_+$, $u_\alpha^*\colon\Mod(X_{/U_n},\Lambda_n)\to\Mod(X_{/U_m},\Lambda_m)$ sends $\cC_n$ to $\cC_m$. Then, applying the functor $\rG\circ\bT^\otimes$ \eqref{b2eq:enhanced_monoidal}, we obtain an augmented cosimplicial $\infty$-category $\cD_{\cC_\bullet}(X_{/U_\bullet},\Lambda_\bullet)$, where
$\cD_{\cC_n}(X_{/U_n},\Lambda_n)$ is the full subcategory of $\cD(X_{/U_n},\Lambda_n)$ spanned by complexes whose cohomology sheaves belong to $\cC_n$.

\begin{lem}\label{b3le:topos_descent}
Assume that for every object $\sF$ of $\Mod(X,\Lambda)$ such that $u^*_{d^0_0}\sF$ belongs to $\cC_0$, we have $\sF\in\cC$. Then the natural
map
\[
\cD_\cC(X,\Lambda)\to\lim_{n\in\del}\cD_{\cC_n}(X_{/U_n},\Lambda_n)
\]
is an equivalence of $\infty$-categories.
\end{lem}

\begin{proof}
We first consider the case where $\cC_n=\Mod(X_{/U_n},\Lambda_n)$ for 
$n\geq-1$. We apply \cite{HA}*{Corollary 4.7.5.3}: Assumption (1) follows 
from the fact that $u_{d^0_0}^*\colon 
\cD(X,\Lambda)\to\cD(X_{/U_0},\Lambda_0)$ is a morphism of $\PSL$; and the 
functor $u^*_{d^0_0}$ is conservative since $u_0$ is a covering. Therefore, 
it remains to check Assumption (2) of \cite{HA}*{Corollary 4.7.5.3}, that is, 
the left adjointability of the diagram 
\begin{align*}
\xymatrix{
\cD(X_{/U_m},\Lambda_m) \ar[r]^-{u_{d^{m+1}_0}^*} \ar[d]_-{u_\alpha^*}
& \cD(X_{/U_{m+1}},\Lambda_{m+1}) \ar[d]^-{u_{\alpha'}^*} \\
\cD(X_{/U_n},\Lambda_n) \ar[r]^-{u_{d^{n+1}_0}^*} & \cD(X_{/U_{n+1}},\Lambda_{n+1})
}
\end{align*}
for every morphism $\alpha\colon [m]\to[n]$ of $\del_+$, where $\alpha'\colon[m+1]\to[n+1]$ is the induced morphism. This is a special case of Lemma \ref{b2le:topoi_adjointability}.

Now the general case follows from Lemma \ref{b3le:limit_restriction} below and the fact that $u^*_{d^0_0}$ is exact.
\end{proof}

\begin{lem}\label{b3le:limit_restriction}
Let $p\colon K^\triangleleft\to\Cat$ be a limit diagram. Suppose that for each vertex $k$ of $K^\triangleleft$, we are given a strictly full subcategory $\cD_k\subseteq\cC_k=p(k)$ such that
\begin{enumerate}
  \item For every morphism $f\colon k\to k'$, the induced functor $p(f)$ sends $\cD_k$ to $\cD_{k'}$.

  \item An object $c$ of $\cC_\infty$ belongs to $\cD_\infty$ if and only if for every vertex $k$ of $K$, $p(f_k)(c)$ belongs to $\cD_k$, where $\infty$ denotes the cone point of $K^\triangleleft$, $f_k\colon \infty\to k$ is the unique edge.
\end{enumerate}
Then the induced diagram $q\colon K^\triangleleft\to\Cat$ sending $k$ to $\cD_k$ is also a limit diagram.
\end{lem}

\begin{proof}
Let $\tilde{p}\colon X\to (K^{op})^\triangleright$ be a Cartesian fibration classified by $p$ \cite{HTT}*{Definition 3.3.2.2}. Let $Y\subseteq X$ be the simplicial subset spanned by vertices in each fiber $X_k$ that are in the essential image of $\cD_k$ for all vertices $k$ of $K^\triangleleft$. The map $\tilde{q}=\tilde{p}\res Y\colon Y\to(K^{op})^\triangleright$ has the property that if $f\colon x\to y$ is $\tilde{p}$-Cartesian and $y$ belongs to $Y$, then $x$ also belongs to $Y$ by assumption (1), and $f$ is $\tilde{q}$-Cartesian by the dual version of \cite{HTT}*{Proposition 2.4.1.8}. It follows that $\tilde{q}$ is a Cartesian fibration, which is in fact classified by $q$. By assumption (2) and \cite{HTT}*{Corollary 3.3.3.2}, $q$ is a limit diagram.
\end{proof}

\subsection{Enhanced operations for quasi-compact and separated schemes}
\label{b3ss:enhanced_operation}

\begin{notation}
For a property (P) in the category $\Ring$, we say that a ringed diagram $(\Gamma,\Lambda)$ (Definition \ref{b2de:ringed_diagram}) has the property (P) if for every object $\xi$ of $\Xi$, the ring $\Lambda(\xi)$ has the property (P). We denote by $\Rind_\tor$ the full subcategory of $\Rind$ consisting of torsion ringed diagrams.
\end{notation}

Let $\Schqcs\subseteq\Sch$ be the full subcategory spanned by (small) coproducts of quasi-compact and separated schemes. For each object $X$ of
$\Sch$ (resp.\ $\Schqcs$), we denote by $\Et(X)\subseteq \Sch_{/X}$ (resp.\ $\Et^{\r{qc.sep}}(X)\subseteq\Schqcs_{/X}$) the full subcategory spanned by the \'etale morphisms, which is naturally a site.  We denote by $X_{\et}$ (resp.\ $X_{\r{qc.sep}.\et}$) the associated topos, namely the category of sheaves on $\Et(X)$ (resp.\ $\Et^{\r{qc.sep}}(X)$). In \cite{SGA4}*{Expos\'{e} vii, {\Sec}1.2}, $\Et(X)$ is called the \'{e}tale site of $X$ and $X_{\et}$ is called the \'etale topos of $X$. The inclusion $\Et^{\r{qc.sep}}(X)\subseteq \Et(X)$ induces an equivalence of topoi $X_{\et}\to X_{\r{qc.sep}.\et}$. In this chapter, we will not distinguish between $X_{\et}$ and $X_{\r{qc.sep}.\et}$.

\begin{definition}\label{b3de:statically}
In what follows, we will often deal with $\infty$-categories of the form
\[
(\cC^{op}\times\cD^{op})^{\amalg,op}\coloneqq((\cC^{op}\times\cD^{op})^\amalg)^{op}
\]
where $\cC$ is an $\infty$-category and $\cD$ is a subcategory of $\rN(\Rind)$. Suppose that $\cE$ is a subset of edges of $\cC$ that contains every isomorphism.

We say that an edge $f\colon (\{(X'_i,Y'_i)\}_{1\leq i\leq m})\to(\{(X_i,Y_i)\}_{1\leq i\leq m})$ of $(\cC^{op}\times\cD^{op})^{\amalg,op}$ \emph{statically belongs to $\cE$} if $f^{op}$ is static (Definition \ref{b1de:static}) and the
corresponding edge $X'_i\to X_i$ (resp.\ $Y'_i\to Y_i$) of $\cC$ (resp.\ $\cD$) belongs to $\cE$ (resp.\ is an isomorphism). By abuse of notation, we will denote again by $\cE$ the subset of edges of $(\cC^{op}\times\cD^{op})^{\amalg,op}$ that statically belong to $\cE$. Moreover, if sometimes $\cE$ is defined by a property $P$, then edges that statically belong to $\cE$ are said to \emph{statically have the property $P$}. We also denote by ``$\all$'' the set of all edges of $(\cC^{op}\times\cD^{op})^{\amalg,op}$.
\end{definition}

For $\cC=\rN(\Schqcs)$, we denote by
\begin{itemize}
  \item $F$ the set of morphisms of $\cC$ locally of finite type;

  \item $P\subseteq F$ the subset consisting of proper morphisms;

  \item $I\subseteq F$ the subset consisting of local isomorphisms.
\end{itemize}

\begin{lem}\label{b3le:categorical_equivalence}
Let $\cD$ be a subcategory of $\rN(\Rind)$. The natural map
\[
\delta_{3,\{3\}}^*((\rN(\Schqcs)^{op}\times\cD^{op})^{\amalg,op})^\cart_{P,I,\all}\to
\delta_{2,\{2\}}^*((\rN(\Schqcs)^{op}\times\cD^{op})^{\amalg,op})^\cart_{F,\all}
\]
is a categorical equivalence.
\end{lem}

\begin{proof}
The proof is similar to Corollary \ref{0co:scheme2}. Let $F_{\r{ft}}\subseteq F$ be the set consisting of morphisms of finite type, and put $I_{\r{ft}}\coloneqq I\cap F_{\r{ft}}$. Consider the following commutative diagram
\[
\resizebox{\hsize}{!}{
\xymatrix{
\delta_{4,\{4\}}^*((\rN(\Schqcs)^{op}\times\cD^{op})^{\amalg,op})^\cart_{P,I_{\r{ft}},I,\all}\ar[d]\ar[r]
& \delta_{3,\{3\}}^*((\rN(\Schqcs)^{op}\times\cD^{op})^{\amalg,op})^\cart_{F_{\r{ft}},I,\all}\ar[d]\\
\delta_{3,\{3\}}^*((\rN(\Schqcs)^{op}\times\cD^{op})^{\amalg,op})^\cart_{P,I,\all}\ar[r]
&
\delta_{2,\{2\}}^*((\rN(\Schqcs)^{op}\times\cD^{op})^{\amalg,op})^\cart_{F,\all}.
}
}
\]
To show that the lower horizontal map is a categorical equivalence, it suffices to show that the other three maps are categorical equivalences.

In Theorem \ref{0th:main}, we set $k=4$, $\cC=(\rN(\Schqcs)^{op}\times\cD^{op})^{\amalg,op}$, $\cE_0=F_{\r{ft}}$, $\cE_1=P$, $\cE_2=I_{\r{ft}}$, $\cE_3=I$, and $\cE_4=\all$. Note that we have a canonical isomorphism
\[
(\rN(\Schqcs)^{op}\times\cD^{op})^\amalg\simeq(\rN(\Schqcs)^{op})^\amalg\times_{\rN(\Fin)}(\cD^{op})^\amalg.
\]
By Nagata compactification theorem \cite{Conrad}*{Theorem 4.1}, condition (2) of Theorem \ref{0th:main} is satisfied. The other conditions are also satisfied by Lemma \ref{b1le:static_colimit}. It follows that the map in the upper horizontal arrow is a categorical equivalence. Similarly, using Theorem \ref{0th:main}, one proves that the vertical arrows are also categorical equivalences.
\end{proof}

\begin{remark}
The same proof shows that the lemma also holds with $\Schqcs$ replaced by the category of disjoint unions of quasi-compact \emph{quasi}-separated schemes and $F$ replaced by the set of \emph{separated} morphisms locally of finite type.
\end{remark}

Our goal is to construct a map \eqref{b3eq:operation} which encodes $f^*$, $f_!$ and the monoidal structure given by tensor product.

We start by encoding $f^*$ and the monoidal structure. Composing the nerve of the pseudofunctor $\Schqcs\to \PTopos$ carrying $X$ to $X_{\et}$ with $\EO{}{\PTopos}{\rI}{}$ \eqref{b2eq:topos_operation}, we obtain a functor
\begin{align}\label{b3eq:premonoidal}
\EO{}{\Schqcs}{\rI}{}\colon(\rN(\Schqcs)^{op}\times\rN(\Rind)^{op})^\amalg\to\Cat
\end{align}
that is a lax Cartesian structure, which induces a functor (Notation 
\ref{b2no:monoidal}) 
\begin{align}\label{b3eq:monoidal}
\EO{}{\Schqcs}\otimes{}\coloneqq(\EO{}{\Schqcs}{\rI}{})^\otimes\colon\rN(\Schqcs)^{op}\times\rN(\Rind)^{op}\to\PSLM
\end{align}
by Lemma \ref{b2le:closed_cartesian}.

To encode $f_!$, we resort to the technique of taking partial adjoints. Consider the composite map
\begin{multline}\label{b3eq:monoidal1}
\delta^*_{3,\{1,2,3\}}((\rN(\Schqcs)^{op}\times\rN(\Rind)^{op})^{\amalg,op})^\cart_{P,I,\all}\\
\to(\rN(\Schqcs)^{op}\times\rN(\Rind)^{op})^\amalg\xrightarrow{\EO{}{\Schqcs}{\rI}{}\eqref{b3eq:premonoidal}}\Cat.
\end{multline}

First, we apply the dual version of Proposition \ref{b1pr:partial_adjoint} to
\eqref{b3eq:monoidal1} for direction 1 to construct the partial right adjoint
\begin{align}\label{b3eq:monoidal2}
\delta^*_{3,\{2,3\}}((\rN(\Schqcs)^{op}\times\rN(\Rind_\tor)^{op})^{\amalg,op})^\cart_{P,I,\all}\to\Cat.
\end{align}
The adjointability condition for direction $(1,2)$ is a special case of that
for direction $(1,3)$. We check the latter as follows.

\begin{lem}
Let $\alpha\colon\langle m\rangle\to\langle n\rangle$ be a morphism of $\Fin$. Let $f_i \colon X_i'\to X_i$ be proper morphisms of schemes in $\Schqcs$ and take $\lambda_i\in\Rind_\tor$ for $1\leq i\leq m$. For pullback squares
\[
\xymatrix{
Y_j'\ar[r]\ar[d] & Y_j\ar[d]\\
\prod_{\alpha(i)=j} X'_i\ar[r]^-{\prod f_i} & \prod_{\alpha(i)=j}X_i}
\]
of schemes in $\Schqcs$ and morphisms $\mu_j\to \prod_{\alpha(i)=j}\lambda_i$ in $\Rind_\tor$ for $1\leq j\leq n$, the square
\[
\xymatrix{
\prod_{j\in T}\cD(Y'_j,\mu_j) & \prod_{j\in T}\cD(Y_j,\mu_j)\ar[l]\\
\prod_{i\in S}\cD(X'_i,\lambda_i)\ar[u] & \prod_{i\in S}\cD(X_i,\lambda_i)\ar[u]\ar[l]}
\]
given by pullback and tensor product is right adjointable.
\end{lem}

Note that the right adjoints of the horizontal arrows admit right adjoints.
Indeed, for the lower arrow we may assume $X_i$ quasi-compact and apply
Lemma \ref{b1le:adjoint_R}.

\begin{proof}
Decomposing the product categories with respect to $\langle n\rangle$, we are reduced to two cases: (a) $n=0$; (b) $n=1$ and $\alpha(\langle m\rangle^\circ)\subseteq\{1\}$. Case (a) is trivial. For case (b), writing $(f_i)_{1\leq i\leq m}$ as a composition, we may further assume that at most one $f_i$ is not the identity. Changing notation, we are reduced to showing that for every pullback square
\[
\xymatrix{
Y'\ar[r]^-{f'}\ar[d]_{g'} & Y\ar[d]^{g}\\
X'\ar[r]^-{f} & X}
\]
of schemes in $\Schqcs$ with $f$ proper and every morphism $\pi\colon \mu\to\lambda$ in $\Rind_\tor$, the diagram
\[
\xymatrix{
\cD(Y',\mu) & \cD(Y,\mu)\ar[l]_-{f'^*}\\
\cD(X',\lambda)\ar[u]^{(g',\pi)^*-\otimes f'^*\sfK} & \cD(X,\lambda)\ar[l]_-{f^*}\ar[u]_{(g,\pi)^*-\otimes \sfK}}
\]
is right adjointable for every $\sfK\in\cD(Y,\mu)$. As in the proof of Lemma \ref{b2le:upperstar_pi}, we easily reduce to the case with $\lambda=(\{\ast\},\Lambda)$ and $\mu=(\{\ast\},M)$. This case is the combination of proper base change and projection formula. See \cite{SGA4}*{Expos\'{e} xvii, Th\'{e}or\`{e}me 4.3.1} for a proof in $\rD^-$. Finally, the right completeness of unbounded derived categories \cite{HA}*{Proposition 1.3.5.21} implies that every object $\sfL$ of $\cD(X,\lambda)$ is the sequential colimit of $\tau^{\le n}\sfL$. The unbounded case follow since the vertical arrows and the right adjoints of the horizontal arrows preserve sequential colimits.
\end{proof}

Second, we apply Proposition \ref{b1pr:partial_adjoint} to \eqref{b3eq:monoidal2} for direction 2 to construct a map
\begin{align}\label{b3eq:monoidal3}
\delta^*_{3,\{3\}}((\rN(\Schqcs)^{op}\times\rN(\Rind_\tor)^{op})^{\amalg,op})^\cart_{P,I,\all}\to\Cat.
\end{align}
The adjointability condition for direction (2,1) follows from the fact that, for every separated \'etale morphism $f$ of finite type between
quasi-separated and quasi-compact schemes, the functor $f_!$ constructed in \cite{SGA4}*{Expos\'{e} xvii, Th\'{e}or\`{e}me 5.1.8} is a left adjoint of $f^*$ \cite{SGA4}*{Expos\'{e} xvii, Proposition 6.2.11}. The adjointability condition for direction (2,3) follows from \'etale base change and a trivial projection formula \cite{KS}*{Proposition 18.2.5}.

Third, we compose \eqref{b3eq:monoidal3} with (a quasi-inverse) of the categorical equivalence in Lemma \ref{b3le:categorical_equivalence} to
construct a map
\begin{align}\label{b3eq:operation}
\EO{}{\Schqcs}{\r{II}}{}\colon \delta^*_{2,\{2\}}((\rN(\Schqcs)^{op}\times\rN(\Rind_\tor)^{op})^{\amalg,op})^\cart_{F,\all}\to\Cat.
\end{align}

Now we explain how to encode $f_*$ and $f^!$ via adjunction. Note that we have a natural map from $\delta^*_{2,\{2\}}((\rN(\Schqcs)^{op}\times\rN(\Rind_\tor)^{op})^{\amalg,op})^\cart_{F,\all}$ to $\rN(\Fin)$, whose fiber over $\langle 1\rangle$ is isomorphic to $\delta^*_{2,\{2\}}\rN(\Schqcs)^\cart_{F,\all}\times\rN(\Rind_\tor)^{op}$. Denote by $\EO{}{\Schqcs}{*}{!}$ the restriction of $\EO{}{\Schqcs}{\r{II}}{}$ to the above fiber. By construction, we see that the image of $\EO{}{\Schqcs}{*}{!}$ actually factorizes through the subcategory $\PSL\subseteq\Cat$. In other words, \eqref{b3eq:operation} induces a map
\begin{align}\label{b3eq:operation1}
\EO{}{\Schqcs}{*}{!}\colon \delta^*_{2,\{2\}}\rN(\Schqcs)^\cart_{F,\all}\times\rN(\Rind_\tor)^{op}\to\PSL.
\end{align}
Evaluating \eqref{b3eq:monoidal} at the object $\langle 1\rangle\in\Fin$, we obtain the map
\begin{align}\label{b3eq:upperstar}
\EO{}{\Schqcs}{*}{}\colon \rN(\Schqcs)^{op}\times\rN(\Rind)^{op}\to\PSL.
\end{align}
Note that this is equivalent to the map obtained by restricting 
\eqref{b3eq:operation1} to the second direction, on 
$\rN(\Schqcs)^{op}\times\rN(\Rind_\tor)^{op}$. Composing the equivalence 
$\phi_{\PS}$ in Remark \ref{b1re:partial_adjoint} with $\EO{}{\Schqcs}{*}{}$, 
we obtain the map 
\[
\EO{}{\Schqcs}{}{*}\colon \rN(\Schqcs)\times\rN(\Rind)\to\PSR.
\]
Restricting \eqref{b3eq:operation1} to the first direction, we obtain the map
\begin{align}\label{b3eq:lowersh}
\EO{}{\Schqcs}{}{!}\colon \rN(\Schqcs)_F\times\rN(\Rind_\tor)^{op}\to\PSL.
\end{align}
Composing the equivalence $\phi_{\PS}$ in Remark \ref{b1re:partial_adjoint} with $\EO{}{\Schqcs}{}{!}$, we obtain the map
\begin{align}\label{b3eq:uppersh}
\EO{}{\Schqcs}{!}{}\colon \rN(\Schqcs)^{op}_F\times\rN(\Rind_\tor)\to\PSR.
\end{align}

\begin{variant}\label{b3va:quasifinite}
Let $Q(\subseteq F)\subseteq\Ar(\Schqcs)$ be the set of locally quasi-finite morphisms \cite{SP}*{01TD}. Recall that base change for an integral morphism \cite{SGA4}*{Expos\'{e} viii, Corollaire 5.6} holds for all Abelian sheaves. Replacing proper base change by finite base change in the construction of \eqref{b3eq:operation}, we obtain
\[
\EO{\r{lqf}}{\Schqcs}{\r{II}}{}\colon
\delta^*_{2,\{2\}}((\rN(\Schqcs)^{op}\times\rN(\Rind)^{op})^{\amalg,op})^\cart_{Q,\all}\to\Cat.
\]
When restricted to their common domain of definition, this map is equivalent to $\EO{}{\Schqcs}{\r{II}}{}$ \eqref{b3eq:operation}.
\end{variant}

\begin{notation}\label{b3no:image}
We introduce the following notation.
\begin{enumerate}
  \item For an object $(X,\lambda)$ of $\Schqcs\times\Rind$, we denote its image under $\EO{}{\Schqcs}\otimes{}$ by $\cD(X,\lambda)^\otimes$, with the underlying $\infty$-category $\cD(X,\lambda)$. In other words, we have $\cD(X,\lambda)^\otimes=\cD(X_{\et},\lambda)^\otimes$ and $\cD(X,\lambda)=\cD(X_{\et},\lambda)$. By construction and Remark \ref{b2re:topos}(2), $\cD(X,\lambda)$ is equivalent to the derived $\infty$-category of $\Mod(X_{\et}^\Xi,\Lambda)$ if $\lambda=(\Xi,\Lambda)$, and the monoidal structure on $\cD(X,\lambda)^\otimes$ is an $\infty$-categorical enhancement of the usual (derived) tensor product in the classical derived category.

  \item For a morphism $f\colon(X',\lambda')\to(X,\lambda)$ of $\Schqcs\times\Rind$, we denote its image under $\EO{}{\Schqcs}\otimes{}$ by
      \[
      f^{*\otimes}\colon\cD(X,\lambda)^\otimes\to\cD(X',\lambda')^\otimes,
      \]
      with the underlying functor $f^*\colon\cD(X,\lambda)\to\cD(X',\lambda')$. Note that $f^*$ is an $\infty$-categorical enhancement of the usual (derived) pullback functor in the classical derived category, which is monoidal. If $\lambda'\to\lambda$ is the identity, we denote the image of $f$ under $\EO{}{\Schqcs}{}{*}$ by
      \[
      f_*\colon\cD(Y,\lambda)\to\cD(X,\lambda),
      \]
      which is an $\infty$-categorical enhancement of the usual (derived) pushforward functor.

  \item For a morphism $f\colon Y\to X$ locally of finite type of $\Schqcs$ and an object $\lambda$ of $\Rind_\tor$, we denote its image under $\EO{}{\Schqcs}{}{!}$ and $\EO{}{\Schqcs}{!}{}$ by
      \begin{align*}
      f_!\colon\cD(Y,\lambda)\to\cD(X,\lambda),\quad f^!\colon\cD(X,\lambda)\to\cD(Y,\lambda)
      \end{align*}
      which are $\infty$-categorical enhancement of the usual $f_!$ and $f^!$ in the classical derived category, respectively.
\end{enumerate}
\end{notation}

\begin{remark}\label{b3re:kunneth}
In the previous discussion, we have constructed two maps
\[
\EO{}{\Schqcs}{\rI}{},\quad\EO{}{\Schqcs}{\r{II}}{}
\]
from which we deduce the other six maps
\[
\EO{}{\Schqcs}\otimes{},\;\EO{}{\Schqcs}{*}{!},\;\EO{}{\Schqcs}{*}{},\;
\EO{}{\Schqcs}{}{*},\;\EO{}{\Schqcs}{}{!},\;\EO{}{\Schqcs}{!}{}.
\]
Moreover, maps $\EO{}{\Schqcs}{\rI}{}$ and $\EO{}{\Schqcs}{\r{II}}{}$ are equivalent on their common part of domain, which is
$(\rN(\Schqcs)^{op}\times\rN(\Rind_\tor)^{op})^\amalg$.

Now we explain how K\"{u}nneth Formula is encoded in the map $\EO{}{\Schqcs}{\r{II}}{}$. In particular, as special cases, Base Change and Projection Formula are also encoded. Suppose that we have a diagram
\[
\xymatrix{Y_1\ar[d]_-{f_1} & Y\ar[l]_-{q_1}\ar[d]^-{f}\ar[r]^-{q_2} & Y_2\ar[d]^{f_2}\\ X_1 & X\ar[l]_{p_1}\ar[r]^-{p_2} & X_2,}
\]
which exhibits $Y$ as the limit $Y_1\times_{X_1}\times X\times_{X_2}Y_2$ and such that $f_1$ and $f_2$ (hence $f$) are locally of finite type. Fix an object $\lambda$ of $\Rind_\tor$. They together induce an edge
\[
\xymatrix{
((Y_1,\lambda),(Y_2,\lambda)) \ar[r]\ar[d] & (Y,\lambda)\ar[d] \\
((X_1,\lambda),(X_2,\lambda)) \ar[r] & (X,\lambda)
}
\]
of $\delta^*_{2,\{2\}}((\rN(\Schqcs)^{op}\times\rN(\Rind_\tor)^{op})^{\amalg,op})^\cart_{F,\all}$ above the unique active map $\langle 2\rangle\to\langle1\rangle$ of $\Fin$. Applying the map $\EO{}{\Schqcs}{\r{II}}{}$ and by adjunction, we obtain the following square
\[
\xymatrix{
\cD(Y_1,\lambda)\times\cD(Y_2,\lambda)
\ar[rr]^-{q_1^*-\otimes_Y q_2^*-}\ar[d]_-{f_{1!}\times f_{2!}} && \cD(Y,\lambda)\ar[d]^-{f_!} \\
\cD(X_1,\lambda)\times\cD(X_2,\lambda) \ar[rr]^-{p_1^*-\otimes_X
p_2^*-} && \cD(X,\lambda) }
\]
in $\Cat$. At the level of homotopy categories, this recovers the classical K\"{u}nneth Formula.
\end{remark}

We end this section by the following adjointability result.

\begin{lem}\label{b3le:lowersh_pi}
Let $f\colon Y\to X$ be a morphism locally of finite type of $\Schqcs$, and $\pi\colon \lambda'\to\lambda$ a perfect morphism of $\Rind_\tor$ (Definition \ref{b2de:perfect}). Then the square
\[
\xymatrix{\cD(Y,\lambda')\ar[r]^{f_!} & \cD(X,\lambda')\\
\cD(Y,\lambda)\ar[u]^{\pi^*}\ar[r]^{f_!} & \cD(X,\lambda)\ar[u]_{\pi^*},}
\]
is right adjointable and its transpose is left adjointable.
\end{lem}

\begin{proof}
The assertion being trivial for $f$ in $I$, we may assume $f$ in $P$. As in the proof of Lemma \ref{b2le:upperstar_pi}, we are reduced to the case where $\pi^*$ is replaced $e_\zeta^*$ and $t_*\circ t^*$, respectively. Here, we have maps $(\{\ast\},\Lambda')\xrightarrow{t}(\{\zeta\},\Lambda(\zeta))\xrightarrow{e_\zeta}(\Xi,\Lambda)$.

The assertion for $t_*\circ t^*$ is trivial, since a left adjoint of $t_*\circ t^*$ is $-\otimes_{\Lambda(\zeta)}{\Lambda'}\spcheck\simeq
\HOM_{\Lambda(\zeta)}(\Lambda',-)$, where ${\Lambda'}\spcheck=\HOM_{\Lambda(\zeta)}(\Lambda',{\Lambda(\zeta)})$. We denote by $e_{\zeta!}$ a left adjoint of $e_\zeta^*$. For $\xi\in \Xi$, since $e_\xi^*$ commutes with $f_*$ by Lemma \ref{b2le:upperstar_pi}, it suffices to check that
$e_\xi^*\circ e_{\zeta!}$ commutes with $f_*$. Here $e_\xi\colon(\{\xi\},\Lambda(\xi))\to (\Xi,\Lambda)$ is the obvious morphism. For $\xi\le
\zeta$, we have $e_\xi^*\circ e_{\zeta!}\simeq-\otimes_{\Lambda(\zeta)}\Lambda(\xi)$ and the assertion follows from projection formula. For other $\xi\in \Xi$, the map $e_\xi^* \circ e_{\zeta!}$ is zero.
\end{proof}

\subsection{Poincar\'{e} duality and (co)homological descent}
\label{b3ss:poincare}

For an object $X$ of $\Schqcs$ and an object $\lambda=(\Xi,\Lambda)$ of $\Rind$, we have a t-structure $(\cD^{\le 0}(X,\lambda),\cD^{\ge
0}(X,\lambda))$ on $\cD(X,\lambda)$,\footnote{We use a \emph{cohomological} indexing convention, which is different from \cite{HA}*{Definition 1.2.1.4}.} which induces the usual t-structure on its homotopy category $\rD(X^\Xi_{\et},\Lambda)$. We denote by $\tau^{\le 0}$ and $\tau^{\ge 0}$
the corresponding truncation functors. The heart
\[
\cD^\heartsuit(X,\lambda)\coloneqq\cD^{\le 0}(X,\lambda)\cap\cD^{\ge 0}(X,\lambda)\subseteq\cD(X,\lambda)
\]
is canonically equivalent to (the nerve of) the Abelian category
\[
\Mod(X,\lambda)\coloneqq\Mod(X^\Xi_{\et},\Lambda).
\]
The constant sheaf $\lambda_X$ on $X^\Xi$ of value $\Lambda$ is an object of $\cD^\heartsuit(X,\lambda)$.

We fix a nonempty set $\Box$ of rational primes. Recall that a ring $R$ is a \emph{$\Box$-torsion} ring if each element is killed by an integer that is a product of primes in $\Box$. In particular, a $\Box$-torsion ring is a torsion ring. We denote by $\Rind_{\ltor}\subseteq\Rind_\tor$ the full subcategory spanned by $\Box$-torsion ringed diagrams. Recall that a scheme $X$ is \emph{$\Box$-coprime} if $\Box$ does not contain any residue characteristic of $X$. Let $\Schqcs_\Box$ be the full subcategory of $\Schqcs$ spanned by $\Box$-coprime schemes. In particular,
$\Spec\dZ[\Box^{-1}]$ is a final object of $\Schqcs_\Box$. By abuse of notation, we still use $A$ and $F$ to denote $A\cap\Ar(\Schqcs_\Box)$ and $F\cap\Ar(\Schqcs_\Box)$, respectively. Moreover, let $L\subseteq F$ be the set of smooth morphisms.

\begin{definition}[Tate twist]\label{b3de:twist}
We define a functor
\[
\r{tw}\colon(\rN(\Rind_{\ltor})^{op})^\triangleleft\to\Cat
\]
such that
\begin{enumerate}
  \item the restriction of $\r{tw}$ to $\rN(\Rind_{\ltor})^{op}$ coincides with the restriction of the functor $\EO{}{\Schqcs}{*}{}$
      \eqref{b3eq:upperstar} to $\{\Spec\dZ[\Box^{-1}]\}\times\rN(\Rind_{\ltor})^{op}$;

  \item $\r{tw}(-\infty)$ equals $\Delta^0$;

  \item for every object $\lambda$ of $\Rind_{\ltor}$, the image of $0$ under the functor $\r{tw}(-\infty\to\lambda)$ is the Tate twisted sheaf, denoted by $\lambda_\Box(1)$, is dualizable in the symmetric monoidal $\infty$-category $\cD(\Spec\dZ[\Box^{-1}],\lambda)^\otimes$.
\end{enumerate}
Let $(X,\lambda)$ be an object of $\Schqcs_\Box\times\Rind_{\ltor}$. We define the following functor
\[
-\langle 1\rangle\coloneqq(-\otimes s_X^*\lambda_\Box(1))[2]\colon\cD(X,\lambda)\to\cD(X,\lambda),
\]
where $s_X\colon X\to\Spec\dZ[\Box^{-1}]$ is the structure morphism. We know that $-\langle 1\rangle$ is an auto-equivalence since $\lambda_\Box(1)$ is dualizable and $s_X^*$ is monoidal. In general, for $d\in\dZ$, we define $-\langle d\rangle$ to be the (inverse of the, if $d<0$) $|d|$-th iteration of $-\langle 1\rangle$.
\end{definition}

We adapt the classical theory of trace maps and the Poincar\'{e} duality to the $\infty$-categorical setting, as follows. Let $f\colon Y\to X$ be a flat morphism in $\Schqcs_\Box$, locally of finite presentation, and such that every geometric fiber has dimension $\leq d$. Let $\lambda$ be an object of $\Rind_{\ltor}$. In \cite{SGA4}*{Expos\'{e} xviii, Th\'{e}or\`{e}me 2.9}, Deligne constructed the trace map
\begin{align}\label{b3eq:trace}
\Tr_f=\Tr_{f,\lambda}\colon \tau^{\ge 0}f_!\lambda_Y\langle d\rangle\to\lambda_X,
\end{align}
which turns out to be a morphism of $\cD^\heartsuit(X,\lambda)$. The construction satisfies the following functorial properties.

\begin{lem}[Functoriality of trace maps]\label{b3le:trace}
The trace maps $\Tr_f$ for all such $f$ and $\lambda$ are functorial in the following sense:
\begin{enumerate}
  \item For every morphism $\lambda\to\lambda'$ of $\Rind_{\ltor}$, the diagram
      \[
      \xymatrix{&\tau^{\ge 0}f_!\lambda_Y\langle d\rangle\ar[rd]^-{\Tr_{f,\lambda}} \\
      \tau^{\ge 0}((\tau^{\ge 0}f_!\lambda'_Y\langle d\rangle)\otimes_{\lambda'_X} \lambda_X)
      \ar[ur]^-\sim\ar[rr]^-{\tau^{\ge 0}(\Tr_{f,\lambda'}\otimes_{\lambda'_X} \lambda_X)}
      && \lambda_X}
      \]
      commutes.

  \item For every Cartesian diagram
        \[
        \xymatrix{Y'\ar[r]^-{f'}\ar[d]_-v & X'\ar[d]^-u\\
        Y \ar[r]^f & X}
        \]
        of $\Schqcs_\Box$, the diagram
        \[
        \xymatrix{u^*\tau^{\ge 0}f_!\lambda_Y\langle d\rangle\ar[rr]^-{u^*\Tr_f}\ar[d]_-{\simeq} && u^*\lambda_X\ar[d]^-{\simeq}\\
        \tau^{\ge 0}f'_!\lambda_{Y'}\langle d\rangle\ar[rr]^-{\Tr_{f'}}
        && \lambda_{X'}}
        \]
        commutes.

  \item Consider a $2$-cell
      \[
      \xymatrix{
      Z \ar[rd]_-{g} \ar[rr]^-{h}  && X\\
      & Y \ar[ru]_-{f} }
      \]
      of $\rN(\Schqcs_\Box)$ with $f$ (resp.\ $g$) flat, locally of finite presentation, and such that every geometric fiber has dimension $\leq d$ (resp.\ $\leq e$). Then $h$ is flat, locally of finite presentation, and such that every geometric fiber has dimension $\leq d+e$, and the diagram
      \[
      \xymatrix{\tau^{\ge 0}f_!(\tau^{\ge 0}g_!\lambda_Z\langle e\rangle)\langle d\rangle
      \ar[rr]^-{\tau^{\ge 0}f_!\Tr_g\langle d\rangle}\ar[d]_-{\simeq}
      && \tau^{\ge 0}f_!\lambda_Y\langle d\rangle\ar[d]^-{\Tr_f}\\
      \tau^{\ge 0}h_!\lambda_Z\langle d+e\rangle\ar[rr]^-{\Tr_h} && \lambda_X}
      \]
      commutes.
\end{enumerate}
\end{lem}

\begin{proof}
This is \cite{SGA4}*{Expos\'{e} xviii, {\Sec}2}.
\end{proof}

Let $f\colon Y\to X$ be as above. We have the following $2$-cell
\[
\xymatrix@R=0.5cm{
&         \cD(Y,\lambda) \ar[dd]^-{f_!}     \\
\cD(X,\lambda) \ar[ur]^-{f^*} \ar[dr]_-{f_!\lambda_Y\otimes-} \\
&         \cD(X,\lambda)                 }
\]
of $\Cat$. If we abuse of notation by writing $f^*\langle d\rangle$ for $-\langle d\rangle \circ f^*$, then the composition
\begin{align}\label{b3eq:trace_1}
u_f\colon f_!\circ f^*\langle d\rangle\xrightarrow{\sim} f_!\lambda_Y\langle d\rangle\otimes-
\to\tau^{\ge 0}f_!\lambda_Y\langle d\rangle\otimes-\xrightarrow{\Tr_f\otimes-}\lambda_X\otimes- \xrightarrow{\sim}\id_X
\end{align}
is a natural transformation, where $\id_X$ is the identity functor of $\cD(X,\lambda)$.

\begin{lem}\label{b3le:poincare}
If $f\colon Y\to X$ is smooth and of pure relative dimension $d$, then $u_f$ is a counit transformation. In particular, the functors $f^*\langle d\rangle$ and $f^!$ are equivalent.
\end{lem}

\begin{proof}
This follows from \cite{SGA4}*{Expos\'{e} xviii, Th\'{e}or\`{e}me 3.2.5} and the fact that $f^!$ is right adjoint to $f_!$.
\end{proof}

\begin{remark}\label{b3re:trace_quasifinite}
Let $f\colon Y\to X$ be a morphism in $\Schqcs$ that is flat, locally quasi-finite, and locally of finite presentation. Let $\lambda$ be an object of $\Rind$ (see Variant \ref{b3va:quasifinite} for the definition of the enhanced operation map in this setting). In \cite{SGA4}*{Expos\'{e} xvii, Th\'{e}or\`{e}me 6.2.3}, Deligne constructed the trace map
\[
\Tr_f\colon \tau^{\geq0}f_!\lambda_Y\to\lambda_X,
\]
which is a morphism of $\cD^\heartsuit(X,\lambda)$. It coincides with the trace map \eqref{b3eq:trace} when both are defined, and satisfies similar functorial properties. Moreover, by \cite{SGA4}*{Expos\'{e} xvii, Proposition 6.2.11}, the map $u_f\colon f_!\circ f^*\to \id_X$ constructed similarly as \eqref{b3eq:trace_1} is a counit transform when $f$ is \'etale. Thus, the functors $f^!$ and $f^*$ are equivalent in this case.
\end{remark}

The following proposition will be used in the construction of the enhanced operation map for quasi-separated schemes.

\begin{proposition}[(Co)homological descent]\label{b3pr:descent}
Let $f\colon X^+_0\to X^+_{-1}$ be a smooth and surjective morphism of $\Schqcs$. Then
\begin{enumerate}
  \item $(f,\id_\lambda)$ is of universal $\EO{}{\Schqcs}\otimes{}$-descent \eqref{b3eq:monoidal}, where $\lambda$ is an arbitrary object of $\Rind$;

  \item $(f,\id_\lambda)$ is of universal $\EO{}{\Schqcs}{}{!}$-codescent \eqref{b3eq:lowersh}, where $\lambda$ is an arbitrary object of
      $\Rind_\tor$.
\end{enumerate}
See Definition \ref{b3de:descent} for the definition of universal (co)descent.
\end{proposition}

\begin{proof}
We first prove the case where $f$ is \'{e}tale. For (1), let $X^+_\bullet$ be 
a \v{C}ech nerve of $f$, and put 
$(\cD^{\otimes*})^\bullet_+\coloneqq\EO{}{\Schqcs}\otimes{}\circ((X^+_\bullet)^{op}\times 
\{\lambda\})$. By Remark \ref{b1re:monoidal}, we only need to check that 
$(\cD^*)^\bullet_+=\rG\circ(\cD^{\otimes*})^\bullet_+$ is a limit diagram, 
where $\rG$ is the functor \eqref{b1eq:G}. This is a special case of Lemma 
\ref{b3le:topos_descent} by letting $U_\bullet$ be the sheaf represented by 
$X^+_\bullet$, and $\cC_\bullet$ be the whole category. For (2), we only need 
to prove that 
$(\cD^!)^\bullet_+\coloneqq\phi\circ\EO{}{\Schqcs}{!}{}\circ((X^+_\bullet)^{op}\times 
\{\lambda\})$ is a limit diagram, where $\phi\colon\PSR\to\Cat$ is the 
natural inclusion, and the functor $\EO{}{\Schqcs}{!}{}$ is the one in 
\eqref{b3eq:uppersh}. By Poincar\'{e} duality for \'{e}tale morphisms 
recalled in Remark \ref{b3re:trace_quasifinite}, $(\cD^!)^\bullet_+$ is 
equivalent to $(\cD^*)^\bullet_+$, which is a limit diagram as we have 
already seen. 

The general case where $u$ is smooth follows from the above case by Lemma 
\ref{b3le:descent}(3) (and its dual version), and the fact that there exists 
an \'etale surjective morphism $g\colon Y\to X$ of $\Schqcs$ that factorizes 
through $f$ \cite{EGAIV}*{Corollaire 17.16.3(ii)}. 
\end{proof}

\section{The program DESCENT}
\label{b4ss}

From Remark \ref{b3re:kunneth}, we know that all useful information of six operations for $\Schqcs$ is encoded in the maps $\EO{}{\Schqcs}{\rI}{}$ \eqref{b3eq:premonoidal} and $\EO{}{\Schqcs}{\r{II}}{}$ \eqref{b3eq:operation} constructed in \Sec\ref{b3ss:enhanced_operation}. In this chapter, we develop a program called DESCENT, which is an abstract categorical procedure to
extend the above two maps to larger categories. The extended maps satisfy similar properties as the original ones. This program will be run in the next chapter to extend our theory successively to quasi-separated schemes, to algebraic spaces, to Artin stacks, and eventually to higher Deligne--Mumford and higher Artin stacks.

In \Sec\ref{b4ss:description}, we describe the program by formalizing the data for $\Schqcs$. In \Sec\ref{b4ss:construction}, we construct the extension of the maps. In \Sec\ref{b4ss:properties}, we prove the required properties of the extended maps.

\subsection{Description}
\label{b4ss:description}

In \Sec\ref{b3ss:enhanced_operation}, we constructed two maps $\EO{}{\Schqcs}{\rI}{}$ \eqref{b3eq:premonoidal} and $\EO{}{\Schqcs}{\r{II}}{}$
\eqref{b3eq:operation}. They satisfy certain properties such as descent for smooth morphisms (Proposition \ref{b3pr:descent}). We would like to extend these maps to maps defined on the $\infty$-category of higher Deligne--Mumford or higher Artin stacks, satisfying similar properties. We
will achieve this in many steps, by first extending the maps to quasi-separated schemes, and then to algebraic spaces, and then to Artin stacks, and so on. All the steps are similar to each other. The output of one step provides the input for the next step. We will think of this as
recursively running a program, which we name DESCENT. In this section, we axiomatize the input and output of this program in an abstract setting.

Let us start with a toy model.

\begin{proposition}\label{b4pr:toy}
Let $(\tilde\cC,\tilde\cE)$ be a marked $\infty$-category such that $\tilde\cC$ admits pullbacks and $\tilde\cE$ is stable under composition and pullback. Let $\cC\subseteq \tilde\cC$ be a full subcategory stable under pullback such that for every object $X$ of $\tilde\cC$, there exists a morphism $Y\to X$ in $\tilde\cE$ representable in $\cC$ with $Y$ in $\cC$. Let $\cD$ be an $\infty$-category such that $\cD^{op}$ admits geometric realizations. Let $\Fun^\cE(\cC^{op},\cD)\subseteq \Fun(\cC^{op},\cD)$ (resp.\ $\Fun^{\tilde\cE}(\tilde\cC^{op},\cD)\subseteq\Fun(\tilde\cC^{op},\cD)$) be the full subcategory spanned by functors $F$ such that every edge in $\cE=\tilde\cE\cap \cC_1$ (resp.\ in $\tilde\cE$) is of $F$-descent. Then the restriction map
\[
\Fun^{\tilde\cE}(\tilde\cC^{op},\cD)\to \Fun^\cE(\cC^{op},\cD)
\]
is a trivial fibration.
\end{proposition}

The proof will be given at the end of \Sec\ref{b4ss:construction}.

\begin{example}\label{b4ex:toy}
Let $\Schqs\subseteq\Sch$ be the full subcategory spanned by quasi-separated schemes. It contains $\Schqcs$ as a full subcategory. By Proposition \ref{b3pr:descent}(1), we may apply Proposition \ref{b4pr:toy} to
\begin{itemize}
  \item $\tilde\cC=(\rN(\Schqs)^{op}\times\rN(\Rind)^{op})^{\amalg,op}$,

  \item $\cC=(\rN(\Schqcs)^{op}\times\rN(\Rind)^{op})^{\amalg,op}$,

  \item $\cD=\Cat$,

  \item and the set $\tilde\cE$ consists of edges $f$ that are statically smooth surjective (Definition \ref{b3de:statically}).
\end{itemize}
Then we obtain an extension of the map $\EO{}{\Schqcs}{\rI}{}$ with larger source $(\rN(\Schqs)^{op}\times\rN(\Rind)^{op})^\amalg$.
\end{example}

Now we describe the program in full. We begin by summarizing the categorical properties we need on the geometric side into the following definition.

\begin{definition}\label{b4de:geometric}
An $\infty$-category $\cC$ is \emph{geometric} if it admits small coproducts and pullbacks such that
\begin{enumerate}
  \item \emph{Coproducts are disjoint:} every coCartesian diagram
      \[
      \xymatrix{
      \emptyset \ar[r] \ar[d] & X \ar[d]\\
      Y \ar[r] & X\coprod Y}
      \]
      is also Cartesian, where $\emptyset$ denotes an initial object of $\cC$.

  \item \emph{Coproducts are universal:} For a small collection of Cartesian diagrams
      \[
      \xymatrix{
      Y_i \ar[r] \ar[d] & Y \ar[d]\\
      X_i \ar[r] & X,}
      \]
      $i\in I$, the diagram
      \[
      \xymatrix{
      \coprod_{i\in I}Y_i \ar[r] \ar[d] & Y \ar[d]\\
      \coprod_{i\in I}X_i \ar[r] & X,}
      \]
      is also Cartesian.
\end{enumerate}
\end{definition}

\begin{remark}
We have the following remarks about geometric categories.
\begin{enumerate}
  \item Let $\cC$ be geometric. Then a small coproduct of Cartesian diagrams of $\cC$ is again Cartesian.

  \item The $\infty$-categories $\rN(\Schqcs)$, $\rN(\Schqs)$, $\rN(\Esp)$, $\rN(\Chp)$, $\Chpar{k}$ and $\Chpdm{k}$ ($k\geq0$) appearing in this article are all geometric.
\end{enumerate}
\end{remark}

We now describe the input and the output of the program. The input has three parts: 0, I, and II. The output has two parts: I and II. We refer the reader to Example \ref{b4ex:scheme} for a typical example.\\

\paragraph*{\textbf{Input 0}} We are given
\begin{itemize}
  \item A $5$-marked $\infty$-category $(\tilde\cC,\tcEs,\tilde\cE',\tilde\cE'',\tcEt,\tilde\cF)$, a full subcategory $\cC\subseteq\tilde\cC$, and a morphism $\sf{s}''\to\sf{s}'$ of $(-1)$-truncated objects of $\cC$ \cite{HTT}*{Definition 5.5.6.1}.

  \item For each $d\in\dZ\cup\{-\infty\}$, a subset $\tilde\cE''_d$ of $\tilde\cE''$.

  \item A sequence of inclusions of $\infty$-categories $\cL''\subseteq\cL'\subseteq\cL$.

  \item A function $\dim^+\colon \tilde\cF\to\dZ\cup\{-\infty,+\infty\}$.
\end{itemize}
Put $\cEs\coloneqq\tcEs\cap\cC_1$, $\cE'\coloneqq\tilde\cE'\cap\cC_1$, $\cE''\coloneqq\tilde\cE''\cap\cC_1$, $\cE''_d\coloneqq\tilde\cE''_d\cap\cC_1$ ($d\in\dZ\cup\{-\infty\}$), $\cEt\coloneqq\tcEt\cap\cC_1$, and $\cF\coloneqq\tilde\cF\cap\cC_1$. Let $\cC'$ (resp.\ $\tilde\cC'$, $\cC''$, and $\tilde\cC''$) be the full subcategory of $\cC$ (resp.\ $\tilde\cC$, $\cC$, and $\tilde\cC$) spanned by those objects that admit morphisms to $\sf{s}'$ (resp.\ $\sf{s}'$, $\sf{s}''$, and $\sf{s}''$). Put $\cF'\coloneqq\cF\cap\cC'_1$ and $\tilde\cF'\coloneqq\tilde\cF\cap\tilde\cC'_1$. They satisfy
\begin{enumerate}
  \item $\tilde\cC$ is geometric, and the inclusion $\cC\subseteq\tilde\cC$ is stable under finite limits. Moreover, for every small coproduct $X=\coprod_{i\in I}X_i$ in $\tilde\cC$, $X$ belongs to $\cC$ if and only if $X_i$ belongs to $\cC$ for all $i\in I$.

  \item $\cL''\subseteq\cL'$ and $\cL'\subseteq\cL$ are full subcategories.

  \item $\tcEs,\tilde\cE',\tilde\cE'',\tcEt,\tilde\cF$ are stable under composition, pullback and small coproducts; and
      $\tilde\cE'\subseteq\tilde\cE''\subseteq\tcEt\subseteq\tilde\cF$.

  \item For every object $X$ of $\tilde\cC$, there exists an edge $f\colon Y\to X$ in $\tcEs\cap\tilde\cE'$ representable in $\cC$ with $Y$ in $\cC$. Such an edge $f$ is called an \emph{atlas} for $X$.


  \item For every edge $f\colon Y\to X$ in $\tilde\cE''$, there exist $2$-simplices
      \begin{align}\label{b4eq:2celld}
      \xymatrix{&Y\ar[rd]^-{f}\\
      Y_d\ar[ru]^-{i_d}\ar[rr]^-{f_d} &&X}
      \end{align}
      of $\tilde\cC$ with $f_d$ in $\tilde\cE''_d$ for $d\in\dZ$, such that the edges $i_d$ exhibit $Y$ as the coproduct $\coprod_{d\in\dZ}Y_d$.

  \item For every $d\in\dZ\cup\{-\infty\}$, we have $\tilde\cE''_d\subseteq \tilde\cE''$, that $\tilde\cE''_d$ is stable under pullback and small coproducts, and that $\tilde\cE''_{-\infty}$ is the set of edges whose source is an initial object. For distinct integers $d$ and $e$, we have $\tilde\cE''_d\cap\tilde\cE''_e=\tilde\cE''_{-\infty}$.

  \item For every small set $I$ and every pair of objects $X$ and $Y$ of $\tilde\cC$, the morphisms $X\to X\coprod Y$ and $\coprod_I X \to
      X$ are in $\tilde\cE''_0$. For every $2$-cell
      \begin{align}\label{b4eq:2cell}
      \xymatrix{&Y\ar[rd]^-{f}\\
      Z\ar[rr]^-{h}\ar[ru]^-{g} && X}
      \end{align}
      of $\tilde\cC$ with $f$ in $\tilde\cE''_d$ and $g$ in $\tilde\cE''_e$, where $d$ and $e$ are integers, $h$ is in $\tilde\cE''_{d+e}$.

  \item The function $\dim^+$ satisfies the following conditions.
    \begin{enumerate}
      \item $\dim^+(f)=-\infty$ if and only if $f$ is in $\tilde\cE''_{-\infty}$.

      \item The restriction of $\dim^+$ to $\tilde\cE''_d-\tilde\cE''_{-\infty}$ is of constant value $d$.

      \item For every $2$-cell \eqref{b4eq:2cell} in $\tilde\cC$ with edges in $\tilde\cF$, we have $\dim^+(h)\leq\dim^+(f)+\dim^+(g)$, and that the equality holds when $g$ belongs to $\tcEs\cap \tilde\cE''$.

      \item For every Cartesian diagram
          \[
          \xymatrix{
          W \ar[r]^-{g} \ar[d]_-{q} & Z \ar[d]^-{p}\\
          Y \ar[r]^-{f} & X}
          \]
          in $\tilde\cC$ with $f$ (and hence $g$) in $\tilde\cF$, we have $\dim^+(g)\leq\dim^+(f)$, and equality holds when $p$ belongs to $\tcEs$.

      \item For every edge $f\colon Y\to X$ in $\tilde\cF$ and every small collection
          \[
          \xymatrix{&Y\ar[rd]^-{f}\\
          Z_i\ar[rr]^-{h_i}\ar[ru]^-{g_i} && X}
          \]
          of $2$-simplices with $g_i$ in $\tilde\cE''_{d_i}$ such that the morphism $\coprod_{i\in I} Z_i\to Y$ is in $\tcEs$, we have $\dim^+(f)=\sup_{i\in I}\{\dim^+(h_i)-d_i\}$.
    \end{enumerate}

  \item We have $\tilde\cE'=\tilde\cE''_0$.
\end{enumerate}

\begin{remark}
In Input 0, by (7) and (8c,d,e), for every small collection $\{Y_i\xrightarrow{f_i}X_i\}_{i\in I}$ of edges in $\tilde\cF$, we have
$\dim^+(\coprod_{i\in I}f_i)=\sup_{i\in I}\{\dim^+(f_i)\}$.
\end{remark}

\paragraph*{\textbf{Input I}} Input I consists of two maps as follows.
\begin{itemize}
  \item The \emph{first abstract operation map}:
      \[
      \EO{}{\cC}{\rI}{}\colon(\cC^{op}\times\cL^{op})^\amalg\to\Cat.
      \]

  \item The \emph{second abstract operation map}:
      \[
      \EO{}{\cC'}{\r{II}}{}\colon\delta^*_{2,\{2\}}((\cC'^{op}\times\cL'^{op})^{\amalg,op})^\cart_{\cF',\all}\to\Cat.
      \]
\end{itemize}

Input I is subject to the following properties:

\begin{description}
  \item[P0] \emph{Monoidal symmetry.} The functor $\EO{}{\cC}{\rI}{}$ is a 
      lax Cartesian structure, and the induced functor 
      $\EO{}{\cC}\otimes{}\coloneqq(\EO{}{\cC}{\rI}{})^\otimes$ factorizes 
      through $\PSLM$ (see Remark \ref{b1re:cartesian}). 

  \item[P1] \emph{Disjointness.} The map $\EO{}{\cC}\otimes{}$ sends small coproducts to products.

  \item[P2] \emph{Compatibility.} The restrictions of $\EO{}{\cC}{\rI}{}$ and $\EO{}{\cC'}{\r{II}}{}$ to $(\cC'^{op}\times\cL'^{op})^\amalg$ are equivalent functors.
\end{description}

Before stating the remaining properties, we have to fix some notation. Similar to the construction of \eqref{b3eq:operation1}, we obtain a map
\[
\EO{}{\cC'}{*}{!}\colon \delta^*_{2,\{2\}}\cC'^\cart_{\cF',\cC'_1}\times\cL'^{op}\to\PSL.
\]
from $\EO{}{\cC'}{\r{II}}{}$. Similar to the construction of \eqref{b3eq:upperstar} and \eqref{b3eq:lowersh}, we obtain maps
\[
\EO{}{\cC}{*}{}\colon\cC^{op}\times\cL^{op}\to\PSL,\quad \EO{}{\cC'}{}{!}\colon\cC'_{\cF'}\times\cL'^{op}\to\PSL.
\]
Moreover, we will use similar notation as in Notation \ref{b3no:image} for the image of 0 and 1-cells under above maps, after replacing $\Schqcs$ (resp.\ $\Rind$) by $\cC$ (resp.\ $\cL$). Now we are ready to state the remaining properties.

\begin{description}
  \item[P3] \emph{Conservativeness.} If $f\colon Y\to X$ belongs to $\cEs$, then $f^*\colon\cD(X,\lambda)\to\cD(Y,\lambda)$ is conservative for every object $\lambda$ of $\cL$.

  \item[P4] \emph{Descent.} Let $f$ be a morphism of $\cC$ (resp.\ $\cC'$) and $\lambda$ an object of $\cL$ (resp.\ $\cL'$). If $f$
      belongs to $\cEs\cap\cE''$ (resp.\ $\cEs\cap\cE''\cap\cC'_1$), then $(f,\id_\lambda)$ is of universal $\EO{}{\cC}\otimes{}$-descent
      (resp.\ $\EO{}{\cC'}{}{!}$-codescent).

  \item[P5] \emph{Adjointability for $\cE'$.} Let
      \[
      \xymatrix{
      W \ar[r]^-{g} \ar[d]_-{q} & Z \ar[d]^-{p} \\
      Y \ar[r]^-{f} & X }
      \]
      be a Cartesian diagram of $\cC'$ with $f$ in $\cE'$, and $\lambda$ an object of $\cL'$. Then
      \begin{enumerate}
        \item The square
            \[
            \xymatrix{
            \cD(Z,\lambda)  \ar[d]_-{g^*} & \cD(X,\lambda) \ar[l]_-{p^*} \ar[d]^-{f^*}\\
            \cD(W,\lambda) & \cD(Y,\lambda) \ar[l]_-{q^*}
            }\]
            has a right adjoint which is a square of $\PSR$.

        \item If $p$ is also in $\cE'$, then the square
            \[
            \xymatrix{
            \cD(X,\lambda)  \ar[d]_-{p^*} & \cD(Y,\lambda) \ar[l]_-{f_!} \ar[d]^-{q^*}\\
            \cD(Z,\lambda) & \cD(W,\lambda) \ar[l]_-{g_!} }
            \]
            is right adjointable.
      \end{enumerate}

  \item[P5\bis] \emph{Adjointability for $\cE''$.} We have the same statement as in (P5) after replacing $\cC'$ by $\cC''$, $\cE'$ by $\cE''$, and $\cL'$ by $\cL''$.
\end{description}

\paragraph*{\textbf{Input II}} Input II consists of the following data.
\begin{itemize}
  \item A functor $\r{tw}\colon(\cL''^{op})^\triangleleft\to\Cat$ satisfying that
      \begin{itemize}
        \item the restriction of $\r{tw}$ to $\cL''^{op}$ coincides with the restriction of $\EO{}{\cC}{*}{}$ to $\{\sf{s}''\}\times\cL''^{op}$;

        \item $\r{tw}(-\infty)$ equals $\Delta^0$;

        \item for every object $\lambda$ of $\Rind_{\ltor}$, if we denote the image of $0$ under the functor $\r{tw}(-\infty\to\lambda)\colon\Delta^0\to\cD(\sf{s}'',\lambda)$ by $\lambda(1)$, then it is dualizable in the symmetric monoidal $\infty$-category $\cD(\sf{s}'',\lambda)^\otimes$.
      \end{itemize}

  \item A t-structure on $\cD(X,\lambda)$ for every object $X$ of $\cC$ and every object $\lambda$ of $\cL$.

  \item (\emph{Trace map} for $\cEt$) A map $\Tr_f\colon \tau^{\ge 0}f_!\lambda_Y\langle d\rangle\to \lambda_X$ for every edge $f\colon Y\to X$ in $\cEt\cap\cC''_1$, every integer $d\geq\dim^+(f)$, and every object $\lambda$ of $\cL''$. Here, $\lambda_X$ is a unit object of the monoidal $\infty$-category $\cD(X,\lambda)$ and similarly for $\lambda_Y$; $-\langle d\rangle$ is defined in the same way as in Definition \ref{b3de:twist}.

  \item (\emph{Trace map} for $\cE'$) A map $\Tr_f\colon \tau^{\ge 0}f_!\lambda_Y\to \lambda_X$ for every edge $f\colon Y\to X$ in $\cE'\cap\cC'_1$ and every object $\lambda$ of $\cL'$, which coincides with the one above for $f\in\cE'\cap\cC''_1$.
\end{itemize}

Input II is subject to the following properties.

\begin{description}
  \item[P6] \emph{t-structure.} Let $\lambda$ be an arbitrary object of $\cL$. We have
      \begin{enumerate}
        \item For every object $X$ of $\cC$, we have $\lambda_X\in\cD^\heartsuit(X,\lambda)$.

        \item If $\lambda$ belongs to $\cL''$ and $X$ is an object of $\cC''$, then the auto-equivalence $-\otimes s_X^*\lambda(1)$ of $\cD(X,\lambda)$ is t-exact.

        \item For every object $X$ of $\cC$, the t-structure on $\cD(X,\lambda)$ is accessible, right complete, and $\cD^{\le -\infty}(X,\lambda)\colonequals\bigcap_n\cD^{\le -n}(X,\lambda)$ consists of zero objects.

        \item For every morphism $f\colon Y\to X$ of $\cC$, the functor $f^*\colon \cD(X,\lambda)\to\cD(Y,\lambda)$ is t-exact.
      \end{enumerate}

  \item[P7] \emph{Poincar\'{e} duality for $\cE''$.} We have
      \begin{enumerate}
         \item For every $f$ in $\cEt\cap\cC''_1$, every integer $d\geq\dim^+(f)$, and every object $\lambda$ of $\cL''$, the source of the trace map $\Tr_f$ belongs to the heart $\cD^\heartsuit(X,\lambda)$. Moreover, $\Tr_f$ is functorial in the same way as in Lemma \ref{b3le:trace}. See Remark \ref{b4re:trace} below for more details.

         \item For every $f$ in $\cE''_d\cap\cC''_1$, and every object $\lambda$ of $\cL''$, the map $u_f\colon f_!\circ f^*\langle d\rangle\to\id_X$, induced by the trace map $\Tr_f\colon \tau^{\ge 0}f_!\lambda_Y\langle d\rangle\to\lambda_X$ similarly as \eqref{b3eq:trace_1}, is a counit transformation. Here $\id_X$ is the identity functor of $\cD(X,\lambda)$.
      \end{enumerate}

  \item [P7\bis] \emph{Poincar\'{e} duality for $\cE'$.}  We have the same statement as in (P7) after letting $d=0$, and replacing $\cC''$ by $\cC'$, $\cEt$ by $\cE'$, and $\cL''$ by $\cL'$.
\end{description}

\begin{remark}\label{b4re:trace}
In (P7)(1) above, the trace maps $\Tr_f$ for all such $f$ and $\lambda$ are functorial in the following sense:
\begin{enumerate}
  \item For every morphism $\lambda\to\lambda'$ of $\cL''$, the diagram
      \[
      \xymatrix{&\tau^{\ge 0}f_!\lambda_Y\langle d\rangle\ar[rd]^-{\Tr_{f,\lambda}} \\
      \tau^{\ge 0}((\tau^{\ge 0}f_!\lambda'_Y\langle d\rangle)\otimes_{\lambda'_X} \lambda_X)
      \ar[ur]^-\sim\ar[rr]^-{\tau^{\ge 0}(\Tr_{f,\lambda'}\otimes_{\lambda'_X} \lambda_X)}
      && \lambda_X}
      \]
      commutes.

  \item For every Cartesian diagram
      \[
      \xymatrix{Y'\ar[r]^-{f'}\ar[d]_-v & X'\ar[d]^-u\\
      Y \ar[r]^f & X}
      \]
      of $\cC''$, the diagram
      \[
      \xymatrix{u^*\tau^{\ge 0}f_!\lambda_Y\langle d\rangle\ar[rr]^-{u^*\Tr_f}\ar[d]_-{\simeq} && u^*\lambda_X\ar[d]^-{\simeq}\\
      \tau^{\ge 0}f'_!\lambda_{Y'}\langle d\rangle\ar[rr]^-{\Tr_{f'}}
      && \lambda_{X'}}
      \]
      commutes.

  \item Consider a $2$-cell
      \[\xymatrix{
      Z \ar[rd]_-{g} \ar[rr]^-{h}  && X\\
      & Y \ar[ru]_-{f} }
      \]
      of $\cC''$ with $f,g\in\cEt\cap\cC''_1$ such that $\dim^+(f)\leq d$ and $\dim^+(g)\leq e$. In particular, we have $h\in\cEt\cap\cC''_1$ and $\dim^+(h)\leq d+e$. Then the diagram
      \[
      \xymatrix{\tau^{\ge 0}f_!(\tau^{\ge 0}g_!\lambda_Z\langle e\rangle)\langle d\rangle
      \ar[rr]^-{\tau^{\ge 0}f_!\Tr_g\langle d\rangle}\ar[d]_-{\simeq}
      && \tau^{\ge 0}f_!\lambda_Y\langle d\rangle\ar[d]^-{\Tr_f}\\
      \tau^{\ge 0}h_!\lambda_Z\langle d+e\rangle\ar[rr]^-{\Tr_h} && \lambda_X}
      \]
      commutes.
\end{enumerate}
\end{remark}

\begin{remark}\label{b4re:input}
We have the following remarks concerning input.
\begin{enumerate}
  \item (P0) and (P4) imply the following: If $f$ is an edge of $(\cC^{op}\times\cL^{op})^{\amalg,op}$ that statically belongs to $\cEs\cap\cE''$, then it is of universal $\EO{}{\cC}{\rI}{}$-descent.

  \item (P4) implies that (P3) holds for $f\in \cEs\cap\cE''$.

  \item If $d>\dim^+(f)$, then the trace map $\Tr_f$ is not interesting because its source $\tau^{\ge 0} f_!\lambda_Y\langle d\rangle$ is a
      zero object. We have included such maps in the data in order to state the functoriality as in Remark \ref{b4re:trace} more conveniently.

  \item We extend the trace map to morphisms $f\colon Y\to X$ in $\cEt\cap \cC''_1$ endowed with $2$-simplices \eqref{b4eq:2celld} satisfying $\dim^+(f_d)\le d$ and such that the morphisms $i_d$ exhibit $Y$ as $\coprod_{d\in\dZ} Y_d$. For every object $\lambda$ of $\cL''$, the map
      \[
      \cD(Y,\lambda)\to\prod_{d\in\dZ}\cD(Y_d,\lambda),
      \]
      induced by $i_d$ is an equivalence by (P1). We write $-\langle\dim^+ \rangle\colon \cD(Y,\lambda)\to \cD(Y,\lambda)$ for the product of $(-\langle d\rangle\colon \cD(Y_d,\lambda)\to\cD(Y_d,\lambda))_{d\in\dZ}$. Since $\lambda_Y\simeq\bigoplus_{d\in\dZ}i_{d!}\lambda_{Y_d}$, the maps $\Tr_{f_d}$ induce a map $\Tr_f\colon\tau^{\ge 0}f_!\lambda_Y\langle
      \dim^+\rangle \to \lambda_X$. Moreover,the trace map is functorial in the sense that an analogue of Remark \ref{b4re:trace} holds.

  \item (P7)(2) still holds for morphisms $f\colon Y\to X$ in $\cE''\cap\cC''_1$. For such morphisms, the $2$-simplices in Input 0(5) are unique up to equivalence by Input 0(6). We write $-\langle \dim f\rangle\colon \cD(Y,\lambda)\to \cD(Y,\lambda)$ for the product of $(-\langle d\rangle\colon\cD(Y_d,\lambda)\to\cD(Y_d,\lambda))_{d\in\dZ}$. Then, (P7)(2) for the morphisms $f_d$ implies that the map $u_f\colon f_!\circ f^*\langle \dim f\rangle \to \id_X$ induced by the trace map $\Tr_f\colon \tau^{\ge 0}f_!\lambda_Y\langle d\rangle\to \lambda_X$ is a counit transformation.
\end{enumerate}
\end{remark}

The output has two parts: I \& II.

\paragraph*{\textbf{Output I}} Output I consists of two maps as follows.
\begin{itemize}
  \item The \emph{first abstract operation map}:
      \[
      \EO{}{\tilde\cC}{\rI}{}\colon(\tilde\cC^{op}\times\cL^{op})^\amalg\to\Cat
      \]
      extending $\EO{}{\cC}{\rI}{}$.

  \item The \emph{second abstract operation map}:
      \[
      \EO{}{\tilde\cC'}{\r{II}}{}\colon
      \delta^*_{2,\{2\}}((\tilde\cC'^{op}\times\cL'^{op})^{\amalg,op})^\cart_{\tilde\cF',\all}\to\Cat
      \]
      extending $\EO{}{\cC'}{\r{II}}{}$.
\end{itemize}

\paragraph*{\textbf{Output II}} Output II consists of the following data, all extending the existed data in Input II.
\begin{itemize}
  \item A functor $\r{tw}\colon(\cL''^{op})^\triangleleft\to\Cat$ same as in Input II.

  \item A t-structure on $\cD(X,\lambda)$ for every object $X$ of $\tilde\cC$ and every object $\lambda$ of $\cL$.

  \item (\emph{Trace map} for $\tcEt$) A map $\Tr_f\colon \tau^{\ge 0} f_!\lambda_Y\langle d\rangle\to \lambda_X$ for every edge $f\colon
      Y\to X$ in $\tcEt\cap\tilde\cC''_1$, every integer $d\geq\dim^+(f)$, and every object $\lambda$ of $\cL''$.

  \item (\emph{Trace map} for $\tilde\cE'$) A map $\Tr_f\colon \tau^{\ge 0} f_!\lambda_Y\to \lambda_X$ for every edge $f\colon Y\to X$ in
      $\tilde\cE'\cap\tilde\cC'_1$ and every object $\lambda$ of $\cL'$, which coincides with the one above for $f\in\tilde\cE'\cap\tilde\cC''_1$.
\end{itemize}

We introduce properties (P0) through (P7\bis) for Output I and II by replacing $\cC'$, $\cC''$ and $(\cC,\cEs,\cE',\cE'',\cEt,\cF)$ by
$\tilde\cC'$, $\tilde\cC''$ and $(\tilde\cC,\tcEs,\tilde\cE',\tilde\cE'',\tcEt,\tilde\cF)$, respectively. The following theorem shows how our program works.

\begin{theorem}\label{b4th:program}
Fix an Input 0. Then
\begin{enumerate}
  \item Every Input I satisfying (P0) through (P5\bis) can be extended to an Output I satisfying (P0) through (P5\bis).

  \item For given Input I, II satisfying (P0) through (P7\bis) and given Output I extending Input I and satisfying (P0) through (P5\bis),
      there exists an Output II extending Input II and satisfying (P6), (P7), (P7\bis).
\end{enumerate}
\end{theorem}

Output I will be accomplished in \Sec\ref{b4ss:construction}. Output II and the proof of properties (P1) through (P7\bis) will be accomplished in \Sec\ref{b4ss:properties}.

\begin{variant}\label{b4va:program}
Let us introduce a variant of DESCENT. In Input 0, we let $\tilde\cE'=\tilde\cE''$, $\sf{s}'\to\sf{s}''$ be a degenerate edge, $\cL'=\cL''$, and \emph{ignore} (9). In Input II (resp.\ Output II), we also \emph{ignore} the trace map for $\cE'$ (resp.\ $\tilde\cE'$) and property
(P7\bis). In particular, (P5) and (P5\bis) coincide. Theorem \ref{b4th:program} for this variant still holds and will be applied to (higher) Artin stacks.
\end{variant}

\begin{remark}\label{b4re:program}
We have the following remarks concerning Theorem \ref{b4th:program}.
\begin{enumerate}
  \item If the only goal is to extend the first and second operation maps, the statement of Theorem \ref{b4th:program}(1) can be made more compact: every Input I satisfying properties (P0), (P2), (P4), and (P5) can be extended to an Output I satisfying (P0), (P2), (P4), and (P5). This will follow from our proof of Theorem \ref{b4th:program} in this chapter.

  \item The Output I in Theorem \ref{b4th:program}(1) is unique up to equivalence. More precisely, we can define a simplicial set $K$
      classifying those Input I that satisfy (P2) and (P4). The vertices of $K$ are triples $(\EO{}{\cC}{\rI}{}, \EO{}{\cC}{\r{II}}{}, h)$,
      where $h$ is the equivalence in (P2). Similarly, let $\tilde{K}$ be the simplicial set classifying those Output II that satisfy (P2)
      and (P4). Then the restriction map $\tilde{K}\to K$ satisfies the right lifting property with respect to $\partial\Delta^n\subseteq\Delta^n$ for all $n\geq 1$. One can show this by adapting our proof of Theorem \ref{b4th:program}. Moreover, in all the above, $h$ can be taken to be the identity without loss of generality.

  \item The Output II in Theorem \ref{b4th:program}(2) is also unique up to equivalence. More precisely, let us fix an Output I extending
      Input I and satisfying (P2) and (P4). Note that the functor $\r{tw}$ remains the same. Fix an assignment of t-structures for the Input satisfying (P6). Then there exists a unique extension to the Output satisfying (P6). Moreover, for every assignment of traces for the Input satisfying (P7) (resp.\ (P7\bis)), there exists a unique extension to the Output satisfying (P7) (resp.\ (P7\bis)). Note that the trace map is defined in the heart, so that no homotopy issue arises.
\end{enumerate}
\end{remark}

\begin{definition}\label{b4de:dimension}
For a morphism $f\colon Y\to X$ locally of finite type between algebraic spaces, we define the \emph{upper relative dimension} of $f$ to be
\[
\sup\{\dim (Y\times_X\Spec\Omega)\}\in \dZ\cup\{-\infty,+\infty\}
\]
\cite{SP}*{04N6}, where the supremum is taken over all geometric points $\Spec\Omega\to X$. We adopt the convention that the empty scheme has dimension $-\infty$.
\end{definition}

\begin{example}\label{b4ex:scheme}
The initial input for DESCENT is the following:
\begin{itemize}
  \item $\tilde\cC=\rN(\Schqs)$, where $\Schqs\subseteq\Sch$ is the full subcategory spanned by quasi-separated schemes as in Example \ref{b4ex:toy}. It is geometric and admits $\Spec\dZ$ as a final object.

  \item $\cC=\rN(\Schqcs)$, and $\sf{s}''\to\sf{s}'$ is the unique morphism $\Spec\dZ[\Box^{-1}]\to\Spec\dZ$. In particular, $\cC'=\cC$ and $\tilde\cC'=\tilde\cC$.

  \item $\tcEs$ is the set of \emph{surjective} morphisms.

  \item $\tilde\cE'$ is the set of \emph{\'{e}tale} morphisms.

  \item $\tilde\cE''$ is the set of \emph{smooth} morphisms.

  \item $\tilde\cE''_d$ is the set of \emph{smooth} morphisms of pure relative dimension $d$.

  \item $\tcEt$ is the set of morphisms that are \emph{flat and locally of finite presentation}.

  \item $\tilde\cF$ is the set of morphisms \emph{locally of finite type}.

  \item $\cL=\rN(\Rind)^{op}$, $\cL'=\rN(\Rind_\tor)^{op}$, and $\cL''=\rN(\Rind_{\ltor})^{op}$.

  \item $\dim^+$ is the (function of) upper relative dimension (Definition \ref{b4de:dimension}).

  \item $\EO{}{\cC}{\rI}{}$ is \eqref{b3eq:premonoidal}, and $\EO{}{\cC'}{\r{II}}{}$ is \eqref{b3eq:operation}.

  \item $\r{tw}$ is defined in Definition \ref{b3de:twist}.

  \item $\cD(X,\lambda)$ is endowed with its usual t-structure recalled at the beginning of \Sec\ref{b3ss:poincare}.

  \item The trace maps are the classical ones  \eqref{b3eq:trace}; see also Remark \ref{b3re:trace_quasifinite}.
\end{itemize}

Properties (P0) through (P7\bis) are satisfied as follows:
\begin{itemize}
  \item[(P0)] This is Lemma \ref{b2le:closed_cartesian}(1,2).

  \item[(P1)] This is Lemma \ref{b2le:closed_cartesian}(3).

  \item[(P2)] This follows from our construction. In fact, the two maps are equal in this case.

  \item[(P3)] This is obvious.

  \item[(P4)] This is Proposition \ref{b3pr:descent}.

  \item[(P5)] This follows from Lemma \ref{b4le:outputii} below. Part (1) of (P5), namely the \'etale base change, is trivial.

  \item[(P5\bis)] This follows from Lemma \ref{b4le:outputii} below. Part (1) of (P5\bis) is the smooth base change.

  \item[(P6)] Part (3) follows from \cite{HA}*{Proposition 1.3.5.21}. The rest follows from construction.

  \item[(P7)] This has been recalled in Lemma \ref{b3le:trace} and Lemma \ref{b3le:poincare}.

  \item[(P7\bis)] This has been recalled in Remark \ref{b3re:trace_quasifinite}.
\end{itemize}
\end{example}

\begin{lem}\label{b4le:outputii}
Assume (P7). Then (P5) holds. In fact, we have the stronger result that part (2) of (P5) holds without the assumption that $p$ is also in $\cE'$. The similar statements hold concerning (P7\bis) and (P5\bis).
\end{lem}

\begin{proof}
We denote by $p_*$ (resp.\ $q_*$) a right adjoint of $p^*$ (resp.\ $q^*$) and by $f^!$ (resp.\ $g^!$) a right adjoint of $f_!$ (resp.\ $g_!$).

By (P7) or (P7\bis), $f^*$ and $g^*$ have left adjoints. Moreover, the diagram
\begin{align}\label{b4eq:outputii}
\resizebox{\hsize}{!}{
\xymatrix{
f^*p_*\langle\dim f\rangle \ar[r]\ar[d]\ar@/_4pc/[dd]_{\simeq} & q_* g^*\langle\dim f\rangle\ar@{=}[rr]\ar[d]
&&q_* g^*\langle\dim f\rangle\ar[d]\ar@/^4pc/[dd]^{\simeq}\\
f^!f_!f^*p_* \ar[r]\ar[d]^-{\Tr_f}\langle\dim f\rangle\ar[r] & f^!f_!q_*g^*\langle\dim f\rangle\ar[r]
& f^!p_*g_!g^*\langle\dim f\rangle\ar[r]^\sim\ar[d]_-{\Tr_g} &  q_*g^!g_!g^*\langle\dim f\rangle\ar[d]_-{\Tr_g}\\
f^!p_*\ar@{=}[rr]&&f^!p_*\ar[r]^{\sim} & q_*g^!
}
}
\end{align}
is commutative up to homotopy. It follows that the top horizontal arrow is an equivalence.

Since the diagram
\begin{align*}
\resizebox{\hsize}{!}{
\xymatrix{
q^*f^*\langle\dim f\rangle\ar@/_4pc/[dd]_{\simeq} \ar@{=}[rr]\ar[d]
&&q^*f^*\langle\dim f\rangle\ar[r]^\sim & g^*p^*\langle\dim f\rangle\ar@/^4pc/[dd]^\simeq \ar[d]\\
q^*f^!f_!f^*\langle\dim f\rangle\ar[r]\ar[d]_-{\Tr_f} & g^!p^*f_!f^*\langle\dim f\rangle\ar[r]^\sim\ar[d]^-{\Tr_f}
& g^!g_!q^*f^* \langle\dim f\rangle\ar[r]^\sim & g^!g_!g^*p^*\langle\dim f\rangle\ar[d]_-{\Tr_g}\\
q^*f^!\ar[r] & g^!p^*\ar@{=}[rr] && g^!p^*
}
}
\end{align*}
is commutative up to homotopy, the bottom horizontal arrow is an equivalence.
\end{proof}

\subsection{Construction}
\label{b4ss:construction}

The goal of this subsection is to construct the maps $\EO{}{\tilde\cC}{\rI}{}$ and $\EO{}{\tilde\cC'}{\r{II}}{}$ in Output I in \Sec\ref{b4ss:description}. We will construct Output II and check the properties (P0) -- (P7\bis) in the next section.

Let us start from the construction of second abstract operation map $\EO{}{\tilde\cC'}{\r{II}}{}$. The first one $\EO{}{\tilde\cC}{\rI}{}$ will be constructed at the end of this section, after the proof of Proposition \ref{b4pr:toy}.

Let $\cR\subseteq\tilde\cF'$ be the subset of
morphisms that are representable in $\cC'$. We have successive inclusions
\begin{align*}
\delta^*_{2,\{2\}}((\cC'^{op}\times\cL'^{op})^{\amalg,op})^\cart_{\cF',\all}
&\subseteq\delta^*_{2,\{2\}}((\tilde\cC'^{op}\times\cL'^{op})^{\amalg,op})^\cart_{\cR,\all} \\
&\subseteq\delta^*_{2,\{2\}}((\tilde\cC'^{op}\times\cL'^{op})^{\amalg,op})^\cart_{\tilde\cF',\all}.
\end{align*}
We proceed in two steps.\\

\paragraph*{\textbf{Step 1}} We first extend $\EO{}{\cC'}{\r{II}}{}$ to the map $\EO{\cR}{\cC'}{\r{II}}{}$ with the new source
\[
\delta^*_{2,\{2\}}((\tilde\cC'^{op}\times\cL'^{op})^{\amalg,op})^\cart_{\cR,\all}.
\]
An $n$-cell of the above source is given by a functor
\[
\sigma\colon\Delta^n\times(\Delta^n)^{op}\to(\tilde\cC'^{op}\times\cL'^{op})^{\amalg,op}
\]
We define $\Cov(\sigma)$ to be the full subcategory of
\[
\Fun(\Delta^n\times(\Delta^n)^{op}\times\rN(\del_+)^{op},(\tilde\cC'^{op}\times\cL'^{op})^{\amalg,op})
\times_{\Fun(\Delta^n\times(\Delta^n)^{op}\times\{[-1]\},(\tilde\cC'^{op}\times\cL'^{op})^{\amalg,op})}\{\sigma\}
\]
spanned by functors $\sigma^0\colon \Delta^n\times(\Delta^n)^{op}\times\rN(\del_+)^{op}\to(\tilde\cC'^{op}\times\cL'^{op})^{\amalg,op}$
such that
\begin{itemize}
  \item for every $0\leq j\leq n$, the restriction $\sigma^0\res\Delta^{(n,j)}\times\rN(\del_+^{\leq 0})^{op}$, regarded as an edge of
      $(\tilde\cC'^{op}\times\cL'^{op})^{\amalg,op}$, is statically an atlas (see Definition \ref{b3de:statically} and Input 0(4));

  \item $\sigma^0$ is a right Kan extension of $\sigma^0\res\Delta^{\{n\}}\times(\Delta^n)^{op}\times\rN(\del_+^{\leq0})^{op}\cup\Delta^n\times(\Delta^n)^{op}\times\{[-1]\}$ along the obvious inclusion.
\end{itemize}
In particular, for every object $(i,j)$ of $\Delta^n\times(\Delta^n)^{op}$, the restriction $\sigma^0\res\Delta^{(i,j)}\times\rN(\del_+)^{op}$ is a \v{C}ech nerve of the restriction $\sigma^0\res\Delta^{(i,j)}\times\rN(\del_+^{\leq0})^{op}$.

The $\infty$-category $\Cov(\sigma)$ is nonempty by Input 0(4), and admits product of two objects. Indeed, for every pair of objects $\sigma_1^0$ and $\sigma_2^0$ of $\Cov(\sigma)$, the assignment
\[
(i,j,[k])\mapsto\sigma_1^0(i,j,[k])\times_{\sigma(i,j)}\sigma_2^0(i,j,[k])
\]
induces a product of $\sigma_1^0$ and $\sigma_2^0$ by Lemma \ref{b1le:static_colimit}. Therefore, by Lemma \ref{b1le:trivial_contractible}, $\Cov(\sigma)$ is a weakly contractible Kan complex.

Since atlases are representable in $\cC$ by Input 0(4), by restriction, $\Cov(\sigma)$ induces a functor
\[
\Cov(\sigma)\to\Fun(\rN(\del)^{op}\times\Delta^n\times(\Delta^n)^{op},(\cC'^{op}\times\cL'^{op})^{\amalg,op}),
\]
which induces a map
\[
\Cov(\sigma)^{op}\to\Fun(\rN(\del),\Fun(\Delta^n,\delta^*_{2,\{2\}}((\cC'^{op}\times\cL'^{op})^{\amalg,op})^\cart_{\cF',\all})).
\]
Composing with the map $\EO{}{\cC'}{\r{II}}{}$, we obtain a functor
\[
\phi(\sigma)\colon\Cov(\sigma)^{op}\to\Fun(\rN(\del),\Fun(\Delta^n,\Cat)).
\]
Let $\cK\subseteq\Fun(\rN(\del_+),\Fun(\Delta^n,\Cat))$ be the full subcategory spanned by those functors $F\colon\rN(\del_+)\to\Fun(\Delta^n,\Cat)$ that are right Kan extensions of $F\res\rN(\del)$. Consider the following diagram
\[
\xymatrix{
& \cN(\sigma) \ar[r]\ar[d]^{\rres_1^*\phi(\sigma)} & \Cov(\sigma)^{op} \ar[d]^-{\phi(\sigma)} \\
\Fun(\Delta^n,\Cat) & \cK \ar[l]_-{\rres_2}\ar[r]^-{\rres_1} &
\Fun(\rN(\del),\Fun(\Delta^n,\Cat)) }
\]
in which the right square is Cartesian, and $\rres_2$ is the restriction to $\{[-1]\}$. Put
\[
\Phi(\sigma)\coloneqq\rres_2\circ\rres_1^*\phi(\sigma)\colon\cN(\sigma)\to\Fun(\Delta^n,\Cat).
\]
It is easy to see that the above process is functorial so that the collection of $\Phi(\sigma)$ defines a morphism $\Phi$ of the category
\[
(\Sset)^{(\del_{\delta^*_{2,\{2\}}((\tilde\cC'^{op}\times\cL'^{op})^{\amalg,op})^\cart_{\cR,\all}})^{op}}.
\]

\begin{lem}\label{b4le:sharp}
The map $\Phi(\sigma)$ takes values in $\Map^\sharp((\Delta^n)^\flat,\Cat^\natural)$.
\end{lem}

\begin{proof}
Let $X_{-1}$ be an object of $(\tilde\cC'^{op}\times\cL'^{op})^{\amalg,op}$, and $\Cov(X_{-1})$ the full subcategory of
\[
\Fun(\rN(\del_+)^{op},(\tilde\cC'^{op}\times\cL'^{op})^{\amalg,op})
\times_{\Fun(\{[-1]\},(\tilde\cC'^{op}\times\cL'^{op})^{\amalg,op})}\{X_{-1}\}
\]
spanned by functors $X_\bullet$ such that the edge $X_0\to X_{-1}$ is statically an atlas and $X_\bullet$ is a \v{C}ech nerve of $X_0\to X_{-1}$. By (P2), it suffices to show that for every morphism $f$ of $\Cov(X_{-1})$, considered as a functor $f\colon\Delta^1\times\rN(\del_+)^{op}\to(\tilde\cC'^{op}\times\cL'^{op})^{\amalg,op}$, and every right Kan extension $F$ of
$\EO{}{\cC}{\rI}{}\circ(f\res\Delta^1\times \rN(\del)^{op})^{op}$, the morphism $F\res(\Delta^1\times\{[-1]\})^{op}$ is an equivalence in $\Cat$.

In fact, let $f\colon X_\bullet^0 \to X_\bullet^1$ be a morphism of $\Cov(X_{-1})$. Let $X_\bullet^2$ be an object of $\Cov(X_{-1})$. Then we
have a diagram
\[
\xymatrix{
& X_\bullet^0\times X_\bullet^2\ar@/_1pc/[ld]_-{\pr} \ar[r]^-{\pr} \ar[d]^-{f\times X_\bullet^2} & X_\bullet^0 \ar[d]^-{f} \\
X_\bullet^2&  X_\bullet^1\times X_\bullet^2\ar[l]_-{\pr}\ar[r]^-{\pr} & X_\bullet^1. }
\]
Here products are taken in $\Cov(X_{-1})$. Thus, it suffices to show the assertion for the projection $X_\bullet\times X'_\bullet\to X'_\bullet$, where $X_\bullet$ and $X'_\bullet$ are objects of $\Cov(X_{-1})$.

Let $Y_{\bullet\bullet}\colon\rN(\del_+)^{op}\times\rN(\del_+)^{op}\to\tilde\cC'$ be an augmented bisimplicial object of $\tilde\cC'$ such that
\begin{itemize}
  \item $Y_{-1\bullet}=X'_\bullet$, $Y_{\bullet-1}=X_\bullet$.

  \item $Y_{\bullet\bullet}$ is a right Kan extension of $Y_{-1\bullet}\cup Y_{\bullet-1}$.
\end{itemize}
Let $\delta\colon [1]\times\del_+^{op}\to\del_+^{op}\times\del_+^{op}$ be the functor sending $(0,[n])$ to $([n],[n])$ and $(1,[n])$ to $([-1],[n])$. It suffices to show the assertion for $Y_{\bullet\bullet}\circ\rN(\delta)$, regarded as a morphism of $\Cov(X_{-1})$. This follows from Lemma \ref{b4le:plimit} below by taking $p$ to be $\Cat\to*$ and $c^{\bullet\bullet}$ to be a right Kan extension of
$\EO{}{\cC}{\rI}{}\circ (Y_{\bullet\bullet}\res\rN(\del_{++})^{op})^{op}$. Here, $\del_{++}\subseteq\del_+\times\del_+$ is the full subcategory spanned by all objects except the initial one. Assumptions (2) and (3) of Lemma \ref{b4le:plimit} are satisfied thanks to (P0) and (P4); see Remark \ref{b4re:input}(1).
\end{proof}

\begin{lem}\label{b4le:plimit}
Let $p\colon \cC\to\cD$ be a categorical fibration of $\infty$-categories. Let $c^{\bullet\bullet}\colon \rN(\del_+)\times\rN(\del_+)\to\cC$ be an augmented bicosimplicial object of $\cC$. For $n\geq -1$, put $c^{n\bullet}\coloneqq c^{\bullet\bullet}\res\{[n]\}\times\rN(\del_+)$ and
$c^{\bullet n}\coloneqq c^{\bullet\bullet}\res\rN(\del_+)\times\{[n]\}$, respectively. Assume that
\begin{enumerate}[(a)]
  \item $c^{\bullet\bullet}$ is a $p$-limit \cite{HTT}*{Definition 4.3.1.1} of $c^{\bullet\bullet}\res\rN(\del_{++})$, where $\del_{++}\subseteq\del_+\times\del_+$ is the full subcategory spanned by all objects except the initial one.

  \item For every $n\geq0$, $c^{n\bullet}$ is a $p$-limit of $c^{n\bullet}\res\rN(\del)$.

  \item For every $n\geq0$, $c^{\bullet n}$ is a $p$-limit of $c^{\bullet n}\res\rN(\del)$.
\end{enumerate}
Then
\begin{enumerate}
  \item $c^{-1\bullet}$ is a $p$-limit of $c^{-1\bullet}\res\{[-1]\}\times\rN(\del)$.

  \item $c^{\bullet-1}$ is a $p$-limit of $c^{\bullet-1}\res\rN(\del)\times\{[-1]\}$.

  \item $c^{\bullet\bullet}\res\rN(\del_+)_\diag$ is a $p$-limit of $c^{\bullet\bullet}\res\rN(\del)_\diag$, where
      $\rN(\del_+)_\diag\subseteq\rN(\del_+)\times\rN(\del_+)$ is the image of the diagonal inclusion $\diag\colon\rN(\del_+)\to\rN(\del_+)\times\rN(\del_+)$ and $\rN(\del)_\diag$ is defined similarly.
\end{enumerate}
\end{lem}

\begin{proof}
For (1), we apply (the dual version of) \cite{HTT}*{Proposition 4.3.2.8} to $p$ and $\rN(\del_+\times\del)\subseteq\rN(\del_{++})\subseteq\rN(\del_+\times\del_+)$. By (the dual version of) \cite{HTT}*{Proposition 4.3.2.9} and assumption (b), the restriction $c^{\bullet\bullet}\res\rN(\del\times \del_+)$ is a $p$-right Kan extension of the restriction $c^{\bullet\bullet}\res\rN(\del\times\del)$ \cite{HTT}*{Definition 4.3.2.2}. It follows that $c^{\bullet\bullet}\res\rN(\del_{++})$ is a $p$-right Kan extension of $c^{\bullet\bullet}\res\rN(\del_+\times\del)$. By assumption (a), $c^{\bullet\bullet}$ is a $p$-right Kan extension of $c^{\bullet\bullet}\res\rN(\del_{++})$. Therefore, $c^{\bullet\bullet}$ is a $p$-right Kan extension of $c^{\bullet\bullet}\res\rN(\del_+\times\del)$. By \cite{HTT}*{Proposition 4.3.2.9} again, $c^{-1\bullet}$ is a $p$-limit of $c^{-1\bullet}\res\{[-1]\}\times\rN(\del)$.

For (2), it follows from conclusion (1) by symmetry.

For (3), we view $(\del\times\del)^\triangleleft$ as a full subcategory of $\del_+\times\del_+$ by sending the cone point to the initial object. By \cite{HTT}*{Lemma 4.3.2.7}, we find that $c^{\bullet\bullet}\res(\del\times\del)^\triangleleft$ is a $p$-limit diagram. By \cite{HTT}*{Lemma 5.5.8.4}, the simplicial set $\rN(\del)^{op}$ is \emph{sifted} \cite{HTT}*{Definition 5.5.8.1}, that is, the diagonal map $\rN(\del)^{op}\to\rN(\del)^{op}\times\rN(\del)^{op}$ is cofinal. Therefore, $c^{\bullet\bullet}\res\rN(\del_+)_\diag$ is a $p$-limit of
$c^{\bullet\bullet}\res\rN(\del)_\diag$.
\end{proof}

Since $\rres_1$ is a trivial fibration by \cite{HTT}*{Proposition 4.3.2.15}, the simplicial set $\cN(\sigma)$ is weakly contractible. By Lemma \ref{b4le:sharp}, we can apply Proposition \ref{a1pr:extension} to
\[
K=\delta^*_{2,\{2\}}((\tilde\cC'^{op}\times\cL'^{op})^{\amalg,op})^\cart_{\cR,\all},\quad
K'=\delta^*_{2,\{2\}}((\cC'^{op}\times\cL'^{op})^{\amalg,op})^\cart_{\cF',\all},\quad
g\colon K'\hookrightarrow K,
\]
and the section $\nu$ given by $\EO{}{\cC'}{\r{II}}{}$. This extends $\EO{}{\cC'}{\r{II}}{}$ to a map
\[
\EO{\cR}{\tilde\cC'}{\r{II}}{}\colon \delta^*_{2,\{2\}}((\tilde\cC'^{op}\times\cL'^{op})^{\amalg,op})^\cart_{\cR,\all}\to\Cat.\\
\]

\paragraph*{\textbf{Step 2}} Now we are going to extend $\EO{\cR}{\tilde\cC'}{\r{II}}{}$ to the map $\EO{}{\tilde\cC'}{\r{II}}{}$ with the new source
\[
\delta^*_{2,\{2\}}((\tilde\cC'^{op}\times\cL'^{op})^{\amalg,op})^\cart_{\tilde\cF',\all}.
\]
An $n$-cell of the above source is given by a functor
\[
\varsigma\colon\Delta^n\times(\Delta^n)^{op}\to(\tilde\cC'^{op}\times\cL'^{op})^{\amalg,op}
\]
We define $\Kov(\varsigma)$ to be the full subcategory of
\[
\Fun(\Delta^n\times(\Delta^n)^{op}\times\rN(\del_+)^{op},(\tilde\cC'^{op}\times\cL'^{op})^{\amalg,op})
\times_{\Fun(\Delta^n\times(\Delta^n)^{op}\times\{[-1]\},(\tilde\cC'^{op}\times\cL'^{op})^{\amalg,op})}\{\varsigma\}
\]
spanned by functors $\varsigma^0\colon\Delta^n\times(\Delta^n)^{op}\times\rN(\del_+)^{op}\to(\tilde\cC'^{op}\times\cL'^{op})^{\amalg,op}$ such that
\begin{itemize}
  \item for every $0\leq i\leq n$, the restriction $\varsigma^0\res\Delta^{(i,0)}\times\rN(\del_+^{\leq 0})^{op}$, regarded as an edge of
      $(\tilde\cC'^{op}\times\cL'^{op})^{\amalg,op}$, statically belongs to $\tcEs\cap\tilde\cE'\cap\cR$;

  \item $\varsigma^0$ is a right Kan extension of $\varsigma^0\res\Delta^n\times(\Delta^{\{0\}})^{op}\times\rN(\del_+^{\leq0})^{op}\cup\Delta^n\times(\Delta^n)^{op}\times\{[-1]\}$ along the obvious inclusion;

  \item the restriction $\varsigma^0\res\Delta^n\times(\Delta^{\{0\}})^{op}\times\{[0]\}$ corresponds to an $n$-cell of
      $(\tilde\cC'^{op}\times\cL'^{op})^{\amalg,op})_\cR$.
\end{itemize}
In particular, for every object $(i,j)$ of $\Delta^n\times(\Delta^n)^{op}$, the restriction $\varsigma^0\res\Delta^{(i,j)}\times\rN(\del_+)^{op}$ is a \v{C}ech nerve of the restriction
$\varsigma^0\res\Delta^{(i,j)}\times\rN(\del_+^{\leq0})^{op}$. Moreover, the restriction $\varsigma^0\res\Delta^n\times(\Delta^n)^{op}\times\{[0]\}$ corresponds to an $n$-cell of
$\delta^*_{2,\{2\}}((\tilde\cC'^{op}\times\cL'^{op})^{\amalg,op})^\cart_{\cR,\all}$.

Similar to $\Cov(\sigma)$, the $\infty$-category $\Kov(\varsigma)$ is nonempty and admits product of two objects. Therefore, by Lemma
\ref{b1le:trivial_contractible}, $\Kov(\varsigma)$ is a weakly contractible Kan complex.

The restriction functor
\[
\Kov(\varsigma)\to\Fun(\rN(\del)^{op}\times\Delta^n\times(\Delta^n)^{op},(\cC'^{op}\times\cL'^{op})^{\amalg,op})
\]
induces a map
\[
\Kov(\varsigma)\to\Fun(\rN(\del)^{op},\Fun(\Delta^n,\delta^*_{2,\{2\}}((\cC'^{op}\times\cL'^{op})^{\amalg,op})^\cart_{\cF',\all})).
\]
Composing with the map $\EO{\cR}{\tilde\cC'}{\r{II}}{}$, we obtain a functor
\[
\phi(\varsigma)\colon\Kov(\sigma)\to\Fun(\rN(\del)^{op},\Fun(\Delta^n,\Cat)).
\]
Let $\cK'\subseteq\Fun(\rN(\del_+)^{op},\Fun(\Delta^n,\Cat))$ be the full subcategory spanned by those functors
$F\colon\rN(\del_+)^{op}\to\Fun(\Delta^n,\Cat)$ that are left Kan extensions of $F\res\rN(\del)^{op}$. Consider the following diagram
\[
\xymatrix{
& \cN(\varsigma) \ar[r]\ar[d]^{\rres_1^*\phi(\varsigma)} & \Kov(\varsigma) \ar[d]^-{\phi(\varsigma)} \\
\Fun(\Delta^n,\Cat) & \cK' \ar[l]_-{\rres_2}\ar[r]^-{\rres_1} &
\Fun(\rN(\del)^{op},\Fun(\Delta^n,\Cat)) }
\]
in which the right square is Cartesian, and $\rres_2$ is the restriction to $\{[-1]\}$. Put
\[
\Phi(\varsigma)\coloneqq\rres_2\circ\rres_1^*\phi(\varsigma)\colon\cN(\varsigma)\to\Fun(\Delta^n,\Cat).
\]
It is easy to see that the above process is functorial so that the collection of $\Phi(\varsigma)$ defines a morphism $\Phi$ of the category
\[
(\Sset)^{(\del_{\delta^*_{2,\{2\}}((\tilde\cC'^{op}\times\cL'^{op})^{\amalg,op})^\cart_{\tilde\cF',\all}})^{op}}.
\]

\begin{lem}\label{b4le:sharp2}
The map $\Phi(\varsigma)$ takes values in $\Map^\sharp((\Delta^n)^\flat,\Cat^\natural)$.
\end{lem}

\begin{proof}
Let $X_\bullet\colon\rN(\del_+)^{op}\to(\tilde\cC'^{op}\times\cL'^{op})^{\amalg,op}$ be an augmented simplicial object that is a \v{C}ech nerve of $f\colon X_0\to X_{-1}$ such that $f$ statically belongs to $\tcEs\cap\tilde\cE'\cap\cR$. By the construction of $\Phi(\varsigma)$, it suffices to show that $R\circ X_\bullet$ is a left Kan extension of $R\circ X_\bullet\res\rN(\del)^{op}$, where
$R=\EO{\cR}{\tilde\cC'}{\r{II}}{}\res((\tilde\cC'^{op}\times\cL'^{op})^{\amalg,op})_\cR$ is the restriction along direction 1.

Choose an object $X'_\bullet$ of $\Cov(X_{-1})$ and form a bisimplicial 
object 
$Y_{\bullet\bullet}\colon\rN(\del_+)^{op}\times\rN(\del_+)^{op}\to(\tilde\cC'^{op}\times\cL'^{op})^{\amalg,op}$ 
as in the proof of Lemma \ref{b4le:sharp}, which is static. Applying 
$\EO{\cR}{\tilde\cC'}{\r{II}}{}$ to $Y_{\bullet\bullet}$ and by adjunction, 
we obtain a diagram $\chi^\bullet_\bullet\colon \rN(\del_+)^{op}\times 
\rN(\del_+)\to \Cat$. By the construction of 
$\EO{\cR}{\tilde\cC'}{\r{II}}{}$, we have that $\chi^\bullet_n$ is a limit 
diagram for $n\geq -1$. By (P4), $\chi^n_\bullet$ is a colimit diagram for 
$n\ge 0$. Therefore, by (P5)(2) and \cite{HA}*{Proposition 4.7.4.19} applied 
to the restriction $\chi_\bullet^\bullet\res 
\rN(\del_{s,+})^{op}\times\rN(\del_{s,+})$, we have that $R\circ 
X_\bullet=\chi_\bullet^{-1}$ is a colimit diagram. In the last sentence, we 
used \cite{HTT}*{Lemma 6.5.3.7} twice. 
\end{proof}

Since $\rres_1$ is a trivial fibration by \cite{HTT}*{Proposition 4.3.2.15}, the simplicial set $\cN(\varsigma)$ is weakly contractible. By Lemma \ref{b4le:sharp2}, we can apply Proposition \ref{a1pr:extension} to
\[
K=\delta^*_{2,\{2\}}((\tilde\cC'^{op}\times\cL'^{op})^{\amalg,op})^\cart_{\tilde\cF',\all},\quad
K'=\delta^*_{2,\{2\}}((\tilde\cC'^{op}\times\cL'^{op})^{\amalg,op})^\cart_{\cR,\all},\quad
g\colon K'\hookrightarrow K,
\]
and the section $\nu$ given by $\EO{\cR}{\tilde\cC'}{\r{II}}{}$. This extends $\EO{\cR}{\tilde\cC'}{\r{II}}{}$ to a map
\[
\EO{}{\tilde\cC'}{\r{II}}{}\colon \delta^*_{2,\{2\}}((\tilde\cC'^{op}\times\cL'^{op})^{\amalg,op})^\cart_{\tilde\cF',\all}\to\Cat,
\]
as demanded.

Now we prove Proposition \ref{b4pr:toy}, which will be applied to construct the first abstract operation map $\EO{}{\tilde\cC}{\rI}{}$ in Output I.

\begin{proof}[Proof of Proposition \ref{b4pr:toy}]
The proof is similar to Step 1 above. Consider the diagram
\[
\xymatrix{
\partial \Delta^n\ar@{^(->}[d]\ar[r]^-G & \Fun^{\tilde\cE}(\tilde\cC^{op},\cD)\ar[d]\\
\Delta^n\ar[r]_-{F}\ar@{-->}[ru] & \Fun^\cE(\cC^{op},\cD).}
\]
Let $\sigma\colon (\Delta^m)^{op}\to \cC$ be an $m$-cell of ${\tilde\cC}^{op}$. We denote by $\Cov(\sigma)$ the full subcategory of
\[
\Fun((\Delta^m)^{op}\times \rN(\del_+)^{op},\tilde\cC)\times_{\Fun((\Delta^m)^{op}\times \{[-1]\},\tilde\cC)}\{\sigma\}
\]
spanned by \v{C}ech nerves $\sigma^0\colon (\Delta^m)^{op}\times\rN(\del_+)^{op}\to \tilde\cC$ such that $\sigma^0\res(\Delta^m)^{op}\times\rN(\del)^{op}$ factorizes through $\cC$, and that $\sigma^0\res\Delta^{\{j\}} \times \rN(\del_+^{\le 0})^{op}$ belongs to $\tilde\cE$ and is representable in $\cC$ for all $0\le j\le m$. Since $\Cov(\sigma)$ admits product of two objects, it is a contractible Kan complex by Lemma \ref{b1le:trivial_contractible}.

Let $\cK\subseteq \Fun(\rN(\del_+),\Fun(\Delta^m,\cD))$ be the full subcategories spanned by augmented cosimplicial objects $X_\bullet^+$ that
are right Kan extensions of $X_\bullet^+\res \rN(\del)$. By \cite{HTT}*{Proposition 4.3.2.15}, the restriction map $\cK\to\Fun(\rN(\del),\Fun(\Delta^m,\cD))$ is a trivial fibration. We have a diagram
\[
\xymatrix{\Cov(\sigma)^{op}\ar[rd]^-{\phi} \ar@/^1.2pc/[rrd]^-\alpha\ar@/_1pc/[rdd]_-{\beta} \\
&\cK'\ar[r]\ar[d] & \Fun(\Delta^n,\Fun(\rN(\del)\times \Delta^m,\cD))\ar[d]\\
& \Fun(\partial\Delta^n,\cK)\ar[r] & \Fun(\partial
\Delta^n,\Fun(\rN(\del)\times \Delta^m,\cD)) }
\]
where the square is Cartesian, $\alpha$ is induced by $F$, and $\beta$ is induced by $G$. Consider the diagram
\[
\xymatrix{ & \cN(\sigma)\ar[r]\ar[d]_-{\rres_1^*\phi} & \Cov(\sigma)^{op}\ar[d]^-{\phi}\\
\Fun(\Delta^n,\Fun(\Delta^m,\cD)) & \Fun(\Delta^n,\cK)\ar[r]^-{\rres_1}\ar[l]_-{\rres_2} & \cK', }
\]
where the square is Cartesian and $\rres_2$ is the restriction to $\{[-1]\}$. Since $\rres_1$ is a trivial fibration, $\cN(\sigma)$ is a
contractible Kan complex.

Put $\Phi(\sigma)\coloneqq\rres_2\circ \rres_1^*\phi$. The construction is functorial in $\sigma$ in the sense that it defines a morphism $\Phi$ of the category $(\Sset)^{(\del_{\tilde\cC^{op}})^{op}}$. Moreover, $\Phi(\sigma)$ takes values in $\Map^\sharp((\Delta^m)^\flat,\Fun(\Delta^n,\cD)^\natural)$. In fact, this is trivial for $n>0$ and the proof of Lemma \ref{b4le:sharp} can be easily adapted to treat the case $n=0$. Applying Corollary \ref{a2co:key} to $\Phi$ and $a=G$, we obtain a lifting $\tilde{F}\colon \Delta^n\to
\Fun(\tilde\cC^{op}, \cD)$ of $F$ extending $G$.

It remains to show that $\tilde{F}$ factorizes through $\Fun^{\tilde\cE}(\tilde\cC^{op},\cD)$. This is trivial for $n>0$. For $n=0$, we need to show that every morphism $f\colon Y \to X$ in $\tilde\cE$ is of $\tilde{F}$-descent, where we regard $\tilde{F}$ as a functor $\cC^{op}\to\cD$. Let $u\colon X'\to X$ be a morphism in $\tilde\cE$ with $X'$ in $\cC$, and $v$ the composite morphism $Y'\xrightarrow{w} Y\times_X X'\to Y$ of the pullback of $u$ and a morphism $w$ in $\cE$ with $Y'$ in $\cC$. This provides a diagram
\[
\xymatrix{
Y'\ar[r]^-{f'}\ar[d]_-{v} & X'\ar[d]^-{u}\\
Y\ar[r]^-{f}& X}
\]
where $u$ and $v$ are in $\tilde\cE$ and $f'$ belongs to $\cE$. Then $f'$ and $u$ are of $\tilde{F}$-descent by construction. It follows that $f$ is of $F$-descent by Lemma \ref{b3le:descent}(3,4).
\end{proof}

Thanks to (P0) and (P4) (see Remark \ref{b4re:input}(1)), we may apply Proposition \ref{b4pr:toy} to
\begin{itemize}
  \item $\tilde\cC=(\tilde\cC^{op}\times\cL^{op})^{\amalg,op}$,

  \item $\cC=(\cC^{op}\times\cL^{op})^{\amalg,op}$,

  \item $\cD=\Cat$,

  \item and the set $\tilde\cE$ consists of edges $f$ that statically belong to $\tcEs\cap \tilde\cE''$,
\end{itemize}
and obtain an extension of the functor $\EO{}{\cC}{\rI}{}$ to a functor
\[
\EO{}{\tilde\cC}{\rI}{}\colon(\tilde\cC^{op}\times\cL^{op})^\amalg\to\Cat
\]
as demanded.

\subsection{Properties}
\label{b4ss:properties}

We construct Output II and prove that Output I and Output II satisfy all required properties.

\begin{lem}[P0]
The functor $\EO{}{\tilde\cC}{\rI}{}$ is a lax Cartesian structure, and the 
induced functor 
$\EO{}{\tilde\cC}\otimes{}\coloneqq(\EO{}{\tilde\cC}{\rI}{})^\otimes$ 
factorizes through $\PSLM$. 
\end{lem}

\begin{proof}
This follows from the construction of $\EO{}{\tilde\cC}{\rI}{}$ as the properties in (P0) are preserved under limits.
\end{proof}

\begin{lem}[P1]
The map $\EO{}{\tilde\cC}\otimes{}$ sends small coproducts to products.
\end{lem}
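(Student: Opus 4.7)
The plan is to reduce to the input property (P1) via descent along an atlas, using the hypothesis that $\tilde{\cC}$ is geometric.

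First, given a small coproduct $X=\coprod_{i\in I} X_i$ in $\tilde{\cC}$, I would choose for each $i$ an atlas $f_i\colon Y_i\to X_i$ with $Y_i\in\cC$ (possible by Input 0 (4)), and form $f=\coprod_i f_i\colon Y=\coprod_i Y_i\to X$. By the stability of $\tcEs\cap\tilde{\cE}'$ under small coproducts (Input 0 (3)), $f$ is still in $\tcEs\cap\tilde{\cE}'\subseteq\tcEs\cap\tilde{\cE}''$, and $Y\in\cC$ by Input 0 (1). Thus $f$ is an atlas of $X$, and by construction of $\EO{}{\tilde{\cC}}{*}{\otimes}$ (which satisfies $\tilde \cE$-descent for $\tilde \cE=\tcEs\cap\tilde{\cE}''$ via Proposition \ref{3pr:toy}), we have
\[\EO{}{\tilde{\cC}}{*}{\otimes}(X)\simeq\lim_{n\in\del}\EO{}{\tilde{\cC}}{*}{\otimes}(Y_n),\]
where $Y_\bullet$ is a \v{C}ech nerve of $f$ in $\tilde{\cC}$, and likewise $\EO{}{\tilde{\cC}}{*}{\otimes}(X_i)\simeq\lim_n\EO{}{\cC}{*}{\otimes}(Y_{i,n})$ for a \v{C}ech nerve $Y_{i,\bullet}$ of $f_i$.

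The key geometric input is the identification $Y_n\simeq\coprod_i Y_{i,n}$ in $\tilde{\cC}$. For $n=1$, this reads $Y\times_X Y\simeq\coprod_{i,j}Y_i\times_X Y_j$ by universality of coproducts (Definition \ref{3de:geometric} (2)); for $i\neq j$, disjointness of coproducts forces $X_i\times_X X_j$ to be initial, so $Y_i\times_X Y_j\simeq Y_i\times_{X_i}(X_i\times_X X_j)\times_{X_j}Y_j$ is initial as well; and for $i=j$, the factorization $Y_i\to X_i\hookrightarrow X$ together with the fact that $X_i\to X$ is a monomorphism (a consequence of disjointness) yields $Y_i\times_X Y_i\simeq Y_i\times_{X_i}Y_i$. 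The same argument in higher arity gives the claimed decomposition of $Y_n$. Since $\cC\subseteq\tilde{\cC}$ is stable under finite limits and small coproducts of objects of $\cC$ lie in $\cC$ (Input 0 (1)), each $Y_n$ lies in $\cC$, so the limit above may be computed using $\EO{}{\cC}{*}{\otimes}$.

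With this in hand, I combine everything: applying the input property (P1) to the decomposition $Y_n\simeq\coprod_i Y_{i,n}$ gives $\EO{}{\cC}{*}{\otimes}(Y_n)\simeq\prod_i\EO{}{\cC}{*}{\otimes}(Y_{i,n})$, and commuting limits with products in $\PSLM$ (which preserves small limits by Lemma \ref{1le:monoidal_limit}) yields
\[\EO{}{\tilde{\cC}}{*}{\otimes}(X)\simeq\lim_n\prod_i\EO{}{\cC}{*}{\otimes}(Y_{i,n})\simeq\prod_i\lim_n\EO{}{\cC}{*}{\otimes}(Y_{i,n})\simeq\prod_i\EO{}{\tilde{\cC}}{*}{\otimes}(X_i).\]
Naturality of each step identifies this composite with the canonical comparison map, giving the desired equivalence. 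The main obstacle is the bookkeeping needed to verify that the \v{C}ech nerve of $f$ really decomposes as a coproduct of \v{C}ech nerves indexed by $I$; once this is granted, descent plus (P1) for $\cC$ mechanically produces the result.
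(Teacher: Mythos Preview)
Your proposal is correct and follows essentially the same approach as the paper's proof, which is a terse one-paragraph sketch: the paper simply notes that since $\tilde{\cC}$ is geometric, forming \v{C}ech nerves commutes with small coproducts, and then invokes the construction of $\EO{}{\tilde{\cC}}{*}{\otimes}$ together with (P1) for the input $\EO{}{\cC}{*}{\otimes}$. Your write-up spells out each of these steps explicitly---the coproduct decomposition of the \v{C}ech nerve via disjointness and universality, the reduction to $\cC$, and the commutation of the cosimplicial limit with the product---which is exactly the content the paper leaves implicit.
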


\begin{proof}
Since $\tilde\cC$ is geometric (Definition \ref{b4de:geometric}), small coproducts commute with pullbacks. Therefore, forming \v{C}ech nerves
commutes with the such coproducts. Then the lemma follows from the construction of $\EO{}{\tilde\cC}\otimes{}$ and the property (P1) for
$\EO{}{\cC}\otimes{}$.
\end{proof}

\begin{lem}[P2]\label{b4le:p2}
The restrictions of $\EO{}{\tilde\cC}{\rI}{}$ and $\EO{}{\tilde\cC'}{\r{II}}{}$ to the subcategory $(\tilde\cC'^{op}\times\cL'^{op})^\amalg$ are equivalent functors.
\end{lem}

\begin{proof}
By Proposition \ref{b4pr:toy} and the original (P2), it suffices to show that the restriction
$F\coloneqq\EO{}{\tilde\cC'}{\r{II}}{}\res(\tilde\cC'^{op}\times\cL'^{op})^\amalg$ belongs to $\Fun^{\tilde\cE}((\tilde\cC'^{op}\times\cL'^{op})^\amalg,\Cat)$ where set $\tilde\cE$ consists of edges $f$ of that statically belong to $\tcEs\cap\tilde\cE''\cap\tilde\cC'_1$. In other words, it suffices to show that $f$ is of $F$-descent.

By construction, the assertions are true if $f$ is statically an atlas. Moreover, by the original (P4), the assertions are also true if $f$ is a morphism of $\cC'$. In the general case, consider a diagram
\[
\xymatrix{Y'\ar[r]^-{f'}\ar[d]_-{v} & X'\ar[d]^-{u}\\
Y\ar[r]^-{f}& X}
\]
where $u$ is an atlas and $f'$ belongs to $\cEs\cap\cE''$. For example, we can take $v$ to be an atlas of $Y\times_XX'$. The proposition then follows from Lemma \ref{b3le:descent}(3,4).
\end{proof}

\begin{lem}[P3]\label{b4le:p3}
If $f\colon Y\to X$ belongs to $\tcEs$, then $f^*\colon\cD(X,\lambda)\to\cD(Y,\lambda)$ is conservative for every object $\lambda$ of $\cL$.
\end{lem}

\begin{proof}
We may put $f$ into the following diagram
\[
\xymatrix{
 Y' \ar[r]^-{f'}  \ar[d]_{v}  & X' \ar[d]^-{u} \\
 Y \ar[r]^-{f} & X }
\]
where $u$ is an atlas, $Y$ belongs to $\cC$ and $f'$ belongs to $\cEs$. Then 
we only need to show that $v^*\circ f^*$, which is equivalent to $f'^*\circ 
u^*$, is conservative. By \cite{HA}*{Theorem 4.7.5.2(3)}, $u^*$ is 
conservative, and $f'^*$ is also conservative by the original (P3). 
Therefore, $f^*$ is conservative. 
\end{proof}

\begin{proposition}[P4]\label{b4pr:p4}
Let $f$ be a morphism of $\tilde\cC^{op}$ (resp.\ $\tilde\cC'$).
\begin{enumerate}
  \item If $f$ belongs to $\tcEs\cap\tilde\cE''$, then $(f,\id_\lambda)$ is of universal $\EO{}{\tilde\cC}\otimes{}$-descent for every object $\lambda$ of $\cL$.

  \item If $f$ belongs to $\tcEs\cap\tilde\cE''\cap\tilde\cC'_1$, then $(f,\id_\lambda)$ is of universal $\EO{}{\tilde\cC'}{}{!}$-codescent  for every object $\lambda$ of $\cL'$.
\end{enumerate}
\end{proposition}

\begin{proof}
Part (1) follows from the construction of $\EO{}{\tilde\cC}{\rI}{}$. Part (2) follows from the same argument as in Lemma \ref{b4le:p2}.
\end{proof}

We will only check (P5), and (P5\bis) follows in the same way.

\begin{proposition}[P5]\label{b4pr:p5}
Let
\[
\xymatrix{
W \ar[r]^-{g} \ar[d]_-{q} & Z \ar[d]^-{p} \\
Y \ar[r]^-{f} & X }
\]
be a Cartesian diagram of $\tilde\cC'$ with $f$ in $\tilde\cE'$, and $\lambda$ an object of $\cL'$. Then
\begin{enumerate}
  \item The square
      \begin{align}\label{b4eq:adjoint1}
      \xymatrix{
      \cD(Z,\lambda)  \ar[d]_-{g^*} & \cD(X,\lambda) \ar[l]_-{p^*} \ar[d]^-{f^*}\\
      \cD(W,\lambda) & \cD(Y,\lambda) \ar[l]_-{q^*}
      }
      \end{align}
      has a right adjoint which is a square of $\PSR$.

  \item If $p$ is also in $\tilde\cE'$, the square
      \begin{align}\label{b4eq:adjoint2}
      \xymatrix{
      \cD(X,\lambda)  \ar[d]_-{p^*} & \cD(Y,\lambda) \ar[l]_-{f_!} \ar[d]^-{q^*}\\
      \cD(Z,\lambda) & \cD(W,\lambda) \ar[l]_-{g_!}
      }
      \end{align}
      is right adjointable.
\end{enumerate}
\end{proposition}

We first prove a technical lemma.

\begin{lem}\label{b4le:adjoint_limit}
Let $K$ be a simplicial set, and $p\colon K\to\Fun(\Delta^1\times\Delta^1,\Cat)$ a diagram of squares of $\infty$-categories. We view $p$ as a functor $K\times\Delta^1\times\Delta^1\to\Cat$. If for every edge $\sigma\colon\Delta^1\to K\times\Delta^1$, the induced square
$p\circ(\sigma\times\id_{\Delta^1})\colon \Delta^1\times\Delta^1\to\Cat$ is right adjointable (resp.\ left adjointable), then the limit square $\lim(p)$ is right adjointable (resp.\ left adjointable).
\end{lem}

Recall from the remark following Proposition \ref{b1pr:partial_adjoint} that when visualizing squares, we adopt the convention that direction $1$ is vertical and direction $2$ is horizontal.

\begin{proof}
Let us prove the right adjointable case, the proof of the other case being essentially the same. The assumption allows us to view $p$ as a functor
\[
p'\colon K\to\Fun(\Delta^1,\Fun^{\r{RAd}}(\Delta^1,\Cat))
\]
\cite{HA}*{Definition 4.7.4.16}. By \cite{HA}*{Corollary 4.7.4.18} and (the 
dual version of) \cite{HTT}*{Corollary 5.1.2.3}, the $\infty$-category 
$\Fun(\Delta^1,\Fun^{\r{RAd}}(\Delta^1,\Cat))$ admits all limits and these 
limits are preserved by the inclusion 
\[
\Fun(\Delta^1,\Fun^{\r{RAd}}(\Delta^1,\Cat))\subseteq\Fun(\Delta^1,\Fun(\Delta^1,\Cat)).
\]
Therefore, the limit square $\lim(p)$ is equivalent to $\lim(p')$ which is right adjointable.
\end{proof}

\begin{proof}[Proof of Proposition \ref{b4pr:p5}]
For (1), it is clear from the construction and the original (P5)(1) that both $f^*$ and $g^*$ admit left adjoints. Therefore, we only need to show that \eqref{b4eq:adjoint1} is right adjointable. By Lemma \ref{b4le:adjoint_limit}, we may assume that $f$ belongs to $\cE'$. Then it reduces to show that the transpose of \eqref{b4eq:adjoint1} is left adjointable, which allows us to assume that $p$ is a morphism of $\cC'$, again by Lemma \ref{b4le:adjoint_limit}. Then it follows from the original (P5)(1).

For (2), by Lemma \ref{b4le:adjoint_limit}, we may assume that $p$ belongs to $\cE'$. Then $p^*$ and $q^*$ admit left adjoints. Therefore, we only need to prove that the transpose of \eqref{b4eq:adjoint2} is left adjointable, which allows us to assume that $f$ is also in $\cE'$, again by Lemma \ref{b4le:adjoint_limit}. Then it follows from the original (P5)(2).
\end{proof}

Next we define the t-structure. Let $X$ be an object of $\tilde\cC$ and let $\lambda$ be an object of $\cL$. For an atlas $f\colon X_0\to X$, we denote by $\cD^{\leq0}_f(X,\lambda)\subseteq\cD(X,\lambda)$ (resp.\ $\cD^{\geq0}_f(X,\lambda)\subseteq\cD(X,\lambda)$) the full subcategory
spanned by complexes $\sfK$ such that $f^*\sfK$ belongs to $\cD^{\leq0}(X_0,\lambda)$ (resp.\ $\cD^{\geq0}(X_0,\lambda)$).

\begin{lem}\label{b4le:t_structure}
We have
\begin{enumerate}
  \item The pair of subcategories $(\cD^{\leq0}_f(X,\lambda),\cD^{\geq0}_f(X,\lambda))$ determine a t-structure on $\cD(X,\lambda)$.

  \item The pair of subcategories $(\cD^{\leq0}_f(X,\lambda),\cD^{\geq0}_f(X,\lambda))$ do not depend on the choice of $f$.
\end{enumerate}
\end{lem}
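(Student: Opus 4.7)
The plan is to reduce to the already-available $t$-structures at the atlas level by using the descent equivalence furnished by (P4). First I observe that every term of the \v{C}ech nerve $X_\bullet$ of $f$ lies in $\cC$: indeed, $\cC \subseteq \tilde\cC$ is stable under finite limits by Input 0 (1), and the diagonal $X \to X \times X$ is representable in $\cC$ by Input 0 (5); a straightforward induction using $X_{n+1} \simeq X_n \times_X X_0 \simeq (X_n \times X_0) \times_{X \times X} X$ then shows that each $X_n$ lies in $\cC$. Since $f \in \tcEs \cap \tilde\cE' \subseteq \tcEs \cap \tilde\cE''$, Proposition \ref{3pr:etale_descent} yields a canonical equivalence $\cD(X, \lambda) \simeq \lim_{[n] \in \del} \cD(X_n, \lambda)$ in $\CAT$, and the transition functors are pullbacks along morphisms of $\cC$, hence $t$-exact by (P6)(3) of the Input.

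For part (1), I plan to assemble the levelwise truncation functors on each $\cD(X_n, \lambda)$ into endofunctors of the limit. For every face or degeneracy map $g\colon X_m \to X_n$, the $t$-exactness of $g^*$ provides canonical equivalences $g^* \circ \tau_n^{\le 0} \simeq \tau_m^{\le 0} \circ g^*$; these coherence data promote $\{\tau_n^{\le 0}\}_n$ to a natural transformation of cosimplicial $\infty$-categories $\cD(X_\bullet, \lambda) \to \cD(X_\bullet, \lambda)$, whose limit yields the desired endofunctor $\tau^{\le 0}$ on $\cD(X, \lambda)$, and likewise for $\tau^{\ge 1}$. The fiber sequence $\tau^{\le 0}\sK \to \sK \to \tau^{\ge 1}\sK$ is obtained as the limit of the levelwise fiber sequences, since $\PSL \subseteq \CAT$ preserves small limits. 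The required $\Hom$-vanishing $\Map(\sK, \sL) \simeq 0$ for $\sK \in \cD^{\le 0}_f$ and $\sL \in \cD^{\ge 1}_f$ follows from $\Map_{\cD(X, \lambda)}(\sK, \sL) \simeq \lim_n \Map_{\cD(X_n, \lambda)}(f_n^* \sK, f_n^* \sL)$ and the levelwise vanishing, where $f_n \colon X_n \to X$ is the structure map. Finally, conservativity of $f^*$, guaranteed by (P3) proven as Lemma \ref{3le:conservative}, identifies $\cD^{\le 0}_f$ and $\cD^{\ge 1}_f$ with the essential images of the two truncation functors.

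For part (2), let $f\colon X_0 \to X$ and $f'\colon X_0' \to X$ be two atlases. The fiber product $Y = X_0 \times_X X_0'$ lies in $\cC$ by the same reasoning as above, and the projections $p\colon Y \to X_0$ and $p'\colon Y \to X_0'$ are pullbacks of $f'$ and $f$ respectively, hence lie in $\tcEs \cap \tilde\cE'$ by the pullback stability in Input 0 (3); being morphisms of $\cC$, they lie in $\cEs \cap \cE'$. By (P6)(3) they are $t$-exact, and by (P3) their pullback functors are conservative. Consequently the three conditions $f^*\sK \in \cD^{\le 0}(X_0, \lambda)$, $(f\circ p)^*\sK \in \cD^{\le 0}(Y, \lambda)$, and $f'^*\sK \in \cD^{\le 0}(X_0', \lambda)$ are mutually equivalent, and similarly for $\cD^{\ge 0}$, which gives the claimed independence.

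The main technical obstacle will be the coherent assembly of the levelwise truncation functors into an honest endofunctor of the limit $\cD(X, \lambda)$: at the level of homotopy categories this is immediate, but producing the full $\infty$-categorical coherence requires either an appeal to a general principle on limits of $t$-structures in the spirit of \cite[\S 1.4.4]{Lu2}, or a direct construction exploiting the universal property of the limit together with the naturality of truncation under $t$-exact functors between stable $\infty$-categories.
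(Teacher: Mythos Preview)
Your proposal is correct and follows essentially the same approach as the paper: for (1) both use the descent equivalence from Proposition~\ref{3pr:etale_descent} to reduce Hom-vanishing to the levelwise statement, and construct the truncation fiber sequence by taking the limit of the levelwise truncation diagrams; for (2) both form the fiber product $Y = X_0 \times_X X_0'$ and use $t$-exactness together with conservativity of the projections. The technical point you flag in your final paragraph---assembling the levelwise truncations into a coherent cosimplicial diagram---is treated in the paper with the same brevity: it simply asserts that (P6) yields a cosimplicial diagram $p\colon \N(\del)\to\Fun(\Delta^1,\Cat)$ whose limit furnishes the required square of endofunctors, without spelling out the coherence any further than you do.
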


In what follows, we will write $(\cD^{\leq0}(X,\lambda),\cD^{\geq0}(X,\lambda))$ for $(\cD^{\leq0}_f(X,\lambda),\cD^{\geq0}_f(X,\lambda))$ for an arbitrary atlas $f$. Moreover, if $X$ is an object of $\cC$, then the new t-structure coincides with the old one since $\id_X\colon X\to X$ is an atlas.

\begin{proof}
For (1), let $f_\bullet\colon X_\bullet\to X$ be a \v{C}ech nerve of $f_0=f$. We need to check the axioms of \cite{HA}*{Definition 1.2.1.1}. To check axiom (1), let $\sfK$ be an object of $\cD^{\le 0}_f(X,\lambda)$ and $\sfL$ an object of $\cD^{\ge 1}_f(X,\lambda)$. By (P6) for the input and Proposition \ref{b4pr:p4}(1), $\Map(\sfK,\sfL)$ is a homotopy limit of $\Map(f_n^*\sfK,f_n^*\sfL)$ by \cite{HTT}*{Theorem 4.2.4.1, Corollary A.3.2.28} and is thus a weakly contractible Kan complex. Axiom (2) is trivial. By (P6) for the input, we have a cosimplicial diagram $p\colon
\rN(\del)\to\Fun(\Delta^1,\Cat)$ sending $[n]$ to the functor $\cD(X_n,\lambda)\to\Fun(\Delta^1\times\Delta^1,\cD(X_n,\lambda))$ that
corresponds to the following Cartesian diagram of functors:
\[
\xymatrix{
\tau_n^{\leq0} \ar[r] \ar[d] & \id_{X_n} \ar[d] \\
0 \ar[r] & \tau_n^{\geq1}, }
\]
where $\tau_n^{\leq0}$ and $\tau_n^{\geq1}$ (resp.\ $\id_{X_n}$) are the truncation functors (resp.\ is the identity functor) of $\cD(X_n,\lambda)$. Axiom (3) follows from the fact that $\lim(p)$ provides a similar Cartesian diagram of endofunctors of $\cD(X,\lambda)$.

For (2), by (1) it suffices to show that for every other atlas $f'\colon X'_0\to X$, we have $\cD^{\le 0}_f(X,\lambda)= \cD_{f'}^{\le 0}(X,\lambda)$. Let $\sfK$ be an object of $\cD_f^{\leq0}(X,\lambda)$ and form a Cartesian diagram
\[
\xymatrix{
Y \ar[r]^-{g} \ar[d]_-{g'} & X'_0 \ar[d]^-{f'} \\
X_0 \ar[r]^-{f} & X. }
\]
By (P6) for the input, the functors $g^*$ and $g'^*$ are t-exact, so that
\[
g^* \tau^{\ge 1}f'^*\sfK\simeq \tau^{\ge 1}g^*f'^*\sfK\simeq\tau^{\ge 1} g'^*f^*\sfK\simeq g'^*\tau^{\ge 1}f^*\sfK=0.
\]
As $g^*$ is conservative by (P3) for the input, we have $\tau^{\ge 1}f'^*\sfK=0$. In other words, $f'^*\sfK$ belongs to $\cD^{\leq0}(X'_0,\lambda)$. Therefore, we have $\cD^{\le0}_f(X,\lambda)\subseteq \cD_{f'}^{\le 0}(X,\lambda)$. By symmetry, we have
$\cD^{\le 0}_f(X,\lambda)\supseteq \cD_{f'}^{\le 0}(X,\lambda)$. It follows that $\cD^{\le 0}_f(X,\lambda)=\cD_{f'}^{\le 0}(X,\lambda)$.
\end{proof}

\begin{lem}[P6]\label{b4le:p6}
Let $\lambda$ be an arbitrary object of $\cL$. We have
\begin{enumerate}
  \item For every object $X$ of $\tilde\cC$, we have $\lambda_X\in\cD^\heartsuit(X,\lambda)$.

  \item If $\lambda$ belongs to $\cL''$ and $X$ is an object of $\tilde\cC''$, then the auto-equivalence $-\otimes s_X^*\lambda(1)$ of $\cD(X,\lambda)$ is t-exact.

  \item For every object $X$ of $\tilde\cC$, the t-structure on $\cD(X,\lambda)$ is accessible, right complete, and $\cD^{\le-\infty}(X,\lambda)\colonequals\bigcap_n \cD^{\le -n}(X,\lambda)$ consists of zero objects.

  \item For every morphism $f\colon Y\to X$ of $\tilde\cC$, the functor $f^*\colon \cD(X,\lambda)\to\cD(Y,\lambda)$ is t-exact.
\end{enumerate}
\end{lem}

\begin{proof}
We choose an atlas $f\colon X_0\to X$. Then (1) and (2) follows from (4), the definition of the t-structure, and that $f^*\lambda_X\simeq\lambda_{X_0}$. Moreover, (3) follows from the construction, the conservativeness of $f^*$, and the corresponding properties for $X_0$. Therefore, it remains to show (4).

However, we may put $f\colon Y\to X$ into a diagram
\[
\xymatrix{
Y' \ar[r]^-{f'}  \ar[d]_{v}  & X' \ar[d]^-{u} \\
Y \ar[r]^-{f} & X }
\]
where $u$ and $v$ are both atlases. Then the assertion follows from the definition of the t-structure and the fact that $f'^*$ is t-exact.
\end{proof}

Finally we construct the trace maps. We will construct the trace maps for $\tcEt$ and check (P7). Construction of the trace maps for $\tilde\cE'$ and verification of (P7\bis) are similar and in fact easier.

Same as before, we have two steps. We first construct the trace maps for $\cR\cap\tcEt$.

\begin{lem}\label{b4le:poincare1}
There exists a unique way to define the trace map
\[
\Tr_f\colon \tau^{\ge 0}f_!\lambda_Y\langle d\rangle\to \lambda_X,
\]
for morphisms $f\colon Y \to X$ in $\cR\cap\tcEt\cap\tilde\cC''_1$ and integers $d\geq\dim^+(f)$, satisfying (P7)(1) and extending the input. In particular, for such a morphism $f$, we have $f_!\lambda_Y\langle d\rangle\in\cD^{\leq 0}(X,\lambda)$.
\end{lem}

\begin{proof}
Let
\begin{align}\label{b4eq:atlas}
\xymatrix{
   Y_0 \ar[d]_-{y_0} \ar[r]^-{f_0} & X_0 \ar[d]^-{x_0} \\
   Y \ar[r]^-{f} & X   }
\end{align}
be a Cartesian diagram in $\tilde\cC''$, where $x_0$ and hence $y_0$ are atlases. Let $\rN(\del_+)^{op}\times\Delta^1\to\tilde\cC''$ be a \v{C}ech nerve, as shown in the following diagram
\begin{align}\label{b4eq:poincare}
\xymatrix{
   Y_\bullet \ar[d]_-{y_\bullet} \ar[r]^-{f_\bullet} & X_\bullet \ar[d]^-{x_\bullet} \\
   Y \ar[r]^-{f} & X.   }
\end{align}
We call such a diagram a \emph{simplicial Cartesian atlas} of $f$. We have $\dim^+(f_n)=\dim^+(f)$ for every $n\geq0$. By Base Change which is encoded in $\EO{}{\tilde\cC'}{\r{II}}{}$ and the definition of $-\langle d\rangle$, we have
\[
x_0^*f_!\lambda_Y\langle d\rangle\simeq f_{0!}y_0^*\lambda_Y\langle d\rangle\simeq f_{0!}\lambda_{Y_0}\langle d\rangle\in \cD^{\le0}(X_0,\lambda),
\]
which implies that $f_!\lambda_Y\langle d\rangle$ belongs to $\cD^{\leq0}(X,\lambda)$ by the definition of the t-structure. The uniqueness of the trace map follows from condition (2) of Remark \ref{b4re:trace} applied to the diagram \eqref{b4eq:atlas} and (P3) applied to $x_0$.

For $n\geq0$, we have trace maps $\Tr_{f_n}\colon\tau^{\geq0}f_{n!}\lambda_{Y_n}\langle d\rangle\to\lambda_{X_n}$. By condition (2) of Remark \ref{b4re:trace} applied to the squares induced by $f_\bullet$, we know that $\tau^{\le 0}x_{\bullet *}\Tr_{f_\bullet}$ is a morphism of cosimplicial objects of $\cD^\heartsuit(X,\lambda)$. Taking limit, we obtain a map
\[
\lim_{n\in \del}\tau^{\le 0}x_{n*}\Tr_{f_n}\colon
\lim_{n\in \del}\tau^{\le 0}x_{n*}\tau^{\geq0}f_{n!}\lambda_{Y_n}\langle d\rangle\to\lim_{n\in \del}\tau^{\le 0}x_{n*}\lambda_{X_n}\simeq\lambda_X.
\]
However, the left-hand side is isomorphic to
\begin{align*}
\lim_{n\in \del}\tau^{\le 0}x_{n*}\tau^{\geq0}f_{n!}y_n^*\lambda_Y\langle d\rangle
&\simeq\lim_{n\in \del }\tau^{\le0}x_{n*}\tau^{\geq0}x_n^*f_!\lambda_Y\langle d\rangle\\
&\simeq\lim_{n\in\del}\tau^{\le 0}x_{n*}x_n^*\tau^{\geq0}f_!\lambda_Y\langle d\rangle\simeq\tau^{\geq0}f_!\lambda_Y\langle d\rangle.
\end{align*}
Therefore, we obtain a map $\Tr_{f_\bullet}\colon \tau^{\ge0}f_!\lambda_Y\langle d\rangle\to \lambda_X$.

This extends the trace map of the input. In fact, for $f$ in $\cC''_1$, by condition (2) of Remark \ref{b4re:trace} applied to \eqref{b4eq:poincare}, $\Tr_{f_\bullet}$ can be identified with $\varprojlim_{n\in \del}x_{n*}x_n^*\Tr_f$. Moreover, condition (2) of Remark \ref{b4re:trace} holds in general if one interprets $\Tr_{f}$ as $\Tr_{f_\bullet}$ and $\Tr_{f'}$ as $\Tr_{f'_\bullet}$, where $f'_\bullet$ is a simplicial Cartesian atlas of $f'$, compatible with $f_\bullet$. In fact, by condition (2) of Remark \ref{b4re:trace} for the input, the bottom square of the diagram
\begin{align*}
\resizebox{\hsize}{!}{
\xymatrix{
u^*\tau^{\ge 0}f_!\lambda_Y\langle d \rangle \ar[rr]^-{u^*\Tr_{f_\bullet}}\ar[rd]^-{\simeq}\ar[dd]_\simeq && u^*\lambda_X\ar[rd]^\simeq\ar[dd]|\hole^(.7)\simeq\\
&\tau^{\ge 0}f'_!\lambda_{Y'}\langle d\rangle\ar[rr]^(.3){\Tr_{f'_\bullet}}\ar[dd]^(.3)\simeq &&
\lambda_{X'}\ar[dd]^-\simeq\\
\lim \tau^{\le 0}x'_{n*}u_n^*\tau^{\ge 0}f_{n!} \lambda_{Y_n}\langle d\rangle
\ar[rr]|-\hole^(.4){\lim\tau^{\le 0} x'_{n*}u_n^*\Tr_{f_{n}}}\ar[dr]^\simeq
&& \lim \tau^{\le 0} x'_{n*}u_n^* \lambda_{X_n}\ar[rd]^\simeq\\
&\lim \tau^{\le 0}x'_{n*} \tau^{\ge 0}f'_{n!}\lambda_{Y'_n}\ar[rr]^-{\lim
\tau^{\le 0}x'_{n*}\Tr_{f'_n}} && \lim \tau^{\le 0}x'_{n*}\lambda_{X'_n}
}
}
\end{align*}
is commutative, where all the limits are taken over $n\in\del$. Since the vertical squares are commutative, it follows that the top square is commutative as well. The case of condition (2) of Remark \ref{b4re:trace} where $u$ is an atlas then implies that $\Tr_{f_\bullet}$ does not depend on the choice of $f_\bullet$. We may therefore denote it by $\Tr_f$.

It remains to check conditions (1) and (3) of Remark \ref{b4re:trace}. Similarly to the situation of condition (2), these follow from the input by taking limits.
\end{proof}

\begin{lem}\label{b4le:poincare2}
If $f\colon Y\to X$ belongs to $\cR\cap\tilde\cE''_d\cap\tilde\cC''_1$, then the induced natural transformation
\[
f^*\langle d\rangle=\id_Y\circ f^*\langle d\rangle\to f^!\circ f_!\circ f^*\langle d\rangle\xrightarrow{f^!\circ u_f} f^!
\]
is an equivalence, where the first arrow is given by the unit transformation and $u_f$ is defined similarly as \eqref{b3eq:trace_1}.
\end{lem}

\begin{proof}
Consider diagram \eqref{b4eq:poincare}. We need to show that for every object $\sfK$ of $\cD(X,\lambda)$, the natural map $f^*\sfK\langle d\rangle\to f^!\sfK$ is an equivalence. By Proposition \ref{b4pr:p4}(1), the map $\sfK\to\lim_{n\in\del}u_{n*}u_n^*\sfK$ is an equivalence. Moreover, $f^!$ preserves small limits, and, by (P5\bis)(1), so does  $f^*$, since $f$ belongs to $\tilde\cE''$. Therefore, we may assume $\sfK=x_{n*}\sfL$, where $\sfL\in\cD(X_n,\lambda)$. Similarly to \eqref{b4eq:outputii}, the diagram
\[
\xymatrix{f^*x_{n*}\sfL\langle d\rangle\ar[r]\ar[d] & y_{n*}f_n^*\sfL\langle d\rangle\ar[d]\\
f^!x_{n*}\sfL \ar[r] & y_{n*}f_n^!\sfL}
\]
is commutative up to homotopy. The upper horizontal arrow is an equivalence by (P5\bis)(1), the lower horizontal arrow is an equivalence by $\EO{}{\tilde\cC'}{*}{!}$, and the right vertical arrow is an equivalence by (P6) for the input. It follows that the left vertical arrow is an
equivalence.
\end{proof}

\begin{proposition}[P7(1)]
There exists a unique way to define the trace map
\[
\Tr_f\colon \tau^{\ge 0}f_!\lambda_Y\langle d\rangle\to \lambda_X,
\]
for morphisms $f\colon Y \to X$ in $\tcEt\cap\tilde\cC''_1$ and integers $d\geq\dim^+(f)$, satisfying (P7)(1) and extending the input. In particular, for such a morphism $f$, we have $f_!\lambda_Y\langle d\rangle\in\cD^{\leq0}(X,\lambda)$.
\end{proposition}

\begin{proof}
Let $Y_\bullet\colon \rN(\del_+)^{op}\to\tilde\cC'$ be a \v{C}ech nerve of an atlas $y_0\colon Y_0\to Y$, and form a triangle
\begin{align}\label{b4eq:triangle}
\xymatrix{ & Y\ar[rd]^-{f}\\
Y_\bullet\ar[rr]^-{f_\bullet} \ar[ur]^-{y_\bullet}&&X.}
\end{align}
For $n\ge 0$, we have $f_n\in\cR\cap \tcEt\cap\tilde\cC''_1$. By Proposition \ref{b4pr:p4}(2), we have equivalences
\[
\colim_{n\in\del^{op}}f_{n!}y_n^!\lambda_Y \simeq \colim_{n\in\del^{op}}f_!y_{n!}y_n^!\lambda_Y
\xrightarrow{\sim}f_!\colim_{n\in\del^{op}}y_{n!}y_n^!\lambda_Y\xrightarrow{\sim} f_!\lambda_Y.
\]
Since $y_n$ belongs to $\cR\cap\tilde\cE''\cap\tilde\cC''_1$, by Lemma \ref{b4le:poincare2} and Remark \ref{b4re:input}(5), we have equivalences
\[
\colim_{n\in\del^{op}}f_{n!}\lambda_{Y_n}\langle d+\dim y_n\rangle\simeq \colim_{n\in\del^{op}}f_{n!}y_n^*\lambda_Y\langle d+\dim y_n\rangle
\xrightarrow{\sim}\colim_{n\in\del^{op}}f_{n!}y_n^!\lambda_Y\langle d\rangle.
\]
Combining the above ones, we obtain an equivalence
\[
\colim_{n\in\del^{op}}f_{n!}\lambda_{Y_n}\langle d+\dim y_n\rangle\xrightarrow{\sim} f_!\lambda_Y\langle d\rangle.
\]
By Lemma \ref{b4le:poincare1}, $f_{n!}\lambda_{Y_n}\langle d+\dim y_n\rangle$ belongs to $\cD^{\leq 0}(X,\lambda)$ for every $n\geq 0$. It follows that the colimit is as well by \cite{HA}*{Corollary 1.2.1.6}. Moreover, the composite map
\begin{align*}
\tau^{\geq0}f_{n!}\lambda_{Y_n}\langle d+\dim y_n\rangle
&\to \colim_{n\in\del^{op}}\tau^{\geq0}f_{n!}\lambda_{Y_n}\langle d+\dim y_n\rangle \\
&\xrightarrow{\sim}\tau^{\geq0}\colim_{n\in\del^{op}}f_{n!}\lambda_{Y_n}\langle d+\dim y_n\rangle \xrightarrow{\sim} \tau^{\ge 0}f_!\lambda_Y\langle d\rangle
\end{align*}
is induced by $\Tr_{f_n}$. The uniqueness of $\Tr_f$ then follows from condition (3) of Remark \ref{b4re:trace} applied to the triangle
\eqref{b4eq:triangle}.

Condition (3) of Remark \ref{b4re:trace} applied to the triangles induced by $f_\bullet$ implies the compatibility of
\[
\Tr_{f_n}\colon\tau^{\geq0}f_{n!}\lambda_{Y_n}\langle d+\dim y_n\rangle\to\lambda_X
\]
with the transition maps, so that we obtain a map $\Tr_{f_\bullet}\colon\tau^{\ge 0}f_!\lambda_Y\langle d\rangle\to \lambda_X$. This extends the trace map of Lemma \ref{b4le:poincare1}, by condition (3) of Remark \ref{b4re:trace} applied to \eqref{b4eq:triangle} for
$f\in\cR\cap\tcEt\cap\tilde\cC''_1$. Moreover, condition (3) of Remark \ref{b4re:trace} holds for $g\in\cR\cap\tcEt\cap\tilde\cC''_1$, if we
interpret $\Tr_f$ as $\Tr_{f_\bullet}$ and $\Tr_h$ as $\Tr_{h_\bullet}$, where $h_\bullet\colon Y_\bullet\times_Y Z\to X$. In fact, by condition (3) of Remark \ref{b4re:trace} for morphisms in $\cR\cap\tcEt\cap\tilde\cC''_1$, the diagram
\begin{align*}
\resizebox{\hsize}{!}{
\xymatrix{
\colim \tau^{\ge 0}f_{n!}(\tau^{\ge 0}g_!\lambda_Z\langle e\rangle)\langle d+\dim
 y_n\rangle\ar[rr]^-{\colim \tau^{\ge 0}f_{n!}\Tr_g\langle d+\dim y_n\rangle}\ar[rd]^\simeq\ar[d] &&
\colim \tau^{\ge 0}f_!\lambda_Y\langle d\rangle\ar[rd]^\simeq\\
\colim \tau^{\ge 0}h_{n!}\lambda_Z\langle d+e+\dim y_n\rangle\ar[rd]^\simeq
&\tau^{\ge 0}f_!(\tau^{\ge 0}g_!\lambda_Z\langle e\rangle)\langle d\rangle
\ar[rr]^-{\tau^{\ge 0}f_!\Tr_g\langle d\rangle}\ar[d]_-{\simeq}
&& \tau^{\ge 0}f_!\lambda_Y\langle d\rangle\ar[d]^-{\Tr_{f_\bullet}}\\
&\tau^{\ge 0}h_!\lambda_Z\langle d+e\rangle\ar[rr]^-{\Tr_{h_\bullet}} &&
\lambda_X
}
}
\end{align*}
commutes, where all the colimits are taken over $n\in \del^{op}$. It follows that $\Tr_{f_\bullet}$ does not depend on the choice of $f_\bullet$. We may therefore denote it by $\Tr_f$.

It remains to check the functoriality of the trace map. Similarly to the above special case of condition (2) of Remark \ref{b4re:trace}, this follows from the functoriality of the trace map for morphisms in $\cR\cap\tcEt\cap\tilde\cC''_1$ by taking colimits.
\end{proof}

\begin{proposition}[P7(2)]\label{b4pr:p7}
If $f\colon Y\to X$ belongs to $\tilde\cE''_d\cap\tilde\cC''_1$, the induced natural transformation
\[
f^*\langle d\rangle=\id_Y\circ f^*\langle d\rangle\to f^!\circ f_!\circ f^*\langle d\rangle\xrightarrow{f^!\circ u_f} f^!
\]
is an equivalence, where the first arrow is given by the unit transformation and $u_f$ is defined similarly as \eqref{b3eq:trace_1}.
\end{proposition}

\begin{proof}
We need to show that $f^*\sfK\langle d\rangle\to f^!\sfK$ is an equivalence of every object $\sfK$ of $\cD(X,\lambda)$. Let $y_0\colon Y_0\to Y$ be an atlas. Since $v_0^*$ is conservative by Lemma \ref{b4le:p3}, we only need to show that the composite map
\[
y_0^*\sfK\langle\dim f_0\rangle\xrightarrow{\sim}y_0^*f^*\sfK\langle d+\dim y_0\rangle\to y_0^*f^!\sfK\langle\dim y_0\rangle\xrightarrow{\sim}y_0^!f^!\sfK\xrightarrow{\sim}f_0^!\sfK
\]
is an equivalence, where $f_0\colon Y_0\to X$ is a composite of $f$ and $y_0$. However, this follows from Lemma \ref{b4le:poincare2} applied to $f_0$.
\end{proof}

\section{Running DESCENT}
\label{b5ss}

In this chapter, we run the program DESCENT recursively to construct the theory of six operations of quasi-separated schemes in \Sec\ref{b5ss:schemes}, algebraic spaces in \Sec\ref{b5ss:spaces}, (classical) Artin stacks in \Sec\ref{b5ss:stacks}, and eventually higher Artin stacks in \Sec\ref{b5ss:artin}. Moreover, we start from algebraic spaces to construct the theory for higher Deligne--Mumford (DM) stacks as well in \Sec\ref{b5ss:dm}. We would like to point out that although higher DM stacks are special cases of higher Artin stacks, we have less restrictions on the coefficient rings for the former.

Throughout this chapter, we fix a nonempty set $\Box$ of rational primes. See Remark \ref{b5re:prime} for the relevance on $\Box$.

\subsection{Quasi-separated schemes}
\label{b5ss:schemes}

Recall from Example \ref{b4ex:scheme} that $\Schqs$ is the full subcategory of $\Sch$ spanned by quasi-separated schemes, which contains $\Schqcs$ as a full subcategory. We run the program DESCENT with the input data in Example \ref{b4ex:scheme}. Then the output consists of the following two maps: a functor
\begin{align}\label{b5eq:premonoidal_scheme}
\EO{}{\Schqs}{\rI}{}\colon(\rN(\Schqs)^{op}\times\rN(\Rind)^{op})^\amalg\to\Cat
\end{align}
that is a lax Cartesian structure, and a map
\begin{align}\label{b5eq:operation_scheme}
\EO{}{\Schqs}{\r{II}}{}\colon \delta^*_{2,\{2\}}((\rN(\Schqs)^{op}\times\rN(\Rind_\tor)^{op})^{\amalg,op})^\cart_{F,\all}\to\Cat,
\end{align}
and Output II. Here we recall that $F$ denotes the set of morphisms locally of finite type of quasi-separated schemes.\\

For each object $X$ of $\Schqs$, we denote by $\Et^{\r{qs}}(X)$ the quasi-separated \'{e}tale site of $X$. Its underlying category is the full
subcategory of $\Schqs_{/X}$ spanned by \'{e}tale morphisms. We denote by $X_{\qset}$ the associated topos, namely the category of sheaves on
$\Et^{\r{qs}}(X)$. For every object $X$ of $\Schqcs$, the inclusions $\Et^{\r{qc.sep}}(X)\subseteq \Et^{\r{qs}}(X)\subseteq \Et(X)$ induce
equivalences of topoi $X_{\r{qc.sep}.\et}\to X_{\qset}\to X_{\et}$.

The pseudofunctor $\Schqs\times\Rind\to\RingedPTopos$ sending $(X,(\Xi,\Lambda))$ to $(X_{\qset}^\Xi,\Lambda)$ induces a map
$\rN(\Schqs)\times\rN(\Rind)\to\rN(\RingedPTopos)$. Composing with $\bT$ \eqref{b2eq:enhanced_premonoidal}, we obtain a functor
\begin{align}\label{b5eq:premonoidal_scheme_etale}
\EO{\qset}{\Schqs}{\rI}{}\colon(\rN(\Schqs)^{op}\times\rN(\Rind)^{op})^\amalg\to\Cat
\end{align}
that is a lax Cartesian structure. It is clear that the restriction of 
$\EO{\qset}{\Schqs}{\rI}{}$ to 
$(\rN(\Schqcs)^{op}\times\rN(\Rind)^{op})^\amalg$ is equivalent to 
$\EO{}{\Schqcs}{\rI}{}$. 

\begin{proposition}[Cohomological descent for \'{e}tale topoi]
Let $f$ be an edge of $(\rN(\Schqs)^{op}\times\rN(\Rind)^{op})^\amalg$ that is statically a smooth surjective morphism of quasi-separated schemes. Then $f$ is of universal $\EO{\qset}{\Schqs}{\rI}{}$-descent.
\end{proposition}

\begin{proof}
This follows from the same proof of Proposition \ref{b3pr:descent}(1).
\end{proof}

\begin{proposition}
The two functors $\EO{}{\Schqs}{\rI}{}$ \eqref{b5eq:premonoidal_scheme} and $\EO{\qset}{\Schqs}{\rI}{}$ \eqref{b5eq:premonoidal_scheme_etale} are equivalent.
\end{proposition}

\begin{proof}
This follows from Proposition \ref{b4pr:toy} and the previous proposition.
\end{proof}

\begin{remark}
Let $X$ be object of $\Schqs$, and $\lambda=(\Xi,\Lambda)$ an object of $\Rind$. Then it is easy to see that the usual t-structure on
$\cD(X_{\qset}^\Xi,\Lambda)$ coincides with the one on $\cD(X,\lambda)$ obtained in Output II of the program DESCENT.
\end{remark}

\subsection{Algebraic spaces}
\label{b5ss:spaces}

Let $\Esp$ be the category of algebraic spaces (\Sec\ref{0ss:results_torsion}). It contains $\Schqs$ as a full subcategory. We run the program DESCENT with the following input:
\begin{itemize}
  \item $\tilde\cC=\rN(\Esp)$. It is geometric.

  \item $\cC=\rN(\Schqs)$, and $\sf{s}''\to\sf{s}'$ is the unique morphism $\Spec\dZ[\Box^{-1}]\to\Spec\dZ$. In particular,
      $\cC'=\cC$ and $\tilde\cC'=\tilde\cC$.

  \item $\tcEs$ is the set of \emph{surjective} morphisms of algebraic spaces.

  \item $\tilde\cE'$ is the set of \emph{\'{e}tale} morphisms of algebraic spaces.

  \item $\tilde\cE''$ is the set of \emph{smooth} morphisms of algebraic spaces.

  \item $\tilde\cE''_d$ is the set of \emph{smooth} morphisms of algebraic spaces of pure relative dimension $d$. In particular,
      $\tilde\cE'=\tilde\cE''_0$.

  \item $\tcEt$ is the set of \emph{flat} morphisms \emph{locally of finite presentation} of algebraic spaces.

  \item $\tilde\cF=F$ is the set of morphisms locally of finite type of algebraic spaces.

  \item $\cL=\rN(\Rind)^{op}$, $\cL'=\rN(\Rind_\tor)^{op}$, and $\cL''=\rN(\Rind_{\ltor})^{op}$.

  \item $\dim^+$ is the upper relative dimension (Definition \ref{b4de:dimension}).

  \item Input I and II are the output of \Sec\ref{b5ss:schemes}. In particular, $\EO{}{\cC}{\rI}{}$ is \eqref{b5eq:premonoidal_scheme}, and $\EO{}{\cC'}{\r{II}}{}$ is \eqref{b5eq:operation_scheme}.
\end{itemize}
Then the output consists of the following two maps: a functor
\begin{align}\label{b5eq:premonoidal_space}
\EO{}{\Esp}{\rI}{}\colon(\rN(\Esp)^{op}\times\rN(\Rind)^{op})^\amalg\to\Cat
\end{align}
that is a lax Cartesian structure, and a map
\begin{align}\label{b5eq:operation_space}
\EO{}{\Esp}{\r{II}}{}\colon \delta^*_{2,\{2\}}((\rN(\Esp)^{op}\times\rN(\Rind_\tor)^{op})^{\amalg,op})^\cart_{F,\all}
\to\Cat,
\end{align}
and Output II.\\

For each object $X$ of $\Esp$, we denote by $\Et^{\r{esp}}(X)$ the spatial \'{e}tale site of $X$. Its underlying category is the full subcategory of $\Esp_{/X}$ spanned by \'{e}tale morphisms. We denote by $X_{\espet}$ the associated topos, namely the category of sheaves on $\Et^{\r{esp}}(X)$. For every object $X$ of $\Schqs$, the inclusion of the original \'{e}tale site $\Et^{\r{qs}}(X)$ of $X$ into $\Et^{\r{esp}}(X)$ induces an equivalence of topoi $X_{\espet}\to X_{\qset}$.

As in \Sec\ref{b5ss:schemes}, we obtain a functor
\begin{align}\label{b5eq:premonoidal_space_etale}
\EO{\espet}{\Esp}{\rI}{}\colon(\rN(\Esp)^{op}\times\rN(\Rind)^{op})^\amalg\to\Cat
\end{align}
that is a lax Cartesian structure. It is clear that the restriction 
$\EO{\espet}{\Esp}{\rI}{}\res(\rN(\Schqs)^{op}\times\rN(\Rind)^{op})^\amalg$ 
is equivalent to $\EO{}{\Schqs}{\rI}{}$.

\begin{proposition}[Cohomological descent for \'{e}tale topoi]
Let $f$ be an edge of $(\rN(\Esp)^{op}\times\rN(\Rind)^{op})^\amalg$ that is statically a smooth surjective morphism of algebraic spaces. Then $f$ is of universal $\EO{\espet}{\Esp}{\rI}{}$-descent.
\end{proposition}

\begin{proof}
This follows from the same proof of Proposition \ref{b3pr:descent}(1).
\end{proof}

\begin{proposition}
The two functors $\EO{}{\Esp}{\rI}{}$ \eqref{b5eq:premonoidal_space} and $\EO{\espet}{\Esp}{\rI}{}$ \eqref{b5eq:premonoidal_space_etale} are equivalent.
\end{proposition}

\begin{proof}
This follows from Proposition \ref{b4pr:toy} and the previous proposition.
\end{proof}

\begin{remark}
Let $X$ be object of $\Esp$, and $\lambda=(\Xi,\Lambda)$ an object of $\Rind$. Then it is easy to see that the usual t-structure on
$\cD(X_{\qset}^\Xi,\Lambda)$ coincides with the one on $\cD(X,\lambda)$ obtained in Output II of the program DESCENT.
\end{remark}

\begin{remark}\label{b5re:proper}
In our construction of the map \eqref{b3eq:operation} in \Sec\ref{b3ss:enhanced_operation}, the essential facts we used from algebraic geometry are Nagata's compactification and proper base change. Nagata's compactification has been extended to separated morphisms of finite type between quasi-compact and quasi-separated algebraic spaces \cite{CLO}*{Theorem 1.2.1}. Proper base change for algebraic spaces follows from the case of schemes by cohomological descent and Chow's lemma for algebraic spaces \cite{RG}*{Premi\`{e}re partie, Corollaire 5.7.13} or the existence theorem of a finite cover by a scheme. The latter is a special case of \cite{Rydh}*{Theorem B} and also follows from the Noetherian case \cite{LMB}*{Th\'{e}or\`{e}me 16.6} by Noetherian approximation of algebraic spaces \cite{CLO}*{Theorem 1.2.2}.

Therefore, if we denote by $\Espqcs$ the full subcategory of $\Esp$ spanned by (small) coproducts of quasi-compact and separated algebraic spaces (hence contains $\Schqcs$ as a full subcategory), and repeat the process in \Sec\ref{b3ss:enhanced_operation}, then we obtain a map
\[
\EO{\r{var}}{\Espqcs}{\r{II}}{}\colon\delta^*_{2,\{2\}}((\rN(\Espqcs)^{op}\times\rN(\Rind_\tor)^{op})^{\amalg,op})^\cart_{F,\all}\to\Cat,
\]
whose restriction to $\delta^*_{2,\{2\}}((\rN(\Schqcs)^{op}\times\rN(\Rind_\tor)^{op})^{\amalg,op})^\cart_{F,\all}$ is equivalent to the map $\EO{}{\Schqcs}{\r{II}}{}$.

Moreover, the restriction $\EO{}{\Esp}{\r{II}}{}\res\delta^*_{2,\{2\}}((\rN(\Espqcs)^{op}\times\rN(\Rind_\tor)^{op})^{\amalg,op})^\cart_{F,\all}$ is equivalent to the map $\EO{\r{var}}{\Espqcs}{\r{II}}{}$. In fact, by Remark \ref{b4re:program}(2), it suffices to prove that $\EO{\r{var}}{\Espqcs}{\r{II}}{}$ satisfies (P4). For this, we can repeat the proof of Proposition \ref{b3pr:descent}. The analogue of Remark \ref{b3re:trace_quasifinite} holds for algebraic spaces because the definition of trace maps is local for the \'etale topology on the target.
\end{remark}

\subsection{Artin stacks}
\label{b5ss:stacks}

Let $\Chp$ be the $(2,1)$-category of Artin stacks (\Sec\ref{0ss:results_torsion}). It contains $\Esp$ as a full subcategory. We run the \emph{simplified} DESCENT (see Variant \ref{b4va:program}) with the following input:
\begin{itemize}
  \item $\tilde\cC=\rN(\Chp)$. It is geometric.

  \item $\cC=\rN(\Esp)$, and $\sf{s}''\to\sf{s}'$ is the identity morphism of $\Spec\dZ[\Box^{-1}]$. In particular, $\cC'=\cC''=\rN(\Esp_\Box)$ (resp.\ $\tilde\cC'=\tilde\cC''=\rN(\Chp_\Box)$), where $\Esp_\Box$ (resp.\ $\Chp_\Box$) is the category of $\Box$-coprime algebraic spaces (resp.\ Artin stacks).

  \item $\tcEs$ is the set of \emph{surjective} morphisms of Artin stacks.

  \item $\tilde\cE'=\tilde\cE''$ is the set of \emph{smooth} morphisms of Artin stacks.

  \item $\tilde\cE''_d$ is the set of \emph{smooth} morphisms of Artin stacks of pure relative dimension $d$.

  \item $\tcEt$ is the set of \emph{flat} morphisms \emph{locally of finite presentation} of Artin stacks.

  \item $\tilde\cF=F$ is the set of morphisms \emph{locally of finite type} of Artin stacks.

  \item $\cL=\rN(\Rind)^{op}$, and $\cL'=\cL''=\rN(\Rind_{\ltor})^{op}$.

  \item $\dim^+$ is upper relative dimension, which is defined as a special case in Definition \ref{b5de:dimension} later.

  \item Input I and II are given by the output of \Sec\ref{b5ss:spaces}. In particular, $\EO{}{\cC}{\rI}{}$ is \eqref{b5eq:operation_space}, and $\EO{}{\cC}{\r{II}}{}=\EO{}{\Esp_\Box}{\r{II}}{}$ is defined as the restriction of $\EO{}{\Esp}{\rI}{}$ \eqref{b5eq:premonoidal_space} to
      \[
      \delta^*_{2,\{2\}}((\rN(\Esp_\Box)^{op}\times\rN(\Rind_{\ltor})^{op})^{\amalg,op})^\cart_{F,\all}.
      \]
\end{itemize}
Then the output consists of the following two maps: a functor
\begin{align}\label{b5eq:premonoidal_stack}
\EO{}{\Chp}{\rI}{}\colon(\rN(\Chp)^{op}\times\rN(\Rind)^{op})^\amalg\to\Cat
\end{align}
that is a lax Cartesian structure, and a map
\[
\EO{}{\Chp_\Box}{\r{II}}{}\colon \delta^*_{2,\{2\}}((\rN(\Chp_\Box)^{op}\times\rN(\Rind_{\ltor})^{op})^{\amalg,op})^\cart_{F,\all}\to\Cat,
\]
and Output II.\\

Now we study the values of objects under the above two maps. Let us recall the lisse-\'{e}tale site $\Liset(X)$ of an Artin stack $X$. Its underlying category, the full subcategory (which is in fact an ordinary category) of $\Chp_{/X}$ spanned by smooth morphisms whose sources are algebraic spaces, is equivalent to a $\cU$-small category. In particular, $\Liset(X)$ endowed with the \'etale topology is a $\cU$-site. We denote by $X_{\liset}$ the associated topos. Let $M\subseteq\Ar(\Chp)$ be the set of smooth representable morphisms of Artin stacks. The lisse-\'etale topos has enough points by \cite{LMB}*{Remarque 12.2.2}, and is functorial with respect to $M$, so that we obtain a functor $\Chp_M\times \Rind \to \RingedPTopos$. Composing
with $\bT$ \eqref{b2eq:enhanced_premonoidal}, we obtain a functor
\begin{align}\label{b5eq:premonoidal_stack_lisse}
(\rN(\Chp)_M^{op}\times\rN(\Rind)^{op})^\amalg\to\Cat
\end{align}
that is a lax Cartesian structure.

To simplify the notation, for an algebraic space $U$, we will write $U_{\et}$ instead of $U_{\espet}$ in what follows. Let $\lambda=(\Xi,\Lambda)$ be an object of $\Rind$. We denote by
\[
\cD_\cart(X_{\liset},\lambda)\subseteq\cD(X_{\liset},\lambda)
\]
(Notation \ref{b2no:image}) the full subcategory consisting of complexes whose cohomology sheaves are all Cartesian (\Sec\ref{0ss:results_torsion}), or,
equivalently, complexes $\sfK$ such that for every morphism $f\colon Y'\to Y$ of $\Liset(X)$, the map $f^*(\sfK\res Y_{\et}) \to (\sfK\res Y'_{\et})$ is an equivalence. This full subcategory is \emph{functorial under $\bT$} in the sense that \eqref{b5eq:premonoidal_stack_lisse} restricts to a new functor
\begin{align}\label{b5eq:premonoidal_stack_lisse2}
\EO{\liset}{\Chp}{\rI}{}\colon(\rN(\Chp)_M^{op}\times\rN(\Rind)^{op})^\amalg\to\Cat
\end{align}
that is a lax Cartesian structure, whose value at $(X,\lambda)$ is 
$\cD_\cart(X_{\liset},\lambda)$. It is clear that the restrictions of 
$\EO{\liset}{\Chp}{\rI}{}$ and $\EO{}{\Esp}{\rI}{}$ 
\eqref{b5eq:premonoidal_space} to 
$(\rN(\Esp)_{M'}^{op}\times\rN(\Rind)^{op})^\amalg$ are equivalent, where 
$M'=M\cap \Ar(\Esp)$. In order to compare $\EO{\liset}{\Chp}{\rI}{}$ and 
$\EO{}{\Chp}{\rI}{}$ more generally, we start from the following lemma, which 
is a variant of Proposition \ref{b4pr:toy}. 

\begin{lem}\label{b4le:toy}
Let $(\tilde\cC,\tilde\cE,\tilde\cF)$ be a $2$-marked $\infty$-category such that $\tilde\cC$ admits pullbacks and $\tilde\cE\subseteq\tilde\cF$ are stable under composition and pullback. Let $\cC\subseteq \tilde\cC$ be a full subcategory stable under pullback such that every edge in $\tilde\cF$ is representable in $\cC$ and for every object $X$ of $\tilde\cC$, there exists a morphism $Y\to X$ in $\tilde\cE$ with $Y$ in $\cC$. Let $\cD$ be an $\infty$-category such that $\cD^{op}$ admits geometric realizations. Put $\cE\coloneqq\tilde\cE\cap \cC_1$ and $\cF\coloneqq\tilde\cF\cap \cC_1$. Let $\Fun^\cE(\cC^{op}_\cF,\cD)\subseteq \Fun(\cC^{op}_\cF,\cD)$ (resp.\ $\Fun^{\tilde\cE}(\tilde\cC^{op}_{\tilde\cF},\cD)\subseteq
\Fun(\tilde\cC^{op}_{\tilde\cF},\cD)$) be the full subcategory spanned by functors $F$ such that for every edge $f\colon X_0^+\to X_{-1}^+$ in $\cE$ (resp.\ in $\tilde\cE$), $F\circ(X_\bullet^{s,+})^{op}\colon\rN(\del_{s,+})\to \cD$ is a limit diagram, where $X_\bullet^{s,+}$ is a semisimplicial \v{C}ech nerve of $f$ in $\cC$ (resp.\ $\tilde\cC$) \cite{HTT}*{Notation 6.5.3.6}. Then the restriction map
\[
\Fun^{\tilde\cE}(\tilde\cC^{op}_{\tilde\cF},\cD)\to \Fun^\cE(\cC^{op}_\cF,\cD)
\]
is a trivial fibration.
\end{lem}

\begin{proof}
The proof is similar to Proposition \ref{b4pr:toy}, whose details we leave to the reader.
\end{proof}

For an object $V\to X$ of $\Liset(X)$, we denote by $\widetilde{V}$ the sheaf in $X_{\liset}$ represented by $V$. The overcategory
$(X_{\liset})_{/\widetilde V}$ is equivalent to the topos defined by the site $\Liset(X)_{/V}$ endowed with the \'etale topology \cite{SGA4}*{Expos\'{e} iii, Proposition 5.4}. A morphism $f\colon U\to U'$ of $\Liset(X)_{/V}$ induces a 2-commutative diagram
\[
\xymatrix{
&(X_{\liset})_{/\widetilde{U}} \ar@/_1pc/[ld]_{u_*}\ar[d]_-{f_*} \ar[r]^-{\epsilon_{U*}} & U_{\et} \ar[d]^-{f_{\et*}}\\
(X_{\liset})_{/\widetilde{V}} &(X_{\liset})_{/\widetilde{U'}} \ar[l]_{u'_*} \ar[r]^-{\epsilon_{U'*}} & U'_{\et} }
\]
of topoi \cite{SGA4}*{Expos\'{e} iv, {\Sec}5.5}.

For an object $\lambda=(\Xi,\Lambda)$ of $\Rind$, let $\cD_\cart((X_{\liset})_{/\widetilde{V}},\lambda)^\otimes\subseteq\cD((X_{\liset})_{/\widetilde{V}},\lambda)^\otimes$ be the full (monoidal)
subcategory spanned by complexes on which the natural transformation $f^*\circ \epsilon_{U'*}\circ u'^*\to\epsilon_{U*}\circ u^*$ is an isomorphism for all $f$. We have a functor
\[
[1]\times\Liset(X)\times\Rind\to\RingedPTopos
\]
sending $[1]\times\{f\colon U\to V\}\times\{\lambda\}$ to the square
\[
\xymatrix{
((X_{\liset})_{/\widetilde{U}}^\Xi,\Lambda) \ar[d]_-{f_*} \ar[r]^-{\epsilon_{U*}} & (U_{\et}^\Xi,\Lambda) \ar[d]^-{f_{\et*}}\\
((X_{\liset})_{/\widetilde{V}}^\Xi,\Lambda)  \ar[r]^-{\epsilon_{V*}} &
(V_{\et}^\Xi,\Lambda). }
\]
Composing with the functor $\bT^\otimes$ \eqref{b2eq:enhanced_monoidal}, we obtain a functor
\[
F\colon(\Delta^1)^{op}\times\rN(\Liset(X))^{op}\times\rN(\Rind)^{op}\to\PSLM.
\]
By construction, $F(0,V,\lambda)=\cD((X_{\liset})_{/\widetilde{V}},\lambda)^\otimes$. Replacing $F(0,V,\lambda)$ by the full subcategory
$\cD_\cart((X_{\liset})_{/\widetilde{V}},\lambda)^\otimes$, we obtain a new functor
\[
F'\colon(\Delta^1)^{op}\times\rN(\Liset(X))^{op}\times\rN(\Rind)^{op}\to\CAlg(\Cat)
\]
sending $(\Delta^1)^{op}\times\{f\colon U\to V\}\times\{\lambda\}$ to the square
\[
\xymatrix{
\cD_\cart((X_{\liset})_{/\widetilde{U}},\lambda)^\otimes & \cD(U_{\et},\lambda)^\otimes \ar[l]_-{\epsilon_U^*} \\
\cD_\cart((X_{\liset})_{/\widetilde{V}},\lambda)^\otimes \ar[u]^-{f^*}
& \cD(V_{\et},\lambda)^\otimes. \ar[u]_-{f_{\et}^*} \ar[l]_-{\epsilon_V^*}
}\]
We have the following two lemmas.

\begin{lem}\label{b4le:Dcart}
The functor $F'$, viewed as an edge of
\[
\Fun(\rN(\Liset(X))^{op}\times\rN(\Rind)^{op},\CAlg(\Cat)),
\]
is an equivalence. In particular, the functor $F'$ factorizes through $\PSLM$.
\end{lem}

\begin{proof}
We only need to prove that for every object $V$ of $\Liset(X)$, the functor
\[
\epsilon_V^*\colon \cD(V_{\et},\lambda)\to\cD_\cart((X_{\liset})_{/\widetilde{V}},\lambda)
\]
is an equivalence. This follows from the fact that
\[ \epsilon_V^*\colon
\Mod(V_{\et},\lambda)\to\Mod_\cart((X_{\liset})_{/\widetilde{V}},\lambda)
\]
is an equivalence of categories and that the functor
\[\epsilon_{V*}\colon
\Mod((X_{\liset})_{/\widetilde{V}},\lambda)\to (V_{\et},\lambda)
\]
is exact, by the following lemma.
\end{proof}

\begin{lem}
Let $F\colon \cA\to\cB$ be an exact fully faithful functor between Grothendieck Abelian categories that admit an exact right adjoint $G$. Then
$F$ induces an equivalence of $\infty$-categories $\cD(\cA)\to\cD_{\cA}(\cB)$, where $\cD_{\cA}(\cB)$ denotes the full subcategory of $\cD(\cB)$ spanned by complexes with cohomology in the essential image of $\epsilon$.
\end{lem}

\begin{proof}
This is standard. The pair $(F,G)$ induce a pair of t-exact adjoint between $\cD(\cA)$ and $\cD_{\cA}(\cB)$. To check that the unit and counit
are natural equivalences, we may reduce to objects in the Abelian categories, for which the assertion follows from the assumptions.
\end{proof}

\begin{lem}\label{b5le:cartesian}
Let $v\colon V\to X$ be an object of $\Liset(X)$, viewed as a morphism of $\Chp$. Assume that $v$ is surjective. Then a complex $\sfK\in
\cD(X_{\liset},\lambda)$ belongs to $\cD_\cart(X_{\liset},\lambda)$ if and only if $v^*\sfK$ belongs to $\cD_\cart((X_{\liset})_{/\widetilde{V}},\lambda)$.
\end{lem}

\begin{proof}
The necessity is trivial. Assume that $v^*\sfK$ belongs to $\cD_\cart((X_{\liset})_{/\widetilde{V}},\lambda)$. We need to show that for every morphism $f\colon Y'\to Y$ of $\Liset(X)$, the map $f^*(\sfK\res Y_{\et}) \to (\sfK\res Y'_{\et})$ is an equivalence. The problem is local for the \'etale topology on $Y$. However, locally for the \'etale topology on $Y$, the morphism $Y\to X$ factorizes through $v$ \cite{EGAIV}*{Corollaire 17.16.3 (ii)}. The assertions thus follows from the assumption.
\end{proof}

Now let $V_\bullet\colon \rN(\del_+)^{op}\to\rN(\Chp)$ be a \v{C}ech nerve of $v$ where $v\colon V\to X$ be an object of $\Liset(X)$, which can be viewed as a simplicial object of $\Liset(X)$. By Lemma \ref{b5le:cartesian}, we can apply Lemma \ref{b3le:topos_descent} to $U_\bullet=\widetilde{V_\bullet}^\Xi$ and $\cC_\bullet=\Mod_\cart((X_{\liset})_{\widetilde{V_\bullet}}^\Xi,\Lambda)$. We obtain a natural equivalence of symmetric monoidal $\infty$-categories
\begin{align}\label{b5eq:descent}
\cD_\cart(X_{\liset},\lambda)^\otimes\xrightarrow{\sim}\lim_{n\in\del}\cD_\cart((X_{\liset})_{/\widetilde{V_n}},\lambda)^\otimes.
\end{align}

\begin{proposition}[Cohomological descent for lisse-\'{e}tale topoi]\label{b5pr:descent_lisse}
Let $X$ be an Artin stack, $V$ an algebraic space, and $v\colon V\to X$ a surjective smooth morphism. Then there is an equivalence in
$\Fun(\rN(\Rind)^{op},\PSLM)$ sending $\lambda$ to the equivalence
\[
\cD_\cart(X_{\liset},\lambda)^\otimes\xrightarrow{\sim}\lim_{n\in\del}\cD(V_{n,\et},\lambda)^\otimes,
\]
where $V_\bullet$ is a \v{C}ech nerve of $v$.
\end{proposition}

\begin{proof}
This follows from \eqref{b5eq:descent} and a quasi-inverse of the equivalence in Lemma \ref{b4le:Dcart}.
\end{proof}

The previous proposition has the following four corollaries.

\begin{corollary}
Let $f\colon Y\to X$ be a smooth surjective representable morphism of Artin stacks, $\lambda$ an object of $\Rind$, and $Y_\bullet$ a \v{C}ech nerve of $f$. Then the functor
\[
\cD_\cart(X_{\liset},\lambda)^\otimes\xrightarrow{\sim}\lim_{n\in\del_s}\cD_\cart(Y_{n,\liset},\lambda)^\otimes
\]
is an equivalence.
\end{corollary}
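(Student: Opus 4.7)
The plan is to deduce the corollary from Proposition~\ref{4pr:descent_lisse} by choosing compatible atlases and running a Fubini-type argument on a bisimplicial diagram of algebraic spaces. First I would pick a smooth surjective morphism $v\colon V\to Y$ with $V$ an algebraic space. Since $f$ is representable and the diagonal of $X$ is representable by algebraic spaces, the composite $V\to X$ is smooth surjective from an algebraic space, and its \v Cech nerve $U_\bullet$, with $U_n=V^{\times_X(n+1)}$, consists of algebraic spaces. Moreover, the natural morphism $U_n\to Y_n=Y^{\times_X(n+1)}$ is a smooth surjective atlas of the Artin stack $Y_n$.

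For each $n\ge 0$, applying Proposition~\ref{4pr:descent_lisse} to the atlas $U_n\to Y_n$ yields an equivalence $\cD_\cart(Y_{n,\liset}^\Xi,\Lambda)^\otimes\simeq\lim_{m\in\del}\cD((U_n)_{m,\et}^\Xi,\Lambda)^\otimes$, where $(U_n)_\bullet$ is the \v Cech nerve of $U_n\to Y_n$. A direct calculation using universal properties of fiber products identifies the bisimplicial algebraic space $(n,m)\mapsto(U_n)_m$ with $(n,m)\mapsto (V^Y_m)^{\times_X(n+1)}$, where $V^Y_\bullet$ denotes the \v Cech nerve of $v$. In particular, for each fixed $m$, the simplicial algebraic space $n\mapsto(U_n)_m$ is the \v Cech nerve of $V^Y_m\to X$, which factors as $V^Y_m\to V\to X$ and is thus a smooth surjective atlas of $X$. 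A second application of Proposition~\ref{4pr:descent_lisse} then supplies $\cD_\cart(X_\liset^\Xi,\Lambda)^\otimes\simeq\lim_{n\in\del}\cD((U_n)_{m,\et}^\Xi,\Lambda)^\otimes$ for every $m$. Using the compatibility of Proposition~\ref{4pr:descent_lisse} with pullback along smooth representable morphisms (noted in the paragraph preceding the corollary), these identifications assemble into a coherent bisimplicial diagram in $\Fun(\N(\Rind^{op}),\PSLM)$.

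Commuting the two cosimplicial limits, using that small limits in $\PSLM$ exist by Lemma~\ref{1le:monoidal_limit}, and then computing the limit of the resulting constant diagram, one obtains $\lim_{n\in\del_s}\cD_\cart(Y_{n,\liset}^\Xi,\Lambda)^\otimes\simeq\lim_{m\in\del}\cD_\cart(X_\liset^\Xi,\Lambda)^\otimes\simeq\cD_\cart(X_\liset^\Xi,\Lambda)^\otimes$. To match the statement of the corollary, I would also check that the natural fully faithful inclusion $\cD_\cart(Y_{n,\liset}^\Xi,\Lambda)\hookrightarrow\cD(Y_{n,\liset}^\Xi,\Lambda)$ induces an equivalence on cosimplicial limits over $\del_s$: a descent datum in the larger target, being compatible along face maps that pull back from a smooth atlas of $Y_n$, is forced to be fibrewise cartesian.

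The main obstacle will be the careful passage between $\lim_{n\in\del}$ and $\lim_{n\in\del_s}$ that enters both in applying Proposition~\ref{4pr:descent_lisse} inside the Fubini step and in reading off the semisimplicial limit in the statement; for this I would rely on the standard principle that cosimplicial limits along \v Cech nerves of effective epimorphisms coincide with the corresponding semisimplicial limits. A secondary, more bookkeeping, difficulty is to realize the bisimplicial identification $(U_n)_m\simeq(V^Y_m)^{\times_X(n+1)}$ functorially enough in both directions to produce an equivalence in $\Fun(\N(\Rind^{op}),\PSLM)$ rather than merely pointwise in $(\Xi,\Lambda)$.
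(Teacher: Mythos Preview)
Your bisimplicial Fubini argument is correct and is, in spirit, exactly the paper's (very terse) proof. The paper's proof is the single sentence preceding the corollary: the equivalence of Proposition~\ref{4pr:descent_lisse} is compatible with pullback by smooth representable morphisms to $X$, hence implies the corollary. Unpacked, this is a bisimplicial argument with the atlas $V\to X$ (rather than your $V\to Y$): one forms $(n,m)\mapsto V_m\times_X Y_n$, uses Proposition~\ref{4pr:descent_lisse} together with its naturality in the $m$-direction, and uses smooth descent for algebraic spaces (Proposition~\ref{4pr:descent_etale}) in the $n$-direction. Your variant with $V\to Y$ and the identification $(U_n)_m\simeq(V^Y_m)^{\times_X(n+1)}$ works equally well; neither route is materially simpler.

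Two remarks. First, the target in the corollary should read $\cD_{\cart}(Y_{n,\liset}^\Xi,\Lambda)^\otimes$ rather than $\cD(Y_{n,\liset}^\Xi,\Lambda)^\otimes$: the sentence immediately after the corollary restates it as the claim that $\EO{\liset}{\Chp}{*}{\otimes}$ lies in $\Fun^{\tilde\cE}$, and that functor takes values in $\cD_{\cart}$. So your attempted patch (that every semisimplicial descent datum in the full $\cD(Y_{n,\liset})$ is forced to be fibrewise Cartesian) is unnecessary, and in fact the argument you sketch for it is not convincing: the face maps $Y_n\to Y_{n-1}$ are fibered over $X$, not over $Y$, so compatibility along them says nothing about whether $K_0\res U'\to K_0\res U$ is an equivalence for a smooth morphism $U'\to U$ in $\Liset(Y)$. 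Second, the $\del$ versus $\del_s$ issue you flag is harmless: by \cite[6.5.3.7]{Lu1} the inclusion $\N(\del_s)^{op}\hookrightarrow\N(\del)^{op}$ is cofinal, so limits over $\N(\del)$ and $\N(\del_s)$ agree, and the paper already invokes this fact elsewhere.
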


\begin{corollary}
The functor $\EO{\liset}{\Chp}{\rI}{}$ \eqref{b5eq:premonoidal_stack_lisse2} belongs to $\Fun^{\tilde\cE}(\tilde\cC^{op}_{\tilde\cF},\Cat)$ with the notation in Lemma \ref{b4le:toy}, where
\begin{itemize}
  \item $\tilde\cC=(\rN(\Chp)^{op}\times\rN(\Rind)^{op})^{\amalg,op}$;

  \item $\tilde\cF$ consists of edges of that statically belong to $M$; and

  \item $\tilde\cE\subseteq\tilde\cF$ consists of edges that are also statically surjective.
\end{itemize}
\end{corollary}

\begin{corollary}\label{b5co:compatibility}
The functor $\EO{\liset}{\Chp}{\rI}{}$ \eqref{b5eq:premonoidal_stack_lisse2} is equivalent to the restriction of the functor $\EO{}{\Chp}{\rI}{}$ \eqref{b5eq:premonoidal_stack} to $(\rN(\Chp)_M^{op}\times\rN(\Rind)^{op})^\amalg$. In particular, for every Artin stack $X$ and every object $\lambda$ of $\Rind$, we have an equivalence
\[
\cD_\cart(X_{\liset},\lambda)^\otimes\simeq\cD(X,\lambda)^\otimes
\]
of symmetric monoidal $\infty$-categories. Consequently, $\cD_\cart(X_{\liset},\lambda)^\otimes$ is a closed presentable stable symmetric monoidal $\infty$-category. Here we recall that $\cD(X,\lambda)^\otimes$ is the value of $(X,\lambda,\langle1\rangle,\{1\})$ under the functor $\EO{}{\Chp}{\rI}{}$.
\end{corollary}

\begin{corollary}
Let $X$ be an Artin stack, and $\lambda$ an object of $\Rind$. Under the equivalence in Corollary \ref{b5co:compatibility}, the usual t-structure on $\cD_\cart(X_{\liset},\lambda)$ coincides with the t-structure on $\cD(X,\lambda)$ obtained in Output II. In particular, the heart of $\cD(X,\lambda)$ is equivalent to (the nerve of) $\Mod_\cart(X_{\liset}^\Xi,\Lambda)$, the Abelian category of Cartesian
$(X^\Xi_{\liset},\Lambda)$-modules.
\end{corollary}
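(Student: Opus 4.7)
The plan is to verify that both $t$-structures are characterized by the same concrete condition on an atlas, and to use this to deduce the identification of hearts. Fix a smooth surjection $u\colon U\to X$ from an algebraic space $U$; such an atlas exists by hypothesis. By the construction of the $t$-structure in the output of DESCENT (the lemma and definition in Section~\ref{3ss:properties}), an object $K\in\cD(X,(\Xi,\Lambda))$ lies in $\cD^{\le 0}$ (resp.\ $\cD^{\ge 0}$) if and only if $u^*K$ lies in $\cD^{\le 0}(U,(\Xi,\Lambda))$ (resp.\ $\cD^{\ge 0}$), and this characterization is independent of the chosen atlas. Since $U$ is an algebraic space, $\cD(U,(\Xi,\Lambda))\simeq \cD(U_{\et}^{\Xi},\Lambda)$ with its usual $t$-structure (already verified at the previous stage of DESCENT in Section~\ref{4ss:spaces}).

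On the other side, the usual $t$-structure on $\cD(X_{\liset}^{\Xi},\Lambda)$ is the one whose truncations compute the cohomology sheaves $\mathcal{H}^i$ in the abelian category $\Mod(X_{\liset}^{\Xi},\Lambda)$. The full subcategory $\cD_{\cart}(X_{\liset}^{\Xi},\Lambda)$ inherits a $t$-structure because the cohomology sheaves of a Cartesian complex are Cartesian modules (smooth pullbacks are exact on modules, and the Cartesian condition is defined by the isomorphism of such pullbacks). The key observation is that a Cartesian module $\mathcal{F}$ vanishes if and only if $u^*\mathcal{F}=0$: indeed, $u$ is a covering in the lisse-\'{e}tale site, and for Cartesian sheaves vanishing is detected on any covering. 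Consequently, for $K\in\cD_{\cart}(X_{\liset}^{\Xi},\Lambda)$ one has $K\in\cD^{\le 0}$ iff $u^*K\in\cD^{\le 0}(U_{\et}^{\Xi},\Lambda)$ and similarly for $\cD^{\ge 0}$.

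Under the equivalence of Corollary~\ref{4co:compatiblity}, the pullback $u^*$ on both sides corresponds to the same functor to $\cD(U_{\et}^{\Xi},\Lambda)$: on the lisse-\'{e}tale side this is immediate from the construction via the \v{C}ech descent of Proposition~\ref{4pr:descent_lisse}, and on the DESCENT side it is the functor attached to the morphism $u$ by $\EO{}{\Chp}{*}{\otimes}$, which by construction extends the map for algebraic spaces. Since both $t$-structures are characterized by exactly the same condition on $u^*K$, they correspond under the equivalence.

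The statement about the heart then follows formally: the heart of the $t$-structure on $\cD_{\cart}(X_{\liset}^{\Xi},\Lambda)$ is the full subcategory of $\Mod(X_{\liset}^{\Xi},\Lambda)$ consisting of Cartesian modules, which is equivalent to the nerve of the abelian category of Cartesian $(X_{\liset}^{\Xi},\Lambda)$-modules. The main obstacle, which is essentially bookkeeping rather than substance, is to match the two functors $u^*$ under the equivalence of Corollary~\ref{4co:compatiblity}; this is already built into the construction of that equivalence, since the descent datum in Proposition~\ref{4pr:descent_lisse} is compatible with pullback along smooth representable morphisms, and the DESCENT construction of $\EO{}{\Chp}{*}{\otimes}$ over atlases is, by Step~1 of the construction in Section~\ref{3ss:construction}, defined precisely so as to recover the given functor on algebraic spaces.
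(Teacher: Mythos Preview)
Your argument is correct and is precisely the unpacking that the paper leaves implicit: the corollary is stated without proof because both $t$-structures are, by construction, detected by pullback along an atlas $u\colon U\to X$ to $\cD(U_{\et}^{\Xi},\Lambda)$, and Corollary~\ref{4co:compatiblity} (together with the remark preceding it that the equivalence is compatible with pullback by smooth representable morphisms) guarantees that these pullbacks match. Your write-up makes all of this explicit and is the intended proof.
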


\begin{remark}[de~Jong]\label{b5re:dejong}
The $*$-pullback encoded by $\EO{}{\Chp}{\rI}{}$ can be described more directly using big \'etale topoi of Artin stacks. For any Artin stack $X$, we consider the full subcategories $\Esp_{\r{lfp}/X}\subseteq\Chp_{\r{rep.lfp}/X}$ of $\Chp_{/X}$ spanned by morphisms locally of finite
presentation whose sources are algebraic spaces and by representable morphisms locally of finite presentation,\footnote{We impose the ``locally of finite presentation'' condition here to avoid set-theoretic issues.} respectively. They are ordinary categories and we endow them with the \'etale topology. The corresponding topoi are equivalent, and we denote them by $X_{\r{big.\acute et}}$. The construction of $X_{\r{big.\acute et}}$ is functorial in $X$, so that we obtain a functor $\Chp\times \Rind\to\RingedPTopos$. Composing with $\bT$, we obtain a functor
\[
(\rN(\Chp)^{op}\times \rN(\Rind)^{op})^\amalg\to\Cat
\]
that is a lax Cartesian structure, sending $(X,\lambda)$ to 
$\cD(X_{\r{big.\acute et}},\lambda)$. Replacing the latter by the full 
subcategory $\cD_\cart(X_{\r{big.\acute et}},\lambda)$ consisting of 
complexes $\sfK$ such that $f^*(\sfK\res Y'_{\et}) \to (\sfK\res Y_{\et})$ is 
an equivalence for every morphism $f\colon Y\to Y'$ of $\Esp_{/X}$, we obtain 
a new functor 
\[
\EO{\r{big}}{\Chp}{\rI}{}\colon (\rN(\Chp)^{op}\times \rN(\Rind)^{op})^\amalg\to\Cat
\]
that is a lax Cartesian structure. Using similar arguments as in this 
section, with Lemma \ref{b4le:toy} replaced by Proposition \ref{b4pr:toy}, 
one shows that $\EO{\r{big}}{\Chp}{\rI}{}$ and $\EO{}{\Chp}{\rI}{}$ are 
equivalent. 
\end{remark}

\subsection{Higher Artin stacks}
\label{b5ss:artin}

We begin by recalling the definition of higher Artin stacks. We will use the fppf topology instead of the \'etale topology adopted in \cite{Toen}. The two definitions are equivalent \cite{Toen2}.  Let $\Schaff\subseteq\Sch$ be the full subcategory spanned by affine schemes. Recall that $\cS_\cW$ is the $\infty$-category of spaces in $\cW\in\{\cU,\cV\}$.\footnote{We refer to \Sec\ref{0ss:conventions} for conventions on set-theoretical issues.}

\begin{definition}[Prestack and stack]\label{b5de:prestack}
We defined the $\infty$-category of \emph{($\cV$-)prestacks} to be $\Chppre\coloneqq\Fun(\rN(\Schaff)^{op},\cS_\cV)$. We endow $\rN(\Schaff)$
with the fppf topology. We define the $\infty$-category of (small) stacks $\Chpfppf$ to be the essential image of the following inclusion
\[
\r{Shv}(\rN(\Schaff)_{\r{fppf}})\cap\Fun(\rN(\Schaff)^{op},\cS_\cU)\subseteq\Chppre,
\]
where $\r{Shv}(\rN(\Schaff)_{\r{fppf}})\subseteq\Fun(\rN(\Schaff)^{op},\cS_\cV)$ is the full subcategory spanned by fppf sheaves \cite{HTT}*{Definition 6.2.2.6}. A prestack $F$ is \emph{$k$-truncated} \cite{HTT}*{Definition 5.5.6.1} for an integer $k\geq-1$, if $\pi_i(F(A))=0$ for every object $A$ of $\Schaff$ and every integer $i>k$.
\end{definition}

The Yoneda embedding $\rN(\Schaff)\to\Chppre$ extends to a fully faithful functor $\rN(\Esp)\to \Chppre$ sending $X$ to the discrete Kan complex $\Hom_{\Esp}(\Spec A,X)$. The image of this functor is contained in $\Chpfppf$. We will generally not distinguish between $\rN(\Esp)$ and its
essential image in $\Chpfppf$. A stack $X$ belongs to (the essential image of) $\rN(\Esp)$ if and only if it satisfies the following conditions.
\begin{itemize}
  \item It is $0$-truncated.

  \item The diagonal morphism $X\to X\times X$ is schematic, that is, for every morphism $Z\to X\times X$ with $Z$ a scheme, the fiber product $X\times_{X\times X} Z$ is a scheme.

  \item There exists a scheme $Y$ and an (automatically schematic) morphism $f\colon Y\to X$ that is smooth (resp.\ \'etale) and surjective.
      In other words, for every morphism $Z\to X$ with $Z$ a scheme, the induced morphism $Y\times_XZ\to Z$ is smooth (resp.\ \'etale) and
      surjective. The morphism $f$ is called an \emph{atlas} (resp.\ \emph{\'etale atlas}) for $X$.
\end{itemize}

\begin{definition}[Higher Artin stack; see \cite{Toen} and \cite{Gai3}]\label{b5de:artin_stack}
We define $k$-Artin stacks inductively for $k\ge 0$.
\begin{itemize}
  \item A stack $X$ is a \emph{$0$-Artin stack} if it belongs to (the essential image of) $\rN(\Esp)$.
\end{itemize}

For $k\ge 0$, assume that we have defined $k$-Artin stacks. We define:
\begin{itemize}
  \item A morphism $F'\to F$ of prestacks is \emph{$k$-Artin} if for every morphism $Z\to F$ where $Z$ is a $k$-Artin stack, the fiber product $F'\times_F Z$ is a $k$-Artin stack.

  \item A $k$-Artin morphism $F'\to F$ is flat (resp.\ locally of finite type, resp.\ locally of finite presentation, resp.\ smooth, resp.\
      surjective) if for every morphism $Z\to F$ and every atlas $f\colon Y\to F'\times_F Z$ where $Y$ and $Z$ are schemes, the composite
      morphism $Y\to F'\times_FZ\to Z$ is a flat (resp.\ locally of finite type, resp.\ locally of finite presentation, resp.\ smooth, resp.\ surjective) morphism of schemes.

  \item A stack $X$ is a $(k+1)$-Artin stack if the diagonal morphism $X\to X\times X$ is $k$-Artin, and there exists a scheme $Y$
      together with an (automatically $k$-Artin) morphism $f\colon Y\to X$ that is smooth and surjective. The morphism $f$ is called an
      \emph{atlas} for $X$.
\end{itemize}
We denote by $\Chpar{k}\subseteq\Chpfppf$ the full subcategory spanned by $k$-Artin stacks. We define \emph{higher Artin stacks} to be objects of $\Chpar{}\coloneqq\bigcup_{k\geq0}\Chpar{k}$. A morphism $F'\to F$ of prestacks is \emph{higher Artin} if for every morphism $Z\to F$ where $Z$ is a higher Artin stack, the fiber product $F'\times_F Z$ is a higher Artin stack.

To simplify the notation, we put $\Chpar{(-1)}\coloneqq\rN(\Schqs)$ and $\Chpar{(-2)}\coloneqq\rN(\Schqcs)$, and we call their objects $(-1)$-Artin stacks and $(-2)$-Artin stacks, respectively.
\end{definition}

By definition, $\Chpar{0}$ and $\Chpar{1}$ are equivalent to $\rN(\Esp)$ and $\rN(\Chp)$, respectively. For $k\ge 0$, $k$-Artin stacks are $k$-truncated prestacks. Higher Artin stacks are \emph{hypercomplete} sheaves \cite{HTT}*{Lemma 6.5.2.9}. Every flat surjective morphism locally of finite presentation of higher Artin stacks is an \emph{effective epimorphism} in the $\infty$-topos $\r{Shv}(\rN(\Schaff)_{\r{fppf}})$ in the sense after \cite{HTT}*{Corollary 6.2.3.5}. A higher Artin morphism of prestacks is $k$-Artin for some $k\ge 0$.

\begin{definition}\label{b5de:finite_presentation}
We have the following notion of quasi-compactness.
\begin{itemize}
  \item A higher Artin stack $X$ is \emph{quasi-compact} if there exists an atlas $f\colon Y\to X$ such that $Y$ is a quasi-compact scheme.

  \item A higher Artin morphism $F'\to F$ of prestacks is \emph{quasi-compact} if for every morphism $Z\to F$ where $Z$ is a quasi-compact scheme, the fiber product $F'\times_F Z$ is a quasi-compact higher Artin stack.
\end{itemize}
We define quasi-separated higher Artin morphisms of prestacks by induction as follows.
\begin{itemize}
  \item A $0$-Artin morphism of prestacks $F'\to F$ is \emph{quasi-separated} if the diagonal morphism $F'\to F'\times_F F'$, which is automatically schematic, is quasi-compact.

  \item For $k\ge 0$, a $(k+1)$-Artin morphism of prestacks $F'\to F$ is \emph{quasi-separated} if the diagonal morphism $F'\to F'\times_F
      F'$, which is automatically $k$-Artin, is quasi-separated and quasi-compact.
\end{itemize}
We say that a morphism of higher Artin stacks is \emph{of finite
presentation} if it is quasi-compact, quasi-separated, and locally of finite
presentation.

We say that a higher Artin stack $X$ is $\Box$-coprime if there exists a morphism $X\to \Spec \dZ[\Box^{-1}]$. This is equivalent to the existence of a $\Box$-coprime atlas. We denote by $\Chpar{}_\Box\subseteq \Chpar{}$ the full subcategory spanned by $\Box$-coprime higher Artin stacks. We put $\Chpar{k}_\Box\coloneqq\Chpar{k}\cap \Chpar{}_\Box$.
\end{definition}

\begin{definition}[Relative dimension]\label{b5de:dimension}
We define by induction the class of smooth morphisms \emph{of pure relative dimension $d$} of $k$-Artin stacks for $d\in\dZ\cup\{-\infty\}$ and the \emph{upper relative dimension} $\dim^+(f)$ for every morphism $f$ locally of finite type of $k$-Artin stacks. If in Input 0 of \Sec\ref{b4ss:description}, we let $\tilde\cF$ (resp.\ $\tilde\cE''$, $\tilde\cE''_d$) be the set of morphisms locally of finite type (resp.\ smooth morphisms, smooth morphisms of pure relative dimension $d$) of $k$-Artin stacks, then such definitions should satisfy Input 0(5--8).

When $k=0$, we use the usual definitions for algebraic spaces, with the upper relative dimension given in Definition \ref{b4de:dimension}. For $k\ge 0$, assuming that these notions are defined for $k$-Artin stacks. We first extend these definitions to $k$-representable morphisms locally of finite type of $(k+1)$-Artin stacks. Let $f\colon Y\to X$ be such a morphism, and $X_0\xrightarrow{u}X$ an atlas of $X$. Let $f_0\colon Y_0\to X_0$ be the base change of $f$ by $u$. Then $f_0$ is a morphism locally of finite type of $k$-Artin stacks. We define $\dim^+(f)=\dim^+(f_0)$. It is easy to see that this is independent of the atlas we choose by Input 0(8d). We say that $f$ is smooth of pure relative dimension $d$ if $f_0$ is -- this is independent of the atlas we choose by Input 0(6). We need to check Input 0(5--8). Input 0(6--8) are easy, and (5) can be argued as follows. Since $f_0$ is a smooth morphism of $k$-Artin stacks,
there is a decomposition $f_0\colon Y_0\simeq\coprod_{d\in\dZ}Y_{0,d}\xrightarrow{(f_{0,d})}X_0$. Let $X_\bullet\to X$ be a \v{C}ech nerve of $u$, and put $Y_{\bullet, d}\coloneqq Y_{0,d}\times_{X_0} X_\bullet$. Then $\coprod_{d\in\dZ}Y_{\bullet,d}\to Y$ is a \v{C}ech nerve of $v\colon Y_0\to Y$. Put $Y_d\coloneqq\colim_{n\in\del^{op}}Y_{n,d}$. Then $Y\simeq\coprod_{d\in\dZ}Y_d$ is the desired decomposition.

Next we extend these definitions to all  morphisms locally of finite type of $(k+1)$-Artin stacks. Let $f\colon Y\to X$ be such a morphism, and $v_0\colon Y_0=\coprod_{d\in\dZ}Y_{0,d}\xrightarrow{(v_{0,d})}Y$ an atlas of $Y$ such that $v_{0,d}$ is smooth of pure relative dimension $d$. We define
\[
\dim^+(f)=\sup_{d\in\dZ}\{\dim^+(f\circ v_{0,d})-d\}.
\]
We say that $f$ is smooth of pure relative dimension $d$ if for every $e\in\dZ$, the morphism $f\circ v_{0,e}$ is smooth of pure relative dimension $d+e$. We leave it to the reader to check that these definitions are independent of the atlas we choose, and satisfy Input 0(6--8). We sketch the proof for Input 0(5). Since $f\circ v_{0,e}$ is smooth and $k$-representable, it can be decomposed as $Y_{0,e}\simeq\coprod_{e'\in\dZ}Y_{0,e,e'}\xrightarrow{(f_{e,e'})} X$ such that $f_{e,e'}$ is of pure relative dimension $e'$. We let $Y_d$ be the colimit of the underlying groupoid object of the \v{C}ech nerve of $\coprod_{e'-e=d}Y_{0,e,e'}\to X$. Then $Y\simeq\coprod_{d\in\dZ}Y_d\to X$ is the desired decomposition.
\end{definition}

Let $F$ be the set of morphisms locally of finite type of higher Artin stacks. For every $k$, we are going to construct a functor
\[
\EO{}{\Chpar{k}}{\rI}{}\colon((\Chpar{k})^{op}\times\rN(\Rind)^{op})^\amalg\to\Cat
\]
that is a lax Cartesian structure, and a map
\[
\EO{}{\Chpar{k}_\Box}{\r{II}}{}\colon \delta^*_{2,\{2\}}(((\Chpar{k}_\Box)^{op}\times\rN(\Rind_{\ltor})^{op})^{\amalg,op})^\cart_{F,\all}\to\Cat,
\]
such that their restrictions to $(k-1)$-Artin stacks coincide with those for the latter.

We construct by induction. When $k=-2,-1,0,1$, they have been constructed in \Sec\ref{b3ss:enhanced_operation}, \Sec\ref{b5ss:schemes},
\Sec\ref{b5ss:spaces}, and \Sec\ref{b5ss:stacks}, respectively. Assume that they have been extended to $k$-Artin stacks. We run the version of DESCENT in Variant \ref{b4va:program} with the following input:
\begin{itemize}
  \item $\tilde\cC=\Chpar{(k+1)}$. It is geometric.

  \item $\cC=\Chpar{k}$, $\sf{s}''\to\sf{s}'$ is the identity morphism of $\Spec\dZ[\Box^{-1}]$. In particular, $\cC'=\cC''=\Chpar{k}_\Box$, and $\tilde\cC'=\tilde\cC''=\Chpar{(k+1)}_\Box$.

  \item $\tcEs$ is the set of \emph{surjective} morphisms of $(k+1)$-Artin stacks.

  \item $\tilde\cE'=\tilde\cE''$ is the set of {\em smooth} morphisms of $(k+1)$-Artin stacks.

  \item $\tilde\cE''_d$ is the set of \emph{smooth} morphisms of $(k+1)$-Artin stacks of pure relative dimension $d$.

  \item $\tcEt$ is the set of \emph{flat} morphisms \emph{locally of finite presentation} of $(k+1)$-Artin stacks.

  \item $\tilde\cF=F$ is the set of morphisms \emph{locally of finite type} of $(k+1)$-Artin stacks.

  \item $\cL=\rN(\Rind)^{op}$, and $\cL'=\cL''=\rN(\Rind_{\ltor})^{op}$.

  \item $\dim^+$ is the upper relative dimension in Definition \ref{b5de:dimension}.

  \item Input I and II is given by induction hypothesis. In particular, we take
      \[
      \EO{}{\cC}{\rI}{}=\EO{}{\Chpar{k}}{\rI}{},\quad\EO{}{\cC'}{\r{II}}{}=\EO{}{\Chpar{k}_\Box}{\r{II}}{}.
      \]
\end{itemize}
Then the output consists of desired two maps $\EO{}{\Chpar{k+1}}{\rI}{}$, $\EO{}{\Chpar{k+1}_\Box}{\r{II}}{}$ and Output II, satisfying (P0) -- (P7\bis). Taking union of all $k\geq0$, we obtain the following two maps: a functor
\begin{align}\label{b5eq:premonoidal_artin}
\EO{}{\Chpar{}}{\rI}{}\colon((\Chpar{})^{op}\times\rN(\Rind)^{op})^\amalg\to\Cat
\end{align}
that is a lax Cartesian structure, and a map
\begin{align}\label{b5eq:operation_artin}
\EO{}{\Chpar{}_\Box}{\r{II}}{}\colon \delta^*_{2,\{2\}}(((\Chpar{}_\Box)^{op}\times\rN(\Rind_{\ltor})^{op})^{\amalg,op})^\cart_{F,\all}\to\Cat.
\end{align}

\subsection{Higher Deligne--Mumford stacks}
\label{b5ss:dm}

The definition of higher Deligne--Mumford (DM) stacks is similar to that of higher Artin stacks (Definition \ref{b5de:artin_stack}).

\begin{definition}[Higher DM stack]\leavevmode
\begin{itemize}
  \item A stack $X$ is a \emph{$0$-DM stack} if it belongs to (the essential image of) $\rN(\Esp)$.
\end{itemize}
For $k\ge 0$, assume that we have defined $k$-DM stacks. We define:
\begin{itemize}
  \item A morphism $F'\to F$ of prestacks is \emph{$k$-DM} if for every morphism $Z\to F$ where $Z$ is a $k$-DM stack, the fiber product
      $F'\times_F Z$ is a $k$-DM stack.

  \item A $k$-DM morphism $F'\to F$ of prestacks is \emph{\'etale} (resp.\ \emph{locally quasi-finite}) if for every morphism $Z\to F$ and every \'etale atlas $f\colon Y\to F'\times_F Z$ where $Y$ and $Z$ are schemes, the composite morphism $Y\to F'\times_FZ\to Z$ is an \'etale (resp.\ locally quasi-finite) morphism of schemes.

  \item A stack $X$ is a $(k+1)$-DM stack if the diagonal morphism $X\to X\times X$ is $k$-DM, and there exists a scheme $Y$ together with an (automatically $k$-DM) morphism $f\colon Y\to X$ that is \'etale and surjective. The morphism $f$ is called an \emph{\'etale atlas} for $X$.
\end{itemize}
We denote by $\Chpdm{k}\subseteq\Chpfppf$ the full subcategory spanned by $k$-DM stacks. We define \emph{higher DM stacks} to be objects of
$\Chpdm{}\coloneqq\bigcup_{k\geq0}\Chpdm{k}$. We put $\Chpdm{}_\Box\coloneqq\Chpdm{}\cap \Chpar{}_\Box$, and $\Chpdm{k}_\Box\coloneqq\Chpdm{k}\cap\Chpdm{}_\Box$.

A morphism of higher DM stacks is \'{e}tale if and only if it is smooth of pure relative dimension $0$.
\end{definition}

Let $F$ be the set of morphisms locally of finite type of higher DM stacks. For every $k$, we are going to construct a functor
\[
\EO{}{\Chpdm{k}}{\rI}{}\colon((\Chpdm{k})^{op}\times\rN(\Rind)^{op})^\amalg\to\Cat
\]
that is a lax Cartesian structure, and a map
\[
\EO{}{\Chpdm{k}}{\r{II}}{}\colon \delta^*_{2,\{2\}}(((\Chpdm{k})^{op}\times\rN(\Rind_\tor)^{op})^{\amalg,op})^\cart_{F,\all}\to\Cat,
\]
such that their restrictions to $(k-1)$-DM stacks coincide with those for the latter. Note that the first functor has already been constructed in \Sec\ref{b5ss:artin}, after restriction. However for induction, we construct it again, which in fact coincides with the previous one.

We construct by induction. When $k=0$, they have been constructed in \Sec\ref{b5ss:spaces}. Assuming that they have been extended to $k$-DM stacks. We run the program DESCENT with the following input:
\begin{itemize}
  \item $\tilde\cC=\Chpdm{(k+1)}$. It is geometric.

  \item $\cC=\Chpdm{k}$, $\sf{s}''\to\sf{s}'$ is the morphism $\Spec\dZ[\Box^{-1}]\to\Spec\dZ$.

  \item $\tcEs$ is the set of \emph{surjective} morphisms of $(k+1)$-DM stacks.

  \item $\tilde\cE'$ is the set of \emph{\'{e}tale} morphisms of $(k+1)$-DM stacks.

  \item $\tilde\cE''$ is the set of \emph{smooth} morphisms of $(k+1)$-DM stacks.

  \item $\tilde\cE''_d$ is the set of \emph{smooth} morphisms of $(k+1)$-DM stacks of pure relative dimension $d$.

  \item $\tcEt$ is the set of \emph{flat} morphisms \emph{locally of finite presentation} of $(k+1)$-DM stacks.

  \item $\tilde\cF=F$ is the set of morphisms \emph{locally of finite type} of $(k+1)$-DM stacks.

  \item $\cL=\rN(\Rind)^{op}$, $\cL'=\rN(\Rind_\tor)^{op}$, and $\cL''=\rN(\Rind_{\ltor})^{op}$.

  \item $\dim^+$ is the upper relative dimension.

  \item Input I and II is given by induction hypothesis. In particular, we take
      \[
      \EO{}{\cC}{\rI}{}=\EO{}{\Chpdm{k}}{\rI}{},\quad\EO{}{\cC'}{\r{II}}{}=\EO{}{\Chpdm{k}}{\r{II}}{}.
      \]
\end{itemize}
Then the output consists of desired two maps $\EO{}{\Chpdm{k+1}}{\rI}{}$, $\EO{}{\Chpdm{k+1}}{\r{II}}{}$ and Output II, satisfying (P0) -- (P7\bis). Taking union of all $k\geq0$, we obtain a functor
\begin{align}\label{b5eq:premonoidal_dm}
\EO{}{\Chpdm{}}{\rI}{}\colon((\Chpdm{})^{op}\times\rN(\Rind)^{op})^\amalg\to\Cat
\end{align}
that is a lax Cartesian structure, and a map
\begin{align}\label{b5eq:operation_dm}
\EO{}{\Chpdm{}}{\r{II}}{}\colon \delta^*_{2,\{2\}}(((\Chpdm{})^{op}\times\rN(\Rind_\tor)^{op})^{\amalg,op})^\cart_{F,\all}\to\Cat.
\end{align}

\begin{remark}
We have the following compatibility properties:
\begin{itemize}
  \item The restriction of $\EO{}{\Chpar{}}{\rI}{}$ to $((\Chpdm{})^{op}\times\rN(\Rind)^{op})^\amalg$ is equivalent to $\EO{}{\Chpdm{}}\otimes{}$.

  \item The restrictions of $\EO{}{\Chpdm{}}{\r{II}}{}$ and $\EO{}{\Chpar{}_\Box{}}{\r{II}}{}$ to the common domain
      \[
      \delta^*_{2,\{2\}}(((\Chpdm{}_\Box)^{op}\times\rN(\Rind_{\ltor})^{op})^{\amalg,op})^\cart_{F,\all}
      \]
      are equivalent.
\end{itemize}
\end{remark}

\begin{variant}
We denote by $Q\subseteq F$ the set of locally quasi-finite morphisms. Applying DESCENT to the map $\EO{\r{lqf}}{\Schqcs}{\r{II}}{}$ constructed in
Variant \ref{b3va:quasifinite} (and $\EO{}{\Schqcs}{\rI}{}$), we obtain a map
\begin{align}\label{b5eq:operation_dm_quasifinite}
\EO{\r{lqf}}{\Chpdm{}}{\r{II}}{}\colon\delta^*_{2,\{2\}}(((\Chpdm{})^{op}\times\rN(\Rind)^{op})^{\amalg,op})^\cart_{Q,\all}\to\Cat.
\end{align}
This map and $\EO{}{\Chpdm{}}{\r{II}}{}$ are equivalent when restricted to their common domain.
\end{variant}

\begin{remark}
The $\infty$-category $\Chpdm{}$ can be identified with a full subcategory of the $\infty$-category $\mathrm{Sch}(\cG_{\et}(\dZ))$ of
$\mathcal{G}_{\et}(\dZ)$-schemes in the sense of \cite{DAG5}*{Definition 2.3.9, Remark 2.6.11}. The constructions of this section can be extended to $\mathrm{Sch}(\cG_{\et}(\dZ))$ by hyperdescent. We will provide more details in Remark \ref{c4re:etale}.
\end{remark}

\begin{remark}\label{b5re:prime}
Note that in this chapter, we have fixed a non-empty set $\Box$ of rational primes. In fact, our constructions are compatible for different $\Box$ in the obvious sense. For example, if we are given $\Box_1\subseteq\Box_2$, then the maps $\EO{}{\Chp^{\r{Ar}}_{\Box_1}}{\r{II}}{}$ and
$\EO{}{\Chp^{\r{Ar}}_{\Box_2}}{\r{II}}{}$ are equivalent when restricted to their common domain, which is
\[
\delta^*_{2,\{2\}}(((\Chp^{\r{Ar}}_{\Box_2})^{op}\times\rN(\Rind_{\Box_1\text{-}\tor})^{op})^{\amalg,op})^\cart_{F,\all}.
\]
We also have obvious compatibility properties for Output II under different $\Box$.
\end{remark}

\section{Summary and complements for torsion coefficients}
\label{b6ss}

In this chapter we summarize the construction in the previous chapter and presents several complements. In \Sec\ref{b6ss:correspondences}, we study the relation of our construction with category of correspondences. In \Sec\ref{b6ss:operations}, we write down the resulting six operations for the most general situations and summarize their properties. In \Sec\ref{b6ss:adjointness}, we prove some additional adjointness properties in the finite-dimensional Noetherian case. In \Sec\ref{b6ss:constructible}, we develop a theory of constructible complexes, based on finiteness results of Deligne \cite{SGA4d}*{Th.\ finitude} and Gabber \cite{TG}*{Expos\'{e} XIIIp}. In \Sec\ref{b6ss:compatibility}, we show that our results for constructible complexes are compatible with those of Laszlo--Olsson \cite{LO1}.

We remark that \Sec\ref{b6ss:correspondences} is independent to the later sections, so readers may skip the first section is they are not interested in the relation with category of correspondences.

Once again, we fix a nonempty set $\Box$ of rational primes.

\subsection{Symmetric monoidal category of correspondences}
\label{b6ss:correspondences}

The $\infty$-category of correspondences was introduced by Gaitsgory 
\cite{Gai1}. We start by recalling the construction of the simplicial set of 
correspondences from Example \ref{a4ex:corr}. 

For $n\geq0$, we define $\sfC(\Delta^n)$ to be the full subcategory of $\Delta^n\times(\Delta^n)^{op}$ spanned by $(i,j)$ with $i\leq j$. An edge of $\sfC(\Delta^n)$ is \emph{vertical} (resp.\ \emph{horizontal}) if its projection to the second (resp.\ first) factor is degenerate. A square of $\sfC(\Delta^n)$ is \emph{exact} if it is both a pushout square and a pullback square. We extend the above construction to a colimit preserving functor $\sfC\colon\Sset\to\Sset$. Then $\sfC$ also preserves finite products. The right adjoint functor is denoted by $\Corr$. In particular, we have $\Corr(K)_n=\Hom(\sfC(\Delta^n),K)$ for a simplicial set $K$.

\begin{definition}
Let $(\cC,\cE_1,\cE_2)$ be a $2$-marked $\infty$-category. We define a simplicial subset $\cC_{\corr\colon\cE_1,\cE_2}$ of $\Corr(\cC)$, called the \emph{simplicial set of correspondences}, such that its $n$-cells are given by maps $\sfC(\Delta^n)\to\cC$ that send vertical (resp.\ horizontal) edges into $\cE_1$ (resp.\ $\cE_2$), and exact squares to pullback squares.
\end{definition}

By construction, there is an obvious map
\[
\delta_{2,\{2\}}^*\cC^\cart_{\cE_1,\cE_2}\to\cC_{\corr\colon\cE_1,\cE_2},
\]
which is a categorical equivalence by Example \ref{a4ex:corr}.

The following lemma shows that under certain mild conditions, $\cC_{\corr\colon\cE_1,\cE_2}$ is an $\infty$-category.

\begin{lem}\label{b6le:correspondence}
Let $(\cC,\cE_1,\cE_2)$ be a $2$-marked $\infty$-category such that
\begin{enumerate}
  \item both $\cE_1$ and $\cE_2$ are stable under composition;

  \item pullbacks of $\cE_1$ by $\cE_2$ exist and remain in $\cE_1$;

  \item pullbacks of $\cE_2$ by $\cE_1$ exist and remain in $\cE_2$.
\end{enumerate}
Then $\cC_{\corr\colon\cE_1,\cE_2}$ is an $\infty$-category.
\end{lem}

\begin{proof}
We check that $\cC_{\corr\colon\cE_1,\cE_2}\to *$ has the right lifting property with respect to the collection $A_2$ in \cite{HTT}*{Proposition 2.3.2.1}. Since $\sfC$ preserves colimits and finite products, to give a map
\[
f\colon(\Delta^m\times\Lambda^2_1)\coprod_{\partial\Delta^m\times\Lambda^2_1}(\partial\Delta^m\times\Delta^2)\to\Corr(\cC)
\]
is equivalent to give a map
\[
f^\sharp\colon\(\sfC(\Delta^m)\times\sfC(\Lambda^2_1)\)\coprod_{\sfC(\partial\Delta^m)
\times\sfC(\Lambda^2_1)}\(\sfC(\partial\Delta^m)\times\sfC(\Delta^2)\)\to\cC.
\]
Let $\cK$ and $\cK'$ be defined as in the dual version of \cite{HTT}*{Proposition 4.3.2.15} with $\cC=\sfC(\Delta^2)$, $\cC^0=\sfC(\Lambda^2_1)$, and $\cD=\cC$ (in our setup). If $f$ factorizes through $\cC_{\corr\colon\cE_1,\cE_2}$, then $f^\sharp$ induces a commutative square
\[
\xymatrix{
\sfC(\partial\Delta^m)  \ar@{^(->}[d]\ar[r] & \cK \ar[d] \\
\sfC(\Delta^m) \ar[r] \ar@{-->}[ru] & \cK' }
\]
by assumption (2) or (3). Since the restriction map $\cK\to\cK'$ is a trivial fibration by the dual of \cite{HTT}*{Proposition 4.3.2.15}, there exists a dotted arrow $g^\sharp\colon\sfC(\Delta^m)\to\cK$ as indicated above. We regard $g^\sharp$ as a map $\sfC(\Delta^m\times\Delta^2)\simeq\sfC(\Delta^m)\times\sfC(\Delta^2)\to\cC$, thus induces a map $g\colon\Delta^m\times\Delta^2\to\Corr(\cC)$. Since all exact squares of $\sfC(\Delta^m\times\Delta^2)$ can be obtained by composition from exact squares either contained in the source of $f^\sharp$ or being constant under the projection to $\sfC(\Delta^m)$, the three assumptions ensure that if $f$ factorizes through
$\cC_{\corr\colon\cE_1,\cE_2}$, then so does $g$.
\end{proof}

Now we study a certain natural symmetric monoidal structure on the $\infty$-category $\cC_{\corr\colon\cE_1,\cE_2}$. Let $(\cC,\cE)$ be a marked $\infty$-category. We construct a $2$-marked $\infty$-categories $((\cC^{op})^{\amalg,op},\cE^-,\cE^+)$ as follows:
We write an edge $f$ of $(\cC^{op})^{\amalg,op}$ in the form $\{Y_j\}_{1\leq j\leq n}\to\{X_i\}_{1\leq i\leq m}$ lying over an edge $\alpha\colon\langle m\rangle\to\langle n\rangle$ of $\rN(\Fin)$. Then $\cE^+$ consists of $f$ such that the induced edge $Y_{\alpha(i)}\to X_i$ belongs to $\cE$ for every $i\in\alpha^{-1}\langle n\rangle^\circ$. Define $\cE^-$ to be the subset of $\cE^+$ such that the edge $\alpha$ is degenerate.

\begin{proposition}\label{b6pr:correspondence}
Let $(\cC,\cE_1,\cE_2)$ be a $2$-marked $\infty$-category satisfying the assumptions in Lemma \ref{b6le:correspondence} and such that $\cC_{\cE_2}$ admits finite products. Then
\begin{align}\label{b6eq:correspondence}
p\colon((\cC^{op})^{\amalg,op})_{\corr\colon\cE_1^-,\cE_2^+}\to\rN(\Fin)
\end{align}
is a symmetric monoidal $\infty$-category, whose underlying $\infty$-category is $\cC_{\corr\colon\cE_1,\cE_2}$.
\end{proposition}

\begin{proof}
Put $\cO^\otimes\coloneqq((\cC^{op})^{\amalg,op})_{\corr\colon\cE_1^-,\cE_2^+}$ for simplicity. If $(\cC,\cE_1,\cE_2)$ satisfies the assumptions in Lemma \ref{b6le:correspondence}, then so does $((\cC^{op})^{\amalg,op},\cE_1^-,\cE_2^+)$. Therefore, by Lemma \ref{b6le:correspondence}, $\cO^\otimes$ is an $\infty$-category hence \eqref{b6eq:correspondence} is an inner fibration by \cite{HTT}*{Proposition 2.3.1.5}. By Lemma \ref{b6le:cocartesian} below, we know that $p$ is a coCartesian fibration since $\cC_{\cE_2}$ admits finite products. Moreover, we have the obvious isomorphism $\cO^\otimes_{\langle n\rangle}\simeq\prod_{1\leq i\leq n}\cO^\otimes_{\langle 1\rangle}$ induced by
$\rho^i_!\colon\cO^\otimes_{\langle n\rangle}\to\cO^\otimes_{\langle 1\rangle}$. By \cite{HA}*{Definition 2.0.0.7}, \eqref{b6eq:correspondence} is a symmetric monoidal $\infty$-category.
\end{proof}

\begin{lem}\label{b6le:cocartesian}
Suppose that $(\cC,\cE_1,\cE_2)$ satisfies the assumptions in Lemma \ref{b6le:correspondence}. If we write an edge $f$ of
$((\cC^{op})^{\amalg,op})_{\corr\colon\cE_1^-,\cE_2^+}$ in the form
\[
\xymatrix{
\{Z_j\}_{1\leq j \leq n} \ar[r]\ar[d]  &  \{X_i\}_{1\leq i\leq m} \\
\{Y_j\}_{1\leq j\leq n} }
\]
lying over an edge $\alpha\colon\langle m\rangle\to\langle n\rangle$ of $\rN(\Fin)$ under \eqref{b6eq:correspondence}, then $f$ is $p$-coCartesian \cite{HTT}*{Definition 2.4.2.1} if and only if
\begin{enumerate}
  \item for every $1\leq j\leq n$, the induced morphism $Z_j\to Y_j$ is an isomorphism; and

  \item for every $1\leq j\leq n$, the induced morphisms $Z_j\to X_i$ with $\alpha(i)=j$ exhibit $Z_j$ as the product of $\{X_i\}_{\alpha(i)=j}$ in $\cC_{\cE_2}$.
\end{enumerate}
\end{lem}

\begin{proof}
The \emph{only if} part: Suppose that $f$ is a $p$-coCartesian edge.

We first show (1). Without lost of generality, we may assume that $\alpha$ is the degenerate edge at $\langle 1\rangle$. In particular, the edge $f$ we consider has the form
\[
\xymatrix{
z  \ar[r]\ar[d]  &  x. \\
y }
\]
Assume that $f$ is $p$-coCartesian. In terms of the dual version of \cite{HTT}*{Remark 2.4.1.4}, we are going to construct a diagram of the form
\begin{align}\label{b6eq:cocartesian}
\xymatrix{
\Delta^{\{0,1\}} \ar@{^(->}[d]\ar[rd]^-{f} \\
\Lambda^n_0  \ar@{^(->}[d]\ar[r]^-{g}  &  ((\cC^{op})^{\amalg,op})_{\corr\colon\cE_1^-,\cE_2^+} \ar[d]^-{p} \\
\Delta^n  \ar[r]\ar@{-->}[ur]  & \rN(\Fin) }
\end{align}
in which $n=3$ and the bottom map is constant with value $\langle1\rangle$. We may construct a map $g$ in \eqref{b6eq:cocartesian} such that its image of $\sfC(\Delta^{\{0,1,2\}})$, $\sfC(\Delta^{\{0,1,3\}})$, $\sfC(\Delta^{\{0,2,3\}})$ are
\[
\xymatrix{
z \ar@{=}@/_1pc/[dd]\ar[r]\ar[d] & z \ar[r]\ar[d] & x, \\
y' \ar[r]\ar[d] & y \\
z \\
}\quad
\xymatrix{
z \ar@{=}[r]\ar[d] & z \ar[r]\ar[d] & x, \\
y \ar@{=}[r]\ar@{=}[d] & y \\
y \\
}\quad
\xymatrix{
z \ar@{=}[r]\ar@{=}[d] & z \ar[r]\ar@{=}[d] & x, \\
z \ar@{=}[r]\ar[d] & z \\
y \\
}
\]
respectively, in which
\begin{itemize}
  \item all squares are Cartesian diagrams;

  \item all edges $z\to x$ are same as the one in the presentation of $f$;

  \item all vertical edges $z\to y$ are same as the one in the presentation of $f$;

  \item in the second and third diagrams, all $2$-cells are degenerate.
\end{itemize}
Note that the existence of the first diagram is due to the lifting property for $n=2$. Now we lift $g$ to a dotted arrow as in \eqref{b6eq:cocartesian}. The image of the unique nondegenerate exact square in $\sfC(\Delta^{\{1,2,3\}})$ provides a pullback square
\[
\xymatrix{
y \ar[r]\ar[d] & y' \ar[d] \\
z \ar@{=}[r] & z. }
\]
Therefore, the edge $y\to y'$ is an isomorphism, and it is easy to check that the left vertical edge $y\to z$ is an inverse of the edge $z\to y$ in the presentation of $f$.

Next we show (2). Without lost of generality, we may assume that $\alpha$ is the unique active map from $\langle m\rangle$ to $\langle 1\rangle$ \cite{HA}*{Definition 2.1.2.1}; and the edge $f$ has the form
\[
\xymatrix{
y  \ar[r]\ar@{=}[d]  &  \{x_i\}_{1\leq i\leq m}. \\
y }
\]
We construct a diagram \eqref{b6eq:cocartesian} as follows. The bottom map $\Delta^n\to\rN(\Fin)$ is given by the sequence of morphisms
\[
\langle m\rangle \xrightarrow{\alpha} \langle 1\rangle\xrightarrow{\id}\cdots\xrightarrow{\id}\langle 1\rangle.
\]
Note that we have a projection map $\pi\colon\sfC(\Delta^n)\to(\Delta^n)^{op}$ to the second factor. Denote by $\sfC(\Delta^n)_0$ the preimage of $(\Delta^{\{1,\dots,n\}})^{op}$ under $\pi$, and $\sfC(\Delta^n)_{00}$ the preimage of $(\partial\Delta^{\{1,\dots,n\}})^{op}$ under $\pi$. It is clear that $\sfC(\Lambda^n_0)\cap\sfC(\Delta^n)_0\subseteq\sfC(\Delta^n)_{00}$. Suppose that we are given a map
\[
\alpha\colon(\partial\Delta^{\{1,\dots,n\}})^{op}\to(\cC_{\cE_2})_{/\{x_i\}_{1\leq i\leq m}}
\]
such that $\alpha\res\Delta^{\{0\}}$ is represented by $y\to\{x_i\}_{1\leq i\leq m}$ as in the edge $f$. We regard $\alpha$ as a map
$\alpha'\colon(\partial\Delta^{\{1,\dots,n\}})^{op}\star\langle m\rangle^\circ\to\cC_{\cE_2}$. Note that $\pi$ induces a projection map
\[
\pi'\colon(\sfC(\Lambda^n_0)\cap\sfC(\Delta^n)_0)\star\langle m\rangle^\circ\to(\partial\Delta^{\{1,\dots,n\}})^{op}\star\langle m\rangle^\circ.
\]
We then have a map $g_\alpha\coloneqq\alpha'\circ\pi'\colon(\sfC(\Lambda^n_0)\cap\sfC(\Delta^n)_0)\star\langle m\rangle^\circ\to\cC_{\cE_2},$ which induces a map $g$ as in \eqref{b6eq:cocartesian}. The existence of the dotted arrow in \eqref{b6eq:cocartesian} will provide a filling of $\alpha$ to $(\Delta^{\{1,\dots,n\}})^{op}$. This implies that $y\to\{x_i\}_{1\leq i\leq m}$ is a final object of $(\cC_{\cE_2})_{/\{x_i\}_{1\leq i\leq m}}$.

The \emph{if} part: Let $f$ be an edge satisfying (1) and (2). To show that $f$ is $p$-coCartesian, we again consider the diagram
\eqref{b6eq:cocartesian}. Define $\sfC(\Delta^n)'$ to be the $\infty$-category by adding one more object $(0,0)'$ emitting from $(0,0)$ in $\sfC(\Delta^n)$, which can be depicted as in the following diagram
\[
\xymatrix{
\cdots & (0,2) \ar[r]\ar[d] & (0,1) \ar[r]\ar[d] & (0,0) \ar[r] & (0,0)'. \\
\cdots & (1,2) \ar[r]\ar[d] & (1,1) \\
\cdots & (2,2) \\
\cdots }
\]
We have maps $\sfC(\Delta^n)\xrightarrow{\iota}\sfC(\Delta^n)'\xrightarrow{\gamma}\sfC(\Delta^n)$, in which $\iota$ is the obvious inclusion, and $\gamma$ collapse the edge $(0,1)\to (0,0)$ to the single object $(0,1)$ and sends $(0,0)'$ to $(0,0)$. Let $K\subseteq\sfC(\Delta^n)$ be the simplicial subset that is the union of $\sfC(\Lambda^n_0)$ and the top row of $\sfC(\Delta^n)$. Define $K'$ to be the inverse image of $K$ under $\gamma$. Then $\iota$ sends $\sfC(\Lambda^n_0)$ into $K'$. We have one more inclusion $\iota'\colon\sfC(\Delta^n)\to\sfC(\Delta^n)'$ that sends $(0,0)$ to $(0,0)'$ and keeps the other objects.

A map $g$ as in \eqref{b6eq:cocartesian} gives rise to a map $g^\sharp\colon\sfC(\Lambda^n_0)\to(\cC^{op})^{\amalg,op}$. By (2) and \cite{HA}*{Remark 2.4.3.4}, we may extend $g^\sharp$ to $K$. Consider the new map $g^\sharp\circ\gamma\circ\iota\colon\sfC(\Lambda^n_0)\to(\cC^{op})^{\amalg,op}$, which gives rise to a map $g'$ as in \eqref{b6eq:cocartesian} however with the restriction $g'\res\Delta^{\{0,1\}}$ being an equivalence in the $\infty$-category $((\cC^{op})^{\amalg,op})_{\corr\colon\cE_1^-,\cE_2^+}$ by (1). Therefore, we may lift $g'$ to an edge $\tilde{g}'$ as the dotted arrow
in \eqref{b6eq:cocartesian} by \cite{HTT}*{Proposition 2.4.1.5}. Now $\tilde{g}'$ induces a map $\tilde{g}'^\sharp\colon\sfC(\Delta^n)\to(\cC^{op})^{\amalg,op}$. To find a lifting of $g$ as the dotted arrow in \eqref{b6eq:cocartesian}, it
suffices to extend $\tilde{g}'^\sharp$ to $\sfC(\Delta^n)'$ under the inclusion $\iota$ such that its restriction to $\sfC(\Lambda^n_0)$ with
respect to the other inclusion $\iota'$ coincides with $g^\sharp$. However, this lifting problem only involves the top row of $\sfC(\Delta^n)'$, which can be solved because of (2).
\end{proof}

\begin{definition}[symmetric monoidal $\infty$-category of correspondences]\label{b6de:correspondence}
Given a $2$-marked $\infty$-category $(\cC,\cE_1,\cE_2)$ satisfying the assumptions in Proposition \ref{b6pr:correspondence}(3), we call
\eqref{b6eq:correspondence} the \emph{symmetric monoidal $\infty$-category of correspondences} associated to $(\cC,\cE_1,\cE_2)$, denoted by
$p\colon\cC^\otimes_{\corr\colon\cE_1,\cE_2}\to\rN(\Fin)$ or simply $\cC^\otimes_{\corr\colon\cE_1,\cE_2}$. It is a reasonable abuse of notation since its underlying $\infty$-category is $\cC_{\corr\colon\cE_1,\cE_2}$.
\end{definition}

We apply the above construction to the source of the map $\EO{}{\Chpar{}_\Box}{\r{II}}{}$ \eqref{b5eq:operation_artin}. Take
$\cC=\Chpar{}_\Box\times\rN(\Rind_{\ltor})$, $\cE_1\coloneqq\cE_F$ to be the set of edges of the form $(f,g)$ where $f$ belongs to $F$ and $g$ is an isomorphism, and $\cE_2\coloneqq\all$ to be the set of all edges. Note that $(\cC,\cE_1,\cE_2)$ satisfies the assumptions in Proposition
\ref{b6pr:correspondence}(2) hence defines a symmetric monoidal $\infty$-category $\cC^\otimes_{\corr\colon\cE_F,\all}$.

By definition, we have the identity
\[
\delta^*_{2,\{2\}}(((\Chpar{}_\Box)^{op}\times\rN(\Rind_{\ltor})^{op})^{\amalg,op})^\cart_{F,\all}=
\delta^*_{2,\{2\}}((\cC^{op})^{\amalg,op})^\cart_{\cE_1^-,\cE_2^+}.
\]
Since the map
\[
\delta^*_{2,\{2\}}((\cC^{op})^{\amalg,op})^\cart_{\cE_1^-,\cE_2^+}\to
((\cC^{op})^{\amalg,op})_{\corr\colon\cE_1^-,\cE_2^+}=\cC^\otimes_{\corr\colon\cE_F,\all}
\]
is a categorical equivalence, by Proposition \ref{b6pr:correspondence}(1), the map \eqref{b5eq:operation_artin} induces a map
\begin{align}\label{b6eq:operadic}
\cC^\otimes_{\corr\colon\cE_F,\all}\to\Cat.
\end{align}

\begin{lem}
The functor \eqref{b6eq:operadic} is a lax Cartesian structure.
\end{lem}

\begin{proof}
It follows from the fact that \eqref{b5eq:premonoidal_artin} is a lax 
Cartesian structure, the construction of \eqref{b5eq:operation_artin}, and 
Lemma \ref{b6le:cocartesian}. 
\end{proof}

From the above lemma, we know that \eqref{b6eq:operadic} induces an $\infty$-operad map
\begin{align}\label{b6eq:correspondence_artin}
\EO{}{\Chpar{}_\Box}{}{\corr}\colon(\Chpar{}_\Box\times\rN(\Rind_{\ltor}))^\otimes_{\corr\colon\cE_F,\all}\to\Cat^\times
\end{align}
between symmetric monoidal $\infty$-categories. Similarly, we have two more $\infty$-operad maps
\begin{align}\label{b6eq:correspondence_dm}
\EO{}{\Chpdm{}}{}{\corr}\colon(\Chpdm{}\times\rN(\Rind_\tor))^\otimes_{\corr\colon\cE_F,\all}\to\Cat^\times,
\end{align}
and
\begin{align}\label{b6eq:correspondence_dm_quasifinite}
\EO{\r{lqf}}{\Chpdm{}}{}{\corr}\colon(\Chpdm{}\times\rN(\Rind))^\otimes_{\corr\colon\cE_Q,\all}\to\Cat^\times,
\end{align}
induced from \eqref{b5eq:operation_dm} and \eqref{b5eq:operation_dm_quasifinite}, respectively.

\begin{remark}
By all the constructions and (P2) of DESCENT, we obtain the following square
\[
\xymatrix{
((\Chpar{}_\Box\times\rN(\Rind_{\ltor}))^{op})^\amalg \ar@{^(->}[r]\ar@{^(->}[d]
& ((\Chpar{}\times\rN(\Rind))^{op})^\amalg \ar[d]  \\
(\Chpar{}_\Box\times\rN(\Rind_{\ltor}))^\otimes_{\corr\colon\cE_F,\all}
\ar[r]^-{\EO{}{\Chpar{}_\Box}{}{\corr}\eqref{b6eq:correspondence_artin}} & \Cat^\times }
\]
in the $\infty$-category of symmetric monoidal $\infty$-categories with $\infty$-operad maps, where the right vertical map is induced from $\EO{}{\Chpar{}}{\rI}{}$ \eqref{b5eq:premonoidal_artin}.

The new functor $\EO{}{\Chpar{}_\Box}{}{\corr}$ loses no information from the original one $\EO{}{\Chpar{}_\Box}{\r{II}}{}$. However, the new one has the advantage that its source is an $\infty$-category as well.

The above remarks can be applied to the other two cases as well.
\end{remark}

\subsection{The six operations}
\label{b6ss:operations}

Now we can summarize our construction of Grothendieck's six operations. Let $f\colon \cY\to\cX$ be a morphism of $\Chpar{}$ (resp.\ $\Chpdm{}$, resp.\ $\Chpdm{}$), and $\lambda$ an object of $\Rind$. From $\EO{}{\Chpar{}}{\rI}{}$ \eqref{b5eq:premonoidal_artin} (resp.\
$\EO{}{\Chpdm{}}{\rI}{}$ \eqref{b5eq:premonoidal_dm}, resp.\ $\EO{}{\Chpdm{}}{\rI}{}$) and $\EO{}{\Chpar{}_\Box}{}{\corr}$
\eqref{b6eq:correspondence_artin} (resp.\ $\EO{}{\Chpdm{}}{}{\corr}$ \eqref{b6eq:correspondence_dm}, resp.\ $\EO{\r{lqf}}{\Chpdm{}}{}{\corr}$
\eqref{b6eq:correspondence_dm_quasifinite}), we directly obtain three operations:
\begin{description}
  \item[1L] $f^*\colon \cD(\cX,\lambda)\to\cD(\cY,\lambda)$, which underlies a monoidal functor
      \[
      f^{*\otimes}\colon\cD(\cX,\lambda)^\otimes\to\cD(\cY,\lambda)^\otimes;
      \]

  \item[2L] $f_!\colon \cD(\cY,\lambda)\to\cD(\cX,\lambda)$ if $f$ is locally of finite type, $\lambda$ belongs to $\Rind_{\ltor}$ and $\cX$ is $\Box$-coprime (resp.\ $f$ is locally of finite type and $\lambda$ belongs to $\Rind_\tor$, resp.\ $f$ is locally quasi-finite and $\lambda$ is arbitrary);

  \item[3L] $-\otimes-=-\otimes_\cX-\colon\cD(\cX,\lambda)\times\cD(\cX,\lambda)\to\cD(\cX,\lambda)$.
\end{description}
If $\cX$ is a $1$-Artin stack (resp.\ $1$-DM stack), then $\cD(\cX,\lambda)^\otimes$ is equivalent to $\cD_\cart(\cX_{\liset},\lambda)^\otimes$ (resp.\ $\cD(\cX_{\et},\lambda)^\otimes$) as symmetric monoidal $\infty$-categories.

Taking right adjoints for (1L) and (2L), respectively, we obtain:
\begin{description}
  \item[1R] $f_*\colon \cD(\cY,\lambda)\to\cD(\cX,\lambda)$;

  \item[2R] $f^!\colon \cD(\cX,\lambda)\to\cD(\cY,\lambda)$ under the same condition as (2L).
\end{description}
For (3L), moving the first factor of the source $\cD(\cX,\lambda)\times\cD(\cX,\lambda)$ to the target side, we can write the functor $-\otimes-$ in the form $\cD(\cX,\lambda)\to\FunL(\cD(\cX,\lambda),\cD(\cX,\lambda))$, since the tensor product on $\cD(\cX,\lambda)$ is closed. Taking opposites and applying \cite{HTT}*{Proposition 5.2.6.2}, we obtain a functor $\cD(\cX,\lambda)^{op}\to\FunR(\cD(\cX,\lambda),\cD(\cX,\lambda))$, which can be written as
\begin{description}
  \item[3R] $\HOM(-,-)=\HOM_\cX(-,-)\colon\cD(\cX,\lambda)^{op}\times\cD(\cX,\lambda) \to\cD(\cX,\lambda)$.
\end{description}

Besides these six operations, for every morphism $\pi\colon\lambda'\to\lambda$ of $\Rind$, we have the following functor of \emph{extension of scalars}:
\begin{description}
  \item[4L] $\pi^*\colon \cD(\cX,\lambda)\to\cD(\cX,\lambda')$, which underlies a monoidal functor
      \[
      \pi^{*\otimes}\colon\cD(\cX,\lambda)^\otimes\to\cD(\cX,\lambda')^\otimes.
      \]
\end{description}
The right adjoint of the functor $\pi^*$ is the functor of \emph{restriction of scalars}:
\begin{description}
  \item[4R] $\pi_*\colon \cD(\cX,\lambda')\to\cD(\cX,\lambda)$.
\end{description}

\begin{theorem}[K\"{u}nneth Formula]\label{b6th:kunneth}
Let $f_i\colon \cY_i\to\cX_i$ ($i=1,\dots,n$) be finitely many morphisms of $\Chpar{}_\Box{}$ (resp.\ $\Chpdm{}$, resp.\ $\Chpdm{}$) that are locally of finite type (resp.\ locally of finite type, resp.\ locally quasi-finite). Given a pullbacks square
\[
\xymatrix{
\cY \ar[rr]^-{(q_1,\dots,q_n)}\ar[d]_-{f}   &&  \cY_1\times\cdots\times \cY_n \ar[d]^-{f_1\times\cdots\times f_n} \\
\cX \ar[rr]^-{(p_1,\dots,p_n)} && \cX_1\times\cdots\times \cX_n }
\]
of $\Chpar{}_\Box{}$ (resp.\ $\Chpdm{}$, resp.\ $\Chpdm{}$), then for every object $\lambda$ of $\Rind_{\ltor}$ (resp.\ $\Rind_\tor$, resp.\ $\Rind$), the following square
\[
\xymatrix{
\cD(\cY_1,\lambda)\times\cdots\times\cD(\cY_n,\lambda)  \ar[rrr]^-{q_1^*-\otimes_\cY\cdots\otimes_\cY q_n^*-}
\ar[d]_-{f_{1!}\times\cdots\times f_{n!}} &&& \cD(\cY,\lambda)  \ar[d]^-{f_!} \\
\cD(\cX_1,\lambda)\times\cdots\times\cD(\cX_n,\lambda)  \ar[rrr]^-{p_1^*-\otimes_\cX\cdots\otimes_\cX p_n^*-}
 &&& \cD(\cX,\lambda)
}\]
is commutative up to equivalence.
\end{theorem}

\begin{proof}
It is a consequence of existence of the map $\EO{}{\Chpar{}_\Box{}}{}{\corr}$ \eqref{b6eq:correspondence_artin} (resp.\
$\EO{}{\Chpdm{}}{}{\corr}$ \eqref{b6eq:correspondence_dm}, resp.\ $\EO{\r{lqf}}{\Chpdm{}}{}{\corr}$ \eqref{b6eq:correspondence_dm_quasifinite}).
\end{proof}

The previous theorem has the following two corollaries.

\begin{corollary}[Base Change]\label{b6co:base_change}
Let
\[
\xymatrix{
\cW \ar[d]_-{q} \ar[r]^-{g} & \cZ \ar[d]^-{p} \\
\cY \ar[r]^-{f} & \cX   }
\]
be a Cartesian diagram in $\Chpar{}_\Box$ (resp.\ $\Chpdm{}$, resp.\ $\Chpdm{}$) where $p$ is locally of finite type (resp.\ locally of finite
type, resp.\ locally quasi-finite). Then for every object $\lambda$ of $\Rind_{\ltor}$ (resp.\ $\Rind_\tor$, resp.\ $\Rind$), the following square
\[
\xymatrix{
\cD(\cW,\lambda) \ar[d]_-{q_!}  & \cD(\cZ,\lambda) \ar[l]_-{g^*} \ar[d]^-{p_!} \\
\cD(\cY,\lambda) & \cD(\cX,\lambda) \ar[l]_-{f^*}   }
\]
is commutative up to equivalence.
\end{corollary}

\begin{corollary}[Projection Formula]\label{b6co:projection}
Let $f\colon \cY\to\cX$ be a morphism of $\Chpar{}_\Box{}$ (resp.\ $\Chpdm{}$, resp.\ $\Chpdm{}$) that is locally of finite type (resp.\ locally of finite type, resp.\ locally quasi-finite). Then the following square
\[
\xymatrix{
\cD(\cY,\lambda)\times\cD(\cX,\lambda) \ar[d]_-{f_!\times\id} \ar[rr]^-{-\otimes_\cY f^*-}
&& \cD(\cY,\lambda) \ar[d]^-{f_!} \\
\cD(\cX,\lambda)\times\cD(\cX,\lambda) \ar[rr]^-{-\otimes_\cX-} && \cD(\cX,\lambda)   }
\]
is commutative up to equivalence.
\end{corollary}

\begin{proposition}\label{b6pr:hom}
Let $f\colon \cY\to\cX$ be a morphism of $\Chpar{}$, and $\lambda$ an object of $\Rind$. Then
\begin{enumerate}
  \item The functors $f^*(-\otimes_\cX-)$ and $(f^*-)\otimes_\cY(f^*-)$ are equivalent.

  \item The functors $\HOM_\cX(-,f_*-)$ and $f_*\HOM_\cY(f^*-,-)$ are equivalent.

  \item If $f$ is a morphism of $\Chpar{}_\Box$ (resp.\ $\Chpdm{}$, resp.\ $\Chpdm{}$) that is locally of finite type (resp.\ locally of finite type, resp.\ locally quasi-finite), and $\lambda$ belongs to $\Rind_{\ltor}$ (resp.\ $\Rind_\tor$, resp.\ $\Rind$), then the functors $f^!\HOM_\cX(-,-)$ and $\HOM_\cY(f^*-,f^!-)$ are equivalent.

  \item Under the same assumptions as in (3), the functors $f_*\HOM_\cY(-,f^!-)$ and $\HOM_\cX(f_!-,-)$ are equivalent.
\end{enumerate}
\end{proposition}

\begin{proof}
For (1), it follows from the fact that $f^*$ is a symmetric monoidal functor.

For (2), the functor $\HOM(-,f_*-)\colon\cD(\cX,\lambda)^{op}\times\cD(\cY,\lambda)\to\cD(\cX,\lambda)$ induces a functor $\cD(\cX,\lambda)^{op}\to\FunR(\cD(\cY,\lambda),\cD(\cX,\lambda))$. Taking opposite, we obtain a functor $\cD(\cX,\lambda)\to\FunL(\cD(\cX,\lambda),\cD(\cY,\lambda))$, which induces a functor $\cD(\cX,\lambda)\times\cD(\cX,\lambda)\to\cD(\cY,\lambda)$. By construction, the latter is equivalent to the functor $f^*(-\otimes_\cX-)$.
Repeating the same process for $f_*\HOM(f^*-,-)$, we obtain $(f^*-)\otimes_\cY(f^*-)$. Therefore, by (1), the functors $\HOM(-,f_*-)$ and
$f_*\HOM(f^*-,-)$ are equivalent.

For (3), the functor $f^!\HOM(-,-)\colon\cD(\cX,\lambda)^{op}\times\cD(\cX,\lambda)\to\cD(\cY,\lambda)$ induces a functor $\cD(\cX,\lambda)^{op}\to\FunR(\cD(\cX,\lambda),\cD(\cY,\lambda))$. Taking opposite, we obtain a functor
$\cD(\cX,\lambda)\to\FunL(\cD(\cY,\lambda),\cD(\cX,\lambda))$, which induces a functor $\cD(\cX,\lambda)\times\cD(\cY,\lambda)\to\cD(\cX,\lambda)$. By construction, the latter is equivalent to the functor $-\otimes_\cX(f_!-)$.
Repeating the same process for $\HOM(f^*-,f^!-)$, we obtain $f_!((f^*-)\otimes_\cY-)$. Therefore, by Corollary \ref{b6co:projection}, the functors $f^!\HOM(-,-)$ and $\HOM(f^*-,f^!-)$ are equivalent.

For (4), the functor $f_*\HOM(-,f^!-)\colon\cD(\cY,\lambda)^{op}\times\cD(\cX,\lambda)\to\cD(\cX,\lambda)$ induces a functor $\cD(\cY,\lambda)^{op}\to\FunR(\cD(\cX,\lambda),\cD(\cX,\lambda))$. Taking opposite, we obtain a functor
$\cD(\cY,\lambda)\to\FunL(\cD(\cX,\lambda),\cD(\cX,\lambda))$, which induces a functor $\cD(\cY,\lambda)\times\cD(\cX,\lambda)\to\cD(\cX,\lambda)$. By construction, the latter is equivalent to the functor $f_!(-\otimes_\cY(f^*-))$. Repeating the same process for $\HOM(f_!-,-)$, we obtain $(f_!-)\otimes_\cX-$. Therefore, by Corollary \ref{b6co:projection}, the functors $f_*\HOM(-,f^!-)$ and $\HOM(f_!-,-)$ are equivalent.
\end{proof}

\begin{proposition}\label{b6pr:hom_pi}
Let $\cX$ be an object of $\Chpar{}$, and $\pi\colon \lambda'\to \lambda$ a morphism of $\Rind$. Then
\begin{enumerate}
  \item The functors $\pi^*(-\otimes_\lambda -)$ and $(\pi^*-)\otimes_{\lambda'}(\pi^*-)$ are equivalent.

  \item The functors $\HOM_\lambda(-,\pi_*-)$ and $\pi_*\HOM_{\lambda'}(\pi^*-,-)$ are equivalent.
\end{enumerate}
\end{proposition}

\begin{proof}
The proof is similar to Proposition \ref{b6pr:hom}.
\end{proof}

\begin{proposition}\label{b6pr:upperstar_pi}
Let $f\colon \cY\to \cX$ be a morphism of $\Chpar{}$, and $\pi\colon\lambda'\to \lambda$ a perfect morphism of $\Rind$. Then the square
\begin{align}\label{b6eq:upperstar_pi}
\xymatrix{\cD(\cY,\lambda') & \cD(\cX,\lambda')\ar[l]_{f^*}\\
\cD(\cY,\lambda)\ar[u]^{\pi^*} & \cD(\cX,\lambda)\ar[u]_{\pi^*}\ar[l]_{f^*}}
\end{align}
is right adjointable and its transpose is left adjointable.
\end{proposition}

In particular, if $\cX$ is an object of $\Chpar{}$ and $\pi\colon \lambda'\to\lambda$ is a perfect morphism of $\Rind$, then $\pi^*$ admits a left adjoint
\[
\pi_!\colon \cD(\cX,\lambda')\to \cD(\cX,\lambda).
\]

\begin{proof}
The first assertion follows from the second one. To show the second assertion, by Lemma \ref{b4le:adjoint_limit}, we may assume that $f$ is a
morphism of $\Schqcs$. In this case the proposition reduces to Lemma \ref{b2le:upperstar_pi}.
\end{proof}

\begin{proposition}\label{b6pr:lowersh_pi}
Let $f\colon \cY\to\cX$ be a morphism of $\Chpar{}_\Box{}$ (resp.\ $\Chpdm{}$, resp.\ $\Chpdm{}$) that is locally of finite type (resp.\ locally of finite type, resp.\ locally quasi-finite), and $\pi\colon \lambda'\to\lambda$ a perfect morphism of $\Rind_{\ltor}$ (resp.\ $\Rind_\tor$, resp.\ $\Rind$). Then the square
\begin{align}\label{b6eq:lowersh_pi}
\xymatrix{\cD(\cY,\lambda')\ar[r]^{f_!} & \cD(\cX,\lambda')\\
\cD(\cY,\lambda)\ar[r]^{f_!}\ar[u]^{\pi^*} & \cD(\cX,\lambda)\ar[u]_{\pi^*}}
\end{align}
is right adjointable and its transpose is left adjointable.
\end{proposition}

\begin{proof}
This follows from Lemma \ref{b4le:adjoint_limit} and Lemma \ref{b3le:lowersh_pi}.
\end{proof}

\begin{proposition}\label{b6pr:Hom_pi2}
Let $\cX$ be an object of $\Chpar{}$, $\lambda=(\Xi,\Lambda)$ an object of $\Rind$, and $\xi$ an object of $\Xi$. Consider the obvious morphism
$\pi\colon \lambda'\coloneqq(\Xi_{/\xi}, \Lambda \res \Xi_{/\xi})\to\lambda$. Then
\begin{enumerate}
  \item The natural transformation $\pi_!(-\otimes_{\lambda'}\pi^* -)\to(\pi_!-)\otimes_\lambda -$ is a natural equivalence.

  \item The natural transformation $\pi^*\HOM_\lambda(-,-)\to\HOM_{\lambda'}(\pi^*-,\pi^*-)$ is a natural equivalence.

  \item The natural transformation $\HOM_\lambda(\pi_!-,-)\to\pi_*\HOM_{\lambda'}(-,\pi^*-)$ is a natural equivalence.
\end{enumerate}
\end{proposition}

\begin{proof}
Similarly to the proof of Proposition \ref{b6pr:hom}(3,4), one shows that the three assertions are equivalent (for every given $\cX$). For assertion (1), we may assume that $\cX$ is an object of $\Schqcs$. In this case, assertion (2) follows from the fact that $\pi^*$ preserves fibrant objects in $\Ch(\Mod(-))^{\r{inj}}$.
\end{proof}

Let $\cX$ be an object of $\Chpar{}$, and $\lambda=(\Xi,\Lambda)$ and object of $\Rind$. There is a t-structure on $\cD(\cX,\lambda)$, such that if $\cX$ is a $1$-Artin stack (resp.\ $1$-DM stack), then it induces the usual t-structure on its homotopy category $\rD_\cart(\cX_{\liset}^\Xi,\Lambda)$ (resp.\ $\rD(\cX_{\et}^\Xi,\Lambda)$). For an object $s_\cX\colon\cX\to\Spec\dZ$ of $\Chpar{}$, we put $\lambda_\cX\coloneqq s_\cX^*\lambda_{\Spec\dZ}$, which is a monoidal unit of $\cD(\cX,\lambda)^\otimes$ and also an object of
$\cD^\heartsuit(\cX,\lambda)$.

\begin{theorem}[Poincar\'{e} duality]\label{b6th:poincare}
Let $f\colon \cY\to\cX$ be a morphism of $\Chpar{}_\Box$ (resp.\ $\Chpdm{}$) that is flat (resp.\ flat and locally quasi-finite) and locally of finite presentation. Let $\lambda$ be an object of $\Rind_{\ltor}$ (resp.\ $\Rind$). Then
\begin{enumerate}
  \item There is a trace map
      \[
      \Tr_f\colon\tau^{\geq0}f_!\lambda_\cY\langle d\rangle=\tau^{\geq0}f_!(f^*\lambda_\cX)\langle d\rangle\to\lambda_\cX
      \]
      for every integer $d\geq\dim^+(f)$, which is functorial in the sense of Remark \ref{b4re:trace}.

  \item If $f$ is moreover smooth, the induced natural transformation
      \[
      u_f\colon f_!\circ f^*\langle\dim f\rangle\to\id_\cX
      \]
      is a counit transformation, so that the induced map
      \[
      f^*\langle\dim f\rangle\to f^!\colon \cD(\cX,\lambda)\to\cD(\cY,\lambda)
      \]
      is a natural equivalence of functors.
\end{enumerate}
\end{theorem}

\begin{proof}
This is simply (P7) of DESCENT.
\end{proof}

\begin{corollary}[Smooth (resp.\ \'Etale) Base Change]\label{b6co:smooth_base_change}
Let
\[
\xymatrix{
\cW \ar[d]_-{q} \ar[r]^-{g} & \cZ \ar[d]^-{p} \\
\cY \ar[r]^-{f} & \cX   }
\]
be a Cartesian diagram in $\Chpar{}_\Box$ (resp.\ $\Chpdm{}$) where $p$ is smooth (resp.\ \'etale). Then for every object $\lambda$ of $\Rind_{\ltor}$ (resp.\ $\Rind$), the following square
\[
\xymatrix{
\cD(\cW,\lambda)   & \cD(\cZ,\lambda) \ar[l]_-{g^*}  \\
\cD(\cY,\lambda) \ar[u]^-{q^*} & \cD(\cX,\lambda) \ar[l]_-{f^*}\ar[u]_-{p^*}   }
\]
is right adjointable.
\end{corollary}

\begin{proof}
This is part (1) of (P5\bis). It also follows from Corollary \ref{b6co:base_change} and Theorem \ref{b6th:poincare}(2) as in Lemma \ref{b4le:outputii}.
\end{proof}

\begin{proposition}\label{b6pr:support}
Let $f\colon \cY\to\cX$ be a morphism of $\Chpar{}_\Box$ (resp.\ $\Chpdm{}$), and $\lambda$ an object of $\Rind_{\ltor}$ (resp.\ $\Rind_\tor$). Assume that for every morphism $X\to\cX$ from an algebraic space $X$, the base change $\cY\times_\cX X\to X$ is a proper morphism of algebraic spaces; in particular, $f$ is locally of finite type. Then
\[
f_*,f_!\colon\cD(\cY,\lambda)\to\cD(\cX,\lambda)
\]
are equivalent functors.
\end{proposition}

\begin{proof}
We only prove the proposition for $\Chpar{}_\Box$ and leave the other case to readers. For simplicity, we call such morphism $f$ in the proposition as \emph{proper}. For every integer $k\geq 0$, denote by $\cC^k$ the subcategory of $\Fun(\Delta^1,\Chpar{k}_\Box)$ spanned by objects of the form $f\colon\cY\to\cX$ that is proper and edges of the form
\begin{align}\label{b6eq:support}
\xymatrix{
\cY' \ar[r]^-{f'}\ar[d]_-{q} & \cX' \ar[d]^-{p} \\
\cY \ar[r]^-{f} & \cX
}
\end{align}
that is a Cartesian diagram in which $p$ hence $q$ are smooth. In addition, we let $\cC^{-1}$ be the subcategory of $\cC^0$ spanned by $f\colon\cY\to\cX$ such that $\cX$ hence $\cY$ are quasi-compact separated algebraic spaces. For $k\geq-1$, denote by $\cE^k$ the subset of $(\cC^k)_1$ consists of \eqref{b6eq:support} in which $p$ hence $q$ are moreover surjective. We have $\cE^k\cap(\cC^{k-1})_1=\cE^{k-1}$ for $k\geq 0$.

By Corollary \ref{b6co:smooth_base_change} and the map $\EO{}{\Chpar{k}_\Box}{*}{!}$ (obtained from $\EO{}{\Chpar{k}_\Box}{\r{II}}{}$ as in \eqref{b3eq:operation1}), for every $k\geq -1$, we have two functors
\[
F^k_*,F^k_!\colon(\cC^k)^{op}\to\Fun(\Delta^1,\Cat)
\]
in which the first (resp.\ second) one sends $f\colon\cY\to\cX$ to $f_*\colon\cD(\cY,\lambda)\to\cD(\cX,\lambda)$ (resp.\ $f_!\colon\cD(\cY,\lambda)\to\cD(\cX,\lambda)$), and an edge \eqref{b6eq:support} to
\[
\xymatrix{
\cD(\cY',\lambda) \ar[rr]^-{f'_*\text{ (resp.\ $f'_!$)}} && \cD(\cX',\lambda) \\
\cD(\cY,\lambda) \ar[rr]^-{f_*\text{ (resp.\ $f_!$)}}\ar[u]^-{q^*} &&  \cD(\cX,\lambda).\ar[u]_-{p^*}
}
\]
By Remark \ref{b5re:proper}, $F^{-1}_*$ and $F^{-1}_!$ are equivalence. Applying Proposition \ref{b4pr:toy} successively to marked $\infty$-categories $(\cC^k,\cE^k)$, we conclude that $F^k_*$ and $F^k_!$ are equivalence for every $k\geq 0$. The proposition follows.
\end{proof}


\begin{remark}\label{b6re:support}
Let $f\colon \cY\to\cX$ be a morphism of $\Chpar{}_\Box$ (resp.\ $\Chpdm{}$) that is locally of finite type and representable by DM stacks, and $\lambda$ an object of $\Rind_{\ltor}$ (resp.\ $\Rind_\tor$). We can always construct a natural transformation
\[
f_!\to f_*\colon\cD(\cY,\lambda)\to\cD(\cX,\lambda)
\]
of functors, which specializes to the equivalence in Proposition \ref{b6pr:support} if $f$ satisfies the property there.
\end{remark}

\begin{theorem}[(Co)homological descent]\label{b6th:descent}
Let $f\colon X^+_0\to X^+_{-1}$ be a smooth surjective morphism of $\Chpar{}$ (resp.\ $\Chpdm{}$), and $X^+_\bullet$ a \v{C}ech nerve of $f$.
\begin{enumerate}
  \item For every object $\lambda$ of $\Rind$, the functor
      \[
      \cD(X^+_{-1},\lambda)\to \lim_{n\in \del} \cD(X^+_n,\lambda)
      \]
      is an equivalence, where the transition maps in the limit are provided by $*$-pullback.

  \item Suppose that $f$ is a morphism of $\Chpar{}_\Box$ (resp.\ $\Chpdm{}$). For every object $\lambda$ of $\Rind_{\ltor}$ (resp.\
      $\Rind_\tor$), the functor
      \[
      \cD(X^+_{-1},\lambda)\to\lim_{n\in \del}\cD(X^+_n,\lambda)
      \]
      is an equivalence, where the transition maps in the limit are provided by $!$-pullback.
\end{enumerate}
\end{theorem}

\begin{proof}
This follows from (P4) of DESCENT.
\end{proof}

\begin{corollary}\label{b6co:descent}
Let $f\colon Y\to X$ be a morphism of $\Chpar{}$ (resp.\ $\Chpdm{}$) and let $y\colon Y^+_0\to Y$ be a smooth surjective morphism of $\Chpar{}$ (resp.\ $\Chpdm{}$). Denote $Y^+_\bullet$ the \v{C}ech nerve of $y$ with the morphism $y_n\colon Y^+_n\to Y^+_{-1}=Y$. Put $f_n\coloneqq f\circ y_n\colon Y^+_n\to X$.
\begin{enumerate}
  \item For every object $\lambda$ of $\Rind$ and every object $\sfK\in\cD^{\geq0}(Y,\Lambda)$, we have a convergent spectral sequence
      \[
      \rE_1^{p,q}=\rH^q(f_{p*}y_p^*\sfK)\Rightarrow \rH^{p+q} f_* \sfK.
      \]

  \item Suppose that $f$ is a morphism of $\Chpar{}_\Box$ (resp.\ $\Chpdm{}$). For every object $\lambda$ of $\Rind_{\ltor}$ (resp.\ $\Rind_\tor$) and every object $\sfK\in\cD^{\leq0}(Y,\Lambda)$, we have a convergent spectral sequence
      \[
      \tilde\rE_1^{p,q}=\rH^q(f_{-p!}y_{-p}^!\sfK)\Rightarrow\rH^{p+q}f_!\sfK.
      \]
\end{enumerate}
\end{corollary}

\begin{proof}
This essentially follows from Theorem \ref{b6th:descent} and 
\cite{HA}*{Proposition 1.2.4.5, Variant 1.2.4.9}. 

For (1), we obtain a cosimplicial object $\rN(\del)\to\cD^{\geq0}(Y,\Lambda)$ whose value at $[n]$ is $y_{n*}y_n^*\sfK$, such that $\sfK$ is its limit by Theorem \ref{b6th:descent}(1); in other words, we have $\sfK\xrightarrow{\sim}\lim_{n\in \del}y_{n*}y_n^*\sfK$. Applying the functor $f_*$, we obtain another cosimplicial object $\rN(\del)\to\cD^{\geq0}(X,\Lambda)$ whose value at $[n]$ is $f_{n*}y_n^*\sfK$, such that $f_*\sfK$ is its limit. Put $\cC\coloneqq\cD(X,\Lambda)^{op}$ and let $\cC_{\geq 0}\coloneqq\cD^{\geq0}(X,\Lambda)^{op}$, $\cC_{\leq 0}\coloneqq\cD^{\leq0}(X,\Lambda)^{op}$ be the induced (homological) t-structure. Then we obtain a simplicial object $\rN(\del)^{op}\to\cC_{\geq 0}$ whose value at $[n]$ is $f_{n*}y_n^*\sfK$, with $f_*\sfK$ its geometric realization. By \cite{HA}*{Proposition 1.2.4.5, Variant 1.2.4.9}, we obtain a spectral sequence $\{\rE_r^{p,q}\}_{r\geq 1}$ abutting to $\rH^{p+q} f_* \sfK$, with $\rE_1^{p,q}=\rH^q(f_{p*}y_p^*\sfK)$.

For (2), by Theorem \ref{b6th:descent}(2), the functor $\cD(Y,\lambda)^{op}\to\lim_{n\in \del}\cD(Y^+_n,\lambda)^{op}$ is an equivalence, where the transition maps in the limit are provided by $!$-pullback. Similar to (1), we obtain a cosimplicial object $\rN(\del)\to\cD^{\leq0}(Y,\Lambda)^{op}$ whose value at $[n]$ is $y_{n!}y_n^!\sfK$, such that $\sfK$ is its limit. Applying the functor $f_!$, we obtain another cosimplicial object $\rN(\del)\to\cD^{\leq0}(Y,\Lambda)^{op}$ whose value at $[n]$ is $f_{n!}y_n^!\sfK$, such that $f_!\sfK$ is its limit. Put $\cC\coloneqq\cD(X,\Lambda)$ and let $\cC_{\geq 0}\coloneqq\cD^{\leq0}(X,\Lambda)$, $\cC_{\leq 0}\coloneqq\cD^{\geq0}(X,\Lambda)$ be the induced (homological) t-structure. Then we obtain a simplicial object $\rN(\del)^{op}\to\cC_{\geq 0}$ whose value at $[n]$ is $f_{n!}y_n^!\sfK$, with $f_!\sfK$ its geometric realization. By \cite{HA}*{Proposition 1.2.4.5, Variant 1.2.4.9}, we obtain a spectral sequence $\{\tilde\rE_r^{p,q}\}_{r\geq 1}$ abutting to $\rH^{p+q} f_!\sfK$, with $\tilde\rE_1^{p,q}=\rH^q(f_{-p!}y_{-p}^!\sfK)$.
\end{proof}

The following lemma will be used in \Sec\ref{b6ss:constructible}.

\begin{lem}\label{b6le:dimension}
Let $f\colon Y\to X$ be a morphism locally of finite type of $\Chpar{}_\Box$ 
(resp.\ $\Chpdm{}$), and $\lambda$ an object of $\Rind_{\ltor}$ (resp.\ 
$\Rind_\tor$). Then $f_!$ restricts to a functor 
$\cD^{\le0}(Y,\lambda)\to\cD^{\le 2d}(X,\lambda)$, where $d=\dim^+(f)$. 
Moreover, if $f$ is smooth (resp.\ \'etale), then $f_!\circ f^!$ restricts to 
a functor $\cD^{\le0}(X,\lambda)\to\cD^{\le 0}(X,\lambda)$. 
\end{lem}

\begin{proof}
We may assume that $X$ is the spectrum of a separably closed field.

We prove the first assertion by induction on $k$ when $Y$ is a $k$-Artin stack. Take an object $\sfK\in \cD^{\le 0}(Y,\lambda)$. For $k=-2$, $Y$ is the coproduct of a family $(Y_i)_{i\in I}$ of morphisms of schemes separated and of finite type over $X$, so that
\[
f_!\sfK=\bigoplus_{i\in I}f_{i!}(\sfK\res Y_i)\in \cD^{\le 2d}(X,\lambda),
\]
where $f_i$ is the composite morphism $Y_i\to Y\xrightarrow{f}X$. Assume the assertion proved for some $k\ge -2$, and let $Y$ be a $(k+1)$-Artin stack. Let $Y_\bullet$ be a \v{C}ech nerve of an atlas (resp.\ \'etale atlas) $y_0 \colon Y_0\to Y$ and form a triangle
\[
\xymatrix{ & Y\ar[rd]^-{f}\\
Y_\bullet\ar[rr]^-{f_\bullet} \ar[ur]^-{y_\bullet}&&X.}
\]
Then, by Theorem \ref{b6th:descent}(2), we have $f_!\sfK\simeq\colim_{n\in\del^{op}}f_{n!}y_n^!\sfK$. Thus, it suffices to show that for every smooth (resp.\ \'etale) morphism $g\colon Z\to X$ where $Z$ is a $k$-Artin stack, $(f\circ g)_{!}g^! \sfK$ belongs to $\cD^{\le 2d}(X,\lambda)$. For this, we may assume that $g$ is of pure dimension $e$ (resp.\ $0$). The assertion then follows from Theorem \ref{b6th:poincare} and induction hypothesis.

For the second assertion, we may assume that $f$ is of pure dimension $d$ (resp.\ $0$). It then follows from Theorem \ref{b6th:poincare}(2) and the first assertion.
\end{proof}

\begin{remark}\label{b6re:comparison}
Let $f\colon \cY\to \cX$ be a smooth morphism of (1-)Artin stacks, and $\pi\colon \Lambda'\to \Lambda$ a ring homomorphism. Standard functors for the lisse-\'etale topoi induce
\begin{align*}
\rL f_{\liset}^*&\colon \rD_\cart(\cX_\liset,\Lambda)\to\rD_\cart(\cY_\liset,\Lambda),\\
-\overset{\rL}{\otimes}_\cX-&\colon
\rD_\cart(\cX_\liset,\Lambda)\times\rD_\cart(\cX_\liset,\Lambda)\to\rD_\cart(\cX_\liset,\Lambda),\\
\rL\pi^*&\colon
\rD_\cart(\cX_\liset,\Lambda)\to\rD_\cart(\cX_\liset,\Lambda').
\end{align*}
By Corollary \ref{b5co:compatibility}, we have an equivalence of categories
\begin{align}\label{b6eq:equivalence}
\rh\cD(\cX,\Lambda)\simeq\rD_\cart(\cX_\liset,\Lambda),
\end{align}
and isomorphisms of functors
\[
\rh f^*\simeq\rL f_{\liset}^*,\quad\rh(-\otimes_\cX-)\simeq(-\overset{\rL}{\otimes}_\cX-),\quad \rh \pi^*\simeq\rL\pi^*,
\]
compatible with \eqref{b6eq:equivalence}.

Let $f\colon \cY\to \cX$ be a morphism of Artin stacks. Using the methods of \cite{Olsson}*{(9.16.2)}, one can define a functor
\[
\rL^+f^*\colon \rD_\cart^+(\cX_\liset,\Lambda)\to\rD_\cart^+(\cY_\liset,\Lambda).
\]
Similarly to Proposition \ref{b6pr:compatibility} in \Sec\ref{b6ss:compatibility}, there is an isomorphism between $\rh f^{*+}\simeq\rL^+f_{\liset}^*$, compatible with \eqref{b6eq:equivalence}, where $f^{*+}$ denotes the obvious restriction of $f^*$.

Assume that there exists a nonempty set $\Box$ of rational primes such that $\Lambda$ is $\Box$-torsion and $\cX$ is $\Box$-coprime. Then the functors $\rR^+f_{\liset*}$ and $\Rhom_\cX$ for the lisse-\'etale topoi induce
\begin{align*}
\rR^+f_{\liset*}&\colon \rD_\cart^{+}(\cY_\liset,\Lambda)\to\rD_\cart^{+}(\cX_\liset,\Lambda),\\
\Rhom_\cX&\colon
\rD_\cart(\cX_\liset,\Lambda)^{op}\times\rD_\cart(\cX_\liset,\Lambda)\to\rD_\cart(\cX_\liset,\Lambda).
\end{align*}
Indeed, the statement for $\rR f_{\liset*}$, similar to \cite{Olsson}*{Proposition 9.9}, follows from smooth base change; and the statement for $\Rhom_\cX$, similar to \cite{LO1}*{Corollary 4.2.2}, follows from the fact that the map $g^*\Rhom_X(-,-)\to\Rhom_Y(g^*-,g^*-)$ is an equivalence for every smooth morphism $g\colon Y\to X$ of $\Box$-coprime schemes, which in turn follows from the Poincar\'e duality. By adjunction, we obtain isomorphisms of functors $\rh\HOM_\cX\simeq\Rhom_\cX$ and $\rh f^+_*\simeq\rR^+f_{\liset*}$, compatible with \eqref{b6eq:equivalence}.
\end{remark}

\subsection{More adjointness in the finite-dimensional Noetherian case}
\label{b6ss:adjointness}

Recall the following result of Gabber: for every morphism $f\colon Y\to X$ of finite type between finite-dimensional Noetherian schemes, and every prime number $\ell$ invertible on $X$, the $\ell$-cohomological dimension of $f_*$ is finite \cite{TG}*{Expos\'{e} XVIII-A, Corollary 1.4}. This extends easily to morphisms representable by algebraic spaces as follows.

\begin{lem}\label{6le:cohdim}
Let $f\colon Y\to X$ be a morphism of finite presentation between $\square$-coprime finite-dimensional Noetherian higher Artin stacks. Let $\lambda$ be a $\square$-torsion ringed diagram. Then $f^!\colon\cD(X,\lambda)\to \cD(Y,\lambda)$ and, if $f$ is $0$-Artin, $f_*\colon\cD(Y,\lambda)\to \cD(X,\lambda)$ have bounded cohomological amplitude.
\end{lem}

\begin{proof}
For the first assertion, we reduce by Poincar\'e duality first to the case of a morphism between affine schemes, and then to the case of a closed immersion. In this case, the assertion follows from Gabber's theorem for the complementary open immersion. For the second assertion, we reduce to the case where $X$ is a scheme. Then $Y$ is an algebraic space. By Noetherian induction, it suffices to show that for every open immersion $j\colon V\to Y$ with $V$ a scheme, the $\ell$-cohomological dimensions of $j_*$ and $(fj)_*$ is finite. Thus we may assume $f$ is representable by schemes. This case follows readily from the case of schemes.
\end{proof}

We say that a higher Artin stack $X$ is locally Noetherian (resp.\ locally finite-dimensional) if $X$ admitting an atlas $Y\to X$ where $Y$ is a coproduct of Noetherian (resp.\ finite-dimensional) schemes.

\begin{proposition}\label{b6pr:more_adj}
Let $f\colon Y\to X$ be a morphism locally of finite type of $\Chpar{}_\Box$, and $\pi\colon \lambda'\to\lambda$ an arbitrary morphism of $\Rind_{\ltor}$. Assume that $X$ is locally Noetherian and locally finite-dimensional. Then $f^!\colon\cD(X,\lambda)\to \cD(Y,\lambda)$ admits a right adjoint; the squares \eqref{b6eq:upperstar_pi} and \eqref{b6eq:lowersh_pi} are right adjointable. Moreover, if $f$ is 0-Artin, quasi-compact and quasi-separated, then $f_*\colon \cD(Y,\lambda)\to\cD(X,\lambda)$ also admits a right adjoint.
\end{proposition}

\begin{proof}
Let $g\colon \coprod Z_i=Z\to Y$ be an atlas of $Y$. By the Poincar\'e duality, $g^!$ is conservative, and $h_i^!$ exhibits $\cD(Z,\lambda)$ as the product of $\cD(Z_i,\lambda)$, where $h_i\colon Z_i\to Z$. Therefore, to show that $f^!$ preserves small colimits, it suffices to show that, for every $i$, $(f\circ g_i)^!$ preserves small colimits, where $g_i\colon Z_i\to Y$. We may thus assume that $X$ and $Y$ are both affine schemes. Let $i$ be a closed embedding of $Y$ into an affine space over $X$. It then suffices to show that $i^!$ preserves small colimits, which follows from the finiteness of cohomological dimension of $j_*$, where $j$ is the complementary open immersion.

To show that \eqref{b6eq:upperstar_pi} and \eqref{b6eq:lowersh_pi} are right adjointable, we reduce by Lemma \ref{b4le:adjoint_limit} to the case of affine schemes. By the factorization above and the Poincar\'e duality, the assertion for $f^!$ reduces to the assertion for $f_*$. We may further assume that $\Xi'=\Xi=\{\ast\}$ where $\lambda=(\Xi,\Lambda)$ and $\lambda'=(\Xi',\Lambda')$. In this case, it suffices to take a resolution of $\Lambda'$ by free $\Lambda$-modules.

For the last assertion, by smooth base change, we may assume that $X$ is an affine Noetherian scheme. In this case, by Lemma \ref{6le:cohdim}, $f_*\colon \cD(Y,\lambda)\to \cD(X,\lambda)$ commutes with small colimits and hence admits a right adjoint.
\end{proof}

\subsection{Constructible complexes}
\label{b6ss:constructible}

We study constructible complexes on higher Artin stacks and their behavior under the six operations. Let $\lambda=(\Xi,\Lambda)$ be a \emph{Noetherian} ringed diagram. For every object $\xi$ of $\Xi$, we denote by $e_\xi$ the morphism $(\{\xi\},\Lambda(\xi))\to(\Xi,\Lambda)$.

We start from the case of schemes. Let $X$ be a scheme. Recall from \cite{SGA4}*{Expos\'{e} ix, D\'{e}finition 2.3} that for a Noetherian ring $R$, a sheaf $\cF$ of $R$-modules on $X$ is said to be \emph{constructible} if the stalks of $\cF$ are finitely-generated $R$-modules and every affine open subset of $X$ is the disjoint union of finitely many constructible subschemes $U_i$ such that the restriction of $\cF$ to each $U_i$ is locally constant.

\begin{definition}\label{b6de:constructible}
We say that an object $\sfK$ of $\cD(X,\lambda)$ is \emph{a constructible complex} or simply \emph{constructible} if for every object $\xi$ of $\Xi$ and every $q\in \dZ$, the sheaf $\rH^q e_\xi^*\sfK\in\Mod(X,\Lambda(\xi))$ is constructible. We say that an object $\sfK$ of $\cD(X,\lambda)$ is \emph{locally bounded from below} (resp.\ \emph{locally bounded from above}) if for every object $\xi$ of $\Xi$ and every quasi-compact open subscheme $U$ of $X$, $e_\xi^*\sfK\res U$ is bounded from below (resp.\ bounded from above).
\end{definition}

Note that we do not require constructible complexes to be bounded in either direction. Note that $\sfK\in\cD(X,\lambda)$ is locally bounded from below (resp.\ from above) if and only if there exists a Zariski open covering $(U_i)_{i\in I}$ of $X$ such that $\sfK\res U_i$ is bounded from below (resp.\ from above).

\begin{lem}\label{b6le:cons}
Let $f\colon Y\to X$ be a morphism of schemes. Let $\sfK$ be an object of $\cD(X,\lambda)$. If $\sfK$ is constructible (resp.\ locally bounded from below, resp.\ locally bounded from above), then $f^*\sfK$ satisfies the same property. The converse holds when $f$ is surjective and locally of finite presentation.
\end{lem}

\begin{proof}
The constructible case follows from \cite{SGA4}*{Expos\'{e} ix, Propositions 2.4(iii), 2.8}. For the locally bounded case we use the characterization by open coverings. The first assertion is then clear. For the second assertion, by \cite{SGA4}*{Expos\'{e} ix, Lemme 2.8.1} we may assume $f$ flat, hence open. In this case the image of an open covering of $Y$ is an open covering of $X$.
\end{proof}

The lemma implies that Definition \ref{b6de:constructible} is compatible with the following.

\begin{definition}[Constructible complex]\label{b6de:construcible_stack}
Let $X$ be a higher Artin stack. We say that an object $\sfK$ of $\cD(X,\lambda)$ is \emph{a constructible complex} or simply
\emph{constructible} (resp.\ \emph{locally bounded from below}, resp.\ \emph{locally bounded from above}) if there exists an atlas $f\colon Y\to X$ with $Y$ a scheme, $f^*\sfK$ is constructible (resp.\ locally bounded from below, resp.\ locally bounded from above).
\end{definition}

We denote by $\cD_\cons(X,\lambda)$ (resp.\ $\cD^{(+)}(X,\lambda)$, $\cD^{(-)}(X,\lambda)$ or $\cD^{(\rb)}(X,\lambda)$) the full subcategory of $\cD(X,\lambda)$ spanned by objects that are constructible (resp.\ locally bounded from below, locally bounded from above, or locally bounded from both sides). Moreover, we put
\begin{align*}
\cD_\cons^{(+)}(X,\lambda)&\coloneqq\cD_\cons(X,\lambda)\cap\cD^{(+)}(X,\lambda),\\
\cD_\cons^{(-)}(X,\lambda)&\coloneqq\cD_\cons(X,\lambda)\cap\cD^{(-)}(X,\lambda),\\
\cD_\cons^{(\rb)}(X,\lambda)&\coloneqq\cD_\cons(X,\lambda)\cap\cD^{(\rb)}(X,\lambda).
\end{align*}

\begin{proposition}\label{b6pr:constructible}
Let $f\colon Y\to X$ be a morphism of higher Artin stacks.
\begin{enumerate}
  \item Let $\sfK$ be an object of $\cD(X,\lambda)$. If $\sfK$ is constructible (resp.\ locally bounded from below, resp.\ locally bounded from above), then $f^*\sfK$ satisfies the same property. The converse holds when $f$ is surjective and locally of finite presentation. In particular, $f^*$ restricts to a functor
      \begin{description}
      \item[1L'] $f^*\colon\cD_\cons(X,\lambda)\to\cD_\cons(Y,\lambda)$.
      \end{description}

  \item Suppose that $X$ and $Y$ are $\Box$-coprime higher Artin stacks (resp.\ higher DM stacks), and $f$ is of finite presentation
      (Definition \ref{b5de:finite_presentation}). Let $\lambda$ be a $\Box$-torsion (resp.\ torsion) Noetherian ringed diagram. Then $f_!$ restricts to
        \begin{description}
        \item[2L'] $f_!\colon\cD^{(-)}_\cons(Y,\lambda)\to\cD^{(-)}_\cons(X,\lambda)$, and if $f$ is $0$-Artin (resp.\ $0$-DM), $f_!\colon\cD_\cons(Y,\lambda)\to\cD_\cons(X,\lambda)$.
        \end{description}

  \item The functor $-\otimes_X -$ restricts to a functor
      \begin{description}
        \item[3L'] $-\otimes_X-\colon\cD^{(-)}_\cons(X,\lambda)\times\cD^{(-)}_\cons(X,\lambda)\to\cD^{(-)}_\cons(X,\lambda)$.
      \end{description}
      In particular, $\cD_\cons^{(-)}(X,\lambda)^\otimes$ is a symmetric monoidal subcategory.
\end{enumerate}
\end{proposition}

\begin{proof}
For (1), we reduce by taking atlases to the case of schemes, which is Lemma \ref{b6le:cons}. The reduction for the second assertion is clear. The reduction for the first assertion uses the second assertion.

For (2), we may assume $\Xi=\{\ast\}$. We prove by induction on $k$ that the assertion holds when $f$ is a morphism of $k$-Artin (resp.\ $k$-DM) stacks. The case $k=-2$ is \cite{SGA4}*{Expos\'{e} xvii, Th\'{e}or\`{e}me 5.3.6}. Now assume that the assertions hold for some $k\ge -2$ and let $f$ be a morphism of $(k+1)$-Artin (resp.\ $(k+1)$-DM) stacks. By smooth base change (Corollary \ref{b6co:smooth_base_change}), we may assume that $X$ is an affine scheme. Then $Y$ is a $(k+1)$-Artin (resp.\ $(k+1)$-DM) stack, of finite presentation over $X$. It suffices to show that for every object $\sfK$ of $\cD^{\leq0}_\cons(Y,\lambda)$, $f_!\sfK$ belongs to $\cD^{\le 2d}_\cons(Y,\lambda)$, where $d=\dim^+(f)$. Let $Y_\bullet$ be a \v{C}ech nerve of an atlas $y_0\colon Y_0\to Y$, where $Y_0$ is an affine scheme, and form a triangle
\[
\xymatrix{ & Y\ar[rd]^-{f}\\
Y_\bullet\ar[rr]^-{f_\bullet} \ar[ur]^-{y_\bullet}&&X.}
\]
Then for $n\geq0$, $f_n$ is a quasi-compact and quasi-separated morphism of $k$-Artin (resp.\ $k$-DM) stacks. By Theorem \ref{b6th:descent} and the dual version of \cite{HA}*{Variant 1.2.4.9}, we have a convergent spectral sequence
\[
\rE_1^{p,q}=\rH^q (f_{-p!}y_{-p}^! \sfK)\Rightarrow \rH^{p+q} f_! \sfK.
\]
By induction hypothesis and the Poincar\'e duality (Theorem \ref{b6th:poincare}(2)), $\rE_1^{p,q}$ is constructible for all $p$ and $q$. Moreover, $\rE_1^{p,q}$ vanishes for $p>0$ or $q>2d$ by Lemma \ref{b6le:dimension}. Therefore, $f_!\sfK$ belongs to $\cD^{\le 2d}_\cons(X,\lambda)$.

For (3), we may assume $X$ is an affine scheme. The assertion is then trivial.
\end{proof}

To state the results for the other operations, we work in a relative setting. Let $\dS$ be a $\Box$-coprime higher Artin stack. Assume that there exists an atlas $S\to \dS$, where $S$ is a coproduct of Noetherian quasi-excellent schemes\footnote{Recall from \cite{TG}*{Expos\'{e} I, D\'{e}finition 2.10} that a ring is \emph{quasi-excellent} if it is Noetherian and satisfies conditions (2), (3) of \cite{EGAIV}*{D\'{e}finition 7.8.2}. A Noetherian scheme is \emph{quasi-excellent} if it admits a Zariski open cover by spectra of quasi-excellent rings.} and regular schemes of dimension $\le 1$. We denote by $\Chplft{\dS}\subseteq\Chpar{}_{/\dS}$ the full subcategory spanned by morphisms $X\to \dS$ locally of finite type.

\begin{proposition}\label{b6pr:constructible2}
Let $f\colon Y\to X$ be a morphism of $\Chplft{\dS}$, and $\lambda$ a $\Box$-torsion Noetherian ringed diagram. Then the operations introduced in \Sec\ref{b6ss:operations} restrict to the following
\begin{description}
  \item[1R']
      $f_*\colon\cD^{(+)}_\cons(Y,\lambda)\to\cD^{(+)}_\cons(X,\lambda)$
      if $f$ is quasi-compact and quasi-separated;

  \item[2R']
      $f^!\colon\cD^{(+)}_\cons(X,\lambda)\to\cD^{(+)}_\cons(Y,\lambda)$;

  \item[3R']
      $\HOM_X(-,-)\colon\cD^{(-)}_\cons(X,\lambda)^{op}\times\cD^{(+)}_\cons(X,\lambda)\to\cD^{(+)}_\cons(X,\lambda)$
      if $\Xi_{/\xi}$ is finite for all $\xi\in \Xi$.
\end{description}
\end{proposition}

\begin{proof}
Suppose that $\lambda=(\Xi,\Lambda)$. We first reduce to the case $\Xi=\{\ast\}$. The reduction follows from Proposition \ref{b6pr:upperstar_pi} and Proposition \ref{b6pr:lowersh_pi} for (1R') and (2R'), respectively. For (3R'), by Proposition \ref{b6pr:Hom_pi2}(2) and the assumption on $\Xi_{/\xi}$, we may assume $\Xi$ finite. In this case, by Proposition \ref{b6pr:hom_pi}(2), it suffices to prove that every $\sfK\in \cD^{(+)}_\cons(X,\lambda)$ is a successive extension of $e_{\xi*}\sfL_\xi$, where $\sfL_\xi\in\cD^{(+)}_\cons(X_\xi,\Lambda(\xi))$ for every object $\xi\in \Xi$. This being trivial for $\Xi=\emptyset$, we proceed by induction on the cardinality of $\Xi$. Let $\Xi'\subseteq \Xi$ be the partially ordered subset spanned by the minimal elements of $\Xi$, and let $\Xi''$ be the complement of $\Xi'$. Then we have a fibre sequence $i_*\sfL\to \sfK \to \prod_{\xi \in \Xi'}e_{\xi*}e_\xi^*\sfK$, where $i\colon (\Xi'',\Lambda\res \Xi'')\to \lambda$ and $\sfL\in \cD^{(+)}_\cons(\Xi'',\Lambda\res \Xi'')$. Since $\Xi'$ is nonempty, it then suffices to apply the induction hypothesis to $\sfL$.

We then prove by induction on $k$ that the assertions for $\Xi=\{\ast\}$ hold when $f$ is a morphism of $k$-Artin stacks. The case $k=-2$ is due to Deligne \cite{SGA4d}*{Th.\ Finitude, Corollaires 1.5, 1.6} if $S$ is regular of dimension $\le 1$ and to Gabber \cite{TG}*{Expos\'{e} XIII} if $S$ is quasi-excellent. In fact, in the latter case, by arguments similar to \cite{SGA4d}*{Th.\ Finitude, {\Sec}2.2}, we may assume $\lambda=(*,\dZ/n\dZ)$. In the finite-dimensional case we also need the finiteness of cohomological dimension recalled at the beginning of \Sec\ref{b6ss:adjointness}. Now assume that the assertions hold for some $k\ge-2$ and let $f$ be a morphism of $(k+1)$-Artin stacks. Then (2R') follows from induction hypothesis, Theorem \ref{b6th:poincare}(2) and (1L'); (3R') follows from induction hypothesis, Proposition \ref{b6pr:hom}(3), Theorem
\ref{b6th:poincare}(2) and (1L'), (2R'). The proof of (1R') is similar to the proof of Proposition \ref{b6pr:constructible}. Indeed, to show that for every object $\sfK$ of $\cD^{\geq0}_\cons(Y,\lambda)$, $f_*\sfK$ belongs to $\cD^{\geq0}_\cons(X,\lambda)$, it suffices to apply the convergent spectral sequence
\[
\rE_1^{p,q}=\rH^q(f_{p*}y_p^*\sfK)\Rightarrow \rH^{p+q} f_* \sfK
\]
and induction hypothesis.
\end{proof}

\begin{proposition}\label{b6pr:constructible3}
Let $f\colon Y\to X$ be a morphism of $\Chplft{\dS}$, and $\lambda$ a $\Box$-torsion Noetherian ringed diagram. Assume that $\dS$ is locally finite-dimensional. Then the operations introduced in \Sec\ref{b6ss:operations} restrict to the following
\begin{description}
  \item[1R'] $f_*\colon \cD_\cons(Y,\lambda)\to\cD_\cons(X,\lambda)$ if $f$ is quasi-compact, quasi-separated, and $0$-Artin;

  \item[2R'] $f^!\colon \cD_\cons(X,\lambda)\to\cD_\cons(Y,\lambda)$;

  \item[3R'] $\HOM_X(-,-)\colon\cD^{(\r{ft})}_\cons(X,\lambda)^{op}\times\cD_\cons(X,\lambda)\to\cD_\cons(X,\lambda)$ if $\Xi_{/\xi}$ is finite for all $\xi\in \Xi$.
\end{description}
\end{proposition}

Here $\cD^{(\r{ft})}_{\cons}(X,\lambda)\subseteq \cD_{\cons}(X,\lambda)$ denotes the full subcategory spanned by objects $\sfK$ such that for every $\xi\in \Xi$, $e_\xi^*\sfK$ is locally of finite tor-dimension.

\begin{proof}
This follows from Proposition \ref{b6pr:constructible2} and Lemmas \ref{6le:cohdim} and \ref{6le:HOMfin}.
\end{proof}

\begin{lem}\label{6le:HOMfin}
Let $X$ be a $\square$-coprime finite-dimensional Noetherian higher Artin stack. Let $\lambda=(\Xi,\Lambda)$ be a $\square$-torsion Noetherian ringed diagram with $\Xi$ finite and let $\sfK\in \cD_\cons(X,\lambda)$ such that for every $\xi\in \Xi$, $e_\xi^*\sfK$ is locally of finite tor-dimension. Then $\HOM_X(\sfK,-)$ has finite cohomological amplitude.
\end{lem}

\begin{proof}
As in the proof of Proposition \ref{b6pr:constructible2}, any $\sfL\in\cD^{\le 0}(X,\lambda)$ is a successive extension of $e_{\xi*}\sfL_\xi$ with $\sfL_\xi\in \cD^{\le l}(X,\Lambda(\xi))$, where $l$ denotes the greatest length of chains in $\Xi$. We are thus reduced to the case $\Xi=\{\ast\}$. We then reduce to the case where $X$ is a scheme and $\sfK=j_!\sfK'$, where
$j\colon U\to X$ is an immersion and $\sfK'\in \cD_\cons(U,\Lambda)$ is a perfect complex. In this case
\[
\HOM_X(\sfK,\sfL)\simeq j_*\HOM_X(\sfK',j^*\sfL)\simeq j_*(j^*\sfL\otimes \HOM_X(\sfK',\Lambda))
\]
and we conclude by the fact that $j_*$ has finite cohomological amplitude.
\end{proof}

\subsection{Compatibility with Laszlo--Olsson (torsion coefficients)}
\label{b6ss:compatibility}

In this section we establish the compatibility between our theory and the work of Laszlo and Olsson \cite{LO1}, under the (more restrictive) assumptions of the latter.

We fix $\Box=\{\ell\}$ and a Gorenstein local ring $\Lambda$ of dimension $0$ and residual characteristic $\ell$. We will suppress $\Lambda$ from the notation when no confusion arises. Let $\dS$ be a $\Box$-coprime scheme, endowed with a global dimension function, satisfying the following conditions.
\begin{enumerate}
  \item $\dS$ is affine excellent and finite-dimensional;

  \item For every $\dS$-scheme $X$ of finite type, there exists an \'{e}tale cover $X'\to X$ such that, for every scheme $Y$ \'{e}tale and of finite type over $X'$, $\r{cd}_{\ell}(Y)<\infty$;
\end{enumerate}

\begin{remark}\label{b6re:dimension}
In \cite{LO1}, the authors did not explicitly include the existence of a global dimension function in their assumptions. However, their method relies on pinned dualizing complexes (see below), which makes use of the dimension function. Note that assumption (2) above is slightly weaker than the assumption on cohomological dimension in \cite{LO1}; for example, (2) allows the case $\dS=\Spec\dR$ and $\ell=2$ while the assumption in \cite{LO1} does not. Nevertheless, assumption (2) implies that the right derived functor of the countable product functor on $\Mod(X_{\et},\Lambda)$ has finite cohomological dimension, which is in fact sufficient for the construction in \cite{LO1}.
\end{remark}

Let $\Chplmb{\dS}$ be the full subcategory of $\Chplft{\dS}$ spanned by ($1$-)Artin stacks locally of finite type over $S$, with quasi-compact and separated diagonal. Stacks with such diagonal are called algebraic stacks in \cite{LMB} and \cite{LO1}. We adopt the notation
$\rD_\cons(\cX_{\liset})\subseteq \rD_\cart(\cX_{\liset})$ from \Sec\ref{0ss:results_torsion}. For a morphism $f\colon \cY\to\cX$ of finite type (of
$\Chplmb{\dS}$), Laszlo--Olsson defined functors
\begin{align*}
\rR f_*&\colon \rD_\cons^{(+)}(\cY_\liset)\to\rD_\cons^{(+)}(\cX_{\liset}),\\
\rR f_!&\colon \rD_\cons^{(-)}(\cY_\liset)\to\rD_\cons^{(-)}(\cX_\liset),\\
\rL f^*&\colon \rD_\cons(\cX_{\liset})\to\rD_\cons(\cY_\liset),\\
\rR f^!&\colon \rD_\cons(\cX)\to\rD_\cons(\cY_\liset),\\
\Rhom_\cX&\colon \rD_\cons^{(-)}(\cX_\liset)^{op}\times\rD_\cons^{(+)}(\cX_\liset)\to\rD_\cons^{(+)}(\cX_\liset),\\
-\overset{\rL}{\otimes}_\cX -&\colon
\rD_\cons^{(-)}(\cX_\liset)\times\rD_\cons^{(-)}(\cX_\liset)\to\rD_\cons^{(-)}(\cX_\liset).
\end{align*}
Three of the six functors, $\rR f_*$, $\Rhom_\cX$, and $-\overset{\rL}{\otimes}_\cX -$, are standard functors for the lisse-\'etale topoi and can be extended to $\rD_\cart$ (see Remarks \ref{b6re:comparison} and \ref{b5re:dejong}):
\begin{align*}
\rR f_*&\colon \rD_\cart^{(+)}(\cY_\liset)\to\rD_\cart^{(+)}(\cX_\liset),\\
\Rhom_\cX&\colon \rD_\cart(\cX_\liset)^{op}\times\rD_\cart(\cX_\liset)\to\rD_\cart(\cX_\liset),\\
-\overset{\rL}{\otimes}_\cX -&\colon
\rD_\cart(\cX_\liset)\times\rD_\cart(\cX_\liset)\to\rD_\cart(\cX_\liset).
\end{align*}
Moreover, the construction of $\rL f^*$ in \cite{LO1}*{{\Sec}4.3} can also be extended to $\rD_\cart$:
\[
\rL f^*\colon \rD_\cart(\cX_\liset)\to\rD_\cart(\cY_\liset).
\]
In fact, it suffices to apply \cite{LO1}*{Theorem 2.2.3} to $\rD_\cart$. The six operations satisfy all the usual adjointness properties (cf.\
\cite{LO1}*{Propositions 4.3.1, 4.4.2}). On the other hand, restricting our constructions in the two previous sections, we have
\begin{align*}
f_*&\colon \cD^{(+)}(\cY)\to\cD^{(+)}(\cX),\\
f_!&\colon \cD_\cons^{(-)}(\cY)\to\cD_\cons^{(-)}(\cX),\\
f^*&\colon \cD(\cX)\to\cD(\cY),\\
f^!&\colon \cD_\cons(\cX)\to\cD_\cons(\cY),\\
\HOM_\cX &\colon \cD(\cX)^{op}\times\cD(\cX)\to\cD(\cX),\\
-\otimes_\cX-&\colon \cD(\cX)\times\cD(\cX)\to\cD(\cX).
\end{align*}
The equivalence of categories $\rh\cD(\cX)\simeq\rD_\cart(\cX_\liset)$ \eqref{b6eq:equivalence} restricts to an equivalence
$\rh\cD_\cons(\cX)\simeq\rD_\cons(\cX_\liset)$. The main result of this section is the following.

\begin{proposition}\label{b6pr:compatibility}
We have equivalences of functors
\begin{align*}
\rh f_*\simeq\rR f_*,\quad\rh f_!\simeq\rR f_!,\quad \rh f^*\simeq\rL f^*,\quad \rh f^!\simeq\rR f^!,\\
\rh\HOM_\cX\simeq\Rhom_\cX,\quad \rh(-\otimes_\cX-)\simeq(-\overset{\rL}{\otimes}_\cX-),
\end{align*}
compatible with \eqref{b6eq:equivalence}.
\end{proposition}

\begin{proof}
The assertions for $-\otimes_\cX -$ and $\HOM_\cX$ are special cases of Remark \ref{b6re:comparison}. Moreover, by adjunction, the assertion for $f_*$ (resp.\ $f_!$) will follow from the one for $f^*$ (resp.\ $f^!$).

Let us first prove that $\rh f^*\simeq\rL f^*\colon\rD_\cart(\cX_\liset)\to\rD_\cart(\cY_\liset)$. We choose a commutative diagram
\[
\xymatrix{
Y \ar[r] \ar[d] & X \ar[d] \\
\cY \ar[r]& \cX}
\]
where the vertical morphisms are atlases. It induces a 2-commutative diagram
\[
\xymatrix{
Y_\bullet \ar[r]^-{f_\bullet} \ar[d]_-{\eta_Y} & X_\bullet \ar[d]^-{\eta_X}  \\
\cY \ar[r]^-{f} & \cX.}
\]
Using arguments similar to \Sec\ref{b5ss:artin}, we get the following diagram
\[
\resizebox{\hsize}{!}{
\xymatrix{
\cD_\cart(\Mod(Y_{\bullet,\et})) \ar[rd] && \cD_\cart(\Mod(X_{\bullet,\et})) \ar[ll]_-{f_{\bullet,\et}^*} \ar[rd]  \\
& \lim_{n\in\del}\cD(Y_{n,\et}) && \lim_{n\in\del}\cD(X_{n,\et}) \ar[ll]_(.7){\lim_{n\in\del}f_{n,\et}^*}  \\
\cD_\cart(\cY_{\liset}) \ar[uu]^-{\eta_{Y,\cart}^*} \ar[ur]^-{\sim} && \cD_\cart(\cX_{\liset}).
\ar@{-->}[ll]_-{f^*} \ar[uu]_(.3){\eta_{X,\cart}^*}|\hole \ar[ur]^-{\sim}
}
}
\]
By \cite{LO1}*{Theorem 2.2.3}, $\eta_{X,\cart}^*$ and $\eta_{Y,\cart}^*$ are equivalences. By the construction of $\rL f^*$, $\rL f^*$ fits into a homotopy version of the rectangle in the above diagram. Therefore, we have an equivalence $\rh f^*\simeq \rL f^*$.

Let $\Omega_\dS\in\cD(\dS)$ be a potential dualizing complex (with respect to the fixed dimension function) in the sense of \cite{TG}*{Expos\'{e} XVII, D\'{e}finition 2.1.2}, which is unique up to isomorphism by \cite{TG}*{Expos\'{e} XVII, Th\'{e}or\`{e}me 5.1.1} (see Remark \ref{b6re:riou}). For every object $\cX$ of $\Chplmb{\dS}$, with structure morphism $a\colon\cX\to\dS$, we put $\Omega_\cX\coloneqq a^!\Omega_\dS$. Let $u\colon U\to\cX$ be an object of $\Liset(\cX)$. Then $u^*\Omega_\cX\simeq \Omega_U\langle-d\rangle$ by the Poincar\'e duality (Theorem \ref{b6th:poincare}(2)), where $d=\dim u$. Consider the morphism of topoi $(\epsilon_*,\epsilon^*)\colon(\cX_\liset)_{/\widetilde U} \to U_{\et}$. Applying Lemma \ref{b4le:Dcart}, we get an equivalence $\Omega_\cX\res (\cX_\liset)_{/\widetilde U}\simeq\epsilon^*\Omega_U\langle -d\rangle$, where we regard $\Omega_\cX$ as an object of $\cD_\cart(\cX_\liset)$ and $\Omega_U$ as an object of $\cD(U_{\et})$. The equivalence is compatible with restriction by morphisms of $\Liset(\cX)$, so that $\Omega_\cX$ is a dualizing complex of $\cX$ in the sense of \cite{LO1}*{Definition 3.4.5}, which is unique up to isomorphism by
\cite{LO1}*{Proposition 3.4.3, Lemma 3.4.4}. Put $\cD_\cX\coloneqq\HOM_\cX(-,\Omega_\cX)$ and $\rD_\cX\coloneqq\Rhom_\cX(-,\Omega_\cX)\simeq \rh\cD_\cX$. By \cite{LO1}*{Corollary 3.5.7}, the biduality functor $\id\to\rD_\cX\circ\rD_\cX$ is a natural isomorphism of endofunctors of $\rD_\cons(\cX_\liset)$. Therefore, the natural
transformation $\rh f^!\to \rh f^!\circ \rD_\cX\circ \rD_\cX$ is a natural equivalence when restricted to $ \rD_\cons(\cX_\liset)$. By Proposition \ref{b6pr:hom}(3), we have
\begin{align*}
f^!\circ \cD_\cX\circ \cD_\cX &\simeq f^!\HOM_\cX(\cD_\cX -,\Omega_\cX)\simeq\HOM_\cY(f^*\cD_\cX-,f^!\Omega_\cX)\\
&\simeq\HOM_\cY(f^*\cD_\cX-,\Omega_\cY)=\cD_\cY\circ f^*\circ \cD_\cX.
\end{align*}
Since $\rh f^*\simeq \rL f^*$, this shows
\[
\rh f^!\simeq\rD_\cY\circ\rL f^*\circ\rD_\cX=\rR f^!,
\]
where the last identity is the definition of $\rR f^!$ in \cite{LO1}*{Definition 4.4.1}.
\end{proof}

\begin{remark}\label{b6re:riou}
As Jo\"el Riou observed (private communication), although the definition, existence and uniqueness of potential dualizing complexes are only stated for the coefficient ring $R=\dZ/n\dZ$ in \cite{TG}*{Expos\'{e} XVII, D\'{e}finition 2.1.2, Th\'{e}or\`{e}me 5.1.1}, they can be extended to any Noetherian ring $R'$ over $R$. In fact, if $\delta$ is a dimension function of an excellent $\dZ[1/n]$-scheme $X$ and $K_R$ is a potential dualizing complex for $(X,\delta)$ relative to $R$, then $K_{R'}=K_R\overset{\rL}{\otimes}_R R'$ is a potential dualizing complex for $(X,\delta)$ relative to $R'$ by the projection formula $\rR\Gamma_x(K_R)\overset{\rL}{\otimes}_RR'\simeq\rR\Gamma_x(K_R\overset{\rL}{\otimes}_RR')$, where $x$ is a geometric point of $X$. The formula follows from the fact that the punctured strict localization of $X$ at $x$ has finite cohomological dimension \cite{TG}*{Expos\'{e} XVIII-A, Corollary 1.4}. Moreover, by the theorem of local biduality \cite{TG}*{Expos\'{e} XVII, Th\'{e}or\`{e}mes 6.1.1, 7.1.2}, $K_{R'}$ is a dualizing complex for $\rD^\rb_\cons(X_{\et},R')$ in the sense of \cite{TG}*{Expos\'{e} XVII, D\'{e}finition 7.1.1} as long as $R'$ is Gorenstein of dimension 0.
\end{remark}

\section{Adic formalism}
\label{c1}

In this chapter, we provide the adic formalism for Grothendieck's six 
operations. In \Sec\ref{c1ss:limit}, we provide our adic formalism by 
constructing two enhanced operation maps via the limit construction. In 
\Sec\ref{c1ss:adic_properties}, we study several properties of the enhanced 
operation maps we constructed previously. In \Sec\ref{c1ss:relation}, we 
study the relation between the limit construction and so-called adic 
complexes. In \Sec\ref{c1ss:constructible} and \Sec\ref{c1ss:adic_dualizing}, 
we study  constructible adic complexes and construct adic dualizing 
complexes. In \Sec\ref{c2ss:madic}, we study a special kind of ringed 
diagrams for which the adic formalism is the most satisfactory. This includes 
the most common application, namely, the $\ell$-adic one. The last section 
\Sec\ref{c2ss:compatibility} is dedicated to proving the compatibility 
between our theory and Laszlo--Olsson (\cite{LO2} and \cite{LO3}) under their 
restrictions.

\subsection{The limit construction}
\label{c1ss:limit}

Recall from \Sec\ref{b5ss:artin} that for higher Artin stacks, we construct the \emph{first enhanced operation map}
\begin{align*}
\EO{}{\Chpar{}}{\r{I}}{}\colon((\Chpar{})^{op}\times\rN(\Rind)^{op})^\amalg\to\Cat,
\end{align*}
and the \emph{second enhanced operation map}
\begin{align*}
\EO{}{\Chpar{}_\Box}{\r{II}}{}\colon \delta^*_{2,\{2\}}(((\Chpar{}_\Box)^{op}\times\rN(\Rind_{\ltor})^{op})^{\amalg,op})^\cart_{F,\all}\to\Cat.
\end{align*}
Their restrictions to the common domain $((\Chpar{}_\Box)^{op}\times\rN(\Rind_{\ltor})^{op})^\amalg$ are equivalent. In particular, for every object $X$ of $\Chpar{}$ and every object $\lambda=(\Xi,\Lambda)$ of $\Rind$, we obtain a diagram $\Xi^{op}\to\PSL$ given by $\xi\mapsto\cD(X,\Lambda(\xi))$ with the transition map given by extension of scalars.

\begin{definition}
We define the \emph{adic derived $\infty$-category} of $\lambda$-modules on $X$ to be
\[
\cD(X,\lambda)_\ra\coloneqq\varprojlim_{\rN(\Xi)^{op}}\cD(X,\Lambda(\xi)).
\]
\end{definition}

The goal of this section is to make the above definition functorial in a homotopy coherent way. Namely, we will construct the \emph{first enhanced adic operation map}
\begin{align}\label{c1eq:premonoidal_artin_adic}
\EO{\ra}{\Chpar{}}{\r{I}}{}\colon((\Chpar{})^{op}\times\rN(\Rind)^{op})^\amalg\to\Cat,
\end{align}
and the \emph{second enhanced adic operation map}
\begin{align}\label{c1eq:operation_artin_adic}
\EO{\ra}{\Chpar{}_\Box}{\r{II}}{}\colon \delta^*_{2,\{2\}}(((\Chpar{}_\Box)^{op}\times\rN(\Rind_{\ltor})^{op})^{\amalg,op})^\cart_{F,\all}\to\Cat,
\end{align}
such that their values on $(X,\lambda)$ are both (equivalent to) $\cD(X,\lambda)_\ra$.

By definition, there is a tautological functor $\Rind\to\cat$ sending $(\Xi,\Lambda)$ to $\Xi$. Applying Grothendieck's construction, we obtain an op-fibration $\pi\colon\Rind^\univ\to\Rind$. More precisely, $\Rind^\univ$ is an ordinary category whose objects are pairs $((\Xi,\Lambda),\xi)$ where $(\Xi,\Lambda)$ is an object of $\Rind$ and $\xi$ is an object of $\Xi$, and a morphism from $((\Xi,\Lambda),\xi)$ to $((\Xi',\Lambda'),\xi')$ is a morphism $(\Gamma,\gamma)\colon(\Xi,\Lambda)\to(\Xi',\Lambda')$ of $\Rind$ such that $\Gamma(\xi)$ admits an arrow to $\xi'$. We have another functor $\sigma\colon\Rind^\univ\to\Rind$ sending $((\Xi,\Lambda),\xi)$ to $(\ast,\Lambda(\xi))$. We have two natural inclusion
\begin{align*}
j_0&\colon\rN(\Rind)^{op}\to\rN(\Rind)^{op}\diamond_{\rN(\Rind)^{op}}\rN(\Rind^\univ)^{op},\\
j_1&\colon\rN(\Rind^\univ)^{op}\to\rN(\Rind)^{op}\diamond_{\rN(\Rind)^{op}}\rN(\Rind^\univ)^{op}
\end{align*}
of simplicial sets.

To construct \eqref{c1eq:premonoidal_artin_adic}, we start from the map
\[
\EO{\sigma}{\Chpar{}}{\r{I}}{}\colon((\Chpar{})^{op}\times\rN(\Rind^\univ)^{op})^\amalg\to\Cat
\]
as the composition of
\[
(\id_{(\Chpar{})^{op}}\times\rN(\sigma)^{op})^\amalg\colon
((\Chpar{})^{op}\times\rN(\Rind^\univ)^{op})^\amalg\to((\Chpar{})^{op}\times\rN(\Rind)^{op})^\amalg
\]
and $\EO{}{\Chpar{}}{\r{I}}{}$. Taking the right Kan extension of $\EO{\sigma}{\Chpar{}}{\r{I}}{}$ along the inclusion
\[
((\Chpar{})^{op}\times\rN(\Rind^\univ)^{op})^\amalg\hookrightarrow
((\Chpar{})^{op}\times\rN(\Rind)^{op}\diamond_{\rN(\Rind)^{op}}\rN(\Rind^\univ)^{op})^\amalg
\]
induced by $j_1$, and restricting to $((\Chpar{})^{op}\times\rN(\Rind)^{op})^\amalg$ via $j_0$, we obtain the desired map $\EO{\ra}{\Chpar{}}{\r{I}}{}$ \eqref{c1eq:premonoidal_artin_adic}.

The construction of \eqref{c1eq:operation_artin_adic} is similar. We have the map
\[
\EO{\sigma}{\Chpar{}_\Box}{\r{II}}{}\colon
\delta^*_{2,\{2\}}(((\Chpar{}_\Box)^{op}\times\rN(\Rind_{\ltor}^\univ)^{op})^{\amalg,op})^\cart_{F,\all}\to\Cat,
\]
where $\Rind_{\ltor}^\univ=\Rind^\univ\times_{\Rind}\Rind_{\ltor}$ in which the first functor in the fiber product is $\pi$. Taking the right Kan extension of $\EO{\sigma}{\Chpar{}_\Box}{\r{II}}{}$ along the inclusion
\begin{multline*}
\delta^*_{2,\{2\}}(((\Chpar{}_\Box)^{op}\times\rN(\Rind_{\ltor}^\univ)^{op})^{\amalg,op})^\cart_{F,\all}\\
\hookrightarrow\delta^*_{2,\{2\}}(((\Chpar{}_\Box)^{op}\times
\rN(\Rind_{\ltor})^{op}\diamond_{\rN(\Rind_{\ltor})^{op}}\rN(\Rind_{\ltor}^\univ)^{op})^{\amalg,op})^\cart_{F,\all}
\end{multline*}
induced by $j_1$, and restricting to $\delta^*_{2,\{2\}}(((\Chpar{}_\Box)^{op}\times\rN(\Rind_{\ltor})^{op})^{\amalg,op})^\cart_{F,\all}$ via $j_0$, we obtain the desired map $\EO{\ra}{\Chpar{}_\Box}{\r{II}}{}$ \eqref{c1eq:operation_artin_adic}.

By the similar process, we obtain enhanced adic operation maps for higher Deligne--Mumford stacks:
\begin{align*}
\EO{\ra}{\Chpdm{}}{\r{I}}{}\colon((\Chpdm{})^{op}\times\rN(\Rind)^{op})^\amalg\to\Cat,
\end{align*}
and a map
\begin{align*}
\EO{\ra}{\Chpdm{}}{\r{II}}{}\colon \delta^*_{2,\{2\}}(((\Chpdm{})^{op}\times\rN(\Rind_\tor)^{op})^{\amalg,op})^\cart_{F,\all}\to\Cat,
\end{align*}
satisfying the obvious compatibility properties with higher Artin stacks.

\subsection{Properties of enhanced adic operations}
\label{c1ss:adic_properties}

In this section, we study properties of the two enhanced adic operation maps constructed previously, in a way parallel to the non-adic ones in \Sec\ref{b5ss:artin}.

To simplify notation, we will only discuss properties for higher Artin stacks, that is, the two maps \eqref{c1eq:premonoidal_artin_adic} and \eqref{c1eq:operation_artin_adic}. We will leave the analogous discussion for higher DM stacks to readers.

\begin{proposition}\label{c1pr:monoidal}
We have
\begin{description}
  \item[(P0)] (Monoidal symmetry) The functor $\EO{\ra}{\Chpar{}}{\r{I}}{}$ 
      is a lax Cartesian structure (Remark \ref{b1re:cartesian}), and the 
      induced functor 
      $\EO{\ra}{\Chpar{}}\otimes{}\coloneqq(\EO{\ra}{\Chpar{}}{\r{I}}{})^\otimes$ 
      factorizes through $\PSLM$. 

  \item[(P1)] (Disjointness) The map $\EO{\ra}{\Chpar{}}\otimes{}$ sends small coproducts to products.

  \item[(P2)] (Compatibility) The restrictions of $\EO{\ra}{\Chpar{}}{\r{I}}{}$ and $\EO{\ra}{\Chpar{}_\Box}{\r{II}}{}$ to the common domain $((\Chpar{}_\Box)^{op}\times\rN(\Rind_{\ltor})^{op})^\amalg$ are equivalent functors.
\end{description}
\end{proposition}

\begin{proof}
By construction, the value of $\EO{\ra}{\Chpar{}}{\r{I}}{}$ on an object $((X_1,\lambda_1),\dots,(X_n,\lambda_n))$ in the target is an $\infty$-category equivalent to
\[
\prod_{i=1}^n\cD(X_i,\lambda_i)_\ra=\prod_{i=1}^n\varprojlim_{\Xi_i^{op}}\cD(X_i,\Lambda_i(\xi))
\]
if $\lambda_i=(\Xi_i,\Lambda_i)$. We also note that the inclusion functor
\[
\PSLM\to\CAlg(\Cat)
\]
preserves small limits. Therefore, (P0) and (P1) follow immediately. (P2) is clear from the construction.
\end{proof}

Before discussing the other properties, we introduce more notation. Similar to the non-adic case, we have the map
\begin{align}\label{c1eq:operation1}
\EO{\ra}{\Chpar{}_\Box}{*}{!}\colon \delta^*_{2,\{2\}}\rN(\Chpar{}_\Box)^\cart_{F,\all}\times\rN(\Rind_{\ltor})^{op}\to\PSL
\end{align}
induced from \eqref{c1eq:operation_artin_adic}.

Evaluating \eqref{c1eq:premonoidal_artin_adic} at the object $\langle 1\rangle\in\Fin$, we obtain the map
\begin{align}\label{c1eq:upperstar_adic}
\EO{\ra}{\Chpar{}}{*}{}\colon \rN(\Chpar{})^{op}\times\rN(\Rind)^{op}\to\PSL.
\end{align}
Note that this is equivalent to the map by restricting \eqref{c1eq:operation1} to the second direction, on $\rN(\Chpar{})^{op}\times\rN(\Rind_{\ltor})^{op}$. Taking right adjoints, we obtain the map
\begin{align}\label{c1eq:lowerstar_adic}
\EO{\ra}{\Chpar{}}{}{*}\colon \rN(\Chpar{})\times\rN(\Rind)\to\PSR.
\end{align}
Restricting \eqref{c1eq:operation1} to the first direction, we obtain the map
\begin{align}\label{c1eq:lowersh_adic}
\EO{\ra}{\Chpar{}_\Box}{}{!}\colon \rN(\Chpar{}_\Box)_F\times\rN(\Rind_{\ltor})^{op}\to\PSL.
\end{align}
Again by taking right adjoints, we obtain the map
\begin{align}\label{c1eq:uppersh_adic}
\EO{\ra}{\Chpar{}_\Box}{!}{}\colon \rN(\Chpar{}_\Box)^{op}_F\times\rN(\Rind_{\ltor})\to\PSR.
\end{align}

More concretely, we have the following enhance adic operations:
\begin{description}
  \item[1L] $f^{*\ra}\colon\cD(X,\lambda)_\ra\to\cD(Y,\lambda)_\ra$, obtained by applying \eqref{c1eq:upperstar_adic} to a morphism $f\colon Y\to X$ in $\Chpar{}$ and an object $\lambda=(\Xi,\lambda)\in\Rind$. It coincides with the limit of functors $f_\xi^*\colon\cD(X,\Lambda(\xi))\to\cD(Y,\Lambda(\xi))$ over $\Xi^{op}$, and underlies a monoidal functor $f^{*\otimes\ra}\colon\cD(X,\lambda)^\otimes_\ra\to\cD(Y,\lambda)^\otimes_\ra$ obtained from $\EO{\ra}{\Chpar{}}\otimes{}$.

  \item[1R] $f_{*\ra}\colon\cD(Y,\lambda)_\ra\to\cD(X,\lambda)_\ra$, obtained by applying \eqref{c1eq:lowerstar_adic} to a morphism $f\colon Y\to X$ in $\Chpar{}$ and an object $\lambda\in\Rind$. It is right adjoint to $f^{*\ra}$.

  \item[2L] $f_{!\ra}\colon\cD(Y,\lambda)_\ra\to\cD(X,\lambda)_\ra$, obtained by applying \eqref{c1eq:lowersh_adic} to a morphism $f\colon Y\to X$ in $\Chpar{}_\Box$ and an object $\lambda=(\Xi,\Lambda)\in\Rind_{\ltor}$. It coincides with the limit of functors $f_{\xi!}\colon\cD(Y,\Lambda(\xi))\to\cD(X,\Lambda(\xi))$ over $\Xi^{op}$.

  \item[2R] $f^{!\ra}\colon\cD(X,\lambda)_\ra\to\cD(Y,\lambda)_\ra$, obtained by applying \eqref{c1eq:uppersh_adic} to a morphism $f\colon Y\to X$ in $\Chpar{}_\Box$ and an object $\lambda\in\Rind_{\ltor}$. It is right adjoint to $f_{!\ra}$.

  \item[3L] $-\atimes_X-\colon\cD(X,\lambda)_\ra\times\cD(X,\lambda)_\ra\to\cD(X,\lambda)_\ra$, the symmetric tensor product obtained from Proposition \ref{c1pr:monoidal}(P0) for every object $(X,\lambda)$ of $\Chpar{}\times\rN(\Rind)$.

  \item[3R] 
      $\HOM^\ra_X\colon\cD(X,\lambda)_\ra^{op}\times\cD(X,\lambda)_\ra\to\cD(X,\lambda)_\ra$, 
      induced from $-\atimes_X-$ in the same way as $\HOM_X$ was induced 
      from $-\otimes_X-$ in \Sec\ref{b6ss:operations}. In particular, for 
      every object $\sfK\in\cD(X,\lambda)_\ra$, we have a pair of adjoint 
      functors $(-\atimes_X\sfK,\HOM^\ra_X(\sfK,-))$. 

  \item[4L] $\pi^{*\ra}\colon\cD(X,\lambda)_\ra\to\cD(X,\lambda')_\ra$, obtained by applying \eqref{c1eq:upperstar_adic} to an object $X\in\Chpar{}$ and a morphism $\pi\colon\lambda'\to\lambda$ of $\Rind$. It is symmetric monoidal.

  \item[4R] $\pi_{*\ra}\colon\cD(X,\lambda')_\ra\to\cD(X,\lambda)_\ra$, which is a right adjoint of $\pi^*$.
\end{description}

\begin{proposition}\label{c1pr:p34}
Let $f\colon Y\to X$ be a morphism of $\Chpar{}$ and $\lambda$ an object of $\Rind$.
\begin{description}
  \item[(P3)] (Conservativeness) If $f$ is surjective, then $f^{*\ra}\colon\cD(X,\lambda)_\ra\to\cD(Y,\lambda)_\ra$ is conservative.

  \item[(P4)] (Descent) Suppose that $f$ is smooth surjective. Then $(f,\id_\lambda)$ is of universal $\EO{\ra}{\Chpar{}}\otimes{}$-descent. If $X$ belongs to $\Chpar{}_\Box$ and and $\lambda$ belongs to $\Rind_{\ltor}$, then $(f,\id_\lambda)$ is of universal $\EO{\ra}{\Chpar{}}{}{!}$-codescent. See Definition \ref{b3de:descent} for the definition of (co)descent.
\end{description}
\end{proposition}

\begin{proof}
(P3) follows from the construction and the fact that
\[
\varprojlim_{\rN(\Xi)^{op}}\cD(X,\Lambda(\xi))\to\varprojlim_{\rN(\Xi)^{op}}\cD(Y,\Lambda(\xi))
\]
is conservative if each functor $\cD(X,\Lambda(\xi))\to\cD(Y,\Lambda(\xi))$ is, where $\lambda=(\Xi,\Lambda)$. The latter is true as $f$ is surjective.

Now we consider (P4). The universal descent property for 
$\EO{\ra}{\Chpar{}}\otimes{}$ follows from the construction, the same 
property in the non-adic case, and (the dual version of) 
\cite{HTT}*{Proposition 4.3.2.9}. The universal codescent property for 
$\EO{\ra}{\Chpar{}}{}{!}$ follows from the construction, the same property in 
the non-adic case, and \cite{HA}*{Proposition 4.7.4.19}. Note that condition 
(c) in \cite{HA}*{Proposition 4.7.4.19} is fulfilled by the Poincar\'{e} 
duality, namely, Theorem \ref{b6th:poincare}. 
\end{proof}

\begin{proposition}[\textbf{(P5)} Smooth Base Change]
Let
\[
\xymatrix{
W \ar[d]_-{q} \ar[r]^-{g} & Z \ar[d]^-{p} \\
Y \ar[r]^-{f} & X   }
\]
be a Cartesian diagram in $\Chpar{}_\Box$ where $p$ is smooth. Then for every object $\lambda$ of $\Rind_{\ltor}$, the following square
\[
\xymatrix{
\cD(W,\lambda)_\ra   & \cD(Z,\lambda)_\ra \ar[l]_-{g^{*\ra}}  \\
\cD(Y,\lambda)_\ra \ar[u]^-{q^{*\ra}} & \cD(X,\lambda)_\ra \ar[l]_-{f^{*\ra}}\ar[u]_-{p^{*\ra}}   }
\]
is right adjointable.
\end{proposition}

\begin{proof}
This follows from the construction, the same property in the non-adic case, and Lemma \ref{b4le:adjoint_limit}.
\end{proof}

Now we consider the usual t-structure on $\cD(X,\lambda)$ for an object 
$(X,\lambda)\in\Chpar{}\times\rN(\Rind)$. Recall from \cite{HA}*{Definition 
1.4.4.12} that, for a presentable stable $\infty$-category $\cD$, a 
t-structure\footnote{As before, we use a \emph{cohomological} indexing 
convention, which is different from \cite{HA}*{Definition 1.2.1.4}.} is 
\emph{accessible} if the full subcategory $\cD^{\leq0}$ is presentable. For a 
scheme $X\in\Schqcs$, the usual t-structure on $\cD(X,\lambda)$ is accessible 
by \cite{HA}*{Proposition 1.3.5.21}. For a higher Artin stack $X$, the usual 
t-structure on $\cD(X,\lambda)$ is accessible by construction Lemma 
\ref{b4le:p6}(3). 

Suppose that $\lambda=(\Xi,\Lambda)$. For $n\in\dZ$, we let $\cD^{\leq n}(X,\lambda)_\ra$ be the full subcategory of $\cD(X,\lambda)_\ra$ spanned by objects $\sfK=(\sfK_\xi)_{\xi\in\Xi}$ with $\sfK_\xi\in\cD^{\leq n}(X,\Lambda(\xi))$. Put
\[
\cD^{\geq n}(X,\lambda)_\ra\coloneqq\cD^{\leq n-1}(X,\lambda)^{\perp}_\ra
\]
as a full subcategory of $\cD(X,\lambda)_\ra$. By Lemma \ref{b3le:limit_restriction}, we have an equivalence
\[
\cD^{\leq n}(X,\lambda)_\ra\simeq\varprojlim_{\rN(\Xi)^{op}}\cD^{\leq n}(Y,\Lambda(\xi)).
\]
Here, we have used the fact that transition functors, which are (derived) extension of scalars, are left exact. In particular, $\cD^{\leq n}(X,\lambda)_\ra$ is presentable; the inclusion $\cD^{\leq n}(X,\lambda)_\ra\subseteq\cD(X,\lambda)$ preserves all small colimits; and $\cD^{\leq n}(X,\lambda)_\ra$ is closed under extension. By \cite{HA}*{Proposition 1.4.4.11(1)}, the pair $(\cD^{\leq n}(X,\lambda)_\ra,\cD^{\geq n}(X,\lambda)_\ra)$ define an accessible t-structure, called the \emph{usual t-structure}, on $\cD(X,\lambda)_\ra$. We have truncation functors
\[
\tau_\ra^{\leq n}\colon\cD(X,\lambda)_\ra\to\cD^{\leq n}(X,\lambda)_\ra,\quad
\tau_\ra^{\geq n}\colon\cD(X,\lambda)_\ra\to\cD^{\geq n}(X,\lambda)_\ra
\]
for every $n\in\dZ$.

\begin{remark}[P6]\label{c1re:p6}
We have the following remarks concerning the above t-structure.
\begin{enumerate}
  \item The constant sheaf $\lambda_X\coloneqq(\Lambda(\xi)_X)_{\xi\in\Xi}\in\cD(X,\lambda)_\ra$ belongs to the heart
      \[
      \cD^\heartsuit(X,\lambda)_\ra\coloneqq\cD^{\leq 0}(X,\lambda)_\ra\cap\cD^{\geq 0}(X,\lambda)_\ra
      \]
      by Lemma \ref{c1le:p6} below.

  \item For an object $\lambda$ of $\Rind_{\ltor}$ and an object $X$ of $\Chpar{}_\Box$, the auto-equivalence $-\otimes\lambda_X(1)$ of $\cD(X,\lambda)_\ra$ is t-exact.

  \item  The usual t-structure on $\cD(X,\lambda)_\ra$ is accessible. 
      Moreover, the intersection 
      $\cD^{\leq-\infty}(X,\lambda)_\ra=\bigcap_{n}\cD^{\leq-n}(X,\lambda)_\ra$ 
      consists of zero objects.\footnote{Unlike the non-adic case, 
      $\cD(X,\lambda)_\ra$ is not right complete in general. See Example 
      \ref{7ex:notrightcomplete} below. See also Corollary \ref{7co:adict} 
      below for a positive result.} 

  \item The functors $f^{*\ra}$, $-\atimes_X-$, $\pi^{*\ra}$ are all left t-exact (that is, preserve $\cD^{\leq n}$). The functors $f_{*\ra}$, $\HOM^\ra_X$, $\pi_{*\ra}$ are all right t-exact (that is, preserve $\cD^{\geq n}$).

  \item It follows from Lemma \ref{b6le:dimension} that $f_{!\ra}[2d]$ is 
      left t-exact, hence $f^{!\ra}[-2d]$ is right t-exact. In particular, 
      if $f$ is a smooth morphism in $\Chpar{}_\Box$ and $\lambda$ is in 
      $\Rind_{\ltor}$, then $f^{*\ra}\simeq f^{!\ra}[-2d]$ is t-exact.
\end{enumerate}
\end{remark}

\begin{lem}\label{c1le:p6}
Let $n\in\dZ$ and let $\cD(X,\lambda)_\ra^{\geq n}$ be the full subcategory  
of $\cD(X,\lambda)_\ra$ spanned by objects $\sfK=(\sfK_\xi)_{\xi\in\Xi}$ with 
$\sfK_\xi\in\cD^{\geq n}(X,\Lambda(\xi))$. Then $\cD(X,\lambda)_\ra^{\geq 
n}\subseteq \cD^{\geq n}(X,\lambda)_\ra$. 
\end{lem}

\begin{proof}
Let $\sfK'\in \cD^{\leq n-1}(X,\lambda)_\ra$ and $\sfK\in\cD(X,\lambda)_\ra^{\geq n}$. Then $\sfK\simeq \varprojlim_{\rN(\Xi)^{op}}r_\xi \sfK_\xi$, where $r_\xi\colon \cD(X,\Lambda(\xi))\to\cD(X,\lambda)_\ra$ is a right adjoint to the projection $\cD(X,\lambda)_\ra\to\cD(X,\Lambda(\xi))$. We have
\[
\Hom_{\rh\cD(X,\lambda)_\ra}(\sfK',r_\xi\sfK_\xi)\simeq \Hom_{\rh\cD(X,\lambda(\xi))}(\sfK'_\xi,\sfK_\xi)=0.
\]
It follows that $\Hom_{\rh\cD(X,\lambda)_\ra}(\sfK',\sfK)=0$.
\end{proof}

The functor $-\langle d\rangle\colon\cD(X,\lambda)\to\cD(X,\lambda)$ from \Sec\ref{b4ss:description} Input II restricts to a functor
\[
-\langle d\rangle\colon\cD(X,\lambda)_\ra\to\cD(X,\lambda)_\ra
\]
for every integer $d$. The proof of the theorem below will be given in the next section, after we introduce adic complexes.

\begin{theorem}[\textbf{(P7)} Poincar\'{e} duality]\label{c1th:poincare}
Let $f\colon Y\to X$ be a morphism of $\Chpar{}_\Box$ that is flat and locally of finite presentation. Let $\lambda$ be an object of $\Rind_{\ltor}$. Then
\begin{enumerate}
  \item There is a functorial (in the sense of Remark \ref{b4re:trace}) trace map
      \[
      \Tr_f\colon\tau_\ra^{\geq0}f_{!\ra}\lambda_Y\langle d\rangle\to\lambda_X
      \]
      in the heart $\cD^\heartsuit(X,\lambda)_\ra$ for every integer $d\geq\dim^+(f)$.

  \item If $f$ is moreover smooth, the induced natural transformation
      \[
      u_f\colon f_{!\ra}\circ f^{*\ra}\langle\dim f\rangle\to\id_X
      \]
      is a counit transformation, so that the induced map
      \[
      f^{*\ra}\langle\dim f\rangle\to f^{!\ra}\colon \cD(X,\lambda)_\ra\to\cD(Y,\lambda)_\ra
      \]
      is a natural equivalence of functors.
\end{enumerate}
\end{theorem}

We summarize some other properties of enhanced adic operations in the following theorem.

\begin{theorem}\label{c1th:properties}
We have
\begin{enumerate}
  \item The \emph{K\"{u}nneth Formula}, namely Theorem \ref{b6th:kunneth}, holds in the adic case.

  \item The \emph{Base Change}, namely Corollary \ref{b6co:base_change}, holds in the adic case.

  \item The \emph{Projection Formula}, namely Corollary \ref{b6co:projection}, holds in the adic case.

  \item The following statements hold in the adic case as well: Proposition \ref{b6pr:hom}, Proposition \ref{b6pr:hom_pi}, Proposition \ref{b6pr:Hom_pi2}, Proposition \ref{b6pr:support}, Theorem \ref{b6th:descent}, Corollary \ref{b6co:descent}, and Lemma \ref{b6le:dimension}.
\end{enumerate}
\end{theorem}

\begin{proof}
The properties follow by the same proofs in their non-adic counterparts.
\end{proof}

\subsection{Relation with adic complexes}
\label{c1ss:relation}

In this section, we define a natural full subcategory $\cD(X,\lambda)'_\ra$ of $\cD(X,\lambda)$ consisting of \emph{adic complexes} and show that there is a canonical equivalence $\cD(X,\lambda)'_\ra\simeq\cD(X,\lambda)_\ra$ of $\infty$-categories.

Let $X$ be an object of $\Chpar{}$, and $\lambda=(\Xi,\Lambda)$ an object of $\Rind$. For every morphism $\varphi\colon \xi\to\xi'$ in $\Xi$, there is a commutative diagram in $\Rind$ of the form
\[
\xymatrix{
(\Xi,\Lambda) \ar@{=}[d] & (\Xi_{/\xi},\Lambda_{/\xi})
\ar[d]^-{i_{\varphi}} \ar[l]_-{i_{\xi}} \ar[r]^-{p_{\xi}}
& (\{\xi\},\Lambda(\xi)) \ar[d]^-{\tilde{\varphi}} \\
(\Xi,\Lambda) & (\Xi_{/\xi'},\Lambda_{/\xi'}) \ar[l]_-{i_{\xi'}} \ar[r]^-{p_{\xi'}} & (\{\xi'\},\Lambda(\xi')),   }
\]
which induces the following diagram in $\PRL$:
\begin{align}\label{c1eq:adic}
\xymatrix{
\cD(X,\lambda) \ar[r]^-{i_{\xi}^*} & \cD(X,\lambda_{/\xi})
& \cD(X,\Lambda(\xi)) \ar[l]_-{p_{\xi}^*} \\
\cD(X,\lambda) \ar[r]^-{i_{\xi'}^*} \ar@{=}[u] & \cD(X,\lambda_{/\xi'})
\ar[u]_-{i_{\varphi}^*}   & \cD(X,\Lambda(\xi')) \ar[l]_-{p_{\xi'}^*} \ar[u]_-{\tilde{\varphi}^*},  }
\end{align}
where $\lambda_{/\xi}\coloneqq(\Xi_{/\xi},\Lambda_{/\xi})$. Let $p_{\xi*}$ (resp.\ $p_{\xi'*}$) be a right adjoint of $p_{\xi}^*$ (resp.\ $p_{\xi'}^*$) and let $\alpha_{\varphi}\colon\tilde{\varphi}^*p_{\xi'*}\to p_{\xi*}i_{\varphi}^*$ be the natural transformation.

\begin{definition}[Adic complex]\label{c1de:adic_object}
We say that an element $\sfK\in\cD(X,\lambda)$ is an \emph{adic complex} if the natural morphism
\[
\alpha_{\varphi}(i_{\xi'}^*\sfK)\colon\tilde{\varphi}^*p_{\xi'*}i_{\xi'}^*\sfK\to p_{\xi*}i_{\varphi}^*i_{\xi'}^*\sfK
\]
is an equivalence for every morphism $\varphi\colon \xi\to\xi'$ in $\Xi$. The target of $\alpha_{\varphi}(i_{\xi'}^*\sfK)$ is equivalent to
$p_{\xi*}i_{\xi}^*\sfK$. It is clear that adic complexes are stable under equivalence. Denote by
\[
\cD(X,\lambda)'_\ra\subseteq\cD(X,\lambda)
\]
the full subcategory spanned by adic complexes.
\end{definition}

\begin{lem}\label{c1le:evaluation}
Let $f\colon Y\to X$ be a morphism in $\Chpar{}$. If $\sfK$ is an adic complex in $\cD(X,\lambda)$, then $f^*\sfK$ is also an adic complex in $\cD(Y,\lambda)$. If $f$ is surjective, then the converse holds as well.
\end{lem}

\begin{proof}
The first statement follows if we can show that the following diagram
\begin{align}\label{c1eq:evaluation}
\xymatrix{
\cD(X,\lambda_{/\xi}) \ar[d]_-{f^*} & \cD(X,\Lambda(\xi)) \ar[l]_-{p_{\xi}^*}\ar[d]^-{f^*} \\
\cD(Y,\lambda_{/\xi})  & \cD(Y,\Lambda(\xi)) \ar[l]_-{p_{\xi}^*} \\
}
\end{align}
is right adjointable. By the construction of $\EO{}{\Chpar{}}{\r{I}}{}$ and Lemma \ref{b4le:adjoint_limit}, we may assume that $f$ is a morphism in $\Schqcs$. Then the following diagram
\[
\xymatrix{
\Mod(X^{\Xi_{/\xi}}_{\et},\Lambda_{/\xi})  \ar[d]_-{f^*} & \Mod(X_{\et},\Lambda(\xi)) \ar[l]_-{p_{\xi}^*}\ar[d]^-{f^*}  \\
\Mod(Y^{\Xi_{/\xi}}_{\et},\Lambda_{/\xi})  & \Mod(Y_{\et},\Lambda(\xi)) \ar[l]_-{p_{\xi}^*}
}
\]
has a right adjoint, which is
\[
\xymatrix{
\Mod(X^{\Xi_{/\xi}}_{\et},\Lambda_{/\xi})  \ar[r]^-{s_{\xi}^*}\ar[d]_-{f^*} & \Mod(X_{\et},\Lambda(\xi))\ar[d]^-{f^*} \\
\Mod(Y^{\Xi_{/\xi}}_{\et},\Lambda_{/\xi})  \ar[r]^-{s_{\xi}^*} & \Mod(Y_{\et},\Lambda(\xi))
}
\]
where $s_\xi\colon\{\xi\}\to\Xi_{/\xi}$ is the inclusion map. Thus, \eqref{c1eq:evaluation} is right adjointable.

The second statement follows from the first one and property (P3) for $\EO{}{\Chpar{}}{\r{I}}{}$.
\end{proof}

In general, if $\lambda=(\Xi,\Lambda)$ is an object of $\Rind$ and $\xi\in\Xi$, then we have successive inclusions
\[
e_{\xi}\colon(\{\xi\},\Lambda(\xi))\xrightarrow{s_{\xi}}(\Xi_{/\xi},\Lambda_{/\xi})\xrightarrow{i_{\xi}}(\Xi,\Lambda)
\]
which induce the \emph{evaluation functor (at $\xi$)}
\[
e_{\xi}^*\colon\cD(X,\lambda)\to\cD(X,\Lambda(\xi))
\]
for a higher Artin stack $X$. As $s^*_{\xi}$ is equivalent to $p_{\xi*}$, 
$e_{\xi}^*$ and $p_{\xi*}\circ i_{\xi}^*$ are equivalent. For brevity, we 
sometimes also write $\sfK_\xi$ for $e_{\xi}^*\sfK$ for an object 
$\sfK\in\cD(X,\lambda)$. 

The functor
\[
\prod_{\xi\in\Xi}e_{\xi}^*\colon\cD(X,\lambda)\to\prod_{\xi\in\Xi}\cD(X,\Lambda(\xi))
\]
is conservative. This is obvious when $X$ is in $\Schqcs$. The general case follows, because simplicial limits of conservative functors are
conservative.

\begin{lem}\label{c1le:adic}
Suppose that $\Xi$ admits a final object $\xi$. Then the functor 
$p_{\xi}^*\colon\cD(X,\Lambda(\xi))\to\cD(X,\lambda)$ is fully faithful with 
essential image $\cD(X,\lambda)'_\ra$.
\end{lem}

\begin{proof}
The fact that the image of the functor $p_{\xi}^*$ is contained in 
$\cD(X,\lambda)'_\ra$ follows from Definition \ref{c1de:adic_object} and the 
natural isomorphism between $p_{\xi'*}$ and $s_{\xi'}^*$ as in 
\eqref{c1eq:adic} for an arbitrary object $\xi'$ of $\Xi$. 

To conclude, we only need to show that for every adic complex 
$\sfK\in\cD(X,\lambda)'_\ra$, the adjunction map 
$p_{\xi}^*p_{\xi*}\sfK\to\sfK$ is an equivalence. Since the functor 
$\prod_{\xi'\in\Xi}e_{\xi'}^*$ is conservative, this is equivalent to showing 
that the map $\beta\colon e_{\xi'}^*p_{\xi}^*p_{\xi*}\sfK\to e_{\xi'}^*\sfK$ 
is an equivalence for every object $\xi'\in\Xi$. Let $\varphi$ be the map 
$\xi'\to\xi$. Since $\sfK$ is an adic complex, the composite 
\[
{\tilde\varphi}^*p_{\xi*}\sfK\xrightarrow{\alpha}p_{\xi'*}p_{\xi'}^*{\tilde\varphi}^*p_{\xi*}\sfK\simeq p_{\xi'*}i_\varphi^*p_\xi^*p_{\xi*}\sfK\xrightarrow{\beta}p_{\xi'*}i_\varphi^*\sfK
\]
is an equivalence, where we adopt the notation in \eqref{c1eq:adic}. Moreover, we have shown that $\alpha$ is an equivalence as $p_{\xi'*}\simeq s_{\xi}^*$. Therefore, $\beta$ is an equivalence.
\end{proof}

\begin{proposition}\label{c1pr:adic_inclusion}
The inclusion $\cD(X,\lambda)'_\ra\to\cD(X,\lambda)$ is a morphism in $\PRL$.
\end{proposition}

\begin{proof}
By definition, the inclusion $\cD(X,\lambda)'_\ra\subseteq\cD(X,\lambda)$ fits into the following diagram
\[
\xymatrix{
\cD(X,\lambda)'_\ra \ar[rr]\ar[d] && \prod_{\xi\in\Xi}\cD(X,\lambda_{/\xi})'_\ra \ar[d] \\
\cD(X,\lambda) \ar[rr]^-{\prod_{\xi\in\Xi}i_{\xi}^*} &&
\prod_{\xi\in\Xi}\cD(X,\lambda_{/\xi}), }
\]
which is a pullback diagram in $\Cat$ by Lemma \ref{c1le:pullback} below. By 
Lemma \ref{c1le:adic},  the inclusion $\cD(X,\lambda_{/\xi})'_\ra\to 
\cD(X,\lambda_{/\xi})$ is equivalent to $p^*_{\xi}$, which is a morphism of 
$\PRL$. Therefore, the right vertical arrow is a morphism in $\PRL$ as $\Xi$ 
is small. Moreover, the functor $\prod_{\xi\in\Xi}i_{\xi}^*$ preserves small 
colimits since each $i_{\xi}^*$ does and $\Xi$ is small. Therefore, the 
inclusion $\cD(X,\lambda)'_\ra\to\cD(X,\lambda)$ is a morphism in $\PRL$, 
because the inclusion $\PRL\subseteq\Cat$ preserves small limits. 
\end{proof}

\begin{lem}\label{c1le:pullback}
Let $\cD$ be a full subcategory of an $\infty$-category $\cC$ and $f\colon\cD\to\cC$ be the inclusion. Then the pullback of $f$ in the category $\Sset$ by any functor $g\colon \cC'\to \cC$ with source in $\Cat$ is a pullback in $\Cat$.
\end{lem}

\begin{proof}
This follows immediately from Lemma \ref{b3le:limit_restriction} applied to the pullback of $\id_\cC$ by $g$.
\end{proof}

Next, we will construct a natural functor
\begin{align}\label{c1eq:limit}
\cD(X,\lambda)'_\ra\to\cD(X,\lambda)_\ra=\varprojlim_{\rN(\Xi)^{op}}\cD(X,\Lambda(\xi))
\end{align}
and show that it is an equivalence. We have a diagram $(\Xi^{op})^\triangleleft\to\Rind$ sending $\xi\in\Xi$ to $\lambda_{/\xi}$ and the left vertex to $\lambda$, which gives rise to a functor
\[
\cD(X,\lambda)\to\varprojlim_{\rN(\Xi)^{op}}\cD(X,\lambda_{/\xi}).
\]
It is clear that for every object $\xi\in\Xi$, $i_\xi^*$ sends $\cD(X,\lambda)'_\ra$ to $\cD(X,\lambda_{/\xi})'_\ra$; and for every morphism $\varphi\colon\xi\to\xi'$ in $\Xi$, $i_\varphi^*$ sends $\cD(X,\lambda_{/\xi'})'_\ra$ to $\cD(X,\lambda_{/\xi})'_\ra$. Therefore, by restricting to full subcategories, we obtain a functor
\[
\cD(X,\lambda)'_\ra\to\varprojlim_{\rN(\Xi)^{op}}\cD(X,\lambda_{/\xi})'_\ra.
\]
By Lemma \ref{c1le:adic}, the right-hand side is equivalent to
\[
\varprojlim_{\rN(\Xi)^{op}}\cD(X,\Lambda(\xi))=\cD(X,\lambda)_\ra.
\]
Thus, we obtain the desired functor \eqref{c1eq:limit}.

\begin{theorem}\label{c1th:limit}
For objects $X$ of $\Chpar{}$ and $\lambda=(\Xi,\Lambda)$ of $\Rind$, the 
functor 
\[
\cD(X,\lambda)'_\ra\to\cD(X,\lambda)_\ra=\varprojlim_{\rN(\Xi)^{op}}\cD(X,\Lambda(\xi))
\]
\eqref{c1eq:limit} is an equivalence of $\infty$-categories.
\end{theorem}

We need some preparation before the proof. Let $X$ be an object of $\Schqcs$. For simplicity, we will write $X$ for $X_{\et}$ as well. By definition, $\cD(X,\lambda)'_\ra$ is a full subcategory of $\cD(X,\lambda)=\cD(X^\Xi,\Lambda)=\cD(\Mod(X^\Xi,\Lambda))$. For every object $\xi$ of $\Xi$, we have an \emph{evaluation functor}
\[
e_\xi^*\colon\Mod(X^\Xi,\Lambda)\to\Mod(X,\Lambda(\xi))
\]
at $\xi$ on the level of Abelian categories. It is exact and admits a (right exact) left adjoint functor
\begin{align}\label{c1eq:limit1}
e_{\xi!}\colon\Mod(X,\Lambda(\xi))\to\Mod(X^\Xi,\Lambda).
\end{align}
Moreover, we define a truncation functor
\begin{align}\label{c1eq:limit2}
t_{\leq\xi}\colon\Mod(X^\Xi,\Lambda)\to\Mod(X^\Xi,\Lambda)
\end{align}
such that for a $\Lambda$-module $F_{\bullet}\in\Mod(X^\Xi,\Lambda)$, we have
\[
(t_{\leq\xi}F_{\bullet})_{\xi'}=
\begin{dcases}
F_{\xi'} & \text{if $\xi'\leq\xi$,} \\
0 & \text{otherwise.}
\end{dcases}
\]
It is exact and admits a right adjoint.

\begin{proof}
By Lemma \ref{c1le:evaluation}, Lemma \ref{b3le:limit_restriction}, property (P4) for $\EO{}{\Chpar{}}{\r{I}}{}$, and Proposition \ref{c1pr:p34}, we may assume $X\in\Schqcs$.

We first study the functor
\[
\alpha\colon\cD(X,\lambda)'_\ra\to\varprojlim_{\rN(\Xi)^{op}}\cD(X,\Lambda(\xi))
\]
from the point of view of coCartesian fibrations. First, we have a functor $\Delta^1\times\rN(\Xi)\to\Cat$ sending
$\Delta^1\times(\varphi\colon\xi\to\xi')$ to the square
\[
\xymatrix{
\cD(X^{\Xi_{/\xi}},\Lambda_{/\xi}) \ar[r]^-{p_{\xi*}} \ar[d]_-{i_{\varphi*}} & \cD(X,\Lambda(\xi)) \ar[d]^-{\tilde{\varphi}_*} \\
\cD(X^{\Xi_{/\xi'}},\Lambda_{/\xi'}) \ar[r]^-{p_{\xi'*}}  & \cD(X,\Lambda(\xi')). }
\]
This corresponds to a projectively fibrant simplicial functor $\cF\colon\fC[\rN(D)]\to\Mset$, where $D=[1]\times\Xi$. Let $\phi_D\colon\fC[\rN(D)]\to D$ be the canonical equivalence of simplicial categories and put
\[
\cF'=(\Fibr^D \circ \St^+_{\phi_D^{op}}\circ\Un^+_{\rN(D)^{op}})\cF\colon D\to \Mset.
\]
We write $\cF'$ in the form $\cF'\colon[1]\to(\Mset)^\Xi$. Applying the marked unstraightening functor $\Un^+_{\phi}$ for the weak equivalence of simplicial categories $\phi\colon\fC[\rN(\Xi)^{op}]\to\Xi^{op}$, we obtain a morphism $\tilde\alpha\colon F_1\to F_2$ of Cartesian fibrations in the category $(\Mset)_{/\rN(\Xi)^{op}}$. Moreover, by \cite{HTT}*{Corollary 5.2.2.5}, both $F_1$ and $F_2$ are \emph{coCartesian} fibrations as well, but $\tilde\alpha$ does \emph{not} send coCartesian edges to coCartesian ones in general. By a similar argument, we have a map
\[
\cD(X^\Xi,\Lambda)\to\Map^{\r{coCart}}_{\rN(\Xi)^{op}}(\rN(\Xi)^{op},F_1)
\coloneqq\Map^\flat_{\rN(\Xi)^{op}}((\rN(\Xi)^{op})^\sharp,(F_1,\cE)),
\]
where $\cE$ is the set of coCartesian edges of $F_1$. Composing with the obvious inclusion $\Map^{\r{coCart}}_{\rN(\Xi)^{op}}(\rN(\Xi)^{op},F_1)\subseteq\Map_{\rN(\Xi)^{op}}(\rN(\Xi)^{op},F_1)$ and $\Map_{\rN(\Xi)^{op}}(\rN(\Xi)^{op},\tilde\alpha)$, we obtain a map
\[
\alpha'\colon\cD(X^\Xi,\Lambda)\to\Map_{\rN(\Xi)^{op}}(\rN(\Xi)^{op},F_2).
\]
We have the equivalence
\[
\Map^{\r{coCart}}_{\rN(\Xi)^{op}}(\rN(\Xi)^{op},F_2)\simeq\lim_{\Xi^{op}}\cD(X,\Lambda(\xi))
\]
by \cite{HTT}*{Corollary 3.3.3.2}, and the following pullback diagram
\[
\xymatrix{
\cD(X^\Xi,\Lambda)'_\ra \ar[r]^-{\alpha} \ar[d] & \Map^{\r{coCart}}_{\rN(\Xi)^{op}}(\rN(\Xi)^{op},F_2) \ar[d] \\
\cD(X^\Xi,\Lambda) \ar[r]^-{\alpha'} & \Map_{\rN(\Xi)^{op}}(\rN(\Xi)^{op},F_2) }
\]
by the definition of adic complexes, where vertical arrows are inclusions. We also note that $\alpha'$ commutes with small colimits by \cite{HTT}*{Proposition 5.1.2.2}. Thus, the goal is to show that $\alpha$ is an equivalence.

To construct an inverse $\beta$ of $\alpha$, we use $\del_{/\Xi}$: the category of simplices of $\Xi$. Then all $n$-cells
of $\rN(\del_{/\Xi})$ are degenerate for $n\geq2$. Define a functor
\[
\beta'\colon\rN(\del_{/\Xi}^{op})\to\Fun(\Map_{\rN(\Xi)^{op}}(\rN(\Xi)^{op},F_2),\cD(X^\Xi,\Lambda))
\]
sending a typical subcategory $\xi\to(\xi\to\xi')\leftarrow\xi'$ of $\del_{/\Xi}$ to
\[
\xymatrix{
\rL e_{\xi!}\circ \epsilon_\xi & t_{\leq\xi}\circ\rL e_{\xi'!}\circ\epsilon_{\xi'} \ar[l]\ar[r]
&  \rL e_{\xi'!}\circ \epsilon_{\xi'}, }
\]
where $\epsilon_\xi\colon\Map_{\rN(\Xi)^{op}}(\rN(\Xi)^{op},F_2)\to\cD(X,\Lambda(\xi))$ is the restriction functor to the fiber at $\xi$. The functor $\Fun(\alpha',\cD(X^\Xi,\Lambda))\circ\beta'$ extends to a functor
$\rN(\del_{/\Xi}^{op})^{\triangleright}\to\Fun(\cD(X^\Xi,\Lambda),\cD(X^\Xi,\Lambda))$ carrying $(\xi\to(\xi\to\xi')\leftarrow\xi')^\triangleleft$ to
\[
\xymatrix{
\rL e_{\xi!}\circ \epsilon_{\xi}\circ \alpha' \ar[rd] & t_{\leq\xi}\circ\rL e_{\xi'!}\circ\epsilon_{\xi'}\circ \alpha'
\ar[l]\ar[r]\ar[d] &  \rL e_{\xi'!}\circ \epsilon_{\xi'}\circ\alpha'\ar[ld] \\ & \r{id} }
\]
which induces a natural transformation
\[
(\colim\beta')\circ\alpha'\simeq\colim(\Fun(\alpha',\cD(X^\Xi,\Lambda))\circ\beta')\to\r{id}.
\]
Now we put
\[
\beta\coloneqq\colim\beta'\res\Map_{\rN(\Xi)^{op}}^{\r{coCart}}(\rN(\Xi)^{op},F_2).
\]
It is easy to check  that $\beta$ takes values in $\cD(X^\Xi,\Lambda)_\ra$.

We show that the induced natural transformation $\beta\circ\alpha\to\r{id}$ 
is an equivalence. Pick an object $\sfK$ of $\cD(X^\Xi,\Lambda)'_\ra$. We 
need to show that the diagram 
\[
\beta^{\triangleright}_{\sfK}\colon\rN(\del_{/\Xi}^{op})^{\triangleright}\to\cD(X^{\Xi},\Lambda),
\]
depicted as
\[
\xymatrix{
\rL e_{\xi!}\sfK_\xi \ar[dr] & t_{\leq\xi}\rL e_{\xi'!} \sfK_{\xi'} \ar[l]\ar[r]\ar[d] &
\rL e_{\xi'!} \sfK_{\xi'}\ar[dl] \\ & \sfK }
\]
is a colimit diagram. We only need to check this after applying $e_{\xi_0}^*$ for every $\xi_0\in\Xi$, since $e_{\xi_0}^*$ commutes with colimits. The composite functor $e_{\xi_0}^*\circ\beta^{\triangleright}_{\sfK}$ has value (equivalent to) $\sfK_{\xi_0}$ (resp.\ $0$) on the cone point, vertices $\{\xi\}$ and $(\xi\to\xi')$ of $\del_{/\Xi}$ for $\xi\geq \xi_0$ (resp.\ otherwise), with all morphisms being either identities on $\sfK_{\xi_0}$ or $0$, or the zero morphism $0\to\sfK_{\xi_0}$. It is clear that $e_{\xi_0}^*\circ\beta^{\triangleright}_{\sfK}$ induces an equivalence $\colim(e_{\xi_0}^*\circ\beta^{\triangleright}_{\sfK}\res\rN(\del_{/\Xi}^{op}))\simeq\sfK_{\xi_0}$ in $\cD(X,\Lambda(\xi_0))$.

For the other direction, that is, a natural equivalence $\alpha\circ\beta\to\r{id}$, we note that the functor $\Fun_{\rN(\Xi)^{op}}(\rN(\Xi)^{op},F_2),\alpha')\circ\beta'$ also extends to a functor
\[
\rN(\del_{/\Xi}^{op})^{\triangleright}\to\Fun(\Map_{\rN(\Xi)^{op}}(\rN(\Xi)^{op},F_2),\Map_{\rN(\Xi)^{op}}(\rN(\Xi)^{op},F_2))
\]
carrying $(\xi\to(\xi\to\xi')\leftarrow\xi')^\triangleleft$ to
\[
\xymatrix{
\alpha'\circ\rL e_{\xi!}\circ \epsilon_\xi \ar[rd] & \alpha'\circ t_{\leq\xi}\circ\rL e_{\xi'!}\circ\epsilon_{\xi'}
\ar[l]\ar[r]\ar[d] & \alpha'\circ \rL e_{\xi'!}\circ \epsilon_{\xi'}\ar[ld] \\ & \r{id} }
\]
which induces a natural transformation
\[
\alpha'\circ(\colim\beta')\simeq\colim(\Fun_{\rN(\Xi)^{op}}(\rN(\Xi)^{op},F_2),\alpha')\circ\beta')\to\id,
\]
where the equivalence of two functors is due to the fact that $\alpha'$ commutes with colimits. Restricting to $\Map^{\r{coCart}}_{\rN(\Xi)^{op}}(\rN(\Xi)^{op},F_2)$, one obtains a natural transformation $\alpha\circ\beta\to\id$ which is an equivalence by an argument similar to the previous one. Therefore, $\alpha$ is an equivalence and the proposition follows.
\end{proof}

By Theorem \ref{c1th:limit}, in what follows, we will identify $\cD(X,\lambda)'_\ra$ with $\cD(X,\lambda)_\ra$. In particular, we will regard $\cD(X,\lambda)_\ra$ as a full subcategory of $\cD(X,\lambda)$.

\begin{remark}\label{c1re:limit}
We have the following remarks.
\begin{enumerate}
  \item When we regard $\cD(X,\lambda)_\ra$ as a full subcategory of $\cD(X,\lambda)$, $\lambda_X$ coincides with the constant sheaf in $\cD(X,\lambda)$.

  \item By Proposition \ref{c1pr:adic_inclusion}, the inclusion functor $\cD(X,\lambda)_\ra\to\cD(X,\lambda)$ admits a right adjoint, which we denote by $\fR_X\colon\cD(X,\lambda)\to\cD(X,\lambda)_\ra$. It is a colocalization functor \cite{HTT}*{{\Sec}5.2.7}.

  \item Let $f\colon Y\to X$ be a morphism of $\Chpar{}$. The functor $f^*\colon\cD(X,\lambda)\to\cD(Y,\lambda)$ preserves adic complexes, and the induced functor $f^*\colon\cD(X,\lambda)_\ra\to\cD(Y,\lambda)_\ra$ coincides with $f^{*\ra}$ up to equivalence. The functor $f_{*\ra}$ is equivalent to the composition of the inclusion $\cD(Y,\lambda)_\ra\to\cD(Y,\lambda)$, $f_*\colon\cD(Y,\lambda)\to\cD(X,\lambda)$ and the functor $\fR_X$.

  \item Let $f\colon Y\to X$ be a locally of finite type morphism of $\Chpar{}_\Box$, and suppose that $\lambda\in\Rind_{\ltor}$. The functor $f_!\colon\cD(Y,\lambda)\to\cD(X,\lambda)$ preserves adic complexes, and the induced functor $f_!\colon\cD(Y,\lambda)_\ra\to\cD(X,\lambda)_\ra$ coincides with $f_{!\ra}$ up to equivalence. The functor $f^{!\ra}$ is equivalent to the composition of the inclusion $\cD(X,\lambda)_\ra\to\cD(X,\lambda)$, $f^!\colon\cD(X,\lambda)\to\cD(Y,\lambda)$ and the functor $\fR_Y$.

  \item The functor $-\otimes_X-\colon\cD(X,\lambda)\times\cD(X,\lambda)\to\cD(X,\lambda)$ preserves adic complexes, and the induced functor
      $-\otimes_X-\colon\cD(X,\lambda)_\ra\times\cD(X,\lambda)_\ra\to\cD(X,\lambda)_\ra$ coincides with $-\atimes_X-$ up to equivalence. The functor $\HOM^\ra_X$ is equivalent to the composition of the inclusion $\cD(X,\lambda)_\ra^{op}\times\cD(X,\lambda)_\ra\to\cD(X,\lambda)^{op}\times\cD(X,\lambda)$, $\HOM_X$ and $\fR_X$.

  \item We have $\cD^{\leq n}(X,\lambda)_\ra=\cD^{\leq n}(X,\lambda)\cap\cD(X,\lambda)_\ra$ for every $n\in\dZ$.
  
  \item Theorem \ref{c1th:limit} also holds if $X$ is a topos with enough 
      points. 
\end{enumerate}
\end{remark}

\begin{proof}[Proof of Theorem \ref{c1th:poincare}]
For (1), we note that $f_{!\ra}\lambda_Y\langle d\rangle=f_!\lambda_Y\langle d\rangle\in\cD^{\leq 0}(X,\lambda)$ by part (1) of (P7) in \Sec\ref{b4ss:description}. Thus, by definition, $f_{!\ra}\lambda_Y\langle d\rangle\in\cD^{\leq 0}(X,\lambda)_\ra$. Note that we have a trace map $f_!\lambda_Y\langle d\rangle\to\lambda_X$ in the non-adic case. Applying $\tau^{\geq0}_\ra$, we obtain the desired trace map
\[
\Tr_f\colon\tau_\ra^{\geq0}f_{!\ra}\lambda_Y\langle d\rangle=\tau_\ra^{\geq0}f_!\lambda_Y\langle d\rangle\to\tau_\ra^{\geq0}\lambda_X=\lambda_X
\]
which is a map in $\cD^\heartsuit(X,\lambda)_\ra$. The functoriality is automatic.

For (2), by the Poincar\'{e} duality $f^*\langle\dim f\rangle\simeq f^!$ in the non-adic case, $f^!$ preserves adic complexes hence $f^{!\ra}=f^!\res\cD(X,\lambda)_\ra$. Then it follows from the corresponding argument in the non-adic case.

\end{proof}

The following is a variant of Proposition \ref{b6pr:Hom_pi2}.

\begin{proposition}\label{7pr:adicHOM}
Let $X$ be an object of $\Chpar{}$, $\pi\colon \lambda'\to \lambda$ a perfect 
morphism of $\Rind$, and $\sfK$ an object of $\cD(X,\lambda)_\ra$. Then 
\begin{enumerate}
  \item The natural transformation $\pi_!(-\otimes_{\lambda'}\pi^* 
      \sfK)\to(\pi_!-)\otimes_\lambda \sfK$ is a natural equivalence. 

  \item The natural transformation 
      $\pi^*\HOM_\lambda(\sfK,-)\to\HOM_{\lambda'}(\pi^*\sfK,\pi^*-)$ is a 
      natural equivalence. 
\end{enumerate}
\end{proposition}

\begin{proof}
As in Proposition \ref{b6pr:Hom_pi2}, the two assertions are equivalent and 
for (1) we may assume that $X$ is an object of $\Schqcs$. In this case, the 
proof of (2) is similar to that of Lemma \ref{b2le:upperstar_pi}. Write 
$\lambda=(\Xi,\Lambda)$ and $\lambda'=(\Xi',\Lambda')$. As the family of 
functors $(e_{\xi'}^*)_{\xi'\in \Xi'}$ is conservative, it suffices to show 
that (2) holds with $\pi$ replaced by $e_{\xi'}$ and by $\pi\circ e_{\xi'}$. 
In other words, we may assume $\Xi'=\{\ast\}$. We decompose $\pi$ as 
\[
(\{\ast\},\Lambda')\xrightarrow{t}(\{\xi\},\Lambda(\xi)) \xrightarrow{s_\xi}(\Xi,\Lambda)_{/\xi}\xrightarrow{i_\xi}(\Xi,\Lambda).
\]
We show that (2) holds with $\pi$ replaced by $i_\xi$, by $s_\xi$, and by 
$t$. The assertion for $i_\xi$ is Proposition \ref{b6pr:Hom_pi2}. The 
assertion for $s_\xi$ is trivial as $s_\xi^*\simeq p_{\xi*}$ and 
$p_\xi^*p_{\xi*}\sfK'\simeq \sfK'$ for every object $\sfK'$ of 
$\cD(\cX,(\Xi,\Lambda)_{/\xi})_{\ra}$ by Lemma \ref{c1le:adic}. It remains to 
prove (2) with $\pi$ replaced by $t$. Changing notation, it suffices to prove 
(2) under the additional assumption $\Xi=\Xi'=\{*\}$. Then $\pi_*$ applied to 
(2) is given by
\[\pi_*\pi^* \HOM_{\Lambda'}(\sfK,-)\to \HOM_{\Lambda'}(\sfK,\pi_*\pi^*-)\simeq \pi_*\HOM_{\Lambda'}(\pi^*\sfK,\pi^*-),\]
which is a natural equivalence since  
$\pi_*\pi^*-\simeq\HOM_{\Lambda}({\Lambda'}\spcheck,-)$. We conclude by the 
fact that $\pi_*$ is conservative in this case.
\end{proof}

\begin{example}\label{7ex:notrightcomplete}
We give an example for which $\cD^{\ge \infty}(X,\lambda)_\ra=\bigcap_n 
\cD^{\ge n}(X,\lambda)_\ra$ contains nonzero objects. Let $k$ be a ring and 
let $\Lambda=k[x_0,x_1,\dots]$ be the polynomial ring in indeterminates 
$x_0,x_1,\dots$. Consider the functor $\Lambda_\bullet\colon \dN^\op \to 
\Ring$ carrying $n$ to $k[x_0,\dots,x_{n-1}]\simeq 
\Lambda/(x_{n},x_{n+1},\dots)$. Consider the homomorphism of 
$\Lambda_n$-modules $\phi_n\colon \Lambda_n\to \Lambda_n [t]$ sending $1$ to 
$\sum_{i=0}^{n-1} x_it^{i}$. We define a complex $\sfK$ of 
$\Lambda_\bullet$-modules by taking $\sfK_n=\mathrm{Kos}^\bullet(\phi_n)$ to 
be the Koszul complex. The transition maps are given by the obvious 
projections. Note that $\sfK_n\in \cD^{\ge n}$. Clearly $\sfK$ is adic. We 
claim that $\sfK\in \cD^{\ge \infty}(X,(\dN,\Lambda_\bullet))_\ra$. Let 
$\sfK'\in \cD^{\le n-1}(X,(\dN,\Lambda_\bullet))_\ra$ for some $n\ge 0$. 
Consider the morphism $j_n\colon (\dN_{\ge n}, \Lambda_{\bullet,\ge n})\to 
(\dN, \Lambda_{\bullet})$. Since $\sfK'$ is adic, we have 
$j_{n!}j_n^*\sfK'\simeq \sfK'$. Thus 
\[\Hom_{\rh\cD(X,(\dN,\Lambda_\bullet))}(\sfK',\sfK)\simeq 
\Hom_{\rh\cD(X,(\dN,\Lambda_\bullet))}(j_{n!}j_n^*\sfK',\sfK)
\simeq \Hom_{\rh\cD(X,(\dN_{\ge n},\Lambda_{\bullet,\ge n}))}(j_n^*\sfK',j_n^*\sfK)=0.
\]
It follows that $\sfK\in \bigcap\cD^{\ge 
\infty}(X,(\dN,\Lambda_\bullet))_\ra$. For $X$ nonempty and $k$ nonzero, 
$\sfK$ is nonzero. In particular, the t-structure on 
$\cD(X,(\dN,\Lambda_\bullet))_\ra$ is not right complete. 
\end{example}

We end this section with more results on the preservation of adic complexes under Noetherian assumptions. Put
\[
\cD(X,\lambda)^{(*)}_\ra\coloneqq\cD(X,\lambda)_\ra\cap\cD^{(*)}(X,\lambda)
\]
for $*=+,-,\rb$.\footnote{For $*=+,\rb$, this intersection does not coincide 
in general with $\cD^{(*)}(X,\lambda)_\ra$, the one given by the usual 
t-structure on $\cD(X,\lambda)_\ra$ introduced in 
\Sec\ref{c1ss:adic_properties}, for example if $\Xi$ has a finite object 
$\xi$ and $\Lambda(\xi')$ is of infinite tor-dimension over $\Lambda(\xi)$ 
for some $\xi'\in \Xi$. However, see Remark \ref{c2re:bound} below.} 

\begin{proposition}\label{7pr:preserve_adic}
Let $\lambda=(\Xi,\Lambda)$ be an object of $\Rind_{\ltor}$. Let $f\colon Y\to X$ be a morphism of $\Chpar{}_\Box$ that is locally of finite type such that $X$ is locally Noetherian. Then
\begin{enumerate}
  \item $f_*$ restricts to $f_{*\ra}\colon \cD(Y,\lambda)^{(+)}_\ra\to\cD(X,\lambda)^{(+)}_\ra$ if $f$ is quasi-finite and quasi-separated and $f_{*\ra}\colon \cD(X,\lambda)_\ra\to \cD(X,\lambda)_\ra$ if in addition $f$ is $0$-Artin and $X$ is locally finite-dimensional.

  \item $f^!$ restricts to $f^{!\ra}\colon \cD(X,\lambda)^{(+)}_\ra\to\cD(Y,\lambda)^{(+)}_\ra$ and, if $X$ is locally finite-dimensional, $f^{!\ra}\colon D(X,\lambda)_\ra\to \cD(Y,\lambda)_\ra$.

  \item Assume that $\Lambda(\xi)$ is Noetherian for every $\xi\in \Xi$. 
      Then $\HOM_X$ restricts to $\HOM_X^{\ra}\colon 
      (\cD(X,\lambda)^{(\r{ft})}_{\ra,\rc})^\op 
      \times\cD(X,\lambda)^{(+)}_\ra\to \cD(X,\lambda)^{(+)}_\ra$ and, if 
      $X$ is locally finite-dimensional, 
      $\HOM_X^{\ra}\colon(\cD(X,\lambda)^{(\r{ft})}_{\ra,\rc})^\op\times\cD(X,\lambda)_\ra\to\cD(X,\lambda)_\ra$. 
      Here, 
      $\cD(X,\lambda)^{(\r{ft})}_{\ra,\rc}=\cD(X,\lambda)_{\ra}\cap\cD^{(\r{ft})}_\cons(X,\lambda)$. 
\end{enumerate}
\end{proposition}

\begin{proof}
The second assertion of (1) and the second assertion of (2) follow from Proposition \ref{b6pr:more_adj}. 

For the first assertion of (1), we reduce easily to the case of complexes bounded from below and where $X$ is a coherent scheme. By the usual descent spectral sequence, we then reduce to the case where $Y$ is also a scheme. In this case, the assertion is the projection formula in \cite{LZ}*{Lemma~1.20(d)}. 

For the first assertion of (2), we reduce easily to the case of affine schemes, and then to the case of a closed immersion, which follows from (1). 

For (3), we may assume that $X$ is a coherent scheme. By Proposition 
\ref{7pr:adicHOM} below, it suffices to show that for all $\xi'\le\xi$ in 
$\Xi$, $\sfK\in\cD_\cons(X,\Lambda(\xi))$ of finite tor-dimension, and 
$\sfL\in\cD^+(X,\Lambda(\xi))$ (or $\sfL\in \cD(X,\Lambda(\xi))$ in the case 
where $X$ is finite-dimensional), the canonical morphism 
\begin{align*}
\HOM_{(X,\Lambda(\xi))}(\sfK,\sfL)\otimes \Lambda(\xi')
&\to\HOM_{(X,\Lambda(\xi))}(\sfK,\sfL\otimes \Lambda(\xi')) \\
&\simeq \HOM_{(X,\Lambda(\xi'))}(\sfK\otimes_{\Lambda(\xi)}\Lambda(\xi'),\sfL\otimes_{\Lambda(\xi)}\Lambda(\xi'))
\end{align*}
is an equivalence. For this, we may assume that $\sfK=j_!\sfK'$ with $j\colon U\to X$ an immersion and $\sfK'\in \cD(U,\Lambda(\xi))$ is a perfect complex. Then
\[
\HOM_{X}(\sfK,-)\simeq\HOM_U(\sfK',\Lambda(\xi))\otimes j_*-.
\]
We conclude by (1).
\end{proof}

\subsection{Constructible adic complexes}
\label{c1ss:constructible}

In this section let $\lambda=(\Xi,\Lambda)\in \Rind$ such that $\Lambda(\xi)$ is Noetherian for every $\xi\in \Xi$. For a higher Artin stack $X\in\Chpar{}$, we put
\begin{align*}
\cD(X,\lambda)_{\ra,\rc}&\coloneqq\cD(X,\lambda)_\ra\cap \cD_\cons(X,\lambda),\\
\cD(X,\lambda)_{\ra,\rc}^{(*)}&\coloneqq\cD(X,\lambda)_\ra\cap\cD_\cons^{(*)}(X,\lambda),
\end{align*}
where $*=+,-,\rb$. Note that $\cD(X,\lambda)_{\ra,\rc}^{(-)}=\cD^{(-)}(X,\lambda)_\ra\cap\cD_\cons(X,\lambda)$ always holds.

\begin{proposition}\label{c2le:constructible_adic}
Let $f\colon Y\to X$ be a morphism of higher Artin stacks. Then $f^*$ and $-\otimes_X-$ restrict to the following functors:
\begin{description}
  \item[1L'] $f^{*\ra}\colon\cD(X,\lambda)_{\ra,\rc}\to\cD(Y,\lambda)_{\ra,\rc}$.

  \item[3L'] $-\atimes_X-\colon\cD(X,\lambda)^{(-)}_{\ra,\rc}\times\cD(X,\lambda)^{(-)}_{\ra,\rc}\to\cD(X,\lambda)^{(-)}_{\ra,\rc}$.
\end{description}
In particular, we have a symmetric monoidal subcategory $(\cD(X,\lambda)^{(-)}_{\ra,\rc})^\otimes$ of $\cD(X,\lambda)^\otimes_\ra$. Moreover, if $\Lambda(\xi)$ is Noetherian and $\square$-torsion for every $\xi\in \Xi$, $X$ is $\square$-coprime, $f$ is of finite presentation (Definition \ref{b5de:finite_presentation}), then $f_!$ restricts to the following functors:
\begin{description}
  \item[2L'] $f_{!\ra}\colon\cD(Y,\lambda)^{(-)}_{\ra,\rc}\to\cD(X,\lambda)^{(-)}_{\ra,\rc}$ and, if $f$ is $0$-Artin,
      $f_{!\ra}\colon\cD(Y,\lambda)_{\ra,\rc}\to\cD(X,\lambda)_{\ra,\rc}$.
\end{description}
\end{proposition}

\begin{proof}
This follows immediately from Proposition \ref{b6pr:constructible}.
\end{proof}

As in \Sec\ref{b6ss:constructible}, to state the results for the other operations, we work in a relative setting. Let $\dS$ be a $\Box$-coprime higher Artin stack. Assume that there exists an atlas $S\to\dS$, where $S$ is either a quasi-excellent scheme or a regular scheme of dimension $\le 1$.

\begin{proposition}\label{c2pr:constructible_adic}
Suppose that $\lambda\in\Rind_{\ltor}$. Let $f\colon Y\to X$ be a morphism of $\Chpars$. Then $f_*$, $f^!$, $\HOM_X$ restrict to the following functors:
\begin{description}
  \item[1R'] $f_{*\ra}\colon\cD(Y,\lambda)^{(+)}_{\ra,\rc}\to\cD(X,\lambda)^{(+)}_{\ra,\rc}$ if $f$ is quasi-compact and quasi-separated (Definition \ref{b5de:finite_presentation}) and $f_{*\ra}\colon\cD(Y,\lambda)_{\ra,\rc}\to\cD(X,\lambda)_{\ra,\rc}$ if in addition $f$ is $0$-Artin and $\dS$ is locally finite-dimensional.

  \item[2R'] $f^{!\ra}\colon\cD(X,\lambda)^{(+)}_{\ra,\rc}\to\cD(Y,\lambda)^{(+)}_{\ra,\rc}$ and, if $\dS$ is locally finite-dimensional, $f^{!\ra}\colon\cD(X,\lambda)_{\ra,\rc}\to\cD(Y,\lambda)_{\ra,\rc}$.

  \item[3R'] $\HOM^\ra_X\colon(\cD(X,\lambda)^{(\r{ft})}_{\ra,\rc})^{op}\times\cD(X,\lambda)^{(+)}_{\ra,\rc}
      \to\cD(X,\lambda)^{(+)}_{\ra,\rc}$ and, if $\dS$ is locally finite-dimensional,
      $\HOM^\ra_X\colon(\cD(X,\lambda)^{(\r{ft})}_{\ra,\rc})^{op}\times\cD(X,\lambda)_{\ra,\rc}\to\cD(X,\lambda)_{\ra,\rc}$.
\end{description}
\end{proposition}

Note that in 3R' above, we do not need the assumption in Propositions 
\ref{b6pr:constructible2} and \ref{b6pr:constructible3} that $\Xi_{/\xi}$ is 
finite. 

\begin{proof}
This follows from Propositions \ref{b6pr:constructible2}, 
\ref{b6pr:constructible3}, \ref{7pr:adicHOM}, and \ref{7pr:preserve_adic}. 
(For the assertions on $\HOM^\ra$, we use Propositions \ref{7pr:adicHOM} and 
\ref{7pr:preserve_adic} to reduce to the case where $\Xi=\{*\}$.) 
\end{proof}

Note that $\cD(X,\lambda)^{(\r{ft})}_{\ra,\rc}=\cD(X,\lambda)^{(\rb)}_{\ra,\rc}$ if for every $\xi\in \Xi$, $\Lambda(\xi)$ is a local ring and there exists a morphism $\xi\to \xi'$ in $\Xi$ such that $\Lambda(\xi)\to \Lambda(\xi')$ identifies $\Lambda(\xi')$ with the residue field of $\Lambda(\xi)$. This is the case if $\cO$ is a Noetherian local ring of maximal ideal $\fm$ and $\lambda=(\dN,\Lambda)$ with $\Lambda(n)=\cO/\fm^{n+1}$.

\subsection{Adic dualizing complexes}
\label{c1ss:adic_dualizing}

In this section, we construct adic dualizing complexes and study the biduality properties in the adic case.

Let $X$ be an object of $\Chpar{}$, and $\lambda=(\Xi,\Lambda)$ an object of $\Rind$. Let $\Omega$ be an object of $\cD(X,\lambda)$ (resp.\ $\cD(X,\lambda)_\ra$). By adjunction of the pair of functors $-\otimes\sfK\coloneqq-\otimes_X\sfK$ and $\HOM(\sfK,-)\coloneqq\HOM_X(\sfK,-)$ (resp.\ $-\atimes\sfK\coloneqq-\atimes_X\sfK$ and $\HOM^\ra(\sfK,-)\coloneqq\HOM^\ra_X(\sfK,-)$), we have a natural transformation
\begin{align}\label{c1eq:biduality1}
\delta_\Omega&\colon\id\to\rh\HOM(\rh\HOM(-,\Omega),\Omega)\\\label{c1eq:biduality2}
\text{resp.\ }\delta^\ra_\Omega&\colon\id\to\rh\HOM^\ra(\rh\HOM^\ra(-,\Omega),\Omega)
\end{align}
between endofunctors of $\rh\cD(X,\lambda)$ (resp.\ $\rh\cD(X,\lambda)_\ra$), which is called the \emph{biduality transformation}.\footnote{In fact, $\delta_\Omega$ can be enhanced to a natural transformation $\tilde\delta_\Omega\colon\id\to\HOM(\HOM(-,\Omega),\Omega)$ between endofunctors of $\cD(X,\lambda)$, that is, $\rh\tilde\delta_\Omega=\delta_{\Omega}$; and similar for the adic case. We omit the details here since we do not need such enhancement in what follows.}

In the remaining of this section, we fix a $\Box$-coprime base scheme $\dS$ that is a disjoint union of \emph{excellent} schemes,\footnote{A scheme is \emph{excellent} if it is quasi-compact and admits a Zariski open cover by spectra of excellent rings \cite{EGAIV}*{D\'{e}finition 7.8.2}.} \emph{endowed with a global dimension function}. Let $\Rind_{\Box\text{-}\r{dual}}$ be the full subcategory of $\Rind_{\ltor}$ spanned by ringed diagrams $\Lambda\colon\Xi^{op}\to\Ring$ such that $\Lambda(\xi)$ is a ($\Box$-torsion) Gorenstein ring of dimension 0 for every object $\xi$ of $\Xi$.

\begin{definition}[Potential dualizing complex]\label{c1de:dualizing_complex}
Let $\lambda=(\Xi,\Lambda)$ be an object of $\Rind_{\Box\text{-}\r{dual}}$. For an object $f\colon X\to\dS$ of $\Chpars$ with $X$ in $\Schqcs$, we say that an object $\Omega\in\cD(X,\lambda)$ is a \emph{pinned/potential dualizing complex} (on $X$) if
\begin{enumerate}
  \item $\Omega$ is an adic complex, and

  \item for every object $\xi$ of $\Xi$, $\Omega_\xi=e_\xi^*\Omega\in\cD(X,\Lambda(\xi))$ is a pinned/potential dualizing complex.
\end{enumerate}
For a general object $f\colon X\to\dS$ of $\Chpars$, we say that an object $\Omega\in\cD(X,\lambda)$ is a \emph{pinned/potential dualizing complex} if for every atlas $u\colon X_0\to X$ with $X_0$ in $\Schqcs$, $u^!\Omega$ is a pinned/potential dualizing complex on $X_0$.
\end{definition}

\begin{proposition}\label{c1pr:dualizing_complex}
Let $f\colon X\to\dS$ be an object of $\Chpars$ and $\lambda$ an object of $\Rind_{\Box\text{-}\r{dual}}$. The full subcategory of $\cD(X,\lambda)$ spanned by all pinned/potential dualizing complexes is equivalent to the nerve of an ordinary category
consisting of only one object $\Omega$ with
\[
\Hom(\Omega,\Omega)=\(\lim_{\xi\in\Xi}\Lambda(\xi)\)^{\pi_0(X)}.
\]
Moreover, pinned/potential dualizing complexes are constructible and compatible under extension of scalars.
\end{proposition}

In the proof, we will use the following observation which is essentially \cite{HTT}*{Proposition A.3.2.27}. Let $\cC\colon K^{\triangleleft}\to\Cat$ be a functor that is a limit diagram. Let $X,Y$ be two objects in the limit $\infty$-category $\cC_{-\infty}$ and write $X_k,Y_k$ the natural images in $\cC_k$ for every vertex $k$ of $K$. Then $\Map_{\cC_{-\infty}}(X,Y)$ is naturally the homotopy limit (in the $\infty$-category $\cH$ of spaces) of a diagram $K\to\cH$ sending $k$ to $\Map_{\cC_k}(X_k,Y_k)$.

\begin{proof}
We first consider the case where $\Xi=*$ is a singleton.

In this case, if $X$ is in $\Schqcs$, then the proposition is proved in \cite{TG}*{Expos\'{e} XVIII-A} (see Remark \ref{b6re:riou}). We also note that if $\Omega_\dS$ is a pinned dualizing complex on $\dS$, then $f^!\Omega_\dS$ is a pinned dualizing complex on $X$. We prove by induction on $k$ that for an object $f\colon X\to\dS$ of $\Chpars$ with $X$ in $\Chpar{k}$,
\begin{enumerate}
  \item For any two pinned dualizing complexes $\Omega$ and $\Omega'$, $\Map_{\cD(X,\Lambda)}(\Omega,\Omega')$ is discrete;\footnote{More precisely, it means that $\Map_{\cD(X,\Lambda)}(\Omega,\Omega')$ is equivalent to a discrete set in $\cH$.}

  \item There is a unique distinguished equivalence $o\colon\Omega\to\Omega'$ such that for every atlas $u\colon X_0\to X$ with $X_0$ in $\Schqcs$, $u^!o$ is the one preserving pinning.
\end{enumerate}
It is clear that once the equivalence $o$ in (2) exists, it is compatible under $f^!$ for every smooth morphism $f$. Choose an atlas $u\colon Y\to X$ (with $Y$ in $\Chpar{(k-1)}$). Since $u$ is of universal $\EO{}{\Chpar{}_\Box}{!}{}$-descent, both (1) and (2) follow from the induction hypothesis, the above observation, and the fact that limit of $k$-truncated spaces is $k$-truncated (which follows from \cite{HTT}*{Proposition 5.5.6.5}).

Then we show that $\Map_{\cD(X,\Lambda)}(\Omega,\Omega)\simeq\pi_0\Map_{\cD(X,\Lambda)}(\Omega,\Omega)$ is isomorphic to $\Lambda^{\pi_0(X)}$. Without loss of generality, we assume that $X$ is connected. Choose an atlas $u=\coprod_Iu_i\colon \coprod_IY_i\to X$ with $Y_i$ in $\Schqcs$ that is connected. We have the following commutative diagram
\[
\xymatrix{
\Lambda \ar[r]^-{\alpha} \ar@{=}[d] & \pi_0\Map_{\cD(X,\Lambda)}(\Omega,\Omega) \ar[d]^-{\beta} \\
\Lambda \ar[r] &
\bigoplus_I\pi_0\Map_{\cD(Y_i,\Lambda)}(u_i^!\Omega,u_i^!\Omega). }
\]
Since $u^!$ is conservative, we know that the map $\beta$ is injective. Since the map $\Lambda\to\pi_0\Map_{\cD(Y_i,\Lambda)}(u_i^!\Omega,u_i^!\Omega)$ is an isomorphism for every $i\in I$, we know that $\alpha$ is injective. If we write elements of $\bigoplus_I\pi_0\Map_{\cD(Y_i,\Lambda)}(u_i^!\Omega,u_i^!\Omega)$ in the coordinate form $(\dots,\lambda_i,\dots)$ with respect to the basis consisting of distinguished equivalences, then the image of $u^!$ must belong to the diagonal since $X$ is connected. Therefore, $\alpha$ is an isomorphism. The fact that pinned dualizing complexes are constructible and compatible under extension of scalars follows from the case of schemes.

We then consider the case of general coefficient $\lambda=(\Xi,\Lambda)$. We start by constructing a pinned dualizing complex $\Omega_{\dS,\lambda}$ on the base scheme $\dS$. Recall that $\del_{/\Xi}$ is the category of simplices of $\Xi$, whose $n$-simplices are degenerate for $n\geq2$. For every object $\xi$ of $\Xi$, denote by $\Omega_{\dS,\xi}$ the pinned dualizing complex in $\cD(\dS,\Lambda(\xi))$. Recall the functors $e_{\xi!}$ \eqref{c1eq:limit1} and $t_{\leq\xi}$ \eqref{c1eq:limit2}. Define a functor $\delta\colon\rN(\del_{/\Xi})\to\cD(\dS,\lambda)$ sending a typical subcategory $\xi\leftarrow(\xi\to\xi')\rightarrow\xi'$ of $\del_{/\Xi}$ to
\[
\xymatrix{
\rL e_{\xi!}\Omega_{\dS,\xi} & \rL e_{\xi!}(\Omega_{\dS,\xi'}\overset{\rL}{\otimes}_{\Lambda(\xi')}\Lambda(\xi))
\simeq t_{\leq\xi}\rL e_{\xi'!}\Omega_{\dS,\xi'} \ar[l]\ar[r] &  \rL e_{\xi'!}\Omega_{\dS,\xi'} }
\]
in which the left arrow is given by the distinguished equivalence $\Omega_{\dS,\xi'}\overset{\rL}{\otimes}_{\Lambda(\xi')}\Lambda(\xi)\xrightarrow{\sim}\Omega_{\dS,\xi}$. It is easy to see that
$\Omega_{\dS,\lambda}\coloneqq\lim\delta$, viewed as an element in $\cD(\dS,\lambda)$, satisfies the two requirements in Definition
\ref{c1de:dualizing_complex}, hence is a pinned dualizing complex. For an object $f\colon X\to\dS$ of $\Chpars$, put $\Omega_{f,\lambda}=f^!\Omega_{\dS,\lambda}$. Then it is a pinned dualizing complex on $X$. The rest of the proposition follows from the fact that $\Omega_{f,\lambda}$ is adic, Theorem \ref{c1th:limit}, the observation before the proof, and the same assertion when $\Xi$ is a singleton.
\end{proof}

\begin{definition}\label{c1de:dualizing}
We introduce the following dualizing functors:
\begin{align*}
\cD=\cD_X\coloneqq\HOM_X(-,\Omega_{X,\lambda})&\colon\cD(X,\lambda)^{op}\to\cD(X,\lambda), \\
\cD^\ra=\cD^\ra_X\coloneqq\HOM^\ra_X(-,\Omega_{X,\lambda})&\colon\cD(X,\lambda)_\ra^{op}\to\cD(X,\lambda)_\ra.
\end{align*}
Put $\rD=\rh\cD$ and $\rD^\ra=\rh\cD^\ra$.
\end{definition}

\begin{proposition}
Let $(X,\lambda)$ be an object of $\Chpar{}\times\rN(\Rind)$. Let $\sfK\in\cD(X,\lambda)_\ra$ be an object such that $\delta_{\Omega_{X,\Lambda(\xi)}}(e_\xi^*\sfK)$ is an equivalence for every object $\xi$ of $\Xi$, where $\delta$ is the biduality transformation \eqref{c1eq:biduality1}. Then $\delta^\ra_{\Omega_{X,\lambda}}(\sfK)$ is an equivalence as well, where $\delta^\ra$ is the biduality transformation \eqref{c1eq:biduality2}.
\end{proposition}

\begin{proof}
We need to show that the natural morphism $\sfK\to\rD^\ra\rD^\ra\sfK$ is an isomorphism (in the homotopy category $\rh\cD(X,\lambda)_\ra$). By definition, we have
\begin{align*}
\rD^\ra\rD^\ra\sfK
&=\rh\HOM^\ra(\sfK,\rh\HOM^\ra(\sfK,\Omega_{X,\lambda}))\\
&\simeq\rh\fR_X\rh\HOM(\sfK,\rh\fR_X\rh\HOM(\sfK,\Omega_{X,\lambda})) \\
&\simeq\rh\fR_X\rh\HOM(\sfK,\rh\HOM(\sfK,\Omega_{X,\lambda})).
\end{align*}
It suffices to show that the map $\delta_{\Omega_{X,\lambda}}(\sfK)\colon\sfK\to\rh\HOM(\sfK,\rh\HOM(\sfK,\Omega_{X,\lambda}))$ is an equivalence. In fact, since $\sfK$ is adic, we have
\begin{align*}
e_\xi^*\rh\HOM(\sfK,\rh\HOM(\sfK,\Omega_{X,\lambda}))
&\simeq\rh\HOM(e_\xi^*\sfK,\rh\HOM(e_\xi^*\sfK,e_\xi^*\Omega_{X,\lambda}))\\
&\simeq\rh\HOM(e_\xi^*\sfK,\rh\HOM(e_\xi^*\sfK,\Omega_{X,\Lambda(\xi)}))
\end{align*}
for every object $\xi\in\Xi$ by Lemma \ref{c1le:adic_hom} below, which is equivalent to $e_\xi^*\sfK$ by the assumption.
\end{proof}

\begin{lem}\label{c1le:adic_hom}
Let $\lambda=(\Xi,\Lambda)$ be an object of $\Rind$, $\xi$ an object of $\Xi$, and $\sK$ an object of $\cD(X,\lambda)_\ra$. Then the following diagram
\[
\xymatrix{
\cD(X,\lambda) \ar[d]_-{e_\xi^*} && \cD(X,\lambda) \ar[d]^-{e_\xi^*}\ar[ll]_-{-\otimes_X\sfK} \\
\cD(X,\Lambda(\xi)) && \cD(X,\Lambda(\xi)) \ar[ll]_-{-\otimes_X e_\xi^*\sfK} }
\]
is right adjointable and its transpose is left adjointable. In other words, the natural maps $e_{\xi!}(\sfL\otimes_X e_\xi^*\sfK)\to(e_{\xi!}\sfL)\otimes_X\sfK$ and $e_\xi^*\HOM_X(\sfK,\sfL')\to\HOM(e_\xi^*\sfK,e_\xi^*\sfL')$ are equivalences for objects $\sfL$ of $\cD(X,\Lambda(\xi))$ and $\sfL'$ of $\cD(X,\lambda)$.
\end{lem}

\begin{proof}
By Proposition \ref{b6pr:lowersh_pi}, we may assume that $\xi$ is the final object of $\Xi$. In this case, $e_\xi^*$ can be identified with $\pi_*$, where $\pi\colon(\Xi,\Lambda)\to(\{\xi\},\Lambda(\xi))$ is the projection. Since $\sfK$ is adic, the morphism $\pi^*e_\xi^*\sfK\to\sfK$ is an equivalence. A left adjoint of the transpose of the above diagram is then given by the diagram
\[
\xymatrix{
\cD(X,\lambda) \ar[d]_-{-\otimes_X\sfK} && \cD(X,\Lambda(\xi)) \ar[ll]_-{\pi^*}\ar[d]^-{-\otimes_X e_\xi^*\sfK}  \\
\cD(X,\lambda) && \cD(X,\Lambda(\xi)) \ar[ll]_-{\pi^*}.
}
\]
The lemma follows by adjunction.
\end{proof}

\subsection{The $\fm$-adic formalism}
\label{c2ss:madic}

\begin{definition}\label{c2de:pring}
Define a category $\PRing$ as follows. The objects are pairs $(\Lambda,\fm)$, 
where $\Lambda$ is a (small) ring and $\fm\subseteq\Lambda$ is a principal 
ideal generated by an element that is not a zero divisor. A morphism from 
$(\Lambda',\fm')$ to $(\Lambda,\fm)$ is a ring homomorphism 
$\phi\colon\Lambda\to\Lambda'$ satisfying $\phi(\fm)\subseteq\fm'$. Let 
$\Lambda_n=\Lambda/\fm^n$. We denote by $\PRing_{\tor}$ (resp.\ 
$\PRing_{\ltor}$) the full subcategory of $\PRing$ spanned by $(\Lambda,\fm)$ 
such that $(\dN,\Lambda_\bullet)$ belongs to $\Rind_\tor$ (resp.\ 
$\Rind_{\ltor}$). 

We have a natural functor $\PRing\to\Fun([1],\Rind)$ sending $(\Lambda,\fm)$ to $(\dN,\Lambda_\bullet)\xrightarrow{\pi}(\ast,\Lambda)$. In what follows, we simply write $\Lambda_\bullet$ for the ringed diagram $(\dN,\Lambda_\bullet)$.
\end{definition}

Let $(\Lambda,\fm)$ be an object of $\PRing$. In this section, we will show 
that adic complexes for $\Lambda_\bullet$ enjoy very nice properties. In 
particular, they are preserved by the six operations. We start by stating a 
new characterization of adic complexes. Let $X\in\Chpar{}$ be a higher Artin 
stack. Recall that $\pi$ is perfect in the sense of Lemma 
\ref{b2le:upperstar_pi} and the functor 
\[
\pi^*\colon\cD(X,\Lambda)\to\cD(X,\Lambda_\bullet)
\]
admits a left adjoint $\pi_!$ by Proposition \ref{b6pr:upperstar_pi} and a 
right adjoint $\pi_*$.

\begin{theorem}\label{7th:madic}
For every $\sfK\in \cD(X,\Lambda_\bullet)$, the following conditions are 
equivalent: 
\begin{enumerate}
  \item $\sfK\in \cD(X,\Lambda_\bullet)_\ra$;
  \item $\sfK$ is in the essential image of $\pi^*$;
  \item The adjunction map $\pi^*\pi_*\sfK\to \sfK$ is an equivalence;
  \item The adjunction map $\sfK\to \pi^*\pi_!\sfK$ is an equivalence.
\end{enumerate}
\end{theorem}

To prove the theorem, we need some preliminaries on $\pi^*$ and its adjoints. 
We decompose $\pi$ into 
\[(\dN,\Lambda_\bullet)\xrightarrow{(\id,\gamma)} (\dN,\Lambda_{\dN})\xrightarrow{\rho} (*,\Lambda),\]
where $\Lambda_{\dN}\colon \dN^\op\to \Ring$ is the constant functor of value 
$\Lambda$. Let $\varpi$ be a generator of $\fm$. The following is a standard 
fact about derived completion. See \cite{SP} for variants. 

\begin{proposition}\label{7pr:RpiLpi}
We have fiber sequences
\begin{align*}
\pi_!\pi^*\sfK&\to \sfK\to \Lambda[1/\varpi]\otimes_\Lambda \sfK,\\
\HOM(\Lambda[1/\varpi],\sfK) &\to \sfK \to \pi_*\pi^* \sfK,
\end{align*} 
functorial in $\sfK\in \cD(X,\Lambda)$. 
\end{proposition}

\begin{proof}
The two fiber sequences being adjoint to each other, it suffices to prove the 
second one. 

We have a short exact sequence 
\[0\to Z_\bullet \to \Lambda_{\dN} \to \Lambda_\bullet\to 0,\]
where $Z_\bullet =(\dotsm \to \Lambda \xrightarrow{\times \varpi} \Lambda 
\xrightarrow{\times \varpi} \Lambda)$. Applying $-\otimes_{\Lambda_\dN} 
\rho^*\sfK$, we obtain a fiber sequence 
\begin{equation}\label{7eq:RpiLpifiber}
Z_\bullet \otimes_{\Lambda_\dN} \rho^*\sfK \to \rho^*\sfK \to \pi^* \sfK.
\end{equation}

Let $f_0\colon X_0\to X$ be a smooth atlas and let $X_\bullet$ be a \v{C}ech 
nerve of $f_0$. Then we have $\sfK\simeq\lim f_{n*}f_n^*\sfK$ and 
$\pi^*\sfK\simeq\lim f_{n*}f_n^*\pi^*\sfK$, where $f_n\colon X_n\to X$ is the 
induced morphism. Since $\HOM(\Lambda[1/\varpi],-)$ commutes with $f_{n*}$ up 
to equivalence, we are reduced to proving the second fiber sequence for each 
$X_n$. By induction, we may assume that $X$ is a scheme. In this case, by 
Remark \ref{3re:Funequiv}, 
\[\rho_*(Z_\bullet \otimes_{\Lambda_\dN} \rho^*\sfK)\simeq \HOM(\colim (\Lambda \xrightarrow{\times \varpi} \Lambda 
\xrightarrow{\times \varpi} \Lambda\to \dotsm),\sfK)\simeq 
\HOM(\Lambda[1/\varpi],\sfK).
\] 
Moreover, we have $\rho_!\simeq e_0^*$ and $\rho_!\rho^*\simeq \id$. By 
adjunction, it follows that $\id\simeq \rho_*\rho^*$. We conclude by applying 
$\rho_*$ to \eqref{7eq:RpiLpifiber}. 
\end{proof}

\begin{corollary}\label{7co:Lpi0}
For $\sfK\in \cD(X,\Lambda)$, $\pi^*\sfK=0$ if and only if multiplication by 
$\varpi$ is an equivalence on $\sfK$.
\end{corollary}

\begin{proof}
If $\times \varpi$ is an equivalence on $\sfK$, then $e_n^*\pi^*\sfK=0$ for 
all $n$. Conversely,  if $\pi^*\sfK=0$, then, by Proposition 
\ref{7pr:RpiLpi}, $\sfK\simeq \HOM(\Lambda[1/\varpi],\sfK)$ and it suffices 
to remark that $\times \varpi$ is an isomorphism on $\Lambda[1/\varpi]$. 
\end{proof}

\begin{corollary}\label{7co:RpiLpi}
The natural transformations 
\begin{gather*}
  \pi^*\to \pi^*\circ \pi_*\circ \pi^*,\quad \pi^*\circ 
    \pi_*\circ \pi^*\to \pi^*,\\
  \pi_*\to \pi_*\circ \pi^*\circ \pi_*,\quad  \pi_*\circ \pi^*\circ \pi_*
  \to \pi_*,
\end{gather*} 
induced by the unit $\epsilon\colon \id\to \pi_*\pi^*$ and the counit $\eta\colon 
\pi^*\pi_*\to \id$ of the adjunction $\pi^*\dashv \pi_*$ are natural 
equivalences. 
\end{corollary}

\begin{proof}
The composition of the two natural transformations on the first line (resp.\ 
second) is equivalent to the identity. Thus it suffices to show that the two 
natural transformations induced by $\epsilon$ are natural equivalences. For 
every $\sfK\in \cD(X,\Lambda)$, $\pi^*\HOM(\Lambda[1/\varpi],\sfK)=0$ and 
$\pi^*\epsilon_\sfK$ is an equivalence by Proposition \ref{7pr:RpiLpi}. For 
$\sfL\in \cD(X,\Lambda_\bullet)$, $\HOM(\Lambda[1/\varpi],\pi_*\sfL)\simeq 
\pi_*\HOM(\pi^*\Lambda[1/\varpi],\sfL)=0$ and $\epsilon_{\pi_*\sfL}$ is an 
equivalence by Proposition \ref{7pr:RpiLpi}. 
\end{proof}

\begin{remark}\label{7re:perfect}
Let $\Lambda\to \Lambda'$ be a ring homomorphism and let $t\colon 
(*,\Lambda')\to (*,\Lambda)$ be the corresponding morphism in $\Rind$. Then 
$t^*=\Lambda' \otimes_\Lambda -$ and $t_*$ is restriction of scalars. If $t$ 
is perfect, then $t^*$ admits a left adjoint, $t_!$. In this case, we have an 
equivalence $t^*t_!-\simeq (t^*t_!\Lambda') \otimes_{\Lambda'} -$. More 
precisely, the map $t^*t_! \sfK\to (t^*t_!\Lambda') \otimes_{\Lambda'} \sfK$, 
adjoint to the map $\sfK\to (t^*t_*t^*t_!\Lambda') \otimes_{\Lambda'} 
\sfK\simeq t^*t_*((t^*t_!\Lambda') \otimes_{\Lambda'} \sfK)$ is an 
equivalence.  Indeed, $t^*t_*-\simeq (\Lambda'\otimes_\Lambda 
\Lambda')\otimes_{\Lambda'} -$ and its left adjoint $t^*t_!$ is equivalent to 
$(\Lambda'\otimes_\Lambda \Lambda')\spcheck\otimes_{\Lambda'}-$ (which is 
also a right adjoint of $t^*t_*$). 
\end{remark}

\begin{proof}[Proof of Theorem \ref{7th:madic}]
(4)$\implies$(2). Obvious.

(2)$\implies$(1). Since $\cD(X,\Lambda)=\cD(X,\Lambda)_\ra$, the image of 
$\pi^*$ is contained in $\cD(X,\Lambda_\bullet)_\ra$.

(1)$\implies$(4).  We denote by $\epsilon\colon \id\to \pi^*\pi_!$ the 
adjunction map. We will show that $\epsilon_\sfK$ is an equivalence for every 
$\sfK\in \cD(X,\Lambda_\bullet)_\ra$. Consider the inclusions 
\[(\{n\},\Lambda_n)\xrightarrow{s_n} 
(\dN_{\le n},\Lambda_{\bullet,\le n})\xrightarrow{i_n} (\dN,\Lambda_\bullet).
\] 
We have $\sfK\simeq \colim_{n\in \dN} i_{n!}i_n^*\sfK\simeq \colim_{n\in \dN} 
e_{n!}\sfK_n$. Here in the second equivalence we used the equivalence 
$i_n^*\sfK\simeq s_{n!}\sfK_n$, which follows from the assumption that $\sfK$ 
is adic. We 
have a diagram 
\[\xymatrix{i_n^*\sfK\ar[rrr]^{i_n^*\epsilon_{e_{n!}\sfK_n}}\ar[d] &&& i_n^*\pi^*\pi_!e_{n!}\sfK_n\ar[d]\\
i_n^*(e_{n!}\Lambda_n \otimes_{\Lambda_\bullet} \sfK) \ar[rrr]^{i_n^*(\epsilon_{e_{n!}\Lambda_n}\otimes_{\Lambda_\bullet}\sfK)} 
&&& i_n^*(\pi^*\pi_!e_{n!}\Lambda_n\otimes_{\Lambda_\bullet} \sfK)}
\]
where the vertical arrows are equivalences. The vertical arrow on the right 
is given by the fact that the source and target are both adic and the 
equivalence $e_n^*\pi^*\pi_!e_{n!}\sfK_n\simeq t_n^*t_{n!}\sfK_n\simeq 
t_n^*t_{n!}\Lambda_n \otimes_{\Lambda_n} \sfK_n$ in Remark \ref{7re:perfect}, where $t_n\coloneqq\pi \circ e_n\colon (\{n\},\Lambda_n)\to (*,\Lambda)$. 
Restricting the diagram to $(\{m\},\Lambda_m)$ and taking colimit for $n\in 
\dN_{\ge m}$, we see that $e_m^*\epsilon_{\sfK}$ is equivalent to 
$e_m^*(\epsilon_{\Lambda_\bullet}\otimes_{\Lambda_\bullet}\sfK)$. Thus, it 
remains to show that $\epsilon_{\Lambda_\bullet}$ is an equivalence. By 
Corollary \ref{7co:RpiLpi}, the adjunction map $\pi^* \to \pi^* \circ \pi_! 
\circ \pi^*$ is an equivalence. In particular, 
$\epsilon_{\Lambda_\bullet}=\epsilon_{\pi^*\Lambda}$ is an equivalence. 

(3)$\implies$(2). Obvious.

(2)$\implies$(3). This follows immediately from Corollary \ref{7co:RpiLpi}.
\end{proof}

\begin{corollary}\label{7co:normlocal}    
The inclusion functor $\cD(X,\Lambda_\bullet)_\ra\to \cD(X,\Lambda_\bullet)$ 
admits a left adjoint given by $\pi^*\circ\pi_!$ and a right adjoint given by 
$\pi^*\circ \pi_*$. 
\end{corollary}

\begin{corollary}\label{7co:adictor}
For any $\sfK\in \cD(X,\Lambda)$, the following conditions are equivalent:
\begin{enumerate}
\item $\sfK$ is in the essential image of $\pi_!$;
\item The adjunction map $\pi_!\pi^* \sfK\to \sfK$ is an equivalence;
\item $\Lambda[1/\varpi]\otimes_\Lambda \sfK=0$.
\end{enumerate}
We let $\cD(X,\Lambda)_{\tor}\subseteq \cD(X,\Lambda)$ denote the full 
subcategory spanned by $\sfK$ satisfying the above conditions. Then $\pi^*$ 
and $\pi_!$ induce equivalences between $\cD(X,\Lambda)_{\tor}$ and 
$\cD(X,\Lambda_\bullet)_\ra$. 
\end{corollary}

Objects of $\cD(X,\Lambda)_\tor$ are said to be \emph{$\fm^\infty$-torsion} 
objects. By (3), $\sfK\in \cD(X,\Lambda)_\tor$ if and only if $\rH^i\sfK\in 
\cD(X,\Lambda)_\tor$ for all $i\in \dZ$. 

\begin{proof}
We have (1)$\iff$(2) by Corollary \ref{7co:RpiLpi} and (2)$\iff$(3) by 
Proposition \ref{7pr:RpiLpi}. The last assertion follows from Theorem 
\ref{7th:madic}. 
\end{proof}

\begin{remark}
Dually, we let 
    $\cD(X,\Lambda)_\compl\subseteq \cD(X,\Lambda)$ denote the essential 
    image of the localization functor $\pi_*\circ \pi^*\colon \cD(X,\Lambda)\to \cD(X,\Lambda)$, 
    which is also the essential image of 
    $\pi_*$. The functors $\pi^*$  and $\pi_*$ induce equivalences between 
    $\cD(X,\Lambda)_\compl$ and $\cD(X,\Lambda_\bullet)_\ra$.
\end{remark}

We have seen that $f^*$, $f_!$, and $-\otimes_{\Lambda_\bullet} -$ preserve 
adic complexes in Remark \ref{c1re:limit}. We can now prove that the other 
three operations preserve $\fm$-adic complexes, extending Proposition 
\ref{7pr:preserve_adic}. 

\begin{proposition}\label{c2pr:preserve_adic2}
Let $f\colon Y\to X$ be a morphism of higher Artin stacks and let 
$(\Lambda,\fm)$ be an object of $\PRing$. Then 
\begin{enumerate}
  \item $f_*$ restricts to $f_{*\ra}\colon 
      \cD(Y,\Lambda_\bullet)_\ra\to\cD(X,\Lambda_\bullet)_\ra$. 

  \item $\HOM_X$ restricts to $\HOM_X^{\ra}\colon 
      (\cD(X,\Lambda_\bullet)_{\ra})^\op \times\cD(X,\Lambda_\bullet)_\ra\to 
      \cD(X,\Lambda_\bullet)_\ra$. 

  \item $f^!$ restricts to $f^{!\ra}\colon 
      \cD(X,\Lambda_\bullet)_\ra\to\cD(Y,\Lambda_\bullet)_\ra$ if $f$ is morphism of 
      $\Chpar{}_\Box$ that is locally of finite type and $(\Lambda,\fm)$ is 
      an object of $\PRing_{\ltor}$. 
\end{enumerate}
\end{proposition}

\begin{proof}
The assertions for $f_*$ and $\HOM_X$ follow from  the commutation of these 
two functors with $\pi^*$ (Propositions \ref{b6pr:upperstar_pi} and 
\ref{7pr:adicHOM}). By Poincar\'e duality, the assertion for $f^!$ reduces to 
the case where $f$ is a closed immersion of schemes. This case follows again 
from the commutation of $f^!$ with $\pi^*$ (Proposition 
\ref{b6pr:lowersh_pi}). 
\end{proof}

Next we discuss a new t-structure on $\cD(X,\Lambda_\bullet)_\ra$. Recall that we already have the usual t-structure $(\cD^{\leq n}(X,\Lambda_\bullet)_\ra,\cD^{\geq n}(X,\Lambda_\bullet)_\ra)$ on $\cD(X,\Lambda_\bullet)_\ra$ from \Sec\ref{c1ss:adic_properties}.

Put $\cD^{\le n}(X,\Lambda)_\tor\coloneqq\cD(X,\Lambda)_\tor\cap \cD^{\le 
n}(X,\Lambda)$ and $\cD^{\ge n}(X,\Lambda)_\tor\coloneqq\cD(X,\Lambda)_\tor\cap 
\cD^{\ge n}(X,\Lambda)$. Since truncation functors on $\cD(X,\Lambda)$ 
preserve $\cD(X,\Lambda)_\tor$, $(\cD^{\le 1}(X,\Lambda)_\tor,\cD^{\ge 
1}(X,\Lambda)_\tor)$ is a t-structure on $\cD(X,\Lambda)_\tor$. Via the 
equivalence of $\infty$-categories in Corollary \ref{7co:adictor}, we obtain 
a t-structure $(\pi^*\cD^{\le 1}(X,\Lambda)_\tor,\pi^*\cD^{\ge 
1}(X,\Lambda)_\tor)$ on $\cD(X,\Lambda_\bullet)_\ra$, with truncation 
functors given by $\pi^*\tau^{\le 1}\pi_!$ and $\pi^*\tau^{\ge 1}\pi_!$. We 
denote the above t-structure by $(\cD_!^{\le 
0}(X,\Lambda_\bullet)_\ra, \cD_!^{\ge 0}(X,\Lambda_\bullet)_\ra)$. 

\begin{proposition}\label{7pr:adictcompare}
Let $X$ be a higher Artin stacks and let $(\Lambda,\fm)$ be an object of $\PRing$. 
\begin{enumerate}
\item The t-structure $(\cD_!^{\le 0}(X,\Lambda_\bullet)_\ra, \cD_!^{\ge 
    0}(X,\Lambda_\bullet)_\ra)$ is right complete. 
\item We have 
    \begin{align*}
    \cD^{\le 0}(X,\Lambda_\bullet)_\ra &\subseteq 
    \cD_!^{\le 0}(X,\Lambda_\bullet)_\ra \subseteq \cD^{\le 
    1}(X,\Lambda_\bullet)_\ra, \\
    \cD^{\ge 
    1}(X,\Lambda_\bullet)_\ra &\subseteq \cD_!^{\ge 
    0}(X,\Lambda_\bullet)_\ra \subseteq \cD(X,\Lambda_\bullet)_\ra^{\ge 
    0}\subseteq \cD^{\ge 0}(X,\Lambda_\bullet)_\ra.
    \end{align*}
    Here, $\cD(X,\Lambda_\bullet)_\ra^{\ge*}$ is introduced in Lemma \ref{c1le:p6}.
\end{enumerate} 
\end{proposition}

\begin{proof}
(1) The right completeness follows from the criterion \cite[Proposition 
1.2.1.19]{HA}: $\pi^*\cD^{\ge 1}(X,\Lambda)_\tor$ is stable under countable 
coproducts and $\bigcap_n \pi^*\cD^{\ge n}(X,\Lambda)_\tor$ consists of zero 
objects. 

(2) Since $\pi^*\colon \cD(X,\Lambda)\to \cD(X,\Lambda_\bullet)$ has 
t-amplitude contained in $[-1,0]$ for the usual t-structures, we have 
$\cD_!^{\le 
    0}(X,\Lambda_\bullet)_\ra \subseteq \cD^{\le 1}(X,\Lambda_\bullet)_\ra$ and $\cD_!^{\ge 
    0}(X,\Lambda_\bullet)_\ra \subseteq \cD(X,\Lambda_\bullet)_\ra^{\ge 0}$. 
The inclusion $\cD(X,\Lambda_\bullet)_\ra^{\ge 0}\subseteq \cD^{\ge 
    0}(X,\Lambda_\bullet)_\ra$ is Lemma \ref{c1le:p6}. The other inclusions 
    follow by orthogonality.
\end{proof}

\begin{remark}\label{c2re:bound}
It follows from Proposition \ref{7pr:adictcompare}(2) that $\cD^{(+)}(X,\Lambda_\bullet)_\ra=\cD(X,\Lambda_\bullet)^{(+)}_\ra$.
\end{remark}

\begin{corollary}\label{7co:adict}
The usual t-structure on $\cD(X,\Lambda_\bullet)_\ra$ is right complete.
\end{corollary}

\begin{proof}
This follows immediately from Proposition \ref{7pr:adictcompare}. 
\end{proof}

The t-structure on $\cD(X,\Lambda)_\tor$ corresponding to the usual 
t-structure on $\cD(X,\Lambda)_\ra$ can be described as follows. We say that 
an object $\cF$ of $\cD^\heartsuit(X,\Lambda)_\tor$ is $\fm$-divisible if 
$\cF\xrightarrow{\times \varpi} \cF$ is a surjection or, equivalently, if 
$\rH^0\pi^*\cF=0$. 

\begin{corollary}\label{7co:adictdiv}
Let $\sfK\in \cD(X,\Lambda)_\tor$.
\begin{enumerate}
\item $\pi^*\sfK\in \cD^{\le 0}(X,\Lambda_\bullet)_\ra$ if and only if 
    $\sfK\in \cD^{\le 1}(X,\Lambda)_\tor$ and $\rH^1\sfK$ is 
    $\fm$-divisible. 
\item $\pi^*\sfK\in \cD^{\ge 0}(X,\Lambda_\bullet)_\ra$ if and only if 
    $\sfK\in \cD^{\ge 0}(X,\Lambda)_\tor$ and $\rH^0\sfK$ contains no 
    nonzero $\fm$-divisible sub-object. 
\end{enumerate}
\end{corollary}

In other words, the usual t-structure on $\cD(X,\Lambda)_\ra$ corresponds to 
the tilt (in the sense of \cite{HRS}, \cite{Polishchuk}) of the usual 
t-structure on $\cD(X,\Lambda)_\tor$ with respect to the torsion pair 
$(\cT,\cT^\perp)$, where $\cT$ is the class of $\fm$-divisible objects in 
$\cD^\heartsuit(X,\Lambda)_\tor$. See also \cite{BBD}*{\Sec3.3}.

\begin{proof}
(1) Under the condition $\sfK\in \cD^{\le 1}(X,\Lambda)_\tor$, 
$\rH^1\pi^*\sfK\simeq \rH^0\pi^*\rH^1\sfK$ is zero if and only if 
$\pi^*\sfK\in \cD^{\le 0}(X,\Lambda_\bullet)_\ra$. Thus it suffices to show 
that $\cD^{\le 0}(X,\Lambda_\bullet)_\ra=\cD_!^{\le 
0}(X,\Lambda_\bullet)_\ra\cap \cD^{\le 0}(X,\Lambda_\bullet)_\ra$, which 
follows from Proposition \ref{7pr:adictcompare}(2). 

(2) By Proposition \ref{7pr:adictcompare}(2), $\pi^*\sfK\in \cD^{\ge 
0}(X,\Lambda_\bullet)_\ra$ implies $\sfK\in \cD^{\ge 0}(X,\Lambda)_\tor$. 
Thus we may assume  $\sfK\in \cD^{\ge 0}(X,\Lambda)_\tor$. Then $\pi^*\sfK\in 
\cD^{\ge 0}(X,\Lambda_\bullet)_\ra$ if and only if for every $\sfL\in 
\cD(X,\Lambda_\bullet)_\ra$ satisfying $\pi^*\sfL\in \cD^{\le 
-1}(X,\Lambda_\bullet)_\ra$, we have 
$\Hom_{\rh\cD(X,\Lambda)_\tor}(\sfL,\sfK)\simeq 
\Hom_{\rh\cD(X,\Lambda_\bullet)_\ra}(\pi^*\sfK,\pi^*\sfL)=0$. By (1), 
$\pi^*\sfL\in \cD^{\le -1}(X,\Lambda_\bullet)_\ra$ if and only if $\sfL\in 
\cD^{\le 0}(X,\Lambda)_\tor$ and $\rH^0\sfL$ is $\fm$-divisible. In this 
case, 
$\Hom_{\rh\cD(X,\Lambda)_\tor}(\sfL,\sfK)\simeq\Hom_{\rh\cD^\heartsuit(X,\Lambda)_\tor}(\rH^0\sfL,\rH^0\sfK)$. 
Assertion (2) follows. 
\end{proof}

The following result is obvious.

\begin{proposition}
For every morphism $f\colon Y\to X$ of higher Artin stacks, $f^{*\ra}\colon 
\cD(X,\Lambda_\bullet)_\ra\to \cD(Y,\Lambda_\bullet)_\ra$ is t-exact for the 
t-structures $(\cD_!^{\le 0}, \cD_!^{\ge 0})$.  
\end{proposition}

\begin{proposition}
Let $f\colon Y\to X$ be a morphism higher Artin stacks. Assume that one of 
the following conditions hold:
\begin{enumerate}
\item $f=i$ is a closed immersion; or
\item $f$ is locally of finite type and $f$ is in $\Chpar{}_\Box$  and 
    $(\Lambda,\fm)$ is in $\PRing_{\ltor}$.
\end{enumerate}
Then $f^{*\ra}\colon \cD(X,\Lambda_\bullet)_\ra\to 
\cD(Y,\Lambda_\bullet)_\ra$ is t-exact for the usual t-structures.  
\end{proposition}

\begin{proof}
(1) By Corollary \ref{7co:adictdiv}, it suffices to show that for $\cF\in 
\cD^\heartsuit(X,\Lambda)_\tor$ that contains no nonzero $\fm$-divisible 
sub-object, $i^*\cF$ satisfies the same property. If $\cG$ is an 
$\fm$-divisible sub-object of $i^*\cF$, then we have monomorphisms $i_*\cG\to 
i_*i^*\cG\to \cF$, which implies that $\cG=0$. 

(2) The case $f$ smooth being already known, we reduce easily to (1). 
\end{proof}

\begin{example}
Without the assumption that $f\colon Y\to X$ is locally of finite type, 
$f^{*\ra}\colon \cD(X,\Lambda_\bullet)_\ra\to \cD(Y,\Lambda_\bullet)_\ra$ is 
in general not t-exact for the usual t-structures. Take $X$ to be the 
spectrum of an absolutely integrally closed valuation ring with valuation 
group $\bigoplus_{n=-\infty}^0\dQ$, ordered lexicographically. Take $f$ to be 
the inclusion of the generic point $Y$ of $X$. The open subsets of $X$ form a 
chain $X=U_0\supsetneq U_{-1}\supsetneq U_{-2}\supsetneq\dotsm \supsetneq 
\emptyset$. Assume that $\fm\subsetneq \Lambda$. Consider the 
$\fm^\infty$-torsion sheaf $\cF\in \Mod(X_{\et},\Lambda)$ given by 
$\cF(U_n)=\varpi^{n}\Lambda/\Lambda$ for all $n\in \dN_{\le 0}$, with 
restriction maps given by the inclusions. The only $\fm$-divisible submodule 
of $\cF$ is zero. However, $f^*\cF=\Lambda[\varpi^{-1}]/\Lambda$ is 
$\fm$-divisible. Thus, by Corollary \ref{7co:adictdiv}, $\pi^*\cF\in 
\cD^\heartsuit(X,\Lambda_\bullet)_\ra$ and $f^{*\ra}\pi^*\cF\simeq 
\pi^*f^*\cF\in \cD^\heartsuit(Y,\Lambda_\bullet)_\ra[1]$. 
\end{example}

The results of this section also hold for $X$ a topos with enough points. 

\begin{remark}
Let $X$ be a replete topos \cite{BS}. Since $\sfK\in \cD(X,\Lambda)$ is derived 
complete if and only if each $\rH^q\sfK$ is, $(\cD^{\le 
0}(X,\Lambda)_\compl, \cD^{\ge 0}(X,\Lambda)_\compl)$ is a t-structure on 
$\cD(X,\Lambda)_\compl$, where $\cD^{\le 
0}(X,\Lambda)_\compl=\cD(X,\Lambda)_\compl\cap \cD^{\le 0}(X,\Lambda)$ and  
$\cD^{\ge 0}(X,\Lambda)_\compl=\cD(X,\Lambda)_\compl\cap \cD^{\ge 
0}(X,\Lambda)$. Moreover, for every $\sfL\in \cD^{\le 
0}(X,\Lambda_\bullet)_\ra$, $\rH^0\sfL$ is a surjective system. It follows 
that $\pi_*\colon \cD(X,\Lambda_\bullet)_\ra\to \cD(X,\Lambda)_\compl$ is 
t-exact. Thus  $\cD^{\le 0}(X,\Lambda)_\ra=\pi^*\cD^{\le 
0}(X,\Lambda)_\compl$  and $\cD^{\ge 0}(X,\Lambda)_\ra=\pi^*\cD^{\ge 
0}(X,\Lambda)_\compl$. 
\end{remark}

\subsection{Compatibility with Laszlo--Olsson (adic coefficients)}
\label{c2ss:compatibility}

We prove the compatibility between our adic formalism and Laszlo--Olsson's \cite{LO2}, under their assumptions.

Put $\Box=\{\ell\}$ where $\ell$ is a rational prime. Let $\dS$ be a $\Box$-coprime scheme satisfying that
\begin{enumerate}
  \item it is affine excellent and finite-dimensional;

  \item for every scheme $X$ of finite type over $\dS$, there exists an \'{e}tale cover $X'\to X$ such that $\r{cd}_{\ell}(Y)<\infty$ for every scheme $Y$ \'{e}tale and of finite type over $X'$;\footnote{According to our notation, $\r{cd}_{\ell}$ is nothing but $\r{cd}_{\d{F}_{\ell}}$.}

  \item it admits a global dimension function and we fix such a function (see Remark \ref{b6re:dimension}).
\end{enumerate}

Recall from \Sec\ref{b6ss:compatibility} that we denote $\Chplmb{\dS}$ the full subcategory of $\Chplft{\dS}$ spanned by ($1$-)Artin stacks locally of finite type over $S$, with quasi-compact and separated diagonal.

For the coefficient, we fix a complete discrete valuation ring $\Lambda$ with 
the maximal ideal $\fm$ and residue characteristic $\ell$ such that 
$\Lambda=\lim_n\Lambda_n$, where $\Lambda_n=\Lambda/\f{m}^{n+1}$, as in 
\cite{LO2}. In particular, $(\Lambda,\fm)$ is an object of $\PRing_{\ltor}$ 
in our notation. 

From the definition of $\cD(\cX,\Lambda_\bullet)_{\ra,\rc}$, which is the full subcategory of $\cD(\cX,\Lambda_\bullet)$ spanned by constructible adic complexes, \cite{LO2}*{Proposition 3.0.10, Theorem 3.0.14, Proposition 3.0.18}, and Proposition \ref{b5pr:descent_lisse}, we have a canonical equivalence between categories
\begin{align}\label{c2eq:compatibility}
\rh\cD(\cX,\Lambda_\bullet)_{\ra,\rc}\simeq\bD_c(\cX,\Lambda),
\end{align}
where the latter one is defined in \cite{LO2}*{Definition 3.0.6}.

\begin{proposition}
For a morphism $f\colon\cY\to\cX$ of finite type in $\Chplmb{\dS}$, there are natural isomorphisms of functors:
\begin{align*}
\rh f^{*\ra}\simeq\rL f^*&\colon\bD_c(\cX,\Lambda)\to\bD_c(\cY,\Lambda),\\
\rh f_{*\ra}\simeq\rR f_*&\colon\bD_c^{(+)}(\cY,\Lambda)\to\bD_c^{(+)}(\cX,\Lambda),\\
\rh f_{!\ra}\simeq\rR f_!&\colon\bD_c^{(-)}(\cY,\Lambda)\to\bD_c^{(-)}(\cX,\Lambda),\\
\rh f^{!\ra}\simeq\rR f^!&\colon\bD_c(\cX,\Lambda)\to\bD_c(\cY,\Lambda),\\
\rh(-\atimes_\cX-)\simeq(-)\overset{\bL}{\otimes}(-)&\colon
\bD_c^{(-)}(\cX,\Lambda)\times\bD_c^{(-)}(\cX,\Lambda)\to\bD_c^{(-)}(\cX,\Lambda),\\
\rh\HOM^\ra_\cX\simeq\b{\s{R}hom}_\Lambda&\colon
\bD_c^{(-)}(\cX,\Lambda)^{\r{opp}}\times\bD_c^{(+)}(\cX,\Lambda)\to\bD_c^{(+)}(\cX,\Lambda)
\end{align*}
that are compatible with \eqref{c2eq:compatibility}. Here, on the right side of the equivalences, we adopt notation from \cite{LO2}*{\Sec1}.
\end{proposition}

By Lemma \ref{c2le:constructible_adic} and Proposition \ref{c2pr:constructible_adic}, the six operations on the left side in the above proposition do have the correct range.

\begin{proof}
The isomorphisms for tensor product, internal Hom and $f^*$ simply follow from the same definitions here and in \cite{LO2}*{\Sec4, \Sec6}. The isomorphism for $f_*$ follows from the adjunction and that for $f^*$ (Proposition \ref{b6pr:compatibility}). The isomorphism for $f_!$ will follows from the adjunction and that for $f^!$ which will be proved below.

By the compatibility of dualizing complexes and the isomorphisms for internal Hom, we have natural isomorphisms $\rD^\ra_\cX\simeq\rD_\cX$ and $\rD^\ra_\cY\simeq\rD_\cY$ (Definition \ref{c1de:dualizing}). Therefore, by \cite{LO2}*{Definition 9.1}, to show the isomorphism for $f^!$, we only need to show that our functors satisfy
\[
\rh f^{!\ra}\simeq\rD^\ra_\cY\circ\rh f^{*\ra}\circ\rD^\ra_\cX.
\]
Note that for every $\sfK\in\bD_c(\cX,\Lambda)$, the biduality map $\delta^\ra_{\Omega_\cX}(\sfK)\colon\sfK\to\rD^\ra_\cX(\rD^\ra_\cX(\sfK))$ is an isomorphism by \cite{LO2}*{Theorem 7.3.1}. Thus, we have
\begin{align*}
\rh f^{!\ra}\sfK&\simeq\rh f^{!\ra}(\rD^\ra_\cX(\rD^\ra_\cX(\sfK)))\\
&=\rh f^{!\ra}(\rh\HOM^\ra_\cX(\rh\HOM^\ra_\cX(\sfK,\Omega_\cX),\Omega_\cX)) \\
&\simeq \rh\fR_\cY(\rh f^!(\rh\HOM_\cX(\rh\HOM^\ra_\cX(\sfK,\Omega_\cX),\Omega_\cX)))  \\
&\simeq \rh\fR_\cY(\rh\HOM_\cY(\rh f^*(\rh\HOM^\ra_\cX(\sfK,\Omega_\cX),f^!\Omega_\cX)))  \\
&\simeq \rh\HOM^\ra_\cY(\rh f^{*\ra}(\rh\HOM^\ra_\cX(\sfK,\Omega_\cX),\Omega_\cY))\\
&=\rD^\ra_\cY(\rh f^{*\ra}(\rD^\ra_\cX(\sfK))).
\end{align*}
The proposition is proved.
\end{proof}

\begin{remark}
In view of the above compatibility, we have proven all the expected 
properties of the six operations, in particular the Base Change Theorem, in 
the adic case of Laszlo--Olsson \cite{LO2}. 
\end{remark}

\section{Perverse t-structures}
\label{c3}

In this chapters, we study perverse t-structures for stacks. In 
\Sec\ref{c3ss:perversity_evaluation}, we define the notion of perversity 
evaluations on stacks, to which we will associate t-structures. In 
\Sec\ref{c3ss:perverse_t}, we construct the perverse t-structure with respect 
to a perverse evaluation. In \Sec\ref{c3ss:adic_perverse_t}, we construct 
perverse t-structures in the adic case.

\subsection{Perversity evaluations}
\label{c3ss:perversity_evaluation}

We first recall various notion of perversity functions on schemes, introduced by Gabber.

\begin{definition}
Let $X$ be a scheme in $\Schqcs$. Denote by $|X|$ the underlying topological space of $X$.
\begin{enumerate}
  \item Following \cite{Gabber}*{{\Sec}1}, a \emph{weak perversity function} on $X$ is a function
      \[
      p\colon|X|\to\dZ\cup\{+\infty\}
      \]
      such that for every $n\in\dZ$, the set $\{x\in|X|\res p(x)\geq n\}$ is ind-constructible.

  \item An \emph{admissible perversity function} on $X$ is a weak perversity function $p$ such that for every $x\in|X|$, there is an open dense subset $U\subseteq\overline{\{x\}}$ satisfying the condition that for every $x'\in U$, $p(x')\leq p(x)+2\codim(x',x)$.

  \item A \emph{codimension perversity function} on $X$ is a function $p\colon|X|\to\dZ\cup\{+\infty\}$ such that for every immediate \'etale specialization $x'$ of $x$, $p(x')=p(x)+1$.
\end{enumerate}
\end{definition}

\begin{remark}
We have the following remarks concerning perversity functions.
\begin{enumerate}
  \item A weak perversity function on a locally Noetherian scheme is locally bounded from below.

  \item An admissible perversity function on a scheme that is locally Noetherian and of finite dimension is locally bounded from above.

  \item A codimension perversity function on a scheme is \emph{not} necessarily a weak perversity function.

  \item A codimension perversity function that is also a weak perversity function is an admissible perversity function. If $X$ is locally
      Noetherian, then a codimension perversity function is a weak perversity function and hence an admissible perversity function.

  \item A codimension perversity function is the opposite of a dimension function in the sense of \cite{TG}*{Expos\'{e} XIV, D\'{e}finition 2.1.8}. If $X$ is locally Noetherian and admits a dimension function, then $X$ is universally catenary by \cite{TG}*{Expos\'{e} XIV, Proposition 2.2.6}. In this case, immediate \'etale specializations coincide with immediate Zariski specializations \cite{TG}*{Expos\'{e} XIV, Proposition 2.1.4}.

  \item If $p$ is a weak (resp.\ admissible, resp.\ codimension) perversity function on $X$ and $d\colon|X|\to\dZ\cup\{+\infty\}$ is a locally constant function, then $p+d$ is a weak (resp.\ admissible, resp.\ codimension) perversity function on $X$.
\end{enumerate}
\end{remark}

\begin{definition}\label{c3de:moderate}
A function $q\colon\dN\to\dZ$ or $q\colon\dZ\to\dZ$ is called \emph{moderate} if $q$ and $\b{2}-q$ are both increasing. Here, $\b{2}$ is the function $\b{2}(x)=2x$ and similarly for $\b{0}$ and $\b{1}$, which will be used below.
\end{definition}

\begin{notation}\label{c3no:pullback}
Let $f\colon Y\to X$ be a morphism of schemes in $\Schqcs$. For a function 
$p\colon|X|\to\dZ\cup\{+\infty\}$, we define the pullback 
$f^*_{\b{0}}p\colon|Y|\to\dZ\cup\{+\infty\}$ by $f_{\b{0}}^*p=p\circ f$. If 
$f$ is locally of finite type and $q\colon\dN\to\dZ$ is a function, we define 
more generally the $q$-weighted pullback 
$f^*_qp\colon|Y|\to\dZ\cup\{+\infty\}$ by 
\[
(f^*_qp)(y)=p(f(y))-q(\trdeg[k(y):k(f(y))])
\]
for every point $y\in|Y|$.
\end{notation}

In the following two lemmas we list some stability properties of weighted 
pullbacks of perversity functions.

\begin{lem}\label{c3le:perverse_zero}
Let $f\colon Y\to X$ be a morphism (resp.\ \'etale morphism, resp.\ \'etale morphism) of schemes in $\Schqcs$. If $p$ is a weak (resp.\ admissible, resp.\ codimension) perversity function on $X$, then $f^*_{\b{0}}p$ is a weak (resp.\ admissible, resp.\ codimension) perversity function on $Y$.
\end{lem}

\begin{proof}
We have $f^*_{\b{0}}p=p\circ f$. If $p$ is a weak perversity function, then
\[
\{y\in|Y| \mid f^*_{\b{0}}p(y)\geq n\}=f^{-1}(\{x\in|X| \mid p(x) \geq n\})
\]
is ind-constructible by \cite{EGAIV}*{Proposition 1.9.5(vi)}. The other two cases follow from the trivial fact that $\codim(y',y)=\codim(f(y'),f(y))$ for every specialization $y'$ of $y$ on $Y$.
\end{proof}

\begin{lem}\label{c3le:perverse_pullback}
Let $f\colon Y\to X$ be a morphism of locally Noetherian schemes in $\Schqcs$, locally of finite type.
\begin{enumerate}
  \item Let $p$ be a weak perversity function on $X$ and $q\colon\dN\to\dZ$ an increasing function. Then $f^*_qp$ is a weak perversity function on $Y$.

  \item Let $p$ be an admissible perversity function on $X$ an $q\colon\dN\to\dZ$ a moderate function (Definition \ref{c3de:moderate}). Then $f^*_qp$ is an admissible perversity function on $Y$.

  \item Let $p$ be a codimension perversity function on $X$. Then $f^*_{\b{1}}p$ is a codimension perversity function on $Y$.
\end{enumerate}
\end{lem}

\begin{proof}
For a locally closed subset $Z$ of a scheme $X$, we endow it with the reduced induced subscheme structure. For every point $y\in|Y|$, let
$U_y\subset \overline{\{y\}}$ be a nonempty open subset such that the induced morphism $f_y\colon\overline{\{y\}}\to\overline{\{f(y)\}}$ is
flat. Such an open subset exists by \cite{EGAIV}*{Th\'{e}or\`{e}me 6.9.1}. For $y'\in U_y$, we have
\begin{align*}
\delta(y',y)&\coloneqq\trdeg[k(y):k(f(y))]-\trdeg[k(y'):k(f(y'))]\\
&=\codim(y',U_y\times_{\overline{f(y)}}\{f(y')\})\geq 0
\end{align*}
by \cite{EGAIV}*{Proposition 14.3.13} since $f_y$ is universally open \cite{EGAIV}*{Th\'{e}or\`{e}me 2.4.6}.

For (1), we know that for every $n\in\dZ$,
\[
\{y\in|Y|\res f^*_qp(y)\geq n\}=\bigcup_{y\in|Y|}f^{-1}\left\{x\in|X|\mid p(x)\geq n+q(\trdeg[k(y):k(f(y))])\right\}\cap U_y
\]
is a union of ind-constructible subsets, and hence is itself ind-constructible. In other words, $f_q^*p$ is a weak perversity function.

For (2), let $y\in|Y|$ be a point; put $x=f(y)$; and let $U_x\subset\overline{\{x\}}$ be a dense open subset such that $p(x')\leq p(x)+2\codim(x',x)$ for every $x'\in U_x$. We prove that for $y'\in U_y\cap f^{-1}(U_x)$,
\[
f^*_qp(y')\leq f^*_qp(y)+2\codim(y',y)
\]
holds. We may assume $p(x)\in\dZ$. Put $x'=f(y')$. We have
\[
f^*_qp(y)=p(x)-q(\trdeg[k(y):k(x)])
\]
and
\[
f^*_qp(y')=p(x')-q(\trdeg[k(y'):k(x')]).
\]
Moreover, by \cite{EGAIV}*{Corollaire 6.1.2}, we have
\[
\delta(y',y)=\codim(y',y)-\codim(x',x).
\]
Therefore, we have
\begin{align*}
f^*_qp(y')-f^*_qp(y)&=p(x')-p(x)+q(\trdeg[k(y):k(x)])-q(\trdeg[k(y'):k(x')])\\
&\leq2\codim(x',x)+2\delta(y',y)=2\codim(y',y)
\end{align*}
since $q$ is moderate. In other words, $f_q^*p$ is an admissible perversity function on $Y$.

For (3), it is essentially proved in \cite{TG}*{Expos\'{e} XIV, Corollaire 2.5.2}.
\end{proof}

Now we generalize the notion of perversity functions from schemes to stacks, by starting from the following definition.

\begin{definition}[Pointed schematic neighborhood]
Let $X$ be a higher Artin (resp.\ Deligne--Mumford) stack. A \emph{pointed smooth (resp.\ \'etale) schematic neighborhood} of $X$ is a triple
$(X_0,u_0,x_0)$ where $u_0\colon X_0\to X$ is a smooth (resp.\ an \'{e}tale) morphism with $X_0\in\Schqcs$ and $x_0\in|X_0|$ a scheme-theoretical point. A morphism $v\colon (X_1,u_1,x_1)\to(X_0,u_0,x_0)$ of pointed smooth (resp.\ \'etale) schematic neighborhoods is a smooth (resp.\ an \'{e}tale) morphism $v\colon X_1\to X_0$ such that there is a triangle
\begin{align}\label{c3eq:schematic_neighborhood}
\xymatrix{
X_1 \ar[rd]_-{u_1} \ar[rr]^-{v} & & X_0 \ar[ld]^-{u_0} \\
& X }
\end{align}
with $v(x_1)=x_0$. We say that $(X_1,u_1,x_1)$ \emph{dominates} $(X_0,u_0,x_0)$ if there is such a morphism. The category of pointed smooth (resp.\ \'etale) schematic neighborhoods of $X$ is denoted by $\Vosm(X)$ (resp.\ $\Voet(X)$).
\end{definition}

\begin{lem}\label{c3le:codimension}
Let $X$ be a higher Artin stack, and let $v\colon(X_1,u_1,x_1)\to(X_0,u_0,x_0)$ be a morphism of pointed smooth schematic neighborhoods of $X$. Then the codimension of $x_1$ in the base change scheme $X_{1,x_0}=X_1\times_{X_0}\{x_0\}$ depends only on the source and the target of $v$.
\end{lem}

\begin{proof}
Note that $\codim(x_1,X_{1,x_0})=\dim_{x_1}(v)-\trdeg[k(x_1):k(x_0)]$. It is clear that the term $\dim_{x_1}(v)=\dim_{x_1}(u_1)-\dim_{x_0}(u_0)$ does not depend on $v$. We will show that the other term $\trdeg[k(x_1):k(x_0)]$ does not depend on $v$ either.

Let $f\colon Y\to X$ be an atlas of $X$ with $Y$ a scheme in $\Schqcs$. Let
\[
\xymatrix{
Y_1 \ar[rr]^-{v'} \ar[dr]_-{u'_1} && Y_0 \ar[dl]^-{u'_0} \\ & Y }
\]
be the base change of \eqref{c3eq:schematic_neighborhood}, and $f_0\colon Y_0\to X_0$, $f_1\colon Y_1\to X_1$ the induced morphisms. Let $w_0\colon Y'_0\to Y_0$ be an atlas with $Y'_0$ a scheme in $\Schqcs$, and let
\[
\xymatrix{
Y'_1 \ar[r]^-{v''} \ar[d]_-{w_1} & Y'_0 \ar[d]^-{w_0} \\
Y_1 \ar[r]^-{v'} & Y_0 }
\]
be the base change. Then $v''$ is a smooth morphism of schemes in $\Schqcs$. Since $f_0\circ w_0\colon Y'_0\to X_0$ is smooth and surjective, the base change scheme $Y'_{0,x_0}=Y'_0\times_{X_0}\{x_0\}$ is nonempty and smooth over the residue field $k(x_0)$ of $x_0$. Similarly, we have a nonempty scheme $Y'_{1,x_1}$, smooth over $k(x_1)$. Choose a generic point $y'_1$ of $Y'_{1,x_1}$. Then its image $y'_0$ in $Y'_{0,x_0}$ is a generic point. Let $y$ be the image of $y'_0$ in $Y$. Then we have
\[
\trdeg[k(x_1):k(x_0)]=\r{tr.deg}[k(y'_1):k(y)]-\trdeg[k(y'_0):k(y)]
\]
which does \emph{not} depend on $v$. The lemma follows.
\end{proof}

\begin{notation}\label{c3no:codimension}
Let $X$ be a higher Artin stack, and let $v\colon(X_1,u_1,x_1)\to(X_0,u_0,x_0)$ be a morphism of pointed smooth schematic neighborhoods of $X$. We will denote by $\delta^{(X_1,u_1,x_1)}_{(X_0,u_0,x_0)}$ the codimension appeared in Lemma \ref{c3le:codimension}. It is clear that
\[
\delta^{(X_2,u_2,x_2)}_{(X_0,u_0,x_0)}=\delta^{(X_2,u_2,x_2)}_{(X_1,u_1,x_1)}+\delta^{(X_1,u_1,x_1)}_{(X_0,u_0,x_0)}
\]
if $(X_2,u_2,x_2)$ dominates $(X_1,u_1,x_1)$. Moreover, if $v$ is \'etale, then we have $\delta^{(X_1,u_1,x_1)}_{(X_0,u_0,x_0)}=0$.
\end{notation}

\begin{notation}
For a higher Artin (resp.\ Deligne--Mumford) stack $X$ and a function $\sfp\colon\Ob(\Vosm(X))\to\dZ\cup\{+\infty\}$ (resp.\ $\sfp\colon\Ob(\Voet(X))\to\dZ\cup\{+\infty\}$), we have, by restriction, the function $\sfp_{u_0}\colon|X_0|\to\dZ\cup\{+\infty\}$ for every smooth (resp.\ \'{e}tale) morphism $u_0\colon X_0\to X$ with $X_0$ in $\Schqcs$.

If $f\colon Y\to X$ is a smooth (resp.\ an \'{e}tale) morphism of higher Artin (resp.\ Deligne--Mumford) stacks, then composition with $f$ induces a functor $f\colon \Vosm(Y)\to\Vosm(X)$ (resp.\ $f\colon \Voet(Y)\to\Voet(X)$), and we put $f^*\sfp=\sfp\circ f$.
\end{notation}

\begin{definition}[(admissible/codimension) perversity evaluations]\label{c3de:perversity_evaluation}
Let $X$ be a higher Artin stack. A \emph{smooth evaluation} on $X$ is a function
\[
\sfp\colon\Ob(\Vosm(X))\to\dZ\cup\{+\infty\}
\]
such that for $(X_1,u_1,x_1)$ dominating $(X_0,u_0,x_0)$, we have
\[
\sfp(X_0,u_0,x_0) \le\sfp(X_1,u_1,x_1)\le\sfp(X_0,u_0,x_0)+2\delta^{(X_1,u_1,x_1)}_{(X_0,u_0,x_0)}.
\]

A \emph{perversity smooth evaluation} (resp.\ \emph{admissible perversity smooth evaluation}, \emph{codimension perversity smooth evaluation}) on $X$ is a smooth evaluation $\sfp$ such that for every $(X_0,u_0,x_0)\in\Ob(\Vosm(X))$, $\sfp_{u_0}$ is a weak perversity function (resp.\ admissible perversity function, codimension perversity function) on $X_0$.

Similarly, we define \'etale evaluations and (admissible/codimension) perversity \'etale evaluations on a higher Deligne--Mumford stack $X$ using $\Voet(X)$.

We say that a smooth (resp.\ \'etale) evaluation $\sfp$ is \emph{locally bounded} if for every smooth (resp.\ \'etale) morphism $u_0\colon X_0\to X$ with $X_0$ a quasi-compact separated scheme, $\sfp_{u_0}$ is bounded.
\end{definition}

\begin{remark}\label{8re:etaleev}
If $X$ is a scheme in $\Schqcs$, then the map from the set of \'etale 
evaluations on $X$ to the set of functions $|X|\to\dZ\cup\{+\infty\}$, 
carrying $\sfp$ to $\sfp_{\id_X}$, is bijective. Under this bijection, the 
notions of (weak) perversity, admissible perversity, and codimension 
perversity coincide.
\end{remark}

\begin{example}
We have the following examples of perversity smooth/\'etale evaluations.
\begin{enumerate}
  \item Let $X$ be a higher Artin (resp.\ Deligne--Mumford) stack. Then every constant smooth (resp.\ \'etale) evaluation is an admissible
      perversity smooth (resp.\ \'etale) evaluation.

  \item Let $f\colon Y\to X$ be a morphism of higher Deligne--Mumford 
      stacks. Let $\sfp$ be an \'etale evaluation on $X$. We define an 
      \'etale evaluation $f^*_{\b{0}}\sfp$ on $Y$ as follows. For any 
      object $(Y_0,v_0,y_0)$ of $\Voet(Y)$, there exists a morphism 
      $(Y_1,v_1,y_1)\to(Y_0,v_0,y_0)$ in $\Voet(Y)$ such that there exists 
      a diagram 
      \[
      \xymatrix{Y_1\ar[r]^{v_1}\ar[d]_{f_0} & Y\ar[d]^f\\
      X_0\ar[r]^{u_0} & X,}
      \]
      where $X_0$ is in $\Schqcs$ and $u_0$ is \'etale. We put
      \[
      f^*_{\b{0}}\sfp(Y_0,v_0,y_0)=\sfp(X_0,u_0,f_0(y_1)).
      \]
      This clearly does not depend on choices. If $\sfp$ is a perversity 
      \'etale evaluation, then so is $f^*_{\b{0}}\sfp$ by Lemma 
      \ref{c3le:perverse_zero}. If $f$ is \'etale, then 
      $f^*_{\b{0}}\sfp=f^*\sfp$. 
      
      If $f$ is locally of finite type and $q\colon\dN\to\dZ$ is a 
      function, we define more generally an \'etale evaluation $f^*_q\sfp$ 
      on $Y$ by
      \[ 
      f^*_q\sfp(Y_0,v_0,y_0)=\sfp(X_0,u_0,f_0(y_1))-q(\trdeg[k(y_1):k(f_0(y_1))]). 
      \]
      
      In the case where $X$ and $Y$ are schemes, the above notation is 
      compatible with Notation \ref{c3no:pullback} via the bijection in 
      Remark \ref{8re:etaleev}. 

  \item Let $f\colon Y\to X$ be a morphism of higher Artin stacks with $X$ 
      being a higher Deligne--Mumford stack. Let $\sfp$ be an \'etale 
      evaluation on $X$, and $q\colon\dZ\to\dZ$ a moderate function 
      (Definition \ref{c3de:moderate}). Assume that $f$ is locally of 
      finite type in the case $q\neq \b{0}$. We define a smooth evaluation 
      $f^*_q\sfp$ on $Y$ by the formula 
      \[
      (f^*_q\sfp)(Y_0,v_0,y_0)=((v_0\circ f)^*_{q'}\sfp)_{\id_{Y_0}}(y_0)
      \]
      for every object $(Y_0,v_0,y_0)$ of $\Vosm(Y)$, where 
      $q'\colon\dN\to\dZ$ is the function $q'(n)=q(n-\dim_{y_0}(v_0))$. If 
      $\sfp$ is a perversity \'etale evaluation, then $f^*_{\b{0}}\sfp$ is 
      a perversity smooth evaluation. If $X$ is locally Noetherian, $f$ is 
      locally of finite type, and $\sfp$ is a perversity (resp.\ admissible 
      perversity, resp.\ codimension perversity) \'etale evaluation, then 
      $f^*_q\sfp$ (resp.\ $f^*_q\sfp$, resp.\ $f^*_{\b{1}}\sfp$) is a 
      perversity (resp.\ admissible perversity, resp.\ codimension 
      perversity) smooth evaluation by Lemma \ref{c3le:perverse_pullback}. 
\end{enumerate}
\end{example}

\subsection{Perverse t-structures}
\label{c3ss:perverse_t}

In this section, we define t-structures associated to perversity evaluations.

\begin{definition}\label{c3de:complete}
Let $\cC$ be a stable $\infty$-category equipped with a t-structure. We say that $\cC$ is \emph{weakly left complete} (resp.\ \emph{weakly right complete}) if $\cC^{\le -\infty}\coloneqq\bigcap_n \cC^{\le -n}$ (resp.\ $\cC^{\ge\infty}\coloneqq\bigcap_n \cC^{\ge n}$) consists of zero objects.
\end{definition}

The family $(\rH^i)_{i\in \dZ}$ is conservative if and only if $\cC$ is both weakly left complete and weakly right complete (cf.\ \cite{BBD}*{Proposition 1.3.7}). The following lemma slightly extends \cite{HA}*{Proposition 1.2.1.19}.

\begin{lem}\label{c3le:wlc}
Let $\cC$ be a stable $\infty$-category equipped with a t-structure. Consider the following conditions
\begin{enumerate}
  \item The $\infty$-category $\cC$ is left complete.

  \item The $\infty$-category $\cC$ is weakly left complete.
\end{enumerate}
Then (1) implies (2). Moreover, if $\cC$ admits countable products and there exists an integer $a$ such that countable products of objects of $\cC^{\le0}$ belong to $\cC^{\le a}$, then (2) implies (1).
\end{lem}

\begin{proof}
The first assertion is obvious since the image of $\cC^{\le -\infty}$ under the functor $\cC\to\widehat\cC$ consists of zero objects, where $\widehat\cC$ is defined prior to \cite{HA}*{Proposition 1.2.1.17}.

To show the second assertion, it suffices to replace $f(n-1)$ by $f(n-a-1)$ in the proof of \cite{HA}*{Proposition 1.2.1.19}.
\end{proof}

Let $X$ be a scheme in $\Schqcs$, let $p\colon|X|\to\dZ\cup\{+\infty\}$ be a function, and let $\lambda=(\Xi,\Lambda)$ be an object of $\Rind$. Following Gabber \cite{Gabber}*{{\Sec}2}, we define full subcategories $\TS{p}{\cD}{\leq0}(X,\lambda),\TS{p}{\cD}{\geq0}(X,\lambda)\subseteq\cD(X,\lambda)$ as follows: For $\sfK\in\cD(X,\lambda)$,
\begin{itemize}
  \item $\sfK$ belongs to $\TS{p}{\cD}{\le 0}(X,\lambda)$ if and only if
      \[
      i^*_{\overline{x}}j^*_{\overline{x}}\sfK\in\cD^{\le p(x)}(\overline{x},\lambda)
      \]
      for every $x\in|X|$.

  \item $\sfK$ belongs to $\TS{p}{\cD}{\ge 0}(X,\lambda)$ if and only if $\sfK\in \cD^{(+)}(X,\lambda)$ and
      \[
      i^!_{\overline{x}}j^*_{\overline{x}}\sfK\in\cD^{\ge p(x)}(\overline{x},\lambda)
      \]
      for every $x\in|X|$.
\end{itemize}
Here $\overline{x}$ is a geometric point above $x$, and we have natural morphisms
\[
i_{\overline{x}}\colon\overline{x}\to X_{(\overline{x})},\quad j_{\overline{x}}\colon X_{(\overline{x})}\to X.
\]
We will omit $j_{\overline{x}}^*$ from the notation when no confusion arises.

\begin{lem}\label{c3le:gabber}
If $p$ is a weak perversity function, then $(\TS{p}{\cD}{\leq 0}(X,\lambda),\TS{p}{\cD}{\geq 0}(X,\lambda))$ is a t-structure on $\cD(X,\lambda)$. Moreover,
\begin{enumerate}
  \item this t-structure is accessible;

  \item this t-structure is weakly left complete if $p$ takes values in 
      $\dZ$; 

  \item this t-structure is right complete;

  \item this t-structure is left complete if $p$ is locally bounded and 
      every quasi-compact closed open subscheme of $X$ is 
      $\lambda$-cohomologically finite. Here, we say that a scheme $Y$ is 
      \emph{$\lambda$-cohomologically finite} if there exists an integer 
      $n$ such that, for every $\xi\in\Xi$, the 
      $\Lambda(\xi)$-cohomological dimension of the \'etale topos of $Y$ is 
      at most $n$. 
\end{enumerate}
\end{lem}

\begin{proof}
The fact that $(\TS{p}{\cD}{\leq 0}(X,\lambda),\TS{p}{\cD}{\geq 
0}(X,\lambda))$ is a t-structure is a theorem of Gabber \cite{Gabber} when 
$\Xi$ is a singleton. This generalizes easily to the case of general $\Xi$ as 
follows. By \cite{HA}*{Proposition 1.4.4.11}, there exists a t-structure 
$(\TS{p}{\cD}{\leq 0}(X,\lambda),\cD')$ on $\cD(X,\lambda)$. For 
$\sfK\in\TS{p}{\cD}{\leq 0}(X,\lambda)$ and $\sfL\in\TS{p}{\cD}{\geq 
0}(X,\lambda)$, we have $a_*\HOM(\sfK,\sfL[1])\in\cD^{\ge1}(\ast,\lambda)$, 
hence $\Hom(\sfK,\sfL[1])=\rH^0(\Xi,a_*\HOM(\sfK,\sfL[1]))=0$, where $a\colon 
X_{\et}\to\ast$ is the morphism of topoi. Thus, we have $\TS{p}{\cD}{\geq 
0}(X,\lambda)\subseteq \cD'$. For every $\xi\in\Xi$, the functor $\rL 
e_{\xi!}\colon\cD(X,\Lambda(\xi))\to\cD(X,\lambda)$ is left t-exact for the 
t-structures 
$(\TS{p}{\cD}{\leq0}(X,\Lambda(\xi)),\TS{p}{\cD}{\geq0}(X,\Lambda(\xi)))$ and 
$(\TS{p}{\cD}{\leq 0}(X,\lambda),\cD')$. It follows that $e_\xi^*$ is right 
t-exact for the same t-structures. Thus, we have 
$\cD'\subseteq\TS{p}{\cD}{\geq 0}(X,\lambda)$ as well. 

For the properties, (1) and (2) follow from the definition directly; (3) follows from \cite{Gabber}*{Lemma 3.1}; and (4) follows from Lemma \ref{c3le:wlc}.
\end{proof}

Now we define t-structures for stacks associated to perversity evaluations. Let $X$ be a $\Box$-coprime higher Artin (resp.\ a higher Deligne--Mumford) stack equipped with a perversity smooth (resp.\ \'{e}tale) evaluation $\sfp$ (Definition \ref{c3de:perversity_evaluation}), and let $\lambda$ be an object of $\Rind_{\ltor}$ (resp.\ $\Rind$). For an atlas (resp.\ \'{e}tale atlas) $u\colon X_0\to X$ with $X_0$ a scheme in $\Schqcs$, we denote by $\TS{\sfp}{\cD}{\leq0}_u(X,\lambda)\subseteq\cD(X,\lambda)$ (resp.\ $\TS{\sfp}{\cD}{\geq0}_u(X,\lambda)\subseteq\cD(X,\lambda)$) the full subcategory spanned by complexes $\sfK$ such that $u^*\sfK$ is in $\TS{\sfp_u}{\cD}{\leq0}(X_0,\lambda)$ (resp.\ $\TS{\sfp_u}{\cD}{\geq0}(X_0,\lambda)$).

\begin{proposition}\label{c3pr:perverse_t}
Let $X$ be a $\Box$-coprime higher Artin (resp.\ a higher Deligne--Mumford) stack equipped with a perversity smooth (resp.\ \'{e}tale)
evaluation $\sfp$, and let $\lambda$ be an object of $\Rind_{\ltor}$ (resp.\ $\Rind$). Then
\begin{enumerate}
  \item The pair of subcategories $(\TS{\sfp}{\cD}{\leq0}_u(X,\lambda),\TS{\sfp}{\cD}{\geq0}_u(X,\lambda))$ do not depend on the choice of $u$. We will denote them by $(\TS{\sfp}{\cD}{\leq0}(X,\lambda),\TS{\sfp}{\cD}{\geq0}(X,\lambda))$.

  \item The pair of subcategories 
      $(\TS{\sfp}{\cD}{\leq0}(X,\lambda),\TS{\sfp}{\cD}{\geq0}(X,\lambda))$ 
      determine a right complete accessible t-structure on 
      $\cD(X,\lambda)$, which is weakly left complete if $\sfp$ takes 
      values in $\dZ$. This t-structure is left complete if $\sfp$ is 
      locally bounded and if for every smooth (resp.\ \'etale) morphism 
      $X_0\to X$ with $X_0$ a quasi-compact separated scheme, $X_0$ is 
      $\lambda$-cohomologically finite. 

  \item If $f\colon Y\to X$ is a smooth (resp.\ \'etale) morphism, then $f^*\colon \cD(X,\lambda)\to\cD(Y,\lambda)$ is t-exact with respect to the t-structures associated to $\sfp$ and $f^*\sfp$.
\end{enumerate}
\end{proposition}

\begin{proof}
There exists $k\ge -2$ such that $X$ and $Y$ are in $\Chpar{k}$ (resp.\ 
$\Chpdm{k}$). We proceed by induction on $k$. The case $k=-2$ follows from 
Lemma \ref{c3le:gabber} and Lemma \ref{c3le:perverse_smooth_pullback} below. 
The induction step follows the same proof as in Lemma \ref{b4le:t_structure} 
and Lemma \ref{b4le:p6}. 
\end{proof}

\begin{lem}\label{c3le:perverse_smooth_pullback}
Let $f\colon Y\to X$ be a smooth morphism of schemes in $\Schqcs_\Box$, let 
$\lambda$ be an object of $\Rind_{\ltor}$, and let 
$p\colon|X|\to\dZ\cup\{+\infty\}$ be a function. Then $f^!$ carries 
$\TS{p}{\cD}{\ge 0}(X,\lambda)$ to $\TS{f^*_{\b{2}}p}{\cD}{\ge 
0}(Y,\lambda)$. Moreover, if $p$ is a weak perversity function on $X$ and $q$ 
is a weak perversity function on $Y$ satisfying $f^*_{\b{0}}p\le q \le 
f^*_{\b{2}}p+2\dim f$, then $f^*\colon\cD(X,\lambda)\to\cD(Y,\lambda)$ is 
t-exact with respect to the t-structures associated to $p$ and $q$. 
\end{lem}

\begin{proof}
The first assertion follows from Lemma \ref{c3le:perverse_smooth_pullback0} below. The second assertion follows from the first assertion and the Poincar\'e duality $f^!\simeq f^*\langle\dim f\rangle$.
\end{proof}

\begin{lem}\label{c3le:perverse_smooth_pullback0}
Let $f\colon Y\to X$ be a smooth morphism in $\Schqcs_\Box$, and $\lambda$ an object of $\Rind_{\ltor}$. Let $\overline{y}$ be a geometric point of $Y$ above $y$; put $\overline{x}=f(\overline{y})$ and $x=f(y)$. Then there is an equivalence of functors
\[
i_{\overline{y}}^!\circ f^!\simeq g^*\circ i_{\overline{x}}^!\langle d\rangle\colon\cD^{(+)}(X,\lambda)\to\cD^+(\overline{y},\lambda),
\]
where $g\colon\overline{y}\to\overline{x}$ is the induced morphism and $d=\trdeg[k(y):k(x)]$.
\end{lem}

\begin{proof}
Consider the diagram with Cartesian squares
\[
\xymatrix{
\overline{y}\ar[rrd]_g\ar[r]^{i_{\overline{y}}} & V\ar[r]^{j} & Y_{\overline x}\ar[d]_{f_{\overline
x}}\ar[r]^{i'_{\overline x}} & Y_{\overline{\{x\}}} \ar[r]^{i'}\ar[d]^{f_{\overline{\{x\}}}}
& Y\ar[d]^f \\
&&\overline x \ar[r]^{i_{\overline x}} & \overline{\{x\}}\ar[r]^i & X}
\]
where $V$ is a regular integral subscheme of $Y_{\overline{x}}$ such that the image of $\overline y$ in $V$ is a generic point. We have a sequence of equivalences of functors
\[
i_{\overline y}^!\circ f^! \simeq i_{\overline y}^*\circ j^! \circ i_{\overline x}^{\prime*} \circ {i'}^!\circ f^!
\simeq i_{\overline y}^*\circ j^! \circ i_{\overline x}^{\prime*} \circ f_{\overline{\{x\}}}^! \circ i^!
\simeq i_{\overline y}^*\circ j^! \circ f_{\overline x}^! \circ i_{\overline x}^* \circ i^!
\]
which, by the Poincar\'{e} duality, is equivalent to
\[
i_{\overline y}^*\circ (f_{\overline x}\circ j)^! \circ i_{\overline x}^!
\simeq i_{\overline y}^*\circ (f_{\overline x}\circ j)^* \circ i_{\overline x}^! \langle d\rangle
\simeq g^*\circ i_{\overline x}^!\langle d\rangle.
\]
The lemma follows.
\end{proof}

\begin{remark}\label{c3re:stalk}
We call the t-structure in Proposition \ref{c3pr:perverse_t} the \emph{perverse t-structure} with respect to $\sfp$ and denote by $\TS{\sfp}{\tau}{\leq0}$ and $\TS{\sfp}{\tau}{\geq0}$ the corresponding truncation functors, respectively.
\begin{enumerate}
  \item For every (\'{e}tale) atlas $u\colon X_0\to X$ with $X_0$ a scheme in $\Schqcs$, we have $u^*\circ\TS{\sfp}{\tau}{\leq0}\simeq \TS{\sfp_u}{\tau}{\leq0}\circ u$ and $u^*\circ\TS{\sfp}{\tau}{\geq0}\simeq \TS{\sfp_u}{\tau}{\geq0}\circ u$.

  \item If $\sfp=0$, then we recover the usual t-structure. If $X$ is a higher Deligne-Mumford stack and $\sfp$ is a perversity smooth evaluation, then the t-structure associated to $\sfp$ coincides with the t-structure associated to $\sfp\res\Voet(X)$. If $X$ is in $\Schqcs$, then the t-structure associated to $\sfp$ coincides with the t-structure defined by Gabber (as in Lemma \ref{c3le:gabber}) associated to the function $\sfp_{\id_X}$.

  \item Let $\sfK$ be a complex in $\cD(X,\lambda)$. Then by definition,
      \begin{itemize}
        \item $\sfK$ belongs to $\TS{\sfp}{\cD}{\leq n}(X,\lambda)$ if and only if for every pointed smooth (resp.\ \'etale) schematic neighborhood $(X_0,u_0,x_0)$ of $X$ and a geometric point $\overline{x_0}$ lying over $x_0$, we have $i_{\overline{x_0}}^*u_0^*\sfK\in\cD^{\leq\sfp(X_0,u_0,x_0)+n}(\overline{x_0},\lambda)$.

        \item $\sfK$ belongs to $\TS{\sfp}{\cD}{\geq n}(X,\lambda)$ if and only if $\sfK\in\cD^{(+)}(X,\lambda)$, and for every pointed smooth (resp.\ \'etale) schematic neighborhood $(X_0,u_0,x_0)$ of $X$ and a geometric point $\overline{x_0}$ lying over $x_0$, we have $i_{\overline{x_0}}^!u_0^*\sfK\in\cD^{\geq\sfp(X_0,u_0,x_0)+n}(\overline{x_0},\lambda)$.
      \end{itemize}
\end{enumerate}
\end{remark}

At the end of the section, we study the restriction of perverse t-structures 
constructed above to various subcategories of constructible complexes. We fix 
a $\Box$-coprime base scheme $\dS$ that is a disjoint union of excellent 
schemes, endowed with a global dimension function. 

\begin{proposition}
Let $\lambda=(\Xi,\Lambda)$ be an object of $\Rind_{\Box\text{-}\r{dual}}$. Let $f\colon X\to\dS$ be an object of $\Chpars$ equipped with an \emph{admissible} perversity smooth evaluation $\sfp$ (Definition \ref{c3de:perversity_evaluation}). Then the truncation functors $\TS{\sfp}{\tau}{\leq0}$, $\TS{\sfp}{\tau}{\geq0}$ preserve the full subcategory $\cD_{\cons}^{(\rb)}(X,\lambda)$. Moreover, if $\sfp$ is locally bounded, then $\TS{\sfp}{\tau}{\leq0}$, $\TS{\sfp}{\tau}{\geq0}$ preserve $\cD_{\cons}^{?}(X,\lambda)$ for $?=(+),(-)$ or empty.
\end{proposition}

\begin{proof}
We reduce easily to the case of a scheme. In this case, the result is essentially \cite{Gabber}*{Theorem 8.2}.
\end{proof}

\subsection{Adic perverse t-structures}
\label{c3ss:adic_perverse_t}

For perverse t-structures in the adic formalism, we define
\[
\TS{\sfp}{\cD}{\leq n}(X,\lambda)_\ra=\TS{\sfp}{\cD}{\leq n}(X,\lambda)\cap\cD(X,\lambda)_\ra,\quad
\TS{\sfp}{\cD}{\geq n}(X,\lambda)_\ra=\TS{\sfp}{\cD}{\leq n-1}(X,\lambda)_\ra^{\perp}
\]
both as full subcategories of $\cD(X,\lambda)_\ra$. Then the pair $(\TS{\sfp}{\cD}{\leq 0}(X,\lambda)_\ra,\TS{\sfp}{\cD}{\geq 0}(X,\lambda)_\ra)$ define a t-structure, called the \emph{adic perverse t-structure} with respect to $\sfp$, on $\cD(X,\lambda)_\ra$. Denote
$\TS{\sfp}{\tau}{\leq0}_\ra$ and $\TS{\sfp}{\tau}{\geq0}_\ra$ the corresponding truncation functors respectively. We have the
following results.

\begin{lem}\label{c3le:adic_perverse_t_indep}
Let $X$ be a $\Box$-coprime higher Artin stack (resp.\ a higher Deligne--Mumford stack) equipped with a perversity smooth (resp.\ \'etale)
evaluation $\sfp$, and $\lambda$ an object of $\Rind_{\ltor}$ (resp.\ $\Rind$). Let $\sfK\in\cD(X,\lambda)_\ra$ be an (adic) complex. Let
$u\colon X_0\to X$ be an atlas (resp.\ \'{e}tale atlas) with $X_0$ a scheme in $\Schqcs$. Then $\sfK$ belongs to $\TS{\sfp}{\cD}{\leq n}(X,\lambda)_\ra$ (resp.\ $\TS{\sfp}{\cD}{\geq n}(X,\lambda)_\ra$) if and only if $u^{*\ra}\sfK$ belongs to $\TS{\sfp_u}{\cD}{\leq
n}(X_0,\lambda)_\ra$ (resp.\ $\TS{\sfp_u}{\cD}{\geq n}(X_0,\lambda)_\ra$).
\end{lem}

\begin{proof}
We only need to show that $u^{*\ra}$ is t-exact. By definition, we obviously have $u^{*\ra}\TS{\sfp}{\cD}{\leq n}(X,\lambda)_\ra\subseteq\TS{\sfp_u}{\cD}{\leq n}(X_0,\lambda)_\ra$. For the other direction, assume $\sfK\in\TS{\sfp}{\cD}{>n}(X,\lambda)_\ra$, that is, $\Hom(\sfL,\sfK)=0$ for every $\sfL\in\cD(X,\lambda)_\ra\cap\TS{\sfp}{\cD}{\leq n}(X,\lambda)$. By the Poincar\'{e} duality, it suffices to show that for every $\sfL'\in\cD(X_0,\lambda)_\ra\cap\TS{\sfp_u}{\cD}{\leq n-2\dim
u}(X_0,\lambda)$, we have $\Hom(\sfL',u^{!\ra}\sfK)=0$, or equivalently, $\Hom(u_{!\ra}\sfL',\sfK)=0$. This follows from the fact that $u_!$ preserves adic complexes and we have $u_!\sfL'\in\TS{\sfp}{\cD}{\leq n}(X,\lambda)$.
\end{proof}

\begin{proposition}\label{c3pr:adic_perverse_stalk}
Let $X$ be a $\Box$-coprime higher Artin stack (resp.\ a higher Deligne--Mumford stack) equipped with a perversity smooth (resp.\ \'etale)
evaluation $\sfp$, and $\lambda$ an object of $\Rind_{\ltor}$ (resp.\ $\Rind$). Let $\sfK\in\cD(X,\lambda)_\ra$ be an (adic) complex.
\begin{enumerate}
  \item Then $\sfK$ belongs to $\TS{\sfp}{\cD}{\leq n}(X,\lambda)_\ra$ if and only if for every pointed smooth (resp.\ \'etale) schematic
      neighborhood $(X_0,u_0,x_0)$ of $X$ and a geometric point $\overline{x_0}$ lying over $x_0$, we have
      $i_{\overline{x_0}}^{*\ra}u_0^{*\ra}\sfK\in\cD^{\leq\sfp(X_0,u_0,x_0)+n}(\overline{x_0},\lambda)_\ra$.

  \item Assume that $\sfp$ is \emph{locally bounded}. Then $\sfK$ belongs to $\TS{\sfp}{\cD}{\geq n}(X,\lambda)_\ra$ if and only if
      $\sfK\in\cD^{(+)}(X,\lambda)_\ra$, and for every pointed smooth (resp.\ \'etale) schematic neighborhood $(X_0,u_0,x_0)$ of
      $X$ and a geometric point $\overline{x_0}$ lying over $x_0$, we have
      $i_{\overline{x_0}}^{!\ra}u_0^{*\ra}\sfK\in\cD^{\geq\sfp(X_0,u_0,x_0)+n}(\overline{x_0},\lambda)_\ra$.
\end{enumerate}
\end{proposition}

\begin{proof}
Part (1) is a consequence of the definition and Remark \ref{c3re:stalk}(3).

For (2), by Lemma \ref{c3le:adic_perverse_t_indep}, we may assume that $X\in\Schqcs$ is quasi-compact and $\sfp=p$ is a bounded weak perversity
function. Then $\sfK\in\TS{p}{\cD}{\geq n}(X,\lambda)_\ra$ is equivalent to that for every $\sfL\in\TS{p}{\cD}{<n}(X,\lambda)_\ra$, $\HOM(\sfL,\sfK)\in\cD^{>0}(X,\lambda)$, which is then equivalent to $\HOM(\sfL,\sfK)\in\cD^+(X,\lambda)$ and $i_{\overline{x}}^!\HOM(\sfL,\sfK)\in\cD^{>0}(\overline{x},\lambda)$ for every geometric point $\overline{x}$ of $X$. By Proposition \ref{c1th:properties}(4), we have isomorphisms
\[
i_{\overline{x}}^!\HOM(\sfL,\sfK)\simeq\HOM(i_{\overline{x}}^*\sfL,i_{\overline{x}}^!\sfK)
\simeq\HOM(i_{\overline{x}}^{*\ra}\sfL,i_{\overline{x}}^{!\ra}\sfK).
\]
Now we may assume $\alpha<p<\beta$ for some $\alpha,\beta\in\dZ$ since $p$ is bounded. Then $\TS{p}{\cD}{<n}(X,\lambda)_\ra$ contains $\cD^{<\alpha+n}(X,\lambda)_\ra$.

Now for $\sfK\in\TS{p}{\cD}{\geq n}(X,\lambda)_\ra$, we have $\sfK\in\cD^{\geq\alpha+n}(X,\lambda)_\ra\subseteq\cD^+(X,\lambda)_\ra$ and
$i_{\overline{x}}^{!\ra}\sfK\in\cD^{\geq p(x)+n}(\overline{x},\lambda)_\ra$ for every geometric point $\overline{x}$ of $X$ lying over $x$.

Conversely, assume $\sfK\in\cD^+(X,\lambda)_\ra$, say in $\cD^{\geq\gamma}(X,\lambda)_\ra$, and $i_{\overline{x}}^{!\ra}\sfK\in\cD^{\geq p(x)+n}(\overline{x},\lambda)_\ra$ for every geometric point $\overline{x}$ of $X$ lying over $x$. We have $\HOM(\sfL,\sfK)\in\cD^{\geq\gamma-\beta-n}(X,\lambda)\subseteq\cD^+(X,\lambda)$ and $\HOM(i_{\overline{x}}^{*\ra}\sfL,i_{\overline{x}}^{!\ra}\sfK)\in\cD^{>0}(\overline{x},\lambda)$. Thus, we have $\sfK\in\TS{p}{\cD}{\geq n}(X,\lambda)_\ra$.
\end{proof}

\begin{remark}\label{c3re:perverse_truncation}
Let $\sfp,\sfq$ be two perversity smooth (resp.\ \'etale) evaluations on a $\Box$-coprime higher Artin stack (resp.\ a higher Deligne--Mumford stack) $X$. Let $\lambda$ be an object of $\Rind_{\ltor}$ (resp.\ $\Rind$). Let the subscript $?$ be either ``$\ra$'' or empty.
\begin{enumerate}
  \item The intersection of the pair of subcategories $(\TS{\sfp}{\cD}{\leq0}(X,\lambda)_?,\TS{\sfp}{\cD}{\geq0}(X,\lambda)_?)$ with $\cD^{(+)}(X,\lambda)_?$ induces a t-structure on the latter stable $\infty$-category.

  \item If $\sfp\leq\sfq$, then
      \begin{enumerate}
        \item $\TS{\sfp}{\tau}{\leq0}_?$ preserves $\TS{\sfq}{\cD}{\leq0}(X,\lambda)_?$;

        \item $\TS{\sfq}{\tau}{\geq0}_?$ preserves $\TS{\sfp}{\cD}{\geq0}(X,\lambda)_?$;

        \item $\TS{\sfp}{\tau}{\geq0}_?$ is equivalent to the identity functor when restricted to $\TS{\sfq}{\cD}{\geq0}(X,\lambda)_?$;

        \item $\TS{\sfq}{\tau}{\leq0}_?$ is equivalent to the identity functor when restricted to $\TS{\sfp}{\cD}{\leq0}(X,\lambda)_?$;

        \item $\TS{\sfp}{\tau}{<0}_?$ is equivalent to the null functor when restricted to $\TS{\sfq}{\cD}{\geq0}(X,\lambda)_?$;

        \item $\TS{\sfq}{\tau}{>0}_?$ is equivalent to the null functor when restricted to $\TS{\sfp}{\cD}{\leq0}(X,\lambda)_?$.
      \end{enumerate}

  \item By (2a), if $\sfp$ is locally bounded, then the intersection of the pair of subcategories $(\TS{\sfp}{\cD}{\leq0}(X,\lambda)_?,\TS{\sfp}{\cD}{\geq0}(X,\lambda)_?)$ with $\cD^{(-)}(X,\lambda)_?$ or $\cD^{(\rb)}(X,\lambda)_?$ induces a t-structure on the latter stable $\infty$-category.

  \item By (2e) and (2f), if $X$ is quasi-compact and $\sfp$ is bounded, then there exist constant integers $\alpha<\beta$ such that
      $\TS{\sfp}{\rH}{0}_?=\TS{\sfp}{\rH}{0}_?\circ\tau^{[\alpha,\beta]}_?$, where $\TS{\sfp}{\rH}{0}_?=\TS{\sfp}{\tau}{\geq0}_?\circ\TS{\sfp}{\tau}{\leq0}_?$ is the cohomology functor.
\end{enumerate}
\end{remark}

\section{Hyperdescent properties}
\label{c4}

In this chapter, we study hyperdescent properties for certain operations on stacks. In \Sec\ref{c4ss:hyperdescent}, we study some general facts for hyperdescent. In \Sec\ref{c4ss:smooth}, \Sec\ref{c4ss:proper} and \Sec\ref{c4ss:flat}, we study smooth, proper and flat hyperdescent, respectively.

\subsection{Hyperdescent}
\label{c4ss:hyperdescent}

In this section, we study hyperdescent properties in the general setup.

\begin{definition}
Let $\cC$, $\cD$ be $\infty$-categories, let $F\colon\cC^{op}\to \cD$ be a functor, and let $X_\bullet^+\colon\rN(\del_+)^{op}\to\cC$ be an augmented simplicial object of $\cC$.
\begin{enumerate}
  \item We say that $X_\bullet^+$ is an \emph{augmentation of $F$-descent} if $F\circ(X_\bullet^+)^{op}$ is a limit diagram in $\cD$.

  \item Assume that $\cC$ admits pullbacks. We say that $X_\bullet^+$ is a \emph{hypercovering for universal $F$-descent} if $X^+_q\to(\cosk_{q-1}(X^+_{\bullet}/X^+_{-1}))_q$ is a morphism of universal $F$-descent for all $q\ge 0$.
\end{enumerate}
\end{definition}

By definition, a morphism of $\cC$ is of $F$-descent (Definition \ref{b3de:descent}) if and only if its \v{C}ech nerve is an augmentation of $F$-descent. We now give several criteria for $(2)\Rightarrow(1)$.

\begin{proposition}\label{c4pr:n_category}
Let $\cC$ be an $\infty$-category admitting pullbacks, let $\cD$ be an $n$-category admitting finite limits for an integer $n\ge 0$, and let $F\colon\cC^{op}\to\cD$ be a functor. Then every hypercovering $X_\bullet^+$ for universal $F$-descent is an augmentation of $F$-descent.
\end{proposition}

To prove Proposition \ref{c4pr:n_category}, we need a few lemmas.

\begin{lem}\label{c4le:induction}
Let $\cC$, $\cD$ be $\infty$-categories such that $\cC$ admits finite limits, let $F\colon\cC^{op}\to\cD$ be a functor, and let $e$ be a final object of $\cC$. Let $f_\bullet\colon U_\bullet\to V_\bullet$ be a morphism of simplicial objects of $\cC$ such that $V_\bullet \to e$ is an augmentation of $F$-descent and $f_q$ is a morphism of $F$-descent for all $q$. Assume that there exists an integer $n\ge 0$ such that $U_\bullet$ is $n$-coskeletal, $V_\bullet$ is $(n-1)$-coskeletal, and $f_q$ is an equivalence for $q<n$. Then $U_\bullet\to e$ is an augmentation of $F$-descent.
\end{lem}

\begin{proof}
With out lost of generality, we may assume that $F(e)$ is an initial object of $\cD$. Let $W_+\colon\rN(\del_+\times\del)^{op}\to\Fun(\Delta^1,\cC)$ be a \v{C}ech nerve of $f_\bullet$, and put $W\coloneqq W_+\res\rN(\del\times\del)^{op}$. For every $q\ge 0$, $W_+\res\rN(\del_+\times\{[q]\})^{op}$ is a \v{C}ech nerve of $f_q$, which is a morphism of $F$-descent by assumption. It follows that $F\circ W_+^{op}\res\rN(\del_+\times\{[q]\})$ is a limit diagram. Thus, we may identify the limit of $F\circ W^{op}$ with the limit $F\circ W_+^{op}\res\rN(\{[-1]\}\times\del_s)$. Since $W_+\res\rN(\{[-1]\}\times\del)^{op}$ can be identified with $V_\bullet$, the limit of $F\circ W^{op}$ can be identified with $F(e)$. Put $D_\bullet\coloneqq W\circ \delta$, where $\delta\colon\rN(\del)^{op}\to\rN(\del\times\del)^{op}$ is the diagonal map. Since $\rN(\del)^{op}$ is sifted \cite{HTT}*{Lemma 5.5.8.4}, the limit of $F\circ D_\bullet^{op}$ can be identified with $F(e)$. The proof of \cite{HTT}*{Lemma 6.5.3.9} exhibits $U_\bullet\res\rN(\del_s)^{op}$ as a retract of $D_\bullet\res\rN(\del_s)^{op}$. It follows that the limit of $F\circ U_\bullet^{op}$ is a retract of $F(e)$, hence is $F(e)$. The lemma follows.
\end{proof}

\begin{lem}\label{c4le:coskeletal}
Let $\cC$, $\cD$ be $\infty$-categories such that $\cC$ admits pullbacks, let $F\colon\cC^{op}\to\cD$ be a functor, and let $X_\bullet^+$ be an
$n$-coskeletal hypercovering for universal $F$-descent for an integer $n\ge-1$. Then $X_\bullet^+$ is an augmentation of $F$-descent.
\end{lem}

\begin{proof}
Since morphisms of universal $F$-descent are stable under pullbacks and compositions, the morphism $\cosk_m(X_\bullet^+/X_{-1}^+)\to\cosk_{m-1}(X_\bullet^+/X_{-1}^+)$ satisfies the assumptions of Lemma \ref{c4le:induction}. It follows by induction that $\cosk_n(X^+_\bullet/X^+_{-1})$ is an augmentation of $F$-descent.
\end{proof}

\begin{lem}\label{c4le:n_category}
Let $n\ge -1$ be an integer, let $\cD$ be an $n$-category admitting finite colimits, and let $f_\bullet \colon Y_\bullet \to X_\bullet$ be a morphism of semisimplicial (resp.\ simplicial) objects of $\cD$ such that $Y_q\to X_q$ is an equivalence for $q\le n$. Then the induced morphism between geometric realizations $|f_\bullet|\colon|Y_\bullet|\to|X_\bullet|$ is an equivalence in $\cD$.
\end{lem}

\begin{proof}
The existence of the geometric realizations is guaranteed by \cite{HA}*{Lemma 1.3.3.10}. The semisimplicial case follows from the simplicial case by taking left Kan extensions. The simplicial case follows from the proof of \cite{HA}*{Lemma 1.3.3.10}.
\end{proof}

\begin{proof}[Proof of Proposition \ref{c4pr:n_category}]
It suffices to apply the dual version of Lemma \ref{c4le:n_category} to the morphism $h\colon X^+_\bullet \to \cosk_n (X^+_\bullet/X_{-1}^+)$ and Lemma \ref{c4le:coskeletal}.
\end{proof}

The following proposition can be used to deduce Gabber's hyper base change theorem \cite{TG}*{Expos\'{e} XIII, Th\'eor\`eme 2.2.5} (see \cite{TG}*{Expos\'{e} XII, Remark 2.3}).

\begin{proposition}\label{c4pr:t_descent}
Let $\cC$ be an $\infty$-category admitting pullbacks, let $\cD$ be a stable $\infty$-category endowed with a weakly right complete t-structure that either admits countable limits or is right complete, let $F\colon\cC^{op}\to\cD$ be a functor, and let $X_\bullet^+\colon\rN(\del_+)^{op}\to\cC$ be a hypercovering for universal $F$-descent such that $F\circ(X_\bullet^+)^{op}$ factorizes through $\cD^{\ge 0}$. Then $X_\bullet^+$ is an augmentation of $F$-descent.
\end{proposition}

\begin{proof}
Let $n\ge 0$. By Lemma \ref{c4le:coskeletal}, $Y_\bullet^+=\cosk_n(X_\bullet^+/X_{-1}^+)$ is an augmentation of $F$-descent, so that it suffices to show that the morphism
\[
c\colon K\coloneqq\lim_{p\in\del}F(X_p)\to L\coloneqq\lim_{p\in\del}F(Y_p)
\]
induced by $h_\bullet\colon X_\bullet^+\to Y_\bullet^+$ is an isomorphism. By \cite{HA}*{Remark 1.2.4.4, Proposition 1.2.4.5}, we have a morphism of converging spectral sequences
\[
\xymatrix{
\rE_1^{p,q}=\rH^q F(X_p)\ar@{=>}[r]\ar[d]_-{c_1^{p,q}} & \rH^{p+q} K\ar[d]^-{\rH^{p+q} c}\\
\TS{\prime}{\rE}{p,q}_1=\rH^q F(Y_p)\ar@{=>}[r] & \rH^{p+q} L,}
\]
concentrated in the first quadrant. For $p\le n$, since $h_p$ is an equivalence, $c_1^{p,q}$ is an isomorphism for all $q$. It follows that
$c_r^{p,q}$ is an isomorphism for $p+q\le n-1$, and $\tau^{\le n-1}c$ is an equivalence. Since $n$ is arbitrary and $\cD$ is weakly right complete, $c$ is an equivalence.
\end{proof}

We denote by $\PSLt$ (resp.\ $\PSRt$) the $\infty$-category defined as follows:
\begin{itemize}
  \item Objects of $\PSLt$ (resp.\ $\PSRt$) are presentable stable $\infty$-categories equipped with a t-structure.

  \item Morphisms of $\PSLt$ (resp.\ $\PSRt$) are t-exact functors admitting right (resp.\ left) adjoints.
\end{itemize}
The $\infty$-categories $\PSLt$ (resp.\ $\PSRt$) admit small limits, and those limits are preserved by the forgetful functor $\PSLt\to\PSL$ (resp.\ $\PSRt\to \PSR$). For a diagram $K\to\PSLt$ or $K\to\PSRt$, $(\lim\cC_k)^{\le 0}$ (resp.\ $(\lim\cC_k)^{\ge 0}$) is the full subcategory of $\lim\cC_k$ spanned by objects whose image in $\cC_k$ is in $\cC_k^{\le 0}$ (resp.\ $\cC_k^{\ge 0}$). For an interval $I\subseteq\dZ$, we have an equivalence $(\lim\cC_k)^{\in I}\to \lim \cC_k^{\in I}$.

We denote by $\PSLt[,\r{wrc}]$ (resp.\ $\PSRt[,\r{rc},\r{wlc}]$) the full subcategory of $\PSLt$ (resp.\ $\PSRt$) spanned by those $\cC$ that are weakly right complete (resp.\ right complete and weakly left complete). This full subcategory is stable under small limits in $\PSLt$ (resp.\ $\PSRt$).

\begin{proposition}\label{c4pr:hyperdescent}
Consider a diagram
\[
\xymatrix{
\cD'^{op}\ar[d]_-{j^{op}}\ar[r]^-F & \PSRt[,\r{rc},\r{wlc}]\ar[d]^-{P}\\
\cD^{op}\ar[r]^-{G} & \Cat}
\]
of $\infty$-categories, in which $\cD$ admits pullbacks, $j$ is an inclusion satisfying the right lifting property with respect to $\partial\Delta^n\subseteq\Delta^n$ for $n\ge 2$, and $P$ is the forgetful functor. Assume that the arrows in $\cD'$ are stable under pullbacks in $\cD$ by arrows in $\cD'$. Let $X_\bullet^+\colon\rN(\del_+)^{op}\to\cD$ be a hypercovering for universal $G$-descent such that $X_\bullet^+\res\rN(\del_{s+})^{op}$ factorizes through $j$. Then $X_\bullet^+$ is an augmentation of $G$-descent.
\end{proposition}

\begin{proof}
By the right completeness of $F(X^+_p)$ for $p\ge -1$, it suffices to show that $(F\circ(X_\bullet^+)^{op}\res\rN(\del_{s+}))^{\le 0}$ is a limit diagram. Put $\cC=\lim(F\circ (X_\bullet^+)^{op}\res\rN(\del_s))$ for simplicity. We then have the induced t-exact functor $f^*\colon F(X^+_{-1})\to\cC$. Let $f_!\colon\cC\to F(X^+_{-1})$ be a left adjoint of $f^*$. The restrictions of these provide adjoint functors
\[
(f_!)^{\le 0}\colon\cC^{\le 0}\to F(X^+_{-1})^{\le 0},\quad (f^*)^{\le 0}\colon F(X^+_{-1})^{\le 0}\to\cC^{\le 0}.
\]
Let us first show that $a\colon f_! f^*K \to K$ is an equivalence for all $K\in F(X^+_{-1})^{\le 0}$, namely, that $(f^*)^{\le 0}$ is fully faithful. This is similar to Proposition \ref{c4pr:t_descent}. Take $n\ge 0$. The morphism $h_\bullet\colon X^+_\bullet\to\cosk_n(X^+_\bullet/X^+_{-1})=Y^+_\bullet$ induces a diagram
\[
\xymatrix{f_!f^* K \ar[rr]^-{c}\ar[rd]_-{a} && g_!g^* K\ar[dl]^-{b}\\
&K}
\]
where $g_!$ is a left adjoint of the t-exact functor $g^*\colon F(X^+_{-1})\to \lim F\circ (Y^+_\bullet)^{op}\res\rN(\del_s)$. By Lemma
\ref{c4le:coskeletal}, $Y^+_\bullet$ is an augmentation of $G$-descent, so that $b$ is an equivalence. Moreover, we have $c=\colim (f_{p!} f_p^* K\to g_{p!}g_p^*K)$, where $f_{p!}$ is a left adjoint of $f_p^*\colon F(X^+_{-1})\to F(X^+_{p})$, $g_{p!}$ is a left adjoint of $g_p^*\colon
F(Y^+_{-1})\to F(Y^+_{p})$, and $f_{p!}f_p^*\to g_{p!}g_p^*$ is induced by $h_p$. By \cite{HA}*{Remark 1.2.4.4, Proposition 1.2.4.5}, we have a morphism of converging spectral sequences
\[
\xymatrix{
\rE_1^{p,q}=\rH^q(f_{-p!}f_{-p}^* K)\ar@{=>}[r]\ar[d]_-{c_1^{p,q}} & \rH^{p+q} f_! f^* K\ar[d]^-{\rH^{p+q} c}\\
\TS{\prime}{\rE}{p,q}_1=\rH^q(g_{-p!}g_{-p}^* K)\ar@{=>}[r] & \rH^{p+q} g_! g^* K,}
\]
concentrated in the third quadrant. For $p\ge -n$, since $h_p$ is an equivalence, $c_1^{p,q}$ is an isomorphism for all $q$. It follows that
$c_r^{p,q}$ is an isomorphism for $p+q\ge 1-n$, and $\tau^{\ge 1-n}c$ is an equivalence. Therefore, $\tau^{\ge 1-n} a$ is an equivalence. Since $n$ is arbitrary and $F(X^+_{-1})$ is weakly left complete, $a$ is an equivalence.

It remains to show that $d\colon L\to f^*f_! L$ is an equivalence for every $L\in \cC^{\le 0}$. Since $\cC$ is weakly left complete, it suffices to show that $\tau^{\ge 1-n}d$ is an equivalence for every $n \ge 1$. For this, we may assume $L\in \cC^{[1-n,0]}$. We will show that $L$ is in the essential image of $(f^*)^{\le 0}$. Since $(f^*)^{\le 0}$ is fully faithful, this proves that $d$ is an equivalence. Let $H\colon\PSRt[,\r{rc},\r{wlc}]\to\c{C}\r{at}_n$ be the functor sending $\cF$ to $\cF^{[1-n,0]}$, where $\c{C}\r{at}_n$ is the $\infty$-category of $n$-categories. It suffices to show that $H\circ F\circ (X^+_\bullet)^{op}\res\rN(\del_{s+})$ is a limit diagram. Since $\c{C}\r{at}_n$ is an $(n+1)$-category, we may assume that $X^+_\bullet/X^+_{-1}$ is $(n+1)$-coskeletal by Lemma \ref{c4le:n_category} applied to $X^+_\bullet\to\cosk_{n+1}(X^+_\bullet/X^+_{-1})$. In this case, $F\circ (X^+_\bullet)^{op}\res\rN(\del_{s+})$ is a limit diagram by Lemma \ref{c4le:coskeletal}.
\end{proof}

The following variant of Proposition \ref{c4pr:hyperdescent} will be used to establish proper hyperdescent. To state it conveniently, we introduce a bit of terminology. Let $\cC$ be an $\infty$-category admitting pullbacks, and $F\colon\cC^{op}\to \Cat$ a functor. We say that a morphism $f$ of $\cC$ is \emph{$F$-conservative} if $F(f)$ is conservative. We say that $f$ is \emph{universally $F$-conservative} if every pullback of $f$ in $\cC$ is $F$-conservative. We say that an augmented simplicial object $X_\bullet^+$ of $\cC$ is a \emph{hypercovering for universal $F$-conservativeness} if $X^+_n\to (\cosk_{n-1}(X^+_\bullet/X^+_{-1}))_n$ is universally $F$-conservative for every $n\ge 0$.

\begin{proposition}\label{c4pr:hyperdescent2}
Let $\cC$ be an $\infty$-category admitting pullbacks, let $F\colon\cC^{op}\to\PSLt[,\r{wrc}]$ be a functor, and let $a$ be an integer.
\begin{enumerate}
  \item Let $G\colon\PSLt[,\r{wrc}]\to\Cat$ be the functor sending $\cC$ to $\cC^{\ge a}$. If $X_\bullet^+$ is a hypercovering for universal $(G\circ F)$-descent, then it is an augmentation of $(G\circ F)$-descent.

  \item Let $G\colon\PSLt[,\r{wrc}]\to\Cat$ be the functor sending $\cC$ to $\cC^+\coloneqq\bigcup_n \cC^{\ge n}$. If $X_\bullet^+$ is a hypercovering for universal $(G\circ F)$-descent and for universal $(P\circ F)$-conservativeness, where $P\colon \PSLt[,\r{wrc}]\to \Cat$ is the forgetful functor, then it is an augmentation of $(G\circ F)$-descent.
\end{enumerate}
\end{proposition}

\begin{proof}
The proof for (1) is similar to the proof of Proposition \ref{c4pr:hyperdescent}. For (2), the conservativeness implies that $G(\lim F\circ (X^+_\bullet)^{op})\to \lim G\circ F\circ(X_\bullet^+)^{op}$ is an equivalence. The rest of the proof is similar.
\end{proof}

\subsection{Smooth hyperdescent}
\label{c4ss:smooth}

The \'etale $\infty$-topos of an affine scheme is not hypercomplete (see \cite{HTT}*{{\Sec}6.5.2} for the definition) in general. By contrast, the stable $\infty$-categories we constructed satisfy smooth hyperdescent.

We regard the map
\begin{align*}
\EO{}{\Chpar{}}\otimes{}\coloneqq(\EO{}{\Chpar{}}{\r{I}}{})^\otimes\colon\rN(\Chpar{})^{op}\times\rN(\Rind)^{op}\to\PSLM
\end{align*}
and the map
\begin{align*}
\EO{}{\Chpar{}_\Box}{}{!}\colon \rN(\Chpar{}_\Box)_F\times\rN(\Rind_{\ltor})^{op}\to\PSL
\end{align*}
from \Sec\ref{b5ss:artin} as functors
\begin{align*}
\EO{}{\Chpar{}}\otimes{}&\colon\rN(\Chpar{})^{op}\to\Fun(\rN(\Rind)^{op},\PSLM),\\
\EO{}{\Chpar{}_\Box}{}{!}&\colon\rN(\Chpar{}_\Box)_F\to\Fun(\rN(\Rind_{\ltor})^{op},\PSL).
\end{align*}
In the adic case, we have similar functors
\begin{align*}
\EO{\ra}{\Chpar{}}\otimes{}&\colon\rN(\Chpar{})^{op}\to\Fun(\rN(\Rind)^{op},\PSLM),\\
\EO{\ra}{\Chpar{}_\Box}{}{!}&\colon\rN(\Chpar{}_\Box)_F\to\Fun(\rN(\Rind_{\ltor})^{op},\PSL).
\end{align*}
from Proposition \ref{c1pr:monoidal} and \eqref{c1eq:lowersh_adic}, respectively.

\begin{definition}\label{c4de:covering}
We say that an augmented simplicial object $X_\bullet^+$ in $\Chpar{}$ (or similar $\infty$-categories) is a \emph{(P) hypercovering} for a property (P) on morphisms if $X^+_q \to (\cosk_{q-1}(X^+_{\bullet}/X^+_{-1}))_q$ is \emph{surjective} and satisfies (P) for every $q\ge 0$.
\end{definition}

\begin{proposition}\label{c4pr:hyperdescent_stack}
Every smooth hypercovering in $\Chpar{}$ (resp.\ $\Chpar{}_\Box$) is an augmentation of both $\EO{}{\Chpar{}}\otimes{}$-descent (resp.\
$\EO{}{\Chpar{}_\Box}{op}{!}$-descent) and $\EO{\ra}{\Chpar{}}\otimes{}$-descent (resp.\ $\EO{\ra}{\Chpar{}_\Box}{op}{!}$-descent).
\end{proposition}

\begin{proof}
Let $X_\bullet^+$ be an augmented simplicial object of $\Chpar{}$ (resp.\ $\Chpar{}_\Box$). It suffices to apply Proposition \ref{c4pr:hyperdescent} to the full subcategory $\Chpar{}_{\r{sm}/X_{-1}}\subseteq \Chpar{}_{/X_{-1}}$ spanned by higher Artin stacks smooth over $X_{-1}$. In the notation of Proposition \ref{c4pr:hyperdescent}, $F$ associates the usual t-structure (resp.\ the usual t-structure shifted by twice the relative dimension over $X_{-1}$). This proof applies to both the non-adic case and the adic case. The adic case can also be deduced from the non-adic case by taking limits.
\end{proof}

\subsection{Proper hyperdescent}
\label{c4ss:proper}

In this section, we study hyperdescent properties for proper morphisms. We start from some lemmas for preparation.

\begin{lem}\label{c4le:real}
Let $\cC$ and $\cD$ be stable $\infty$-categories equipped with left complete t-structures. Let $F\colon \cC\to \cD$ be a t-exact functor. Then
$\cC^{\le 0}$ admits geometric realizations, and geometric realizations are preserved by $F$.
\end{lem}

\begin{proof}
By \cite{HA}*{Proposition 1.2.4.5}, for any simplicial object $X_\bullet$ of $\cC$, there exist a geometric realization $X=\lvert X_\bullet \rvert$ in $\cC$ and a geometric realization $Y=\lvert FX_\bullet\rvert$ in $\cD$, and $\rH^n(f)$ is an isomorphism for all $n$,  where $f$ is the morphism $Y\to FX$. It follows that $f$ is an equivalence.
\end{proof}

\begin{lem}\label{c4le:split}
Let $\cC$, $\cD$, $\cE$ be stable $\infty$-categories equipped with 
t-structures such that $\cC$ and $\cD$ are both left and right complete. Let 
$F\colon \cC\to \cD$ and $G\colon \cC\to \cE$ be t-exact functors. Assume $G$ 
conservative. Then $\cC$ admits $G$-split \cite{HA}*{Definition 4.7.2.2} 
geometric realizations, and those geometric realizations are preserved by 
$F$. 
\end{lem}

\begin{proof}
Let $X_\bullet$ be a $G$-split simplicial object of $\cC$, and $Y_\bullet\colon\rN(\del_+)^{op}\to\cD$ a split augmentation of $G\circ
X_\bullet$. Then the unnormalized cochain complex
\[
\dots \to \rH^q Y_2\to \rH^q Y_1 \to \rH^q Y_0\to \rH^q Y_{-1} \to 0
\]
is acyclic. Since $G$ is conservative, it follows that the unnormalized cochain complex
\[
\dots \to \rH^q X_2\to \rH^q X_1 \xrightarrow{\theta^q} \rH^q X_0
\]
is an acyclic resolution of the object $A^q=\r{coker}(\theta^q)$ in the heart of $\cC$ and the same holds after applying the functor $F$. By \cite{HA}*{Corollary 1.2.4.12}, $X_\bullet$ admits a geometric realization $X$, $FX_\bullet$ admits a geometric realization $Z$, and $\rH^n(f)$ is an isomorphism for all $n$, where $f$ is the morphism $Z\to FX$. It follows that $f$ is an equivalence.
\end{proof}

The functor $\EO{}{\Chpar{}}\otimes{}$ restricts to a functor
\[
\EO{\geq 0}{\Chpar{}_\Box}{*}{}\colon\rN(\Chpar{}_\Box)^{op}\to\Fun(\rN(\Rind_{\ltor})^{op},\Cat)
\]
sending $X$ to the assignment $\lambda\mapsto\cD^{\geq 0}(X,\lambda)$.

\begin{proposition}\label{c4pr:proper_descent}
Let $\dS$ be a $\Box$-coprime (resp.\ $\Box$-coprime \emph{locally Noetherian}, that is, there exists an atlas $S\to\dS$ where $S$ is a locally
Noetherian scheme) higher Artin stack.
\begin{enumerate}
  \item For every object $\lambda$ of $\Rind_{\ltor}$ and every Cartesian square
      \[
      \xymatrix{
      W \ar[r]^-{g} \ar[d]_-{q} & Z \ar[d]^-{p} \\
      Y \ar[r]^-{f} & X }
      \]
      in $\Chpar{}_\Box$ (resp.\ $\Chpars$) with $p$ proper of finite diagonal (resp.\ proper and $1$-Artin), the induced square
      \[
      \xymatrix{
      \cD^{\ge 0}(Z,\lambda)  \ar[d]_-{g^*} & \cD^{\ge 0}(X,\lambda) \ar[l]_-{p^*} \ar[d]^-{f^*}\\
      \cD^{\ge 0}(W,\lambda) & \cD^{\ge 0}(Y,\lambda) \ar[l]_-{q^*} }
      \]
      is right adjointable.

  \item Every proper finite-diagonal hypercovering in $\Chpar{}_\Box$ (resp.\ proper and $1$-Artin hypercovering in $\Chpars$) is an augmentation of $\EO{\geq 0}{\Chpar{}_\Box}{*}{}$-descent.
\end{enumerate}
\end{proposition}

\begin{proof}
Let us first show that (1) implies (2). By Proposition 
\ref{c4pr:hyperdescent2}, to show (2), it suffices to show that every 
surjective morphism proper of finite diagonal (resp.\ proper and $1$-Artin) 
is of $\EO{\geq 0}{\Chpar{}_\Box}{*}{}$-descent. For this, we apply 
\cite{HA}*{Corollary 4.7.5.3}: Assumption (1) follows from the dual of Lemma 
\ref{c4le:real}; Assumption (2) is simply part (1); and the conservativeness 
is clear. 

To show (1), applying Proposition \ref{b4pr:p5} and the smooth base change, we are reduced to the case where $X$ and $Y$ are in $\Schqcs$. In this case, there exists a finite \cite{Rydh}*{Theorem~B} (resp.\ proper \cite{OlChow}*{Theorem 1.1}) surjective morphism $r_0\colon Z_0\to Z$ with $Z_0$ a scheme. Since (1) is known in the case where $p$ is proper and $0$-Artin, $r_0$ is $\EO{\geq 0}{\Chpar{}_\Box}{*}{}$-descent by the above proof of (2). Thus, every object of $\cD^{\ge 0}(Z,\lambda)$ has the form $\lim_{n\in \del}r_{n*}r_n^*\sfK$, where $r_\bullet$ is a \v{C}ech nerve of $r_0$. By Lemma \ref{c4le:real}, the functors $f^*$ and $g^*$ preserve limits indexed by $\del$. Thus, it suffices to check that the natural transformation $f^*\circ p_*\circ r_{n*}\to q_*\circ g^*\circ r_{n*}$ is a natural equivalence. This follows from the known cases of (1) with $p$ replaced by the proper $0$-Artin morphisms $r_n$ and $p\circ r_n$.
\end{proof}

The above result can be extended to $\cD(X,\lambda)^\otimes$ under cohomological finiteness conditions. We fix an object $\lambda$ of $\Rind_{\ltor}$. The functors $\EO{}{\Chpar{}}\otimes{}$ and $\EO{\ra}{\Chpar{}}\otimes{}$ restrict to functors
\begin{align*}
\EO{}{\Chpar{}_\Box}{\otimes}{\lambda}&\colon\rN(\Chpar{}_\Box)^{op}\to\PSLM,\\
\EO{\ra}{\Chpar{}_\Box}{\otimes}{\lambda}&\colon\rN(\Chpar{}_\Box)^{op}\to\PSLM
\end{align*}
sending $X$ to $\cD(X,\lambda)^\otimes$ and $\cD(X,\lambda)_\ra^\otimes$, respectively.

\begin{proposition}\label{c4pr:proper_descent2}
Let $\dS$ be a $\Box$-coprime (resp.\ $\Box$-coprime \emph{locally Noetherian}) higher Artin stack. Let $\lambda$ be an object of $\Rind_{\ltor}$.
\begin{enumerate}
  \item Consider a Cartesian square
      \[
      \xymatrix{
      W \ar[r]^-{g} \ar[d]_-{q} & Z \ar[d]^-{p} \\
      Y \ar[r]^-{f} & X
      }
      \]
      in $\Chpar{}_\Box$ (resp.\ $\Chpars$) with $p$ proper of finite diagonal (resp.\ proper and $1$-Artin). Assume that for every morphism $U\to X$ locally of finite type with $U$ an affine scheme, $X_0$ is $\lambda$-cohomologically finite. Then the induced square
      \[
      \xymatrix{
      \cD(Z,\lambda)  \ar[d]_-{g^*} & \cD(X,\lambda) \ar[l]_-{p^*} \ar[d]^-{f^*}\\
      \cD(W,\lambda) & \cD(Y,\lambda) \ar[l]_-{q^*} }
      \]
      is right adjointable.

  \item Let $X^+_\bullet$ be a proper finite-diagonal hypercovering in $\Chpar{}_\Box$ (resp.\ proper and $1$-Artin hypercovering in $\Chpars$). Assume that for every morphism $U\to X^+_{-1}$ locally of finite type with $U$ an affine scheme, $X_0$ is $\lambda$-cohomologically finite. Then $X^+_\bullet$ is an augmentation of both $\EO{}{\Chpar{}_\Box}{\otimes}{\lambda}$-descent and
      $\EO{\ra}{\Chpar{}_\Box}{\otimes}{\lambda}$-descent.
\end{enumerate}
\end{proposition}

\begin{proof}
We first show that (1) implies (2) for $\EO{}{\Chpar{}_\Box}{\otimes}{\lambda}$-descent. One only needs to repeat the proof of Proposition \ref{c4pr:proper_descent} with Proposition \ref{c4pr:hyperdescent2} replaced by Proposition \ref{c4pr:hyperdescent} and Lemma \ref{c4le:real} replaced by Lemma \ref{c4le:split}. Note that the case for $\EO{}{\Chpar{}_\Box}{\otimes}{\lambda}$-descent implies the case for $\EO{\ra}{\Chpar{}_\Box}{\otimes}{\lambda}$-descent by Lemma \ref{b3le:limit_restriction}.

The proof for (1) is similar to Proposition \ref{c4pr:proper_descent} since $r_0$ is of $\EO{}{\Chpar{}_\Box}{\otimes}{}$-descent as well.
\end{proof}

\subsection{Flat hyperdescent}
\label{c4ss:flat}

The following proposition is an analogue of flat cohomological descent \cite{SGA4}*{Expos\'{e} vbis, Proposition 4.3.3(c)}.

\begin{proposition}\label{c4pr:flat}
Every flat and locally finitely presented hypercovering of higher Artin stacks is an augmentation of $\EO{\geq 0}{\Chpar{}_\Box}{*}{}$-descent.
\end{proposition}

\begin{proof}
By Proposition \ref{c4pr:hyperdescent2}, we are reduced to show that every 
surjective flat and locally finitely presented morphism $f\colon Y\to X$ in 
$\Chpar{}_\Box$ is of $\EO{\geq 0}{\Chpar{}_\Box}{*}{}$-descent. By Lemma 
\ref{b3le:descent} and the smooth descent, we are reduced to the case of 
schemes. Let $X'$ be a disjoint union of strict localizations of $X$, such 
that the morphism  is surjective. By \cite{EGAIV}*{Corollaire 17.16.2, 
Th\'{e}or\`{e}me 18.5.11}, there exists a surjective \'etale morphism of 
schemes $g\colon X'\to X$ and a finite surjective morphism of schemes 
$g'\colon Z\to X'$ in $\Schqcs$ such that the composite morphism $Z\to X$ 
factorizes through $f$. By Lemma \ref{b3le:descent} and \'etale descent, it 
suffices to show that $g'$ is of universal $\EO{\geq 
0}{\Schqcs}{*}{}$-descent. For this, we apply \cite{HA}*{Corollary 4.7.5.3}: 
Assumption (1) follows from the dual of Lemma \ref{c4le:real}; Assumption (2) 
follows from finite base change; and the conservativeness is clear. 
\end{proof}

The above proposition can be extended to $\cD(X,\lambda)^\otimes$ under cohomological finiteness conditions, similar to the case of proper hyperdescent. We leave details to the reader.

\begin{remark}\label{c4re:etale}
We define the $\infty$-category of \emph{$\infty$-DM stacks} $\Chpdm{\infty}$ to be the $\infty$-category $\r{Sch}(\cG_{\et}(\dZ))$ of $\cG_{\et}(\dZ)$-schemes in the sense of \cite{DAG5}*{Definition 2.3.9, Remark 2.6.11}. Using Proposition \ref{c4pr:hyperdescent}, we can adapt the DESCENT program in Chapter \ref{b4ss} to define the first and the second enhanced operation maps for $\infty$-DM stacks, namely, a functor
\[
\EO{}{\Chpdm{\infty}}{\r{I}}{}\colon((\Chpdm{\infty})^{op}\times\rN(\Rind)^{op})^\amalg\to\Cat
\]
that is a lax Cartesian structure, and a map
\[
\EO{}{\Chpdm{\infty}}{\r{II}}{}\colon
\delta^*_{2,\{2\}}(((\Chpdm{\infty})^{op}\times\rN(\Rind_\tor)^{op})^{\amalg,op})^\cart_{F,\all}\to\Cat.
\]
Applying the construction in \Sec\ref{c1ss:limit}, we obtain the first and the second enhanced adic operation maps for $\infty$-DM stacks, namely, a functor
\[
\EO{\ra}{\Chpdm{\infty}}{\r{I}}{}\colon((\Chpdm{\infty})^{op}\times\rN(\Rind)^{op})^\amalg\to\Cat
\]
that is a lax Cartesian structure, and a map
\[
\EO{\ra}{\Chpdm{\infty}}{\r{II}}{}\colon
\delta^*_{2,\{2\}}(((\Chpdm{\infty})^{op}\times\rN(\Rind_\tor)^{op})^{\amalg,op})^\cart_{F,\all}\to\Cat.
\]
By restriction, we have similar functors $\EO{}{\Chpdm{\infty}}{}{!}$ and $\EO{\ra}{\Chpdm{\infty}}{}{!}$. Parallel to Proposition \ref{c4pr:hyperdescent_stack}, we have that every smooth hypercovering in $\Chpdm{\infty}$ is an augmentation of both $\EO{}{\Chpdm{\infty}}\otimes{}$-descent (resp.\ $\EO{}{\Chpdm{\infty}}{op}{!}$-descent) and $\EO{\ra}{\Chpdm{\infty}}\otimes{}$-descent (resp.\ $\EO{\ra}{\Chpdm{\infty}}{op}{!}$-descent). We have similar results for proper and flat hyperdescent.
\end{remark}

\begin{bibdiv}
\begin{biblist}

\bib{SP}{book}{
   label={SP},
   author={The Stacks Project Authors},
   title={Stacks Project},
   eprint={http://math.columbia.edu/algebraic_geometry/stacks-git/},
}

\bib{ALRR}{article}{
   author={Achar, P. N.},
   author={Lourenco, J.},
   author={Richarz, T.},
   author={Riche, S.},
   title={A modular ramified geometric Satake equivalence},
   note={\href{https://arxiv.org/abs/2403.10651}{arXiv:2403.10651}},
}

\bib{SGA4}{book}{
   label={SGA4},
   author={Artin, M.},
   author={Grothendieck, A.},
   author={Verdier, J. L.},
   title={Th\'eorie des topos et cohomologie \'etale des sch\'emas},
   series={Lecture Notes in Mathematics, Vol. 269, 270, 305},
   note={S\'eminaire de G\'eom\'etrie Alg\'ebrique du Bois-Marie 1963--1964 (SGA 4); Avec la collaboration de N. Bourbaki, P. Deligne et B. Saint-Donat},
   publisher={Springer-Verlag},
   place={Berlin, 1972--1973},
   review={\MR{0354652 (50 \#7130)}},
   review={\MR{0354653 (50 \#7131)}},
   review={\MR{0354654 (50 \#7132)}},
}

\bib{Ayoub}{article}{
   author={Ayoub, Joseph},
   title={Les six op\'erations de Grothendieck et le formalisme des cycles \'evanescents dans le monde motivique. I},
   journal={Ast\'erisque},
   number={314},
   date={2007},
   pages={x+466 pp.},
   issn={0303-1179},
   isbn={978-2-85629-244-0},
   review={\MR{2423375 (2009h:14032)}},
}

\bib{Behrend}{article}{
   author={Behrend, Kai A.},
   title={Derived $\ell$-adic categories for algebraic stacks},
   journal={Mem. Amer. Math. Soc.},
   volume={163},
   date={2003},
   number={774},
   pages={viii+93},
   issn={0065-9266},
   review={\MR{1963494 (2004e:14006)}},
}

\bib{BBD}{article}{
   author={Be{\u\i}linson, A. A.},
   author={Bernstein, J.},
   author={Deligne, P.},
   author={Gabber, O.},
   title={Faisceaux pervers},
   language={French},
   conference={
      title={Analysis and topology on singular spaces, I},
      address={Luminy},
      date={1981},
   },
   book={
      series={Ast\'erisque},
      volume={100},
      publisher={Soc. Math. France},
      place={Paris},
   },
   date={2018},
   review={\MR{751966 (86g:32015)}},
}

\bib{BFN}{article}{
   author={Ben-Zvi, David},
   author={Francis, John},
   author={Nadler, David},
   title={Integral transforms and Drinfeld centers in derived algebraic geometry},
   journal={J. Amer. Math. Soc.},
   volume={23},
   date={2010},
   number={4},
   pages={909--966},
   issn={0894-0347},
   review={\MR{2669705 (2011j:14023)}},
   doi={10.1090/S0894-0347-10-00669-7},
}

\bib{BKV}{article}{
   author={Bezrukavnikov, Roman},
   author={Kazhdan, David},
   author={Varshavsky, Yakov},
   title={A categorical approach to the stable center conjecture},
   language={English, with English and French summaries},
   journal={Ast\'erisque},
   number={369},
   date={2015},
   pages={27--97},
   issn={0303-1179},
   isbn={978-2-85629-805-3},
   review={\MR{3379633}},
}

\bib{BS}{article}{
   author={Bhatt, Bhargav},
   author={Scholze, Peter},
   title={The pro-\'etale topology for schemes},
   language={English, with English and French summaries},
   journal={Ast\'erisque},
   number={369},
   date={2015},
   pages={99--201},
   issn={0303-1179},
   isbn={978-2-85629-805-3},
   review={\MR{3379634}},
}

\bib{BV}{book}{
   author={Boardman, J. M.},
   author={Vogt, R. M.},
   title={Homotopy invariant algebraic structures on topological spaces},
   series={Lecture Notes in Mathematics, Vol. 347},
   publisher={Springer-Verlag},
   place={Berlin},
   date={1973},
   pages={x+257},
   review={\MR{0420609 (54 \#8623a)}},
}

\bib{Conrad}{article}{
   author={Conrad, Brian},
   title={Deligne's notes on Nagata compactifications},
   journal={J. Ramanujan Math. Soc.},
   volume={22},
   date={2007},
   number={3},
   pages={205--257},
   issn={0970-1249},
   review={\MR{2356346 (2009d:14002)}},
}

\bib{CLO}{article}{
   author={Conrad, Brian},
   author={Lieblich, Max},
   author={Olsson, Martin},
   title={Nagata compactification for algebraic spaces},
   journal={J. Inst. Math. Jussieu},
   volume={11},
   date={2012},
   number={4},
   pages={747--814},
   issn={1474-7480},
   review={\MR{2979821}},
   doi={10.1017/S1474748011000223},
}

\bib{CD}{book}{
   author={Cisinski, Denis-Charles},
   author={D\'{e}glise, Fr\'{e}d\'{e}ric},
   title={Triangulated categories of mixed motives},
   series={Springer Monographs in Mathematics},
   publisher={Springer, Cham},
   date={[2019] \copyright 2019},
   pages={xlii+406},
   isbn={978-3-030-33241-9},
   isbn={978-3-030-33242-6},
   review={\MR{3971240}},
   doi={10.1007/978-3-030-33242-6},
}

\bib{Cho}{article}{
   author={Chowdhury, C.},
   title={Motivic homotopy theory of algebraic stacks},
   note={\href{https://arxiv.org/abs/2112.15097}{arXiv:2112.15097}},
}

\bib{DP}{book}{
   author={Davey, B. A.},
   author={Priestley, H. A.},
   title={Introduction to lattices and order},
   edition={2},
   publisher={Cambridge University Press},
   place={New York},
   date={2002},
   pages={xii+298},
   isbn={0-521-78451-4},
   review={\MR{1902334 (2003e:06001)}},
}

\bib{SGA4d}{book}{
   label={SGA4d},
   author={Deligne, P.},
   title={Cohomologie \'etale},
   series={Lecture Notes in Mathematics, Vol. 569},
   note={S\'eminaire de G\'eom\'etrie Alg\'ebrique du Bois-Marie SGA 4$\frac{1}{2}$. Avec la collaboration de J. F. Boutot, A. Grothendieck, L. Illusie et J. L. Verdier},
   publisher={Springer-Verlag},
   place={Berlin},
   date={1977},
   review={\MR{0463174 (57 \#3132)}},
}

\bib{WeilII}{article}{
   author={Deligne, Pierre},
   title={La conjecture de Weil. II},
   journal={Inst. Hautes \'Etudes Sci. Publ. Math.},
   volume={52},
   date={1980},
   pages={137--252},
   review={\MR{601520 (83c:14017)}},
}

\bib{Ekedahl}{article}{
   author={Ekedahl, Torsten},
   title={On the adic formalism},
   conference={
      title={The Grothendieck Festschrift, Vol.\ II},
   },
   book={
      series={Progr. Math.},
      volume={87},
      publisher={Birkh\"auser Boston},
      place={Boston, MA},
   },
   date={1990},
   pages={197--218},
   review={\MR{1106899 (92b:14010)}},
}

\bib{EO}{article}{
   author={Enochs, Edgar},
   author={Oyonarte, Luis},
   title={Flat covers and cotorsion envelopes of sheaves},
   journal={Proc. Amer. Math. Soc.},
   volume={130},
   date={2002},
   number={5},
   pages={1285--1292},
   issn={0002-9939},
   review={\MR{1879949 (2003d:18023)}},
   doi={10.1090/S0002-9939-01-06190-1},
}

\bib{FYZ}{article}{
   author={Feng, T.},
   author={Yun, Z.},
   author={Zhang, W.},
   title={Modularity of higher theta series I: cohomology of the generic fiber},
   note={\href{https://arxiv.org/abs/2308.10979}{arXiv:2308.10979}},
}

\bib{FP}{article}{
   author={Fiore, Thomas M.},
   author={Paoli, Simona},
   title={A Thomason model structure on the category of small $n$-fold categories},
   journal={Algebr. Geom. Topol.},
   volume={10},
   date={2010},
   number={4},
   pages={1933--2008},
   issn={1472-2747},
   review={\MR{2728481 (2011i:18011)}},
   doi={10.2140/agt.2010.10.1933},
}

\bib{FN}{article}{
   author={Frenkel, Edward},
   author={Ng{\^o}, Bao Ch{\^a}u},
   title={Geometrization of trace formulas},
   journal={Bull. Math. Sci.},
   volume={1},
   date={2011},
   number={1},
   pages={129--199},
   issn={1664-3607},
   review={\MR{2823791}},
   doi={10.1007/s13373-011-0009-0},
}

\bib{Gabber}{article}{
   author={Gabber, Ofer},
   title={Notes on some t-structures},
   conference={
      title={Geometric aspects of Dwork theory. Vol. I, II},
   },
   book={
      publisher={Walter de Gruyter GmbH \& Co. KG, Berlin},
   },
   date={2004},
   pages={711--734},
   review={\MR{2099084 (2005m:14025)}},
}

\bib{GZ}{book}{
   author={Gabriel, P.},
   author={Zisman, M.},
   title={Calculus of fractions and homotopy theory},
   series={Ergebnisse der Mathematik und ihrer Grenzgebiete, Band 35},
   publisher={Springer-Verlag New York, Inc., New York},
   date={1967},
   pages={x+168},
   review={\MR{0210125 (35 \#1019)}},
}

\bib{Gai1}{article}{
   author={Gaitsgory, Dennis},
   title={ind-coherent sheaves},
   language={English, with English and Russian summaries},
   journal={Mosc. Math. J.},
   volume={13},
   date={2013},
   number={3},
   pages={399--528, 553},
   issn={1609-3321},
   review={\MR{3136100}},
}

\bib{Gai3}{article}{
   author={Gaitsgory, Dennis},
   title={Stacks},
   eprint={https://people.mpim-bonn.mpg.de/gaitsgde/GL/Stackstext.pdf},
}

\bib{GR}{book}{
   author={Gaitsgory, Dennis},
   author={Rozenblyum, Nick},
   title={A study in derived algebraic geometry. Vol. I. Correspondences and
   duality},
   series={Mathematical Surveys and Monographs},
   volume={221},
   publisher={American Mathematical Society, Providence, RI},
   date={2017},
   pages={xl+533pp},
   isbn={978-1-4704-3569-1},
   review={\MR{3701352}},
}

\bib{Gil1}{article}{
   author={Gillespie, James},
   title={The flat model structure on complexes of sheaves},
   journal={Trans. Amer. Math. Soc.},
   volume={358},
   date={2006},
   number={7},
   pages={2855--2874},
   issn={0002-9947},
   review={\MR{2216249 (2007a:55024)}},
   doi={10.1090/S0002-9947-06-04157-2},
}

\bib{Gil2}{article}{
   author={Gillespie, James},
   title={Kaplansky classes and derived categories},
   journal={Math. Z.},
   volume={257},
   date={2007},
   number={4},
   pages={811--843},
   issn={0025-5874},
   review={\MR{2342555 (2009e:55030)}},
   doi={10.1007/s00209-007-0148-x},
}

\bib{Giraud}{article}{
   author={Giraud, Jean},
   title={M\'ethode de la descente},
   journal={Bull. Soc. Math. France M\'em.},
   volume={2},
   date={1964},
   pages={viii+150},
   review={\MR{0190142 (32 \#7556)}},
}

\bib{EGAIV}{article}{
   label={EGAIV},
   author={Grothendieck, A.},
   title={\'El\'ements de g\'eom\'etrie alg\'ebrique. IV. \'Etude locale des sch\'emas et des morphismes de sch\'emas},
   journal={Inst. Hautes \'Etudes Sci. Publ. Math.},
   number={20, 24, 28, 32},
   note={R\'edig\'es avec la collaboration de J. Dieudonn\'e, 1964--1967},
}

\bib{SGA5}{book}{
   label={SGA5},
    author={Grothendieck, A.}, 
   title={Cohomologie $l$-adique et fonctions $L$},
   language={French},
   series={Lecture Notes in Mathematics, Vol. 589},
   note={S\'eminaire de G\'eometrie Alg\'ebrique du Bois-Marie 1965--1966 (SGA 5); Edit\'e par Luc Illusie},
   publisher={Springer-Verlag},
   place={Berlin},
   date={1977},
   pages={xii+484},
   isbn={3-540-08248-4},
   review={\MR{0491704 (58 \#10907)}},
}

\bib{TDTEI}{article}{
   author={Grothendieck, A.},
   title={Technique de descente et th\'eor\`emes d'existence en g\'eometrie alg\'ebrique. I. G\'en\'eralit\'es. Descente par morphismes fid\`element plats},
   conference={
      title={S\'eminaire Bourbaki, Vol.\ 5},
   },
   book={
      publisher={Soc. Math. France},
      place={Paris},
   },
   date={1995},
   pages={Exp.\ No.\ 190, 299--327},
   review={\MR{1603475}},
}

\bib{EGAIn}{book}{
   label={EGAI},
   author={Grothendieck, A.},
   author={Dieudonn\'e, J. A.},
   title={\'El\'ements de g\'eom\'etrie alg\'ebrique. I},
   series={Grundlehren der Mathematischen Wissenschaften},
   volume={166},
   date={1971},
   publisher={Springer-Verlag},
   place={Berlin},
}

\bib{GHW}{article}{
   author={Gulotta, D.},
   author={Hansen, D.},
   author={Weinstein, J.},
   title={An enhanced six-functor formalism for diamonds and v-stacks},
   note={\href{https://arxiv.org/abs/2202.12467}{arXiv:2202.12467}},
}

\bib{HRS}{article}{
   author={Happel, Dieter},
   author={Reiten, Idun},
   author={Smal\o, Sverre O.},
   title={Tilting in abelian categories and quasitilted algebras},
   journal={Mem. Amer. Math. Soc.},
   volume={120},
   date={1996},
   number={575},
   pages={viii+ 88},
   issn={0065-9266},
   review={\MR{1327209}},
   doi={10.1090/memo/0575},
}

\bib{HP}{article}{
   author={Hesselholt, Lars},
   author={Pstragowski, Piotr},
   title={Dirac geometry II: coherent cohomology},
   journal={Forum Math. Sigma},
   volume={12},
   date={2024},
   pages={Paper No. e27, 93},
   review={\MR{4710717}},
   doi={10.1017/fms.2024.2},
}

\bib{HM}{article}{
   author={Heyer, C.},
   author={Mann, L.},
   title={6-functor formalisms and smooth representations},
   note={\href{https://arxiv.org/abs/2410.13038}{arXiv:2410.13038}},
}

\bib{HL}{article}{
   author={Ho, Q. P.},
   author={Li, P.},
   title={Revisiting mixed geometry},
   note={\href{https://arxiv.org/abs/2202.04833}{arXiv:2202.04833}},
}

\bib{Hovey}{book}{
   author={Hovey, Mark},
   title={Model categories},
   series={Mathematical Surveys and Monographs},
   volume={63},
   publisher={American Mathematical Society, Providence, RI},
   date={1999},
   pages={xii+209},
   isbn={0-8218-1359-5},
   review={\MR{1650134 (99h:55031)}},
}

\bib{TG}{article}{
   label={TG},
   author={Illusie, Luc},
   author={Laszlo, Yves},
   author={Orgogozo, Fabrice},
   title={Travaux de Gabber sur l'uniformisation locale et la cohomologie \'etale des sch\'emas quasi-excellents. S\'eminaire \`a l'\'Ecole polytechnique 2006--2008},
   language={French},
   note={Avec la collaboration de F. D\'eglise, A. Moreau, V. Pilloni, M. Raynaud, J. Riou, B. Stroh, M. Temkin et W. Zheng},
   journal={Ast\'erisque},
   number={363--364},
   date={2014},
   issn={0303-1179},
   isbn={978-2-85629-790-2},
}

\bib{Joyal1}{article}{
   author={Joyal, Andr\'e},
   title={Quasi-categories and Kan complexes},
   note={Special volume celebrating the 70th birthday of Professor Max Kelly},
   journal={J. Pure Appl. Algebra},
   volume={175},
   date={2002},
   number={1-3},
   pages={207--222},
   issn={0022-4049},
   review={\MR{1935979 (2003h:55026)}},
   doi={10.1016/S0022-4049(02)00135-4},
}

\bib{Joyal2}{article}{
    author={Joyal, Andr\'e},
    title={Notes on quasi-categories},
    eprint={http://www.math.uchicago.edu/~may/IMA/JOYAL/JoyalDec08.pdf},
}

\bib{JT}{article}{
    author={Joyal, Andr\'e},
    author={Tierney, Myles},
    title={Notes on simplicial homotopy theory},
    eprint={http://mat.uab.cat/~kock/crm/hocat/advanced-course/Quadern47.pdf},
}

\bib{KS}{book}{
   author={Kashiwara, Masaki},
   author={Schapira, Pierre},
   title={Categories and sheaves},
   series={Grundlehren der Mathematischen Wissenschaften},
   volume={332},
   publisher={Springer-Verlag},
   place={Berlin},
   date={2006},
   pages={x+497},
   isbn={978-3-540-27949-5},
   isbn={3-540-27949-0},
   review={\MR{2182076 (2006k:18001)}},
}

\bib{KR21}{article}{
   author={Khan, Adeel A.},
   author={Ravi, Charanya},
   title={Generalized cohomology theories for algebraic stacks},
   journal={Adv. Math.},
   volume={458},
   date={2024},
   pages={Paper No. 109975},
   issn={0001-8708},
   review={\MR{4811546}},
   doi={10.1016/j.aim.2024.109975},
}

\bib{LO1}{article}{
   author={Laszlo, Yves},
   author={Olsson, Martin},
   title={The six operations for sheaves on Artin stacks. I. Finite coefficients},
   journal={Publ. Math. Inst. Hautes \'Etudes Sci.},
   number={107},
   date={2008},
   pages={109--168},
   issn={0073-8301},
   review={\MR{2434692 (2009f:14003a)}},
   doi={10.1007/s10240-008-0011-6},
}

\bib{LO2}{article}{
   author={Laszlo, Yves},
   author={Olsson, Martin},
   title={The six operations for sheaves on Artin stacks. II. Adic coefficients},
   journal={Publ. Math. Inst. Hautes \'Etudes Sci.},
   number={107},
   date={2008},
   pages={169--210},
   issn={0073-8301},
   review={\MR{2434693 (2009f:14003b)}},
   doi={10.1007/s10240-008-0012-5},
}

\bib{LO3}{article}{
   author={Laszlo, Yves},
   author={Olsson, Martin},
   title={Perverse $t$-structure on Artin stacks},
   journal={Math. Z.},
   volume={261},
   date={2009},
   number={4},
   pages={737--748},
   issn={0025-5874},
   review={\MR{2480756 (2009j:14004)}},
   doi={10.1007/s00209-008-0348-z},
}

\bib{LMB}{book}{
   author={Laumon, G{\'e}rard},
   author={Moret-Bailly, Laurent},
   title={Champs alg\'ebriques},
   series={Ergebnisse der Mathematik und ihrer Grenzgebiete. 3. Folge. A Series of Modern Surveys in Mathematics},
   volume={39},
   publisher={Springer-Verlag},
   place={Berlin},
   date={2000},
   pages={xii+208},
   isbn={3-540-65761-4},
   review={\MR{1771927 (2001f:14006)}},
}

\bib{LZ}{article}{
   author={Lu, Qing},
   author={Zheng, Weizhe},
   title={Duality and nearby cycles over general bases},
   journal={Duke Math. J.},
   volume={168},
   date={2019},
   number={16},
   pages={3135--3213},
   issn={0012-7094},
   review={\MR{4027830}},
   doi={10.1215/00127094-2019-0057},
}

\bib{HTT}{book}{
   label={HTT},
   author={Lurie, Jacob},
   title={Higher topos theory},
   series={Annals of Mathematics Studies},
   volume={170},
   publisher={Princeton University Press},
   place={Princeton, NJ},
   date={2009},
   pages={xviii+925},
   isbn={978-0-691-14049-0},
   isbn={0-691-14049-9},
   review={\MR{2522659 (2010j:18001)}},
}

\bib{HA}{article}{
   label={HA},
   author={Lurie, Jacob},
   title={Higher Algebra (version dated September 18, 2017)},
   eprint={https://www.math.ias.edu/~lurie/papers/HA.pdf},
}

\bib{DAG5}{article}{
   label={DAG5},
   author={Lurie, Jacob},
   title={Derived Algebraic Geometry V: Structured Spaces},
   eprint={https://www.math.ias.edu/~lurie/papers/DAGV.pdf},
}

\bib{Man}{article}{
   author={Mann, L.},
   title={A $p$-adic 6-functor formalism in rigid-analytic geometry},
   note={\href{https://arxiv.org/abs/2206.02022}{arXiv:2206.02022}},
}

\bib{OlChow}{article}{
   author={Olsson, Martin},
   title={On proper coverings of Artin stacks},
   journal={Adv. Math.},
   volume={198},
   date={2005},
   number={1},
   pages={93--106},
   issn={0001-8708},
   review={\MR{2183251 (2006h:14003)}},
   doi={10.1016/j.aim.2004.08.017},
}

\bib{Olsson}{article}{
   author={Olsson, Martin},
   title={Sheaves on Artin stacks},
   journal={J. Reine Angew. Math.},
   volume={603},
   date={2007},
   pages={55--112},
   issn={0075-4102},
   review={\MR{2312554 (2008b:14002)}},
   doi={10.1515/CRELLE.2007.012},
}

\bib{Park}{article}{
   author={Park, D.},
   title={Log motivic Gysin isomorphisms},
   note={\href{https://arxiv.org/abs/2303.12498}{arXiv:2303.12498}},
}

\bib{Polishchuk}{article}{
   author={Polishchuk, A.},
   title={Constant families of $t$-structures on derived categories of
   coherent sheaves},
   language={English, with English and Russian summaries},
   journal={Mosc. Math. J.},
   volume={7},
   date={2007},
   number={1},
   pages={109--134, 167},
   issn={1609-3321},
   review={\MR{2324559}},
   doi={10.17323/1609-4514-2007-7-1-109-134},
}

\bib{RG}{article}{
   author={Raynaud, Michel},
   author={Gruson, Laurent},
   title={Crit\`eres de platitude et de projectivit\'e. Techniques de \guillemotleft platification\guillemotright\ d'un module},
   journal={Invent. Math.},
   volume={13},
   date={1971},
   pages={1--89},
   issn={0020-9910},
   review={\MR{0308104 (46 \#7219)}},
}

\bib{RS20}{article}{
   author={Richarz, Timo},
   author={Scholbach, Jakob},
   title={The intersection motive of the moduli stack of shtukas},
   journal={Forum Math. Sigma},
   volume={8},
   date={2020},
   pages={Paper No. e8, 99},
   review={\MR{4061978}},
   doi={10.1017/fms.2019.32},
}

\bib{Robalo}{thesis}{
    author={Robalo, Marco},
    title={Motivic homotopy theory of non-commutative spaces},
    date={2014},
    note={Th\`ese de doctorat, Universit\'e Montpellier 2},
}

\bib{Rydh}{article}{
   author={Rydh, David},
   title={Noetherian approximation of algebraic spaces and stacks},
   journal={J. Algebra},
   volume={422},
   date={2015},
   pages={105--147},
   issn={0021-8693},
   review={\MR{3272071}},
   doi={10.1016/j.jalgebra.2014.09.012},
}

\bib{Scho}{article}{
   author={Scholze, P.},
   title={Six-functor formalisms},
   eprint={https://people.mpim-bonn.mpg.de/scholze/SixFunctors.pdf},
}

\bib{Toen}{article}{
   author={To{\"e}n, Bertrand},
   title={Simplicial presheaves and derived algebraic geometry},
   conference={
      title={Simplicial methods for operads and algebraic geometry},
   },
   book={
      series={Adv. Courses Math. CRM Barcelona},
      publisher={Birkh\"auser/Springer Basel AG, Basel},
   },
   date={2010},
   pages={119--186},
   review={\MR{2778590}},
}

\bib{Toen2}{article}{
   author={To{\"e}n, Bertrand},
   title={Descente fid\`element plate pour les $n$-champs d'Artin},
   language={French, with English and French summaries},
   journal={Compos. Math.},
   volume={147},
   date={2011},
   number={5},
   pages={1382--1412},
   issn={0010-437X},
   review={\MR{2834725}},
   doi={10.1112/S0010437X10005245},
}

\bib{Zh2}{article}{
   author={Zheng, Weizhe},
   title={Six operations and Lefschetz-Verdier formula for Deligne-Mumford stacks},
   journal={Sci. China Math.},
   volume={58},
   date={2015},
   number={3},
   pages={565--632},
   issn={1674-7283},
   review={\MR{3319927}},
   doi={10.1007/s11425-015-4970-z},
}

\bib{Zh1}{article}{
   author={Zheng, Weizhe},
   title={Gluing pseudo functors via $n$-fold categories},
   journal={J. Homotopy Relat. Struct.},
   volume={12},
   date={2017},
   number={1},
   pages={189--271},
   issn={2193-8407},
   review={\MR{3613026}},
   doi={10.1007/s40062-016-0126-2},
}

\end{biblist}
\end{bibdiv}

\end{document}